\newtheorem{X}{X}[section]
\newtheorem{conjecture}[X]{Conjecture}
\newtheorem{corollary}[X]{Corollary}
\newtheorem{lemma}[X]{Lemma}
\newtheorem{proposition}[X]{Proposition}
\newtheorem{summary}[X]{Summary}
\newtheorem{theorem}[X]{Theorem}
\newtheorem{definition}[X]{Definition}
\newtheorem{example}[X]{Example}
\newtheorem{plain}[X]{}
\newtheorem{question}[X]{Question}
\newtheorem{fundamentalquestion}[X]{Fundamental Question}
\newtheorem{remark}[X]{Remark}
\newtheorem{aside}[X]{Aside}
\newtheorem*{nt}{Notes}
\theoremstyle{nonumberplain}
\newtheorem{proof}{Proof}
\titleformat*{\section}{\LARGE\bfseries}
\titleformat*{\subsection}{\Large\itshape}
\titleformat*{\subsubsection}{\scshape}
\titleformat*{\paragraph}{\itshape}
\setlist{itemsep=0em,topsep=0em,parsep=0em}
\let\cite\citealt
\newcommand{\btable}{\begin{table}}
\newcommand{\etable}{\end{table}}
\newcommand\bquote{\begin{quote}}
\newcommand\equote{\end{quote}}
\newcommand\bcomment{}
\newcommand\edocument{
\begin{document}

\title{Shimura Varieties and Moduli}
\date{April 30, 2011, v2.00}
\author{J.S. Milne}
\maketitle

\begin{abstract}
Connected Shimura varieties are the quotients of hermitian symmetric domains
by discrete groups defined by congruence conditions. We examine their relation
with moduli varieties.

\end{abstract}
\tableofcontents

\renewcommand{\theequation}{\arabic{equation}}

\section*{Introduction}

The hermitian symmetric domains are the complex manifolds isomorphic to
bounded symmetric domains. The Griffiths period domains are the parameter
spaces for polarized rational Hodge structures. A period domain is a hermitian
symmetric domain if the universal family of Hodge structures on it is a
variation of Hodge structures, i.e., satisfies Griffiths transversality. This
rarely happens, but, as Deligne showed, every hermitian symmetric domain can
be realized as the subdomain of a period domain on which certain tensors for
the universal family are of type $(p,p)$ (i.e., are Hodge tensors).

In particular, every hermitian symmetric domain can be realized as a moduli
space for Hodge structures plus tensors. This all takes place in the analytic
realm, because hermitian symmetric domains are not algebraic varieties. To
obtain an algebraic variety, we must pass to the quotient by an arithmetic
group. In fact, in order to obtain a moduli variety, we should assume that the
arithmetic group is defined by congruence conditions. The algebraic varieties
obtained in this way are the connected Shimura varieties.

The arithmetic subgroup lives in a semisimple algebraic group over
$\mathbb{Q}{}$, and the variations of Hodge structures on the connected
Shimura variety are classified in terms of auxiliary reductive algebraic
groups${}$. In order to realize the connected Shimura variety as a moduli
variety, we must choose the additional data so that the variation of Hodge
structures is of geometric origin. The main result of the article classifies
the connected Shimura varieties for which this is known to be possible.
Briefly, in a small number of cases, the connected Shimura variety is a moduli
variety for abelian varieties with polarization, endomorphism, and level
structure (the PEL case); for a much larger class, the variety is a moduli
variety for abelian varieties with polarization, Hodge class, and level
structure (the PHL case); for all connected Shimura varieties except those of
type $E_{6}$, $E_{7}$, and certain types $D$, the variety is a moduli variety
for abelian \textit{motives} with additional structure. In the remaining
cases, the connected Shimura variety is not a moduli variety for abelian
motives, and it is not known whether it is a moduli variety at all.\medskip

We now summarize the contents of the article.

\S 1. As an introduction to the general theory, we review the case of elliptic
modular curves. In particular, we prove that the modular curve constructed
analytically coincides with the modular curve constructed algebraically using
geometric invariant theory.

\S 2. We briefly review the theory of hermitian symmetric domains. To give a
hermitian symmetric domain amounts to giving a real semisimple Lie group $H$
with trivial centre and a homomorphism $u$ from the circle group to $H$
satisfying certain conditions. This leads to a classification of hermitian
symmetric domains in terms of Dynkin diagrams and special nodes.

\S 3. The group of holomorphic automorphisms of a hermitian symmetric domain
is a real Lie group, and we are interested in quotients of the domain by
certain discrete subgroups of this Lie group. In this section we review the
fundamental theorems of Borel, Harish-Chandra, Margulis, Mostow, Selberg,
Tamagawa, and others concerning discrete subgroups of Lie groups.

\S 4. The arithmetic locally symmetric varieties (resp. connected Shimura
varieties) are the quotients of hermitian symmetric domains by arithmetic
(resp. congruence) groups. We explain the fundamental theorems of Baily and
Borel on the algebraicity of these varieties and of the maps into them.

\S 5. We review the definition of Hodge structures and of their variations,
and state the fundamental theorem of Griffiths that motivated their definition.

\S 6. We define the Mumford-Tate group of a rational Hodge structure, and we
prove the basic results concerning their behaviour in families.

\S 7. We review the theory of period domains, and explain Deligne's
interpretation of hermitian symmetric domains as period subdomains.

\S 8. We classify certain variations of Hodge structures on locally symmetric
varieties in terms of group-theoretic data.

\S 9. {\small I}n order to be able to realize all but a handful of locally
symmetric varieties as moduli varieties, we shall need to replace algebraic
varieties and algebraic classes by more general objects. In this section, we
prove Deligne's theorem that all Hodge classes on abelian varieties are
absolutely Hodge, and have algebraic meaning, and we define abelian motives.

\S 10. Following Satake and Deligne, we classify the symplectic embeddings of
an algebraic group that give rise to an embedding of the associated hermitian
symmetric domain into a Siegel upper half space.

\S 11. We use the results of the preceding sections to determine which Shimura
varieties can be realized as moduli varieties for abelian varieties (or
abelian motives) plus additional structure.

Although the expert will find little that is new in this article, there is
much that is not well explained in the literature. As far as possible,
complete proofs have been included.

\subsection{Notations}

We use $k$ to denote the base field (always of characteristic zero), and
$k^{\mathrm{al}}$ to denote an algebraic closure of $k$. \textquotedblleft
Algebraic group\textquotedblright\ means \textquotedblleft affine algebraic
group scheme\textquotedblright\ and \textquotedblleft algebraic
variety\textquotedblright\ means \textquotedblleft geometrically reduced
scheme of finite type over a field\textquotedblright.%
\index{algebraic group}%
\index{algebraic variety}
For a smooth algebraic variety $X$ over $\mathbb{C}{}$, we let $X^{\text{an}}$
denote the set $X(\mathbb{C}{})$ endowed with its natural structure of a
complex manifold. The tangent space at a point $p$ of space $X$ is denoted by
$T_{p}(X)$.

Vector spaces and representations are finite dimensional unless indicated
otherwise. The linear dual of a vector space $V$ is denoted by $V^{\vee}$. For
a $k$-vector space $V$ and commutative $k$-algebra $R$, $V_{R}=R\otimes_{k}V$.
For a topological space $S$, we let $V_{S}$ denote the constant local system
of vector spaces on $S$ defined by $V$. By a lattice in a real vector space,
we mean a full lattice, i.e., the $\mathbb{Z}{}$-module generated by a basis
for the vector space.

A \textit{vector sheaf}%
\index{vector sheaf}%
on a complex manifold (or scheme) $S$ is a locally free sheaf of
$\mathcal{O}{}_{S}$-modules of finite rank. In order for $\mathcal{W}{}$ to be
a vector subsheaf of a vector sheaf $\mathcal{V}{}$, we require that the maps
on the fibres $\mathcal{W}{}_{s}\rightarrow\mathcal{V}{}_{s}$ be injective.
With these definitions, vector sheaves correspond to vector bundles and vector
subsheaves to vector subbundles.

The quotient of a Lie group or algebraic group $G$ by its centre $Z(G)$ is
denoted by $G^{\mathrm{ad}}$.%
\index{Gad@$G^{\mathrm{ad}}$}
A Lie group or algebraic group is said to be \textit{adjoint}%
\index{algebraic group!adjoint}
if it is semisimple (in particular, connected) with trivial centre. An
algebraic group is \textit{simple} (resp. \textit{almost simple}) if it
connected noncommutative and every proper normal subgroup is trivial (resp.
finite). An \textit{isogeny} of algebraic groups is a surjective homomorphism
with finite kernel. An algebraic group $G$ is \textit{simply connected}%
\index{algebraic group!simply connected}
if it is semisimple and every isogeny $G^{\prime}\rightarrow G$ with
$G^{\prime}$ connected is an isomorphism. The inner automorphism of $G$
defined by an element $g$ is denoted by $\inn(g)$.%
\index{inn(g)@$\inn(g)$}
Let $\ad\colon G\rightarrow G^{\mathrm{ad}}$ be the quotient map.%
\index{ad@$\ad$}
There is an action of $G^{\mathrm{ad}}$ on $G$ such that $\ad(g)$ acts as
$\inn(g)$ for all $g\in G(k^{\mathrm{al}})$. For an algebraic group $G$ over
$\mathbb{R}{}$, $G(\mathbb{R}{})^{+}$ is the identity component of
$G(\mathbb{R}{})$ for the real topology. For a finite extension of fields
$L/k$ and an algebraic group $G$ over $L$, we write $(G)_{L/k}$ for algebraic
group over $k$ obtained by (Weil) restriction of scalars.%
\index{restriction of scalars}%
\index{Weil restriction}%
\index{GLk@$(G)_{L/k}$}
As usual, $\mathbb{G}_{m}=\GL_{1}$ and $\mu_{N}$ is the kernel of
$\mathbb{G}_{m}\overset{N}{\longrightarrow}\mathbb{G}_{m}$.

A \textit{prime}%
\index{prime}
of a number field $k$ is a prime ideal in $\mathcal{O}{}_{k}$ (a finite
prime), an embedding of $k$ into $\mathbb{R}{}$ (a real prime), or a conjugate
pair of embeddings of $k$ into $\mathbb{C}{}$ (a complex prime). The ring of
finite ad\`{e}les of $\mathbb{Q}{}$ is $\mathbb{A}{}_{f}=\mathbb{Q}{}%
\otimes\left(  \prod\nolimits_{p}\mathbb{Z}{}_{p}\right)  $.

We use $\iota$ or $z\mapsto\bar{z}$ to denote complex conjugation on
$\mathbb{C}{}$ or on a subfield of $\mathbb{C}{}$, and we use $X\simeq Y$ to
mean that $X$ and $Y$ isomorphic with a specific isomorphism --- which
isomorphism should always be clear from the context.

For algebraic groups we use the language of modern algebraic geometry, not the
more usual language, which is based on Weil's Foundations. For example, if $G$
and $G^{\prime}$ are algebraic groups over a field $k$, then by a homomorphism
$G\rightarrow G^{\prime}$ we mean a homomorphism defined over $k$, not over
some universal domain. Similarly, a simple algebraic group over a field $k$
need not be geometrically (i.e., absolutely) simple.

\section{Elliptic modular curves}

\begin{quote}
{\small The first Shimura varieties, and the first moduli varieties, were the
elliptic modular curves. In this section, we review the theory of elliptic
modular curves as an introduction to the general theory.}
\end{quote}

\subsection{Definition of elliptic modular curves}

Let $D$ be the complex upper half plane,%
\[
D=\{z\in\mathbb{C}{}\mid\Im(z)>0\}.
\]
The group $\SL_{2}(\mathbb{R}{})$ acts transitively on $D$ by the rule%
\[%
\begin{pmatrix}
a & b\\
c & d
\end{pmatrix}
z=\frac{az+b}{cz+d}.
\]
A subgroup $\Gamma$ of $\SL_{2}(\mathbb{Z}{})$ is a congruence subgroup%
\index{congruence subgroup}
if, for some integer $N\geq1$, $\Gamma$ contains the principal congruence
subgroup%
\index{principal congruence subgroup}
of level $N$,%
\[
\Gamma(N)\overset{\textup{{\tiny def}}}{=}\left\{  A\in\SL_{2}(\mathbb{Z}%
{})\mid A\equiv I\bmod N\right\}  \text{.}%
\]
An elliptic modular curve%
\index{elliptic modular curve}
is the quotient $\Gamma\backslash D$ of $D$ by a congruence group $\Gamma$.
Initially this is a one-dimensional complex manifold, but it can be
compactified by adding a finite number of \textquotedblleft
cusps\textquotedblright, and so it has a unique structure of an algebraic
curve compatible with its structure as a complex manifold.\footnote{We are
using that the functor $S\rightsquigarrow S^{\text{an}}$ from smooth algebraic
varieties over $\mathbb{C}{}$ to complex manifolds defines an equivalence from
the category of \textit{complete} smooth algebraic curves to that of
\textit{compact} Riemann surfaces.} This curve can be realized as a moduli
variety for elliptic curves with level structure, from which it is possible
deduce many beautiful properties of the curve, for example, that it has a
canonical model over a specific number field, and that the coordinates of the
special points on the model generate class fields.

\subsection{Elliptic modular curves as moduli varieties}

For an elliptic curve $E$ over $\mathbb{C}{}$, the exponential map defines an
exact sequence%
\begin{equation}
0\rightarrow\Lambda\rightarrow T_{0}(E^{\text{an}})\overset{\exp
}{\longrightarrow}E^{\text{an}}\rightarrow0 \label{hq49}%
\end{equation}
with%
\[
\Lambda\simeq\pi_{1}(E^{\text{an}},0)\simeq H_{1}(E^{\text{an}},\mathbb{Z}%
{}).
\]
The functor $E\rightsquigarrow(T_{0}E,\Lambda)$ is an equivalence from the
category of complex elliptic curves to the category of pairs consisting of a
one-dimensional $\mathbb{C}{}$-vector space and a lattice. Thus, to give an
elliptic curve over $\mathbb{C}{}$ amounts to giving a two-dimensional
$\mathbb{R}{}$-vector space $V$, a complex structure on $V$, and a lattice in
$V$. It is known that $D$ parametrizes elliptic curves plus additional data.
Traditionally, to a point $\tau$ of $D$ one attaches the quotient of
$\mathbb{C}{}$ by the lattice spanned by $1$ and $\tau$. In other words, one
fixes the real vector space and the complex structure, and varies the lattice.
From the point of view of period domains and Shimura varieties, it is more
natural to fix the real vector space and the lattice, and vary the complex
structure.\footnote{The choice of a trivialization of a variation of integral
Hodge structures attaches to each point of the underlying space a fixed real
vector space and lattice, but a varying Hodge structure --- see below.}

Thus, let $V$ be a two-dimensional vector space over $\mathbb{R}{}$. A complex
structure%
\index{complex structure}
on $V$ is an endomorphism $J$ of $V$ such that $J^{2}=-1$. From such a $J$, we
get a decomposition $V_{\mathbb{C}{}}=V_{J}^{+}\oplus V_{J}^{-}$ of
$V_{\mathbb{C}{}}$ into its $+i$ and $-i$ eigenspaces, and the isomorphism
$V\rightarrow V_{\mathbb{C}{}}/V_{J}^{-}$ carries the complex structure $J$ on
$V$ to the natural complex structure on $V_{\mathbb{C}{}}/V_{J}^{-}$. The map
$J\mapsto V_{\mathbb{C}{}}/V_{J}^{-}$ identifies the set of complex structures
on $V$ with the set of nonreal one-dimensional quotients of $V_{\mathbb{C}{}}%
$, i.e., with $\mathbb{P}{}(V_{\mathbb{C}{}})\smallsetminus\mathbb{P}{}(V)$.
This space has two connected components.

Now choose a basis for $V$, and identify it with $\mathbb{R}{}^{2}$. Let
$\psi\colon V\times V\rightarrow\mathbb{R}{}$ be the alternating form%
\[
\psi(\left(  \!%
\begin{smallmatrix}
a\\
b
\end{smallmatrix}
\!\right)  ,\left(  \!%
\begin{smallmatrix}
c\\
d
\end{smallmatrix}
\!\right)  )=\det\left(  \!%
\begin{smallmatrix}
a & c\\
b & d
\end{smallmatrix}
\!\right)  =ad-bc.
\]
On one of the connected components, which we denote $D$, the symmetric
bilinear form
\[
(x,y)\mapsto\psi_{J}(x,y)\overset{\textup{{\tiny def}}}{=}\psi(x,Jy)\colon
V\times V\rightarrow\mathbb{R}{}%
\]
is positive definite and on the other it is negative definite. Thus $D$ is the
set of complex structures on $V$ for which $+\psi$ (rather than $-\psi$) is a
Riemann form. Our choice of a basis for $V$ identifies $\mathbb{P}%
{}(V_{\mathbb{C}{}})\smallsetminus\mathbb{P}{}(V)$ with $\mathbb{P}{}%
^{1}(\mathbb{C}{})\smallsetminus\mathbb{P}{}^{1}(\mathbb{R}{})$ and $D$ with
the complex upper half plane.

Now let $\Lambda$ be the lattice $\mathbb{Z}{}^{2}$ in $V$. For each $J\in D$,
the quotient $(V,J)/\Lambda$ is an elliptic curve $E$ with $H_{1}%
(E^{\text{an}},\mathbb{Z}{})\simeq\Lambda$. In this way, we obtain a
one-to-one correspondence between the points of $D$ and the isomorphism
classes of pairs consisting of an elliptic curve $E$ over $\mathbb{C}{}$ and
an ordered basis for $H_{1}(E^{\text{an}},\mathbb{Z}{})$.

Let $E_{N}$ denote the kernel of multiplication by $N$ on an elliptic curve
$E$. Thus, for the curve $E=(V,J)/\Lambda$,
\[
E_{N}(\mathbb{C}{})=\tstyle\frac{1}{N}\Lambda/\Lambda\simeq\Lambda
/N\Lambda\approx(\mathbb{Z}{}/N\mathbb{Z}{})^{2}.
\]
A level-$N$ structure on $E$ is a pair of points $\eta=(t_{1},t_{2})$ in
$E(\mathbb{C}{})$ that forms an ordered basis for $E_{N}(\mathbb{C}{})$.

For an elliptic curve $E$ over any field, there is an algebraically defined
(Weil) pairing
\[
e_{N}\colon E_{N}\times E_{N}\rightarrow\mu_{N}.
\]
When the ground field is $\mathbb{C}{}$, this induces an isomorphism
$\bigwedge^{2}\left(  E_{N}(\mathbb{C}{})\right)  \simeq\mu_{N}(\mathbb{C}{}%
)$. In the following, we fix a primitive $N$th root $\zeta$ of $1$ in
$\mathbb{C}{}$, and we require that our level-$N$ structures satisfy the
condition $e_{N}(t_{1},t_{2})=\zeta$.

Identify $\Gamma(N)$ with the subgroup of $\SL(V)$ whose elements preserve
$\Lambda$ and act as the identity on $\Lambda/N\Lambda$. On passing to the
quotient by $\Gamma(N)$, we obtain a one-to-one correspondence between the
points of $\Gamma(N)\backslash D$ and the isomorphism classes of pairs
consisting of an elliptic curve $E$ over $\mathbb{C}{}$ and a level-$N$
structure $\eta$ on $E$. Let $Y_{N}$ denote the algebraic curve over
$\mathbb{C}{}$ with $Y_{N}^{\text{an}}=\Gamma(N)\backslash D$.

Let $f\colon E\rightarrow S$ be a family of elliptic curves over a scheme $S$,
i.e., a flat map of schemes together with a section whose fibres are elliptic
curves. A level-$N$ structure on $E/S$ is an ordered pair of sections to $f$
that give a level-$N$ structure on $E_{s}$ for each closed point $s$ of $S$.

\begin{proposition}
\label{h11}Let $f\colon E\rightarrow S$ be a family of elliptic curves on a
smooth algebraic curve $S$ over $\mathbb{C}{}$, and let $\eta$ be a level-$N$
structure on $E/S$. The map $\gamma\colon S(\mathbb{C}{})\rightarrow
Y_{N}(\mathbb{C}{})$ sending $s\in S(\mathbb{C}{})$ to the point of
$\Gamma(N)\backslash D$ corresponding to $(E_{s},\eta_{s})$ is regular%
\index{regular map}%
, i.e., defined by a morphism of algebraic curves.
\end{proposition}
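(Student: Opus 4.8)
The plan is to show first that $\gamma$ is holomorphic, then that it extends to a holomorphic map between the smooth projective completions of $S$ and of $Y_N$, and finally to invoke the equivalence between compact Riemann surfaces and smooth complete algebraic curves to conclude that this extended map --- hence $\gamma$ itself --- is algebraic. The only step carrying real content is the extension over the cusps; a holomorphic map of quasi-projective varieties need not be algebraic, so without some control at the boundary there would be nothing to prove.

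That $\gamma$ is well defined as a map $S(\mathbb{C})\to Y_N(\mathbb{C})$ is just the fibrewise application of the bijection set up above between $\Gamma(N)\backslash D$ and isomorphism classes of pairs $(E,\eta)$. To see it is holomorphic, work over a simply connected open $U\subseteq S^{\mathrm{an}}$: the local system $R_1f_*\mathbb{Z}$ is then constant, the level-$N$ structure $\eta$ pins down an identification of it with $\Lambda=\mathbb{Z}^{2}$ compatible with the Weil pairing, and the relative form of the exponential sequence \eqref{hq49} equips $V=\Lambda\otimes\mathbb{R}$ with a complex structure $J_s$ for each $s\in U$. The assignment $s\mapsto J_s$ is holomorphic because the Hodge bundle $f_*\Omega^{1}_{E/S}$ is a holomorphic subbundle of the de Rham bundle (Griffiths), and it lands in the component $D$ because the canonical principal polarization of $E_s$ corresponds to $+\psi$ under our normalization $e_N(t_1,t_2)=\zeta$. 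Thus $\gamma|_U$ is the composite $U\to D\to\Gamma(N)\backslash D=Y_N^{\mathrm{an}}$, and these local descriptions glue to the holomorphic map $\gamma$.

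Next, embed $S$ as an open subvariety of a smooth complete algebraic curve $\bar S$, and recall that $Y_N$ is an open subvariety of a smooth complete curve $\bar Y_N$, the compactification by cusps. I claim $\gamma$ extends to a holomorphic map $\bar\gamma\colon\bar S^{\mathrm{an}}\to\bar Y_N^{\mathrm{an}}$. The point is a local one: near $p\in\bar S\smallsetminus S$, choose a coordinate disc $\Delta$ with $\Delta\smallsetminus\{p\}\subseteq S^{\mathrm{an}}$; the local monodromy of $R_1f_*\mathbb{Z}$ around $p$ is quasi-unipotent (monodromy theorem for families of abelian varieties) and, being congruent to the identity modulo $N$, is in fact unipotent once $N\geq 3$. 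The explicit behaviour of the period map near the cusp --- equivalently, the semistable (Tate curve) description of the degeneration, or Borel's extension theorem for holomorphic maps into arithmetic quotients (\S4) --- then shows that $\gamma$ carries $\Delta\smallsetminus\{p\}$ into a neighbourhood of a cusp of $\bar Y_N$ in a way that extends holomorphically across $p$. This is the step I expect to be the main obstacle; everything else is formal.

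Finally, $\bar\gamma\colon\bar S^{\mathrm{an}}\to\bar Y_N^{\mathrm{an}}$ is a holomorphic map of compact Riemann surfaces, hence is induced by a morphism $\bar S\to\bar Y_N$ of smooth complete algebraic curves, by the equivalence of categories recalled in the footnote of \S1. Since $\gamma(S^{\mathrm{an}})\subseteq Y_N^{\mathrm{an}}$ and $S(\mathbb{C})$ is dense in $\bar S$, the preimage of the open subvariety $Y_N$ under this morphism is an open subset of $\bar S$ containing $S$, so the morphism restricts to a morphism of algebraic curves $S\to Y_N$ whose analytification is $\gamma$. Hence $\gamma$ is regular.
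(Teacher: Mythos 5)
Your overall scaffolding matches the paper's: holomorphicity first, then extension across the boundary, then Chow/Riemann-surface GAGA. The holomorphicity argument via the Hodge bundle being a holomorphic subbundle of the de Rham bundle is essentially the dual of what the paper does (the paper works with the quotient $\mathcal{O}_{S^{\mathrm{an}}}\otimes R_1f_*\mathbb{Z}\twoheadrightarrow\mathcal{T}_0(E^{\mathrm{an}}/S^{\mathrm{an}})$ and the universal property of $\mathbb{P}(V_{\mathbb{C}})$ as the Grassmannian of $1$-dimensional quotients); both are fine. The final Riemann-surface argument is also fine.

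The gap is exactly the step you flag as "the main obstacle": you do not actually prove that $\gamma$ extends across a puncture $p\in\bar S\smallsetminus S$. You observe (correctly) that the local monodromy is unipotent for $N\geq 3$, and then you gesture at three possible tools --- period-map asymptotics, the Tate curve, and Borel's extension theorem --- without carrying any of them out. Borel's theorem (\ref{h43}) would indeed finish it, but the paper deliberately presents it later as a generalization of precisely the elementary argument used here, so invoking it would be anachronistic in context; and the Tate-curve/period-asymptotics route, while genuine, requires real work that your sketch does not supply.

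The paper closes the gap with a trick that avoids all of this machinery and makes the unipotent-monodromy discussion unnecessary. After passing to a finite covering of $S$ so that $N$ may be assumed even, there is a forgetful map $Y_N^{\mathrm{an}}\to Y_2^{\mathrm{an}}$, and the curve $Y_2$ is isomorphic to $\mathbb{P}^1\smallsetminus\{0,1,\infty\}$. Near a puncture $p$, the composite $\Delta\smallsetminus\{p\}\to Y_N^{\mathrm{an}}\to\mathbb{P}^1(\mathbb{C})\smallsetminus\{0,1,\infty\}$ omits three values, so by the big Picard theorem it has no essential singularity and extends to $\Delta\to\mathbb{P}^1(\mathbb{C})$. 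Since $\bar Y_N\to\mathbb{P}^1$ is finite and the image of a small $\Delta\smallsetminus\{p\}$ lies in a punctured neighbourhood of a cusp, the map into $Y_N^{\mathrm{an}}$ itself extends holomorphically to $\bar Y_N^{\mathrm{an}}$. This is the one nontrivial idea your proposal is missing; once you have it, the rest of your outline is correct and complete.
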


\begin{proof}
We first show that $\gamma$ is holomorphic. For this, we use that
$\mathbb{P}{}(V_{\mathbb{C}{}})$ is the Grassmann manifold classifying the
one-dimensional quotients of $V_{\mathbb{C}{}}$. This means that, for any
complex manifold $M$ and surjective homomorphism $\alpha\colon\mathcal{O}%
{}_{M}\otimes_{\mathbb{R}{}}V\rightarrow\mathcal{W}{}$ of vector sheaves on
$M$ with $\mathcal{W}{}$ of rank $1$, the map sending $m\in M$ to the point of
$\mathbb{P}{}(V_{\mathbb{C}{}})$ corresponding to the quotient $\alpha
_{m}\colon V_{\mathbb{C}{}}\rightarrow\mathcal{W}_{m}$ of $V_{\mathbb{C}{}}$
is holomorphic.

Let $f\colon E\rightarrow S$ be a family of elliptic curves on a connected
smooth algebraic variety $S$. The exponential map defines an exact sequence of
sheaves on $S^{\text{an}}$%
\[
0\longrightarrow R_{1}f_{\ast}\mathbb{Z}\longrightarrow{}\mathcal{T}{}%
_{0}(E^{\text{an}}/S^{\text{an}})\longrightarrow E^{\text{an}}\longrightarrow
0
\]
whose fibre at a point $s\in S^{\text{an}}$ is the sequence (\ref{hq49}) for
$E_{s}$. From the first map in the sequence we get a surjective map%
\begin{equation}
\mathcal{O}{}_{S^{\text{an}}}\otimes_{\mathbb{Z}{}}R_{1}f_{\ast}%
\mathbb{Z}\twoheadrightarrow\mathcal{T}{}_{0}(E^{\text{an}}/S^{\text{an}}).
\label{hq48}%
\end{equation}
Let $(t_{1},t_{2})$ be a level-$N$ structure on $E/S$. Each point of
$S^{\text{an}}$ has an open neighbourhood $U$ such that $t_{1}|_{U}$and
$t_{2}|_{U}$ lift to sections $\tilde{t}_{1}$ and $\tilde{t}_{2}$ of
$\mathcal{T}{}_{0}(E^{\text{an}}/S^{\text{an}})$ over $U$; now $N\tilde{t}%
_{1}$ and $N\tilde{t}_{2}$ are sections of $R_{1}f_{\ast}\mathbb{Z}{}$ over
$U$, and they define an isomorphism
\[
\mathbb{Z}_{U}^{2}{}\rightarrow R_{1}f_{\ast}\mathbb{Z}{}|_{U}\text{.}%
\]
On tensoring this with $\mathcal{O}{}_{U^{\text{an}}}$,%
\[
\mathcal{O}{}_{U^{\text{an}}}\otimes_{\mathbb{Z}{}}\mathbb{Z}_{U}^{2}%
{}\rightarrow\mathcal{O}_{U^{\text{an}}}{}\otimes R_{1}f_{\ast}\mathbb{Z}%
{}|_{U}%
\]
and composing with (\ref{hq48}), we get a surjective map%
\[
\mathcal{O}_{U^{\text{an}}}{}\otimes_{\mathbb{R}{}}{}V\twoheadrightarrow
\mathcal{T}{}_{0}(E^{\text{an}}/S^{\text{an}})|U
\]
of vector sheaves on $U$, which defines a holomorphic map $U\rightarrow
\mathbb{P}{}(V_{\mathbb{C}{}})$. This maps into $D$, and its composite with
the quotient map $D\rightarrow\Gamma(N)\backslash D$ is the map $\gamma$.
Therefore $\gamma$ is holomorphic.

It remains to show that $\gamma$ is algebraic. We now assume that $S$ is a
curve. After passing to a finite covering, we may suppose that $N$ is even.
Let $\bar{Y}_{N}$ (resp. $\bar{S}$) be the completion of $Y_{N}$ (resp. $S$)
to a smooth complete algebraic curve. We have a holomorphic map%
\[
S^{\text{an}}\overset{\gamma}{\longrightarrow}Y_{N}^{\text{an}}\subset\bar
{Y}_{N}^{\text{an}}\text{;}%
\]
to show that it is regular, it suffices to show that it extends to a
holomorphic map of compact Riemann surfaces $\bar{S}^{\text{an}}%
\rightarrow\bar{Y}_{N}^{\text{an}}$. The curve $Y_{2}$ is isomorphic to
$\mathbb{P}{}^{1}\smallsetminus\{0,1,\infty\}$. The composed map%
\[
S^{\text{an}}\overset{\gamma}{\longrightarrow}Y_{N}^{\text{an}}\overset
{\text{onto}}{\longrightarrow}Y_{2}^{\text{an}}\approx\mathbb{P}{}%
^{1}(\mathbb{C}{})\smallsetminus\{0,1,\infty\}
\]
does not have an essential singularity at any of the (finitely many) points of
$\bar{S}^{\text{an}}\smallsetminus S^{\text{an}}$ because this would violate
the big Picard theorem.\footnote{Recall that this says that a holomorphic
function on the punctured disk with an essential singularity at $0$ omits at
most one value in $\mathbb{C}{}$. Therefore a function on the punctured disk
that omits two values has (at worst) a pole at $0$, and so extends to a
function from the whole disk to $\mathbb{P}{}^{1}(\mathbb{C}{})$.} Therefore,
it extends to a holomorphic map $\bar{S}^{\text{an}}\rightarrow\mathbb{P}%
{}^{1}(\mathbb{C}{})$, which implies that $\gamma$ extends to a holomorphic
map $\bar{\gamma}\colon\bar{S}^{\text{an}}\rightarrow\bar{Y}_{N}^{\text{an}}$,
as required.
\end{proof}

Let $\mathcal{F}$ be the functor sending a scheme $S$ of finite type over
$\mathbb{C}{}$ to the set of isomorphism classes of pairs consisting of a
family elliptic curves $f\colon E\rightarrow S$ over $S$ and a level-$N$
structure $\eta$ on $E$. According to Mumford 1965, Chapter
7,\nocite{mumford1965} the functor $\mathcal{F}{}$ is representable when
$N\geq3$. More precisely, when $N\geq3$ there exists a smooth algebraic curve
$S_{N}$ over $\mathbb{C}{}$ and a family of elliptic curves over $S_{N}$
endowed with a level $N$ structure that is universal in the sense that any
similar pair on a scheme $S$ is isomorphic to the pullback of the universal
pair by a unique morphism $\alpha\colon S\rightarrow S_{N}$.

\begin{theorem}
\label{h14}There is a canonical isomorphism $\gamma\colon S_{N}\rightarrow
Y_{N}$.
\end{theorem}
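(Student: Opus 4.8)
The plan is to obtain $\gamma$ from the universal family and then check that it is an isomorphism by passing to the associated complex manifolds. First I would apply Proposition \ref{h11} to the universal family with level-$N$ structure $(E^{\mathrm{univ}}/S_N,\eta^{\mathrm{univ}})$ guaranteed by Mumford's representability result: since $S_N$ is a smooth algebraic curve over $\mathbb{C}$ (if it is disconnected, apply the proposition componentwise), this produces a regular map $\gamma\colon S_N\to Y_N$ whose effect on $\mathbb{C}$-points sends $s$ to the point of $\Gamma(N)\backslash D$ classifying the pair $(E^{\mathrm{univ}}_s,\eta^{\mathrm{univ}}_s)$. It then remains to show this $\gamma$ is an isomorphism.

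Next I would prove that $\gamma^{\mathrm{an}}\colon S_N^{\mathrm{an}}\to Y_N^{\mathrm{an}}$ is a bijection. On one side, the universal property of $S_N$ applied to $S=\Spec\mathbb{C}$ identifies $S_N(\mathbb{C})$ with the set of isomorphism classes of pairs $(E,\eta)$ over $\mathbb{C}$, the point $s$ corresponding to the class of $(E^{\mathrm{univ}}_s,\eta^{\mathrm{univ}}_s)$ and the classifying morphism being unique. On the other side, the discussion preceding Proposition \ref{h11} identifies $Y_N(\mathbb{C})=\Gamma(N)\backslash D$ with the very same set of isomorphism classes. Under these two identifications $\gamma^{\mathrm{an}}$ is the identity map: surjectivity is the assertion that every $(E,\eta)$ over $\mathbb{C}$ arises, up to isomorphism, from some $s\in S_N(\mathbb{C})$, and injectivity is the uniqueness clause in the universal property. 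Here one must verify that the normalizations on the two sides agree — that the component $D$ of $\mathbb{P}(V_{\mathbb{C}})\smallsetminus\mathbb{P}(V)$ on which $\psi$ is a Riemann form is the correct one for the chosen root of unity $\zeta$, and that the pairing on $E_N(\mathbb{C})$ induced by $\psi$ coincides with the algebraic Weil pairing $e_N$ used to pin down level structures.

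Finally I would descend from the analytic statement to the algebraic one. A holomorphic bijection between connected Riemann surfaces is automatically biholomorphic (a nonconstant holomorphic map is open and locally of the form $z\mapsto z^k$, so injectivity forces $k=1$ everywhere and a nowhere-vanishing derivative), hence $\gamma^{\mathrm{an}}$ is an isomorphism of complex manifolds. Being in particular a homeomorphism of the underlying surfaces, it carries the finitely many ends of $S_N^{\mathrm{an}}$ bijectively to those of $Y_N^{\mathrm{an}}$, each a punctured disc mapping biholomorphically onto a punctured-disc neighbourhood of a cusp; by the removable singularity theorem $\gamma^{\mathrm{an}}$ extends to a biholomorphism $\bar\gamma^{\mathrm{an}}\colon\bar S_N^{\mathrm{an}}\to\bar Y_N^{\mathrm{an}}$ of the smooth completions carrying cusps to cusps. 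By the equivalence between complete smooth algebraic curves over $\mathbb{C}$ and compact Riemann surfaces recalled in \S1, $\bar\gamma^{\mathrm{an}}$ is the analytification of an isomorphism $\bar\gamma\colon\bar S_N\to\bar Y_N$, which (mapping the cusp loci to one another) restricts to the desired isomorphism $\gamma\colon S_N\to Y_N$.

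I expect the one genuine obstacle to be the compatibility check in the second step. Proposition \ref{h11}, the universal property of $S_N$, and the Riemann surface facts are all routine to invoke, but reconciling the algebraic Weil pairing and the fixed root $\zeta$ with the analytic symplectic form $\psi$ and the selected connected component $D$ demands care: a sign mismatch there would replace $\gamma$ by its composite with a nontrivial automorphism of $Y_N$, or make the purported map on isomorphism classes ill-defined, so this is precisely where the word "canonical" in the statement must be justified.
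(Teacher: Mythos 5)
Your proposal follows the paper's own route in its two main steps: obtain $\gamma$ from Proposition \ref{h11} applied to the universal family, and observe that on $\mathbb{C}$-points $\gamma$ is the identity map between two sets that are each canonically identified with the set of isomorphism classes of pairs $(E,\eta)$. Where you diverge is in the final deduction. The paper simply says ``$\gamma(\mathbb{C})$ is bijective, which implies that $\gamma$ is an isomorphism,'' invoking (tacitly) the algebraic fact that a bijective morphism between smooth varieties over an algebraically closed field of characteristic zero is an isomorphism (via Zariski's main theorem and normality). You instead argue analytically: a holomorphic bijection of Riemann surfaces is biholomorphic, this extends over the cusps to a biholomorphism of the smooth completions, and the equivalence between compact Riemann surfaces and complete curves converts this to an algebraic isomorphism preserving the cusp loci. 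Both routes are correct; yours is longer but avoids citing the algebraic lemma, and dovetails nicely with the analytic continuation argument already used in the proof of Proposition \ref{h11}.

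The compatibility issue you flag in the second step is a sensible thing to worry about, but in the setting of the paper it is dissolved by fiat: the paper fixes a primitive $N$th root $\zeta$ and builds the condition $e_N(t_1,t_2)=\zeta$ into the \emph{definition} of a level-$N$ structure, and the same notion is used both for the functor $\mathcal{F}$ that $S_N$ represents and for the parametrization of $\Gamma(N)\backslash D$. So the two identifications with isomorphism classes use one and the same definition, and there is nothing further to reconcile. Likewise the choice of connected component $D$ (where $\psi$, rather than $-\psi$, is a Riemann form) is part of the fixed conventions, not a sign ambiguity that could change $\gamma$. In other words, the ``canonicity'' you were concerned about is secured by the uniform definitions rather than by an argument inside the proof.
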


\begin{proof}
According to Proposition \ref{h11}, the universal family of elliptic curves
with level-$N$ structure on $S_{N}$ defines a morphism of smooth algebraic
curves $\gamma\colon S_{N}\rightarrow Y_{N}$. Both sets $S_{N}(\mathbb{C}{})$
and $Y_{N}(\mathbb{C}{})$ are in natural one-to-one correspondence with the
set of isomorphism classes of complex elliptic curves with level-$N$
structure, and $\gamma$ sends the point in $S_{N}(\mathbb{C}{})$ corresponding
to a pair $(E,\eta)$ to the point in $Y_{N}(\mathbb{C}{})$ corresponding to
the same pair. Therefore, $\gamma(\mathbb{C}{})$ is bijective, which implies
that $\gamma$ is an isomorphism.
\end{proof}

In particular, we have shown that the curve $S_{N}$, constructed by Mumford
purely in terms of algebraic geometry, is isomorphic by the obvious map to the
curve $Y_{N}$, constructed analytically. Of course, this is well known, but it
is difficult to find a proof of it in the literature. For example, Brian
Conrad has noted that it is used without reference in \cite{katzM1985}.

Theorem \ref{h14} says that there exists a single algebraic curve over
$\mathbb{C}{}$ enjoying the good properties of both $S_{N}$ and $Y_{N}$.

\section{Hermitian symmetric domains}

\begin{quote}
{\small The natural generalization of the complex upper half plane is a
hermitian symmetric domain.}
\end{quote}

\subsection{Preliminaries on Cartan involutions and polarizations}

Let $G$ be a connected algebraic group over $\mathbb{R}{}$, and let
$\sigma_{0}\colon g\mapsto\bar{g}$ denote complex conjugation on
$G_{\mathbb{C}{}}$ with respect to $G$. A \emph{Cartan involution }%
\index{Cartan involution}
of $G$ is an involution $\theta$ of $G$ (as an algebraic group over
$\mathbb{R}{}$) such that the group%
\index{Gt@$G^{(\theta)}$}%
\[
G^{(\theta)}(\mathbb{R}{})=\{g\in G(\mathbb{C}{})\mid g=\theta(\bar{g})\}
\]
is compact. Then $G^{(\theta)}$ is a compact real form of $G_{\mathbb{C}}$,
and $\theta$ acts on $G(\mathbb{C})$ as $\sigma_{0}\sigma=\sigma\sigma_{0}$
where $\sigma$ denotes complex conjugation on $G_{\mathbb{C}}$ with respect to
$G^{(\theta)}$.

Consider, for example, the algebraic group $\GL_{V}$ attached to a real vector
space $V$. The choice of a basis for $V$ determines a transpose operator
$g\mapsto g^{t}$, and $\theta\colon g\mapsto(g^{t})^{-1}$ is a Cartan
involution of $\GL_{V}$ because $\GL_{V}^{(\theta)}(\mathbb{R}{})$ is the
unitary group. The basis determines an isomorphism $\GL_{V}\simeq\GL_{n}$, and
$\sigma_{0}(A)=\bar{A}$ and $\sigma(A)=(\bar{A}^{t})^{-1}$ for $A\in
\GL_{n}(\mathbb{C}{})$.

A connected algebraic group $G$ has a Cartan involution if and only if it has
a compact real form, which is the case if and only if $G$ is reductive. Any
two Cartan involutions of $G$ are conjugate by an element of $G(\mathbb{R}{}%
)$. In particular, all Cartan involutions of $\GL_{V}$ arise, as in the last
paragraph, from the choice of a basis for $V$. An algebraic subgroup $G$ of
$\GL_{V}$ is reductive if and only if it is stable under $g\mapsto g^{t}$ for
some basis of $V$, in which case the restriction of $g\mapsto(g^{t})^{-1}$ to
$G$ is a Cartan involution. Every Cartan involution of $G$ is of this form.
See \cite{satake1980}, I, \S 4.

Let $C$ be an element of $G(\mathbb{R}{})$ whose square is central (so
$\inn(C)$ is an involution). A $C$-\emph{polarization}%
\index{C-polarization@$C$-polarization}
on a real representation $V$ of $G$ is a $G$-invariant bilinear form
$\varphi\colon V\times V\rightarrow\mathbb{R}{}$ such that the form
$\varphi_{C}\colon(x,y)\mapsto\varphi(x,Cy)$ is symmetric and positive definite.

\begin{theorem}
\label{h20b}If $\inn(C)$ is a Cartan involution of $G$, then every finite
dimensional real representation of $G$ carries a $C$-polarization; conversely,
if one \textup{faithful} finite dimensional real representation of $G$ carries
a $C$-polarization, then $\inn(C)$ is a Cartan involution.
\end{theorem}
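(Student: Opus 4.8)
The plan is to reduce both directions to the characterization of Cartan involutions recalled just above: a subgroup of $\GL_V$ is reductive if and only if it is stable under the transpose $g\mapsto g^{t}$ attached to some positive definite symmetric form on $V$, in which case $g\mapsto(g^{t})^{-1}$ restricts to a Cartan involution of the subgroup, and every Cartan involution of the subgroup arises this way. Granting that, each direction becomes a short manipulation with transpose maps, together with --- for the first assertion --- a trivial reduction to the case of a faithful representation.

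For the converse, let $\rho\colon G\hookrightarrow\GL_V$ be the given faithful representation carrying a $C$-polarization $\varphi$, and set $\psi=\varphi_{C}$, a positive definite symmetric form on $V$, with associated transpose $g\mapsto g^{t}$. Using the $G$-invariance of $\varphi$ in the middle step, one computes $\psi(gx,y)=\varphi(gx,Cy)=\varphi(x,g^{-1}Cy)=\psi(x,C^{-1}g^{-1}Cy)$, hence $g^{t}=C^{-1}g^{-1}C$ for $g\in G$. Since $C\in G(\mathbb{R})$, the group $G$ is stable under $g\mapsto g^{t}$, so $g\mapsto(g^{t})^{-1}$ restricts to a Cartan involution of $G$. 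Finally $(g^{t})^{-1}=C^{-1}gC$, and this equals $CgC^{-1}=\inn(C)(g)$ because $C^{2}$ is central; so $\inn(C)$ is a Cartan involution.

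For the first assertion it suffices to treat a faithful representation: given any finite dimensional real representation $W$, pick a faithful one $V_{0}$ and set $V=V_{0}\oplus W$, which is faithful; a $C$-polarization on $V$ restricts, along $W\hookrightarrow V$, to a $C$-polarization on $W$, because restricting a positive definite symmetric form to a subspace keeps it such. So let $G\subseteq\GL_V$ with $V$ faithful. By the transpose characterization, $\inn(C)$ is $g\mapsto(g^{t})^{-1}$ for the transpose $t$ attached to some positive definite symmetric form $\psi$ on $V$ under which $G$ is stable; thus $g^{t}=(CgC^{-1})^{-1}=Cg^{-1}C^{-1}$. I claim $\varphi(x,y):=\psi(x,C^{-1}y)$ is a $C$-polarization. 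It is $G$-invariant, since $\varphi(gx,gy)=\psi(gx,C^{-1}gy)=\psi(x,g^{t}C^{-1}gy)=\psi(x,Cg^{-1}C^{-2}gy)=\psi(x,C^{-1}y)$, using centrality of $C^{2}$; and $\varphi_{C}(x,y)=\psi(x,y)$ is symmetric and positive definite by construction.

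The transpose computations are routine; the point needing care is that centrality of $C^{2}$ --- equivalently, $\inn(C^{-1})=\inn(C)$ --- must be invoked at exactly the two places it is used: identifying $(g^{t})^{-1}$ with $\inn(C)$ in the converse, and collapsing $Cg^{-1}C^{-2}g$ to $C^{-1}$ in the direct statement. The real input is the quoted characterization of Cartan involutions of subgroups of $\GL_V$ in terms of transposes; were it unavailable, its substitute --- constructing $\psi$ by averaging a Hermitian form over the compact real form $G^{(\theta)}(\mathbb{R})$, $\theta=\inn(C)$ --- is where the genuine work would lie.
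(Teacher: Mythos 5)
Your proof is correct, but it routes both directions through the transpose characterization of Cartan involutions quoted just before the theorem, whereas the paper's own proof does not invoke it. Granting that black box, you are right that each direction collapses to a short formal computation: the identity $g^{t}=C^{\mp1}g^{-1}C^{\pm1}$ valid on $G$, together with centrality of $C^{2}$, converts between $\varphi$ and $\psi=\varphi_{C}$. The paper instead proves the forward direction by averaging over the compact group $G^{(\inn C)}(\mathbb{R})$ to obtain an invariant positive-definite symmetric form, then passing to the associated hermitian form on $V_{\mathbb{C}}$ and manipulating the two complex conjugations on $G_{\mathbb{C}}$ (with respect to $G$ and to $G^{(\inn C)}$) to conclude that $\varphi_{C^{-1}}$ is $G$-invariant. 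That argument works for an arbitrary representation, so the paper never needs your reduction to the faithful case via $V_{0}\oplus W$ --- a small but necessary step for your route, and one you supply correctly. For the converse the paper's sketch (show $\varphi_{C}$ is invariant under $G^{(\inn C)}(\mathbb{R})$, which is therefore compact) is essentially the content of the ``in which case'' clause of the transpose statement you cite, so there your two routes nearly coincide, yours simply packaged through the black box. In sum, your approach buys brevity and transparency by leaning harder on the Satake fact the paper already quotes as background; the paper's argument is closer to self-contained.
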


\begin{proof}
An $\mathbb{R}{}$-bilinear form $\varphi$ on a real vector space $V$ defines a
sesquilinear form $\varphi^{\prime}\colon(u,v)\mapsto\varphi_{\mathbb{C}{}%
}(u,\bar{v})$ on $V(\mathbb{C}{})$, and $\varphi^{\prime}$ is hermitian (and
positive definite) if and only if $\varphi$ is symmetric (and positive definite).

Let $G\rightarrow\GL_{V}$ be a representation of $G$. If $\inn(C)$ is a Cartan
involution of $G$, then $G^{(\inn C)}(\mathbb{R}{})$ is compact, and so there
exists a $G^{(\inn C)}$-invariant positive definite symmetric bilinear form
$\varphi$ on $V$. Then $\varphi_{\mathbb{C}{}}$ is $G(\mathbb{C}{}%
)$-invariant, and so%
\[
\varphi^{\prime}(gu,(\sigma g)v)=\varphi^{\prime}(u,v),\quad\text{for all
}g\in G(\mathbb{C}{})\text{, }u,v\in V_{\mathbb{C}{}},
\]
where $\sigma$ is the complex conjugation on $G_{\mathbb{C}{}}$ with respect
to $G^{(\inn C)}$. Now $\sigma g=\inn(C)(\bar{g})=\inn(C^{-1})(\bar{g})$, and
so, on replacing $v$ with $C^{-1}v$ in the equality, we find that%
\[
\varphi^{\prime}(gu,(C^{-1}\bar{g}C)C^{-1}v)=\varphi^{\prime}(u,C^{-1}%
v),\quad\text{for all }g\in G(\mathbb{C}{})\text{, }u,v\in V_{\mathbb{C}{}}.
\]
In particular, $\varphi(gu,C^{-1}gv)=\varphi(u,C^{-1}v)$ when $g\in
G(\mathbb{R}{})$ and $u,v\in V$. Therefore, $\varphi_{C^{-1}}$ is
$G$-invariant. As $(\varphi_{C^{-1}})_{C}=\varphi$, we see that $\varphi$ is a
$C$-polarization.

For the converse, one shows that, if $\varphi$ is a $C$-polarization on a
faithful representation, then $\varphi_{C}$ is invariant under $G^{(\inn
C)}(\mathbb{R}{})$, which is therefore compact.
\end{proof}

\begin{plain}
\label{h20c}\textsc{Variant.} Let $G$ be an algebraic group over $\mathbb{Q}%
{}$, and let $C$ be an element of $G(\mathbb{R}{})$ whose square is central. A
$C$-\emph{polarization} on a $\mathbb{Q}{}$-representation $V$ of $G$ is a
$G$-invariant bilinear form $\varphi\colon V\times V\rightarrow\mathbb{Q}{}$
such that $\varphi_{\mathbb{R}{}}$ is a $C$-polarization on $V_{\mathbb{R}{}}%
$. In order to show that a $\mathbb{Q}{}$-representation $V$ of $G$ is
polarizable, it suffices to check that $V_{\mathbb{R}{}}$ is polarizable. We
prove this when $C^{2}$ acts as $+1$ or $-1$ on $V$, which are the only cases
we shall need. Let $P(\mathbb{Q}{})$ (resp. $P(\mathbb{R}{})$) denote the
space of $G$-invariant bilinear forms on $V$ (resp. on $V_{\mathbb{R}{}}$)
that are symmetric when $C^{2}$ acts as $+1$ or skew-symmetric when it acts as
$-1$. Then $P(\mathbb{R}{})=\mathbb{R}{}\otimes_{\mathbb{Q}{}}P(\mathbb{Q}{}%
)$. The $C$-polarizations of $V_{\mathbb{R}{}}$ form an open subset of
$P(\mathbb{R}{})$, whose intersection with $P(\mathbb{Q}{})$ consists of the
$C$-polarizations of $V$.
\end{plain}

\subsection{Definition of hermitian symmetric domains}

Let $M$ be a complex manifold, and let $J_{p}\colon T_{p}M\rightarrow T_{p}M$
denote the action of $i=\sqrt{-1}$ on the tangent space at a point $p$ of $M$.
A \emph{hermitian metric}%
\index{hermitian metric}
on $M$ is a riemannian metric $g$ on the underlying smooth manifold of $M$
such that $J_{p}$ is an isometry for all $p$.\footnote{Then $g_{p}$ is the
real part of a unique hermitian form on the complex vector space $T_{p}M$,
which explains the name.} A \emph{hermitian manifold}%
\index{hermitian manifold}
is a complex manifold equipped with a hermitian metric $g$, and a
\emph{hermitian symmetric space}%
\index{hermitian symmetric space}
is a connected hermitian manifold $M$ that admits a symmetry at each point
$p$, i.e., an involution $s_{p}$ having $p$ as an isolated fixed point. The
group $\mathrm{Hol}(M)$%
\index{HolM@$\mathrm{Hol}(M)$}
of holomorphic automorphisms of a hermitian symmetric space $M$ is a real Lie
group whose identity component $\mathrm{Hol}(M)^{+}$ acts transitively on $M$.

Every hermitian symmetric space $M$ is a product of hermitian symmetric spaces
of the following types:

\begin{itemize}
\item Noncompact type --- the curvature is negative\footnote{This means that
the sectional curvature $K(p,E)$ is $<0$ for every $p\in M$ and every
two-dimensional subspace $E$ of $T_{p}M$.} and $\mathrm{Hol}(M)^{+}$ is a
noncompact adjoint Lie group; example, the complex upper half plane.

\item Compact type --- the curvature is positive and $\mathrm{Hol}(M)^{+}$ is
a compact adjoint Lie group; example, the Riemann sphere.

\item Euclidean type --- the curvature is zero; $M$ is isomorphic to a
quotient of a space $\mathbb{C}{}^{n}$ by a discrete group of translations.
\end{itemize}

\noindent In the first two cases, the space is simply connected. A hermitian
symmetric space is \emph{indecomposable}%
\index{hermitian symmetric space!indecomposable}
if it is not a product of two hermitian symmetric spaces of lower dimension.
For an indecomposable hermitian symmetric space $M$ of compact or noncompact
type, the Lie group $\mathrm{Hol}(M)^{+}$ is simple. See \cite{helgason1978},
Chapter VIII.

A \emph{hermitian symmetric domain}%
\index{hermitian symmetric domain}
is a connected complex manifold that admits a hermitian metric for which it is
a hermitian symmetric space of noncompact type.\footnote{Usually a hermitian
symmetric domain is defined to be a complex manifold \textit{equipped} with a
hermitian metric etc.. However, a hermitian symmetric domain in our sense
satisfies conditions (A.1) and (A.2) of \cite{kobayashi1959}, and so has a
canonical Bergman metric, invariant under all holomorphic automorphisms.} The
hermitian symmetric domains are exactly the complex manifolds isomorphic to
bounded symmetric domains (via the Harish-Chandra embedding; \cite{satake1980}%
, II \S 4). Thus a connected complex manifold $M$ is a hermitian symmetric
domain if and only if

\begin{enumerate}
\item it is isomorphic to a bounded open subset of $\mathbb{C}{}^{n}$ for some
$n$, and

\item for each point $p$ of $M$, there exists a holomorphic involution of $M$
(the \emph{symmetry} at $p$)%
\index{symmetry}
having $p$ as an isolated fixed point.
\end{enumerate}

For example, the bounded domain $\{z\in\mathbb{C}{}\mid\left\vert z\right\vert
<1\}$ is a hermitian symmetric domain because it is homogeneous and admits a
symmetry at the origin ($z\mapsto-1/z$). The map $z\mapsto\frac{z-i}{z+i}$ is
an isomorphism from the complex upper half plane $D$ onto the open unit disk,
and so $D$ is also a hermitian symmetric domain. Its automorphism group is%
\[
\mathrm{Hol}(D)\simeq\SL_{2}(\mathbb{R}{})/\{\pm I\}\simeq\PGL_{2}%
(\mathbb{R}{})^{+}\text{.}%
\]

\subsection{Classification in terms of real groups}

\begin{plain}
\label{h20p}Let $U^{1}$%
\index{U1@$U^{1}$}
be the circle group, $U^{1}=\{z\in\mathbb{C}{}\mid\left\vert z\right\vert
=1\}$. For each point $o$ of a hermitian symmetric domain $D$, there is a
unique homomorphism $u_{o}\colon U^{1}\rightarrow\mathrm{Hol}(D)$ such that
$u_{o}(z)$ fixes $o$ and acts on $T_{o}D$ as multiplication by $z$ ($z\in
U^{1}$).\footnote{See, for example, \cite{milneSVI}, Theorem 1.9.} In
particular, $u_{o}(-1)$ is the symmetry at $o$.
\end{plain}

\begin{example}
\label{h20}Let $D$ be the complex upper half plane and let $o=i$. Let $h\colon
U^{1}\rightarrow\SL_{2}(\mathbb{R}{})$ be the homomorphism $a+bi\mapsto\left(
\begin{smallmatrix}
\hfill a & b\\
-b & a
\end{smallmatrix}
\right)  $. Then $h(z)$ fixes $o$, and it acts as $z^{2}$ on $T_{o}(D)$. For
$z\in U^{1}$, choose a square root $\sqrt{z}$ in $U^{1}$, and let
$u_{o}(z)=h(\sqrt{z})$ mod $\pm I$. Then $u_{o}(z)$ is independent of the
choice of $\sqrt{z}$ because $h(-1)=-I$. The homomorphism $u_{o}\colon
U^{1}\rightarrow\SL_{2}(\mathbb{Z}{})/\left\{  \pm I\right\}  =\mathrm{Hol}%
(D)$ has the correct properties.
\end{example}

Now let $D$ be a hermitian symmetric domain. Because $\mathrm{Hol}(D)$ is an
adjoint Lie group, there is a unique real algebraic group $H$ such that
$H(\mathbb{R}{})^{+}=\mathrm{Hol}(D)^{+}$. Similarly, $U^{1}$ is the group of
$\mathbb{R}{}$-points of the algebraic torus $\mathbb{S}{}^{1}$
\index{S1@$\mathbb{S}{}^{1}$}%
defined by the equation $X^{2}+Y^{2}=1$. A point $o\in D$ defines a
homomorphism $u\colon\mathbb{S}{}^{1}\rightarrow H$ of real algebraic groups.

\begin{theorem}
\label{h21}The homomorphism $u\colon\mathbb{S}{}^{1}\rightarrow H$ has the
following properties:

\begin{description}
\item[\textmd{SU1:}] only the characters $z,1,z^{-1}$ occur in the
representation of $\mathbb{S}^{1}$ on $\Lie(H)_{\mathbb{C}}$ defined
by\nolinebreak\ $u$;\footnote{The maps $\mathbb{S}^{1}\overset{u}%
{\longrightarrow}H_{\mathbb{R}{}}\overset{\Ad}{\longrightarrow}\Aut(\Lie(H))$
define an action of $\mathbb{S}{}^{1}$ on $\Lie(H)$, and hence on
$\Lie(H)_{\mathbb{C}{}}$. The condition means that $\Lie(H)_{\mathbb{C}{}}$ is
a direct sum of subspaces on which $u(z)$ acts as $z$, $1$, or $z^{-1}$.}%
\index{SU (conditions)}%

\item[\textmd{SU2:}] $\inn(u(-1))$ is a Cartan involution.
\end{description}

\noindent Conversely, if $H$ is a real adjoint algebraic group with no compact
factor and $u\colon\mathbb{S}{}^{1}\rightarrow H$ satisfies the conditions
(SU1,2), then the set $D$ of conjugates of $u$ by elements of $H(\mathbb{R}%
{})^{+}$ has a natural structure of a hermitian symmetric domain for which
$u(z)$ acts on $T_{u}D$ as multiplication by $z$; moreover, $H(\mathbb{R}%
{})^{+}=\mathrm{Hol}(D)^{+}$.
\end{theorem}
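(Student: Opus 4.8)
The plan is to prove the two directions separately, the forward direction by pure Lie theory and the converse by invoking the structure theory of hermitian symmetric spaces together with Theorem~\ref{h20b}. For the forward direction, fix $o\in D$ and let $u=u_o\colon\mathbb{S}^1\to H$ be the homomorphism of \ref{h20p}, so that $u(z)$ fixes $o$ and acts on $T_oD$ as multiplication by $z$. I would first verify SU2: by \ref{h20p}, $u(-1)$ is the symmetry $s_o$ at $o$, so $\inn(u(-1))$ is the involution of $H$ whose fixed group contains (the image of) the isotropy group $K$ of $o$ in $\mathrm{Hol}(D)^+$. Since $D$ is of noncompact type, $K$ is a maximal compact subgroup of $\mathrm{Hol}(D)^+$, so $H^{(\inn u(-1))}(\mathbb{R})$ is compact, which is exactly the assertion that $\inn(u(-1))$ is a Cartan involution. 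For SU1, I would use the Cartan decomposition $\Lie(H)=\mathfrak{k}\oplus\mathfrak{p}$ attached to the involution $\theta=\inn(u(-1))$: the $+1$-eigenspace $\mathfrak{k}$ is $\Lie(K)$, on which $u(-1)$ acts trivially, and the $-1$-eigenspace $\mathfrak{p}$ is identified $K$-equivariantly with $T_oD$, on which $u(z)$ acts by multiplication by $z$ under the complex structure $J_o$. Complexifying, $\mathfrak{p}_{\mathbb{C}}$ splits into the $\pm i$-eigenspaces of $J_o$, i.e.\ the $(-1,0)$ and $(0,-1)$ parts; $u(z)$ acts on these as $z$ and $z^{-1}$ respectively, while it acts trivially on $\mathfrak{k}_{\mathbb{C}}$. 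Hence only the characters $z,1,z^{-1}$ occur, which is SU1. (One small check: since $u$ factors through $H^{\mathrm{ad}}=H$ and $u(z)$ acts on $T_oD$ by a genuine character $z$ rather than $z^2$, the weights on $\mathfrak p_{\mathbb C}$ are exactly $\pm1$, not $\pm2$; this is where one uses that $u_o$ was chosen with the factor of $\tfrac12$ built in, as in Example~\ref{h20}.)

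For the converse, suppose $H$ is real adjoint with no compact factor and $u\colon\mathbb{S}^1\to H$ satisfies SU1 and SU2. Set $\theta=\inn(u(-1))$, a Cartan involution by SU2, with Cartan decomposition $\Lie(H)=\mathfrak{k}\oplus\mathfrak{p}$, and let $K\subset H(\mathbb{R})^+$ be the (compact) subgroup with Lie algebra $\mathfrak{k}$. Put $D=H(\mathbb{R})^+/K$, the associated symmetric space; since $H$ has no compact factors and is adjoint, $D$ is a symmetric space of noncompact type with $\mathrm{Hol}$-type isometry group, and $T_oD\cong\mathfrak{p}$ at the base point $o=eK$. Because $u(\mathbb{S}^1)$ centralizes $u(-1)$ it normalizes $K$; in fact $u(\mathbb{S}^1)\subset K$ since $u(z)$ commutes with $u(-1)$ and $\mathbb S^1$ is connected, so $u(\mathbb{S}^1)$ acts on $\mathfrak{p}\cong T_oD$. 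By SU1 this action has only weights $z^{\pm1}$, so defining $J_o$ on $T_oD$ to be the action of $u(i)$ gives $J_o^2=u(-1)|_{\mathfrak p}=-\mathrm{id}$, i.e.\ an $\mathbb S^1$-equivariant complex structure on $T_oD$. Translating by $H(\mathbb{R})^+$ and using that the action is by a single $\mathrm{Ad}$-weight, one gets an almost complex structure on all of $D$; integrability follows because the $(0,1)$-subspace of $\mathfrak{p}_{\mathbb C}$, being a $u$-eigenspace inside the $[\mathfrak k_{\mathbb C},\cdot]$-stable decomposition, satisfies the bracket condition $[\mathfrak p^{0,1},\mathfrak p^{0,1}]\subset\mathfrak k_{\mathbb C}\oplus\mathfrak p^{0,1}$ forced by weight considerations (the bracket of two vectors of weight $z^{-1}$ has weight $z^{-2}$, which by SU1 does not occur in $\Lie(H)_{\mathbb C}$, so the bracket lands in $\mathfrak k_{\mathbb C}$). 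Finally one picks an $H(\mathbb R)^+$-invariant riemannian metric on $D$ (available since $K$ is compact) compatible with $J$; this makes $D$ a hermitian symmetric space of noncompact type, $u(-1)$ being the symmetry at $o$, and $\mathrm{Hol}(D)^+=H(\mathbb{R})^+$ because $H$ is adjoint and both are the identity component of the isometry group of a space of noncompact type. By construction $u(z)$ acts on $T_oD$ as multiplication by $z$.

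The main obstacle, and the place where real work (as opposed to bookkeeping) is needed, is establishing in the converse direction that the almost complex structure $J$ built from $u(i)$ is \emph{integrable} and that the resulting hermitian manifold is genuinely of noncompact type rather than, say, of mixed type. The weight-$z^{-2}$-doesn't-occur argument above handles integrability cleanly; the noncompactness is where one must use the hypothesis that $H$ has no compact factor, together with the fact that a Cartan involution of a semisimple group with no compact factor gives a symmetric pair $(\mathfrak h,\mathfrak k)$ of the noncompact type in each simple factor. I would cite \cite{helgason1978}, Ch.~VIII, for the dictionary between such symmetric pairs equipped with an invariant complex structure on $\mathfrak p$ and hermitian symmetric spaces of noncompact type, and reduce the statement to the indecomposable case factor by factor, as is done for the classification; everything else (that $u(\mathbb S^1)\subset K$, that $J_o^2=-1$, that the metric exists, that $\mathrm{Hol}(D)^+=H(\mathbb R)^+$) is routine once that structural input is in hand. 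Alternatively, one may shortcut the whole converse by quoting \cite{milneSVI}, Theorem~1.21, but I would prefer to give the Lie-theoretic argument sketched here.
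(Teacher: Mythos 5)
The paper gives no proof of this theorem --- it defers to \cite{milneSVI}, 1.21, and \cite{satake1980}, II, Proposition~3.2. Your sketch is a correct outline of the standard Lie-theoretic argument those sources contain, so there is no alternative route in the paper to compare against.

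The one step you use in both directions but never justify is that $u(\mathbb{S}^1)$ is central in $K$ (equivalently, that $\Ad(u(z))$ is trivial on $\mathfrak{k}_{\mathbb{C}}$). In the forward direction you need it to finish SU1 on the $\mathfrak{k}$-part; it follows because the isotropy representation of $K$ on $T_oD$ is faithful and $\mathbb{C}$-linear while $u(z)$ acts by the scalar $z$, which commutes with every $\mathbb{C}$-linear endomorphism. In the converse direction you need it to see that $J_o=\Ad(u(i))|_{\mathfrak{p}}$ is $\Ad(K)$-invariant and therefore descends to a well-defined $H(\mathbb{R})^+$-invariant almost complex structure on $D=H(\mathbb{R})^+/K$; here it follows from SU1 itself, since on $\mathfrak{k}_{\mathbb{C}}$ (the $+1$-eigenspace of $\Ad(u(-1))$) the only character of $\mathbb{S}^1$ consistent with SU1 is the trivial one --- your phrase ``using that the action is by a single $\Ad$-weight'' does not substitute for this, because the $K$-invariance of $J_o$ is precisely the thing that needs proving before you can translate it. Two further small points: your integrability weight count actually gives $[\mathfrak{p}^{0,1},\mathfrak{p}^{0,1}]=0$, not merely containment in $\mathfrak{k}_{\mathbb{C}}$, and you should say so, since this commutativity is exactly what underlies the Harish--Chandra realization you then invoke; and $\mathrm{Hol}(D)^+=H(\mathbb{R})^+$ requires the theorem (\cite{helgason1978}, Ch.~VIII) identifying the holomorphic automorphism group of a noncompact hermitian symmetric space with the identity component of its isometry group, together with the fact that $H$ adjoint makes $H(\mathbb{R})^+\to\mathrm{Isom}(D)^+$ an isomorphism. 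With these points filled in your sketch is a complete proof.
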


\begin{proof}
The proof is sketched in \cite{milneSVI}, 1.21; see also \cite{satake1980},
II, Proposition 3.2
\end{proof}

Thus, the pointed hermitian symmetric domains are classified by the pairs
$(H,u)$ as in the theorem. Changing the point corresponds to conjugating $u$
by an element of $H(\mathbb{R}{}$).

\subsection{Classification in terms of root systems}

We now assume that the reader is familiar with the classification of
semisimple algebraic groups over an algebraically closed field in terms of
root systems (e.g., \cite{humphreys1975}).

Let $D$ be an indecomposable hermitian symmetric domain. Then the corresonding
group $H$ is simple, and $H_{\mathbb{C}{}}$ is also simple because $H$ is an
inner form of its compact form (by SU2).\footnote{\label{inner}If
$H_{\mathbb{C}{}}$ is not simple, say, $H_{\mathbb{C}{}}=H_{1}\times H_{2}$,
then $H=(H_{1})_{\mathbb{C}/\mathbb{R}}$, and every inner form of $H$ is
isomorphic to $H$ itself (by Shapiro's lemma), which is not compact because
$H(\mathbb{R}{})=H_{1}(\mathbb{C}{})$.} Thus, from $D$ and a point $o$, we get
a simple algebraic group $H_{\mathbb{C}{}}$ over $\mathbb{C}{}$ and a
nontrivial cocharacter $\mu\overset{\textup{{\tiny def}}}{=}u_{\mathbb{C}{}%
}\colon\mathbb{G}_{m}\rightarrow H_{\mathbb{C}{}}$ satisfying the condition:

\begin{quote}
(*)\hspace{1cm}$\mathbb{G}_{m}$ acts on $\Lie(H_{\mathbb{C}{}})$ through the
characters $z$, $1$, $z^{-1}$.
\end{quote}

\noindent Changing $o$ replaces $\mu$ by a conjugate. Thus the next step is to
classify the pairs $(G,M)$ consisting of a simple algebraic group over
$\mathbb{C}{}$ and a conjugacy class of nontrivial cocharacters of $G$
satisfying (*).

Fix a maximal torus $T$ of $G$ and a base $S$ for the root system $R=R(G,T)$,
and let $R^{+}$ be the corresponding set of positive roots. As each $\mu$ in
$M$ factors through some maximal torus, and all maximal tori are conjugate, we
may choose $\mu\in M$ to factor through $T$. Among the $\mu$ in $M$ factoring
through $T$, there is exactly one such that $\langle\alpha,\mu\rangle\geq0$
for all $\alpha\in R^{+}$ (because the Weyl group acts simply transitively on
the Weyl chambers). The condition (*) says that $\langle\alpha,\mu\rangle
\in\{1,0,-1\}$ for all roots $\alpha$. Since $\mu$ is nontrivial, not all of
the $\langle\alpha,\mu\rangle$ can be zero, and so $\langle\tilde{\alpha}%
,\mu\rangle=1$ where $\tilde{\alpha}$ is the highest root. Recall that the
highest root $\tilde{\alpha}=\sum_{\alpha\in S}n_{\alpha}\alpha$ has the
property that $n_{\alpha}\geq m_{\alpha}$ for any other root $\sum_{\alpha\in
S}m_{\alpha}\alpha$; in particular, $n_{\alpha}\geq1$. It follows that
$\langle\alpha,\mu\rangle=0$ for all but one simple root $\alpha$, and that
for that simple root $\langle\alpha,\mu\rangle=1$ and $n_{\alpha}=1$. Thus,
the pairs $(G,M)$ are classified by the simple roots $\alpha$ for which
$n_{\alpha}=1$ --- these are called the \emph{special} simple roots. On
examining the tables, one finds that the special simple roots are as in the
following table:\renewcommand{\arraystretch}{1.3}

\begin{center}

\begin{tabular}
[c]{|c|c|c|c|}\hline
type & $\tilde{\alpha}$ & special roots & $\#$\\\hline
$A_{n}$ & $\alpha_{1}+\alpha_{2}+\cdots+\alpha_{n}$ & $\alpha_{1}%
,\ldots,\alpha_{n}$ & $n$\\\hline
$B_{n}$ & $\alpha_{1}+2\alpha_{2}+\cdots+2\alpha_{n}$ & $\alpha_{1}$ &
$1$\\\hline
$C_{n}$ & $2\alpha_{1}+\cdots+2\alpha_{n-1}+\alpha_{n}$ & $\alpha_{n}$ &
$1$\\\hline
$D_{n}$ & $\alpha_{1}+2\alpha_{2}+\cdots+2\alpha_{n-2}+\alpha_{n-1}+\alpha
_{n}$ & $\alpha_{1},\alpha_{n-1},\alpha_{n}$ & $3$\\\hline
$E_{6}$ & $\alpha_{1}+2\alpha_{2}+2\alpha_{3}+3\alpha_{4}+2\alpha_{5}%
+\alpha_{6}$ & $\alpha_{1},\alpha_{6}$ & $2$\\\hline
$E_{7}$ & $2\alpha_{1}+2\alpha_{2}+3\alpha_{3}+4\alpha_{4}+3\alpha_{5}%
+2\alpha_{6}+\alpha_{7}$ & $\alpha_{7}$ & 1\\\hline
$E_{8,}F_{4},G_{2}$ &  & none & 0\\\hline
\end{tabular}

\end{center}

\noindent\renewcommand{\arraystretch}{1.0}Mnemonic: the number of special
simple roots is one less than the connection index $(P(R)\colon Q(R))$ of the
root system.\footnote{It is possible to prove this directly. Let $S^{+}%
=S\cup\{\alpha_{0}\}$ where $\alpha_{0}$ is the negative of the highest root
--- the elements of $S^{+}$ correspond to the nodes of the completed Dynkin
diagram (\cite{bourbakiLie}, VI 4, 3). The group $P/Q$ acts on $S^{+}$, and it
acts simply transitively on the set $\left\{  \text{simple roots}\right\}
\cup\{\alpha_{0}\}$ (\cite{deligne1979}, 1.2.5).}

To every indecomposable hermitian symmetric domain we have attached a special
node, and we next show that every special node arises from a hermitian
symmetric domain. Let $G$ be a simple algebraic group over $\mathbb{C}{}$ with
a character $\mu$ satisfying (*). Let $U$ be the (unique) compact real form of
$G$, and let $\sigma$ be the complex conjugation on $G$ with respect to $U$.
Finally, let $H$ be the real form of $G$ such that $\inn(\mu(-1))\circ\sigma$
is the complex conjugation on $G$ with respect to $H$. The restriction of
$\mu$ to $U^{1}\subset\mathbb{C}{}^{\times}$ maps into $H(\mathbb{R}{})$ and
defines a homomorphism $u$ satisfying the conditions (SU1,2) of (\ref{h21}).
The hermitian symmetric domain corresponding to $(H,u)$ gives rise to
$(G,\mu)$. Thus there are indecomposable hermitian symmetric domains of all
possible types except $E_{8}$, $F_{4}$, and $G_{2}$.

Let $H$ be a real simple group such that there exists a homomorphism
$u\colon\mathbb{S}{}^{1}\rightarrow H$ satisfying (SV1,2). The set of such
$u$'s has two connected components, interchanged by $u\leftrightarrow u^{-1}$,
each of which is an $H(\mathbb{R}{})^{+}$-conjugacy class. The $u$'s form a
single $H(\mathbb{R}{})$-conjugacy class except when $s$ is moved by the
opposition involution (\cite{deligne1979}, 1.2.7, 1.2.8). This happens in the
following cases: type $A_{n}$ and $s\neq\frac{n}{2}$; type $D_{n}$ with $n$
odd and $s=\alpha_{n-1}$ or $\alpha_{n}$; type $E_{6}$ (see p.~\pageref{opp} below).

\subsection{Example: the Siegel upper half space}

A \emph{symplectic space}%
\index{symplectic space}
$(V,\psi)$ over a field $k$ is a finite dimensional vector space $V$ over $k$
together with a nondegenerate alternating form $\psi$ on $V$. The
\emph{symplectic group}%
\index{symplectic group}%
\index{S(psi)@$S(\psi)$}
$S(\psi)$ is the algebraic subgroup of $\GL_{V}$ of elements fixing $\psi$. It
is an almost simple simply connected group of type $C_{n-1}$ where $n=\frac
{1}{2}\dim_{k}V$.

Now let $k=\mathbb{R}{}$, and let $H=S(\psi)$. Let $D$ be the space of complex
structures $J$ on $V$ such that $(x,y)\mapsto\psi_{J}(x,y)\overset
{\textup{{\tiny def}}}{=}\psi(x,Jy)$ is symmetric and positive definite.
\noindent The symmetry is equivalent to $J$ lying in $S(\psi)$. Therefore, $D$
is the set of complex structures $J$ on $V$ for which $J\in H(\mathbb{R}{})$
and $\psi$ is a $J$-polarization for $H$.

The action,%
\[
g,J\mapsto gJg^{-1}\colon H(\mathbb{R}{})\times D\rightarrow D,
\]
of $H(\mathbb{R}{})$ on $D$ is transitive (\cite{milneSVI}, \S 6). Each $J\in
D$ defines an action of $\mathbb{C}{}$ on $V$, and%
\begin{equation}
\psi(Jx,Jy)=\psi(x,y)\text{ all }x,y\in V\implies\psi(zx,zy)=|z|^{2}%
\psi(x,y)\text{ all }x,y\in V. \label{q1}%
\end{equation}
Let $h_{J}\colon\mathbb{S}{}\rightarrow\GL_{V}$ be the homomorphism such that
$h_{J}(z)$ acts on $V$ as multiplication by $z$, and let $V_{\mathbb{C}{}%
}=V^{+}\oplus V^{-}$ be the decomposition of $V_{\mathbb{C}{}}$ into its $\pm
i$ eigenspaces for $J$. Then $h_{J}(z)$ acts on $V^{+}$ as $z$ and on $V^{-}$
as $\bar{z}$, and so it acts on%
\[
\Lie(H)_{\mathbb{C}{}}\subset\End(V)_{\mathbb{C}{}}\simeq V_{\mathbb{C}{}%
}^{\vee}\otimes V_{\mathbb{C}{}}=(V^{+}\oplus V^{-})^{\vee}\otimes(V^{+}\oplus
V^{-}),
\]
through the characters $z^{-1}\bar{z}$, $1$, $z\bar{z}^{-1}$.

For $z\in U^{1}$, (\ref{q1}) shows that $h_{J}(z)\in H$; choose a square root
$\sqrt{z}$ of $z$ in $U^{1}$, and let $u_{J}(z)=h_{J}(\sqrt{z})\bmod\pm1$.
Then $u_{J}$ is a well-defined homomorphism $U^{1}\rightarrow H^{\mathrm{ad}%
}(\mathbb{R}{})$, and it satisfies the conditions (SU1,2) of Theorem
\ref{h21}. Therefore, $D$ has a natural complex structure for which $z\in
U^{1}$ acts on $T_{J}(D)$ as multiplication by $z$ and $\mathrm{Hol}%
(D)^{+}=H^{\mathrm{ad}}(\mathbb{R}{})^{+}$. With this structure, $D$ is the
(unique) indecomposable hermitian symmetric domain of type $C_{n-1}$. It is
called the \emph{Siegel upper half space}%
\index{Siegel upper half space}
(of degree, or genus, $n$).

\section{Discrete subgroups of Lie groups}

\begin{quote}
{\small The algebraic varieties we are concerned with are quotients of
hermitian symmetric domains by the action of discrete groups. In this section,
we describe the discrete groups of interest to us.}
\end{quote}

\subsection{Lattices in Lie groups}

Let $H$ be a connected real Lie group. A \emph{lattice}%
\index{lattice}
in $H$ is a discrete subgroup ~$\Gamma$ of finite covolume,%
\index{finite covolume}
i.e., such that $H/\Gamma$ has finite volume with respect to an $H$-invariant
measure. For example, the lattices in $\mathbb{R}{}^{n}$ are exactly the
$\mathbb{Z}{}$-submodules generated by bases for $\mathbb{R}{}^{n}$, and two
such lattices are commensurable\footnote{Recall that two subgroup $S_{1}$ and
$S_{2}$ of a group are \emph{commensurable}%
\index{commensurable}
if $S_{1}\cap S_{2}$ has finite index in both $S_{1}$ and $S_{2}$.
Commensurability is an equivalence relation.} if and only if they generate the
same $\mathbb{Q}{}$-vector space. Every discrete subgroup commensurable with a
lattice is itself a lattice.

Now assume that $H$ is semisimple with finite centre. A lattice $\Gamma$ in
$H$ is \emph{irreducible}%
\index{irreducible lattic}
if $\Gamma\cdot N$ is dense in $H$ for every noncompact closed normal subgroup
$N$ of $H$. For example, if $\Gamma_{1}$ and $\Gamma_{2}$ are lattices in
$H_{1}$ and $H_{2}$, then the lattice $\Gamma_{1}\times\Gamma_{2}$ in
$H_{1}\times H_{2}$ is not irreducible because $\left(  \Gamma_{1}\times
\Gamma_{2}\right)  \cdot(1\times H_{2})=\Gamma_{1}\times H_{2}$ is not dense.
On the other hand, $\SL_{2}(\mathbb{Z}{}[\sqrt{2}])$ can be realized as an
irreducible lattice in $\SL_{2}(\mathbb{R}{})\times\SL_{2}(\mathbb{R}{})$ via
the embeddings $\mathbb{Z}{}[\sqrt{2}]\rightarrow\mathbb{R}{}$ given by
$\sqrt{2}\mapsto\sqrt{2}$ and $\sqrt{2}\mapsto-\sqrt{2}$.

\begin{theorem}
\label{h31}Let $H$ be a connected semisimple Lie group with no compact factors
and trivial centre, and let $\Gamma$ be a lattice $H$. Then $H$ can be written
(uniquely) as a direct product $H=H_{1}\times\cdots\times H_{r}$ of Lie
subgroups $H_{i}$ such that $\Gamma_{i}\overset{\textup{{\tiny def}}}{=}%
\Gamma\cap H_{i}$ is an irreducible lattice in $H_{i}$ and $\Gamma_{1}%
\times\cdots\times\Gamma_{r}$ has finite index in $\Gamma$
\end{theorem}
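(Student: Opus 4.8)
The plan is to reduce the statement to a fact about the simple factors of $H$ and the way $\Gamma$ interacts with them. Since $H$ is a connected semisimple Lie group with trivial centre and no compact factors, it decomposes as a direct product $H=G_1\times\cdots\times G_n$ of its minimal connected normal subgroups, each $G_i$ being simple. The subgroups $H_i$ we seek will each be a product of some subcollection of the $G_i$, so the first task is to identify the correct partition of $\{1,\dots,n\}$. For each $i$, let $\pi_i\colon H\to G_i$ be the projection. The key preliminary observation is that the closure $\overline{\pi_i(\Gamma)}$ is a normal subgroup of $G_i$ (since $\Gamma$ is normalized by nothing a priori, but $\pi_i(\Gamma)$ is normalized by the dense image — more precisely one uses that $\Gamma$ has finite covolume, hence Zariski-dense projections in the relevant sense via Borel's density theorem), so by simplicity $\overline{\pi_i(\Gamma)}=G_i$; thus $\Gamma$ projects densely onto every simple factor. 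This is the input from Borel density alluded to in \S3.

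Next I would define an equivalence relation on the index set: say $i\sim j$ if the projection of $\Gamma$ to $G_i\times G_j$ is \emph{not} discrete, equivalently if $\Gamma\cap(G_i\times G_j)$ is not of finite index times a product of lattices in the two factors. More robustly, following the standard argument, consider for a subset $A\subseteq\{1,\dots,n\}$ the subgroup $H_A=\prod_{i\in A}G_i$ and $\Gamma_A=\Gamma\cap H_A$; call $A$ \emph{split} if $\Gamma_A\cdot\Gamma_{A^c}$ has finite index in $\Gamma$. One checks that the split subsets form a Boolean sublattice of the power set (closed under union, intersection, complement): this uses that if $A$ and $B$ are split then $\Gamma_{A}\Gamma_{A^c}$ and $\Gamma_B\Gamma_{B^c}$ both have finite index, and a short commensurability argument shows $\Gamma_{A\cap B}\Gamma_{(A\cap B)^c}$ does too. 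The atoms of this Boolean lattice give the desired partition $\{1,\dots,n\}=I_1\sqcup\cdots\sqcup I_r$, and we set $H_k=\prod_{i\in I_k}G_i$, $\Gamma_k=\Gamma\cap H_k$.

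It then remains to verify the two assertions: that $\Gamma_1\times\cdots\times\Gamma_r$ has finite index in $\Gamma$, and that each $\Gamma_k$ is an irreducible lattice in $H_k$. The first is immediate from the definition of the atoms (iterating the finite-index property). For irreducibility, suppose $N$ is a noncompact closed normal subgroup of $H_k$; since $H_k$ is semisimple with the $G_i$ ($i\in I_k$) as its simple factors, $N$ contains some $G_i$, $i\in I_k$. If $\Gamma_k\cdot N$ were not dense in $H_k$, its closure would be a proper closed subgroup containing $\Gamma_k$ and $G_i$; projecting to the product of the remaining factors $\prod_{j\in I_k,\,j\neq i}G_j$ one would obtain that $\Gamma_k$ maps into a proper closed subgroup there, forcing a nontrivial split subset strictly inside $I_k$, contradicting that $I_k$ is an atom. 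The uniqueness of the decomposition follows because any such decomposition must refine into, and be refined by, the atoms — each $H_k$ must be a union of simple factors (being normal and connected), and irreducibility of $\Gamma_k$ together with the finite-index condition pins down exactly the atomic partition.

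The main obstacle is the Boolean-lattice step: proving that split subsets are closed under intersection requires the nontrivial input that a discrete subgroup of a semisimple Lie group that projects densely to each simple factor and meets a product of factors in a lattice must actually be (up to finite index) a product along that partition — this is where one genuinely invokes the structure theory and density results of \S3 (Borel, and for the irreducible case, the rigidity circle of Margulis/Mostow is in the background though not strictly needed for this statement). Everything else is bookkeeping with commensurability.
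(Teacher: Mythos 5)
The paper gives no proof here (it simply cites \cite{morris2008}, 4.24, equivalently Raghunathan's Theorem 5.22), so there is nothing of its own to compare against; but your plan --- a ``split'' relation on the simple factors whose atoms give the decomposition --- is the right skeleton and matches the standard textbook argument.

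However, the ``key preliminary observation'' you lead with is false. Take $\Gamma=\Gamma_1\times\Gamma_2$ with each $\Gamma_i$ a lattice in the simple factor $G_i$: then $\pi_1(\Gamma)=\Gamma_1$ is discrete, so $\overline{\pi_1(\Gamma)}=\Gamma_1\neq G_1$. A lattice need not project densely (in the real topology) onto any simple factor. What Borel density gives is Zariski density of $\Gamma$ in $H$, hence of $\pi_i(\Gamma)$ in $G_i$, which is strictly weaker and not what you assert; fortunately you never actually use the observation, so the claim is both wrong and idle. The genuine load sits in the step you wave off as ``a short commensurability argument,'' namely that split subsets are closed under intersection. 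This rests on a substantive lemma about a lattice $\Gamma$ and a connected closed normal subgroup $N$ of $H$: that $\Gamma\cap N$ is a lattice in $N$ if and only if $\Gamma N$ is closed in $H$, if and only if the image of $\Gamma$ in $H/N$ is a lattice there, together with a normality statement for the identity component of $\overline{\Gamma N}$. These facts are the actual content of the finite-covolume hypothesis (Raghunathan, Chapters 1 and 5); without them the Boolean-lattice bookkeeping has nothing to stand on. The same machinery is what is hidden behind your final assertion that non-density of $\Gamma_k\cdot N$ ``forces a nontrivial split subset strictly inside $I_k$,'' which as written is a claim, not an argument. In short: right plan, but the two steps you treat as routine are, respectively, false and the heart of the theorem.
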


\begin{proof}
See \cite{morris2008}, 4.24.
\end{proof}

\begin{theorem}
\label{h32}Let $D$ be a hermitian symmetric domain, and let $H=\mathrm{Hol}%
(D)^{+}$. A discrete subgroup $\Gamma$ of $H$ is a lattice if and only if
$\Gamma\backslash D$ has finite volume. Let $\Gamma$ be a lattice in $H$; then
$D$ can be written (uniquely) as a product $D=D_{1}\times\cdots\times D_{r}$
of hermitian symmetric domains such that $\Gamma_{i}\overset
{\textup{{\tiny def}}}{=}\Gamma\cap\mathrm{Hol}(D_{i})^{+}$ is an irreducible
lattice in $\mathrm{Hol}(D_{i})^{+}$ and $\Gamma_{1}\backslash D_{1}%
\times\cdots\times\Gamma_{r}\backslash D_{r}$ is a finite covering of
$\Gamma\backslash D$.
\end{theorem}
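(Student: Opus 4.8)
The plan is to deduce both assertions from the structure theory of semisimple Lie groups --- concretely, from Theorem \ref{h31} --- together with the fact that $H=\mathrm{Hol}(D)^{+}$ acts transitively on $D$ with compact isotropy groups, so that $D\cong H/K$ for a maximal compact subgroup $K=\Stab_{H}(o)$, $o\in D$.

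First, for the equivalence \emph{``lattice $\iff$ finite volume''}. I would use that, because $D$ is of noncompact type, $K$ is compact and the orbit map identifies $D$ with $H/K$. Since $H$ is semisimple it is unimodular, so $\mathrm{vol}(H/\Gamma)<\infty$ if and only if $\mathrm{vol}(\Gamma\backslash H)<\infty$; by definition the former says $\Gamma$ is a lattice. Normalizing the $H$-invariant measure on $D$ suitably, the projection $\Gamma\backslash H\to\Gamma\backslash H/K=\Gamma\backslash D$ is a bundle with compact fibre $K$ (finite, of bounded cardinality, over the locus of points with nontrivial $\Gamma$-stabilizer), so $\mathrm{vol}(\Gamma\backslash H)$ and $\mathrm{vol}(\Gamma\backslash D)$ are finite together. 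Hence $\Gamma$ is a lattice in $H$ exactly when $\Gamma\backslash D$ has finite volume.

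Next, for the decomposition. Given a lattice $\Gamma$, note that $H$ is a connected adjoint Lie group (semisimple, trivial centre) with no compact factors because $D$ is of noncompact type, so Theorem \ref{h31} gives $H=H_{1}\times\cdots\times H_{r}$ with $\Gamma_{i}:=\Gamma\cap H_{i}$ an irreducible lattice in $H_{i}$ and $\Gamma_{1}\times\cdots\times\Gamma_{r}$ of finite index in $\Gamma$. Writing $D=D'_{1}\times\cdots\times D'_{m}$ for the decomposition into indecomposable hermitian symmetric domains, with $H=H'_{1}\times\cdots\times H'_{m}$ and $\mathrm{Hol}(D'_{j})^{+}=H'_{j}$ simple, each $H_{i}$ is the product of the $H'_{j}$ over some block $I_{i}$ of a partition of $\{1,\dots,m\}$; setting $D_{i}=\prod_{j\in I_{i}}D'_{j}$ gives a hermitian symmetric domain with $\mathrm{Hol}(D_{i})^{+}=H_{i}$ and $D=D_{1}\times\cdots\times D_{r}$, so that $\Gamma_{i}=\Gamma\cap\mathrm{Hol}(D_{i})^{+}$ is an irreducible lattice in $\mathrm{Hol}(D_{i})^{+}$ as required. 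Since $\Gamma':=\Gamma_{1}\times\cdots\times\Gamma_{r}$ has finite index in $\Gamma$, the natural map $\Gamma'\backslash D\to\Gamma\backslash D$ is a finite covering, and $\Gamma'\backslash D=(\Gamma_{1}\backslash D_{1})\times\cdots\times(\Gamma_{r}\backslash D_{r})$, which is the claimed finite covering. Uniqueness of $D=D_{1}\times\cdots\times D_{r}$ follows from the uniqueness of $H=H_{1}\times\cdots\times H_{r}$ in Theorem \ref{h31}, since passing to $\mathrm{Hol}(-)^{+}$ sends a product of hermitian symmetric domains to the product of the corresponding groups.

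The main obstacle I anticipate is purely bookkeeping: justifying cleanly that $\mathrm{vol}(\Gamma\backslash H)<\infty \iff \mathrm{vol}(\Gamma\backslash D)<\infty$ when $\Gamma$ has torsion, and checking that the finite-index inclusion $\Gamma'\subset\Gamma$ really induces a finite (possibly branched) covering of the quotient spaces rather than merely a map. Both are standard facts about proper discontinuous actions with compact isotropy, but worth stating carefully since $\Gamma$ need not act freely on $D$.
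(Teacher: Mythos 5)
Your argument is correct, and the first half is essentially the paper's: both identify $D$ with $H/K$ for $K$ a compact isotropy group, and conclude that finite covolume in $H$ and finite volume of $\Gamma\backslash D$ go together.

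For the decomposition, you take a slightly different route from the paper. The paper, after decomposing $H=H_{1}\times\cdots\times H_{r}$ via Theorem~\ref{h31}, writes the homomorphism $u_{o}\colon\mathbb{S}^{1}\rightarrow H$ componentwise as $u_{o}=(u_{1},\ldots,u_{r})$, checks that each $(H_{i},u_{i})$ satisfies (SU1,2), and invokes Theorem~\ref{h21} to produce $D_{i}$ and to identify $D$ with $D_{1}\times\cdots\times D_{r}$. You instead decompose $D$ into its indecomposable hermitian symmetric factors $D'_{j}$, use that $\mathrm{Hol}(D'_{j})^{+}$ is simple, and observe that each $H_{i}$ must be a product of simple factors, so $D_{i}$ can be taken to be the product of the corresponding $D'_{j}$. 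The two constructions give the same $D_{i}$; yours leans on the structure theory of hermitian symmetric spaces (indecomposable pieces, simplicity of the automorphism group), while the paper's leans on the group-theoretic classification of pointed hermitian symmetric domains by pairs $(H,u)$. Both are clean; your version makes uniqueness slightly more transparent because it is inherited from the uniqueness in Theorem~\ref{h31} together with the uniqueness of the indecomposable decomposition, whereas the paper gets it from the bijective correspondence in Theorem~\ref{h21}. The caveats you raise at the end (measure comparison under a compact fibration, finite covering when $\Gamma$ has torsion) are real but standard, and the paper silently glosses over them in the same way you do.
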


\begin{proof}
\noindent Let $u_{o}$ be the homomorphism $\mathbb{S}{}^{1}\rightarrow H$
attached to a point $o\in D$ (see \ref{h20p}), and let $\theta$ be the Cartan
involution $\inn(u_{o}(-1))$. The centralizer of $u_{o}$ is contained in
$H(\mathbb{R}{})\cap H^{(\theta)}(\mathbb{R}{})$, which is compact. Therefore
$D$ is a quotient of $H(\mathbb{R}{})$ by a \textit{compact} subgroup, from
which the first statement follows. For the second statement, let
$H=H_{1}\times\cdots\times H_{r}$ be the decomposition of $H$ defined by
$\Gamma$ (see \ref{h31}). Then $u_{o}=(u_{1},\ldots,u_{r})$ where each $u_{i}$
is a homomorphism $\mathbb{S}{}^{1}\rightarrow H_{i}$ satisfying the
conditions SU1,2 of Theorem \ref{h21}. Now $D=D_{1}\times\cdots\times D_{r}$
with $D_{i}$ the hermitian symmetric domain corresponding to $(H_{i},u_{i})$.
This is the required decomposition.
\end{proof}

\begin{proposition}
\label{h32a}Let $\varphi\colon H\rightarrow H^{\prime}$ be a surjective
homomorphism of Lie groups with compact kernel. If $\Gamma$ is a lattice in
$H$, then $\varphi(\Gamma)$ is a lattice in $H^{\prime}$.
\end{proposition}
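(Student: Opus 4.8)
The plan is to pass to the universal cover, or rather to exploit the structure of $\varphi$ directly. Since $K \overset{\text{def}}{=} \ker(\varphi)$ is compact, $H^{\prime} \simeq H/K$, and the $H$-invariant measure on $H$ descends (by integration over $K$-cosets, using that $K$ is compact hence unimodular and that conjugation by $K$ preserves the Haar measure up to a bounded factor) to an $H^{\prime}$-invariant measure on $H^{\prime}$. So the first step is to fix compatible choices of Haar measures: a left Haar measure $dh$ on $H$, normalized Haar measure on $K$ with total mass $1$, and the resulting pushforward measure $dh^{\prime}$ on $H^{\prime}$, so that $\int_{H} f(h)\,dh = \int_{H^{\prime}} \big(\int_{K} f(hk)\,dk\big)\,dh^{\prime}$ for $f$ suitably integrable.

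Next I would check that $\varphi(\Gamma)$ is discrete in $H^{\prime}$. This is where compactness of $K$ is essential: if $\varphi(\gamma_n) \to 1$ in $H^{\prime}$ with $\gamma_n \in \Gamma$, lift to get $\gamma_n = k_n \varepsilon_n$ with $k_n \in K$ and $\varepsilon_n \to 1$ in $H$; by compactness of $K$ a subsequence of $k_n$ converges, so a subsequence of $\gamma_n$ converges in $H$, and discreteness of $\Gamma$ forces $\varphi(\gamma_n)$ to be eventually trivial (one must also note $\Gamma \cap K$ is finite, being discrete in a compact group, which is harmless). Hence $\varphi(\Gamma)$ is discrete.

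The heart of the argument is then the covolume computation. Set $\Gamma^{\prime} = \varphi(\Gamma)$ and consider the continuous surjection $H/\Gamma \to H^{\prime}/\Gamma^{\prime}$ induced by $\varphi$. I would argue that the fibres of this map are the orbits of the compact group $K$ acting on $H/\Gamma$, so that $H^{\prime}/\Gamma^{\prime}$ is the quotient of $H/\Gamma$ by this $K$-action; then, using the measure compatibility fixed in the first step together with the fact that $K$-orbits have finite (bounded) measure, one gets $\mathrm{vol}(H^{\prime}/\Gamma^{\prime}) \leq \mathrm{vol}(H/\Gamma) < \infty$. Concretely, a fundamental domain argument works: if $\mathcal{F} \subset H$ is a measurable fundamental domain for $\Gamma$ of finite volume, then $\varphi(\mathcal{F})$ covers $H^{\prime}/\Gamma^{\prime}$ and $\mathrm{vol}(\varphi(\mathcal{F})) \leq \mathrm{vol}(\mathcal{F})$ by the pushforward formula (the map $\mathcal{F} \to H^{\prime}/\Gamma^{\prime}$ is at most "$|K\cap\ldots|$-to-one" in a measure-theoretic sense, and in any case non-expanding on measure). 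I expect the main obstacle to be the bookkeeping around invariant measures — verifying that an $H$-invariant measure really does descend to an $H^{\prime}$-invariant one through a compact kernel, which uses unimodularity of $K$ and the fact that $\varphi$ intertwines the two actions — rather than any deep structural input. Once the measures are pinned down correctly, finiteness of the covolume is essentially immediate from the fundamental domain estimate.
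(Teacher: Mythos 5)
Your discreteness argument is correct: openness of $\varphi$ lets you write $\gamma_n = k_n\varepsilon_n$ with $k_n \in K$ and $\varepsilon_n \to 1$, compactness of $K$ extracts a convergent subsequence of $(\gamma_n)$, and discreteness of $\Gamma$ forces it to be eventually constant, hence eventually in $\Ker\varphi$. The covolume argument, however, has its key inequality backwards and the supporting picture is off. With $\operatorname{vol}(K)=1$ and the quotient measure on $H'=H/K$, pushing forward \emph{enlarges} measure: for any measurable $U\subset H$ one has $\operatorname{vol}_{H'}(\varphi(U))=\operatorname{vol}_H(KU)\geq\operatorname{vol}_H(U)$, since $\varphi^{-1}(\varphi(U))$ is the $K$-saturation $KU$. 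So $\operatorname{vol}(\varphi(\mathcal{F}))\leq\operatorname{vol}(\mathcal{F})$ is false, and ``$|K\cap\Gamma|$-to-one'' is not the right picture either --- the fibres of $\mathcal{F}\to H'/\Gamma'$ are pieces of positive-dimensional $K$-orbits, not finite sets, and when $\mathcal{F}$ is not relatively compact (non-cocompact lattices) the saturation $K\mathcal{F}$ is not obviously of finite measure. Likewise, what makes the fibration argument work is a \emph{lower} bound on fibre volume, not the upper bound your phrase ``bounded measure'' suggests.

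The repair stays entirely within your framework. Set $\Gamma'=\varphi(\Gamma)$ and $L=\varphi^{-1}(\Gamma')=K\Gamma$; this is closed in $H$ because $\Gamma'$ is discrete hence closed, and $L/\Gamma\simeq K/(K\cap\Gamma)$ is compact (with $K\cap\Gamma$ finite, as you note). Thus $\Gamma$ is a cocompact lattice in $L$, so $L$ is unimodular, as is $H$; the Weil formula for the tower $\Gamma\subset L\subset H$ gives
\[
\operatorname{vol}(H/\Gamma)=\operatorname{vol}(H/L)\cdot\operatorname{vol}(L/\Gamma),
\]
with $0<\operatorname{vol}(L/\Gamma)<\infty$. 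Since $H/L\simeq H'/\Gamma'$ compatibly with invariant measures, $\operatorname{vol}(H'/\Gamma')=\operatorname{vol}(H/\Gamma)/\operatorname{vol}(L/\Gamma)<\infty$. In short: all fibres of $H/\Gamma\to H'/\Gamma'$ are translates of $L/\Gamma$ and have the same \emph{positive} volume $c$, and the conclusion is $\operatorname{vol}(H'/\Gamma')=\operatorname{vol}(H/\Gamma)/c$, not an inequality in the direction you wrote.
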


\begin{proof}
The proof is elementary (it requires only that $H$ and $H^{\prime}$ be locally
compact topological groups).
\end{proof}

\subsection{Arithmetic subgroups of algebraic groups}

Let $G$ be an algebraic group over $\mathbb{Q}$. When $r\colon G\rightarrow
\GL_{n}$ is an injective homomorphism, we let
\[
G(\mathbb{Z}{})_{r}=\{g\in G(\mathbb{Q}{})\mid r(g)\in\GL_{n}(\mathbb{Z}%
{})\}.
\]
Then $G(\mathbb{Z}{})_{r}$ is independent of $r$ up to commensurability
(\cite{borel1969}, 7.13), and we sometimes omit $r$ from the notation. A
subgroup $\Gamma$ of $G(\mathbb{Q}{})$ is \emph{arithmetic}%
\index{arithmetic subgroup}
if it is commensurable with $G(\mathbb{Z}{})_{r}$ for some $r$.

\begin{theorem}
\label{h33}Let $\varphi\colon G\rightarrow G^{\prime}$ be a surjective
homomorphism of algebraic groups over $\mathbb{Q}{}$. If $\Gamma$ is an
arithmetic subgroup of $G(\mathbb{Q}{})$, then $\varphi(\Gamma)$ is an
arithmetic subgroup of $G^{\prime}(\mathbb{Q}{}).$
\end{theorem}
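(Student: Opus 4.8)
The plan is to reduce the assertion to the statement about lattices in Lie groups (Proposition~\ref{h32a}) by passing to real points. The key technical input is a result of Borel and Harish-Chandra (\cite{borel1969}): for a connected algebraic group $G$ over $\mathbb{Q}{}$, the group $G(\mathbb{Z}{})_r$ is a lattice in $G(\mathbb{R}{})$ provided $G$ has no nontrivial $\mathbb{Q}{}$-rational character (e.g., when $G$ is semisimple, or more generally reductive with anisotropic centre), and more generally $G(\mathbb{Z}{})_r$ always has finite covolume after quotienting by the connected central split part; since the paper's interest is ultimately in semisimple groups, one may state the theorem under the hypothesis that $G$ (hence $G^{\prime}$) is semisimple, or invoke the general finiteness-of-covolume result. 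I will assume $G$ is reductive with no $\mathbb{Q}{}$-rational characters, so that arithmetic subgroups of $G(\mathbb{Q}{})$ are lattices in $G(\mathbb{R}{})$, and likewise for $G^{\prime}$.

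First I would observe that, since arithmetic subgroups of a fixed group form a single commensurability class and the image of a subgroup commensurable with $\Gamma$ is commensurable with the image of $\Gamma$, it suffices to prove the statement for one convenient choice of $\Gamma$, say $\Gamma = G(\mathbb{Z}{})_r$. Next, I would reduce to the case where $G$ is connected (by replacing $G$ with $G^\circ$ and $\Gamma$ with the finite-index subgroup $\Gamma\cap G^\circ(\mathbb{Q}{})$, whose image has finite index in $\varphi(\Gamma)$) and where $\varphi$ is surjective with connected kernel $N$. The crucial step is then a diagram-chasing/measure-theoretic argument: $\varphi$ induces a continuous surjection $G(\mathbb{R}{})\to G^{\prime}(\mathbb{R}{})$ with image of finite index (its image contains the identity component, by smoothness of $\varphi$ and the implicit function theorem), and $\Gamma$ maps into $G^{\prime}(\mathbb{Z}{})$ up to commensurability because $\varphi$ is defined over $\mathbb{Q}{}$ and carries the integral structure of $G$ into a lattice in the coordinate ring of $G^\prime$. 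So $\varphi(\Gamma)$ is an arithmetic subgroup provided it has finite index in $G^\prime(\mathbb{Z}{})_{r^\prime}$ — equivalently, provided $\varphi(\Gamma)$ is a lattice in $G^\prime(\mathbb{R}{})$, since $G^\prime(\mathbb{Z}{})_{r^\prime}$ is a lattice and any discrete subgroup of finite covolume sitting inside a lattice has finite index in it (the covolumes being finite and the index being the ratio).

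To see $\varphi(\Gamma)$ is a lattice in $G^\prime(\mathbb{R}{})$, I would factor the real map: write $N(\mathbb{R}{})$ for the kernel. The quotient $G(\mathbb{R}{})/N(\mathbb{R}{})$ need not be compact, so one cannot directly cite Proposition~\ref{h32a}; instead I use that $\Gamma$ is a lattice in $G(\mathbb{R}{})$ and $\Gamma\cap N(\mathbb{Q}{})$ is an arithmetic subgroup of $N$ (intersection of $\Gamma$ with the $\mathbb{Q}{}$-rational points of a $\mathbb{Q}{}$-subgroup), hence — by Borel--Harish-Chandra applied to $N$, or at least to its semisimple part plus a unipotent argument — a lattice in $N(\mathbb{R}{})$. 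The standard fact that if $\Gamma$ is a lattice in $H$ and $N\triangleleft H$ is a closed normal subgroup with $\Gamma\cap N$ a lattice in $N$, then $\Gamma N/N = \varphi(\Gamma)$ is a lattice in $H/N$, then gives the conclusion; this is the real analogue of the group-theoretic decomposition used in Theorem~\ref{h32} and is where the actual work lies. Finally, I descend: the image of $G(\mathbb{R}{})$ has finite index in $G^\prime(\mathbb{R}{})$, and a lattice in a finite-index closed subgroup is a lattice in the whole group.

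The main obstacle I anticipate is the measure-theoretic passage through the kernel $N$ — specifically, verifying that $\Gamma\cap N(\mathbb{Q}{})$ is a lattice in $N(\mathbb{R}{})$ when $N$ is not reductive (it may have a unipotent radical, for which the relevant statement is Borel's theorem that $N(\mathbb{Z}{})$ is cocompact in $N(\mathbb{R}{})$ for unipotent $N$), and then invoking the fibration lemma for lattices over the exact sequence $1\to N\to G\to G^\prime\to 1$ of Lie groups. An alternative, perhaps cleaner, route that avoids the unipotent subtleties entirely: by a theorem in \cite{borel1969}, $\varphi\colon G\to G^\prime$ surjective over $\mathbb{Q}{}$ implies $\varphi(G(\mathbb{Q}{}))$ has finite index in $G^\prime(\mathbb{Q}{})$ and $\varphi(G(\mathbb{Z}{})_r)$ is commensurable with $G^\prime(\mathbb{Z}{})_{r^\prime}$ directly; then one is done without any Lie-group measure theory at all. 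I would present the proof via this second route, citing \cite{borel1969}, and remark that it also follows from Proposition~\ref{h32a} combined with the fibration lemma in the semisimple case.
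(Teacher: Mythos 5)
Your chosen route --- cite \cite{borel1969} for the commensurability of $\varphi(G(\mathbb{Z}{})_r)$ with $G^{\prime}(\mathbb{Z}{})_{r^{\prime}}$ --- is precisely the paper's proof, which is the one-line citation to Borel 1969, 8.11. But the lattice-theoretic argument you sketch first, and which you claim also works in the semisimple case, has a genuine gap, and one of the supporting assertions is false.

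The gap: establishing that $\varphi(\Gamma)$ is a lattice in $G^{\prime}(\mathbb{R}{})$ does not make it commensurable with $G^{\prime}(\mathbb{Z}{})_{r^{\prime}}$. Your covolume-ratio step needs a finite-index subgroup of $\varphi(\Gamma)$ to actually lie \emph{inside} $G^{\prime}(\mathbb{Z}{})_{r^{\prime}}$; you assert this on the grounds that $\varphi$ is defined over $\mathbb{Q}{}$, but rationality of $\varphi$ only places the images of finitely many generators of $\Gamma$ in $G^{\prime}(\mathbb{Z}{}[1/m])$ for some $m$, and bridging from $\mathbb{Z}{}[1/m]$-points to commensurability with $\mathbb{Z}{}$-points is exactly the arithmetic content of Borel's 8.11 --- assuming it makes the argument circular. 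Moreover, the restriction to reductive $G$ with no $\mathbb{Q}{}$-rational characters narrows the theorem: the paper states it for \emph{all} algebraic groups, and the very next paragraph exhibits $\mathbb{G}_m$, where the arithmetic subgroup $\{\pm 1\}$ is not a lattice, so a lattice-based proof cannot cover the stated generality. Finally, the remark that ``$\varphi(G(\mathbb{Q}{}))$ has finite index in $G^{\prime}(\mathbb{Q}{})$'' is not a theorem in \cite{borel1969} and is false: for $\SL_2\to\PGL_2$, the cokernel of $\SL_2(\mathbb{Q}{})\to\PGL_2(\mathbb{Q}{})$ is $\mathbb{Q}{}^{\times}/(\mathbb{Q}{}^{\times})^2$, which is infinite. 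Fortunately the conclusion of \ref{h33} does not depend on it, but the claim should be dropped.
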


\begin{proof}
See \cite{borel1969}, 8.11.
\end{proof}

An arithmetic subgroup $\Gamma$ of $G(\mathbb{Q}{})$ is obviously discrete in
$G(\mathbb{R}{})$, but it need not be a lattice. For example, $\mathbb{G}%
_{m}(\mathbb{Z}{})=\{\pm1\}$ is an arithmetic subgroup of $\mathbb{G}%
_{m}(\mathbb{Q}{})$ of infinite covolume in $\mathbb{G}_{m}(\mathbb{R}%
)=\mathbb{R}{}^{\times}$.

\begin{theorem}
\label{h34}Let $G$ be a reductive algebraic group over $\mathbb{Q}{}$, and let
$\Gamma$ be an arithmetic subgroup of $G(\mathbb{Q)}$.

\begin{enumerate}
\item The quotient $\Gamma\backslash G(\mathbb{R}{})$ has finite volume if and
only if $\Hom(G,\mathbb{G}_{m})=0$; in particular, $\Gamma$ is a lattice if
$G$ is semisimple.

\item (Godement compactness criterion) The quotient $\Gamma\backslash
G(\mathbb{R}{})$ is compact if and only if $\Hom(G,\mathbb{G}_{m})=0$ and
$G(\mathbb{Q}{})$ contains no unipotent element other than $1$.
\end{enumerate}
\end{theorem}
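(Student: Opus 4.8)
The plan is to prove both parts by reducing to the classical finiteness and compactness results for arithmetic groups, the key external input being the theory of Siegel domains / reduction theory, or equivalently the Borel--Harish-Chandra theorem for (1) and the Borel--Harish-Chandra plus Godement theorem for (2); since the excerpt cites \cite{borel1969}, I would simply invoke those results, so the real content of the ``proof'' is to set up the right reductions and handle the passage between $G(\mathbb{R})$ and subgroups.

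For part (1): First I would reduce to the case $\Gamma = G(\mathbb{Z})_r$ for some faithful representation $r$, using that any two arithmetic subgroups are commensurable and that a subgroup commensurable with a lattice is a lattice (and similarly finite covolume is a commensurability invariant). Next, write $G = Z^\circ \cdot G^{\mathrm{der}}$ where $Z^\circ$ is the connected center (a torus) and $G^{\mathrm{der}}$ is semisimple; note $\Hom(G,\mathbb{G}_m) = 0$ if and only if $Z^\circ$ is trivial, i.e. $G$ is semisimple. If $Z^\circ$ is nontrivial and anisotropic it still contributes infinite covolume because a nontrivial $\mathbb{Q}$-anisotropic torus like $(\mathbb{G}_m)_{L/\mathbb{Q}}^{(1)}$ has $\Gamma\backslash T(\mathbb{R})$ of infinite volume only in the split directions --- here I would be careful: the precise statement is that $\Gamma\backslash G(\mathbb{R})$ has finite volume iff $X^*(G)_{\mathbb{Q}} = 0$, i.e. $G$ has no nontrivial $\mathbb{Q}$-rational character, which is exactly $\Hom(G,\mathbb{G}_m)=0$ as algebraic groups over $\mathbb{Q}$. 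The ``if'' direction is the Borel--Harish-Chandra theorem (\cite{borel1969}, Theorem 9.4); the ``only if'' direction follows by observing that a nontrivial character $\chi\colon G\to\mathbb{G}_m$ induces $\Gamma\backslash G(\mathbb{R})\to \chi(\Gamma)\backslash\mathbb{R}^\times$, and $\chi(\Gamma)$ is arithmetic in $\mathbb{G}_m(\mathbb{Q})$ by Theorem \ref{h33}, hence $\{\pm1\}$ up to finite index, so the target has infinite volume, and fibering gives infinite volume upstairs (one checks the fibers have constant volume using invariance of Haar measure, or just that the pushforward of a finite measure is finite). The last clause, that $\Gamma$ is a lattice if $G$ is semisimple, is then immediate since semisimple groups have no nontrivial characters.

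For part (2): Again reduce to $\Gamma = G(\mathbb{Z})_r$. Compactness of $\Gamma\backslash G(\mathbb{R})$ certainly requires finite volume, hence $\Hom(G,\mathbb{G}_m)=0$ by part (1). Granting that, the Godement criterion (\cite{borel1969}, Theorem 11.8, or \cite{mostow.tamagawa1961}) states that $\Gamma\backslash G(\mathbb{R})$ is compact if and only if $G(\mathbb{Q})$ contains no nontrivial unipotent element; I would invoke this directly. For orientation I would sketch the two implications: if there is a nontrivial unipotent $u\in G(\mathbb{Q})$, then the $\mathbb{Q}$-subgroup it generates is $\mathbb{G}_a$ (or a unipotent group containing $\mathbb{G}_a$), and the corresponding closed orbit/horospherical direction yields a divergent sequence in $\Gamma\backslash G(\mathbb{R})$ with no convergent subsequence --- concretely, conjugates $g^{-1} u^n g$ escape every compact set by a Mahler-criterion argument applied to the lattice $r(\Gamma)$ acting on $\mathbb{Q}^n$; conversely, absence of unipotents forces every $\mathbb{Q}$-parabolic to have no unipotent radical, so $G$ is $\mathbb{Q}$-anisotropic, and then reduction theory (Siegel sets consist of finitely many translates with the torus part bounded) gives a compact fundamental domain.

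\textbf{Main obstacle.} The genuinely hard inputs --- the Borel--Harish-Chandra finiteness theorem and the Godement/Mostow--Tamagawa compactness criterion --- are exactly what \cite{borel1969} supplies, so in the write-up they are black boxes. The step requiring actual care is the \emph{``only if''} direction of part (1): translating ``$\Hom(G,\mathbb{G}_m)\neq 0$'' into ``infinite covolume'' cleanly, making sure one uses a $\mathbb{Q}$-rational character and correctly invokes Theorem \ref{h33} to see its image in $\Gamma$ is infinite-covolume in $\mathbb{R}^\times$, and then arguing that a fibration with base of infinite measure (and locally constant fiber measure) has infinite total measure. I expect that bookkeeping, plus the reduction-to-$G(\mathbb{Z})$ preliminaries via commensurability invariance, to be where the proof spends its words; everything else is citation.
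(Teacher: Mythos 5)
Your proposal matches the paper's treatment: the paper's entire proof of Theorem \ref{h34} is the citation ``See \cite{borel1969}, 13.2, 8.4'' (with a footnote attributing (a) to Siegel and Borel--Harish-Chandra and (b) to Godement/Mostow--Tamagawa/Borel--Harish-Chandra), and everything you add --- the commensurability reduction, the ``only if'' of (a) via a nontrivial $\mathbb{Q}$-character together with Theorem \ref{h33} and a pushforward-of-measure argument, and the sketch of both directions of the compactness criterion --- is a correct unpacking of what those citations contain. The bookkeeping you flag as delicate is sound as stated, so the proposal is fine; it is simply more explicit than the paper chooses to be.
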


\begin{proof}
See \cite{borel1969}, 13.2, 8.4.\footnote{Statement (a) was proved in
particular cases by Siegel and others, and in general by Borel and
Harish-Chandra (1962).\nocite{borelH1962} Statement (b) was conjectured by
Godement, and proved independently by Mostow and Tamagawa (1962) and by Borel
and Harish-Chandra (1962). \nocite{borelH1962} \nocite{mostowT1962}}
\end{proof}

Let $k$ be a subfield of $\mathbb{C}{}$. An automorphism $\alpha$ of a
$k$-vector space $V$ is said to be \emph{neat}%
\index{neat}
if its eigenvalues in $\mathbb{C}{}$ generate a torsion free subgroup of
$\mathbb{C}^{\times}$. Let $G$ be an algebraic group over $\mathbb{Q}{}$. An
element $g\in G(\mathbb{Q}{})$ is \emph{neat}%
\index{neat}
if $\rho(g)$ is neat for one faithful representation $G\hookrightarrow\GL(V)$,
in which case $\rho(g)$ is neat for every representation $\rho$ of $G$ defined
over a subfield of $\mathbb{C}{}$. A subgroup of $G(\mathbb{Q}{})$ is
\emph{neat}%
\index{neat}
if all its elements are. See \cite{borel1969}, \S 17.

\begin{theorem}
\label{h35}Let $G$ be an algebraic group over $\mathbb{Q}{}$, and let $\Gamma$
be an arithmetic subgroup of $G(\mathbb{Q}{})$. Then, $\Gamma$ contains a neat
subgroup of finite index. In particular, $\Gamma$ contains a torsion free
subgroup of finite index.
\end{theorem}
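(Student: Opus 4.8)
The plan is to reduce to a local statement at each finite prime and then use a congruence condition to kill torsion. First I would fix a faithful representation $G\hookrightarrow\GL(V)$ over $\mathbb{Q}$ and choose a $\mathbb{Z}$-lattice $L$ in $V$, so that $\Gamma$ may be assumed (after replacing it by a commensurable subgroup, which changes nothing for the conclusion) to be contained in the stabilizer $G(\mathbb{Z})$ of $L$. The key elementary observation is that an element $g\in\GL(L)$ whose eigenvalues generate a torsion-free subgroup of $\mathbb{C}^\times$ is automatically neat, and that the failure of torsion-freeness of the eigenvalue group is detected by a single congruence: if $g\equiv 1 \pmod{\ell}$ for a prime $\ell\geq 3$ (or $\pmod 4$ if $\ell=2$), then no eigenvalue of $g$ can be a nontrivial root of unity.

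The main step is therefore to show that $\Gamma(\ell)\overset{\text{def}}{=}\{g\in\Gamma\mid r(g)\equiv I \bmod \ell\}$ is neat for a suitable $\ell$, and that it has finite index in $\Gamma$. Finite index is immediate: $\Gamma(\ell)$ is the kernel of the reduction homomorphism $\Gamma\to\GL_n(\mathbb{Z}/\ell\mathbb{Z})$, and the target is finite. For neatness, I would argue as follows. Let $g\in\Gamma(\ell)$ and let $\zeta$ be an eigenvalue of $r(g)$ in $\mathbb{C}$; then $\zeta$ is an algebraic integer, a unit (since $\det r(g)=\pm1$ is forced up to finite index, or more simply since $r(g)\in\GL_n(\mathbb{Z})$), and the characteristic polynomial of $r(g)$ is congruent to $(T-1)^n$ modulo $\ell$. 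If some product of eigenvalues (equivalently, some eigenvalue of an exterior power, i.e., of $r(g)$ acting on a tensor construction) were a primitive $m$th root of unity $\omega$ with $m>1$, then working in the ring of integers of $\mathbb{Q}(\omega)$ and reducing modulo a prime above $\ell$, one sees $\omega\equiv 1$, hence $\ell\mid (1-\omega)$; but the norm of $1-\omega$ is a power of a prime $p$ (when $m$ is a prime power $p^k$) or $1$ (otherwise), so this is impossible once $\ell$ is chosen to avoid finitely many constraints — in fact any $\ell\geq 3$ works because a primitive $m$th root of unity reducing to $1$ mod a prime above $\ell$ forces $\ell\mid m$ for $m$ a prime power, contradiction, and the general case reduces to the prime-power case.

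The hard part, such as it is, is packaging the eigenvalue bookkeeping cleanly: the eigenvalues of $r(g)$ themselves need not lie in a small field, so one passes to $\GL_n(\overline{\mathbb{Q}})$ or, better, notes that whether the eigenvalue group is torsion-free can be checked after any finite extension, and the reduction-mod-$\ell$ trick is insensitive to this. I would also remark that this is exactly the content of \cite{borel1969}, \S 17, so in the write-up I can legitimately either give the short argument above or simply cite it. Finally, the torsion-free assertion follows from neatness: if $\gamma\in\Gamma(\ell)$ has finite order $m$, then $r(\gamma)$ is diagonalizable with eigenvalues that are $m$th roots of unity generating a finite subgroup of $\mathbb{C}^\times$, forcing $m=1$ by neatness. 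Hence $\Gamma(\ell)$ is torsion free of finite index in $\Gamma$, which proves the theorem.
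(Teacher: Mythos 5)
The approach---killing torsion in the eigenvalue group via a congruence condition---is the right one and is exactly what Borel (1969), \S 17 does; the paper itself only cites this reference without giving the argument, so you were supplying a missing proof. But there is a genuine gap in the key eigenvalue step. From $r(g)\equiv I\pmod{\ell}$ you infer that any purported primitive $m$th root of unity $\omega$ appearing among (products of) eigenvalues satisfies $\omega\equiv 1$ modulo a prime $\mathfrak{q}$ above $\ell$, and you conclude via norms that $\ell\mid m$ when $m$ is a prime power, ``contradiction.'' But $\ell\mid m$ is \emph{not} a contradiction: when $m$ is a power of $\ell$, the prime $\mathfrak{q}$ above $\ell$ in $\mathbb{Z}[\omega]$ is exactly $(1-\omega)$, so $\omega\equiv 1\pmod{\mathfrak{q}}$ holds automatically and rules nothing out. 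The intermediate inference ``$\omega\equiv 1\pmod{\mathfrak{q}}$, hence $\ell\mid(1-\omega)$'' is also a non-sequitur: congruence modulo a single prime $\mathfrak{q}$ above $\ell$ gives only $\mathfrak{q}\mid(1-\omega)$, which is far weaker than $\ell\mid(1-\omega)$ precisely when $\ell$ is ramified in $\mathbb{Q}(\omega)$, i.e.\ precisely in the $m=\ell^{k}$ case you need to exclude.

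The repair is to extract the stronger divisibility directly from the matrix congruence rather than from residue fields. Since $r(g)-I\in\ell\,M_n(\mathbb{Z})$, and the same congruence persists for $r(g)$ acting on every $L^{\otimes a}\otimes(L^\vee)^{\otimes b}$, every eigenvalue $\omega$ of $r(g)$ on such a construction satisfies $\omega-1=\ell\mu$ with $\mu$ an eigenvalue of an integer matrix, hence an algebraic integer; so one has genuinely $\ell\mid(1-\omega)$ in $\overline{\mathbb{Z}}$, not merely $\mathfrak{q}\mid(1-\omega)$. Taking norms from $\mathbb{Q}(\omega)$ down to $\mathbb{Q}$ then gives $\ell^{\varphi(m)}\mid\Phi_m(1)$, and since $\Phi_m(1)$ equals $p$ for $m=p^{k}$ and $1$ otherwise, this forces $\ell=p$ and $\varphi(m)=1$, i.e.\ $m=2$ and $\ell=2$ --- impossible for $\ell\ge 3$ (and for $\ell=2$ the $\bmod\ 4$ version gives $4^{\varphi(m)}\mid\Phi_m(1)$, which also fails for every $m>1$). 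With this correction your argument is complete; the rest of the proposal (reduction to $\Gamma\subset G(\mathbb{Z})$, finite index of $\Gamma(\ell)$, passage through tensor constructions, and deducing torsion-freeness from neatness) is sound.
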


\begin{proof}
\noindent In fact, the neat subgroup can be defined by congruence conditions.
See \cite{borel1969}, 17.4.
\end{proof}

\begin{definition}
\label{h35d}A semisimple algebraic group $G$ over $\mathbb{Q}{}$ is said to be
of \emph{compact type}%
\index{algebraic group!of compact type}
if $G(\mathbb{R}{})$ is compact, and it is said to be of \emph{algebraic
group!of noncompact type}%
\index{noncompact type}
if it does not contain a nontrivial connected normal algebraic subgroup of
compact type.
\end{definition}

Thus a simply connected or adjoint group over $\mathbb{Q}{}$ is of compact
type if all of its almost simple factors are of compact type, and it is of
noncompact type if \textit{none} of its almost simple factors is of compact
type. In particular, an algebraic group may be of neither type.

\begin{theorem}
[Borel density theorem]\label{h35e}Let $G$ be a semisimple algebraic group
over $\mathbb{Q}{}$. If $G$ is of noncompact type, then every arithmetic
subgroup of $G(\mathbb{Q}{})$ is dense in the Zariski topology.
\end{theorem}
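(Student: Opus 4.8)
The plan is to reduce to the case of an irreducible lattice in a single almost simple factor, and then invoke the classical measure-theoretic argument of Borel. First I would replace $G$ by its adjoint group: since $G$ is of noncompact type, so is $G^{\ad}$, the map $\ad\colon G\to G^{\ad}$ is surjective, and by Theorem \ref{h33} the image $\ad(\Gamma)$ is arithmetic in $G^{\ad}(\mathbb{Q})$; moreover the kernel of $\ad$ is the (finite) centre, so the Zariski closure of $\Gamma$ in $G$ is all of $G$ if and only if the Zariski closure of $\ad(\Gamma)$ in $G^{\ad}$ is all of $G^{\ad}$. So I may assume $G$ is adjoint, hence a product of its almost simple factors, none of which is of compact type. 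Passing to real points, $H = G(\mathbb{R})$ is (up to finite index issues) a connected semisimple Lie group with no compact factors, and by Theorem \ref{h34}(a) the group $\Gamma$ is a lattice in $H$. By Theorem \ref{h31} I may further decompose $H = H_1\times\cdots\times H_r$ so that each $\Gamma\cap H_i$ is an irreducible lattice, and after projecting it suffices to treat each factor separately — so the essential case is that of an irreducible lattice $\Gamma$ in an almost simple real Lie group $H$ with no compact factors.

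Next I would set up the dynamical argument. Let $Z$ denote the Zariski closure of $\Gamma$ in $G$; it is an algebraic subgroup defined over $\mathbb{Q}$, normalized by $\Gamma$, and I want to show $Z = G$. Consider the action of $G(\mathbb{R})$ on a suitable real algebraic variety — the cleanest choice is to let $G$ act on the projective space $\mathbb{P}(W)$ attached to a representation $G\to\GL(W)$ in which the subvariety of $\Gamma$-fixed points encodes $Z$; concretely, take $W$ to be a space of tensors cutting out $Z$, so that a line fixed by all of $\Gamma$ corresponds to a tensor defining $Z$, and I must show any $\Gamma$-fixed line is $G$-fixed. The key input is the Furstenberg-type lemma: a lattice $\Gamma$ in $H$ acts on $\mathbb{P}(W)$ in such a way that any $\Gamma$-invariant probability measure is $H$-invariant, because $H$ has no compact factors and $\Gamma$ has finite covolume (one pushes an $H$-invariant measure on $H/\Gamma$ forward and uses that the stabilizer in $\GL(W)$ of a probability measure on projective space is compact-by-algebraic, contradicting noncompactness of the factors unless the measure is $H$-invariant). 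Applying this to the Dirac mass at a $\Gamma$-fixed point of $\mathbb{P}(W)$ forces that point to be $H$-fixed, hence $G$-fixed since $G$ is connected and $H = G(\mathbb{R})^+$ is Zariski dense in $G$. Therefore every $\Gamma$-fixed tensor is $G$-fixed, which gives $Z = G$.

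The main obstacle is the measure-rigidity lemma itself: showing that a $\Gamma$-invariant measure on $\mathbb{P}(W)$ is automatically $H$-invariant. This is where the hypothesis "no compact factors" is used in an essential way, and where one must be careful about the difference between $G(\mathbb{R})$ and $G(\mathbb{R})^+$ and about the reduction to irreducible lattices — the irreducibility from Theorem \ref{h31} is exactly what lets me conclude that the measure, invariant under a dense subgroup, is invariant under the whole (simple) factor. A secondary technical point is the descent at the end: having shown $\Gamma$ is Zariski dense in $G_{\mathbb{R}}$ (or rather in each real factor), I need that this implies Zariski density in $G$ over $\mathbb{Q}$; this follows because $G$ is connected and an algebraic subgroup of $G$ containing a Zariski-dense subset of $G(\mathbb{R})$ must equal $G$. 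In the write-up I would simply cite a standard reference (e.g. \cite{borel1969} or \cite{morris2008}) for the measure-rigidity lemma rather than reproving it, since the paper's style is to quote such foundational results.
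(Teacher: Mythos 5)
The paper gives no proof here; it simply cites \cite{borel1969}, 15.12, where Borel establishes the result by his original algebraic argument (a lemma about automorphisms of algebraic groups having no eigenvalue of modulus one), not by invariant measures. What you sketch is the other standard route, the Furstenberg measure-rigidity argument, which is the one found in Zimmer's and Morris's books that the paper cites for nearby results. Your preliminary reductions --- to the adjoint group via Theorem \ref{h33}, to a lattice in $G(\mathbb{R})$ via Theorem \ref{h34}(a), and into irreducible pieces via Theorem \ref{h31} --- are all sound and use only results that appear before Theorem \ref{h35e}, so there is no circularity. Two refinements. First, the decomposition into irreducible lattices is an unnecessary detour: the measure argument applies directly to any lattice in a connected semisimple Lie group with no compact factors, since Furstenberg's lemma (the stabilizer in $\mathrm{PGL}(W)$ of a probability measure on $\mathbb{P}(W)$ is compact modulo the stabilizer of a proper linear configuration) can be run one noncompact simple factor at a time. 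Second, rather than applying measure rigidity to the Dirac mass at the $\Gamma$-fixed point $[v]$, the cleanest version pushes forward the finite $H$-invariant measure on $H/\Gamma$ under the orbit map $h\Gamma\mapsto h[v]$; Furstenberg's lemma then forces the support of the resulting $H$-invariant probability measure to lie in the $H$-fixed locus, and $[v]$ is in that support, hence $H$-fixed. The statement you invoke in passing --- that any $\Gamma$-invariant probability measure on $\mathbb{P}(W)$ is automatically $H$-invariant --- is itself a theorem (a measure-theoretic form of Borel density, due to Dani and others), but it is normally \emph{deduced from} the density theorem rather than used to prove it, so in a careful write-up you should appeal to Furstenberg's structure lemma rather than to that corollary.
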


\begin{proof}
See \cite{borel1969}, 15.12.
\end{proof}

\begin{proposition}
\label{h34a}Let $G$ be a simply connected algebraic group over $\mathbb{Q}{}$
of noncompact type, and let $\Gamma$ be an arithmetic subgroup of
$G(\mathbb{Q}{})$. Then $\Gamma$ is irreducible as a lattice in $G(\mathbb{R}%
{})$ if and only if $G$ is almost simple.
\end{proposition}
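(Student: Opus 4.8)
The plan is to deduce this from Theorem \ref{h31}, which decomposes $H = G(\mathbb{R}{})$ (with trivial centre) as a product of factors on which $\Gamma$ cuts out irreducible lattices. The subtlety is that $G$ is simply connected, hence $G(\mathbb{R}{})$ need \emph{not} have trivial centre, so I must first pass to the adjoint group. Write $G = G_1 \times \cdots \times G_r$ as a product of its almost simple factors over $\mathbb{Q}{}$ (these are the minimal connected normal $\mathbb{Q}{}$-subgroups; since $G$ is simply connected, it is genuinely a direct product of almost simple simply connected groups $G_i$). Correspondingly $G^{\ad} = G_1^{\ad} \times \cdots \times G_r^{\ad}$, and $G^{\ad}(\mathbb{R}{})$ has trivial centre with no compact factors (the noncompact-type hypothesis guarantees each $G_i^{\ad}(\mathbb{R}{})$ is a nontrivial product of noncompact simple Lie groups). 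Let $\bar\Gamma$ be the image of $\Gamma$ in $G^{\ad}(\mathbb{Q}{})$; by Theorem \ref{h33} it is an arithmetic subgroup, hence by Theorem \ref{h34}(a) a lattice in $G^{\ad}(\mathbb{R}{})$, and by Proposition \ref{h32a} (applied to $G(\mathbb{R}{}) \to G^{\ad}(\mathbb{R}{})$, which has compact — indeed finite — kernel) irreducibility of $\Gamma$ in $G(\mathbb{R}{})$ is equivalent to irreducibility of $\bar\Gamma$ in $G^{\ad}(\mathbb{R}{})$. So it suffices to prove the statement for $\bar\Gamma$ in the adjoint, trivial-centre setting.

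Now I apply Theorem \ref{h31} to $H = G^{\ad}(\mathbb{R}{})$ and $\bar\Gamma$: it gives a decomposition $H = H_1 \times \cdots \times H_s$ into Lie subgroups such that each $\bar\Gamma \cap H_j$ is an irreducible lattice in $H_j$ and the product of these has finite index in $\bar\Gamma$. The key point is to match this analytic decomposition with the algebraic one $G^{\ad}(\mathbb{R}{}) = \prod_i G_i^{\ad}(\mathbb{R}{})$: each $H_j$, being a connected normal Lie subgroup of $G^{\ad}(\mathbb{R}{})$, is a product of a subset of the $G_i^{\ad}(\mathbb{R}{})$ (the Lie algebra $\Lie(H_j)$ is an ideal in $\Lie(G^{\ad})_{\mathbb{R}{}} = \bigoplus \Lie(G_i^{\ad})_{\mathbb{R}{}}$, and each $\Lie(G_i^{\ad})_{\mathbb{R}{}}$ is simple as a \emph{real} Lie algebra — here I use that $G_i$ is almost simple over $\mathbb{Q}{}$ of noncompact type, so $G_i^{\ad}(\mathbb{R}{})$ is $\mathbb{R}{}$-simple, i.e.\ has simple Lie algebra). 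Thus the analytic decomposition refines, or rather coincides with a grouping of, the factors $G_i^{\ad}(\mathbb{R}{})$.

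From here the equivalence is immediate in both directions. If $G$ is almost simple, then $r = 1$, there is a single algebraic factor $G^{\ad}(\mathbb{R}{})$ with simple Lie algebra, so the decomposition of Theorem \ref{h31} can only be trivial, $s = 1$, $H_1 = H$, and $\bar\Gamma$ (up to finite index, hence by the last sentence of Section 3.1's opening — every subgroup commensurable with a lattice is a lattice, and irreducibility is visibly commensurability-invariant given the Borel density theorem) is irreducible. Conversely, if $G$ is not almost simple, then $r \geq 2$; I claim $\bar\Gamma$ is reducible. Indeed, by Theorem \ref{h35e} (Borel density) $\bar\Gamma$ is Zariski dense in $G^{\ad}$, and by the arithmeticity/commensurability with $G^{\ad}(\mathbb{Z}{})$ one sees $\bar\Gamma$ is commensurable with $\prod_i (\bar\Gamma \cap G_i^{\ad}(\mathbb{R}{}))$, each factor a lattice in $G_i^{\ad}(\mathbb{R}{})$; then $\bar\Gamma \cdot \bigl(\prod_{i\geq 2} G_i^{\ad}(\mathbb{R}{})\bigr)$ fails to be dense because its closure is contained in $(\text{closure of }\bar\Gamma\cap G_1^{\ad}(\mathbb{R}{})) \times \prod_{i\geq 2} G_i^{\ad}(\mathbb{R}{})$, a proper subgroup. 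Taking $N = \prod_{i \geq 2} G_i^{\ad}(\mathbb{R}{})$, a noncompact closed normal subgroup, violates irreducibility.

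The main obstacle I anticipate is the bookkeeping around \emph{commensurability}: passing between $\Gamma$, its finite-index subgroups, the images of these in $G^{\ad}$, and the products of intersections with factors, while keeping track that ``irreducible lattice'' is stable under all these operations. The cleanest way to handle it is to observe once and for all that whether a lattice $\Lambda$ in a semisimple $H$ with trivial centre is irreducible depends only on the commensurability class of $\Lambda$ — because for a noncompact closed normal $N$, $\Lambda N$ is dense iff $\Lambda' N$ is dense for any $\Lambda'$ commensurable with $\Lambda$ (the closures differ by a subgroup of finite index in a closed subgroup of $H$, and a proper finite-index closed subgroup of $H/\overline{(\text{stuff})}$ argument, or more simply: density of $\Lambda N$ in $H$ is equivalent to density of the image of $\Lambda$ in $H/N$, and finite-index subgroups of dense-or-not subgroups behave the same way because $H/N$ is connected). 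Once that lemma is in hand, all the reductions above become routine.
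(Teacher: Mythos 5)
The student's ``$\Rightarrow$'' direction is essentially sound and coincides with the paper's (up to some extra bookkeeping). The ``$\Leftarrow$'' direction, however, contains a decisive error. You assert, in the course of matching the decomposition of Theorem~\ref{h31} with the algebraic almost-simple factors, that ``each $\Lie(G_i^{\ad})_{\mathbb{R}}$ is simple as a real Lie algebra — here I use that $G_i$ is almost simple over $\mathbb{Q}$ of noncompact type, so $G_i^{\ad}(\mathbb{R})$ is $\mathbb{R}$-simple.'' This implication is false, and it fails in exactly the cases that make the proposition interesting. If $G_i = (\SL_2)_{F/\mathbb{Q}}$ with $F$ a real quadratic field, then $G_i$ is almost simple over $\mathbb{Q}$, of noncompact type, yet $G_{i,\mathbb{R}}^{\ad} \simeq \PGL_2 \times \PGL_2$, which is not simple. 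So when you later say ``If $G$ is almost simple, then $\ldots$ there is a single algebraic factor $G^{\ad}(\mathbb{R})$ with simple Lie algebra, so the decomposition of Theorem~\ref{h31} can only be trivial,'' you have assumed away the very case — $\SL_2(\mathbb{Z}[\sqrt2])$ in $\SL_2(\mathbb{R})\times\SL_2(\mathbb{R})$, mentioned earlier in this section — that requires a real argument.

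The missing idea is Galois descent. The paper's proof of ``$\Leftarrow$'' takes the decomposition $G(\mathbb{R}) = H_1 \times\cdots\times H_r$ from Theorem~\ref{h31}, observes that it can be matched with an algebraic decomposition $G_F = G_1 \times\cdots\times G_r$ over some finite Galois extension $F/\mathbb{Q}$ in $\mathbb{R}$ (each $H_i = G_i(\mathbb{R})$), and then uses the Borel density theorem: since each $\Gamma_i = \Gamma\cap H_i$ is Zariski-dense in $G_i$ and $\Gamma\subset G(\mathbb{Q})$ is preserved by $\Gal(F/\mathbb{Q})$, the decomposition $\{G_i\}$ is permuted by the Galois action — and in fact must be Galois-stable, hence descends to a decomposition of $G$ over $\mathbb{Q}$. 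Almost-simplicity of $G$ over $\mathbb{Q}$ then forces $r=1$. Without this step your argument establishes nothing beyond the trivial case where $G_{\mathbb{R}}$ is already almost simple, and the interesting direction of the proposition remains unproved. Everything else — the reduction to the adjoint case via Proposition~\ref{h32a}, the commensurability-invariance of irreducibility, the ``$\Rightarrow$'' direction — is fine and matches the paper in spirit.
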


\begin{proof}
$\Rightarrow$: Suppose $G=G_{1}\times G_{2}$, and let $\Gamma_{1}$ and
$\Gamma_{2}$ be arithmetic subgroups in $G_{1}(\mathbb{Q}{})$ and
$G_{2}(\mathbb{Q}{})$. Then $\Gamma_{1}\times\Gamma_{2}$ is an arithmetic
subgroup of $G(\mathbb{Q}{})$, and so $\Gamma$ is commensurable with it, but
$\Gamma_{1}\times\Gamma_{2}$ is not irreducible.

$\Leftarrow$: Let $G(\mathbb{R}{})=H_{1}\times\cdots\times H_{r}$ be a
decomposition of the Lie group $G(\mathbb{R}{})$ such that $\Gamma_{i}%
\overset{\textup{{\tiny def}}}{=}\Gamma\cap H_{i}$ is an irreducible lattice
in $H_{i}$ (cf. Theorem \ref{h31}). There exists a finite Galois extension $F$
of $\mathbb{Q}{}$ in $\mathbb{R}{}$ and a decomposition $G_{F}=G_{1}%
\times\cdots\times G_{r}$ of $G_{F}$ into a product of algebraic subgroups
$G_{i}$ over $F$ such that $H_{i}=G_{i}(\mathbb{R}{})$ for all $i$. Because
$\Gamma_{i}$ is Zariski dense in $G_{i}$ (Borel density theorem), this last
decomposition is stable under the action of $\Gal(F/\mathbb{Q}{})$, and hence
arises from a decomposition over $\mathbb{Q}{}$. This contradicts the almost
simplicity of $G$ unless $r=1$.
\end{proof}

The rank, $\rank(G),$
\index{rank}%
of a semisimple algebraic group over $\mathbb{R}{}$ is the dimension of a
maximal split torus in $G$, i.e., $\rank(G)=r{}$ if $G$ contains an algebraic
subgroup isomorphic to $\mathbb{G}_{m}^{r}$ but not to $\mathbb{G}_{m}^{r+1}$.

\begin{theorem}
[Margulis superrigidity theorem]\label{h35f}Let $G$ and $H$ be algebraic
groups over $\mathbb{Q}{}$ with $G$ simply connected and almost simple. Let
$\ \Gamma$ be an arithmetic subgroup of $G(\mathbb{Q})$, and let $\delta
\colon\Gamma\rightarrow H(\mathbb{Q})$ be a homomorphism. If
$\rank(G_{\mathbb{R}{}})\geq2$, then the Zariski closure of $\delta(\Gamma)$
in $H$ is a semisimple algebraic group (possibly not connected), and there is
a unique homomorphism $\varphi\colon G\rightarrow H$ of algebraic groups such
that $\varphi(\gamma)=\delta(\gamma)$ for all $\gamma$ in a subgroup of finite
index in $\Gamma$.
\end{theorem}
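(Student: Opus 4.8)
The plan is to follow the ergodic‑theoretic proof of this theorem (Margulis; see also Zimmer's cocycle‑superrigidity reformulation), whose analytic core is a rigidity statement for the right action of $G(\mathbb{R})$ on the finite‑volume space $\Gamma\backslash G(\mathbb{R})$. First I would make the standard reductions. By Theorem~\ref{h34}, $\Gamma$ is a lattice in $G(\mathbb{R})$; since $G_{\mathbb{R}}$ is not compact (as $\rank(G_{\mathbb{R}})\ge 2$), the Borel density theorem (Theorem~\ref{h35e}) shows $\Gamma$ is Zariski dense in $G$, so \emph{uniqueness} of $\varphi$ is automatic — two algebraic homomorphisms $G\to H$ agreeing on a Zariski‑dense subgroup agree on $G$. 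By Theorem~\ref{h35} I may replace $\Gamma$ by a torsion‑free neat subgroup of finite index, which only weakens the conclusion. Replacing $H$ by the Zariski closure of $\delta(\Gamma)$ and then $\Gamma$ by the preimage of $H^{\circ}$, I may assume $H$ is connected and $\delta(\Gamma)$ is Zariski dense in $H$; it then remains to show $H$ is semisimple and that $\delta$ extends to a surjective morphism $G\to H$.

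Next I would prove $H$ is semisimple by ruling out a nontrivial radical $R$. Higher‑rank lattices are rigid in ways that forbid a Zariski‑dense image in a group with a nontrivial solvable normal subgroup: $\Gamma$ has Kazhdan's property (T), so $\Gamma^{\mathrm{ab}}$ is finite and, more generally, $H^{1}(\Gamma,V)$ vanishes for the relevant finite‑dimensional $\Gamma$‑representations $V$. Applied to the torus quotient of a Levi subgroup of $H$, this forces that quotient to be trivial (a finite image cannot be Zariski dense in a positive‑dimensional torus); applied to the successive vector‑group quotients of the unipotent radical $R_{u}(H)$, it forces $R_{u}(H)=1$. Hence $R=1$ and $H$ is semisimple; passing to $H/Z(H)$ I may further assume $H$ is adjoint.

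The construction of the algebraic homomorphism $\varphi_{\mathbb{R}}\colon G_{\mathbb{R}}\to H_{\mathbb{R}}$ is the heart of the argument and the step I expect to carry essentially all the technical weight. Combining $\delta$ with a Borel fundamental domain for $\Gamma$ in $G(\mathbb{R})$ produces a measurable cocycle $\alpha\colon G(\mathbb{R})\times(\Gamma\backslash G(\mathbb{R}))\to H(\mathbb{R})$. Using amenability of a minimal parabolic $P\subset G(\mathbb{R})$, one obtains a measurable $\alpha$‑equivariant map from the boundary $G(\mathbb{R})/P$ into the space of probability measures on a flag variety of $H$. The hypothesis $\rank(G_{\mathbb{R}})\ge 2$ now enters decisively — through Moore's ergodicity theorem (ergodicity of the actions of noncompact subgroups) together with an analysis of the various directions in the higher‑rank group — to show this measure‑valued map is almost everywhere a Dirac mass, i.e.\ a rational $H$‑equivariant map $G_{\mathbb{R}}/P\to H_{\mathbb{R}}/Q$ (defined on a dense open set) for some parabolic $Q$ of $H$. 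Zariski density of $\delta(\Gamma)$ forces $Q$ proper and, after the usual rationality and equivariance bookkeeping, yields an algebraic homomorphism $\varphi_{\mathbb{R}}\colon G_{\mathbb{R}}\to H_{\mathbb{R}}$ whose associated cocycle is cohomologous to $\alpha$ modulo a cocycle valued in a compact subgroup; since $H$ is adjoint semisimple and $\delta(\Gamma)$ is Zariski dense, this compact correction is trivial on a finite‑index subgroup, so $\varphi_{\mathbb{R}}(\gamma)=\delta(\gamma)$ for all $\gamma$ in a finite‑index subgroup $\Gamma'$ of $\Gamma$.

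Finally I would descend to $\mathbb{Q}$ and record surjectivity. The graph of $\varphi_{\mathbb{R}}$ is the Zariski closure in $(G\times H)_{\mathbb{R}}$ of $\{(\gamma,\delta(\gamma)):\gamma\in\Gamma'\}$; these are $\mathbb{Q}$‑rational points of $G\times H$, so the Zariski closure of this set formed as a $\mathbb{Q}$‑subvariety has the graph of $\varphi_{\mathbb{R}}$ as its real points and is therefore the graph of a morphism $\varphi\colon G\to H$ defined over $\mathbb{Q}$ with $\varphi|_{\Gamma'}=\delta|_{\Gamma'}$. Since $\varphi(G)$ is a closed subgroup of $H$ containing the Zariski‑dense set $\delta(\Gamma')$, it equals $H$, so $\varphi$ is surjective; uniqueness was noted above.
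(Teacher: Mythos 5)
The paper does not give an argument of its own; the ``proof'' is the one-line citation to \cite{margulis1991}, Chapter~VIII, Theorem~B. Your proposal is therefore not so much an alternative route as an honest sketch of the proof that lives behind that citation: the standard Margulis--Zimmer ergodic-theoretic argument (reductions via Borel density and neatness, cocycle over $\Gamma\backslash G(\mathbb{R})$, amenability of a minimal parabolic producing a boundary map to probability measures on a flag variety of $H$, higher rank plus Moore ergodicity forcing the map to be a rational point map, then rationality/descent). The outline is faithful to that source and the final descent-to-$\mathbb{Q}$ step (the $\mathbb{Q}$-Zariski closure of the graph of $\gamma\mapsto(\gamma,\delta(\gamma))$ is the graph of a $\mathbb{Q}$-morphism) is a clean way to phrase the rationality bookkeeping.

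One step of your sketch is asserted more cheaply than it can actually be obtained. You rule out a nontrivial unipotent radical $R_u(H)$ by invoking $H^1(\Gamma,V)=0$ for ``the relevant finite-dimensional $\Gamma$-representations $V$'' and attribute this to property~(T). Property~(T) gives $H^1(\Gamma,\mathcal{H})=0$ for \emph{unitary} $\Gamma$-modules $\mathcal{H}$ (and hence $\Gamma^{\mathrm{ab}}$ finite, which is exactly what you need to kill the torus quotient of a Levi factor). But the modules arising from the successive abelian quotients of $R_u(H)$ carry the $\Gamma$-action induced by $\delta$ and $\operatorname{Ad}$, and there is no reason for this linear action to be bounded or unitarizable; so property~(T) alone does not yield the vanishing you use. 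The vanishing of $H^1(\Gamma,V)$ for arbitrary finite-dimensional $V$ over a local field of characteristic zero is indeed true for higher-rank lattices, but it is a theorem of comparable depth to superrigidity (Margulis, Raghunathan), and in Margulis's treatment the semisimplicity of the Zariski closure is obtained by first applying superrigidity to the reductive quotient $H/R_u(H)$ and then disposing of the resulting $R_u$-valued cocycle, rather than by a frontal property~(T) argument. As written, your semisimplicity step either relies on a theorem you have not justified or risks circularity with the main statement, and this should be repaired (e.g., by the bootstrapping-through-the-Levi-quotient argument) before the sketch can stand on its own.
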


\begin{proof}
This the special case of \cite{margulis1991}, Chapter VIII, Theorem B, p.~258,
in which $K=\mathbb{Q}{}=l$, $S=\{\infty\}$, $\mathbf{G}=G$, $\mathbf{H}=H$,
and $\Lambda=\Gamma$.
\end{proof}

\subsection{Arithmetic lattices in Lie groups}

For an algebraic group $G$ over $\mathbb{Q}{}$, $G(\mathbb{R}{})$ has a
natural structure of a real Lie group, which is connected if $G$ is simply
connected (Theorem of Cartan).

Let $H$ be a connected semisimple real Lie group with no compact factors and
trivial centre. A subgroup $\Gamma$ in $H$ is \emph{arithmetic}%
\index{arithmetic subgroup}
if there exists a simply connected algebraic group $G$ over $\mathbb{Q}{}$ and
a surjective homomorphism $\varphi\colon G(\mathbb{R}{})\rightarrow H$ with
compact kernel such that $\Gamma$ is commensurable with $\varphi
(G(\mathbb{Z}{}))$. Such a subgroup is a lattice by Theorem \ref{h34}(a) and
Proposition \ref{h32a}.

\begin{example}
\label{h35a}Let $H=\SL_{2}(\mathbb{R}{})$, and let $B$ be a quaternion algebra
over a totally real number field $F$ such that $H\otimes_{F,v}\mathbb{R}%
{}\approx M_{2}(\mathbb{R}{})$ for exactly one real prime $v{}$. Let $G$ be
the algebraic group over $\mathbb{Q}{}$ such that $G(\mathbb{Q}{})=\{b\in
B\mid\mathrm{Norm}_{B/\mathbb{Q}{}}(b)=1\}$. Then $H\otimes_{\mathbb{Q}{}%
}\mathbb{R}{}\approx M_{2}(\mathbb{R}{})\times\mathbb{H}{}\times\mathbb{H}%
{}\times\cdots$ where $\mathbb{H}{}$ is usual quaternion algebra, and so there
exists a surjective homomorphism $\varphi\colon G(\mathbb{R}{})\rightarrow
\SL_{2}(\mathbb{R}{})$ with compact kernel. The image under $\varphi$ of any
arithmetic subgroup of $G(\mathbb{Q}{})$ is an arithmetic subgroup $\Gamma$ of
$\SL_{2}(\mathbb{R}{})$, and every arithmetic subgroup of $\SL_{2}%
(\mathbb{R}{})$ is commensurable with one of this form. If $F=\mathbb{Q}{}$
and $B=M_{2}(\mathbb{Q}{})$, then $G=\SL_{2\mathbb{Q}{}}$ and $\Gamma
\backslash\SL_{2}(\mathbb{R}{})$ is noncompact (see \S 1); otherwise $B$ is a
division algebra, and $\Gamma\backslash\SL_{2}(\mathbb{R}{})$ is compact by
Godement's criterion (\ref{h34}b).
\end{example}

For almost a century, $\PSL_{2}(\mathbb{R}{})$ was the only simple Lie group
known to have nonarithmetic lattices, and when further examples were
discovered in the 1960s they involved only a few other Lie groups. This gave
credence to the idea that, except in a few groups of low rank, all lattices
are arithmetic (Selberg's conjecture). This was proved by Margulis in a very
precise form.

\begin{theorem}
[Margulis arithmeticity theorem]\label{h37}Every irreducible lattice in a
semisimple Lie group is arithmetic unless the group is isogenous to
$\SO(1,n)\times($compact$)$ or $\SU(1,n)\times($compact$)$.
\end{theorem}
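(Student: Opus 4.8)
The plan is to follow Margulis's original strategy, whose engine is a strengthening of the superrigidity theorem stated above (Theorem \ref{h35f}) in which $\Gamma$ is only assumed to be an irreducible lattice in $H$ — \emph{not} yet known to be arithmetic — and in which the target is allowed to be an algebraic group over an arbitrary local field. First I would reduce to the case that $H$ has trivial centre and no compact factors: an irreducible lattice in $H'\times(\text{compact})$ maps to one in $H'$ with the same arithmeticity properties, by Proposition \ref{h32a}, and isogenies are harmless. If $\rank(H)\ge 2$ then either some almost simple factor of $H$ has real rank $\ge 2$, or $H$ is a product of at least two rank-one factors; in both situations the relevant superrigidity statement is available (in the second case, superrigidity for irreducible lattices in products). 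The rank-one simple groups $\SO(1,n)$ and $\SU(1,n)$ are excluded precisely because superrigidity fails for them.

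Second, I would pin down the field of definition. Fix a faithful representation $H\hookrightarrow\GL_{N}(\mathbb{R}{})$ — the adjoint representation will do — so that $\Gamma\subset\GL_{N}(\mathbb{R}{})$, and let $k\subset\mathbb{C}{}$ be the subfield generated by $\{\Tr(\Ad\gamma)\mid\gamma\in\Gamma\}$. Applying superrigidity to the Galois conjugates of the inclusion $\Gamma\hookrightarrow\GL_{N}(\mathbb{C}{})$, each conjugate representation extends to an algebraic representation of $H$; since $H$ admits only finitely many such up to conjugacy, the Galois orbit of $k\hookrightarrow\mathbb{C}{}$ is finite, so $k$ is a number field. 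Let $G$ be the Zariski closure of $\Gamma$ in $\GL_{N,k}$. Because $\Gamma$ is a lattice in the noncompact-type group $H$, the Borel density theorem (Theorem \ref{h35e}) gives that $G$ is semisimple with $G(\mathbb{R}{})\approx H$ up to isogeny and compact factors.

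Third — and this is the crux — I would show that, after replacing $\Gamma$ by a commensurable subgroup, $\Gamma$ consists of matrices integral at every finite place of $k$. For a finite place $w$, superrigidity over $k_{w}$ asserts that if the image of $\Gamma$ in $G(k_{w})$ is unbounded then the inclusion extends to an algebraic homomorphism over $k_{w}$; but $H$ is connected with no compact factors and admits no nontrivial continuous homomorphism to a totally disconnected group, so together with the Borel density theorem this forces the $w$-adic image to be bounded. Since a finitely generated subgroup of $\GL_{N}(k)$ is automatically integral outside a finite set $S$ of finite places, and boundedness at the places of $S$ promotes this to integrality everywhere, we get $\Gamma\subset G(\mathcal{O}{}_{k})$ up to commensurability. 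Set $G^{\prime}=(\widetilde{G})_{k/\mathbb{Q}{}}$, the restriction of scalars of the simply connected cover; then $G^{\prime}$ is a simply connected $\mathbb{Q}{}$-group, $G^{\prime}(\mathbb{Z}{})$ (lifted through the central isogeny) is commensurable with $\Gamma$, and $G^{\prime}(\mathbb{R}{})=\prod_{v\mid\infty}\widetilde{G}(k_{v})$. The archimedean factors at places $v\neq v_{0}$ must be compact, for otherwise projecting $\Gamma$ to such a factor and invoking superrigidity would contradict irreducibility. Dividing out these compact factors exhibits $\Gamma$ as arithmetic in the sense defined before Example \ref{h35a}.

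The main obstacle is the foundational superrigidity input itself, over all completions of $k$: this is the deep ergodic-theoretic theorem of Margulis (proper ergodicity of boundary actions, the multiplicative ergodic theorem, amenability of minimal parabolics), and it is exactly what requires the rank-$\ge 2$ hypothesis. A secondary subtlety is the integrality step, where ruling out unbounded $p$-adic behaviour uses that higher-rank lattices cannot act on buildings without a fixed point — a consequence of property (T). Finally, the statement as given lists \emph{only} $\SO(1,n)$ and $\SU(1,n)$ as exceptions, so the remaining rank-one groups $\mathrm{Sp}(1,n)$ and $F_{4}^{-20}$ must be absorbed by appealing instead to the harmonic-map superrigidity of Corlette and of Gromov–Schoen; incorporating those cases is a genuinely different argument from the ergodic-theoretic one and would have to be cited separately.
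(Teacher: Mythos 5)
The paper offers no proof of this theorem: the stated ``Proof'' is nothing more than a pointer to Morris, Margulis, and Zimmer. Your sketch is therefore not a variant of the paper's argument but a summary of the argument those sources contain, and as a summary it is essentially faithful to Margulis's route: (i) reduce to $H$ adjoint with no compact factors; (ii) use superrigidity (over $\mathbb{R}$ and over all local fields) to show the trace field $k=\mathbb{Q}(\Tr\Ad\gamma\mid\gamma\in\Gamma)$ is a number field and that the Zariski closure $G$ of $\Gamma$ is a semisimple $k$-group; (iii) bound the image of $\Gamma$ at every finite place of $k$ via nonarchimedean superrigidity, hence obtain integrality after shrinking $\Gamma$; (iv) apply $\mathrm{Res}_{k/\mathbb{Q}}$ to the simply connected cover and check that all archimedean factors other than the given one are compact, giving commensurability with an arithmetic group in the precise sense the paper uses (cf.\ the definition preceding Example \ref{h35a}). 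You also correctly flag that the theorem as stated excludes only $\SO(1,n)$ and $\SU(1,n)$, so the rank-one groups $\mathrm{Sp}(1,n)$ and $F_4^{-20}$ must be covered by the Corlette and Gromov--Schoen harmonic-map superrigidity, which is a genuinely different input.

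Two small points worth tightening if you were to expand the sketch. First, in the field-of-definition step, what superrigidity gives directly is that each $\Tr\Ad\gamma$ has a finite $\Aut(\mathbb{C})$-orbit (because the twisted representations land in finitely many extension classes), hence is algebraic; one then uses finite generation of $\Gamma$ to conclude $k$ is a number field, and one must also dispose of the twists $\sigma\circ\Ad$ whose image is relatively compact, which superrigidity does not extend but whose traces are then bounded and again algebraic. Second, your remark that the integrality step ``uses that higher-rank lattices cannot act on buildings without a fixed point --- a consequence of property~(T)'' conflates two formulations: boundedness of the $w$-adic image is exactly the fixed-point statement on the Bruhat--Tits building, and in Margulis's proof it is deduced from nonarchimedean superrigidity (as you state in the preceding sentence), not from property~(T). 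Property~(T) does give fixed points on trees and Hilbert spaces, but it is not the mechanism here and is not in general strong enough for higher-rank buildings; citing it alongside superrigidity makes the integrality step look like it needs two independent inputs when it needs only one.
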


\begin{proof}
\noindent For a discussion of the theorem, see \cite{morris2008}, \S 5B. For
proofs, see \cite{margulis1991}, Chapter IX, and \cite{zimmer1984}, Chapter 6.
\end{proof}

\begin{theorem}
\label{h37a}Let $H$ be the identity component of the group of automorphisms of
a hermitian symmetric domain $D$, and let $\Gamma$ be a discrete subgroup of
$H$ such that $\Gamma\backslash D$ has finite volume. If $\rank H_{i}\geq2$
for each factor $H_{i}$ in (\ref{h31}), then there exists a simply connected
algebraic group $G$ of noncompact type over $\mathbb{Q}{}$ and a surjective
homomorphism $\varphi\colon G(\mathbb{R}{})\rightarrow H$ with compact kernel
such that $\Gamma$ is commensurable with $\varphi(G(\mathbb{Z}{}))$. Moreover,
the pair $(G,\varphi)$ is unique up to a unique isomorphism.
\end{theorem}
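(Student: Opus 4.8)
The plan is to decompose $H$ according to $\Gamma$, apply Margulis' arithmeticity theorem factor-by-factor, and then assemble a single $\mathbb{Q}$-group out of the arithmetic structures on the factors, being careful about irreducibility and the uniqueness clause.

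First I would invoke Theorem \ref{h32}: since $\Gamma\backslash D$ has finite volume, $\Gamma$ is a lattice in $H$, and there is a canonical decomposition $H = H_1 \times \cdots \times H_r$ with $\Gamma_i \overset{\textnormal{def}}{=} \Gamma \cap H_i$ an irreducible lattice in $H_i$ and $\Gamma_1 \times \cdots \times \Gamma_r$ of finite index in $\Gamma$. Each $H_i$ is the identity component of the automorphism group of an indecomposable hermitian symmetric domain $D_i$, hence a simple adjoint Lie group, and by hypothesis $\rank H_i \geq 2$. Since $\rank H_i \geq 2$, no $H_i$ is isogenous to $\SO(1,n)$ or $\SU(1,n)$ (these have rank $1$), so Margulis' arithmeticity theorem (\ref{h37}) applies to the irreducible lattice $\Gamma_i$: there is a simply connected almost simple algebraic group $G_i$ over $\mathbb{Q}$ and a surjective homomorphism $\varphi_i \colon G_i(\mathbb{R}) \to H_i$ with compact kernel such that $\Gamma_i$ is commensurable with $\varphi_i(G_i(\mathbb{Z}))$. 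Because $H_i$ is noncompact, $G_i$ is of noncompact type (its almost simple factor mapping onto $H_i$ is noncompact); and by Proposition \ref{h34a}, the arithmeticity of $\Gamma_i$ forces $G_i$ to be \emph{almost simple} over $\mathbb{Q}$ (this is exactly the irreducibility of $\Gamma_i$).

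Next I would set $G = G_1 \times \cdots \times G_r$, a simply connected algebraic group over $\mathbb{Q}$ of noncompact type, with surjective homomorphism $\varphi = \varphi_1 \times \cdots \times \varphi_r \colon G(\mathbb{R}) \to H$ whose kernel is the product of the (compact) kernels, hence compact. Then $\varphi(G(\mathbb{Z})) = \varphi_1(G_1(\mathbb{Z})) \times \cdots \times \varphi_r(G_r(\mathbb{Z}))$ is commensurable with $\Gamma_1 \times \cdots \times \Gamma_r$, which is commensurable with $\Gamma$; so $\Gamma$ is commensurable with $\varphi(G(\mathbb{Z}))$, giving existence.

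The main obstacle is the uniqueness clause. Suppose $(G,\varphi)$ and $(G',\varphi')$ both work. First one reduces to the almost-simple case: the decomposition $H = H_1 \times \cdots \times H_r$ is intrinsic to $(\Gamma, H)$, and each of $G$, $G'$ must decompose compatibly into almost simple $\mathbb{Q}$-factors mapping onto the $H_i$ (using Proposition \ref{h34a} again, plus the fact that Zariski density of the arithmetic subgroups forces the $\mathbb{Q}$-decomposition to refine the real one), so it suffices to match $G_i$ with $G'_i$ for each $i$. For a fixed $i$, $\rank H_i \geq 2$ lets me apply the Margulis superrigidity theorem (\ref{h35f}): the inclusion $G_i(\mathbb{Z}) \xrightarrow{\varphi_i} H_i$ composed with $(\varphi'_i)^{-1}$ on a finite-index subgroup (where it makes sense, using that $\varphi'_i$ has compact, hence finite-on-the-relevant-arithmetic-group, kernel and that the image lands in $G'_i(\mathbb{Q})$ up to passing to a torsion-free finite-index subgroup via Theorem \ref{h35}) gives a homomorphism of arithmetic groups that extends uniquely to a $\mathbb{Q}$-homomorphism $G_i \to G'_i$; symmetrically one gets $G'_i \to G_i$, and the two compositions agree with the identity on a finite-index subgroup, hence (by Zariski density, Theorem \ref{h35e}) are the identity. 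This produces a $\mathbb{Q}$-isomorphism $G_i \to G'_i$ commuting with the $\varphi_i$, $\varphi'_i$ up to the finite kernels; uniqueness of this isomorphism follows because any two such differ by an automorphism of $G_i$ trivial on a Zariski-dense subgroup, hence trivial. I expect the delicate point to be bookkeeping the passage to finite-index/torsion-free subgroups so that superrigidity genuinely yields algebraic homomorphisms in \emph{both} directions and these assemble to a \emph{unique} isomorphism of the pairs $(G,\varphi)$; the hermitian and rank-$\geq 2$ hypotheses are used only to guarantee arithmeticity and superrigidity, after which the argument is purely group-theoretic.
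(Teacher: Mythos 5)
Your proof is correct and follows the paper's overall strategy (decompose via Theorem \ref{h32}, apply Margulis arithmeticity to each irreducible factor, use superrigidity for uniqueness), but the uniqueness clause is handled by a genuinely different mechanism. The paper applies superrigidity \emph{once}, obtaining a homomorphism $\alpha\colon G\rightarrow G_{1}$ with $\varphi_{1}\circ\alpha(\mathbb{R})=\varphi$ after a Borel-density argument; it then deduces that $\alpha$ is an isogeny from almost-simplicity of $G$ and $G_{1}$ (the kernel is a proper normal subgroup, hence finite; the image contains a Zariski-dense arithmetic subgroup, hence is all of $G_{1}$), and promotes the isogeny to an isomorphism using simple-connectedness of $G_{1}$. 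You instead apply superrigidity \emph{twice}, in both directions, and verify that the two resulting $\mathbb{Q}$-maps are mutually inverse by checking the compositions on a Zariski-dense arithmetic subgroup. Both work; the paper's version is a bit slicker since the second application of superrigidity (and the attendant bookkeeping of finite-index and injectivity domains that you correctly flag as delicate) is replaced by the one-line observation that a nontrivial isogeny onto a simply connected group from a connected one is an isomorphism. Your version makes the inverse map explicit, which has its own virtue. You also spell out the product reassembly $G=G_{1}\times\cdots\times G_{r}$ that the paper compresses into ``we may suppose that $\Gamma$ is irreducible,'' which is a helpful clarification. One small point worth making precise at the step ``the two compositions agree with the identity on a finite-index subgroup'': this requires choosing the finite-index subgroups on \emph{both} sides small enough that $\varphi_{i}$ and $\varphi'_{i}$ are injective on them and that the images match up, so that the pointwise inverses $(\varphi'_{i})^{-1}\circ\varphi_{i}$ and $(\varphi_{i})^{-1}\circ\varphi'_{i}$ are honestly mutually inverse set maps; once that is arranged, Zariski density (Theorem \ref{h35e}) finishes it as you say.
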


\begin{proof}
The group $\Gamma$ is a lattice in $H$ by Theorem \ref{h32}. Each factor
$H_{i}$ is again the identity component of the group of automorphisms of a
hermitian symmetric domain (Theorem \ref{h32}), and so we may suppose that
$\Gamma$ is irreducible. The existence of the pair $(G,\varphi)$ just means
that $\Gamma$ is arithmetic, which follows from the Margulis arithmeticity
theorem (\ref{h37}).

Because $\Gamma$ is irreducible, $G$ is almost simple (see \ref{h34a}). As $G$
is simply connected, this implies that $G=\left(  G^{s}\right)  _{F/\mathbb{Q}%
{}}$ where $F$ is a number field and $G^{s}$ is a geometrically almost simple
algebraic group over $F$. If $F$ had a complex prime, $G_{\mathbb{R}}$ would
have a factor $(G^{\prime})_{\mathbb{C}/\mathbb{R}}$, but $(G^{\prime
})_{\mathbb{C}/\mathbb{R}}$ has no inner form except itself (by Shapiro's
lemma), and so this is impossible. Therefore $F$ is totally real.

Let $(G_{1},\varphi_{1})$ be a second pair. Because the kernel of $\varphi
_{1}$ is compact, its intersection with $G_{1}(\mathbb{Z}{})$ is finite, and
so there exists an arithmetic subgroup $\Gamma_{1}$ of $G_{1}(\mathbb{Q})$
such $\varphi_{1}|\Gamma_{1}$ is injective. Because $\varphi(G(\mathbb{Z))}$
and $\varphi_{1}(\Gamma_{1})$ are commensurable, there exists an arithmetic
subgroup $\Gamma^{\prime}$ of $G(\mathbb{Q}{})$ such that $\varphi
(\Gamma^{\prime})\subset\varphi_{1}(\Gamma_{1})$. Now the Margulis
superrigidity theorem \ref{h35f} shows that there exists a homomorphism
$\alpha\colon G\rightarrow G_{1}$ such that
\begin{equation}
\varphi_{1}(\alpha(\gamma))=\varphi(\gamma) \label{hq41}%
\end{equation}
for all $\gamma$ in a subgroup $\Gamma^{\prime\prime}$ of $\Gamma^{\prime}$ of
finite index. The subgroup $\Gamma^{\prime\prime}$ of $G(\mathbb{Q}{})$ is
Zariski-dense in $G$ (Borel density theorem \ref{h35e}), and so (\ref{hq41})
implies that
\begin{equation}
\varphi_{1}\circ\alpha(\mathbb{R}{})=\varphi. \label{hq42}%
\end{equation}
Because $G$ and $G_{1}$ are almost simple, (\ref{hq42}) implies that $\alpha$
is an isogeny, and because $G_{1}$ is simply connected, this implies that
$\alpha$ is an isomorphism. It is unique because it is uniquely determined on
an arithmetic subgroup of $G$.
\end{proof}

\subsection{Congruence subgroups of algebraic groups}

As in the case of elliptic modular curves, we shall need to consider a special
class of arithmetic subgroups, namely, the congruence subgroups.

Let $G$ be an algebraic group over $\mathbb{Q}{}$. Choose an embedding of $G$
into $\GL_{n}$, and define%
\[
\Gamma(N)=G(\mathbb{Q}{})\cap\left\{  A\in\GL_{n}(\mathbb{Z}{})\mid
A\equiv1\text{ mod }N\right\}  .
\]
A \emph{congruence subgroup}%
\index{congruence subgroup}%
\footnote{Subgroup defined by congruence conditions.} of $G(\mathbb{Q}{})$ is
any subgroup containing $\Gamma(N)$ as a subgroup of finite index. Although
$\Gamma(N)$ depends on the choice of the embedding, this definition does not
--- in fact, the congruence subgroups are exactly those of the form $K\cap
G(\mathbb{Q}{})$ for $K$ a compact open subgroup of $G(\mathbb{A}{}_{f})$.

For a surjective homomorphism $G\rightarrow G^{\prime}$ of algebraic groups
over $\mathbb{Q}{}$, the homomorphism $G(\mathbb{Q}{})\rightarrow G^{\prime
}(\mathbb{Q}{})$ need not send congruence subgroups to congruence subgroups.
For example, the image in $\PGL_{2}(\mathbb{Q}{})$ of a congruence subgroup of
$\SL_{2}(\mathbb{Q}{})$ is an arithmetic subgroup (see \ref{h33}) but not
necessarily a congruence subgroup.

Every congruence subgroup is an arithmetic subgroup, and for a simply
connected group the converse is often, but not always, true. For a survey of
what is known about the relation of congruence subgroups to arithmetic groups
(the congruence subgroup problem), see \cite{prasad2008}.

\begin{aside}
\label{h38}Let $H$ be a connected adjoint real Lie group without compact
factors. The pairs $(G,\varphi)$ consisting of a simply connected algebraic
group over $\mathbb{Q}{}$ and a surjective homomorphism $\varphi\colon
G(\mathbb{R}{})\rightarrow H$ with compact kernel have been classified (this
requires class field theory). Therefore the arithmetic subgroups of $H$ have
been classified up to commensurability. When all arithmetic subgroups are
congruence, there is even a classification of the groups themselves in terms
of congruence conditions or, equivalently, in terms of compact open subgroups
of $G(\mathbb{A}{}_{f})$.
\end{aside}

\section{Locally symmetric varieties}

\begin{quote}
{\small To obtain an algebraic variety from a hermitian symmetric domain, we
need to pass to the quotient by an arithmetic group.}
\end{quote}

\subsection{Quotients of hermitian symmetric domains}

Let $D$ be a hermitian symmetric domain, and let $\Gamma$ be a discrete
subgroup of $\mathrm{Hol}(D)^{+}$. If $\Gamma$ is torsion free, then $\Gamma$
acts freely on $D$, and there is a unique complex structure on $\Gamma
\backslash D$ for which the quotient map $\pi\colon D\rightarrow
\Gamma\backslash D$ is a local isomorphism. Relative to this structure, a map
$\varphi$ from $\Gamma\backslash D$ to a second complex manifold is
holomorphic if and only if $\varphi\circ\pi$ is holomorphic.

When $\Gamma$ is torsion free, we often write $D(\Gamma)$ for $\Gamma
\backslash D$ regarded as a complex manifold. In this case, $D$ is the
universal covering space of $D(\Gamma)$ and $\Gamma$ is the group of covering
transformations. The choice of a point $p\in D$ determines an isomorphism of
$\Gamma$ with the fundamental group $\pi_{1}(D(\Gamma),\pi p)$.\footnote{Let
$\gamma\in\Gamma$, and choose a path from $p$ to $\gamma p$; the image of this
in $\Gamma\backslash D$ is a loop whose homotopy class does not depend on the
choice of the path.}

The complex manifold $D(\Gamma)$ is locally symmetric in the sense that, for
each $p\in D(\Gamma)$, there is an involution $s_{p}$ defined on a
neighbourhood of $p$ having $p$ as an isolated fixed point.

\subsection{The algebraic structure on the quotient}

Recall that $X^{\text{an}}$ denotes the complex manifold attached to a smooth
complex algebraic variety $X$. The functor $X\rightsquigarrow X^{\text{an}}$
is faithful, but it is far from being surjective on arrows or on objects. For
example, $\left(  \mathbb{A}^{1}\right)  ^{\text{an}}=\mathbb{C}{}{}{}$ and
the exponential function is a nonpolynomial holomorphic map $\mathbb{C}%
{}\rightarrow\mathbb{C}$. A Riemann surface arises from an algebraic curve if
and only if it can be compactified by adding a finite number of points. In
particular, if a Riemann surface is an algebraic curve, then every bounded
function on it is constant, and so the complex upper half plane is not an
algebraic curve (the function $\frac{z-i}{z+i}$ is bounded).

\subsubsection{Chow's theorem}

An algebraic variety (resp. complex manifold) is \emph{projective}%
\index{projective}
if it can be realized as a closed subvariety of $\mathbb{P}{}^{n}$ for some
$n$ (resp. closed submanifold of $\left(  \mathbb{P}{}^{n}\right)
^{\text{an}}$).

\begin{theorem}
[\cite{chow1949}]\label{h41}The functor $X\rightsquigarrow X^{\text{an}}$ from
smooth projective complex algebraic varieties to projective complex manifolds
is an equivalence of categories.
\end{theorem}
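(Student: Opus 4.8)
The plan is to establish the equivalence by proving two assertions: first, that every closed complex submanifold of $(\mathbb{P}^{n})^{\text{an}}$ is the analytification of a closed algebraic subvariety of $\mathbb{P}^{n}$ (this is the "GAGA-type" surjectivity on objects), and second, that every holomorphic map between analytifications of smooth projective varieties is itself algebraic (surjectivity on arrows). Faithfulness is already known and requires no work here. The second assertion will be deduced from the first by a graph argument: given a holomorphic map $f\colon X^{\text{an}}\to Y^{\text{an}}$ with $X,Y$ smooth projective, its graph $\Gamma_{f}$ is a closed complex submanifold of $(X\times Y)^{\text{an}}$, hence by the first assertion is algebraic; then $f$ is recovered as the composite of the inverse of the (algebraic, bijective, hence iso) projection $\Gamma_{f}\to X$ with the projection $\Gamma_{f}\to Y$, so $f$ is a morphism of algebraic varieties.

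For the first assertion — that an analytic subvariety $Z$ of $\mathbb{P}^{n}$ is algebraic — I would proceed as follows. First reduce to the case $Z$ is irreducible (an analytic subvariety of a compact space has finitely many irreducible components). The classical approach is to project: choose a point $p\in\mathbb{P}^{n}\smallsetminus Z$ and project from $p$ to a hyperplane $\mathbb{P}^{n-1}$; if $Z$ has dimension $<n-1$ one can arrange the projection to be finite onto its image, and induct on $n$, the base cases being $Z$ a point or all of $\mathbb{P}^{n}$. The technical heart is the \emph{local parametrization / Weierstrass preparation} input: near a point of $Z$ the defining ideal is generated by finitely many holomorphic functions, and one shows that the sheaf of holomorphic functions vanishing on $Z$ is coherent (Oka's theorem), so that the analytic structure is controlled by finitely much data. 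Combining coherence with the compactness of $\mathbb{P}^{n}$ (which forces global sections of coherent sheaves to be finite-dimensional — here one uses Cartan's Theorems A and B on the affine charts together with a Čech argument, or cites the finiteness theorem for coherent cohomology on compact analytic spaces) yields that the homogeneous ideal cut out by \emph{polynomials} vanishing on $Z$ already defines $Z$ analytically, hence $Z$ is the zero locus of finitely many homogeneous polynomials, i.e. algebraic. Finally one checks that the algebraic structure so obtained is smooth when $Z$ is (regularity is a local condition visible on the completed local rings, which agree analytically and algebraically).

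The main obstacle is the finiteness/coherence package: one genuinely needs either Oka coherence plus the Cartan–Serre finiteness theorem for coherent sheaves on compact analytic spaces, or an equivalent substitute, and this is the part of the argument that is not formal. Everything else — the reduction to irreducible components, the induction via linear projection, the graph trick for morphisms, and the smoothness comparison — is standard once that analytic input is in hand. In a treatment at the level of this article one would simply invoke \cite{chow1949} (or the later sheaf-theoretic proof of Serre) for the substantive analytic statement and spend the remaining effort on the clean deductions. I would therefore present the graph argument in full and sketch the projection induction, while citing the coherence and finiteness theorems rather than reproving them.
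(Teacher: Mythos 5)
The paper gives no proof of this theorem at all; it simply refers the reader to Taylor's book. So your proposal is not "a different route from the paper" so much as an actual argument where the paper offers none. With that said, your outline is essentially sound: the reduction of fullness to the statement that closed analytic subvarieties of $\mathbb{P}^{n}$ are algebraic, via the graph trick and the fact that a bijective morphism of varieties in characteristic zero is an isomorphism, is exactly the standard deduction and is correct; and you rightly identify that the substantive content is the GAGA-type assertion about analytic subvarieties of projective space, with everything else being formal.

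One thing worth flagging: in sketching the core assertion you blend two genuinely different proofs. The projection-from-a-point induction (reduce to a hypersurface, then use that meromorphic functions on $\mathbb{P}^{n}$ are rational, with Weierstrass preparation controlling the local degree) is Chow's original 1949 argument and makes no use of sheaf cohomology. The Oka-coherence plus Cartan--Serre-finiteness route is Serre's later GAGA proof, which derives Chow's theorem as a corollary of the cohomology comparison isomorphisms and does not proceed by projection induction. Either path works, but as written your sketch reads as if both ingredients are needed simultaneously, which is misleading. If you present this, choose one: either follow Chow and cite the Levi/Hartogs extension theorem for meromorphic functions as the analytic input, or follow Serre and cite Cartan's Theorems A and B together with the finiteness theorem. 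The graph argument and the smoothness comparison at the end (regularity is detected on completed local rings, which agree in the analytic and algebraic categories) should be presented in full, as you propose.
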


\noindent In other words, a projective complex manifold has a unique structure
of a smooth projective algebraic variety, and every holomorphic map of
projective complex manifolds is regular for these structures. See
\cite{taylor2002}, 13.6, for the proof.

Chow's theorem remains true when singularities are allowed and
\textquotedblleft complex manifold\textquotedblright\ is replaced by
\textquotedblleft complex space\textquotedblright.

\subsubsection{The Baily-Borel theorem}

\begin{theorem}
[\cite{bailyB1966}]\label{h42}Every quotient $D(\Gamma)$ of a hermitian
symmetric domain $D$ by a torsion-free arithmetic subgroup $\Gamma$ of
$\mathrm{Hol}(D)^{+}$ has a canonical structure of an algebraic variety.
\end{theorem}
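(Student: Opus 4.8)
\textbf{Proof proposal for the Baily--Borel theorem (\ref{h42}).}

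The plan is to construct the algebraic structure on $D(\Gamma)$ by first compactifying the complex space $D(\Gamma)$ to a normal compact complex space, then realizing that compactification as a projective variety via a carefully chosen ring of automorphic forms, and finally invoking Chow's theorem (\ref{h41}) to transport the algebraic structure back to the open part. The first step is to identify the boundary components of $D$. Realize $D$ as a bounded symmetric domain via the Harish-Chandra embedding; its topological closure $D^{*}$ in $\mathbb{C}^n$ decomposes into $D$ together with proper boundary components, each of which is itself (isomorphic to) a hermitian symmetric domain of lower dimension. One equips $D^{*}$ --- or rather the union of $D$ with its \emph{rational} boundary components, where rationality is defined relative to the $\mathbb{Q}$-structure coming from the arithmetic group $G$ with $\varphi\colon G(\mathbb{R})^{+}\to\mathrm{Hol}(D)^{+}$ --- with the Satake topology, which is coarser than the subspace topology near the boundary and is designed so that $\Gamma$ acts with compact quotient.

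The second step is the central analytic construction: show that $\Gamma\backslash D^{*}$ (with the Satake topology) is a compact Hausdorff space, that it carries a natural structure of a normal complex analytic space extending the complex structure on $D(\Gamma)=\Gamma\backslash D$, and that $D(\Gamma)$ is open and dense in it with the boundary $\Gamma\backslash(D^{*}\setminus D)$ of complex codimension $\geq 1$ (in fact $\geq 2$ away from the one-dimensional domain, which is why one handles elliptic modular curves separately by adding cusps). The complex structure near a boundary point is built by exhibiting, for each rational boundary component $F$, a suitable neighborhood modeled on a quotient involving $F$, the unipotent radical of the associated maximal parabolic, and Siegel domain coordinates; one checks compatibility of these charts and normality. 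This requires the reduction theory for arithmetic groups acting on $D^{*}$ --- precise fundamental domains adapted to the boundary (Siegel sets), finiteness of the number of $\Gamma$-orbits of rational boundary components, and properness of the action.

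The third step is projectivity. Choose $k$ large enough that automorphic forms of weight $k$ for $\Gamma$ (holomorphic sections of $\omega^{\otimes k}$, where $\omega$ is the canonical automorphy line bundle on $D$, which extends to $D^{*}$) separate points and tangent directions on $\Gamma\backslash D^{*}$ and have no base points; the Koecher principle guarantees that such forms automatically extend holomorphically across the boundary when $\dim D \geq 2$. The graded ring $A=\bigoplus_{m\geq 0} H^{0}(\Gamma\backslash D^{*},\omega^{\otimes km})$ is then a finitely generated $\mathbb{C}$-algebra (finiteness is proved using the compactness from step two together with coherence of the sheaves $\omega^{\otimes km}$ and Cartan's theorems), and $\Proj A$ is a normal projective variety whose analytification is $\Gamma\backslash D^{*}$; thus $\Gamma\backslash D^{*}$ is a projective complex space, and by the singular version of Chow's theorem it has a unique algebraic structure. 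Removing the closed algebraic subset corresponding to the boundary, $D(\Gamma)$ inherits a canonical quasi-projective algebraic structure; canonicity (independence of all choices, and functoriality) follows because any two algebraic structures inducing the same complex structure on a variety that admits a projective compactification must agree, again by Chow. The main obstacle is step two: setting up the Satake topology and the extended complex-analytic structure on $\Gamma\backslash D^{*}$ correctly, proving its compactness and normality, and establishing the finite generation of the ring of automorphic forms. These are exactly the hard theorems of \cite{bailyB1966}, built on the reduction theory of \cite{borel1969}; I would cite them rather than reprove them.
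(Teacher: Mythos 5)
Your proposal reproduces the Baily--Borel strategy (rational boundary components and the Satake topology, the normal compact analytic structure on $\Gamma\backslash D^{*}$, finite generation of the ring of automorphic forms, $\Proj$, and Chow), which is exactly the route the paper indicates; the paper itself does not reprove the theorem but simply states the more precise version with $D(\Gamma)^{\ast}=\Proj(A)$ and cites \cite{bailyB1966}, \S 10, just as you propose to do for the hard analytic steps. Your outline is therefore consistent with the paper's treatment, merely more detailed about the internal structure of the Baily--Borel argument.
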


More precisely, let $G$ be the algebraic group over $\mathbb{Q}{}$ attached to
$(D,\Gamma)$ in Theorem \ref{h37a}, and assume, for simplicity, that $G$ has
no normal algebraic subgroup of dimension $3$. Let $A_{n}$ be the vector space
of automorphic forms on $D$ for the $n$th power of the canonical automorphy
factor. Then $A=\bigoplus\nolimits_{n\geq0}A_{n}$ is a finitely generated
graded $\mathbb{C}{}$-algebra, and the canonical map
\[
D(\Gamma)\rightarrow D(\Gamma)^{\ast}\overset{\textup{{\tiny def}}}%
{=}\Proj(A)
\]
realizes $D(\Gamma)$ as a Zariski-open subvariety of the projective algebraic
variety $D(\Gamma)^{\ast}$ (\cite{bailyB1966}, \S 10).

\subsubsection{Borel's theorem}

\begin{theorem}
[\cite{borel1972}]\label{h43}Let $D(\Gamma)$ be the quotient $\Gamma\backslash
D$ in (\ref{h42}) endowed with its canonical algebraic structure, and let $V$
be a smooth complex algebraic variety. Every holomorphic map $f\colon
V^{\text{an}}\rightarrow D(\Gamma)^{\text{an}}$ is regular.
\end{theorem}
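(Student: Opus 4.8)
The plan is to reduce the statement to a local extension problem across a normal-crossings boundary, and then to solve that problem by combining a hyperbolic-metric estimate on $D$ with the geometry of the Baily--Borel boundary of $D(\Gamma)^{\ast}$. By Hironaka's resolution of singularities, choose a smooth projective variety $\bar{V}$ containing $V$ as a dense Zariski-open subset with $Z=\bar{V}\smallsetminus V$ a divisor with normal crossings. Since $D(\Gamma)^{\ast}$ is projective (Theorem~\ref{h42}), the singular form of Chow's theorem noted after Theorem~\ref{h41} reduces us to proving that $f$ extends to a \emph{holomorphic} map $\bar{f}\colon\bar{V}^{\text{an}}\to D(\Gamma)^{\ast,\text{an}}$: such an $\bar{f}$ is automatically regular, hence so is its restriction to $V$; and since $f(V^{\text{an}})\subseteq D(\Gamma)^{\text{an}}$ while $D(\Gamma)$ is Zariski-open in $D(\Gamma)^{\ast}$, that restriction is in fact a regular map $V\to D(\Gamma)$, namely $f$. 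Existence of $\bar{f}$ is local near $Z$; choosing analytic coordinates that identify a neighbourhood in $V^{\text{an}}$ with $(\Delta^{\ast})^{p}\times\Delta^{q}\subset\Delta^{p+q}$ (with $\Delta$, $\Delta^{\ast}$ the disk and punctured disk), and handling the punctures one at a time via the classical Riemann extension theorem in the directions already filled in, the essential case is that of a single punctured variable: extend a holomorphic $f\colon\Delta^{\ast}\times\Delta^{q}\to D(\Gamma)$ across $\{z=0\}$.

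For the metric estimate, equip $D$ with its Bergman metric --- $\mathrm{Hol}(D)$-invariant, complete, K\"ahler, with holomorphic sectional curvature bounded above by a negative constant --- which descends to a complete metric $\rho$ on $D(\Gamma)$ with the same curvature bound; equip $\Delta^{\ast}$ and $\Delta$ with the complete Poincar\'e metric of curvature $-1$. By the Ahlfors--Schwarz lemma (in Yau's form, using completeness and the K\"ahler condition on the source), $f\colon\Delta^{\ast}\times\Delta^{q}\to(D(\Gamma),\rho)$ is distance-decreasing up to a fixed constant. Writing $z$ for the coordinate on $\Delta^{\ast}$, the key consequence is that the image under $f$ of a loop $\{|z|=r\}\times\{w\}$ has $\rho$-diameter $O(1/|\log r|)\to 0$ as $r\to 0$: loops around the puncture shrink.

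It remains to turn this into a holomorphic extension. If the image of $f$ stays in a $\rho$-bounded, hence relatively compact, subset of $D(\Gamma)$, the classical Riemann removable-singularity theorem lets $f$ extend holomorphically into $D(\Gamma)$ itself. Otherwise the image approaches $\partial:=D(\Gamma)^{\ast}\smallsetminus D(\Gamma)$, and one uses the explicit description of a neighbourhood of a rational boundary component $F$: its preimage in $D$ is a Siegel domain of the third kind, and passing to $D(\Gamma)$ presents the directions normal to $F$ as (a tower of) punctured-disk fibrations, so that a neighbourhood of $F$ in $D(\Gamma)^{\ast}$ carries holomorphic ``boundary-distance'' coordinates whose growth is governed by $\rho$. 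Comparing these with the shrinking-loops estimate and applying a big-Picard-type argument to the $\mathbb{C}^{\ast}$-coordinate transverse to $F$ forces $z\mapsto f(z)$ to extend holomorphically across $z=0$ with value in $\partial$. Patching the local extensions yields $\bar{f}$, and we are done.

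The main obstacle is exactly this last step. The metric estimate by itself is not enough: radial approach to a puncture has infinite Poincar\'e length, so completeness of $\rho$ does not force $f$ to converge there. The genuine work is the fine analysis of $D(\Gamma)$ near its Baily--Borel boundary --- Siegel domains of the third kind, the resulting nilmanifold/torus-bundle picture, and the comparison of the invariant metric $\rho$ with boundary coordinates --- needed to exclude pathological degeneration. Phrased invariantly, the content is that $D(\Gamma)$ is \emph{hyperbolically embedded} in $D(\Gamma)^{\ast}$, after which an extension (big Picard) theorem for complements of normal-crossings divisors concludes the proof.
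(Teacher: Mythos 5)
Your outline reproduces the paper's reduction exactly — resolve singularities to obtain a normal-crossings compactification $\bar{V}$, identify the boundary locally as products of disks and punctured disks, extend $f$ holomorphically to $\bar{V}^{\text{an}}\to D(\Gamma)^{\ast,\text{an}}$, then invoke the singular form of Chow's theorem — and where the paper simply cites Borel (1972) for the crucial extension step, you have sketched the content of that theorem (Bergman-metric distance-decreasing estimates via Ahlfors--Schwarz, shrinking loops, the Siegel-domain structure near the Baily--Borel boundary, and a big-Picard argument on the transverse $\mathbb{C}^{\ast}$-coordinate), correctly identifying that the metric estimate alone is insufficient and that the real work is showing $D(\Gamma)$ is hyperbolically embedded in $D(\Gamma)^{\ast}$. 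This is the same route the paper takes, only with the cited black box unpacked.
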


\noindent In the proof of Proposition \ref{h11}, we saw that for curves this
theorem follows from the big Picard theorem. Recall that this says that every
holomorphic map from a punctured disk to $\mathbb{P}{}^{1}(\mathbb{C}%
{})\smallsetminus\{$three points$\}$ extends to a holomorphic map from the
whole disk to $\mathbb{P}{}^{1}(\mathbb{C}{})$. Following earlier work of
Kwack and others, Borel generalized the big Picard theorem in two respects:
the punctured disk is replaced by a product of punctured disks and disks, and
the target space is allowed to be any quotient of a hermitian symmetric domain
by a torsion-free arithmetic group.

Resolution of singularities (\cite{hironaka1964}) shows that every smooth
quasi-projective algebraic variety $V$ can be embedded in a smooth projective
variety $\bar{V}$ as the complement of a divisor with normal crossings. This
condition means that $\bar{V}^{\text{an}}\smallsetminus V^{\text{an}}$ is
locally a product of disks and punctured disks. Therefore $f|V^{\text{an}}$
extends to a holomorphic map $\bar{V}^{\text{an}}\rightarrow D(\Gamma)^{\ast}$
(by Borel) and so is a regular map (by Chow).

\subsection{Locally symmetric varieties}

A \emph{locally symmetric variety}%
\index{locally symmetric variety}
is a smooth algebraic variety $X$ over $\mathbb{C}{}$ such that $X^{\text{an}%
}$ is isomorphic to $\Gamma\backslash D$ for some hermitian symmetric domain
$D$ and torsion-free subgroup $\Gamma$ of $\mathrm{Hol}(D)$.\footnote{As
$\mathrm{Hol}(D)$ has only finitely many components, $\Gamma\cap
\mathrm{Hol}(D)^{+}$ has finite index in $\Gamma$. Sometimes we only allow
discrete subgroups of $\mathrm{Hol}(D)$ contained in $\mathrm{Hol}(D)^{+}$. In
the theory of Shimura varieties, we generally consider only \textquotedblleft
sufficiently small\textquotedblright\ discrete subgroups, and we regard the
remainder as \textquotedblleft noise\textquotedblright. Algebraic geometers do
the opposite.} In other words, $X$ is a locally symmetric variety if the
universal covering space $D$ of $X^{\text{an}}$ is a hermitian symmetric
domain and the group of covering transformations of $D$ over $X^{\text{an}}$
is a torsion-free subgroup $\Gamma$ of $\mathrm{Hol}(D)$. When $\Gamma$ is an
arithmetic subgroup of $\mathrm{Hol}(D)^{+}$, $X$ is called an
\emph{arithmetic locally symmetric variety}. The group $\Gamma$ is
automatically a lattice, and so the Margulis arithmeticity theorem (\ref{h37})
shows that nonarithmetic locally symmetric varieties can occur only when there
are factors of low dimension.

A nonsingular projective curve over $\mathbb{C}{}$ has a model over
$\mathbb{Q}{}^{\mathrm{al}}$ if and only if it contains an arithmetic locally
symmetric curve as the complement of a finite set (Belyi; see \cite{serre1990}%
, p.~71). This suggests that there are too many arithmetic locally symmetric
varieties for us to be able to say much about their arithmetic.

Let $D(\Gamma)$ be an arithmetic locally symmetric variety. Recall that
$\Gamma$ is arithmetic if there is a simply connected algebraic group $G$ over
$\mathbb{Q}{}$ and a surjective homomorphism $\varphi\colon G(\mathbb{R}%
{})\rightarrow\mathrm{Hol}(D)^{+}$ with compact kernel such that $\Gamma$ is
commensurable with $\varphi(G(\mathbb{Z}{}))$. If there exists a
\textit{congruence subgroup }$\Gamma_{0}$ of $G(\mathbb{Z})$ such that
$\Gamma$ contains $\varphi(\Gamma_{0})$ as a subgroup of finite index, then we
call $D(\Gamma)$ a \emph{connected Shimura variety}%
\index{connected Shimura variety}%
. Only for Shimura varieties do we have a rich arithmetic theory (see
\cite{deligne1971}, \cite{deligne1979}, and the many articles of Shimura,
especially, \cite{shimura1964, shimura1966, shimura1967a, shimura1967c,
shimura1970}).

\subsection{Example: Siegel modular varieties}

For an abelian variety $A$ over $\mathbb{C}{}$, the exponential map defines an
exact sequence%
\[
0\longrightarrow\Lambda\longrightarrow T_{0}(A^{\text{an}})\overset{\exp
}{\longrightarrow}A^{\text{an}}\longrightarrow0
\]
with $T_{0}(A^{\text{an}})$ a complex vector space and $\Lambda$ a lattice in
$T_{0}(A^{\text{an}})$ canonically isomorphic to $H_{1}(A^{\text{an}%
},\mathbb{Z}{})$.

\begin{theorem}
[Riemann's Theorem]\label{h44}The functor $A\rightsquigarrow(T_{0}%
(A),\Lambda)$ is an equivalence from the category of abelian varieties over
$\mathbb{C}{}$ to the category of pairs consisting of a $\mathbb{C}{}$-vector
space $V$ and a lattice $\Lambda$ in $V$ that admits a Riemann form.
\end{theorem}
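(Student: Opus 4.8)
The plan is to factor the functor $A\rightsquigarrow(T_{0}(A),\Lambda)$ through analytification, splitting the problem into full faithfulness (which is formal, given Chow's theorem) and the identification of the essential image (the classical content of the theorem). Recall first that every holomorphic map of complex tori $V/\Lambda\rightarrow V^{\prime}/\Lambda^{\prime}$ is a translation composed with the homomorphism induced by a $\mathbb{C}{}$-linear map $V\rightarrow V^{\prime}$ carrying $\Lambda$ into $\Lambda^{\prime}$ (lift to the universal covers; such a lift has constant differential). Hence $(V,\Lambda)\rightsquigarrow V/\Lambda$ is an equivalence from the category of pairs (finite-dimensional $\mathbb{C}{}$-vector space, full lattice) to the category of complex tori. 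Using Riemann's exponential sequence, the functor of the theorem is $A\rightsquigarrow A^{\text{an}}$ followed by this equivalence. Abelian varieties are projective, so by Chow's theorem (Theorem \ref{h41}) the functor $A\rightsquigarrow A^{\text{an}}$ is fully faithful, and a holomorphic homomorphism of group varieties is automatically algebraic; therefore $A\rightsquigarrow(T_{0}(A),\Lambda)$ is fully faithful, and the theorem reduces to showing that a complex torus $V/\Lambda$ is the analytification of an abelian variety precisely when $(V,\Lambda)$ admits a Riemann form.

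For the necessity, suppose $V/\Lambda=A^{\text{an}}$ and choose an ample line bundle $L$ on $A$. Under the identification $H^{2}(A^{\text{an}},\mathbb{Z}{})\simeq\Hom(\bigwedge^{2}\Lambda,\mathbb{Z}{})$, the class $c_{1}(L)$ becomes an alternating integer-valued form $E$ on $\Lambda$. Because $c_{1}(L)$ lies in the N\'eron--Severi group, its $\mathbb{R}{}$-bilinear extension satisfies $E(iv,iw)=E(v,w)$, and because $L$ is ample the Hermitian form $H(v,w)=E(iv,w)+iE(v,w)$ is positive definite. Thus $E$ is a Riemann form.

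For the sufficiency --- the heart of the matter --- I would invoke the Appell--Humbert construction. From a Riemann form $E$, build the positive-definite Hermitian form $H$ on $V$ with $\Im H|_{\Lambda\times\Lambda}=E$, choose a semicharacter $\chi\colon\Lambda\rightarrow\{z:|z|=1\}$ satisfying $\chi(\lambda+\mu)=\chi(\lambda)\chi(\mu)e^{\pi iE(\lambda,\mu)}$, and form the holomorphic line bundle $L=L(H,\chi)$ on $V/\Lambda$ whose global sections are the theta functions for $(H,\chi)$. The Frobenius normal form of $E$ shows that $\dim\Gamma(V/\Lambda,L)$ equals the Pfaffian $\mathrm{Pf}(E)>0$, so $L$ is effective; by Lefschetz's theorem $L^{\otimes 3}$ is very ample, so its sections define a closed holomorphic immersion $V/\Lambda\hookrightarrow\mathbb{P}{}^{N}$. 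Then $V/\Lambda$ is a projective complex manifold, hence by Chow's theorem (Theorem \ref{h41}) underlies a smooth projective variety $A$; its group law is holomorphic, hence regular, making $A$ an abelian variety with $T_{0}(A)=V$ and $H_{1}(A^{\text{an}},\mathbb{Z}{})=\Lambda$. The one substantial step is showing $L^{\otimes 3}$ is very ample, i.e.\ that its linear system separates points and tangent directions; this requires an honest computation with theta functions (Lefschetz; see Mumford, \emph{Abelian Varieties}). Everything else is either formal or supplied by Chow.
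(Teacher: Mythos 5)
Your proof is correct and follows exactly the route of the reference the paper cites (Mumford, \emph{Abelian Varieties}, Chapter~I): reduce to complex tori via Chow's theorem, extract a Riemann form from $c_1$ of an ample bundle for necessity, and for sufficiency run the Appell--Humbert construction, Riemann--Roch for theta functions, and the Lefschetz embedding theorem, then algebraize via Chow. Since the paper gives only the citation as its ``proof,'' your proposal is a faithful summary of the intended argument rather than a genuinely different approach.
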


\begin{proof}
See, for example, \cite{mumford1970}, Chapter I.
\end{proof}

A Riemann form for a pair $(V,\Lambda)$ is an alternating form $\psi
\colon\Lambda\times\Lambda\rightarrow\mathbb{Z}{}$ such that the pairing
$(x,y)\mapsto\psi_{\mathbb{R}{}}(x,\sqrt{-1}y)\colon V\times V\rightarrow
\mathbb{R}{}$ is symmetric and positive definite. \noindent Here
$\psi_{\mathbb{R}{}}$ denotes the linear extension of $\psi$ to $\mathbb{R}%
{}\otimes_{\mathbb{Z}}\Lambda\simeq V$. A principal polarization on an abelian
variety $A$ over $\mathbb{C}{}$ is Riemann form for $(T_{0}(A),\Lambda)$ whose
discriminant is $\pm1$. A level-$N$ structure on an abelian variety over
$\mathbb{C}{}$ is defined similarly to an elliptic curve (see \S 1; we require
it to be compatible with the Weil pairing).

Let $(V,\psi)$ be a symplectic space over $\mathbb{R}{}$, and let $\Lambda$ be
a lattice in $V$ such that $\psi(\Lambda,\Lambda)\subset\mathbb{Z}$ and
$\psi|_{\Lambda\times\Lambda}$ has discriminant $\pm1$. The points of the
corresponding Siegel upper half space $D$ are the complex structures $J$ on
$V$ such that $\psi_{J}$ is Riemann form (see \S 2). The map $J\mapsto
(V,J)/\Lambda$ is a bijection from $D$ to the set of isomorphism classes of
principally polarized abelian varieties over $\mathbb{C}{}$ equipped with an
isomorphism $\Lambda\rightarrow H_{1}(A,\mathbb{Z}{})$. On passing to the
quotient by the principal congruence subgroup $\Gamma(N)$, we get a bijection
from $D_{N}\overset{\textup{{\tiny def}}}{=}\Gamma(N)\backslash D$ to the set
of isomorphism classes of principally polarized abelian over $\mathbb{C}{}$
equipped with a level-$N$ structure.

\begin{proposition}
\label{h45}Let $f\colon A\rightarrow S$ be a family of principally polarized
abelian varieties on a smooth algebraic variety $S$ over $\mathbb{C}{}$, and
let $\eta$ be a level-$N$ structure on $A/S$. The map $\gamma\colon
S(\mathbb{C}{})\rightarrow D_{N}(\mathbb{C}{})$ sending $s\in S(\mathbb{C}{})$
to the point of $\Gamma(N)\backslash D$ corresponding to $(A_{s},\eta_{s})$ is regular.
\end{proposition}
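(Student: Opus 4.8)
The plan is to mimic the argument of Proposition~\ref{h11}, which treated the case of elliptic curves, with the principally polarized abelian variety playing the role formerly played by the elliptic curve. First I would recall that the period domain $D$ sits inside $\mathbb{P}(V_{\mathbb{C}})$ — more precisely, a complex structure $J$ on $V$ corresponds to the subspace $V_J^- \subset V_{\mathbb{C}}$ (its $-i$-eigenspace), a Lagrangian subspace for $\psi_{\mathbb{C}}$, so $D$ is an open subset of the Lagrangian Grassmannian $\mathrm{LG}(V_{\mathbb{C}},\psi)$. The key functoriality I want is the analogue of the statement used in Proposition~\ref{h11}: for a complex manifold $M$ and a surjection $\mathcal{O}_M\otimes_{\mathbb{R}}V \twoheadrightarrow \mathcal{W}$ of vector sheaves, with each fibre $V_{\mathbb{C}}\to\mathcal{W}_m$ corank-equal and isotropic, the induced map $M \to \mathrm{LG}(V_{\mathbb{C}},\psi)$ is holomorphic; this is just the statement that the Lagrangian Grassmannian represents the obvious functor.

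Next I would construct the holomorphic map $\gamma^{\mathrm{an}}$. Given $f\colon A\to S$, the exponential sequence for abelian varieties globalizes over $S^{\mathrm{an}}$ to
\[
0 \longrightarrow R_1 f_* \mathbb{Z} \longrightarrow \mathcal{T}_0(A^{\mathrm{an}}/S^{\mathrm{an}}) \longrightarrow A^{\mathrm{an}} \longrightarrow 0,
\]
with fibre at $s$ the sequence $0\to\Lambda_s\to T_0(A_s)\to A_s\to 0$, and from the first map we get a surjection $\mathcal{O}_{S^{\mathrm{an}}}\otimes_{\mathbb{Z}} R_1 f_* \mathbb{Z} \twoheadrightarrow \mathcal{T}_0(A^{\mathrm{an}}/S^{\mathrm{an}})$. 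The level-$N$ structure $\eta$ (compatible with the Weil pairing, hence with $\psi$) trivializes $R_1 f_*\mathbb{Z}$ locally and, up to the scaling by $1/N$ needed to promote torsion-level data to lattice bases, identifies it with the constant system $\Lambda \simeq \mathbb{Z}^{2n}$ carrying the standard symplectic form; this uses that $\psi$ has discriminant $\pm 1$, so no ambiguity of lattices is introduced. Tensoring with $\mathcal{O}$ and composing, we get locally a surjection $\mathcal{O}_{U^{\mathrm{an}}}\otimes_{\mathbb{R}} V \twoheadrightarrow \mathcal{T}_0(A^{\mathrm{an}}/S^{\mathrm{an}})|_U$ whose kernel is fibrewise Lagrangian for $\psi_{\mathbb{C}}$ (this is exactly the Riemann-relations content of the polarization being principal), defining a holomorphic map $U\to \mathrm{LG}(V_{\mathbb{C}},\psi)$ landing in $D$; the positivity half of the Riemann form guarantees we land in the correct connected component. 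These local maps agree on overlaps after passing to $\Gamma(N)\backslash D$, because changing the local trivialization of $R_1 f_*\mathbb{Z}$ by an element of $\Gamma(N)$ (which is what monodromy respecting the level-$N$ structure does) does not change the point of the quotient. Hence $\gamma$ is holomorphic.

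Finally, the passage from holomorphic to regular: this is precisely Borel's theorem (Theorem~\ref{h43}), applied with $V = S$ and the arithmetic quotient $D_N = D(\Gamma(N))$ — every holomorphic map from a smooth complex algebraic variety to an arithmetic locally symmetric variety is automatically regular. In the earlier proof of Proposition~\ref{h11} this step was carried out by hand using the big Picard theorem; here I would simply cite Theorem~\ref{h43}, which was proved precisely to cover this situation in arbitrary dimension. I expect the main obstacle to be the bookkeeping in the second paragraph — verifying that the kernel of the surjection is fibrewise Lagrangian (equivalently, that the polarization forces the period map to land in the symplectic, not just the full, Grassmannian) and that the local maps glue correctly over $\Gamma(N)\backslash D$ — rather than anything in the first or third paragraph, which are formal once the correct functorial description of $D$ is in place.
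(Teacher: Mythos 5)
Your proposal is correct and takes the same route as the paper, whose own proof is the one-liner: holomorphicity by the argument of Proposition~\ref{h11}, algebraicity by Borel's theorem~\ref{h43}. Your unpacking of the first step---passing from $\mathbb{P}(V_{\mathbb{C}})$ to the Lagrangian Grassmannian to account for the polarization in higher rank---is exactly the detail the paper leaves implicit when it says ``the same argument.''
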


\begin{proof}
The holomorphicity of $\gamma$ can be proved by the same argument as in the
proof of Proposition \ref{h11}. Its algebraicity then follows from Borel's
theorem \ref{h43}.
\end{proof}

Let $\mathcal{F}$ be the functor sending a scheme $S$ of finite type over
$\mathbb{C}{}$ to the set of isomorphism classes of pairs consisting of a
family of principally polarized abelian varieties $f\colon A\rightarrow S$
over $S$ and a level-$N$ structure on $A$. When $N\geq3$, $\mathcal{F}{}$ is
representable by a smooth algebraic variety $S_{N}$ over $\mathbb{C}{}$
(\cite{mumford1965}, Chapter 7). This means that there exists a (universal)
family of principally polarized abelian varieties $A/S_{N}$ and a level-$N$
structure $\eta$ on $A/S_{N}$ such that, for any similar pair $(A^{\prime
}/S,\eta^{\prime})$ over a scheme $S$, there exists a unique morphism
$\alpha\colon S\rightarrow S_{N}$ for which $\alpha^{\ast}(A/S_{N}%
,\eta)\approx(A^{\prime}/S^{\prime},\eta^{\prime})$.

\begin{theorem}
\label{h46}There is a canonical isomorphism $\gamma\colon S_{N}\rightarrow
D_{N}$.
\end{theorem}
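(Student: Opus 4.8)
The plan is to run the argument of Theorem~\ref{h14}, with Proposition~\ref{h45} playing the role of Proposition~\ref{h11}. First I would apply Proposition~\ref{h45} to the universal family of principally polarized abelian varieties with level-$N$ structure on $S_N$; this is legitimate because, by the representability result of Mumford quoted above, $S_N$ is a \emph{smooth} complex algebraic variety carrying such a family. The output is a regular map $\gamma\colon S_N\rightarrow D_N$ of smooth complex algebraic varieties. Nothing new is needed to handle a higher-dimensional base: the algebraicity half of Proposition~\ref{h45} already rests on Borel's theorem~\ref{h43}, which is proved for arbitrary smooth base varieties.

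Next I would identify $\gamma(\mathbb{C})$ as a map of moduli sets. By the universal property of $S_N$, the set $S_N(\mathbb{C})$ is in natural bijection with the set of isomorphism classes of principally polarized abelian varieties over $\mathbb{C}$ equipped with a level-$N$ structure; the discussion preceding Proposition~\ref{h45} exhibits the same bijection for $D_N(\mathbb{C})=\Gamma(N)\backslash D$ via $J\mapsto(V,J)/\Lambda$. Since $\gamma$ is by construction the classifying map of the universal family, it sends the point of $S_N(\mathbb{C})$ classifying a pair $(A,\eta)$ to the point of $D_N(\mathbb{C})$ classifying the same pair --- the fibre of the universal family over the former point being exactly $(A,\eta)$. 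Hence $\gamma(\mathbb{C})$ is bijective.

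It remains to deduce that $\gamma$ is an isomorphism, and this is the one genuinely non-formal step, harder than in the curve case of Theorem~\ref{h14}. I would first record that $D_N$ and $S_N$ are irreducible: $D_N^{\text{an}}=\Gamma(N)\backslash D$ is connected because the Siegel upper half space $D$ is, and $S_N$ is connected because the moduli variety of principally polarized abelian varieties of fixed dimension with level-$N$ structure is. Then $\gamma$ is a dominant quasi-finite morphism of irreducible complex varieties; being bijective in characteristic zero, its generic fibre is a single reduced point, so $\gamma$ is birational; and since $D_N$ is normal, Zariski's main theorem forces $\gamma$ to be an open immersion, which, being surjective, is an isomorphism. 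The delicate point is precisely this last passage (in dimension $>1$ a bijective morphism of smooth varieties need not be an isomorphism even set-theoretically, e.g.\ one must rule out $S_N$ being disconnected), so one really uses the irreducibility of $S_N$ together with Zariski's main theorem. An alternative that sidesteps both inputs is to construct the inverse directly: the tautological family of abelian varieties over the Siegel upper half space descends, with its polarization and level structure, to an analytic family over $D_N^{\text{an}}$; by relative GAGA over the quasi-projective variety $D_N$ this family is algebraic, hence classified by a morphism $\delta\colon D_N\rightarrow S_N$; and $\gamma\circ\delta$, $\delta\circ\gamma$ agree with the respective identities on $\mathbb{C}$-points, hence are the identities. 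I would present the Zariski-main-theorem argument as primary, as it parallels Theorem~\ref{h14} most closely.
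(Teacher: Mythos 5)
Your proof follows the paper's approach exactly: construct $\gamma$ from the universal family via Proposition~\ref{h45}, note it is bijective on $\mathbb{C}$-points by comparing the two moduli interpretations, and conclude it is an isomorphism. The paper simply says ``the proof is the same as that of Theorem~\ref{h14}'' and leaves the final step implicit; your careful appeal to irreducibility of $S_N$ and $D_N$ together with Zariski's main theorem (or the alternative inverse construction via relative GAGA) fills in the one point that is genuinely more delicate once the base has dimension greater than one, where bijective morphisms of smooth varieties need not be isomorphisms without a normality argument.
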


\begin{proof}
The proof is the same as that of Theorem \ref{h14}.
\end{proof}

\begin{corollary}
\label{h48}The universal family of complex tori on $D_{N}$ is algebraic.
\end{corollary}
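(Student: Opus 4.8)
The plan is to transport the algebraic universal family from $S_{N}$ to $D_{N}$ along the isomorphism of Theorem~\ref{h46}, and then check that the resulting algebraic family analytifies to the given analytic family of complex tori. By Theorem~\ref{h46} we have an isomorphism $\gamma\colon S_{N}\xrightarrow{\ \sim\ }D_{N}$ of algebraic varieties over $\mathbb{C}{}$. On $S_{N}$ sits the universal family $f\colon A\to S_{N}$ of principally polarized abelian varieties (with its level-$N$ structure), which by construction (\cite{mumford1965}) is an abelian scheme, hence an algebraic family. Set $\mathcal{A}^{\mathrm{alg}}=(\gamma^{-1})^{\ast}A$, an algebraic family of abelian varieties over $D_{N}$. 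Since analytification commutes with fibre products, $(\mathcal{A}^{\mathrm{alg}})^{\text{an}}=((\gamma^{\text{an}})^{-1})^{\ast}A^{\text{an}}$, so it suffices to identify the universal family of complex tori on $D_{N}$ with $(\mathcal{A}^{\mathrm{alg}})^{\text{an}}$, equivalently to produce a canonical isomorphism of analytic families $(\gamma^{\text{an}})^{\ast}(\text{universal family of complex tori})\cong A^{\text{an}}$ over $S_{N}^{\text{an}}$.

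To make this identification I would revisit the construction of $\gamma$ from the proof of Proposition~\ref{h45} (which copies that of Proposition~\ref{h11}). Analytifying $f$, the exponential sequence gives $A^{\text{an}}=\mathcal{T}{}_{0}(A^{\text{an}}/S_{N}^{\text{an}})/R_{1}f_{\ast}\mathbb{Z}{}$ over $S_{N}^{\text{an}}$. Locally on $S_{N}^{\text{an}}$, the level-$N$ structure (after multiplication by $N$) trivializes $R_{1}f_{\ast}\mathbb{Z}{}$, and composing with the canonical surjection $\mathcal{O}{}\otimes_{\mathbb{Z}{}}R_{1}f_{\ast}\mathbb{Z}{}\twoheadrightarrow\mathcal{T}{}_{0}$ yields a surjection $\mathcal{O}{}_{U}\otimes_{\mathbb{R}{}}V\twoheadrightarrow\mathcal{T}{}_{0}(A^{\text{an}}/S_{N}^{\text{an}})|_{U}$ whose classifying map $U\to D$ is the local lift of $\gamma^{\text{an}}$. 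On the other side, by definition the universal family of complex tori is, over $D$, the quotient of the tautological rank-$g$ holomorphic quotient bundle $(\mathcal{O}{}_{D}\otimes_{\mathbb{R}{}}V)/\mathcal{V}{}^{-}$ (using that $D$ sits in the Grassmannian of $g$-dimensional quotients of $V_{\mathbb{C}{}}$, via $V_{\mathbb{C}{}}=V^{+}\oplus V^{-}$) by the constant lattice $\Lambda=\mathbb{Z}{}^{2g}\subset V$, and it descends to $D_{N}^{\text{an}}$ because $\Gamma(N)$ respects all of this data. Pulling this family back along the local lift $U\to D$: the subbundle $\mathcal{V}{}^{-}$ pulls back to the kernel of $\mathcal{O}{}_{U}\otimes_{\mathbb{R}{}}V\twoheadrightarrow\mathcal{T}{}_{0}|_{U}$, so the pulled-back quotient bundle is $\mathcal{T}{}_{0}(A^{\text{an}}/S_{N}^{\text{an}})|_{U}$, while $\Lambda$ pulls back to $R_{1}f_{\ast}\mathbb{Z}{}|_{U}$; hence the pulled-back family is $\mathcal{T}{}_{0}|_{U}/R_{1}f_{\ast}\mathbb{Z}{}|_{U}=A^{\text{an}}|_{U}$. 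These isomorphisms are canonical: changing the lift of the level structure changes the trivialization only by an element of $\Gamma(N)$, which acts trivially on $D_{N}$ and on the descended family, so the quotients $\mathcal{T}{}_{0}|_{U}/R_{1}f_{\ast}\mathbb{Z}{}|_{U}$ are matched independently of the choices; the local isomorphisms therefore glue to the desired global isomorphism.

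Combining the two steps, the universal family of complex tori on $D_{N}$ is isomorphic to $(\mathcal{A}^{\mathrm{alg}})^{\text{an}}$, the analytification of an algebraic family, hence is itself algebraic. The routine parts are the transport of the family along the isomorphism of Theorem~\ref{h46} and the compatibility of analytification with base change; the part requiring care — and the only real obstacle — is the bookkeeping in the second step: keeping straight the real vector space $V$ versus its complexification $V_{\mathbb{C}{}}$, the factor of $N$ relating the level-$N$ structure to a basis of $R_{1}f_{\ast}\mathbb{Z}{}$, and the $\Gamma(N)$-ambiguity in the local lifts, so as to be sure the local identifications are canonical and descend to $D_{N}$.
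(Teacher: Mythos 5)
Your proof is correct and takes essentially the same route the paper intends: the corollary is stated immediately after Theorem \ref{h46} precisely because transporting the universal abelian scheme on $S_{N}$ across the isomorphism $\gamma\colon S_{N}\to D_{N}$, and then checking (as you do, by revisiting the construction of $\gamma$ in the proof of Proposition \ref{h11}/\ref{h45}) that its analytification is the universal family of complex tori on $D_{N}$, is the whole argument.
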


\section{Variations of Hodge structures}

\begin{quote}
{\small We review the definitions.}
\end{quote}

\subsection{The Deligne torus}

The \emph{Deligne torus}%
\index{Deligne torus}
is the algebraic torus%
\index{S@$\mathbb{S}{}$}
$\mathbb{S}$ over $\mathbb{R}{}$ obtained from $\mathbb{G}_{m}$ over
$\mathbb{C}{}$ by restriction of the base field${}$; thus%
\[
\mathbb{S}(\mathbb{R})=\mathbb{C}^{\times},\quad\mathbb{S}_{\mathbb{C}{}%
}\simeq\mathbb{G}_{m}\times\mathbb{G}_{m}.
\]
The map $\mathbb{S}{}(\mathbb{R}{})\rightarrow\mathbb{S}{}(\mathbb{C}{})$
induced by $\mathbb{R}{}\rightarrow\mathbb{C}{}$ is $z\mapsto(z,\bar{z})$.
There are homomorphisms%
\[%
\begin{array}
[c]{ccccccc}%
\mathbb{G}_{m} & \xrightarrow[\phantom{a\mapsto a^{-1}}]{w} & \mathbb{S}{} &
\xrightarrow[\phantom{z\mapsto{z\bar{z}}}]{t} & \mathbb{G}_{m}, &  & t\circ
w=-2,\\
\mathbb{R}{}^{\times} & \xrightarrow{a\mapsto a^{-1}} & \mathbb{C}{}^{\times}
& \xrightarrow{z\mapsto{z\bar{z}}} & \mathbb{R}{}^{\times}. &  &
\end{array}
\]
The kernel of $t$ is $\mathbb{S}{}^{1}$. A homomorphism $h\colon
\mathbb{S}\rightarrow G$ of real algebraic groups gives rise to cocharacters%
\[%
\begin{array}
[c]{lll}%
\mu_{h}\colon\mathbb{G}_{m}\rightarrow G_{\mathbb{C}}, & z\mapsto
h_{\mathbb{C}{}}(z,1), & z\in\mathbb{G}_{m}(\mathbb{C})=\mathbb{C}^{\times},\\
w_{h}\colon\mathbb{G}_{m}\rightarrow G, & w_{h}=h\circ w & \text{(\emph{weight
homomorphism}).}%
\end{array}
\]
The following formulas are useful ($\mu=\mu_{h}$):%
\begin{align}
h_{\mathbb{C}{}}(z_{1},z_{2})  &  =\mu(z_{1})\cdot\overline{\mu}(z_{2});\quad
h(z)=\mu(z)\cdot\overline{\mu(z)}\label{hq40}\\
h(i)  &  =\mu(-1)\cdot w_{h}(i). \label{hq21}%
\end{align}

\subsection{Real Hodge structures}

A \emph{real Hodge structure}%
\index{real Hodge structure}
is a representation $h\colon\mathbb{S}{}\rightarrow\GL_{V}$ of $\mathbb{S}$ on
a real vector space $V$. Equivalently, it is a real vector space $V$ together
with a \emph{Hodge decomposition} (%
\index{Hodge decomposition}%
),
\[
V_{\mathbb{C}{}}=\bigoplus\nolimits_{p,q\in\mathbb{Z}{}}V^{p,q}\text{ such
that }\overline{V^{p,q}}=V^{q,p}\text{ for all }p,q.
\]
To pass from one description to the other, use the rule (\cite{deligne1973,
deligne1979}):%
\[
v\in V^{p,q}\iff h(z)v=z^{-p}\bar{z}^{-q}v\text{, all }z\in\mathbb{C}^{\times
}\text{.}%
\]
The integers $h^{p,q}\overset{\textup{{\tiny def}}}{=}\dim_{\mathbb{C}}%
V^{p,q}$ are called the \emph{Hodge numbers}%
\index{Hodge numbers}
of the Hodge structure. A real Hodge structure defines a (weight) gradation on
$V$,
\[
V=\bigoplus\nolimits_{m\in\mathbb{Z}{}}V_{m},\quad V_{m}=V\cap\left(
\bigoplus\nolimits_{p+q=m}V^{p,q}\right)  \text{,}%
\]
and a descending \emph{Hodge filtration}%
\index{Hodge filtration}%
,
\[
V_{\mathbb{C}{}}\supset\cdots\supset F^{p}\supset F^{p+1}\supset\cdots
\supset0,\quad F^{p}=\bigoplus\nolimits_{p^{\prime}\geq p}V^{p^{\prime
},q^{\prime}}\text{.}%
\]
The weight gradation and Hodge filtration together determine the Hodge
structure because%
\[
V^{p,q}=\left(  V_{p+q}\right)  _{\mathbb{C}{}}\cap F^{p}\cap\overline{F^{q}%
}.
\]
Note that the weight gradation is defined by $w_{h}$. A filtration $F$ on
$V_{\mathbb{C}{}}$ arises from a Hodge structure of weight $m$ on $V$ if and
only if%
\[
V=F^{p}\oplus\overline{F^{q}}\text{ whenever }p+q=m+1.
\]
The $\mathbb{R}{}$-linear map $C=h(i)$ is called the \emph{Weil operator}%
\index{Weil operator}%
. It acts as $i^{q-p}$ on $V^{p,q}$, and $C^{2}$ acts as $(-1)^{m}$ on $V_{m}$.

Thus a Hodge structure on a real vector space $V$ can be regarded as a
homomorphism $h\colon\mathbb{S}{}\rightarrow\GL_{V}$, a Hodge decomposition of
$V$, or a Hodge filtration together with a weight gradation of $V$. We use the
three descriptions interchangeably.

\begin{plain}
\label{h47}Let $V$ be a real vector space. To give a Hodge structure $h$ on
$V$ of type $\{(-1,0),(0,-1)\}$ is the same as giving a complex structure on
$V$: given $h$, let $J$ act as $C=h(i)$; given a complex structure, let $h(z)$
act as multiplication by $z$. The Hodge decomposition $V_{\mathbb{C}{}%
}=V^{-1,0}\oplus V^{0,-1}$ corresponds to the decomposition $V_{\mathbb{C}{}%
}=V^{+}\oplus V^{-}$ of $V_{\mathbb{C}{}}$ into its $J$-eigenspaces.
\end{plain}

\subsection{Rational Hodge structures}

A \emph{rational Hodge structure}%
\index{rational Hodge structure}
is a $\mathbb{Q}{}$-vector space $V$ together with a real Hodge structure on
$V_{\mathbb{R}{}}$ such that the weight gradation is defined over
$\mathbb{Q}{}$. Thus to give a rational Hodge structure on $V$ is the same as giving

\begin{itemize}
\item a gradation $V=\bigoplus_{m}V_{m}$ on $V$ together with a real Hodge
structure of weight $m$ on $V_{m\mathbb{R}{}}$ for each $m$, \textit{or}

\item a homomorphism $h\colon\mathbb{S}{}\rightarrow\GL_{V_{\mathbb{R}{}}}$
such that $w_{h}\colon\mathbb{G}_{m}\rightarrow\GL_{V_{\mathbb{R}{}}}$ is
defined over $\mathbb{Q}{}$.
\end{itemize}

\noindent The \emph{Tate Hodge structure}%
\index{Tate Hodge structure}
$\mathbb{Q}(m)$ is defined to be the $\mathbb{Q}$-subspace $(2\pi
i)^{m}\mathbb{Q}$ of $\mathbb{C}{}$ with $h(z)$ acting as multiplication by
$\mathrm{Norm}_{\mathbb{C}{}/\mathbb{R}{}}(z)^{m}=(z\bar{z})^{m}$. It has
weight $-2m$ and type $(-m,-m)$.

\subsection{Polarizations}

A \emph{polarization}%
\index{polarization}
of a real Hodge structure $(V,h)$ of weight $m$ is a morphism of Hodge
structures
\begin{equation}
\psi\colon V\otimes V\rightarrow\mathbb{\mathbb{R}{}}(-m),\quad m\in
\mathbb{Z}, \label{hq15}%
\end{equation}
such that
\begin{equation}
(x,y)\mapsto(2\pi i)^{m}\psi(x,Cy)\colon V\times V\rightarrow\mathbb{R}
\label{hq16}%
\end{equation}
is symmetric and positive definite. The condition (\ref{hq16}) means that
$\psi$ is symmetric if $m$ is even and skew-symmetric if it is odd, and that
$(2\pi i)^{m}\cdot i^{p-q}\psi_{\mathbb{C}{}}(x,\bar{x})>0$ for $x\in V^{p,q}$.

A \emph{polarization}%
\index{polarization}
of a rational Hodge structure $(V,h)$ of weight $m$ is a morphism of rational
Hodge structures $\psi\colon V\otimes V\rightarrow\mathbb{Q}{}(-m)$ such that
$\psi_{\mathbb{R}{}}$ is a polarization of $(V_{\mathbb{R}{}},h)$. A rational
Hodge structure $(V,h)$ is polarizable if and only if $(V_{\mathbb{R}{}},h)$
is polarizable (cf. \ref{h20c}).

\subsection{Local systems and vector sheaves with connection}

Let $S$ be a complex manifold. A \emph{connection}%
\index{connection}
on a vector sheaf $\mathcal{V}{}$ on $S$ is a $\mathbb{C}{}$-linear
homomorphism $\nabla\colon\mathcal{V}{}\rightarrow\Omega_{S}^{1}%
\otimes\mathcal{V}{}$ satisfying the Leibniz condition%
\[
\nabla(fv)=df\otimes v+f\cdot\nabla v
\]
for all local sections $f$ of $\mathcal{O}{}_{S}$ and $v$ of $\mathcal{V}{}$.
The \emph{curvature}%
\index{curvature}
of $\nabla$ is the composite of $\nabla$ with the map%
\begin{align*}
\nabla_{1}\colon\Omega_{S}^{1}\otimes\mathcal{V}{}  &  \rightarrow\Omega
_{S}^{2}\otimes\mathcal{V}{}\\
\omega\otimes v  &  \mapsto d\omega\otimes v-\omega\wedge\nabla(v).
\end{align*}
A connection $\nabla$ is said to be \emph{flat}%
\index{flat connection}
if its curvature is zero. In this case, the kernel $\mathcal{V}{}^{\nabla}$ of
$\nabla$ is a local system of complex vector spaces on $S$ such that
$\mathcal{O}{}_{S}\otimes\mathcal{V}{}^{\nabla}\simeq\mathcal{V}{}$.

Conversely, let $\mathsf{V}$ be a local system of complex vector spaces on
$S$. The vector sheaf $\mathcal{V}{}=\mathcal{O}{}_{S}\otimes\mathsf{V}$ has a
canonical connection $\nabla$: on any open set where $\mathsf{V}$ is trivial,
say $\mathsf{V}\approx\mathbb{C}{}^{n}$, the connection is the map
$(f_{i})\mapsto(df_{i})\colon\left(  \mathcal{O}{}_{S}\right)  ^{n}%
\rightarrow\left(  \Omega_{S}^{1}\right)  ^{n}$. This connection is flat
because $d\circ d=0$. Obviously for this connection, $\mathcal{V}{}^{\nabla
}\simeq\mathsf{V}$.

In this way, we obtain an equivalence between the category of vector sheaves
on $S$ equipped with a flat connection and the category of local systems of
complex vector spaces.

\subsection{Variations of Hodge structures}

Let $S$ be a complex manifold. By a \emph{family of real Hodge structures on}%
\index{family of real Hodge structures on}
$S$ we mean a holomorphic family. For example, a family of real Hodge
structures on $S$ of weight $m$ is a local system $\mathsf{V}$ of $\mathbb{R}%
$-vector spaces on $S$ together with a filtration $F$ on $\mathcal{V}%
\overset{\textup{{\tiny def}}}{=}\mathcal{O}{}_{S}\otimes_{\mathbb{R}{}%
}\mathsf{V}{}$ by holomorphic vector subsheaves that gives a Hodge filtration
at each point, i.e., such that
\[
F^{p}\mathcal{V}{}_{s}\oplus\overline{F^{m+1-p}\mathcal{V}_{s}}\simeq
\mathcal{V}{}_{s},\quad\text{all }s\in S\text{, }p\in\mathbb{Z}{}.
\]
For the notion of a \emph{family of rational Hodge structures}%
\index{family of rational Hodge structures}%
, replace $\mathbb{R}{}$ with $\mathbb{Q}{}$.

A \emph{polarization}%
\index{polarization}
of a family of real Hodge structures of weight $m$ is a bilinear pairing of
local systems%
\[
\psi\colon\mathsf{V}\times\mathsf{V}\rightarrow\mathbb{R}(-m)
\]
that gives a polarization at each point $s$ of $S$. For rational Hodge
structures, replace $\mathbb{R}{}$ with $\mathbb{Q}{}$.

Let $\nabla$ be connection on a vector sheaf $\mathcal{V}{}$. A holomorphic
vector field $Z$ on $S$ is a map $\Omega_{S}^{1}\rightarrow\mathcal{O}{}_{S}$,
and it defines a map $\nabla_{Z}\colon\mathcal{V}{}\rightarrow\mathcal{V}{}$.
A family of rational Hodge structures $\mathsf{V}$ on $S$ is a
\emph{variation}%
\index{variation of Hodge structures}
of rational Hodge structures on $S$ if it satisfies the following axiom
(\emph{Griffiths transversality}%
\index{Griffiths transversality}%
):%
\[
\nabla_{Z}(F^{p}\mathcal{V}{})\subset F^{p-1}\mathcal{V}{}\text{ for all
}p\text{ and }Z\text{.}%
\]
Equivalently,
\[
\nabla(F^{p}\mathcal{V})\subset\Omega_{S}^{1}\otimes F^{p-1}\mathcal{V}\text{
for all }p.
\]
Here $\nabla$ is the flat connection on $\mathcal{V}{}\overset
{\textup{{\tiny def}}}{=}\mathcal{O}{}_{S}\otimes_{\mathbb{Q}{}}\mathsf{V}$
defined by $\mathsf{V}$.

These definitions are motivated by the following theorem.

\begin{theorem}
[\cite{griffiths1968}]\label{h51}\textsc{ }Let $f\colon X\rightarrow S$ be a
smooth projective map of smooth algebraic varieties over $\mathbb{C}$. For
each $m$, the local system $R^{m}f_{\ast}\mathbb{Q}{}$ of $\mathbb{Q}{}%
$-vector spaces on $S^{\text{an}}$ together with the de Rham filtration on
$\mathcal{O}_{S}\otimes_{\mathbb{Q}}Rf_{\ast}\mathbb{Q}{}\simeq Rf_{\ast
}(\Omega_{X/\mathbb{C}}^{\bullet})$ is a polarizable variation of rational
Hodge structures of weight $m$ on $S^{\text{an}}$.
\end{theorem}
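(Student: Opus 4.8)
The plan is to check the three ingredients separately: that $R^{m}f_{\ast}\mathbb{Q}{}$ is a local system whose fibres carry weight-$m$ Hodge structures, that the Hodge filtration is a holomorphic filtration of $\mathcal{V}{}\overset{\textup{{\tiny def}}}{=}\mathcal{O}{}_{S^{\text{an}}}\otimes_{\mathbb{Q}{}}R^{m}f_{\ast}\mathbb{Q}{}$, and that it satisfies Griffiths transversality and is polarizable. For the first point, $f^{\text{an}}\colon X^{\text{an}}\rightarrow S^{\text{an}}$ is a proper submersion, hence a locally trivial fibre bundle by Ehresmann's theorem, so $R^{m}f_{\ast}\mathbb{Q}{}$ is a local system with fibre $H^{m}(X_{s},\mathbb{Q}{})$; and classical Hodge theory on the smooth projective fibre $X_{s}$ puts a Hodge decomposition of weight $m$ on $H^{m}(X_{s},\mathbb{C}{})$. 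The one substantive claim is that this varies holomorphically.

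For that, I would identify $\mathcal{V}{}$ with the relative de Rham cohomology $R^{m}f_{\ast}^{\text{an}}(\Omega_{X/S}^{\bullet})$ via the holomorphic Poincaré lemma and the algebraic-to-analytic comparison, and identify the putative Hodge filtration with the image of the filtration b\^{e}te $\sigma^{\geq p}\Omega_{X/S}^{\bullet}$. To know that the resulting subsheaves $F^{p}\mathcal{V}{}$ are holomorphic vector \emph{sub}sheaves (locally free, compatible with base change) one needs the degeneration at $E_{1}$ of the relative Hodge-to-de Rham spectral sequence $E_{1}^{p,q}=R^{q}f_{\ast}\Omega_{X/S}^{p}\Rightarrow R^{p+q}f_{\ast}\Omega_{X/S}^{\bullet}$, which yields local freeness of the Hodge bundles $R^{q}f_{\ast}\Omega_{X/S}^{p}$ and their compatibility with restriction to fibres; this is Deligne's degeneration theorem, or can be deduced fibrewise from the degeneration over a point together with the cohomology and base-change theorems. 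Granting this, the pointwise condition $F^{p}\mathcal{V}{}_{s}\oplus\overline{F^{m+1-p}\mathcal{V}{}_{s}}\simeq\mathcal{V}{}_{s}$ is exactly the classical statement on $X_{s}$.

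Next, Griffiths transversality. The canonical flat connection $\nabla$ on $\mathcal{V}{}$ attached to the local system $R^{m}f_{\ast}\mathbb{Q}{}$ is the Gauss-Manin connection, and it can be computed from the filtration of $\Omega_{X}^{\bullet}$ by powers of $f^{\ast}\Omega_{S}^{1}$: the short exact sequence of complexes $0\rightarrow f^{\ast}\Omega_{S}^{1}\otimes\Omega_{X/S}^{\bullet-1}\rightarrow\Omega_{X}^{\bullet}/(f^{\ast}\Omega_{S}^{2}\wedge\Omega_{X}^{\bullet-2})\rightarrow\Omega_{X/S}^{\bullet}\rightarrow 0$ produces, on applying $Rf_{\ast}$, the connecting map defining $\nabla$. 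Inspecting this description, the de Rham differential shifts the $\sigma$-filtration degree by at most one, which gives $\nabla(F^{p}\mathcal{V}{})\subset\Omega_{S}^{1}\otimes F^{p-1}\mathcal{V}{}$. I expect this to be the step requiring the most care to write out cleanly, since it involves constructing Gauss-Manin through the filtered de Rham complex and tracking the filtration through the connecting homomorphism; conceptually, though, the real analytic input was already spent on the $E_{1}$-degeneration above, and this last step is essentially formal once the complexes are in place.

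Finally, polarizability. Choose a line bundle on $X$ that is relatively ample over $S$; its first Chern class defines a Lefschetz operator $L$, a morphism of variations of Hodge structures $R^{k}f_{\ast}\mathbb{Q}{}\rightarrow R^{k+2}f_{\ast}\mathbb{Q}{}(1)$. Hard Lefschetz and the Lefschetz decomposition hold fibrewise and are morphisms of local systems, so the construction reduces to the primitive part $P^{m}\subset R^{m}f_{\ast}\mathbb{Q}{}$, on which the cup-product pairing suitably twisted by powers of $L$ supplies a morphism $\psi\colon\mathsf{V}\times\mathsf{V}\rightarrow\mathbb{Q}{}(-m)$ of variations; the positivity of $(x,y)\mapsto(2\pi i)^{m}\psi(x,Cy)$ at each $s$ is precisely the Hodge-Riemann bilinear relations on $X_{s}$. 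Assembling these pieces proves that $(R^{m}f_{\ast}\mathbb{Q}{},F)$ is a polarizable variation of rational Hodge structures of weight $m$.
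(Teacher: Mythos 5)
The paper gives no proof of this theorem, simply citing \cite{griffiths1968}, so there is nothing to compare against; your sketch is correct and is the standard modern argument. Every ingredient is correctly placed: Ehresmann's theorem for the local system, the holomorphic Poincar\'e lemma and GAGA to identify $\mathcal{O}_{S^{\text{an}}}\otimes_{\mathbb{Q}}R^{m}f_{\ast}\mathbb{Q}$ with relative analytic de Rham cohomology, the $E_{1}$-degeneration of the relative Hodge-to-de Rham spectral sequence deduced from fibrewise degeneration together with Grauert semicontinuity and base change (which is what makes the $F^{p}$ genuine vector subsheaves in the sense of the paper), the Katz--Oda filtered description of the Gauss--Manin connection for Griffiths transversality, and the Lefschetz decomposition with the Hodge--Riemann bilinear relations for the polarization. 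The one point you rightly flag as needing care is transversality: beyond the shift-by-one bookkeeping in the filtration b\^{e}te, one must also verify that the connecting homomorphism of your filtered short exact sequence coincides with the canonical flat connection of the local system, and this comparison is the substantive content of the Katz--Oda theorem rather than a purely formal step.
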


This theorem suggests that the first step in realizing an algebraic variety as
a moduli variety should be to show that it carries a polarized variation of
rational Hodge structures.

\section{Mumford-Tate groups and their variation in families}

{\small We define Mumford-Tate groups, and we study their variation in
families. Throughout this section, \textquotedblleft Hodge
structure\textquotedblright\ means \textquotedblleft rational Hodge
structure\textquotedblright.}

\subsection{The conditions (SV)}

We list some conditions on a homomorphism $h\colon\mathbb{S}\rightarrow G$ of
real connected algebraic groups:\label{SV}%
\index{SV (conditions)}%

\begin{description}
\item[\textmd{SV1:}] the Hodge structure on the Lie algebra of $G$ defined by
$\Ad\circ h\colon\mathbb{S}\rightarrow\GL_{\Lie(G)}$ is of type
$\{(1,-1),(0,0),(-1,1)\}$;

\item[\textmd{SV2:}] $\inn(h(i))$ is a Cartan involution of $G^{\text{ad}}$.
\end{description}

\noindent In particular, (SV2) says that the Cartan involutions of
$G^{\mathrm{ad}}$ are inner, and so $G^{\mathrm{ad}}$ is an inner form of its
compact form. This implies that the simple factors of $G^{\mathrm{ad}}$ are
geometrically simple (see footnote \ref{inner}, p.~\pageref{inner}).

Condition (SV1) implies that the Hodge structure on $\Lie(G)$ defined by $h$
has weight $0$, and so $w_{h}(\mathbb{G}_{m})\subset Z(G)$. In the presence of
this condition, we sometimes need to consider a stronger form of (SV2):

\begin{description}
\item[\textmd{SV2*:}] $\inn(h(i))$ is a Cartan involution of $G/w_{h}%
(\mathbb{G}_{m})$.
\end{description}

\noindent Note that (SV2*) implies that $G$ is reductive.

Let $G$ be an algebraic group over $\mathbb{Q}$, and let $h$ be a homomorphism
$\mathbb{S}\rightarrow G_{\mathbb{R}}$. We say that $(G,h)$ satisfies the
condition (SV1) or (SV2) if $(G_{\mathbb{R}},h)$ does. When $w_{h}$ is defined
over $\mathbb{Q}{}$, we say that $(G,h)$ satisfies (SV2*) if $(G_{\mathbb{R}%
{}},h)$ does. We shall also need to consider the condition:

\begin{description}
\item[\textmd{SV3:}] $G^{\mathrm{ad}}$ has no $\mathbb{Q}$-factor on which the
projection of $h$ is trivial.
\end{description}

\noindent In the presence of (SV1,2), the condition (SV3) is equivalent to
$G^{\mathrm{ad}}$ being of noncompact type (apply Lemma 4.7 of \cite{milneSVI}).

Each condition holds for a homomorphism $h$ if and only if it holds for a
conjugate of $h$ by an element of $G(\mathbb{R}{})$.

Let $G$ be a reductive group over $\mathbb{Q}{}$. Let $h$ be a homomorphism
$\mathbb{S}{}\rightarrow G_{\mathbb{R}{}}$, and let $\bar{h}\colon\mathbb{S}%
{}\rightarrow G_{\mathbb{R}{}}^{\mathrm{ad}}$ be $\ad\circ h$. Then $(G,h)$
satisfies (SV1,2,3) if and only if $(G^{\mathrm{ad}},\bar{h})$ satisfies the
same conditions.\footnote{For (SV1), note that $\Ad(h(z))\colon\Lie(G){}%
\rightarrow\Lie(G)$ is the derivative of $\ad(h(z))\colon G\rightarrow G$. The
latter is trivial on $Z(G)$, and so the former is trivial on $\Lie(Z(G))$.}

\begin{remark}
\label{h90a}Let $H$ be a real algebraic group. The map $z\mapsto z/\bar{z}$
defines an isomorphism $\mathbb{S}{}/w(\mathbb{G}_{m})\simeq\mathbb{S}{}^{1}$,
and so the formula%
\begin{equation}
h(z)=u(z/\bar{z}) \label{hq45}%
\end{equation}
defines a one-to-one correspondence between the homomorphisms $h\colon
\mathbb{S}{}\rightarrow H$ trivial on $w(\mathbb{G}_{m})$ and the
homomorphisms $u\colon\mathbb{S}{}^{1}\rightarrow H$. When $H$ has trivial
centre, $h$ satisfies SV1 (resp. SV2) if and only if $u$ satisfies SU1 (resp. SU2).
\end{remark}

\begin{nt}
Conditions (SV1), (SV2), and (SV3) are respectively the conditions (2.1.1.1),
(2.1.1.2), and (2.1.1.3) of \cite{deligne1979}, and (SV2*) is the condition (2.1.1.5).
\end{nt}

\subsection{Definition of Mumford-Tate groups}

Let $(V,h)$ be a rational Hodge structure. Following \cite{deligne1972}, 7.1,
we define the \emph{Mumford-Tate group}%
\index{Mumford-Tate group}
of $(V,h)$ to be the smallest algebraic subgroup $G$ of $\GL_{V}$ such that
$G_{\mathbb{R}{}}\supset h(\mathbb{S}{})$. It is also the smallest algebraic
subgroup $G$ of $\GL_{V}$ such that $G_{\mathbb{C}}\supset\mu_{h}%
(\mathbb{G}_{m})$ (apply (\ref{hq40}), p.~\pageref{hq40}). We usually regard
the Mumford-Tate group as a pair $(G,h)$, and we sometimes denote it by
$\MT_{V}$. Note that $G$ is connected, because otherwise we could replace it
with its identity component. The weight map $w_{h}\colon\mathbb{G}%
_{m}\rightarrow G_{\mathbb{R}{}}$ is defined over $\mathbb{Q}$ and maps into
the centre of $G$.\footnote{Let $Z(w_{h})$ be the centralizer of $w_{h}$ in
$G$. For any $a\in\mathbb{R}^{\times}$, $w_{h}(a)\colon V_{\mathbb{R}{}%
}\rightarrow V_{\mathbb{R}{}}$ is a morphism of real Hodge structures, and so
it commutes with the action of $h(\mathbb{S}{})$. Hence $h(\mathbb{S}%
{})\subset Z(w_{h})_{\mathbb{R}{}}$. As $h$ generates $G$, this implies that
$Z(w_{h})=G$.}

Let $(V,h)$ be a polarizable rational Hodge structure, and let $T^{m,n}$
denote the Hodge structure $V^{\otimes m}\otimes V^{\vee\otimes n}$
($m,n\in\mathbb{N}{}$). By a \emph{Hodge class}%
\index{Hodge class}
of $V$, we mean an element of $V$ of type $(0,0)$, i.e., an element of $V\cap
V^{0,0}$, and by a \emph{Hodge tensor}%
\index{Hodge tensor}
of $V$, we mean a Hodge class of some $T^{m,n}$. The elements of $T^{m,n}$
fixed by the Mumford-Tate group of $V$ are exactly the Hodge tensors, and
$\MT_{V}$ is the largest algebraic subgroup of $\GL_{V}$ fixing all the Hodge
tensors of $V$ (cf. \cite{deligne1982}, 3.4).

The real Hodge structures form a semisimple tannakian category\footnote{For
the theory of tannakian categories, we refer the reader to \cite{deligneM1982}%
. In fact, we shall only need to use the elementary part of the theory (ibid.
\S \S 1,2).} over $\mathbb{R}{}$; the group attached to the category and the
forgetful fibre functor is $\mathbb{S}{}$. The rational Hodge structures form
a tannakian category over $\mathbb{Q}{}$, and the polarizable rational Hodge
structures form a \textit{semisimple} tannakian category, which we denote%
\index{HdgQ@$\Hdg_{\mathbb{Q}{}}$}
$\Hdg_{\mathbb{Q}{}}$. Let $(V,h)$ be a rational Hodge structure, and let
$\langle V,h\rangle^{\otimes}$ be the tannakian subcategory generated by
$(V,h)$. The Mumford-Tate group of $(V,h)$ is the algebraic group attached
$\langle V,h\rangle^{\otimes}$ and the forgetful fibre functor.

Let $G$ and $G^{e}$ respectively denote the Mumford-Tate groups of $V$ and
$V\oplus\mathbb{Q}{}(1)$. The action of $G^{e}$ on $V$ defines a homomorphism
$G^{e}\rightarrow G$, which is an isogeny unless $V$ has weight $0$, in which
case $G^{e}\simeq G\times\mathbb{G}_{m}$. The action of $G^{e}$ on
$\mathbb{Q}{}(1)$ defines a homomorphism $G^{e}\rightarrow\GL_{\mathbb{Q}%
{}(1)}$ whose kernel we denote $G^{1}$ and call the \emph{special Mumford-Tate
group} of $V$. Thus \textrm{\thinspace}$G^{1}\subset\GL_{V}$, and it is the
smallest algebraic subgroup of $\GL_{V}$ such that $G_{\mathbb{R}{}}%
^{1}\supset h(\mathbb{S}{}^{1})$. Clearly $G^{1}\subset G$ and $G=G^{1}\cdot
w_{h}(\mathbb{G}_{m})$.

\begin{proposition}
\label{h53a}Let $G$ be a connected algebraic group over $\mathbb{Q}{}$, and
let $h$ be a homomorphism $\mathbb{S}{}\rightarrow G_{\mathbb{R}{}}$. The pair
$(G,h)$ is the Mumford-Tate group of a Hodge structure if and only if the
weight homomorphism $w_{h}\colon\mathbb{G}_{m}\rightarrow G_{\mathbb{R}{}}$ is
defined over $\mathbb{Q}{}$ and $G$ is generated by $h$%
\index{generated}
(i.e., any algebraic subgroup $H$ of $G$ such that $h(\mathbb{S}{})\subset
H_{\mathbb{R}{}}$ equals $G$).
\end{proposition}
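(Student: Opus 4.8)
The plan is to prove both directions by exploiting the tannakian description of the Mumford-Tate group that was recalled just before the statement. For the forward direction, suppose $(G,h)$ is the Mumford-Tate group of some rational Hodge structure $(V,h)$. Then by the very definition of $\MT_V$ as the smallest algebraic subgroup of $\GL_V$ whose real points contain $h(\mathbb{S})$, any algebraic subgroup $H\subset G$ with $h(\mathbb{S})\subset H_{\mathbb{R}}$ must, on the one hand, contain $\MT_V=G$ (minimality), and on the other hand be contained in $G$ by hypothesis; hence $H=G$, which is exactly the condition that $G$ is generated by $h$. That $w_h$ is defined over $\mathbb{Q}$ and maps into the centre of $G$ was already established in the text (it is the displayed remark with the footnote about $Z(w_h)=G$). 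So this direction is essentially a restatement of definitions and requires no real work.

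The substance is the converse. Assume $w_h\colon\mathbb{G}_m\to G_{\mathbb{R}}$ is defined over $\mathbb{Q}$ and that $G$ is generated by $h$; we must produce a rational Hodge structure whose Mumford-Tate group is $(G,h)$. The idea is to let $G$ act on itself: choose a faithful finite-dimensional representation $\rho\colon G\hookrightarrow\GL_V$ over $\mathbb{Q}$ (which exists since $G$ is affine). Then $h_V\overset{\textup{def}}{=}\rho_{\mathbb{R}}\circ h\colon\mathbb{S}\to\GL_{V_{\mathbb{R}}}$ is a real Hodge structure on $V$, and since $w_{h_V}=\rho\circ w_h$ is defined over $\mathbb{Q}$, the pair $(V,h_V)$ is a rational Hodge structure. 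Its Mumford-Tate group $\MT_V$ is the smallest $\mathbb{Q}$-subgroup of $\GL_V$ whose real points contain $h_V(\mathbb{S})=\rho(h(\mathbb{S}))$. Clearly $\rho(G)$ is such a subgroup, so $\MT_V\subset\rho(G)$, and identifying $\rho(G)$ with $G$ we get $\MT_V\subset G$. Conversely $\MT_V$ is an algebraic subgroup of $G$ whose real points contain $h(\mathbb{S})$ (after the identification), so $h(\mathbb{S})\subset(\MT_V)_{\mathbb{R}}$, and the hypothesis that $G$ is generated by $h$ forces $\MT_V=G$. Thus $(G,h)\simeq(\MT_V,h_V)$.

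The one point that needs care — and which I expect to be the main (minor) obstacle — is whether one must first check that the chosen representation, or more precisely the Hodge structure it carries, is \emph{polarizable}, since the tannakian category $\Hdg_{\mathbb{Q}}$ used to define Mumford-Tate groups in the text consists of polarizable rational Hodge structures. However, the definition of $\MT_V$ as the smallest $\mathbb{Q}$-subgroup of $\GL_V$ with $G_{\mathbb{R}}\supset h(\mathbb{S})$ makes sense for an arbitrary rational Hodge structure, polarizable or not, and the argument above only uses that characterization; so polarizability is not actually needed for this proposition. (If one did want the cleaner tannakian picture, one could observe that a faithful representation can be built from the regular representation, whose Hodge structure, being a sum of pieces $T^{m,n}$ after passing to $\MT_V$, is polarizable when $\MT_V$ is reductive — but the statement does not assume $G$ reductive, so it is better to avoid this route entirely.) Hence the clean proof is: pick any faithful $\mathbb{Q}$-representation of $G$, transport $h$ to it, and read off from the two extremal properties of $\MT_V$ that it equals $G$.
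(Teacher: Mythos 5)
Your proof is correct and is essentially the paper's own argument: the paper likewise proves the converse by choosing a faithful $\mathbb{Q}$-representation $\rho\colon G\hookrightarrow\GL_V$, observing that $(V,\rho\circ h)$ is a rational Hodge structure because $w_h$ is defined over $\mathbb{Q}$, and reading off $\MT_V=G$ from the two extremal properties. Your aside about polarizability is a reasonable sanity check that the paper omits (and you resolve it correctly: the direct definition of $\MT_V$ does not require polarizability).
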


\begin{proof}
If $(G,h)$ is the Mumford-Tate group of a Hodge structure $(V,h)$, then
certainly $h$ generates $G$. The weight homomorphism $w_{h}$ is defined over
$\mathbb{Q}{}$ because $(V,h)$ is a rational Hodge structure.

Conversely, suppose that $(G,h)$ satisfy the conditions. For any faithful
representation $\rho\colon G\rightarrow\GL_{V}$ of $G$, the pair
$(V,h\circ\rho)$ is a rational Hodge structure, and $(G,h)$ is its
Mumford-Tate group.
\end{proof}

\begin{proposition}
\label{h52}Let $(G,h)$ be the Mumford-Tate group of a Hodge structure $(V,h)$.
Then $(V,h)$ is polarizable if and only if $(G,h)$ satisfies (SV2*).
\end{proposition}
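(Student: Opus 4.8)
The plan is to deduce the proposition from Theorem~\ref{h20b} applied to the \emph{special} Mumford--Tate group $G^{1}\subset\GL_{V}$, exploiting the fact that for a pure Hodge structure the two notions of ``polarization'' coincide: a polarization of the Hodge structure $(V,h)$, and a $C$-polarization of $V$ as a representation of $G^{1}$ in the sense of \ref{h20b} (with $C=h(i)$). First I would dispose of the weight grading: $V=\bigoplus_{m}V_{m}$ is a direct sum of rational Hodge structures, $(V,h)$ is polarizable exactly when each $(V_{m},h)$ is, and $G^{1}$ embeds into $\prod_{m}G^{1}_{m}$ (where $G^{1}_{m}$, the image of $G^{1}$ in $\GL_{V_{m}}$, is the special Mumford--Tate group of $V_{m}$), surjecting onto each factor. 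So the crux is the pure case.

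Assume then that $V$ is pure of weight $m$, put $C=h(i)\in G^{1}(\mathbb{R})$, and note $C^{2}=h(-1)$ acts on $V$ as the scalar $(-1)^{m}$, hence is central. The key elementary point is that an $\mathbb{R}$-bilinear form $\varphi$ on $V_{\mathbb{R}}$ is $G^{1}$-invariant if and only if it is a morphism of real Hodge structures $V_{\mathbb{R}}\otimes V_{\mathbb{R}}\to\mathbb{R}(-m)$: invariance under $h(\mathbb{S}^{1})$ is precisely $\mathbb{S}^{1}$-equivariance, and since $h$ sends a positive real number $r$ to the scalar $r^{-m}$ on $V$, $\mathbb{S}^{1}$-equivariance together with this scaling give equivariance under all of $\mathbb{S}$; invariance of a bilinear form being a closed condition, invariance under $h(\mathbb{S}^{1})$ is the same as invariance under $G^{1}$. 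Under this identification the positivity condition (\ref{hq16}) defining a polarization of $(V,h)$ becomes exactly ``$\varphi_{C}$ symmetric and positive definite''. Since $V_{\mathbb{R}}$ is a faithful representation of $G^{1}$, Theorem~\ref{h20b} then gives: $(V_{m},h)$ is polarizable $\iff$ $\inn(C)$ is a Cartan involution of $G^{1}$ --- passing between $\mathbb{Q}$ and $\mathbb{R}$ in the ``if'' direction by the usual argument (compare \ref{h20c}) that a rational Hodge structure is polarizable as soon as its realification is.

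Next I would chain the pieces together. Over all $m$: ``$\inn(h(i))$ is a Cartan involution of every $G^{1}_{m}$'' $\iff$ ``$\inn(h(i))$ is a Cartan involution of $\prod_{m}G^{1}_{m}$'' (a product of compact real forms is a compact real form) $\iff$ ``$\inn(h(i))$ is a Cartan involution of $G^{1}$''; for the last step, $G^{1}$ is an $\inn(h(i))$-stable subgroup of $\prod_{m}G^{1}_{m}$, so its $\inn(h(i))$-form is a closed subgroup of a compact group, hence compact, while conversely $G^{1}$ surjects onto each $G^{1}_{m}$ and Cartan involutions pass to quotients. Finally I relate $G^{1}$ to $G/w_{h}(\mathbb{G}_{m})$: since $G=G^{1}\cdot w_{h}(\mathbb{G}_{m})$ with $w_{h}(\mathbb{G}_{m})$ central, $G/w_{h}(\mathbb{G}_{m})\cong G^{1}/N$ with $N=G^{1}\cap w_{h}(\mathbb{G}_{m})$, and $N$ is \emph{finite}: $h(\mathbb{S}^{1})$, hence $G^{1}$, acts with determinant $1$ on each graded piece (the exponent $\sum_{p+q=m}p\,h^{p,q}$ equals $\sum_{p+q=m}q\,h^{p,q}$ by the symmetry $h^{p,q}=h^{q,p}$), whereas $w_{h}$ acts on any nonzero piece of nonzero weight by all scalar operators, almost none of which have determinant $1$. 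Thus $G^{1}\to G/w_{h}(\mathbb{G}_{m})$ is a central isogeny, across which Cartan-ness of $\inn(h(i))$ transfers in both directions. Combining, $(V,h)$ is polarizable $\iff$ $\inn(h(i))$ is a Cartan involution of $G/w_{h}(\mathbb{G}_{m})$, which is (SV2*).

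The routine ingredients are the standard stability properties of Cartan involutions --- under products, restriction to stable subgroups, surjections, and central isogenies --- each of which reduces to ``a closed subgroup, or a finite-by-compact extension, of a compact group is compact'', together with finiteness of $H^{1}(\mathbb{R},-)$ for finite and for reductive real groups; reductivity of the groups involved is automatic once any one of them is known to possess a compact real form, so there is no circularity. I expect the only genuinely delicate point to be organizational: Theorem~\ref{h20b} naturally produces a statement about $G^{1}$ (the group on which $V$ is faithful and for which invariant forms are Hodge pairings), whereas the proposition is phrased in terms of $G/w_{h}(\mathbb{G}_{m})$, so one must check with care that these two groups differ only by a finite central isogeny --- this is what makes the $C$-polarization criterion of \ref{h20b} transport to the required assertion.
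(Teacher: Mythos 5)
Your proof is correct and follows essentially the same route as the paper's: identify polarizations of $(V,h)$ with $C$-polarizations of $V$ for the special Mumford--Tate group $G^{1}$, invoke Theorem~\ref{h20b} (and \ref{h20c} for the $\mathbb{Q}$/$\mathbb{R}$ passage), and transfer the Cartan-involution condition across the isogeny $G^{1}\to G/w_{h}(\mathbb{G}_{m})$. You merely supply details the paper leaves implicit --- the reduction over the weight grading and the determinant argument showing $G^{1}\cap w_{h}(\mathbb{G}_{m})$ is finite --- and both are sound.
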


\begin{proof}
Let $C=h(i)$. For notational convenience, assume that $(V,h)$ has a single
weight $m$. Let $G^{1}$ be the special Mumford-Tate group of $(V,h)$. Then
$C\in G^{1}(\mathbb{R}{})$, and a pairing $\psi\colon V\times V\rightarrow
\mathbb{Q}(-m)$ is a polarization of the Hodge structure $(V,h)$ if and only
if $(2\pi i)^{m}\psi$ is a $C$-polarization of $V$ for $G^{1}$ in the sense of
\S 2. It follows from (\ref{h20b}) and (\ref{h20c}) that a polarization $\psi$
for $(V,h)$ exists if and only if $\inn(C)$ is a Cartan involution of
$G_{\mathbb{R}{}}^{1}$. Now $G^{1}\subset G$ and the quotient map
$G^{1}\rightarrow G/w_{h}(\mathbb{G}_{m})$ is an isogeny, and so $\inn(C)$ is
a Cartan involution of $G^{1}$ if and only if it is a Cartan involution of
$G/w_{h}(\mathbb{G}_{m})$.
\end{proof}

\begin{corollary}
\label{h53}The Mumford-Tate group of a polarizable Hodge structure is reductive.
\end{corollary}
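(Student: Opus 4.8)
The plan is to read this off immediately from Proposition~\ref{h52} together with the dictionary between Cartan involutions and reductivity recalled in \S2. Let $(V,h)$ be a polarizable rational Hodge structure and let $(G,h)$ be its Mumford-Tate group. By Proposition~\ref{h52}, $(G,h)$ satisfies (SV2*); that is, $\inn(h(i))$ is a Cartan involution of $G/w_h(\mathbb{G}_m)$. In particular $\bigl(G/w_h(\mathbb{G}_m)\bigr)_{\mathbb{R}}$ admits a Cartan involution, hence has a compact real form, hence is reductive (see \S2); therefore $G/w_h(\mathbb{G}_m)$ is reductive.

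It then remains to pass from reductivity of $G/w_h(\mathbb{G}_m)$ to reductivity of $G$ itself. The point, established in \S6 when the Mumford-Tate group was defined, is that $w_h\colon\mathbb{G}_m\rightarrow G$ has image in the centre of $G$; thus $G/w_h(\mathbb{G}_m)$ is the quotient of $G$ by a central torus. Working over $\mathbb{Q}^{\mathrm{al}}$, and using that we are in characteristic zero, the unipotent radical $R_u(G)$ is a connected normal unipotent subgroup of $G$, and its image in $G/w_h(\mathbb{G}_m)$ is again connected, normal and unipotent, hence trivial since that quotient is reductive. Therefore $R_u(G)\subset w_h(\mathbb{G}_m)$, and as a torus has no nontrivial unipotent subgroup this forces $R_u(G)=1$, i.e., $G$ is reductive.

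I do not expect a genuine obstacle: the corollary is a formal consequence of Proposition~\ref{h52} together with the characterization ``Cartan involution $\Leftrightarrow$ compact real form $\Leftrightarrow$ reductive'' from \S2. The only step deserving a word of care is the last one, deducing reductivity of $G$ from that of $G/w_h(\mathbb{G}_m)$, which relies essentially on $w_h(\mathbb{G}_m)$ being central (so that the kernel of the quotient is a torus, which cannot contain a nontrivial unipotent radical).
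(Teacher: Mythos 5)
Your proposal is correct and takes the same route as the paper's primary argument: Proposition~\ref{h52} gives (SV2*), which implies $G_{\mathbb{R}}$ is reductive. The paper simply cites its earlier remark that ``(SV2*) implies that $G$ is reductive'' without spelling it out, whereas you supply the detail — that $G/w_h(\mathbb{G}_m)$ admits a Cartan involution, hence is reductive, and that since $w_h(\mathbb{G}_m)$ is a central torus the unipotent radical of $G$ must be trivial — so you are essentially unpacking the step the paper leaves implicit. (The paper also records a second, alternative argument via semisimplicity of the tannakian category of polarizable Hodge structures, which you do not need.)
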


\begin{proof}
An algebraic group $G$ over $\mathbb{Q}{}$ is reductive if and only if
$G_{\mathbb{R}{}}$ is reductive, and we have already observed that (SV2*)
implies that $G_{\mathbb{R}{}}$ is reductive. Alternatively, polarizable Hodge
structures are semisimple, and an algebraic group in characteristic zero is
reductive if its representations are semisimple (e.g., \cite{deligneM1982}, 2.23).
\end{proof}

\begin{remark}
\label{h52r}Note that (\ref{h52}) implies the following statement: let $(V,h)$
be a Hodge structure; if there exists an algebraic group $G\subset\GL_{V}$
such that $h(\mathbb{S}{})\subset G_{\mathbb{R}{}}$ and $(G,h)$ satisfies
(SV2*), then $(V,h)$ is polarizable.
\end{remark}

\begin{nt}
The Mumford-Tate group%
\index{Mumford-Tate group!of an abelian variety}
of a complex abelian variety $A$ is defined to be the Mumford-Tate group of
the Hodge structure $H_{1}(A^{\text{an}},\mathbb{Q}{})$. In this context,
special Mumford-Tate groups were first introduced in the talk of Mumford
(1966)\nocite{mumford1966} (which is \textquotedblleft partly joint work with
J. Tate\textquotedblright).
\end{nt}

\subsection{Special Hodge structures}

A rational Hodge structure is \emph{special}%
\index{special Hodge structure}%
\footnote{Poor choice of name, since \textquotedblleft
special\textquotedblright\ is overused and special points on Shimura varieties
don't correspond to special Hodge structures, but I can't think of a better
one. Perhaps an \textquotedblleft SV Hodge structure\textquotedblright?} if
its Mumford-Tate group satisfies (SV1,2*) or, equivalently, if it is
polarizable and its Mumford-Tate group satisfies (SV1).

\begin{proposition}
\label{h54a}The special Hodge structures form a tannakian subcategory of
$\Hdg_{\mathbb{Q}}$.
\end{proposition}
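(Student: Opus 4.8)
The plan is to show directly that the full subcategory of $\Hdg_{\mathbb{Q}{}}$ spanned by the special Hodge structures is stable under the four tannakian operations — passing to subquotients, finite direct sums, tensor products, and duals — and contains the unit object; it then inherits the forgetful fibre functor and so is a tannakian subcategory. I would begin by simplifying the condition: by Proposition \ref{h52} every object of $\Hdg_{\mathbb{Q}{}}$ has Mumford--Tate group satisfying (SV2*), so inside $\Hdg_{\mathbb{Q}{}}$ being special means exactly that the Mumford--Tate group satisfies (SV1). Hence the whole task reduces to tracking the single condition (SV1) through the operations. The unit $\mathbb{Q}{}(0)$ has trivial Mumford--Tate group, so (SV1) holds vacuously and the unit object is special.

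The main tool is the following well-known fact, which I would record first: if $(V,h)$ has Mumford--Tate group $G=\MT_{V}$, then for every object $W$ of the tannakian subcategory $\langle (V,h)\rangle^{\otimes}$ generated by $(V,h)$ — a class that includes every subquotient of $(V,h)$ (a sub-Hodge structure of $V$ being automatically stable under $\MT_{V}$, by the same stabiliser argument used below), its dual, and every tensor construction $T^{m,n}(V,h)$ — the action of $G$ on $W$ identifies $\MT_{W}$ with the image of $G$ in $\GL_{W}$. This follows from the minimality in the definition of $\MT$: writing $\rho\colon G\to\GL_{W}$ for the action and $H$ for the smallest algebraic subgroup of $\GL_{W}$ with $h|_{W}(\mathbb{S}{})\subset H_{\mathbb{R}{}}$, the subgroup $\rho^{-1}(H)$ of $G\subset\GL_{V}$ contains $h(\mathbb{S}{})$ in its real points, hence equals $G$, so $H=\rho(G)$. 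Thus $\MT_{W}$ is a \emph{quotient} of $\MT_{V}$. Now if $\pi\colon G\twoheadrightarrow G'$ is surjective and $h'=\pi\circ h$, then $\Lie(\pi)\colon\Lie(G)\to\Lie(G')$ is $\mathbb{S}{}$-equivariant for the actions $\Ad\circ h$ and $\Ad\circ h'$ (because $\pi$ intertwines conjugation), hence a surjective morphism of Hodge structures, and such a morphism carries the $(p,q)$-component onto the $(p,q)$-component. Therefore (SV1) for $G$ forces (SV1) for $G'$, and so: if $(V,h)$ is special, then every object of $\langle(V,h)\rangle^{\otimes}$ is special.

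It remains only to handle two special Hodge structures $(V_{1},h_{1})$ and $(V_{2},h_{2})$ simultaneously, with Mumford--Tate groups $G_{1},G_{2}$; this covers direct sums, and then — via the previous paragraph applied to $\langle(V_{1},h_{1})\oplus(V_{2},h_{2})\rangle^{\otimes}$ — also tensor products and all mixed subquotients. Each $V_{i}$ is a sub-Hodge structure of $V_{1}\oplus V_{2}$, hence stable under $h=(h_{1},h_{2})$, and hence — its stabiliser in $\GL_{V_{1}\oplus V_{2}}$ being an algebraic $\mathbb{Q}{}$-subgroup whose real points contain $h(\mathbb{S}{})$ — stable under $\MT_{V_{1}\oplus V_{2}}$. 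Thus $\MT_{V_{1}\oplus V_{2}}\subset\GL_{V_{1}}\times\GL_{V_{2}}$, and the two projections, which surject onto $G_{1}$ and $G_{2}$ by the image principle above, embed $\MT_{V_{1}\oplus V_{2}}$ into $G_{1}\times G_{2}$ compatibly with the homomorphisms from $\mathbb{S}{}$. Passing to Lie algebras, $\Lie(\MT_{V_{1}\oplus V_{2}})$ becomes a sub-Hodge structure of $\Lie(G_{1})\oplus\Lie(G_{2})$, which is of type $\{(1,-1),(0,0),(-1,1)\}$ since $G_{1},G_{2}$ satisfy (SV1); hence so is $\Lie(\MT_{V_{1}\oplus V_{2}})$, i.e. $(V_{1},h_{1})\oplus(V_{2},h_{2})$ is special. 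Combining everything, the subcategory is closed under subquotients, finite direct sums, tensor products and duals and contains the unit, so it is a tannakian subcategory of $\Hdg_{\mathbb{Q}{}}$. The only step that takes real care is the image principle for Mumford--Tate groups together with the observation that the Lie-algebra maps induced by the relevant group homomorphisms are morphisms of Hodge structures; once those are in hand, preservation of (SV1) under quotients and under the direct-sum inclusion is immediate, and nothing further is needed.
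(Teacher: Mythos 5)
Your proof is correct and its engine is the same as the paper's: an object $W$ of $\langle V,h\rangle^{\otimes}$ has Mumford--Tate group a quotient of $\MT_{V}$, and condition (SV1) survives passage to quotients because the induced map on Lie algebras is a surjective morphism of Hodge structures. The paper's proof of Proposition \ref{h54a} is a one-liner that records exactly this quotient principle for the subcategory generated by a \emph{single} special $(V,h)$ and stops there, leaving the reader to observe that it also covers direct sums. You supply that missing half explicitly: $\MT_{V_{1}\oplus V_{2}}$ is contained in the block stabiliser $\GL_{V_{1}}\times\GL_{V_{2}}$, hence embeds into $G_{1}\times G_{2}$ compatibly with $h=(h_{1},h_{2})$, so $\Lie\MT_{V_{1}\oplus V_{2}}$ is a sub-Hodge structure of $\Lie G_{1}\oplus\Lie G_{2}$ and inherits type $\{(1,-1),(0,0),(-1,1)\}$. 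This is the dual observation to the paper's (subobjects rather than quotients of groups), and it is genuinely needed for the statement as worded; once it is in hand, closure under tensor products and subquotients of mixed objects follows by running the quotient principle on $\langle V_{1}\oplus V_{2}\rangle^{\otimes}$, just as you do. So: same idea, but you have made the argument complete where the paper only gestures. One tiny economy you could take: the reduction via Proposition \ref{h52} to tracking only (SV1) is pleasant but not essential, since (SV2*) also passes to quotients and to subgroups of $G_{1}\times G_{2}$ that surject onto each factor, so you could have tracked both conditions uniformly without invoking \ref{h52}.
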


\begin{proof}
Let $(V,h)$ be a special Hodge structure. The Mumford-Tate group of any object
in the tannakian subcategory of $\Hdg_{\mathbb{Q}{}}$ generated by $(V,h)$ is
a quotient of $\MT_{V}$, and hence satisfies (SV1,2*).
\end{proof}

Recall that the \emph{level}%
\index{level}
of a Hodge structure $(V,h)$ is the maximum value of $\left\vert
p-q\right\vert $ as $(p,q)$ runs over the pairs $(p,q)$ with $V^{p,q}\neq0$.
It has the same parity as the weight of $(V,h)$.

\begin{example}
\label{h54c}Let $V_{n}(a_{1},\ldots,a_{d})$ denote a complete intersection of
$d$ smooth hypersurfaces of degrees $a_{1},\ldots,a_{d}$ in general position
in $\mathbb{P}{}^{n+d}$ over $\mathbb{C}{}$. Then $H^{n}(V_{n},\mathbb{Q}{})$
has level $\leq1$ only for the varieties $V_{n}(2)$, $V_{n}(2,2)$, $V_{2}(3)$,
$V_{n}(2,2,2)$ ($n$ odd), $V_{3}(3)$, $V_{3}(2,3)$, $V_{5}(3)$, $V_{3}(4)$
(\cite{rapoport1972}).
\end{example}

\begin{proposition}
\label{h54b}Every polarizable Hodge structure of level $\leq1$ is special.
\end{proposition}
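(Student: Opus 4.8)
The plan is to reduce immediately to the case of a Hodge structure of pure weight, where the claim becomes a short observation about the Hodge type of $\End(V)$. A polarizable Hodge structure of level $\leq 1$ is the direct sum of its weight components, each of which is again polarizable of level $\leq 1$; and by Proposition \ref{h54a} the special Hodge structures form a tannakian subcategory of $\Hdg_{\mathbb{Q}{}}$, hence one closed under direct sums. So it suffices to treat a $(V,h)$ of a single weight $m$. Since $(V,h)$ is assumed polarizable, the definition of \emph{special} (via Proposition \ref{h52}) reduces the problem to verifying that the Mumford-Tate group $G=\MT_V$ satisfies (SV1), i.e.\ that the Hodge structure on $\Lie(G)$ defined by $\Ad\circ h$ is of type $\{(1,-1),(0,0),(-1,1)\}$.

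For this, I would argue as follows. The dual $V^{\vee}$ of a level-$\leq 1$ Hodge structure again has level $\leq 1$, and since the level is subadditive under tensor products, $\End(V)=V\otimes V^{\vee}=T^{1,1}$ has level $\leq 2$; it also has weight $0$. A weight-$0$ Hodge structure of level $\leq 2$ can only have types in $\{(1,-1),(0,0),(-1,1)\}$. Now $\Lie(G)$ is a $\mathbb{Q}{}$-subspace of $\End(V)=\Lie(\GL_V)$, and it is stable under the $\mathbb{S}{}$-action defining the Hodge structure on $\End(V)$ because $h(\mathbb{S}{})\subset G_{\mathbb{R}{}}$ and $G$ acts on $\Lie(G)$ by the adjoint representation; hence $\Lie(G)$ is a rational sub-Hodge-structure of $T^{1,1}$. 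Therefore its Hodge type is also contained in $\{(1,-1),(0,0),(-1,1)\}$, which is exactly condition (SV1). Together with (SV2*), which holds by Proposition \ref{h52} because $(V,h)$ is polarizable, this shows that $\MT_V$ satisfies (SV1,2*), so $(V,h)$ is special.

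I do not expect a genuine obstacle. The two points needing a little care are: first, the reduction to pure weight is essential and cannot be skipped, since for a \emph{mixed}-weight Hodge structure of level $\leq 1$ the algebra $\End(V)$ can acquire types such as $(1,0)$ or $(0,1)$ (coming from $\Hom$ between pieces of consecutive weights), so the type bound on $\End(V)$ really uses purity of the weight; and second, one must confirm that $\Lie(G)$ is genuinely a rational sub-Hodge-structure of $T^{1,1}$, so that the type computation for $\End(V)$ transfers to it. Both are routine, and the core of the argument is the one-line bound on the Hodge type of $\End(V)$.
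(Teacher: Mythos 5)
Your proof is correct and follows essentially the same route as the paper: the key step in both is to observe $\Lie(\MT_V)\subset\End(V)$, note that $\End(V)$ has Hodge type contained in $\{(1,-1),(0,0),(-1,1)\}$, and conclude that (SV1) holds, with (SV2*) coming from Proposition \ref{h52} via polarizability. The only differences are cosmetic: the paper treats level $0$ and level $1$ as separate cases (level $0$ giving $\MT_V=\mathbb{G}_m$ directly, level $1$ giving $\End(V)$ of type $\{(-1,1),(0,0),(1,-1)\}$ by inspection), whereas you derive the type bound on $\End(V)$ uniformly from weight $0$ and subadditivity of level; and you make explicit the reduction to a single weight, which the paper leaves implicit (its definition of level, with the parity remark, already tacitly presupposes pure weight). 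Your caution on the mixed-weight point is well-placed --- without purity, $\End(V)$ does pick up odd-weight pieces --- but given the paper's conventions the reduction is a matter of hygiene rather than a real extra step.
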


\begin{proof}
A Hodge structure of level $0$ is direct sum of copies of $\mathbb{Q}{}(m)$
for some $m$, and so its Mumford-Tate group is $\mathbb{G}_{m}$. A Hodge
structure $(V,h)$ of level $1$ is of type $\{(p,p+1),(p+1,p)\}$ for some $p$.
Then%
\[
\Lie(\MT_{V})\subset\End(V)=V^{\vee}\otimes V,
\]
which is of type $\{(-1,1),(0,0),(1,-1)\}$.
\end{proof}

\begin{example}
\label{h54f}Let $A$ be an abelian variety over $\mathbb{C}{}$. The Hodge
structures $H_{B}^{n}(A)$ are special for all $n$. To see this, note that
$H_{B}^{1}(A)$ is of level $1$, and hence is special by (\ref{h54b}), and that%
\[
H_{B}^{n}(A)\simeq\bigwedge\nolimits^{n}H_{B}^{1}(A)\subset H_{B}%
^{1}(A)^{\otimes n},
\]
and hence $H_{B}^{n}(A)$ is special by (\ref{h54a}).
\end{example}

It follows that a nonspecial Hodge structure does not lie in the tannakian
subcategory of $\Hdg_{\mathbb{Q}{}}$ generated by the cohomology groups of
abelian varieties.

\begin{proposition}
\label{h54e}A pair $(G,h)$ is the Mumford-Tate group of a special Hodge
structure if and only if $h$ satisfies (SV1,2*), the weight $w_{h}$ is defined
over $\mathbb{Q}{}$, and $G$ is generated by $h$.
\end{proposition}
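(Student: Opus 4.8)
The plan is to deduce this purely by assembling the two preceding propositions (\ref{h53a} and \ref{h52}) together with the definition of a special Hodge structure. Recall that $(V,h)$ is special precisely when $(\MT_V,h)$ satisfies (SV1) and (SV2*), equivalently when $(V,h)$ is polarizable and $(\MT_V,h)$ satisfies (SV1); the equivalence of these two formulations is exactly Proposition \ref{h52}. So the only content beyond Proposition \ref{h53a} (which characterizes which pairs $(G,h)$ are Mumford--Tate groups at all) is the interplay between polarizability and (SV2*) supplied by Proposition \ref{h52}.

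First I would prove the forward implication. Suppose $(G,h)$ is the Mumford--Tate group of a special Hodge structure $(V,h)$. By the definition of special, $(G,h)$ satisfies (SV1) and (SV2*); moreover $w_h$ is defined over $\mathbb{Q}{}$ (this is presupposed by (SV2*), and in any case holds because $(V,h)$ is a rational Hodge structure), and $h$ generates $G$ by the very definition of the Mumford--Tate group. This yields all the asserted conditions.

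For the converse, assume $h$ satisfies (SV1) and (SV2*), that $w_h$ is defined over $\mathbb{Q}{}$, and that $G$ is generated by $h$. By Proposition \ref{h53a}, the pair $(G,h)$ is the Mumford--Tate group of some rational Hodge structure $(V,h)$; concretely, choosing any faithful representation $\rho\colon G\rightarrow\GL_V$, the pair $(V,h\circ\rho)$ is a rational Hodge structure whose Mumford--Tate group is $(G,h)$, as in the proof of \ref{h53a}. Since $(G,h)$ satisfies (SV2*), Proposition \ref{h52} shows that $(V,h)$ is polarizable; since $(G,h)$ also satisfies (SV1), $(V,h)$ is by definition a special Hodge structure. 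Hence $(G,h)$ is the Mumford--Tate group of a special Hodge structure, completing the proof.

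I expect no genuine obstacle here: the statement is bookkeeping over results already in place. The only point requiring care is to invoke Proposition \ref{h52} in the direction ``(SV2*) $\Rightarrow$ polarizable'', which is legitimate because we have arranged for $(G,h)$ to be the Mumford--Tate group of the Hodge structure in question; one should also remark that the conditions (SV1) and (SV2*) on $(G,h)$ are insensitive to $G(\mathbb{R}{})$-conjugation of $h$, so neither the choice of $\rho$ nor the choice of representative in the conjugacy class of $h$ matters.
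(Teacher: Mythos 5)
Your proof is correct and takes essentially the same route as the paper, which simply cites Proposition \ref{h53a} and the definition of a special Hodge structure; you have merely unpacked that the equivalence of the two formulations of ``special'' (via Proposition \ref{h52}) is already built into the definition.
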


\begin{proof}
Immediate consequence of Proposition \ref{h53a}, and of the definition of a
special Hodge structure.
\end{proof}

Note that, because $h$ generates $G$, it also satisfies (SV3).

\begin{example}
\label{h54g}Let $f\colon X\rightarrow S$ be the universal family of smooth
hypersurfaces of a fixed degree $\delta$ and of a fixed odd dimension $n$. For
$s$ outside a meagre subset of $S$, the Mumford-Tate group of $H^{n}%
(X_{s},\mathbb{Q}{})$ is the full group of symplectic similitudes (see
\ref{h54d} below). This implies that $H^{n}(X_{s},\mathbb{Q}{})$ is not
special unless it has level $\leq1$. According to (\ref{h54c}), this rarely happens.
\end{example}

\subsection{The generic Mumford-Tate group}

Throughout this subsection, $(\mathsf{V},F)$ is a family of Hodge structures
on a connected complex manifold $S$. Recall that \textquotedblleft
family\textquotedblright\ means \textquotedblleft holomorphic
family\textquotedblright.

\begin{lemma}
\label{h54}For any $t\in\Gamma(S,\mathsf{V})$, the set
\[
Z(t)=\{s\in S\mid t_{s}\text{ is of type }(0,0)\text{ in }\mathsf{V}_{s}\}
\]
is an analytic subset of $S$.
\end{lemma}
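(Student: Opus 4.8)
The plan is to exhibit $Z(t)$ as the zero locus of a holomorphic section of a holomorphic vector bundle on $S$; such a locus is automatically an analytic subset, since in any local trivialisation it is the common zero set of finitely many holomorphic functions.

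First I would reduce to the case that $\mathsf{V}$ is pure of weight $0$. If $\mathsf{V}$ has pure weight $m\neq 0$ there are no vectors of type $(0,0)$, so $Z(t)=\emptyset$; and if the weight gradation is nontrivial, decompose $\mathsf{V}=\bigoplus_{m}\mathsf{V}_{m}$ into its (locally constant) weight pieces and write $t=\sum_{m}t_{m}$ with $t_{m}\in\Gamma(S,\mathsf{V}_{m})$. A vector of type $(0,0)$ lies in the weight-$0$ part, so a point of $Z(t)$ is a common zero of the sections $t_{m}$ with $m\neq 0$; but each such $t_{m}$ is a flat section of a local system on the connected manifold $S$, hence is either nowhere zero or identically zero. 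Thus either $Z(t)=\emptyset$, or $t=t_{0}$ lies in the weight-$0$ summand and we may replace $\mathsf{V}$ by $\mathsf{V}_{0}$.

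So assume $\mathsf{V}$ has weight $0$, and set $\mathcal{V}=\mathcal{O}_{S}\otimes\mathsf{V}$. I claim $Z(t)=\{s\in S:t_{s}\in F^{0}\mathcal{V}_{s}\}$. The inclusion $\subseteq$ is immediate because $V^{0,0}_{s}\subset F^{0}\mathcal{V}_{s}$. For $\supseteq$: since $t$ is a section of $\mathsf{V}$, the vector $t_{s}\in\mathcal{V}_{s}=V_{s,\mathbb{C}}$ is fixed by complex conjugation relative to $V_{s}$, so $t_{s}\in F^{0}\mathcal{V}_{s}$ forces $t_{s}\in\overline{F^{0}\mathcal{V}_{s}}=\overline{F^{0}}\mathcal{V}_{s}$ as well; and in weight $0$ one has $F^{0}\cap\overline{F^{0}}=V^{0,0}$ (if $v=\sum_{p}v^{p,-p}\in F^{0}$ then $v^{p,-p}=0$ for $p<0$, and if also $v\in\overline{F^{0}}$ then $v^{p,-p}=0$ for $p>0$). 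Hence $t_{s}\in V^{0,0}_{s}$, i.e.\ $s\in Z(t)$.

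Finally, $F^{0}\mathcal{V}$ is a holomorphic vector subbundle of $\mathcal{V}$ --- this is part of the definition of a family of Hodge structures, together with the convention that vector subsheaves have fibrewise injective inclusions --- so the quotient $\mathcal{Q}=\mathcal{V}/F^{0}\mathcal{V}$ is a holomorphic vector bundle, and the composite of $t\colon\mathcal{O}_{S}\to\mathcal{V}$ with the projection $\mathcal{V}\twoheadrightarrow\mathcal{Q}$ is a holomorphic section $\bar{t}$ of $\mathcal{Q}$. Then $Z(t)=\{s:\bar{t}(s)=0\}$, which is an analytic subset of $S$. There is no serious obstacle here; the only points needing a little care are the reduction to pure weight $0$, the translation of ``type $(0,0)$'' into membership in $F^{0}\mathcal{V}$ (which uses that $t$ takes values in the real, or rational, structure), and the fact that $F^{0}\mathcal{V}$ is a genuine subbundle, so that $\mathcal{Q}$ is locally free and the notion ``zero locus of a section'' is available.
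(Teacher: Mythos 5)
Your argument is essentially the paper's: identify $Z(t)$ with the set of $s$ where $t_{s}$ lies in $F^{0}\mathcal{V}_{s}$, then observe that this is the zero locus of the image of $t$ in the locally free quotient $\mathcal{Q}=\mathcal{V}/F^{0}\mathcal{V}$, hence analytic. The only difference is that you justify the equivalence ``$t_{s}$ is of type $(0,0)$ iff $t_{s}\in F^{0}\mathcal{V}_{s}$'' via the reduction to weight $0$ and the identity $F^{0}\cap\overline{F^{0}}=V^{0,0}$ for real $t_{s}$, whereas the paper states this equivalence without comment; your extra care is correct and closes a small gap.
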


\begin{proof}
An element of $\mathsf{V}_{s}$ is of type $(0,0)$ if and only if it lies in
$F^{0}\mathsf{V}_{s}$. On $S$, we have an exact sequence%
\[
0\rightarrow F^{0}\mathcal{V}\rightarrow\mathcal{V}\rightarrow\mathcal{Q}%
{}\rightarrow0
\]
of locally free sheaves of $\mathcal{O}{}_{S}$-modules. Let $U$ be an open
subset of $S$ such that $\mathcal{Q}{}$ is free over $U$. Choose an
isomorphism $\mathcal{Q}{}\simeq\mathcal{O}{}_{U}^{r}$, and let $t|U$ map to
$(t_{1},\ldots,t_{r})$ in $\mathcal{O}{}_{U}^{r}$. Then
\[
Z(t)\cap U=\{s\in U\mid t_{1}(s)=\cdots=t_{r}(s)=0\}.
\]

\end{proof}

For $m,n\in\mathbb{N}{}$, let $\mathsf{T}^{m,n}=\mathsf{T}^{m,n}\mathsf{V}$ be
the family of Hodge structures $\mathsf{V}^{\otimes m}\otimes\mathsf{V}%
^{\vee\otimes n}$ on $S$. Let $\pi\colon\tilde{S}\rightarrow S$ be a universal
covering space of $S$, and define%
\begin{equation}
\mathring{S}=S\smallsetminus\bigcup\nolimits_{t}\pi_{\ast}(Z(t)) \label{hq22}%
\end{equation}
where $t$ runs over the global sections of the local systems $\pi^{\ast
}\mathsf{T}^{m,n}$ ($m,n\in\mathbb{N}{}$) such that $\pi_{\ast}(Z(t))\neq S$.
Thus $\mathring{S}$ is the complement in $S$ of a countable union of proper
analytic subvarieties --- we shall call such a subset \emph{meagre}%
\index{meagre}%
.

\begin{example}
\label{h55a}For a \textquotedblleft general\textquotedblright\ abelian variety
of dimension $g$ over $\mathbb{C}{}$, it is known that the $\mathbb{Q}{}%
$-algebra of Hodge classes is generated by the class of an ample divisor class
(\cite{comessatti1934}, \cite{mattuck1958}). It follows that the same is true
for all abelian varieties in the subset $\mathring{S}$ of the moduli space
$S$. The Hodge conjecture obviously holds for these abelian varieties.
\end{example}

Let $t$ be a section of $\mathsf{T}^{m,n}$ over an open subset $U$ of
$\mathring{S}$; if $t$ is a Hodge class in $\mathsf{T}_{s}^{m,n}$ for one
$s\in U$, then it is Hodge tensor for every $s\in U$. Thus, there exists a
local system of $\mathbb{Q}{}$-subspaces $H\mathsf{T}^{m,n}$ on $\mathring{S}$
such that $\left(  H\mathsf{T}^{m,n}\right)  _{s}$ is the space of Hodge
classes in $\mathsf{T}_{s}^{m,n}$ for each $s$. Since the Mumford-Tate group
of $(\mathsf{V}_{s},F_{s})$ is the largest algebraic subgroup of $\GL_{V_{s}}$
fixing the Hodge tensors in the spaces $\mathsf{T}_{s}^{m,n}$, we have the
following result.

\begin{proposition}
\label{h55}Let $G_{s}$ be the Mumford-Tate group of $(\mathsf{V}_{s},F_{s})$.
Then $G_{s}$ is locally constant on $\mathring{S}$.
\end{proposition}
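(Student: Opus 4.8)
The plan is to reduce the statement to two facts already in hand: that $\MT_{V_{s}}$ is the largest algebraic subgroup of $\GL_{\mathsf{V}_{s}}$ fixing all the Hodge tensors, and that the spaces of Hodge tensors in the various $\mathsf{T}^{m,n}_{s}$ fit together into local systems $H\mathsf{T}^{m,n}$ over $\mathring{S}$. Given these, being \emph{locally constant} should fall out of the general principle that the fibres of a local system are canonically identified with one another over a simply connected base.

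First I would fix $s_{0}\in\mathring{S}$, set $V=\mathsf{V}_{s_{0}}$, and choose a small open ball $U$ in $S$ containing $s_{0}$. Since $U$ is simply connected, $\mathsf{V}|_{U}$ is a constant local system; a flat trivialisation of it provides, for every $s\in U$, an isomorphism $\mathsf{V}_{s}\simeq V$ compatible with parallel transport inside $U$, and hence isomorphisms $\mathsf{T}^{m,n}_{s}\simeq T^{m,n}:=V^{\otimes m}\otimes V^{\vee\otimes n}$ and $\GL_{\mathsf{V}_{s}}\simeq\GL_{V}$ for all $m,n\in\mathbb{N}{}$ and all $s\in U$. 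Under the last of these, each $G_{s}$ becomes an algebraic subgroup of the single group $\GL_{V}$, so it suffices to prove that this subgroup does not vary as $s$ ranges over $U\cap\mathring{S}$. I may moreover shrink $U$ so that $U\cap\mathring{S}$ is connected: $\mathring{S}$ is the complement in $S$ of a countable union of proper analytic subvarieties, each of which meets the ball $U$ in a proper analytic subset (otherwise it would contain $U$, hence, by the identity theorem, all of $S$), that is, in a set of real codimension $\geq 2$; and the complement in a ball of a countable union of such sets is connected.

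Next I would identify the transported groups. Fix $m,n$. An element $v\in T^{m,n}$ corresponds under the trivialisation to a parallel section of $\mathsf{T}^{m,n}$ over $U$; restricting this section to the connected open subset $U\cap\mathring{S}$ of $\mathring{S}$ and applying the defining property of $\mathring{S}$ (a parallel tensor that is a Hodge class at one point of a connected open subset of $\mathring{S}$ is a Hodge class at every point of it), we see that $v$ is a Hodge tensor at $s_{0}$ if and only if it is a Hodge tensor at every $s\in U\cap\mathring{S}$. Hence the image of $(H\mathsf{T}^{m,n})_{s}$ in $T^{m,n}$ is one fixed subspace $W^{m,n}\subseteq T^{m,n}$, the same for every $s\in U\cap\mathring{S}$. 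By the characterisation of Mumford--Tate groups recalled above, the subgroup of $\GL_{V}$ corresponding to $G_{s}$ is then the largest algebraic subgroup of $\GL_{V}$ fixing $W^{m,n}$ for all $m,n$ --- visibly independent of $s\in U\cap\mathring{S}$. Therefore $s\mapsto G_{s}$ is constant on $U\cap\mathring{S}$, and as $s_{0}$ was arbitrary, it is locally constant on $\mathring{S}$.

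I do not expect a serious obstacle here; the real work went into constructing $\mathring{S}$ and the local systems $H\mathsf{T}^{m,n}$, which the statement lets me take for granted. The one thing to handle with care is the compatibility, used in the third paragraph, between the flat trivialisation of $\mathsf{V}$ (unambiguous precisely because $U$ is simply connected) and the local system structure of $H\mathsf{T}^{m,n}$ on $\mathring{S}$; it is in order to have both of these available that I first pass to a simply connected $U$ inside $S$ and only afterwards intersect with $\mathring{S}$, rather than attempting to trivialise over a neighbourhood of $s_{0}$ inside $\mathring{S}$, which need not be simply connected.
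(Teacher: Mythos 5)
Your proof is correct and follows the paper's route exactly: trivialise $\mathsf{V}$ over a small ball $U$, use the fact that a flat tensor which is Hodge at one point of a connected open subset of $\mathring{S}$ is Hodge everywhere on it, and then invoke the characterisation of the Mumford--Tate group as the stabiliser of all Hodge tensors. The only thing you make explicit that the paper passes over silently is the need to shrink $U$ so that $U\cap\mathring{S}$ is connected, which your codimension-$\geq 2$ argument handles correctly.
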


More precisely:\bquote Let $U$ be an open subset of $S$ on which $\mathsf{V}$
is constant, say, $\mathsf{V}=V_{U}$; identify the stalk $\mathsf{V}_{s}$
($s\in U$) with $V$, so that $G_{s}$ is a subgroup of $\GL_{V}$; then $G_{s}$
is constant for $s\in U\cap\mathring{S}$, say $G_{s}=G$, and $G\supset G_{s}$
for all $s\in U\smallsetminus(U\cap\mathring{S})$. \equote

\begin{plain}
\label{generic}We say that $G_{s}$ is \emph{generic}%
\index{generic Mumford-Tate group}
if $s\in\mathring{S}$. Suppose that $\mathsf{V}$ is constant, say
$\mathsf{V}=V_{S}$, and let $G=G_{s_{0}}\subset\GL_{V}$ be generic. By
definition, $G$ is the smallest algebraic subgroup of $\GL_{V}$ such that
$G_{\mathbb{R}{}}$ contains $h_{s_{0}}(\mathbb{S}{})$. As $G\supset G_{s}$ for
all $s\in S$, the generic Mumford-Tate group of $(\mathsf{V},F)$ is the
smallest algebraic subgroup $G$ of $\GL_{V}$ such that $G_{\mathbb{R}{}}$
contains $h_{s}(\mathbb{S}{})$ for \textit{all} $s\in S$.

Let $\pi\colon\tilde{S}\rightarrow S$ be a universal covering of $S$, and fix
a trivialization $\pi^{\ast}\mathsf{V}\simeq V_{S}$ of $\mathsf{V}$. Then, for
each $s\in S$, there are given isomorphisms%
\begin{equation}
V\simeq(\pi^{\ast}\mathsf{V})_{s}\simeq\mathsf{V}_{\pi s}\text{.} \label{hq23}%
\end{equation}
There is an algebraic subgroup $G$ of $\GL_{V}$ such that, for each $s\in
\pi^{-1}(\mathring{S})$, $G$ maps isomorphically onto $G_{s}$ under the
isomorphism $\GL_{V}\simeq\GL_{\mathsf{V}_{\pi s}}$ defined by (\ref{hq23}).
It is the smallest algebraic subgroup of $\GL_{V}$ such that $G_{\mathbb{R}{}%
}$ contains the image of $h_{s}\colon\mathbb{S}{}\rightarrow\GL_{V_{\mathbb{R}%
{}}}$ for all $s\in\tilde{S}$.
\end{plain}

\begin{aside}
\label{h59g}For a polarizable integral variation of Hodge structures on a
smooth algebraic variety $S$, Cattani, Deligne, and Kaplan (1995, Corollary
1.3)\nocite{cattaniDK1995} show that the sets $\pi_{\ast}(Z(t))$ in
(\ref{hq22}) are algebraic subvarieties of $S$. This answered a question of
Weil 1977\nocite{weil1977}.
\end{aside}

\subsection{Variation of Mumford-Tate groups in families}

\begin{definition}
\label{h55b}Let $(\mathsf{V},F)$ be a family of Hodge structures on a
connected complex manifold $S$.

\begin{enumerate}
\item An \emph{integral structure}%
\index{integral structure}
on $(\mathsf{V},F)$ is a local system of $\mathbb{Z}{}$-modules $\Lambda
\subset\mathsf{V}$ such that $\mathbb{Q}{}\otimes_{\mathbb{Z}{}}\Lambda
\simeq\mathsf{V}$.

\item The family $(\mathsf{V},F)$ is said to \emph{satisfy the theorem of the
fixed part} if, for every finite covering $a\colon S^{\prime}\rightarrow S$ of
$S$, there is a Hodge structure on the $\mathbb{Q}{}$-vector space
$\Gamma(S^{\prime},a^{\ast}\mathsf{V})$ such that, for all $s\in S^{\prime}$,
the canonical map $\Gamma(S^{\prime},a^{\ast}\mathsf{V})\rightarrow a^{\ast
}\mathsf{V}_{s}$ is a morphism of Hodge structures, or, in other words, if the
largest constant local subsystem $\mathsf{V}^{f}$ of $a^{\ast}\mathsf{V}$ is a
constant family of Hodge substructures of $a^{\ast}\mathsf{V}$.

\item The \emph{algebraic monodromy group}%
\index{algebraic monodromy group}
at point $s\in S$ is the smallest algebraic subgroup of $\GL_{\mathsf{V}_{s}}$
containing the image of the monodromy homomorphism $\pi_{1}(S,s)\rightarrow
\GL(\mathsf{V}_{s})$. Its identity connected component is called the
\emph{connected monodromy group }$M_{s}$ at $s$. In other words, $M_{s}$ is
the smallest connected algebraic subgroup of $\GL_{V_{s}}$ such that
$M_{s}(\mathbb{Q}{})$ contains the image of a subgroup of $\pi_{1}(S,s)$ of
finite index.
\end{enumerate}
\end{definition}

\begin{plain}
\noindent Let $\pi\colon\tilde{S}\rightarrow S$ be the universal covering of
$S$, and let $\Gamma$ be the group of covering transformations of $\tilde
{S}/S$. The choice of a point $s\in\tilde{S}$ determines an isomorphism
$\Gamma\simeq\pi_{1}(S,\pi s)$. Now choose a trivialization $\pi^{\ast
}\mathsf{V}\approx V_{\tilde{S}}$. The choice of a point $s\in\tilde{S}$
determines an isomorphism $V\simeq V_{\pi(s)}$. There is an action of $\Gamma$
on $V$ such that, for each $s\in\tilde{S}$, the diagram%
\[
\begin{tikzpicture}
\node (P0) at (0,0cm) {$\Gamma$};
\node (P1) at (1.0,0cm) {$\times$} ;
\node (P2) at (1.6,0cm) {$V$};
\node (P3) at (3.6,0cm) {$V$};
\node (P4) [below=of P0] {$\pi_1(S,\pi s)$};
\node (P5) [below=1.1cm of P1] {$\times$};
\node (P6) [below=of P2] {$V_s$};
\node (P7) [below=of P3] {$V_s$};
\draw[->,font=\scriptsize,>=angle 90]
(P2) edge (P3)
(P6) edge (P7)
(P0) edge node[right] {$\simeq$} (P4)
(P2) edge node[right] {$\simeq$} (P6)
(P3) edge node[right] {$\simeq$} (P7);
\end{tikzpicture}
\]
commutes. Let $M$ be the smallest connected algebraic subgroup of $\GL_{V}$
such $M(\mathbb{Q}{})$ contains a subgroup of $\Gamma$ of finite index; in
other words,%
\[
M=\bigcap\{H\subset\GL_{V}\mid H\text{ connected, }(\Gamma\colon
H(\mathbb{Q}{})\cap\Gamma)<\infty\}.
\]
Under the isomorphism $V\simeq V_{\pi s}$ defined by $s\in S$, $M$ maps
isomorphically onto $M_{s}$.
\end{plain}

\begin{theorem}
\label{h56}Let $(\mathsf{V},F)$ be a polarizable family of Hodge structures on
a connected complex manifold $S$, and assume that $(\mathsf{V},F)$ admits an
integral structure. Let $G_{s}$ (resp. $M_{s}$) denote the Mumford-Tate (resp.
the connected monodromy group) at $s\in S$.

\begin{enumerate}
\item For all $s\in\mathring{S}$, $M_{s}\subset G_{s}^{\mathrm{der}}$.

\item If $\mathsf{T}^{m,n}$ satisfies the theorem of the fixed part for all
$m,n$, then $M_{s}$ is normal in $G_{s}^{\mathrm{der}}$ for all $s\in
\mathring{S}$; moreover, if $G_{s^{\prime}}$ is commutative for some
$s^{\prime}\in S$, then $M_{s}=G_{s}^{\mathrm{der}}$ for all $s\in\mathring
{S}$.
\end{enumerate}
\end{theorem}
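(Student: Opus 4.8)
The plan is to prove (a) by an essentially elementary argument and (b) by invoking the theorem of the fixed part. Fix $s_{0}\in\mathring{S}$, trivialise $\pi^{\ast}\mathsf{V}\simeq V$ on the universal cover, and write $G=G_{s_{0}}$ for the generic Mumford--Tate group and $M=M_{s_{0}}$ for the connected monodromy group, both viewed in $\GL_{V}$; recall $G$ is reductive (Corollary \ref{h53}). First I would show $M\subseteq G$. On $\mathring{S}$ the Hodge classes in $\mathsf{T}^{m,n}$ form a local subsystem $H\mathsf{T}^{m,n}$, and it underlies a polarizable family of Hodge structures of weight $0$ and type $(0,0)$; its Weil operator is the identity, so the polarization form is symmetric and positive definite, forcing the monodromy of $H\mathsf{T}^{m,n}$ into a compact group. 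The integral structure on $\mathsf{V}$ induces one on $H\mathsf{T}^{m,n}$, so this monodromy also preserves a lattice, hence is finite. Therefore $M$, being connected, acts trivially on every $H\mathsf{T}^{m,n}_{s}$, i.e. fixes every Hodge tensor of $\mathsf{V}_{s}$; as $G_{s}$ is precisely the subgroup of $\GL_{\mathsf{V}_{s}}$ fixing all Hodge tensors, $M\subseteq G_{s}=G$ for $s\in\mathring S$.

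To get $M\subseteq G^{\mathrm{der}}$, note that $A:=G/G^{\mathrm{der}}$ is a torus and the image $T_{0}$ of the connected group $M$ in $A$ is a subtorus; it suffices to see $T_{0}=1$. A maximal $\mathbb{Q}$-split subtorus of $Z(G)^{\circ}$ must act on $V$ through a single character --- otherwise the polarization would vanish on an isotropic eigenspace stable under the Weil operator --- so it has rank $\le 1$, and $A$ accordingly has a split part of rank $\le 1$ with anisotropic complement. On any faithful $A$-subrepresentation $W$ occurring in some $\mathsf{T}^{m,n}_{s}$ the monodromy preserves a lattice (the image of $\Lambda^{m,n}$), so its image in $A$ lies in an arithmetic subgroup; moreover $A$ preserves the induced polarization on $W$ up to the weight character, and the anisotropic part of $A$ therefore preserves a positive definite form, has compact real points, and so has only finite arithmetic subgroups. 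For the rank-$\le 1$ split part one uses that the monodromy of $\mathsf{V}$ preserves its polarization \emph{exactly} (the Tate twist being a constant local system) while a split central torus rescales it; so again the image is finite. Hence the Zariski-dense image of a finite-index subgroup of $\pi_{1}$ in $T_{0}$ is finite, and a connected torus with a dense finite subgroup is trivial.

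For (b), the crucial input --- a theorem of Deligne --- is that, under the theorem of the fixed part, $\mathfrak{m}:=\Lie(M)$ is a sub-Hodge-structure of $\mathcal{E}nd(\mathsf{V})_{s}=\mathsf{T}^{1,1}_{s}$, i.e. is stable under $\Ad\circ h_{s}(\mathbb{S})$. Granting this, each $\Ad(h_{s}(z))$ preserves $\mathfrak m$, so $h_{s}(z)Mh_{s}(z)^{-1}$ and $M$ are connected with the same Lie algebra and hence equal; thus $h_{s}(\mathbb S)$ normalises $M$, and since $G$ is the smallest algebraic subgroup of $\GL_{V}$ whose real points contain $h_{s}(\mathbb{S})$, its subgroup $N_{G}(M)$ must be all of $G$. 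So $M\triangleleft G$, whence $M\triangleleft G^{\mathrm{der}}$ by (a). For the final assertion, write $G^{\mathrm{der}}=M\cdot M'$ as an almost direct product, $M'$ the product of the simple factors of $G^{\mathrm{der}}$ not contained in $M$; as $G$ normalises $M$ and permutes the simple factors, $M'\triangleleft G$ too. A finite-index subgroup of $\pi_{1}$ lands in $M$, which commutes with $M'$ up to a finite group, so $\Ad$ of the monodromy is trivial on $\mathfrak m':=\Lie(M')$; thus the sub-local-system $\mathfrak m'\otimes\mathcal{O}$ of $\mathcal{E}nd(\mathsf V)$ is constant (and, on a suitable finite cover, extends over all of $S$, since $G_{s'}\subseteq G$ normalises $M'$ for every $s'$). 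By the theorem of the fixed part it is a constant family of Hodge substructures, so $h_{s}|_{\mathfrak m'}$ is, up to the trivialisation, independent of $s$; taking $s=s'$ with $G_{s'}$ commutative, the generic Mumford--Tate group of $\mathfrak m'\otimes\mathcal{O}$ --- which is the image of $G$ acting on $\mathfrak m'$, hence contains $\Ad(M')=M'^{\mathrm{ad}}$ --- coincides with the Mumford--Tate group of the single Hodge structure $(\mathfrak m',h_{s'}|_{\mathfrak m'})$, a sub-object of $\mathcal{E}nd(\mathsf V_{s'})$ and so a quotient of the commutative group $G_{s'}$. Therefore $M'^{\mathrm{ad}}=1$, forcing $M'=1$ and $M=G^{\mathrm{der}}$.

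The one genuinely hard ingredient is the cited fact that $\mathfrak m_{s}$ is a sub-Hodge-structure: this rests on Deligne's theorem of the fixed part --- ultimately on Schmid's asymptotic analysis of the period map near the boundary --- and is precisely what separates (b) from (a). The remaining work is bookkeeping: in (a), checking that the polarization really is positive definite on the Hodge classes and controlling the arithmetic of the torus $G/G^{\mathrm{der}}$; and in the ``moreover'', verifying that the generic Mumford--Tate group of a sub-variation is the image of the ambient one and behaves well over the non-generic locus, so that information can be transported from $\mathring S$ to the special point $s'$.
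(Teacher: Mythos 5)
Parts (a) and the ``moreover'' of (b) are correct by routes different from the paper's. For (a), you show $M\subset G$ by the same positivity-plus-integrality argument on the local system of Hodge tensors, but you get $M\subset G^{\mathrm{der}}$ by killing the image of $M$ in the torus $A=G/G^{\mathrm{der}}$, where the paper instead intersects the special Mumford--Tate group $G_{s}^{1}$ with $G_{s}^{\mathrm{der}}\cdot w_{h_{s}}(\mathbb{G}_{m})$ and uses compactness of the resulting quotient. Your approach works; the rank bound on the $\mathbb{Q}$-split part of $Z(G)^{\circ}$ is more cleanly extracted from (SV2*) via Proposition \ref{h52} than from the isotropy heuristic you sketch, but the conclusion is right. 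For the ``moreover,'' your complementary-factor argument is more direct than the paper's Lemma \ref{h59d} and is valid: $\mathfrak{m}'=\Lie(M')$ underlies a constant local subsystem of $\End(\mathsf{V})$ on which, by the fixed-part hypothesis, the Hodge structure is constant, and its Mumford--Tate group is simultaneously a quotient of the commutative $G_{s'}$ and contains $M'^{\mathrm{ad}}$, forcing $M'=1$.

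The genuine gap is the normality assertion in (b). You reduce it to the statement that $\mathfrak{m}=\Lie(M_{s})$ is a sub-Hodge structure of $\End(\mathsf{V})_{s}$, and then cite this as ``a theorem of Deligne.'' But that statement is \emph{equivalent} to the normality you are asked to prove: since $G_{s}$ is the smallest $\mathbb{Q}$-group whose real points contain $h_{s}(\mathbb{S})$, the condition that $\Ad\circ h_{s}$ stabilizes $\mathfrak{m}$ is the same as $N_{G_{s}}(M_{s})=G_{s}$, i.e.\ $M_{s}\triangleleft G_{s}$. So you are quoting the conclusion rather than deriving it. You also misplace the hard input: Schmid's asymptotic analysis is what establishes the theorem of the fixed part (Theorem \ref{h59a}) on algebraic varieties, whereas in Theorem \ref{h56}(b) the theorem of the fixed part for $\mathsf{T}^{m,n}$ is a \emph{hypothesis}, and the missing step is deducing the sub-Hodge-structure claim from that hypothesis. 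The paper supplies this via Chevalley's theorem (Lemmas \ref{h57} and \ref{h58}): realize $M_{s}$ as the stabilizer of a line $L$ in some $\bigoplus\mathsf{T}_{s}^{m,n}$; check that $M_{s}$ acts trivially on $L$, because $L$ comes from a rank-one local subsystem inheriting an integral structure; check that $(\mathsf{T}_{s}^{m,n})^{M_{s}}$ is $G_{s}$-stable, because by the fixed-part hypothesis it is a Hodge substructure and hence stable under $h_{s}(\mathbb{S})$; and conclude that $M_{s}$ is precisely the kernel of the $G_{s}$-action on a suitable subspace, hence normal. Some argument of this shape must be given.
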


The theorem was proved by Deligne (see \cite{deligne1972}, 7.5;
\cite{zarhin1984}, 7.3) except for the second statement of (b), which is
Proposition 2 of \cite{andre1992mt}. The proof of the theorem will occupy the
rest of this subsection.

\begin{example}
\label{h54d}Let $f\colon X\rightarrow\mathbb{P}^{1}$ be a Lefschetz pencil
over $\mathbb{C}$ of hypersurfaces of fixed degree and odd dimension $n$, and
let $S$ be the open subset of $\mathbb{P}{}^{1}$ where $X_{s}$ is smooth. Let
$(\mathsf{V},F)$ be the variation of Hodge structures $R^{n}f_{\ast}%
\mathbb{Q}{}$ on $S$. The action of $\pi_{1}(S,s)$ on $\mathsf{V}_{s}%
=H^{n}(X_{s}^{\text{an}},\mathbb{Q}{})$ preserves the cup-product form on
$\mathsf{V}_{s}$, and a theorem of Kazhdan and Margulis (\cite{deligne1974},
5.10) says that the image of $\pi_{1}(S,s)$ is Zariski-dense in the symplectic
group. It follows that the generic Mumford-Tate group $G_{s}$ is the full
group of symplectic similitudes. This implies that, for $s\in\mathring{S}$,
the Hodge structure $\mathsf{V}_{s}$ is not special unless it has level
$\leq1$.
\end{example}

\subsubsection{Proof of \textnf{(a)} of Theorem \ref{h56}}

We first show that $M_{s}\subset G_{s}$ for $s\in\mathring{S}$. Recall that on
$\mathring{S}$ there is a local system of $\mathbb{Q}{}$-vector spaces
$H\mathsf{T}^{m,n}\subset\mathsf{T}^{m,n}$ such that $H\mathsf{T}_{s}^{m,n}$
is the space of Hodge tensors in $\mathsf{T}_{s}^{m,n}$. The fundamental group
$\pi_{1}(S,s)$ acts on $H\mathsf{T}_{s}^{m,n}$ through a discrete subgroup of
$\GL(H\mathsf{T}_{s}^{m,n})$ (because it preserves a lattice in $\mathsf{T}%
_{s}^{m,n}$), and it preserves a positive definite quadratic form on
$H\mathsf{T}_{s}^{m,n}$. It therefore acts on $H\mathsf{T}_{s}^{m,n}$ through
a finite quotient. As $G_{s}$ is the algebraic subgroup of $\GL_{\mathsf{V}%
_{s}}$ fixing the Hodge tensors in some finite direct sum of spaces
$\mathsf{T}_{s}^{m,n}$, this shows that the image of some finite index
subgroup of $\pi_{1}(S,s)$ is contained in $G_{s}(\mathbb{Q}{})$. Hence
$M_{s}\subset G_{s}$.

We next show that $M_{s}$ is contained in the special Mumford-Tate group
$G_{s}^{1}$ at $s$. Consider the family of Hodge structures $\mathsf{V}%
\oplus\mathbb{Q}{}(1)$, and let $G_{s}^{e}$ be its Mumford-Tate group at $s$.
As $\mathsf{V}\oplus\mathbb{Q}{}(1)$ is polarizable and admits an integral
structure, its connected monodromy group $M_{s}^{e}$ at $s$ is contained in
$G_{s}^{e}$. As $\mathbb{Q}{}(1)$ is a constant family, $M_{s}^{e}%
\subset\Ker(G_{s}^{e}\rightarrow\GL_{\mathbb{Q}{}(1)})=G_{s}^{1}$. Therefore
$M_{s}=M_{s}^{e}\subset G_{s}^{1}$.

There exists an object $W$ in $\Rep_{\mathbb{Q}{}}G_{s}\simeq\langle
V_{s}\rangle^{\otimes}\subset\Hdg_{\mathbb{Q}{}}$ such that $G_{s}%
^{\mathrm{der}}\cdot w_{h_{s}}(\mathbb{G}_{m})$ is the kernel of
$G_{s}\rightarrow\GL_{W}$. The Hodge structure $W$ admits an integral
structure, and its Mumford-Tate group is $G^{\prime}\simeq G_{s}/\left(
G_{s}^{\mathrm{der}}\cdot w_{h_{s}}(\mathbb{G}_{m})\right)  $. As $W$ has
weight $0$ and $G^{\prime}$ is commutative, we find from (\ref{h52}) that
$G^{\prime}(\mathbb{R}{})$ is compact. As the action of $\pi_{1}(S,s)$ on $W$
preserves a lattice, its image in $G^{\prime}(\mathbb{R}{})$ must be discrete,
and hence finite. This shows that
\[
M_{s}\subset\left(  G_{s}^{\mathrm{der}}\cdot w_{h_{s}}(\mathbb{G}%
_{m})\right)  \cap G_{s}^{1}=G_{s}^{\mathrm{der}}.
\]

\subsubsection{Proof of the first statement of \textnf{(b)} of Theorem
\ref{h56}}

We first prove two lemmas.

\begin{lemma}
\label{h58}Let $V$ be a $\mathbb{Q}{}$-vector space, and let $H\subset G$ be
algebraic subgroups of $\GL_{V}$. Assume:

\begin{enumerate}
\item the action of $H$ on any $H$-stable line in a finite direct sum of
spaces $T^{m,n}$ is trivial;

\item $(T^{m,n})^{H}$ is $G$-stable for all $m,n\in\mathbb{N}{}$.
\end{enumerate}

\noindent Then $H$ is normal in $G$.
\end{lemma}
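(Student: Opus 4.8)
The plan is to invoke Chevalley's theorem to present $H$ as the isotropy group of a single vector lying in a tensor construction on $V$, and then to play hypotheses (a) and (b) against each other. First I would apply Chevalley's theorem: there is a finite-dimensional representation $W$ of $\GL_V$ and a line $L\subset W$ with $H=\Stab_{\GL_V}(L)$. Since $\GL_V$ is reductive, all its representations are semisimple, so $W$ may be taken to be a $\GL_V$-subrepresentation of a finite direct sum $T=\bigoplus_i T^{m_i,n_i}$; in particular $L$ is an $H$-stable line inside $T$.

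Next, hypothesis (a) applies to this $L$, so $H$ acts trivially on $L$. Choosing a nonzero $v\in L$, every element of $H$ fixes $v$, while conversely any $g\in G$ fixing $v$ stabilizes $L$ and hence lies in $H$. Therefore
\[
H=\{g\in G\mid gv=v\},
\]
the isotropy subgroup of $v$ in $G$. Since $v$ is fixed by $H$, we have $v\in T^{H}=\bigoplus_i (T^{m_i,n_i})^{H}$. By hypothesis (b) each $(T^{m_i,n_i})^{H}$ is $G$-stable, hence so is the finite direct sum $T^{H}$. Thus for any $g\in G$ the vector $gv$ again lies in $T^{H}$, i.e.\ $gv$ is fixed by $H$; equivalently $g^{-1}Hg\subset\{x\in G\mid xv=v\}=H$. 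Applying this to $g$ and to $g^{-1}$ yields $g^{-1}Hg=H$ for all $g\in G$, so $H$ is normal in $G$.

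I do not foresee a deep obstacle; the one point that needs a word of care is the precise shape of Chevalley's theorem — one wants the line $L$ inside a \emph{direct sum} of the $T^{m,n}$, so that hypothesis (a) literally applies, rather than inside a subquotient, and this is exactly what semisimplicity of $\Rep(\GL_V)$ provides. (Alternatively the whole argument can be phrased tannakianly, with $H$ realized as the automorphism group of the restricted fibre functor on $\langle V\rangle^{\otimes}$, but the Chevalley formulation is the most economical, and it is also what will be convenient when this lemma is applied with $H=M_s$, $G=G_s^{\mathrm{der}}$ in the proof of Theorem \ref{h56}(b).)
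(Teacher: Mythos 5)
Your proof is correct, and it is close in spirit to the paper's but finishes more directly. Both proofs open identically: apply Chevalley's theorem to write $H=\Stab_{\GL_V}(L)$ for a line $L$ in a finite direct sum $T=\bigoplus T^{m_i,n_i}$ (the remark about semisimplicity of $\Rep(\GL_V)$ is exactly the right justification for placing $L$ inside a \emph{sub}representation of $T$), and then use (a) to see that $H$ fixes $L$ pointwise, so a generator $v$ of $L$ lies in $T^H$. From here the paper takes the smallest $G$-stable subspace $W$ of $T$ containing $L$, notes $W\subset T^H$, and exhibits $H$ as the kernel of the homomorphism $G\to\GL_{W^\vee\otimes W}$ (elements of the kernel act as scalars on $W$, hence stabilize $L$, hence lie in $H$; and $H$ acts trivially on $W$, hence lies in the kernel). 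You skip the auxiliary space $W$ and the passage to $W^\vee\otimes W$ altogether: since $T^H$ is $G$-stable by (b), $gv\in T^H$ for every $g\in G$, which unwinds to $g^{-1}Hg\subset\{x\in G\mid xv=v\}=H$, and then $g\leftrightarrow g^{-1}$ gives equality. What the paper's route buys is that normality comes for free from being a kernel; what yours buys is economy — no need to identify the minimal $G$-stable envelope of $L$ or to observe that the kernel of the action on $W^\vee\otimes W$ is the scalars on $W$. Either argument works; yours is a modest simplification of the same idea.
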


\begin{proof}
There exists a line $L$ in some finite direct sum $T$ of spaces $T^{m,n}$ such
that $H$ is the stabilizer of $L$ in $\GL_{V}$ (Chevalley's theorem,
\cite{deligne1982}, 3.1a,b). According to (a), $H$ acts trivially on $L$. Let
$W$ be the intersection of the $G$-stable subspaces of $T$ containing $L$.
Then $W\subset T^{H}$ because $T^{H}$ is $G$-stable by (b). Let $\varphi$ be
the homomorphism $G\rightarrow\GL_{W^{\vee}\otimes W}$ defined by the action
of $G$ on $W$. As $H$ acts trivially on $W$, it is contained in the kernel of
$\varphi$. On the other hand, the elements of the kernel of $\varphi$ act as
scalars on $W$, and so stabilize $L$. Therefore $H=\Ker(\varphi)$, which is
normal in $G$.
\end{proof}

\begin{lemma}
\label{h57}Let $(\mathsf{V},F)$ be a polarizable family of Hodge structures on
a connected complex manifold $S$. Let $\mathsf{L}$ be a local system of
$\mathbb{Q}{}$-vector spaces on $S$ contained in a finite direct sum of local
systems $\mathsf{T}^{m,n}$. If $(\mathsf{V},F)$ admits an integral structure
and $\mathsf{L}$ has dimension $1$, then $M_{s}$ acts trivially on
$\mathsf{L}_{s}$.
\end{lemma}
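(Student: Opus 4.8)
The plan is to isolate the elementary fact that a rank-one local system carrying an integral structure has finite monodromy, and then play it against the minimality built into the definition of $M_s$. Fix $s\in S$, write $\pi_1=\pi_1(S,s)$ and let $\rho\colon\pi_1\to\GL(\mathsf{V}_s)$ be the monodromy representation. The integral structure $\Lambda\subset\mathsf{V}$ induces one on each $\mathsf{T}^{m,n}$, namely $\Lambda^{\otimes m}\otimes(\Lambda^{\vee})^{\otimes n}$, and hence a $\rho(\pi_1)$-stable lattice $\Lambda_{\mathsf{T}}$ in the given finite direct sum of $\mathsf{T}^{m,n}$'s containing $\mathsf{L}$. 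Since $\mathsf{L}$ is a local subsystem of $\mathbb{Q}$-vector spaces, clearing denominators in a generator of the line $\mathsf{L}_s$ shows that $\mathsf{L}_s\cap\Lambda_{\mathsf{T},s}$ is a full lattice in $\mathsf{L}_s$, hence free of rank one; it is preserved by $\rho(\pi_1)$, so $\rho(\pi_1)$ acts on $\mathsf{L}_s$ through $\mathrm{Aut}_{\mathbb{Z}}(\mathbb{Z})=\{\pm1\}$.

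Next I would introduce the algebraic stabilizer $G'\subset\GL_{\mathsf{V}_s}$ of the line $\mathsf{L}_s$ (for the induced action on the ambient sum of $\mathsf{T}^{m,n}$'s). It is a closed subgroup containing $\rho(\pi_1)$, hence its Zariski closure, hence $M_s$. The action of $G'$ on $\mathsf{L}_s$ is a character $\chi\colon G'\to\GL(\mathsf{L}_s)\simeq\mathbb{G}_m$, and it suffices to show $\chi$ is trivial on $M_s$. Set $N=\ker\chi$ and $H=N^{\circ}$. The crucial observation is that a finite-index subgroup of $\pi_1$ is mapped by $\rho$ into $H(\mathbb{Q})$: the subgroup $\Gamma_1=\{\gamma\in\pi_1\mid\chi(\rho(\gamma))=1\}$ has index at most $2$ by the previous paragraph, and $\Gamma_2=\{\gamma\in\Gamma_1\mid\rho(\gamma)\in H\}$ is the kernel of a homomorphism from $\Gamma_1$ to the finite group $N/H$, so it still has finite index in $\pi_1$, and $\rho(\Gamma_2)\subset H(\mathbb{Q})$ because $\rho$ takes values in $\GL$ of a $\mathbb{Q}$-vector space.

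Finally, by the minimality in the definition of $M_s$ (Definition \ref{h55b}(c)), any connected algebraic subgroup of $\GL_{\mathsf{V}_s}$ whose group of $\mathbb{Q}$-points contains the $\rho$-image of a finite-index subgroup of $\pi_1$ must contain $M_s$; applied to $H$ this yields $M_s\subseteq H\subseteq\ker\chi$, so $\chi$ is trivial on $M_s$ and $M_s$ acts trivially on $\mathsf{L}_s$. I do not foresee a real obstacle; the only subtlety is the step from ``$\rho(\pi_1)$ acts on $\mathsf{L}_s$ through a finite group'' to ``$\rho$ carries a finite-index subgroup into the \emph{connected} group $H$'', which is precisely why the refinement $\Gamma_1\supset\Gamma_2$ is inserted. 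It is worth noting that this argument uses only the integral structure and $\dim\mathsf{L}=1$, not the polarizability of $(\mathsf{V},F)$.
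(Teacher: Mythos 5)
Your argument is correct and follows the paper's proof: the integral structure forces $\pi_1(S,s)$ to act through $\{\pm1\}$ on $\mathsf{L}_s$, and the connectedness of $M_s$ then forces the induced character to be trivial. The paper compresses the second step into a single clause; your $\Gamma_1\supset\Gamma_2$ refinement supplies the elided detail (equivalently: the character on the line maps the Zariski closure of $\rho(\pi_1)$ into $\mu_2$, and the connected $M_s$ must land in $\mu_2^{\circ}=1$), and your observation that polarizability is not used here is also correct.
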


\begin{proof}
The hypotheses imply that $\mathsf{L}$ also admits an integral structure, and
so $\pi_{1}(S,s)$ acts through the finite subgroup $\{\pm1\}$ of
$\GL_{\mathsf{L}_{s}}$. This implies that $M_{s}$ acts trivially on
$\mathsf{L}_{s}$.
\end{proof}

We now prove the first part of (b) of the theorem. Let $s\in\mathring{S}$; we
shall apply Lemma \ref{h58} to $M_{s}\subset G_{s}\subset\GL_{\mathsf{V}_{s}}%
$. After passing to a finite covering of $S$, we may suppose that $\pi
_{1}(S,s)\subset M_{s}(\mathbb{Q}{})$. Any $M_{s}$-stable line in
$\bigoplus\nolimits_{m,n}\mathsf{T}_{s}^{m,n}$ is of the form $\mathsf{L}_{s}$
for a local subsystem $\mathsf{L}$ of $\bigoplus\nolimits_{m,n}\mathsf{T}%
_{s}^{m,n}$, and so hypothesis (a) of Lemma \ref{h58} follows from
(\ref{h57}). It remains to show $(\mathsf{T}_{s}^{m,n})^{M_{s}}$ is stable
under $G_{s}$. Let $H$ be the stablizer of $(\mathsf{T}_{s}^{m,n})^{M_{s}}$ in
$\GL_{\mathsf{T}_{s}^{m,n}}$. Because $\mathsf{T}^{m,n}$ satisfies the theorem
of the fixed part, $(\mathsf{T}_{s}^{m,n})^{M_{s}}$ is a Hodge substructure of
$\mathsf{T}_{s}^{m,n}$, and so $(\mathsf{T}_{s}^{m,n})_{\mathbb{R}{}}^{M_{s}}$
is stable under $h(\mathbb{S}{})$. Therefore $h(\mathbb{S}{})\subset
H_{\mathbb{R}{}}$, and this implies that $G_{s}\subset H$.

\subsubsection{Proof of the second statement of \textnf{(b)} of Theorem
\ref{h56}}

We first prove a lemma.

\begin{lemma}
\label{h59d}Let $(\mathsf{V},F)$ be a variation of polarizable Hodge
structures on a connected complex manifold $S$. Assume:

\begin{enumerate}
\item $M_{s}$ is normal in $G_{s}$ for all $s\in\mathring{S}$;

\item $\pi_{1}(S,s)\subset M_{s}(\mathbb{Q)}$ for one (hence every) $s\in S$;

\item $(\mathsf{V},F)$ satisfies the theorem of the fixed part.
\end{enumerate}

\noindent Then the subspace $\Gamma(S,\mathsf{V})$ of $\mathsf{V}_{s}$ is
stable under $G_{s}$, and the image of $G_{s}$ in $\GL_{\Gamma(S,\mathsf{V})}$
is independent of $s\in S$.
\end{lemma}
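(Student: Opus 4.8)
The plan is to identify $\Gamma(S,\mathsf{V})$, viewed inside each fibre $\mathsf{V}_s$, with the space $\mathsf{V}_s^{M_s}$ of invariants of the connected monodromy group; to deduce $G_s$-stability of this space from the normality of $M_s$ in $G_s$; and then to recognise the action of $G_s$ on $\Gamma(S,\mathsf{V})$ as the Mumford--Tate group of a single Hodge structure that does not vary with $s$.

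First I would put $W=\Gamma(S,\mathsf{V})$. Because $S$ is connected, the evaluation map $W\to\mathsf{V}_s$ is injective and its image is the space $\mathsf{V}_s^{\pi_1(S,s)}$ of monodromy invariants. A finite-index subgroup of $\pi_1(S,s)$ is already Zariski dense in the connected group $M_s$ (by the minimality in the definition of $M_s$), so hypothesis (b) forces $\pi_1(S,s)$ itself to be Zariski dense in $M_s$; since the set of elements of $\GL_{\mathsf{V}_s}$ fixing a given vector is Zariski closed, $\mathsf{V}_s^{\pi_1(S,s)}=\mathsf{V}_s^{M_s}$, and thus $W$ is identified with $\mathsf{V}_s^{M_s}\subseteq\mathsf{V}_s$. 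To prove the first assertion it now suffices to show that $G_s$ normalises $M_s$. For $s\in\mathring{S}$ this is hypothesis (a). For an arbitrary point, choose a connected open $U$ on which $\mathsf{V}$ is constant: the connected monodromy group is then the same subgroup $M\subseteq\GL_V$ for every point of $U$, and by \ref{generic} the generic Mumford--Tate group $G$ on $U$ satisfies $G_s\subseteq G$ for all $s\in U$, with equality on the dense set $U\cap\mathring{S}$. Applying (a) at a point of $U\cap\mathring{S}$ gives $M\trianglelefteq G$, so the subgroup $G_s\subseteq G$ normalises $M=M_s$. Hence $\Gamma(S,\mathsf{V})$ is stable under $G_s$ for every $s$.

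For the second assertion, note that since $h_s(\mathbb{S})\subseteq(G_s)_{\mathbb{R}}$ and $W$ is $G_s$-stable, $h_s$ induces an action of $\mathbb{S}$ on $W_{\mathbb{R}}$, i.e.\ a Hodge structure $h_s^W$ on $W$, namely the sub-Hodge structure of $(\mathsf{V}_s,h_s)$ carried by $W$. By hypothesis (c) applied with $a=\mathrm{id}_S$, the space $\Gamma(S,\mathsf{V})$ carries a Hodge structure $h^0$ for which the inclusion $\Gamma(S,\mathsf{V})\hookrightarrow\mathsf{V}_s$ is a morphism of Hodge structures for every $s$; as this inclusion is injective, $h^0$ must coincide with the induced sub-Hodge structure, so $h_s^W=h^0$ for all $s$. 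Let $\rho_s\colon G_s\to\GL_W$ be the homomorphism given by this action, so that $\rho_{s,\mathbb{R}}\circ h_s=h^0$. Its image $\rho_s(G_s)$ is an algebraic subgroup of $\GL_W$ with $h^0(\mathbb{S})\subseteq(\rho_s(G_s))_{\mathbb{R}}$; conversely, if $H\subseteq\GL_W$ is any algebraic subgroup with $h^0(\mathbb{S})\subseteq H_{\mathbb{R}}$, then $\rho_s^{-1}(H)$ is an algebraic subgroup of $G_s$ whose base change to $\mathbb{R}$ contains $h_s(\mathbb{S})$, hence equals $G_s$ because $G_s$ is generated by $h_s$ (Proposition \ref{h53a}); so $\rho_s(G_s)\subseteq H$. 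Therefore $\rho_s(G_s)$ is the Mumford--Tate group of $(W,h^0)$, which manifestly does not depend on $s$.

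The one step needing genuine care is the extension of hypothesis (a) from $\mathring{S}$ to all of $S$ in the second paragraph; I expect this to be the main obstacle, and it is handled by the local constancy, on a chart trivialising $\mathsf{V}$, of both the connected monodromy group and the generic Mumford--Tate group, together with the inclusion $G_s\subseteq G$ recorded in \ref{generic}. Everything else — the identification of $\Gamma(S,\mathsf{V})$ with the monodromy invariants via the injective evaluation maps, and the comparison of a Mumford--Tate group with the Mumford--Tate group of a quotient Hodge structure — is formal.
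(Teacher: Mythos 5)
Your proof is correct and follows essentially the same route as the paper's: identify $\Gamma(S,\mathsf{V})$ with $\mathsf{V}_s^{M_s}$ using (b), deduce $G_s$-stability from (a) directly on $\mathring{S}$ and via inclusion in a generic Mumford--Tate group elsewhere, and then recognise the image of $G_s$ in $\GL_{\Gamma(S,\mathsf{V})}$ as the Mumford--Tate group of the fixed-part Hodge structure, which is independent of $s$. The only difference is that the paper derives this last identification from the general tannakian fact that $\MT_V\to\MT_W$ is surjective for $W$ in $\langle V\rangle^{\otimes}$, whereas you verify the minimality property of the Mumford--Tate group directly — a harmless unpacking of the same argument.
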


\noindent In fact, (c) implies that $\Gamma(S,\mathsf{V})$ has a well-defined
Hodge structure, and we shall show that the image of $G_{s}$ in $\GL_{\Gamma
(S,\mathsf{V})}$ is the Mumford-Tate group of $\Gamma(S,\mathsf{V})$.

\begin{proof}
We begin with observation: let $G$ be the affine group scheme attached to the
tannakian category $\Hdg_{\mathbb{Q}{}}$ and the forgetful fibre functor; for
any $(V,h_{V})$ in $\Hdg_{\mathbb{Q}{}}$, $G$ acts on $V$ through a surjective
homomorphism $G\rightarrow\MT_{V}$; therefore, for any $(W,h_{W})$ in $\langle
V,h_{V}\rangle^{\otimes}$, $\MT_{V}$ acts on $W$ through a surjective
homomorphism $\MT_{V}\rightarrow\MT_{W}$.

For every $s\in S$,
\[
\Gamma(S,\mathsf{V})=\Gamma(S,\mathsf{V}^{f})=(\mathsf{V}^{f})_{s}%
=\mathsf{V}_{s}^{\pi_{1}(S,s)}\overset{\text{(b)}}{=}\mathsf{V}_{s}^{M_{s}}.
\]
The subspace $\mathsf{V}_{s}^{M_{s}}$ of $\mathsf{V}_{s}$ is stable under
$G_{s}$ when $s\in\mathring{S}$ because then $M_{s}$ is normal in $G_{s}$, and
it is stable under $G_{s}$ when $s\notin\mathring{S}$ because then $G_{s}$ is
contained in some generic Mumford-Tate group. Because $(\mathsf{V},F)$
satisfies the theorem of the fixed part, $\Gamma(S,\mathsf{V})$ has a Hodge
structure (independent of $s$) for which the inclusion $\Gamma(S,\mathsf{V}%
)\rightarrow\mathsf{V}_{s}$ is a morphism of Hodge structures. From the
observation, we see that the image of $G_{s}$ in $\GL_{\Gamma(S,\mathsf{V})}$
is the Mumford-Tate group of $\Gamma(S,\mathsf{V})$, which does not depend on
$s$.
\end{proof}

We now prove that $M_{s}=G_{s}^{\mathrm{der}}$ when some Mumford-Tate group
$G_{s^{\prime}}$ is commutative. We know that $M_{s}$ is a normal subgroup of
$G_{s}^{\mathrm{der}}$ for $s\in\mathring{S}$, and so it remains to show that
$G_{s}/M_{s}$ is commutative for $s\in\mathring{S}$ under the hypothesis.

We begin with a remark. Let $N$ be a normal algebraic subgroup of an algebraic
group $G$. The category of representations of $G/N$ can be identified with the
category of representations of $G$ on which $N$ acts trivially. Therefore, to
show that $G/N$ is commutative, it suffices to show that $G$ acts through a
commutative quotient on every $V$ on which $N$ acts trivially. If $G$ is
reductive and we are in characteristic zero, then it suffices to show that,
for one faithful representation $V$ of $G$, the group $G$ acts through a
commutative quotient on $(T^{m,n})^{N}$ for all $m,n\in\mathbb{N}{}$.

Let $\mathsf{T}=\mathsf{T}^{m,n}$. According to the remark, it suffices to
show that, for $s\in\mathring{S}$, $G_{s}$ acts on $\mathsf{T}_{s}^{M_{s}}$
through a commutative quotient. This will follow from the hypothesis, once we
check that $\mathsf{T}$ satisfies the hypotheses of Lemma \ref{h59d}.
Certainly, $M_{s}$ is a normal subgroup of $G_{s}$ for $s\in\mathring{S}$, and
$\pi_{1}(S,s)$ will be contained in $M_{s}$ once we have passed to a finite
cover. Finally, we are assuming that $\mathsf{T}$ satisfies the theorem of the
fixed part.

\subsection{Variation of Mumford-Tate groups in algebraic families}

When the underlying manifold is an algebraic variety, we have the following theorem.

\begin{theorem}
[Griffiths, Schmid]\label{h59a}A variation of Hodge structures on a smooth
algebraic variety over $\mathbb{C}{}$ satisfies the theorem of the fixed part
if it is polarizable and admits an integral structure.
\end{theorem}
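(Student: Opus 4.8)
The plan is to reduce the statement to the known analytic results of Schmid and the original theorem of the fixed part of Griffiths--Deligne by exploiting resolution of singularities and the algebraicity of the setup, exactly as in the discussion following Borel's theorem (\ref{h43}). First I would observe that it suffices to treat an arbitrary finite \'etale covering $a\colon S'\to S$, since such a covering is again a smooth algebraic variety over $\mathbb{C}$ and $a^{*}(\mathsf{V},F)$ is again a polarizable integral variation; thus without loss of generality one wants to produce a Hodge structure on $\Gamma(S,\mathsf{V})$ making each specialization map $\Gamma(S,\mathsf{V})\to\mathsf{V}_{s}$ a morphism of Hodge structures, equivalently to show that the largest constant local subsystem $\mathsf{V}^{f}\subset\mathsf{V}$ is a sub-variation of Hodge structures.

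Next I would embed $S$ as a Zariski-open subset of a smooth projective variety $\bar{S}$ with $\bar{S}\smallsetminus S$ a normal crossings divisor (Hironaka, \cite{hironaka1964}), so that locally near the boundary the punctured polydisk picture applies and Schmid's nilpotent orbit and $\mathrm{SL}_{2}$-orbit theorems are available: the Hodge bundles $F^{p}\mathcal{V}$ extend to vector subsheaves of Deligne's canonical extension $\bar{\mathcal{V}}$ of $\mathcal{V}$ across the boundary, and the monodromy operators around the boundary components are quasi-unipotent. After passing to a finite cover ramified along the boundary (allowed by the first paragraph) one may assume the local monodromies are unipotent. Now the global sections of $\mathsf{V}$, viewed inside $H^{0}(\bar{S},\bar{\mathcal{V}})$ with its limiting/residual Hodge structure, inherit a Hodge filtration; the content of the theorem of the fixed part in the projective-boundary setting (Schmid's rigidity theorem, equivalently \cite{deligne1972}, Th\'eor\`eme 4.1.1, extended to the open case via the canonical extension) is precisely that this filtration defines a Hodge substructure and that the fibrewise evaluation maps are morphisms of Hodge structures.

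The main obstacle I anticipate is the passage from the compact case (where the theorem of the fixed part is classical) to the open case: one must check that Griffiths transversality together with Schmid's asymptotics forces the flat global sections to have constant Hodge type, i.e.\ that a global flat section lying in $F^{p}\mathcal{V}$ at one point lies in $F^{p}\mathcal{V}$ everywhere. This is where one genuinely uses that $S$ is algebraic and not merely a complex manifold (the statement is false for general $S$): the curvature estimates of the Hodge metric on $\mathcal{V}$ near the normal crossings boundary, due to Schmid, show that the Hodge norm of a flat section is bounded and of moderate growth, and a flat holomorphic section of a Hodge bundle with bounded Hodge norm over $\bar{S}$ minus a divisor extends and is parallel for the Hodge structure. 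Granting Schmid's analytic input, the remaining steps—upgrading from $\mathsf{V}$ to each $\mathsf{T}^{m,n}$ is immediate since tensor constructions of polarizable integral variations on an algebraic variety are again of the same kind, so the hypotheses of Theorem \ref{h56}(b) are met—are formal. I would therefore present the argument as: (i) reduce to unipotent local monodromy on a smooth compactification with normal crossings boundary; (ii) invoke Schmid's $\mathrm{SL}_{2}$-orbit theorem to control the Hodge bundles and their metric near the boundary; (iii) conclude the theorem of the fixed part for $(\mathsf{V},F)$, citing \cite{schmid1973} and \cite{deligne1972} for the precise statements, and note that the same applies to all $\mathsf{T}^{m,n}$.
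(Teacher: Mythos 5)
Your proposal takes essentially the same route as the paper: Milne's proof is a bare citation to Griffiths (complete base) and Schmid (\cite{schmid1973}, 7.22) for the open case, with a pointer to \cite{deligne1971h}; you unpack the content of those references — compactification with normal crossings boundary, reduction to unipotent local monodromy by a finite cover that is \'etale over $S$, Deligne's canonical extension, Schmid's nilpotent-orbit and $\mathrm{SL}_2$-orbit theorems, and the bounded/moderate-growth estimate on the Hodge norm of a flat section — and the sketch is accurate.

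One small misstep, not a gap but a conflation: the clause ``so the hypotheses of Theorem \ref{h56}(b) are met'' does not belong in a proof of Theorem \ref{h59a}. The statement of \ref{h59a} concerns a single variation $(\mathsf{V},F)$ and says nothing about the tensor spaces $\mathsf{T}^{m,n}$; checking that each $\mathsf{T}^{m,n}$ again satisfies the theorem of the fixed part (because tensor constructions preserve polarizability and integral structures) is exactly the content of the \emph{deduction} of Theorem \ref{h59b} from \ref{h59a}, and invoking \ref{h56} at this stage reverses the logical order of the section. Delete that sentence and the argument stands as a faithful expansion of the paper's citation.
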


\begin{proof}
When the variation of Hodge structures arises from a projective smooth map
$X\rightarrow S$ of algebraic varieties and $S$ is complete, this is the
original theorem of the fixed part (\cite{griffiths1970}, \S 7). In the
general case it is proved in \cite{schmid1973}, 7.22. See also
\cite{deligne1971h}, 4.1.2 and the footnote on p.~45.
\end{proof}

\begin{theorem}
\label{h59b}Let $(\mathsf{V},F)$ be a variation of Hodge structures on a
connected smooth complex algebraic variety $S$. If $(\mathsf{V},F)$ is
polarizable and admits an integral structure, then $M_{s}$ is a normal
subgroup of $G_{s}^{\mathrm{der}}$ for all $s\in\mathring{S}$, and the two
groups are equal if $G_{s}$ is commutative for some $s\in S$.
\end{theorem}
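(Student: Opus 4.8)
The plan is to derive this immediately from Theorem \ref{h56}(b) together with the Griffiths--Schmid theorem of the fixed part (Theorem \ref{h59a}). Theorem \ref{h56}(b) already gives both conclusions --- that $M_{s}$ is normal in $G_{s}^{\mathrm{der}}$ for $s\in\mathring{S}$, and that $M_{s}=G_{s}^{\mathrm{der}}$ when some $G_{s^{\prime}}$ is commutative --- under the single standing hypothesis that $\mathsf{T}^{m,n}$ satisfies the theorem of the fixed part for all $m,n\in\mathbb{N}{}$. So the only thing to check is that, under the hypotheses of the present theorem, each $\mathsf{T}^{m,n}$ does satisfy the theorem of the fixed part.

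First I would observe that each $\mathsf{T}^{m,n}=\mathsf{V}^{\otimes m}\otimes\mathsf{V}^{\vee\otimes n}$ is again a variation of Hodge structures on $S$: the Hodge filtration on $\mathcal{O}{}_{S}\otimes\mathsf{T}^{m,n}$ is the one induced by functoriality of tensor products and duals, and Griffiths transversality for $\mathsf{T}^{m,n}$ follows from that for $\mathsf{V}$ by the Leibniz rule. Next I would note that $\mathsf{T}^{m,n}$ is polarizable: a polarization $\psi$ of $(\mathsf{V},F)$ induces in the standard way a polarization on $\mathsf{V}^{\otimes m}$, on $\mathsf{V}^{\vee}$, and hence on $\mathsf{T}^{m,n}$ (after the appropriate Tate twist, which does not affect polarizability). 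Finally, if $\Lambda\subset\mathsf{V}$ is an integral structure, then $\Lambda^{\otimes m}\otimes(\Lambda^{\vee})^{\otimes n}$ is a local system of $\mathbb{Z}{}$-modules inside $\mathsf{T}^{m,n}$ with $\mathbb{Q}{}\otimes_{\mathbb{Z}{}}\bigl(\Lambda^{\otimes m}\otimes(\Lambda^{\vee})^{\otimes n}\bigr)\simeq\mathsf{T}^{m,n}$, so $\mathsf{T}^{m,n}$ admits an integral structure.

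Since $S$ is a smooth complex algebraic variety, Theorem \ref{h59a} applies to each $\mathsf{T}^{m,n}$ and shows it satisfies the theorem of the fixed part. (Strictly, the theorem of the fixed part is a condition about all finite coverings $a\colon S^{\prime}\rightarrow S$; but such $S^{\prime}$ is again a smooth complex algebraic variety, and $a^{\ast}\mathsf{T}^{m,n}$ is again a polarizable variation of Hodge structures with integral structure on it, so Theorem \ref{h59a} covers this too.) The hypotheses of Theorem \ref{h56}(b) are therefore met, and that theorem gives exactly the two assertions. I do not expect any genuine obstacle here: the content is entirely in Theorems \ref{h56} and \ref{h59a}, and what remains is the routine verification that polarizability and the existence of an integral structure are inherited by the tensor constructions $\mathsf{T}^{m,n}$ (and survive pullback along finite covers).
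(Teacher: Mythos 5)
Your proof is correct and follows the same route as the paper: the paper likewise notes that $\mathsf{T}^{m,n}$ inherits polarizability and an integral structure from $(\mathsf{V},F)$, invokes Theorem \ref{h59a} to conclude each $\mathsf{T}^{m,n}$ satisfies the theorem of the fixed part, and then applies Theorem \ref{h56}. You have simply spelled out the routine inheritance verifications that the paper leaves implicit; your parenthetical about finite covers is harmless but unnecessary, since ``satisfies the theorem of the fixed part'' as defined in (\ref{h55b}) already quantifies over all finite coverings, which is exactly what Theorem \ref{h59a} asserts.
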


\begin{proof}
If $(\mathsf{V},F)$ is polarizable and admits an integral structure, then
$\mathsf{T}^{m,n}$ is polarizable and admits an integral structure, and so it
satisfies the theorem of the fixed part (Theorem \ref{h59a}). Now the theorem
follows from Theorem \ref{h56}.
\end{proof}

\section{Period subdomains}

\begin{quote}
{\small We define the notion of a period subdomain, and we show that the
hermitian symmetric domains are exactly the period subdomains on which the
universal family of Hodge structures is a \textit{variation} of Hodge
structures.}
\end{quote}

\subsection{Flag manifolds}

Let $V$ be a complex vector space and let $\mathbf{d}=(d_{1},\ldots,d_{r})$ be
a sequence of integers with $\dim V>d_{1}>\cdots>d_{r}>0$. The \emph{flag
manifold} $\Gr_{\mathbf{d}}(V)$ has as points the filtrations
\[
V\supset F^{1}V\supset\cdots\supset F^{r}V\supset0,\quad\quad\dim F^{i}%
V=d_{i}\text{.}%
\]
It is a projective complex manifold, and the tangent space to $\Gr_{\mathbf{d}%
}(V)$ at the point corresponding to a filtration $F$ is%
\[
T_{F}(\Gr_{\mathbf{d}}(V))\simeq\End(V)/F^{0}\End(V)
\]
where
\[
F^{j}\End(V)=\{\alpha\in\End(V)\mid\alpha(F^{i}V)\subset F^{i+j}V\text{ for
all }i\}.
\]

\begin{theorem}
\label{h61}Let $V_{S}$ be the constant sheaf on a connected complex manifold
$S$ defined by a real vector space $V$, and let $(V_{S},F)$ be a family of
Hodge structures on $S$. Let $\mathbf{d}$ be the sequence of ranks of the
subsheaves in $F{}$.

\begin{enumerate}
\item The map $\varphi\colon S\rightarrow\Gr_{\mathbf{d}}(V_{\mathbb{C}{}})$
sending a point $s$ of $S$ to the point of $\Gr_{\mathbf{d}}(V_{\mathbb{C}{}%
})$ corresponding to the filtration $F{}_{s}$ on $V$ is holomorphic.

\item The family $(V_{S},F{})$ satisfies Griffiths transversality if and only
if the image of the map%
\[
(d\varphi)_{s}\colon T_{s}S\rightarrow T_{\varphi(s)}\Gr_{\mathbf{d}%
}(V_{\mathbb{C}{}})
\]
lies in the subspace $F_{s}^{-1}\End(V_{\mathbb{C}{}})/F_{s}^{0}%
\End(V_{\mathbb{C}{}})$ of $\End(V_{\mathbb{C}{}})/F_{s}^{0}\End(V_{\mathbb{C}%
{}})$ for all $s\in S$.
\end{enumerate}
\end{theorem}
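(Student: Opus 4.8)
The plan is to prove (a) from the universal property of the flag manifold, and (b) by writing down the differential of $\varphi$ explicitly in terms of the flat connection $\nabla$ and comparing it directly with Griffiths transversality.

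For (a): since $\mathsf{V}=V_S$ is the constant sheaf, the vector sheaf $\mathcal{V}=\mathcal{O}_S\otimes_{\mathbb{R}}\mathsf{V}$ is canonically the trivial sheaf with fibre $V_{\mathbb{C}}$, and by the definition of a family of Hodge structures the filtration $F$ is a filtration of $\mathcal{V}$ by holomorphic vector subsheaves (hence subbundles, by our convention on vector subsheaves) of rank sequence $\mathbf{d}$. The flag manifold $\Gr_{\mathbf{d}}(V_{\mathbb{C}})$ represents the functor sending a complex manifold $M$ to the set of filtrations of $\mathcal{O}_M\otimes_{\mathbb{C}}V_{\mathbb{C}}$ by holomorphic subbundles of ranks $\mathbf{d}$ — this is the content of the fact about $\mathbb{P}(V_{\mathbb{C}})$ used in the proof of Proposition \ref{h11}, applied stepwise to the filtration. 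Therefore $(\mathcal{V},F)$ corresponds to a unique holomorphic map $\varphi\colon S\to\Gr_{\mathbf{d}}(V_{\mathbb{C}})$, and by construction $\varphi(s)$ is the point attached to $F_s$; this is (a).

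For (b): I compute $(d\varphi)_{s_0}$. Working in the trivialization $\mathcal{V}=\mathcal{O}_S\otimes_{\mathbb{C}}V_{\mathbb{C}}$, in which $\nabla$ is the exterior derivative, $\varphi(s)$ is literally the subspace filtration $F_s^{\bullet}\subset V_{\mathbb{C}}$. Given a holomorphic tangent vector $Z$ at $s_0$ and $v\in F_{s_0}^p$, extend $v$ to a holomorphic section $\tilde v$ of $F^p\mathcal{V}$ near $s_0$ and put $\alpha_Z(v)=(\nabla_Z\tilde v)(s_0)\bmod F_{s_0}^p$; this is independent of the lift, since if $\tilde v(s_0)=0$, writing $\tilde v=\sum f_j w_j$ with $w_j$ holomorphic sections of $F^p\mathcal{V}$ and $f_j(s_0)=0$ gives $(\nabla_Z\tilde v)(s_0)=\sum (Zf_j)(s_0)\,w_j(s_0)\in F_{s_0}^p$. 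Assembling these maps over all $p$ identifies $(d\varphi)_{s_0}(Z)$ with the class of $\alpha_Z$ in $\End(V_{\mathbb{C}})/F_{s_0}^0\End(V_{\mathbb{C}})$, under the standard identification of $T_{\varphi(s_0)}\Gr_{\mathbf{d}}(V_{\mathbb{C}})$ (a tangent vector recording, for each $p$, the velocity of $F^p$ modulo $F^p$).

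The comparison is then immediate. Adding to $\alpha_Z$ an element of $F_{s_0}^0\End(V_{\mathbb{C}})$ does not change $\alpha_Z(F_{s_0}^p)\bmod F_{s_0}^{p-1}$, so $[\alpha_Z]\in F_{s_0}^{-1}\End(V_{\mathbb{C}})/F_{s_0}^0\End(V_{\mathbb{C}})$ if and only if $\alpha_Z(F_{s_0}^p)\subset F_{s_0}^{p-1}$ for all $p$, i.e.\ $(\nabla_Z\tilde v)(s_0)\in F_{s_0}^{p-1}$ for all $p$ and all $\tilde v$ as above, i.e.\ $\nabla_Z(F^p\mathcal{V})_{s_0}\subset (F^{p-1}\mathcal{V})_{s_0}$. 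Ranging over all $Z$ and all $s_0$, this is exactly $\nabla(F^p\mathcal{V})\subset\Omega_S^1\otimes F^{p-1}\mathcal{V}$ for all $p$, which is Griffiths transversality. I expect the only genuinely delicate point to be nailing down the identification $T_{\varphi(s)}\Gr_{\mathbf{d}}(V_{\mathbb{C}})\simeq\End(V_{\mathbb{C}})/F_s^0\End(V_{\mathbb{C}})$ together with the compatible description of $d\varphi$ via $\nabla$ (in particular the bookkeeping of which step of $F$ appears where); once that is in place, both (a) and the equivalence in (b) are formal.
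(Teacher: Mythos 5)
Your proof is correct and takes the same route as the paper, which however disposes of the theorem in one line ("(a) simply says that the filtration is holomorphic, and (b) restates the definition of Griffiths transversality") without spelling out the two identifications you carefully verify: that $\Gr_{\mathbf{d}}$ represents filtrations by holomorphic subbundles, and that $(d\varphi)_{s}(Z)$, under $T_{\varphi(s)}\Gr_{\mathbf{d}}(V_{\mathbb{C}})\simeq\End(V_{\mathbb{C}})/F_{s}^{0}\End(V_{\mathbb{C}})$, is the class induced by $v\mapsto\nabla_{Z}\tilde v(s)\bmod F_{s}^{p}$. Your well-definedness check (writing $\tilde v=\sum f_j w_j$ with $f_j(s_0)=0$) and the final unwinding of the containment $[\alpha_Z]\in F^{-1}/F^{0}$ into $\nabla_Z(F^p\mathcal V)_{s_0}\subset(F^{p-1}\mathcal V)_{s_0}$ are both sound, so the proposal is a complete, correct expansion of what the paper treats as definitional.
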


\begin{proof}
Statement (a) simply says that the filtration is holomorphic, and (b) restates
the definition of Griffiths transversality.
\end{proof}

\subsection{Period domains}

We now fix a real vector space $V$, a Hodge filtration $F_{0}$ on $V$ of
weight $m$, and a polarization $t_{0}\colon V\times V\rightarrow\mathbb{R}%
{}(m)$ of the Hodge structure $(V,F_{0})$.

Let $D=D(V,F_{0},t_{0})$ be the set of Hodge filtrations $F$ on $V$ of weight
$m$ with the same Hodge numbers as $(V,F_{0})$ for which $t_{0}$ is a
polarization. Thus $D$ is the set of descending filtrations%
\[
V_{\mathbb{C}{}}\supset\cdots\supset F^{p}\supset F^{p+1}\supset\cdots
\supset0
\]
on $V_{\mathbb{C}{}}$ such that

\begin{enumerate}
\item $\dim_{\mathbb{C}{}}F^{p}=\dim_{\mathbb{C}{}}F_{0}^{p}$ for all $p$,

\item $V_{\mathbb{C}{}}=F^{p}\oplus\overline{F^{q}}$ whenever $p+q=m+1$,

\item $t_{0}(F^{p},F^{q})=0$ whenever $p+q=m+1$, and

\item $\left(  2\pi i\right)  ^{m}t_{0\mathbb{C}{}}(v,C\bar{v})>0$ for all
nonzero elements $v$ of $V_{\mathbb{C}}$.
\end{enumerate}

\noindent Condition (b) requires that $F$ be a Hodge filtration of weight $m$,
condition (a) requires that $(V,F)$ have the same Hodge numbers as $(V,F_{0}%
)$, and the conditions (c) and (d) require that $t_{0}$ be a polarization.

Let $D^{\vee}=D^{\vee}(V,F_{0},t_{0})$ be the set of filtrations on
$V_{\mathbb{C}{}}$ satisfying (a) and (c).

\begin{theorem}
\label{h62}The set $D^{\vee}$ is a compact complex submanifold of
$\Gr_{\mathbf{d}}(V)$, and $D$ is an open submanifold of $D^{\vee}$.
\end{theorem}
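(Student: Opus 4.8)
The plan is to realize $D^{\vee}$ as a projective homogeneous space for a reductive group and then to check that $D$ is carved out of $D^{\vee}$ by open conditions. First I would introduce the relevant group: let $G$ be the algebraic group over $\mathbb{R}$ of automorphisms of the pair $(V,t_{0})$, i.e.\ the $g\in\GL_{V}$ with $t_{0}(gx,gy)=t_{0}(x,y)$. Since $t_{0}$ is nondegenerate and $(-1)^{m}$-symmetric, $G$ is a symplectic or an orthogonal group, and in particular reductive. Then $G(\mathbb{C})=\Aut(V_{\mathbb{C}},t_{0,\mathbb{C}})$ acts on the flag manifold $\Gr_{\mathbf{d}}(V_{\mathbb{C}})$, and it preserves $D^{\vee}$: an isometry carries a filtration of the prescribed dimensions to one of the same dimensions and respects the orthogonality relations in (c). I would also record at the outset that $\Gr_{\mathbf{d}}(V_{\mathbb{C}})$ is a projective, hence compact, complex manifold, and that $D^{\vee}$ is a closed subset of it: condition (a) holds everywhere on $\Gr_{\mathbf{d}}$ by our choice of $\mathbf{d}$, and (c) — the vanishing of the bilinear maps $F^{p}\times F^{q}\to\mathbb{C}$ for $p+q=m+1$ — is a closed (indeed Zariski-closed) condition.

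The heart of the argument is that $G(\mathbb{C})$ acts transitively on $D^{\vee}$. Because $F$ is a decreasing filtration, condition (c) is equivalent to $t_{0}(F^{p},F^{q})=0$ for all $p+q\ge m+1$; a dimension count (using that $(V,F_{0})$ satisfies (b), so $\dim F^{p}+\dim F^{m+1-p}=\dim V$) then forces $F^{m+1-p}=(F^{p})^{\perp}$ for every $p$. Thus a point of $D^{\vee}$ is exactly a partial isotropic flag of a fixed type for $t_{0,\mathbb{C}}$, and by Witt's extension theorem — equivalently, because the stabilizer of such a flag is a parabolic subgroup and $G(\mathbb{C})$ acts transitively on the associated flag variety — any two points of $D^{\vee}$ differ by an element of $G(\mathbb{C})$. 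Hence $D^{\vee}\simeq G(\mathbb{C})/P$, where $P$ is the parabolic subgroup fixing $F_{0}$; this is a projective homogeneous variety, so a compact complex manifold, and its inclusion into $\Gr_{\mathbf{d}}(V_{\mathbb{C}})$ is a $G(\mathbb{C})$-equivariant closed immersion, realizing $D^{\vee}$ as a compact complex submanifold. I expect this transitivity statement (and the resulting identification with a flag variety) to be the one genuinely nontrivial step; everything surrounding it is formal.

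Finally I would show $D$ is open in $D^{\vee}$. By definition $D\subset D^{\vee}$ is the locus where the two further conditions (b) and (d) hold, so it suffices to see each is open. For (b): by (a) and the fact that $F_{0}$ satisfies (b), one has $\dim_{\mathbb{C}}F^{p}+\dim_{\mathbb{C}}\overline{F^{q}}=\dim_{\mathbb{C}}V_{\mathbb{C}}$ for $p+q=m+1$, so the natural map $F^{p}\oplus\overline{F^{q}}\to V_{\mathbb{C}}$ is an isomorphism iff it is injective iff a suitable determinant is nonzero; since $F^{p}$ varies holomorphically and $\overline{F^{q}}$ anti-holomorphically over $D^{\vee}$, this is an open condition. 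On the open subset $D^{\vee}_{0}$ of $D^{\vee}$ where (b) holds, the Hodge decomposition $V^{p,q}_{F}=F^{p}\cap\overline{F^{q}}$ is well defined and depends real-analytically on $F$, hence so does the Weil operator $C_{F}=h_{F}(i)$; condition (d), that the Hermitian form $v\mapsto(2\pi i)^{m}t_{0,\mathbb{C}}(v,C_{F}\bar v)$ be positive definite, is then an open condition on $D^{\vee}_{0}$, because the positive-definite Hermitian forms form an open subset of all Hermitian forms. Therefore $D=\{F\in D^{\vee}\mid\text{(b) and (d) hold}\}$ is open in $D^{\vee}$ (and nonempty, as $F_{0}\in D$), which completes the proof.
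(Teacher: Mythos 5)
Your proof is correct and follows essentially the same route as the paper: you identify $D^{\vee}$ with the set of isotropic partial flags for $t_{0}$, use transitivity of the isometry group $G(\mathbb{C})$ with parabolic stabilizer $P$ to realize $D^{\vee}\simeq G(\mathbb{C})/P$ as a compact complex submanifold, and then observe $D$ is cut out by the open conditions (b) and (d). You handle the odd and even $m$ cases uniformly via $\Aut(V,t_{0})$ and spell out the openness of (b) and (d) a bit more explicitly than the paper, but the argument is the same.
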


\begin{proof}
We first remark that, in the presence of (a), condition (c) requires that
$F^{m+1-p}$ be the orthogonal complement of $F^{p}$ for all $p$. In
particular, each of $F^{p}$ and $F^{m+1-p}$ determines the other.

When $m$ is odd, $t_{0}$ is alternating, and the remark shows that $D^{\vee}$
can be identified with the set of filtrations%
\[
V_{\mathbb{C}{}}\supset F^{(m+1)/2}\supset F^{(m+3)/2}\supset\cdots\supset0
\]
satisfying (a) and such that $F^{(m+1)/2}$ is totally isotropic for $t_{0}$.
Let $S$ be the symplectic group for $t_{0}$. Then $S(\mathbb{C}{})$ acts
transitively on these filtrations, and the stabilizer $P$ of the filtration
$F_{0}$ is a parabolic subgroup of $S$. Therefore $S(\mathbb{C}{}%
)/P(\mathbb{C}{})$ is a compact complex manifold, and the bijection
$S(\mathbb{C}{})/P(\mathbb{C}{})\simeq D^{\vee}$ is holomorphic. The proof
when $m$ is even is similar.

The submanifold $D$ of $D^{\vee}$ is open because the conditions (b) and (d)
are open.
\end{proof}

The complex manifold $D=D(V,F_{0},t_{0})$ is the (Griffiths) \emph{period
domain}%
\index{period domain}
defined by $(V,F_{0},t_{0})$.

\begin{theorem}
\label{h63}Let $(\mathsf{V},F,t)$ be a polarized family of Hodge structures on
a complex manifold $S$. Let $U$ be an open connected subset of $S$ on which
the local system $V$ is trivial, and choose an isomorphism $\mathsf{V}|U\simeq
V_{U}$ and a point $o\in U$. The map $\mathcal{P}{}\colon U\rightarrow
D(V,F_{o},t_{o})$ sending a point $s\in U$ to the point $(V_{s},F_{s},t_{s})$
is holomorphic.
\end{theorem}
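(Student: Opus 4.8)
The plan is to deduce this immediately from Theorems \ref{h61} and \ref{h62}, so the only real work is bookkeeping. First I would check that $\mathcal{P}$ is well-defined, i.e. that for each $s\in U$ the filtration $F_s$ — transported to $V_{\mathbb{C}{}}$ via the chosen trivialization $\mathsf{V}|U\simeq V_U$ — genuinely lies in $D(V,F_o,t_o)$. Since $U$ is connected and the $F^{p}\mathcal{V}$ are vector subsheaves (hence subbundles) of $\mathcal{V}$, their ranks are locally constant, so $\dim_{\mathbb{C}{}}F_s^{p}=\dim_{\mathbb{C}{}}F_o^{p}$ for all $p$; this is condition (a) in the definition of $D^{\vee}$. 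Because $F_s$ is a Hodge filtration of weight $m$ at every point, condition (b) holds. Finally, a polarization of a family of Hodge structures is by definition a morphism of local systems $t\colon\mathsf{V}\times\mathsf{V}\rightarrow\mathbb{R}{}(-m)$ that restricts to a polarization at each point; under the trivialization it becomes a locally constant, hence constant, bilinear form, which we identify with $t_o$, and the polarization hypothesis says exactly that $t_o$ polarizes $(V,F_s)$ for every $s$, which is conditions (c) and (d). Thus $F_s\in D(V,F_o,t_o)$ and $\mathcal{P}$ makes sense.

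Next I would invoke Theorem \ref{h61}(a) applied to the family $(V_U,F|U)$ of Hodge structures on the connected manifold $U$: with $\mathbf{d}$ the sequence of ranks of the subsheaves in $F$, the map $\varphi\colon U\rightarrow\Gr_{\mathbf{d}}(V_{\mathbb{C}{}})$ sending $s$ to $F_s$ is holomorphic. By the previous paragraph $\varphi$ factors as
\[
U\xrightarrow{\ \mathcal{P}\ }D(V,F_o,t_o)\hookrightarrow D^{\vee}\hookrightarrow\Gr_{\mathbf{d}}(V_{\mathbb{C}{}}).
\]

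Finally I would apply Theorem \ref{h62}: $D^{\vee}$ is a closed complex submanifold of $\Gr_{\mathbf{d}}(V_{\mathbb{C}{}})$ and $D(V,F_o,t_o)$ is an open submanifold of $D^{\vee}$. A holomorphic map into a complex manifold whose image is contained in an embedded complex submanifold is holomorphic as a map into that submanifold — a standard local statement, proved by choosing a chart of $\Gr_{\mathbf{d}}(V_{\mathbb{C}{}})$ adapted to $D^{\vee}$ — and restricting the codomain further to the open subset $D$ preserves holomorphy. Hence $\mathcal{P}\colon U\rightarrow D(V,F_o,t_o)$ is holomorphic.

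There is essentially no obstacle here: all the substance sits in Theorems \ref{h61} and \ref{h62}, which are already available. The only point needing a word of care is the last one — passing from "holomorphic into $\Gr_{\mathbf{d}}(V_{\mathbb{C}{}})$ with image in $D^{\vee}$" to "holomorphic into $D^{\vee}$" genuinely uses that $D^{\vee}$ is a complex submanifold (Theorem \ref{h62}), not merely the set-theoretic description of $D$ and $D^{\vee}$ by conditions (a)--(d).
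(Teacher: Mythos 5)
Your proof is correct and follows essentially the same route as the paper's: apply Theorem \ref{h61} to get holomorphicity into the flag manifold, observe the image lies in $D$, and use that $D$ is a complex submanifold of $\Gr_{\mathbf{d}}(V_{\mathbb{C}})$ (via Theorem \ref{h62}) to conclude. Your extra care in checking that $\mathcal{P}$ is well-defined and in making explicit the ``restriction of codomain to an embedded submanifold preserves holomorphy'' step is exactly the point the paper delegates to the reference \cite{grauertR1984}, 4.3.3.
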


\begin{proof}
The map $s\mapsto F_{s}\colon U\rightarrow\Gr_{\mathbf{d}}(V)$ is holomorphic
by (\ref{h61}) and it takes values in $D$. As $D$ is a complex submanifold of
$\Gr_{\mathbf{d}}(V)$ this implies that the map $U\rightarrow D$ is
holomorphic (\cite{grauertR1984}, 4.3.3).
\end{proof}

The map $\mathcal{P}{}$ is called the \emph{period map}%
\index{period map}%
.

The constant local system of real vector spaces $V_{D}$ on $D$ becomes a
polarized family of Hodge structures on $D$ in an obvious way (called the
\emph{universal family})%
\index{universal family}%

\begin{theorem}
\label{h64}If the universal family of Hodge structures on $D=D(V,F_{0},t_{0})$
satisfies Griffiths transversality, then $D$ is a hermitian symmetric domain.
\end{theorem}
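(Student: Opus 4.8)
The plan is to recognise $D$ as a homogeneous space under the semisimple group $G_{0}=\mathrm{Aut}(V,t_{0})$ (the symplectic group of $t_{0}$ if $m$ is odd, the orthogonal group if $m$ is even) and then to check that the pair $(H,u_{o})$ attached to $D$ and a base point $o$ satisfies the conditions (SU1) and (SU2), where $H=G_{0}^{\mathrm{ad}}$; the converse part of Theorem \ref{h21} then supplies the hermitian symmetric structure. Fix $o\in D$, giving the polarized Hodge structure $(V,h_{o},t_{0})$, and set $C=h_{o}(i)$. Since $h_{o}(z)$ acts on $\mathbb{R}(m)$ as $(z\bar z)^{m}$, we have $C\in G_{0}(\mathbb{R})$ and $C^{2}=h_{o}(-1)=(-1)^{m}$ is central. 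Because $w_{h_{o}}$ is central, $\ad\circ h_{o}$ is trivial on $w(\mathbb{G}_{m})$, so by Remark \ref{h90a} it corresponds to a homomorphism $u_{o}\colon\mathbb{S}^{1}\to H$ with $u_{o}(-1)=\ad(C)$.

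For (SU2): the conditions (b) and (d) defining $D$ say exactly that the real $G_{0}$-invariant bilinear form $\varphi(x,y)=(2\pi i)^{m}t_{0}(x,Cy)$ (real-valued after identifying $(2\pi i)^{2m}\mathbb{R}$ with $\mathbb{R}$) is symmetric and positive definite, i.e. that $\varphi$ is a $C$-polarization of the faithful representation $V$ of $G_{0}$. By the converse half of Theorem \ref{h20b}, $\inn(C)$ is a Cartan involution of $G_{0}$, hence $\inn(u_{o}(-1))=\inn(\ad(C))$ is a Cartan involution of $H$.

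For (SU1): $G_{0}$ is cut out by the Hodge tensor $t_{0}$, so $\Lie(G_{0})\subset\End(V)$ is a sub-Hodge-structure of weight $0$; write $\Lie(G_{0})_{\mathbb{C}}=\bigoplus_{r}L^{r,-r}$ with $\overline{L^{r,-r}}=L^{-r,r}$. As in the proof of Theorem \ref{h62}, $D^{\vee}\cong G_{0,\mathbb{C}}/P$ with $P$ the parabolic fixing $F_{o}$, whence $T_{o}D^{\vee}\cong\Lie(G_{0})_{\mathbb{C}}/F_{o}^{0}\Lie(G_{0})_{\mathbb{C}}=\bigoplus_{r\le-1}L^{r,-r}$ inside $T_{o}\Gr_{\mathbf{d}}(V_{\mathbb{C}})=\End(V_{\mathbb{C}})/F_{o}^{0}\End(V_{\mathbb{C}})$. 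Since $D$ is open in $D^{\vee}$, the period map of the universal family on $D$ is the inclusion $D\hookrightarrow\Gr_{\mathbf{d}}(V_{\mathbb{C}})$, so by Theorem \ref{h61}(b) Griffiths transversality holds if and only if $T_{o}D^{\vee}\subset F_{o}^{-1}\End(V_{\mathbb{C}})/F_{o}^{0}\End(V_{\mathbb{C}})$, that is, iff $L^{r,-r}=0$ for $r\le-2$, hence by conjugation for all $|r|\ge2$. Translating the resulting Hodge type $\{(-1,1),(0,0),(1,-1)\}$ on $\Lie(H)_{\mathbb{C}}\cong\Lie(G_{0})_{\mathbb{C}}$ through $h(z)=u(z/\bar z)$ gives precisely (SU1) for $u_{o}$, and shows that $u_{o}(z)$ acts on the holomorphic tangent space $T_{o}D=L^{-1,1}$ as multiplication by $z$.

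To apply Theorem \ref{h21} I also need $H$ adjoint with no compact factor; it is adjoint by construction, and if it had a compact factor $H_{1}$ then (SU2) would force $\inn(u_{o}(-1))|_{H_{1}}$ to be a Cartan involution of the compact group $H_{1}$, hence trivial, so $u_{o}(-1)$, and then by (SU1) all of $u_{o}$, is trivial on $H_{1}$; such factors contribute nothing and may be discarded (the extreme case, where all factors are discarded, being $D$ a point). Theorem \ref{h21} then gives a hermitian symmetric structure on the $H(\mathbb{R})^{+}$-conjugacy class of $u_{o}$, and this is identified equivariantly with $D$: $G_{0}(\mathbb{R})$ acts transitively on $D$ (standard for period domains — any two polarized Hodge structures on $V$ with the same numerical data are carried to one another by an isometry of $t_{0}$), and the stabilizer of $o$ in $H(\mathbb{R})^{+}$ is the centralizer of $u_{o}$ since an element stabilizes $o$ exactly when it commutes with $h_{o}$. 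I expect the genuinely substantive input beyond the cited results to be this transitivity statement; the remaining difficulty is the bookkeeping that knits everything together — matching the $\mathbb{S}$- and $\mathbb{S}^{1}$-pictures and the Hodge-type shifts, and passing carefully between $G_{0}$, $G_{0}^{\mathrm{ad}}$ and their identity components — rather than any single conceptual obstacle.
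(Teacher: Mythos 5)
Your proof is correct and follows essentially the same route as the paper: read (SU1) off the Lie algebra via the Griffiths‑transversality constraint on $T_oD^\vee$, deduce (SU2) from Theorem \ref{h20b} applied to the $C$-polarization $(2\pi i)^m t_0$, discard compact factors, and invoke Theorem \ref{h21}. The only (inconsequential) variations are that you work throughout with $G_0=\mathrm{Aut}(V,t_0)$ rather than first introducing the similitude group $G$ and its subgroup $G^1$ as the paper does — $h_o$ itself lands only in the similitude group, though its restriction to $U^1$ lands in $G_0$ and $G^{\mathrm{ad}}\cong G_0^{\mathrm{ad}}$, so this is harmless — and that you replace the paper's appeal to \ref{h65a} (identifying $D$ with a conjugacy class) by a direct transitivity claim for $G_0(\mathbb{R})$ on $D$.
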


\begin{proof}
Let $h_{0}\colon\mathbb{S}{}\rightarrow\GL_{V}$ be the homomorphism
corresponding to the Hodge filtration $F_{0}$, and let $G$ be the algebraic
subgroup of $\GL_{V}$ whose elements fix $t_{0}$ up to scalar. Then $h_{0}$
maps into $G$, and $h_{0}\circ w$ maps into its centre (recall that $V$ has a
single weight $m$). Therefore (see \ref{h90a}), there exists a homomorphism
$u_{0}\colon\mathbb{S}{}^{1}\rightarrow G^{\mathrm{ad}}$ such that
$h_{0}(z)=u_{0}(z/\bar{z})$ mod $Z(G)(\mathbb{\mathbb{R}{}})$.

Let $o$ be the point $F_{0}$ of $D$, and let $\mathfrak{g}{}$ denote $\Lie G$
with the Hodge structure provided by $\Ad\circ h_{0}$. Then
\[
\mathfrak{g}{}_{\mathbb{C}{}}/\mathfrak{g}{}^{00}\simeq T_{o}(D)\subset
T_{o}(\Gr_{\mathbf{d}}(V))\simeq\End(V)/F^{0}\End(V).
\]
If the universal family of Hodge structures satisfies Griffiths
transversality, then $\mathfrak{g}{}_{\mathbb{C}{}}=F^{-1}\mathfrak{g}%
{}_{\mathbb{C}{}}$ (by \ref{h61}b). As $\mathfrak{g}{}$ is of weight $0$, it
must be of type $\{(1,-1),(0,0),(-1,1)$, and so $h_{0}$ satisfies the
condition SV1. Hence $u_{0}$ satisfies condition SU1 of Theorem \ref{h21}.

Let $G^{1}$ be the subgroup of $G$ of elements fixing $t_{0}$. As $t_{0}$ is a
polarization of the Hodge structure, $(2\pi i)^{m}t_{0}$ is a $C$-polarization
of $V$ relative to $G^{1}$, and so $\inn\left(  C\right)  $ is a Cartan
involution of $G^{1}$ (Theorem \ref{h20b}). Now $C=h_{0}(i)=u_{0}(-1)$, and so
$u_{0}$ satisfies condition SU2 of Theorem \ref{h21}. The set $D$ is a
connected component of the space of homomorphisms $u\colon\mathbb{S}{}%
^{1}\rightarrow(G^{1})^{\mathrm{ad}}$, and so it is equal to the set of
conjugates of $u_{0}$ by elements of $(G^{1})^{\mathrm{ad}}(\mathbb{R}{})^{+}$
(apply \ref{h65a} below with $\mathbb{S}{}$ replaced by $\mathbb{S}{}^{1}$).
Any compact factors of $(G^{1})^{\mathrm{ad}}$ can be discarded, and so
Theorem \ref{h21} shows that $D$ is a hermitian symmetric domain.
\end{proof}

\begin{remark}
\label{h65}The universal family of Hodge structures on the period domain
$D(V,h,t_{0})$ satisfies Griffiths transversality only if (a) $(V,h)$ is of
type $\{(-1,0),(0,-1)\}$, or (b) $(V,h)$ of type $\{(-1,1),(0,0),(1,-1)\}$ and
$h^{-1,1}\leq1$, or (c) $(V,h)$ is a Tate twist of one of these Hodge structures.
\end{remark}

\subsection{Period subdomains}

\begin{plain}
\label{h65a}We shall need the following statement (\cite{deligne1979},
1.1.12.). Let $G$ be a real algebraic group, and let $X$ be a (topological)
connected component of the space of homomorphisms $\mathbb{S}{}\rightarrow G$.
Let $G_{1}$ be the smallest algebraic subgroup of $G$ through which all the
$h\in X$ factor. Then $X$ is again a connected component of the space of
homomorphisms of $\mathbb{S}{}$ into $G_{1}$. Since $\mathbb{S}{}$ is a torus,
any two elements of $X$ are conjugate, and so the space $X$ is a
$G_{1}(\mathbb{R}{})^{+}$-conjugacy class of morphisms from $\mathbb{S}{}$
into $G$. It is also a $G(\mathbb{R}{})^{+}$-conjugacy class, and $G_{1}$ is a
normal subgroup of the identity component of $G$.
\end{plain}

Let $(V,F_{0})$ be a real Hodge structure of weight $m$. A tensor $t\colon
V^{\otimes2r}\rightarrow\mathbb{R}(-mr)$ of $V$ is a \emph{Hodge tensor} of
$(V,F_{0})$ if it is a morphism of Hodge structures. Concretely, this means
that $t$ is of type $(0,0)$ for the natural Hodge structure on
\[
\Hom(V^{\otimes2r},\mathbb{R}{}(-mr))\simeq\left(  V^{\vee}\right)
^{\otimes2r}(-mr),
\]
or that it lies in $F^{0}\left(  \Hom(V^{\otimes2r},\mathbb{R}{}(-mr))\right)
$.

We now fix a real Hodge structure $(V,F_{0})$ of weight $m$ and a family
$\mathfrak{t}{}=(t_{i})_{i\in I}$ of Hodge tensors of $(V,F_{0})$. We assume
that $I$ contains an element $0$ such that $t_{0}$ is a polarization of
$(V,F_{0})$. Let $D(V,F_{0},\mathfrak{t})$ be a connected component of the set
of Hodge filtrations $F$ in $D(V,F_{0},t_{0})$ for which every $t_{i}$ is a
Hodge tensor. Thus, $D(V,F_{0},\mathfrak{t}{})$ is a connected component of
the space of Hodge structures on $V$ for which every $t_{i}$ is a Hodge tensor
and $t_{0}$ is a polarization.

Let $G$ be the algebraic subgroup of $\GL_{V}\times\GL_{\mathbb{Q}{}(1)}$
fixing the $t_{i}$. Then $G(\mathbb{R}{})$ consists of the pairs $(g,c)$ such
that
\[
t_{i}(gv_{1},\ldots,gv_{2r})=c^{rm}t_{i}(v_{1},\ldots,v_{2r})
\]
for $i\in I$. Let $h$ be a homomorphism $\mathbb{S}{}\rightarrow\GL_{V}$. The
$t_{i}$ are Hodge tensors for $(V,h)$ if and only if the homomorphism%
\[
z\mapsto(h(z),z\bar{z})\colon\mathbb{S}{}\rightarrow\GL_{V}\times
\mathbb{G}_{m}%
\]
factors through $G$. Thus, to give a Hodge structure on $V$ for which all the
$t_{i}$ are Hodge tensors is the same as giving a homomorphism $h\colon
\mathbb{S}{}\rightarrow G$, and so $D$ is a connected component of the space
of homomorphisms $\mathbb{S}{}\rightarrow G$.

Let $G_{1}$ be the smallest algebraic subgroup of $G$ through which all the
$h$ in $D$ factor. According to (\ref{h65a}), $D$ is a $G_{1}(\mathbb{R}%
{})^{+}$-conjugacy class of homomorphisms $\mathbb{S}{}\rightarrow G_{1}$. The
group $G_{1}(\mathbb{C}{})$ acts on $D^{\vee}(V,F_{0},t_{0})$, and we let
$D^{\vee}(V,F_{0},\mathfrak{t}{})$ denote the orbit of $F_{0}$.

\begin{theorem}
\label{h66}The set $D^{\vee}(V,F_{0},\mathfrak{t}{})$ is a compact complex
submanifold of $D^{\vee}(V,F_{0},t_{0})$, and $D$ is an open complex
submanifold of $D^{\vee}$.
\end{theorem}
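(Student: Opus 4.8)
The plan is to run the argument of Theorem \ref{h62} with the group $G_1$ in place of the symplectic (or orthogonal) group used there: for the first assertion I would exhibit $D^{\vee}(V,F_{0},\mathfrak{t})$ as a homogeneous space $G_{1,\mathbb{C}}/P$ for a parabolic $P$, and for the second I would compare the open conditions that cut $D$ out of $D^{\vee}(V,F_{0},\mathfrak{t})$.

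For the first assertion, I would first note that $G_1$ is reductive. Since $h_0\in D$ (it is the homomorphism attached to $F_0$), $h_0$ factors through $G_1$, so $C:=h_0(i)\in G_1(\mathbb{R})$, and $C^2=h_0(-1)$ acts on the pure weight-$m$ space $V$ as $(-1)^m$, hence $\inn(C)$ is an involution. Let $G_1^1$ be the identity component of $\Ker(G_1\to\GL_{\mathbb{Q}(1)})$; then $C\in G_1^1(\mathbb{R})$ (because $h_0(U^1)\subset G_1^1$), the form $(2\pi i)^m t_0$ is a $C$-polarization of the faithful representation $V$ of $G_1^1$, and so Theorem \ref{h20b} shows that $\inn(C)$ is a Cartan involution of $G_1^1$; thus $G_1^1$, and hence $G_1$, is reductive, exactly as in Proposition \ref{h52} and Corollary \ref{h53}. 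Now $F_0$ is the filtration of $V_{\mathbb{C}}$ defined by the cocharacter $\mu_0:=\mu_{h_0}\colon\mathbb{G}_m\to G_{1,\mathbb{C}}$, and its stabilizer $P$ in the connected reductive group $G_{1,\mathbb{C}}$ is the parabolic subgroup $\{g\mid\lim_{z\to0}\mu_0(z)\,g\,\mu_0(z)^{-1}\text{ exists}\}$ associated to $\mu_0$. Hence $G_{1,\mathbb{C}}/P$ is a projective complex manifold, and the orbit $D^{\vee}(V,F_{0},\mathfrak{t})$ of $F_0$ in the flag manifold is a smooth locally closed subvariety that, being the image of $G_{1,\mathbb{C}}/P$ under $gP\mapsto gF_0$, is also closed. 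Therefore $D^{\vee}(V,F_{0},\mathfrak{t})$ is a compact complex submanifold of $D^{\vee}(V,F_{0},t_0)$.

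For the second assertion, recall from (\ref{h65a}) that, viewed as a set of filtrations, $D=G_1(\mathbb{R})^{+}\!\cdot F_0$, so $D\subset G_{1,\mathbb{C}}\cdot F_0=D^{\vee}(V,F_{0},\mathfrak{t})$. The key remark is that every $F=gF_0\in D^{\vee}(V,F_{0},\mathfrak{t})$, with $g\in G_{1,\mathbb{C}}\subset G_{\mathbb{C}}$, satisfies $t_i\in F^{0}\bigl(\Hom(V^{\otimes2r},\mathbb{R}(-mr))\bigr)$ for all $i$, because $g\cdot t_i$ is a nonzero scalar multiple of $t_i$ and lies in $gF_0^{0}=F^{0}$. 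Let $W$ be the set of $F\in D(V,F_0,t_0)$ for which every $t_i$ is a Hodge tensor; this is a closed analytic subset of the complex manifold $D(V,F_0,t_0)$, and $D$ is by definition one of its connected components. The remark gives
\[
D^{\vee}(V,F_{0},\mathfrak{t})\cap W \;=\; D^{\vee}(V,F_{0},\mathfrak{t})\cap D(V,F_0,t_0)\;=:\;U,
\]
since on $D^{\vee}(V,F_{0},\mathfrak{t})$ the relations $t_i\in F^{0}$ together with condition (b) force each real tensor $t_i$ to be of type $(0,0)$. Now $U$ is open in $D^{\vee}(V,F_{0},\mathfrak{t})$, because $D(V,F_0,t_0)$ is open in $D^{\vee}(V,F_{0},t_0)\supset D^{\vee}(V,F_{0},\mathfrak{t})$ by Theorem \ref{h62}; and $D$, being a connected component of the locally connected analytic set $W$, is open in $W$, so $D=D\cap U$ is open in $U$ and hence in $D^{\vee}(V,F_{0},\mathfrak{t})$. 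As $D^{\vee}(V,F_{0},\mathfrak{t})$ is a complex manifold by the first part, $D$ is an open complex submanifold of it.

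I expect the first part to be where the real work lies: establishing that $G_1$ is reductive and that the stabilizer of $F_0$ is parabolic, so as to be in a position to invoke the projectivity of a parabolic quotient of a connected reductive group. Once that is in hand, the second part is routine bookkeeping of the conditions (b), (d) and the Hodge-tensor conditions across $D^{\vee}(V,F_{0},t_0)$, $D^{\vee}(V,F_{0},\mathfrak{t})$, and $W$, the one nontrivial input being the scalar-multiple remark above.
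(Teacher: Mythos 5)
Your argument is correct and takes essentially the same route as the paper's own proof: the first assertion rests on the fact that the stabilizer of $F_0$ in $G_{1\mathbb{C}}$ is parabolic (so the $G_{1\mathbb{C}}$-orbit of $F_0$ is a smooth projective variety), and the second reduces to the observation that $D$ is an open piece of $D^{\vee}\cap D(V,F_0,t_0)$. What you add is the justification that $G_1$ is reductive via the $C$-polarization argument of Theorem~\ref{h20b} applied to $G_1^1$ --- a step the paper leaves implicit --- together with a more careful treatment of why the Hodge-tensor locus $W$ agrees with $D^{\vee}\cap D(V,F_0,t_0)$ and why the connected component $D$ is then open there; both elaborations are sound.
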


\begin{proof}
In fact, $D^{\vee}(V,F_{0},t_{0})$ is a smooth projective algebraic variety.
The stabilizer $P$ of $F_{0}$ in the algebraic group $G_{1\mathbb{C}{}}$ is
parabolic, and so the orbit of $F_{0}$ in the algebraic variety $D^{\vee
}(V,F_{0},t_{0}\mathfrak{)}$ is smooth projective variety. Thus, its complex
points form a compact complex submanifold. As
\[
D(V,h_{0},\mathfrak{t}_{0})=D(V,h_{0},t_{0})\cap D^{\vee}(V,h_{0}%
,\mathfrak{t}_{0}),
\]
it is an open complex submanifold of $D^{\vee}(V,h_{0},\mathfrak{t}_{0}).$
\end{proof}

We call $D=D(V,F_{0},\mathfrak{t}{})$ the \emph{period subdomain}%
\index{period subdomain}
defined by $(V,F_{0},\mathfrak{t}{})$.

\begin{theorem}
\label{h67}Let ($\mathsf{V},F)$ be a family of Hodge structures on a complex
manifold $S$, and let $\ \mathfrak{t}{}=(t_{i})_{i\in I}$ be a family of Hodge
tensors of $\mathsf{V}$. Assume that $I$ contains an element $0$ such that
$t_{0}$ is a polarization. Let $U$ be a connected open subset of $S$ on which
the local system $\mathsf{V}$ is trivial, and choose an isomorphism
$\mathsf{V}|U\overset{\approx}{\longrightarrow}V_{U}$ and a point $o\in U$.
The map $\mathcal{P}{}\colon U\rightarrow D(V,F_{o},\mathfrak{t}{}_{o})$
sending a point $s\in U$ to the point $(\mathsf{V}_{s},F_{s},\mathfrak{t}%
{}_{s})$ is holomorphic.
\end{theorem}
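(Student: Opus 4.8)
The plan is to deduce this from Theorem \ref{h63} with almost no extra work. Forgetting all of the tensors except the polarization $t_0$, Theorem \ref{h63} already tells us that the period map $\mathcal{P}\colon U\rightarrow D(V,F_o,t_0)$ is holomorphic. It therefore suffices to prove two things: (a) that $\mathcal{P}$ in fact takes its values in the subset $D(V,F_o,\mathfrak{t}{}_o)$ of $D(V,F_o,t_0)$; and (b) that $D(V,F_o,\mathfrak{t}{}_o)$ is a complex submanifold of $D(V,F_o,t_0)$. Granting these, a holomorphic map into the ambient manifold whose image lies in a complex submanifold is holomorphic as a map into that submanifold (\cite{grauertR1984}, 4.3.3), exactly as in the proof of Theorem \ref{h63}, and we are done.

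For (a) I would first make explicit the meaning of ``$\mathfrak{t}{}=(t_i)_{i\in I}$ is a family of Hodge tensors of $\mathsf{V}$'': each $t_i$ is a horizontal global section of one of the local systems $\mathsf{T}^{m,n}$ (suitably Tate twisted), and $t_{i,s}$ is a Hodge tensor of $(\mathsf{V}_s,F_s)$ for every $s\in S$. Under the chosen trivialization $\mathsf{V}|U\simeq V_U$ of the local system, a horizontal section becomes a \emph{constant} tensor of $V$, so each $t_i$ is carried to the fixed tensor $t_{i,o}\in\mathfrak{t}{}_o$ and $\mathfrak{t}{}_s$ is carried to $\mathfrak{t}{}_o$ for every $s\in U$. (In particular $\mathcal{P}$ really is a well-defined map into $D(V,F_o,\mathfrak{t}{}_o)$, whose target is defined using the fixed tensors $\mathfrak{t}{}_o$ of $V$.) The fiberwise hypothesis now says precisely that, for each $s\in U$, the Hodge filtration $F_s$ lies in the set of $F\in D(V,F_o,t_0)$ for which every $t_{i,o}$ is a Hodge tensor of $(V,F)$. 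Since $U$ is connected, $\mathcal{P}$ is continuous, and $\mathcal{P}(o)$ is the point $F_o$, the image $\mathcal{P}(U)$ lies in the connected component of that set containing $F_o$, which by definition is $D(V,F_o,\mathfrak{t}{}_o)$. This proves (a).

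For (b), Theorem \ref{h66} states that $D(V,F_o,\mathfrak{t}{}_o)$ is an open complex submanifold of the compact complex submanifold $D^{\vee}(V,F_o,\mathfrak{t}{}_o)$ of $D^{\vee}(V,F_o,t_0)$; since $D(V,F_o,t_0)$ is an open submanifold of $D^{\vee}(V,F_o,t_0)$ (Theorem \ref{h62}) and $D(V,F_o,\mathfrak{t}{}_o)\subset D(V,F_o,t_0)$ by construction, it follows at once that $D(V,F_o,\mathfrak{t}{}_o)$ is a complex submanifold of $D(V,F_o,t_0)$. Combining (a), (b), and the submanifold principle quoted above gives the theorem. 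There is no analytic content beyond Theorem \ref{h63}; the only step that needs care is the bookkeeping in the previous paragraph --- that the trivialization turns the Hodge tensors $t_i$ into the \emph{constant} tensors $\mathfrak{t}{}_o$, and that the connectedness of $U$ confines the image to the single component $D(V,F_o,\mathfrak{t}{}_o)$ rather than to a union of such components or to the a priori larger set $D^{\vee}(V,F_o,\mathfrak{t}{}_o)$. One could equally avoid invoking Theorem \ref{h63} and argue directly: $s\mapsto F_s$ is holomorphic into $\Gr_{\mathbf{d}}(V_{\mathbb{C}})$ by Theorem \ref{h61}(a), it takes values in $D(V,F_o,\mathfrak{t}{}_o)$ by (a), and $D(V,F_o,\mathfrak{t}{}_o)$ is a complex submanifold of $\Gr_{\mathbf{d}}(V_{\mathbb{C}})$ by Theorems \ref{h62} and \ref{h66}.
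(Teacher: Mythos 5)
Your proposal is correct, and the argument you give as an alternative in the last sentence --- $s\mapsto F_s$ is holomorphic into $\Gr_{\mathbf{d}}(V_{\mathbb{C}})$ by Theorem \ref{h61}(a), it lands in $D(V,F_o,\mathfrak{t}_o)$, and the latter is a complex submanifold of $\Gr_{\mathbf{d}}(V_{\mathbb{C}})$ by Theorems \ref{h62} and \ref{h66} --- is precisely what the paper means by ``Same as that of Theorem \ref{h63}''. Your main route, factoring first through $D(V,F_o,t_0)$ via Theorem \ref{h63} and then restricting to the submanifold $D(V,F_o,\mathfrak{t}_o)\subset D(V,F_o,t_0)$, is a harmless rearrangement of the same ingredients. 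The one thing you do that the paper does not is spell out step (a): that the trivialization converts the horizontal sections $t_i$ into the constant tensors $t_{i,o}$, and that connectedness of $U$ plus $\mathcal{P}(o)=F_o$ forces the image into the single connected component $D(V,F_o,\mathfrak{t}_o)$ rather than the full set of filtrations for which the $t_{i,o}$ are Hodge. That is indeed the only point requiring care, and it is correct as you state it.
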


\begin{proof}
Same as that of Theorem \ref{h63}.
\end{proof}

\begin{theorem}
\label{h68}If the universal family of Hodge structures on $D$ satisfies
Griffiths transversality, then $D$ is a hermitian symmetric domain.
\end{theorem}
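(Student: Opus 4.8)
The plan is to run the argument of Theorem~\ref{h64}, with the group $G_1$ attached to the period subdomain playing the role there played by the stabilizer of $t_0$. Recall from the discussion preceding the theorem that, by \ref{h65a}, $D=D(V,F_0,\mathfrak{t})$ is a single $G_1(\mathbb{R})^{+}$-conjugacy class of homomorphisms $h\colon\mathbb{S}\to G_1$, where $G_1$ is the smallest algebraic subgroup of $G$ through which all these $h$ factor. Fix the base point $o=F_0$, let $h_0\colon\mathbb{S}\to G_1$ be the corresponding homomorphism, and set $C=h_0(i)$. Since $(V,h_0)$ is of pure weight $m$, the weight cocharacter $w_{h_0}$ acts on $V$ as the scalar $a\mapsto a^{-m}$, hence lies in $Z(G_1)$; so by Remark~\ref{h90a} there is a homomorphism $u_0\colon\mathbb{S}^{1}\to G_1^{\mathrm{ad}}$ with $h_0(z)\equiv u_0(z/\bar z)\bmod Z(G_1)(\mathbb{R})$, and the image of $C$ in $G_1^{\mathrm{ad}}$ is $u_0(-1)$.

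I would first check that $(G_1,h_0)$ satisfies SV1. Because every $h\in D$ factors through $G_1$, the subspace $\Lie(G_1)\subset\End(V)$ is stable under $\Ad\circ h_0(\mathbb{S})$ and so inherits a sub-Hodge structure, necessarily of weight $0$; write $F$ for its Hodge filtration. By Theorem~\ref{h66}, $D$ is an open submanifold of the smooth projective variety $D^{\vee}=D^{\vee}(V,F_0,\mathfrak{t})$, which is the $G_{1\mathbb{C}}$-orbit of $F_0$, so $T_oD=T_oD^{\vee}$ is $\Lie(G_1)_{\mathbb{C}}/F^{0}\Lie(G_1)_{\mathbb{C}}$, sitting inside $T_o\Gr_{\mathbf{d}}(V)\simeq\End(V_{\mathbb{C}})/F^{0}\End(V_{\mathbb{C}})$. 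The universal family on $D$ has period map the inclusion $D\hookrightarrow\Gr_{\mathbf{d}}(V)$, so Griffiths transversality, via Theorem~\ref{h61}(b), forces $T_oD\subset F^{-1}\End(V_{\mathbb{C}})/F^{0}\End(V_{\mathbb{C}})$; hence $\Lie(G_1)_{\mathbb{C}}\subset F^{-1}\End(V_{\mathbb{C}})$, i.e. $F^{-1}\Lie(G_1)_{\mathbb{C}}=\Lie(G_1)_{\mathbb{C}}$. Since $\Lie(G_1)$ has weight $0$, the reality of its Hodge structure then forces its type to be $\{(1,-1),(0,0),(-1,1)\}$, which is SV1; equivalently $u_0$ satisfies SU1.

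Next I would check SV2. Let $G_1^{1}$ be the kernel of the projection $G_1\to\GL_{\mathbb{Q}(1)}$, i.e. the group of pairs $(g,1)$ in $G_1$; it acts faithfully on $V$, it fixes each $t_i$ (now literally), and it contains $C$, since the homomorphism into $G$ attached to $h_0$ sends $i$ to $(h_0(i),i\bar i)=(C,1)$. As $C^{2}=h_0(-1)$ acts on $V$ as the scalar $(-1)^{m}$, $\inn(C)$ is an involution of $G_1^{1}$. The form $(x,y)\mapsto(2\pi i)^{m}t_0(x,Cy)$ is symmetric and positive definite, because $t_0$ polarizes $(V,h_0)$ and $C$ is its Weil operator, and it is $G_1^{1}$-invariant, so it is a $C$-polarization of the faithful representation $V$ of $G_1^{1}$; the converse part of Theorem~\ref{h20b} then shows that $\inn(C)$ is a Cartan involution of $G_1^{1}$. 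A short argument ($G_1/G_1^{1}$ is a torus, so $G_1^{\mathrm{der}}=(G_1^{1})^{\mathrm{der}}$) gives $G_1^{\mathrm{ad}}=(G_1^{1})^{\mathrm{ad}}$, and a Cartan involution of $G_1^{1}$ induces one of $(G_1^{1})^{\mathrm{ad}}$; hence $\inn(u_0(-1))$ is a Cartan involution of $G_1^{\mathrm{ad}}$, which is SV2, and in particular $G_1^{\mathrm{ad}}$ is adjoint.

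Finally, $D$ is a connected component of the space of homomorphisms $\mathbb{S}\to G_1$; as all of them share the central weight $w_{h_0}$, the correspondence $h\leftrightarrow u$ together with \ref{h65a} (applied with $\mathbb{S}$ replaced by $\mathbb{S}^{1}$) identifies it with the $G_1^{\mathrm{ad}}(\mathbb{R})^{+}$-conjugacy class of $u_0$. Discarding the compact factors of $G_1^{\mathrm{ad}}$, Theorem~\ref{h21} applies to $(G_1^{\mathrm{ad}},u_0)$ and exhibits $D$ as a hermitian symmetric domain on which $u_0(z)$ acts on $T_oD$ as multiplication by $z$. The step I expect to require the most care is the one borrowed from the proof of Theorem~\ref{h64}: identifying $T_oD$ with $\Lie(G_1)_{\mathbb{C}}/F^{0}\Lie(G_1)_{\mathbb{C}}$ and thereby converting Griffiths transversality into the Hodge-type statement SV1 for $\Lie(G_1)$; the remainder is the bookkeeping that places $C$ in $G_1^{1}$ and relates $G_1$, $G_1^{1}$, and $G_1^{\mathrm{ad}}$.
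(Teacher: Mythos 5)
Your proof is correct and is exactly the argument the paper has in mind: the paper's proof of Theorem~\ref{h68} reads only ``essentially the same as that of Theorem~\ref{h64}'', and your proposal is precisely that adaptation, with $G_1$ and $G_1^1$ replacing the $G$ and $G^1$ of the period-domain case. The extra details you supply --- identifying $T_oD$ with $\Lie(G_1)_{\mathbb{C}}/F^0\Lie(G_1)_{\mathbb{C}}$ via Theorem~\ref{h66}, checking that $C=(h_0(i),1)$ lies in $G_1^1$, and the reduction $(G_1^1)^{\mathrm{ad}}=G_1^{\mathrm{ad}}$ via $G_1^{\mathrm{der}}=(G_1^1)^{\mathrm{der}}$ --- are all correct and are exactly what is left to the reader.
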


\begin{proof}
Essentially the same as that of Theorem \ref{h64}.
\end{proof}

\begin{theorem}
\label{h69}Every hermitian symmetric domain arises as a period subdomain.
\end{theorem}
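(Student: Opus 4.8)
The plan is to realise $D$ explicitly as the period subdomain attached to the adjoint representation of the group $H$ classifying it. Fix a point $o\in D$; by Theorem~\ref{h21} there is a real adjoint group $H$ with $H(\mathbb{R}{})^{+}=\mathrm{Hol}(D)^{+}$ and a homomorphism $u\colon\mathbb{S}{}^{1}\rightarrow H$ (attached to $o$ as in \ref{h20p}) satisfying (SU1,2). Put $V=\Lie(H)$, on which $H$ acts faithfully by $\Ad$ (faithfully because $H$ is adjoint), and define $h\colon\mathbb{S}{}\rightarrow\GL_{V}$ by $h(z)=\Ad(u(z/\bar z))$. Then $h$ is trivial on $w(\mathbb{G}_{m})$, so $(V,h)$ is a real Hodge structure of weight $0$, and (SU1) says precisely that it is of type $\{(1,-1),(0,0),(-1,1)\}$; let $F_{0}$ be its Hodge filtration.

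For the tensors I would take $\mathfrak{t}{}=\{t_{0},\,[\,\cdot\,,\cdot\,]\}$. Here $t_{0}=-B$, with $B$ the Killing form of $\Lie(H)$: it is $\Ad$-invariant, hence a morphism of Hodge structures $V\otimes V\rightarrow\mathbb{R}{}(0)$, and since $h(i)=u(-1)$ acts on $V$ as $\theta=\Ad(u(-1))$, the content of (SU2) --- that $\inn(u(-1))$ is a Cartan involution of $H$ --- translates, by the argument proving Theorem~\ref{h20b}, into exactly the positivity required of a polarization of $(V,F_{0})$. The Lie bracket $[\,\cdot\,,\cdot\,]\in V^{\vee}\otimes V^{\vee}\otimes V$ is $H$-equivariant, hence fixed by $h(\mathbb{S}{})$, hence of type $(0,0)$ in a weight-$0$ Hodge structure, so it is a Hodge tensor. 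Now form the period subdomain $D(V,F_{0},\mathfrak{t}{})$ of Theorem~\ref{h66}. Since the Hodge structures in the universal family are of type $\{(1,-1),(0,0),(-1,1)\}$, one has $F^{-1}\mathcal{V}{}=\mathcal{V}{}$, so Griffiths transversality holds trivially and Theorem~\ref{h68} already shows $D(V,F_{0},\mathfrak{t}{})$ is a hermitian symmetric domain; the remaining task is to identify it with $D$.

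For that I would unwind the definition. A point of $D(V,F_{0},\mathfrak{t}{})$ is a weight-$0$ Hodge structure $h'$ on $V$ with the same Hodge numbers as $F_{0}$, for which $-B$ is a polarization and $[\,\cdot\,,\cdot\,]$ is a Hodge tensor, lying in the connected component of $F_{0}$. The last condition forces $h'(\mathbb{S}{})\subset\Aut(\Lie(H))^{\circ}$, and $\Aut(\Lie(H))^{\circ}=H$ because every derivation of a semisimple Lie algebra is inner; since $V$ is faithful, $h'$ is trivial on $w(\mathbb{G}_{m})$ and arises from some $u'\colon\mathbb{S}{}^{1}\rightarrow H$. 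Having the same Hodge numbers as $F_{0}$ is condition (SU1) for $u'$, and the requirement that $-B$ be a polarization of $(V,h')$ is, by the computation of the second paragraph read in reverse, equivalent to $\inn(u'(-1))$ being a Cartan involution of $H$, i.e.\ (SU2). Hence $D(V,F_{0},\mathfrak{t}{})$ is the connected component through $u$ of the space of homomorphisms $\mathbb{S}{}^{1}\rightarrow H$ satisfying (SU1,2), which by Theorem~\ref{h21} (together with \ref{h65a}) is exactly the $H(\mathbb{R}{})^{+}$-conjugacy class of $u$, namely $D$; and the two complex structures agree because at the base point $u(z)$ acts on the holomorphic tangent space $\mathfrak{g}{}^{-1,1}\cong F^{-1}\mathfrak{g}{}_{\mathbb{C}{}}/F^{0}\mathfrak{g}{}_{\mathbb{C}{}}$ as multiplication by $z$ in both, and a homogeneous complex structure is determined by this. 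Running the same construction over a product decomposition of $H$ deals with decomposable $D$ as well, so no preliminary reduction is needed.

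The step I expect to be the main obstacle is the two-way verification that $-B$ is a polarization of $(V,h')$ if and only if $\inn(u'(-1))$ is a Cartan involution of $H$: this is where Theorem~\ref{h20b}, condition (SU2), and careful bookkeeping of the signs $i^{q-p}$ on the three summands $\mathfrak{g}{}^{1,-1}$, $\mathfrak{g}{}^{0,0}$, $\mathfrak{g}{}^{-1,1}$ are really used, and it is what guarantees that the period subdomain cut out by $\mathfrak{t}{}$ is \emph{all} of $D$ and not a proper subset. The remaining ingredients --- faithfulness of $\Ad$ for an adjoint group, $\Aut(\Lie H)^{\circ}=H$, and the identification of connected components --- are routine.
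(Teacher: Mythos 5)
Your argument is in essence the paper's proof, executed with explicit choices where the paper is abstract: the paper takes an arbitrary faithful $\rho\colon H\rightarrow\GL_{V}$ and invokes (without construction) a family $\mathfrak{t}$ of tensors cutting $H$ out of $\GL_{V}\times\GL_{\mathbb{Q}(1)}$; you take $V=\Lie(H)$ with the adjoint action and exhibit the tensors concretely as the Killing form and the bracket. This is a genuine gain in transparency --- both the step ``(SU2) $\Leftrightarrow$ $-B$ polarizes'' and the identification $D(V,F_{0},\mathfrak{t})\cong D$ via $\Aut(\Lie H)^{\circ}=H$ become visible rather than being absorbed into an existence claim --- and you correctly flag the two-way polarization equivalence as the crux.

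One technicality needs a patch. The tensors in the period-subdomain construction of \S 7 are required to be of the form $V^{\otimes 2r}\rightarrow\mathbb{R}(-mr)$: an even number of $V$-factors, mapping into the scalars. The Lie bracket $[\,\cdot\,,\cdot\,]\in V^{\vee}\otimes V^{\vee}\otimes V$ is not of that shape. The cheapest repair is to lower the last index with $-B$, getting the Cartan $3$-form $\beta(x,y,z)=B([x,y],z)\in V^{\vee\otimes3}$, and then take $\beta\otimes\beta\colon V^{\otimes6}\rightarrow\mathbb{R}(0)$ to make the arity even; the stabilizer of $\{-B,\,\beta\otimes\beta\}$ in $\GL_{V}$ is only a finite extension of $\Aut(\Lie H)$, so its identity component is still $H$, which is enough because by (\ref{h65a}) the period subdomain depends only on the smallest algebraic subgroup $G_{1}$ through which the $h$'s in the chosen connected component factor, not on the ambient tensor-stabilizer $G$. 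With that substitution the rest of your argument goes through as written.
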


\begin{proof}
Let $D$ be a hermitian symmetric domain, and let $o\in D$. Let $H$ be the real
adjoint algebraic group such that $H(\mathbb{R}{})^{+}=\mathrm{Hol}(D)^{+}$,
and let $u\colon\mathbb{S}{}^{1}\rightarrow H$ be the homomorphism such that
$u(z)$ fixes $o$ and acts on $T_{0}(D)$ as multiplication by $z$ (see \S 2).
Let $h\colon\mathbb{S}{}\rightarrow H$ be the homomorphism such that
$h(z)=u_{o}(z/\bar{z})$ for $z\in\mathbb{C}^{\times}=\mathbb{S}{}(\mathbb{R}%
{})$. Choose a faithful representation $\rho\colon H\rightarrow\GL_{V}$ of
$G$. Because $u$ satisfies (\ref{h21}, SU2), the Hodge structure $(V,\rho\circ
h)$ is polarizable. Choose a polarization and include it in a family
$\mathfrak{t}$ of tensors for $V$ such that $H$ is the subgroup of
$\GL_{V}\times\GL_{\mathbb{Q}{}(1)}$ fixing the elements of $\mathfrak{t}{}$.
Then $D\simeq D(V,h,\mathfrak{t}{})$.
\end{proof}

\begin{nt}
The interpretation of hermitian symmetric domains as moduli spaces for Hodge
structures with tensors is taken from \cite{deligne1979}, 1.1.17.
\end{nt}

\subsection{Why moduli varieties are (sometimes) locally symmetric}

Fix a base field $k$. A \emph{moduli problem}%
\index{moduli problem}
over $k$ is a contravariant functor $\mathcal{F}{}$ from the category of (some
class of) schemes over $k$ to the category of sets. A variety $S$ over $k$
together with a natural isomorphism $\phi\colon\mathcal{F}{}\rightarrow
\Hom_{k}(-,S)$ is called a \emph{fine solution to the moduli problem}%
\index{fine solution to the moduli problem}%
. A variety that arises in this way is called a \emph{moduli variety}%
\index{moduli variety}%
.

Clearly, this definition is too general: every variety $S$ represents the
functor $h_{S}=\Hom_{k}(-,S)$. In practice, we only consider functors for
which $\mathcal{F}{}(T)$ is the set of isomorphism classes of some
algebro-geometric objects over $T$, for example, families of algebraic
varieties with additional structure.

If $S$ represents such a functor, then there is an object $\alpha
\in\mathcal{F}{}(S)$ that is universal in the sense that, for any
$\alpha^{\prime}\in\mathcal{F}{}(T)$, there is a unique morphism $a\colon
T\rightarrow S$ such that $\mathcal{F}{}(a)(\alpha)=\alpha^{\prime}$. Suppose
that $\alpha$ is, in fact, a smooth projective map $f\colon X\rightarrow S$ of
smooth varieties over $\mathbb{C}{}$. Then $R^{m}f_{\ast}\mathbb{Q}{}$ is a
polarizable variation of Hodge structures on $S$ admitting an integral
structure (Theorem \ref{h51}). A polarization of $X/S$ defines a polarization
of $R^{m}f_{\ast}\mathbb{Q}{}$ and a family of algebraic classes on $X/S$ of
codimension $m$ defines a family of global sections of $R^{2m}f_{\ast
}\mathbb{Q}{}(m)$. Let $D$ be the universal covering space of $S^{\text{an}}$.
The pull-back of $R^{m}f_{\ast}\mathbb{Q}{}$ to $D$ is a variation of Hodge
structures whose underlying locally constant sheaf of $\mathbb{Q}{}$-vector
spaces is constant, say, equal to $V_{S}$; thus we have a variation of Hodge
structures $(V_{S},F)$ on $D$. We suppose that the additional structure on
$X/S$ defines a family $\mathfrak{t}{}=(t_{i})_{i\in I}$ of Hodge tensors of
$V_{S}$ with $t_{0}$ a polarization. We also suppose that the family of Hodge
structures on $D$ is universal, i.e., that $D=$
$D(V,F_{0},\mathfrak{t}{})$. Because $(V_{S},F)$ is a variation of Hodge
structures, $D$ is a hermitian symmetric domain (by \ref{h68}). The Margulis
arithmeticity theorem (\ref{h37}) shows that $\Gamma$ is an arithmetic
subgroup of $G(D)$ except possibly when $G(D)$ has factors of small dimension.
Thus, when looking at moduli varieties, we are naturally led to consider
arithmetic locally symmetric varieties.

\begin{remark}
In fact it is unusual for a moduli problem to lead to a locally symmetric
variety. The above argument will usually break down where we assumed that the
variation of Hodge structures is universal. Essentially, this will happen only
when a \textquotedblleft general\textquotedblright\ member of the family has a
Hodge structure that is special in the sense of \S 6. Even for smooth
hypersurfaces of a fixed degree, this is rarely happens (see \ref{h54c} and
\ref{h54g}). Thus, in the whole universe of moduli varieties, locally
symmetric varieties form only a small, but important, class.
\end{remark}

\subsection{Application: Riemann's theorem in families}

Let $A$ be an abelian variety over $\mathbb{C}{}$. The exponential map defines
an exact sequence%
\[
0\rightarrow H_{1}(A^{\text{an}},\mathbb{Z}{})\rightarrow T_{0}(A^{\text{an}%
})\overset{\exp}{\longrightarrow}A^{\text{an}}\rightarrow0.
\]
From the first map in this sequence, we get an exact sequence%
\[
0\rightarrow\Ker(\alpha)\rightarrow H_{1}(A^{\text{an}},\mathbb{Z}%
{})_{\mathbb{C}{}}\overset{\alpha}{\longrightarrow}T_{0}(A^{\text{an}%
})\rightarrow0.
\]
The $\mathbb{Z}{}$-module $H_{1}(A^{\text{an}},\mathbb{Z}{})$ is an integral
Hodge structure with Hodge filtration%
\[
\renewcommand{\arraystretch}{1.3}%
\begin{array}
[c]{ccccc}%
H_{1}(A^{\text{an}},\mathbb{Z})_{\mathbb{C}{}} & \supset & \Ker(\alpha) &
\supset & 0.\\
F^{-1} &  & F^{0} &  &
\end{array}
\]
Let $\psi$ be a Riemann form for $A$. Then $2\pi i\psi$ is a polarization for
the Hodge structure $H_{1}(A^{\text{an}},\mathbb{Z}{})$.

\begin{theorem}
\label{h69a}The functor $A\rightsquigarrow H_{1}(A^{\text{an}},\mathbb{Z}{})$
is an equivalence from the category of abelian varieties over $\mathbb{C}{}$
to the category of polarizable integral Hodge structures of type
$\{(-1,0),(0,-1)\}$.
\end{theorem}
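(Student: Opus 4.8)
The plan is to deduce the theorem from Riemann's Theorem \ref{h44} by translating its target category. Write $\mathcal{H}{}$ for the category of polarizable integral Hodge structures of type $\{(-1,0),(0,-1)\}$, and $\mathcal{C}{}$ for the category of pairs $(V,\Lambda)$ with $V$ a finite-dimensional $\mathbb{C}{}$-vector space and $\Lambda$ a lattice in $V$ admitting a Riemann form. By Theorem \ref{h44}, $A\rightsquigarrow(T_{0}(A^{\text{an}}),H_{1}(A^{\text{an}},\mathbb{Z}{}))$ is an equivalence $\AV_{\mathbb{C}{}}\to\mathcal{C}{}$. The discussion preceding the theorem shows that $A\rightsquigarrow H_{1}(A^{\text{an}},\mathbb{Z}{})$, equipped with its natural Hodge structure (with $F^{0}=\Ker(\alpha)$ and $F^{-1}=H_{1}(A^{\text{an}},\mathbb{Z}{})_{\mathbb{C}{}}$), is a functor $\AV_{\mathbb{C}{}}\to\mathcal{H}{}$, and I will prove it is an equivalence by checking full faithfulness and essential surjectivity separately, each time reducing to Theorem \ref{h44} via the dictionary of (\ref{h47}). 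That dictionary says: an integral Hodge structure $(\Lambda,h)$ of type $\{(-1,0),(0,-1)\}$ is nothing but a lattice $\Lambda$ together with a complex structure $J=h(i)$ on $\Lambda_{\mathbb{R}{}}$; moreover $\Lambda_{\mathbb{C}{}}=\Lambda^{-1,0}\oplus\Lambda^{0,-1}$ with $\overline{\Lambda^{-1,0}}=\Lambda^{0,-1}$, so the inclusion $\Lambda\hookrightarrow\Lambda_{\mathbb{R}{}}\hookrightarrow\Lambda_{\mathbb{C}{}}$ followed by projection identifies $\Lambda$ with a lattice in the $\mathbb{C}{}$-vector space $\Lambda_{\mathbb{C}{}}/\Lambda^{0,-1}\simeq(\Lambda_{\mathbb{R}{}},J)$ --- which, when $\Lambda=H_{1}(A^{\text{an}},\mathbb{Z}{})$, is exactly $T_{0}(A^{\text{an}})$ by the exact sequence recalled above.

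For full faithfulness, fix abelian varieties $A,A'$ and set $\Lambda=H_{1}(A^{\text{an}},\mathbb{Z}{})$, $\Lambda'=H_{1}(A'^{\text{an}},\mathbb{Z}{})$. By Theorem \ref{h44}, $\Hom(A,A')$ is identified with the set of $\mathbb{C}{}$-linear maps $T_{0}(A^{\text{an}})\to T_{0}(A'^{\text{an}})$ carrying $\Lambda$ into $\Lambda'$. Under the identifications above such a map is the same thing as a $\mathbb{Z}{}$-linear map $\Lambda\to\Lambda'$ whose $\mathbb{R}{}$-linear extension commutes with the complex structures $J,J'$, equivalently with $h$ and $h'$ (since $h$ is recovered from $J=h(i)$ together with the weight), equivalently a morphism of Hodge structures. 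Hence $\Hom(A,A')\to\Hom_{\mathcal{H}{}}(\Lambda,\Lambda')$ is a bijection.

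For essential surjectivity, let $(\Lambda,h)\in\mathcal{H}{}$ and form $(V,\Lambda)$ with $V=(\Lambda_{\mathbb{R}{}},h(i))$. A polarization $\psi\colon\Lambda\otimes\Lambda\to\mathbb{Z}{}(1)$ of $(\Lambda,h)$ is, after dividing by $2\pi i$, an alternating form $\Lambda\times\Lambda\to\mathbb{Z}{}$; comparing the definition of a polarization --- that $\psi$ be a morphism of Hodge structures and that the form in (\ref{hq16}) be symmetric and positive definite --- with the definition of a Riemann form for $(V,\Lambda)$, and using $C=h(i)=J$, one checks that these two data coincide. (One may also invoke (\ref{h20c}): a rational polarization of $(\Lambda_{\mathbb{Q}{}},h)$ exists iff one of $(\Lambda_{\mathbb{R}{}},h)$ does, and clearing denominators makes it integral, so ``polarizable'' in $\mathcal{H}{}$ matches ``admits a Riemann form'' in $\mathcal{C}{}$.) Thus $(V,\Lambda)\in\mathcal{C}{}$, and by Theorem \ref{h44} there is an abelian variety $A$ with $(T_{0}(A^{\text{an}}),H_{1}(A^{\text{an}},\mathbb{Z}{}))\simeq(V,\Lambda)$ in $\mathcal{C}{}$; since the isomorphism is $\mathbb{C}{}$-linear it carries $J$ to $h(i)$, so $H_{1}(A^{\text{an}},\mathbb{Z}{})\simeq(\Lambda,h)$ in $\mathcal{H}{}$.

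The only point requiring genuine care is the one just indicated: keeping the Tate twist by $2\pi i$ and the sign conventions of (\ref{hq16}) straight, so that a Riemann form for $(V,\Lambda)$ and a polarization of the associated integral Hodge structure are literally the same datum (in particular, that $\psi$ being a morphism of Hodge structures to $\mathbb{Q}{}(1)$ is equivalent to $(x,y)\mapsto\psi(x,Jy)$ being symmetric, with positivity then being the extra content of a polarization). Everything else is formal once one has the dictionary (\ref{h47}) between Hodge structures of type $\{(-1,0),(0,-1)\}$ and complex structures.
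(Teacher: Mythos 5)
Your argument is correct and follows the paper's own route: the paper's proof simply asserts that Theorem \ref{h69a} is a restatement of Riemann's Theorem \ref{h44} via the dictionary (\ref{h47}) between Hodge structures of type $\{(-1,0),(0,-1)\}$ and complex structures, and your proposal fills in the details of that translation (full faithfulness, essential surjectivity). In particular you correctly single out and verify the one nontrivial compatibility --- that the Tate-twisted polarization condition, with Weil operator $C=h(i)=J$, matches the Riemann-form condition after dividing by $2\pi i$ --- which the paper leaves implicit.
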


\begin{proof}
In view of the correspondence between complex structures and Hodge structures
of type $\{(-1,0),(0,-1)\}$ (see \ref{h47}), this is simply a restatement of
Theorem \ref{h44}.
\end{proof}

\begin{theorem}
\label{h69b}Let $S$ be a smooth algebraic variety over $\mathbb{C}{}$. The
functor
\[
(A\overset{f}{\longrightarrow}S)\rightsquigarrow R_{1}f_{\ast}\mathbb{Z}%
\]
is an equivalence from the category of families of abelian varieties over $S$
to the category of polarizable integral variations of Hodge structures of type
$\{(-1,0),(0,-1)\}$.
\end{theorem}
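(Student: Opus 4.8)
The plan is to exhibit an explicit quasi-inverse and then reduce to results already available. Given a polarizable integral variation of Hodge structures $\mathsf{V}=(\Lambda,F)$ of type $\{(-1,0),(0,-1)\}$ on $S$, put $\mathcal{V}=\mathcal{O}_{S^{\mathrm{an}}}\otimes_{\mathbb{Z}}\Lambda$, form the holomorphic quotient bundle $\mathcal{T}=\mathcal{V}/F^{0}\mathcal{V}$, and set $A^{\mathrm{an}}=\mathcal{T}/\Lambda$, where $\Lambda$ maps to $\mathcal{T}$ through $\Lambda\hookrightarrow\mathcal{V}\twoheadrightarrow\mathcal{T}$. On each fibre this is the construction of Theorems \ref{h44} and \ref{h69a} (here $F^{0}\mathcal{V}_{s}=\Ker(\alpha_{s})$ in the notation of the preceding subsection), and the chosen polarization restricts on each fibre to a Riemann form, so $A^{\mathrm{an}}\to S^{\mathrm{an}}$ is a holomorphic family of abelian varieties. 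In the reverse direction, for a family $f\colon A\to S$ the local system $R_{1}f_{\ast}\mathbb{Z}$ underlies the variation of Hodge structures dual to $R^{1}f_{\ast}\mathbb{Q}$, which is polarizable and of the asserted type by Theorem \ref{h51} and a relative polarization of $A/S$. The real work is to show (i) that $A^{\mathrm{an}}$ is the analytification of an algebraic family, and (ii) that the two functors so obtained are mutually quasi-inverse.

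For (i) I would first reduce to the principally polarized case: if $d$ is the largest elementary divisor of the polarization $\psi$ on $\Lambda$, then $\Lambda'=\{v\in\Lambda\mid\psi(v,\Lambda)\subset d\mathbb{Z}\}$ is a monodromy-stable local subsystem of finite index on which $d^{-1}\psi$ is principal; since $\Lambda'_{\mathbb{C}}=\Lambda_{\mathbb{C}}$, the analytic family built from $\Lambda'$ maps isogenously onto $A^{\mathrm{an}}$ with kernel the finite local system $\Lambda/\Lambda'$, so it suffices to algebraize the principally polarized family, after which $A^{\mathrm{an}}$ is recovered as the quotient of an abelian scheme by a finite \'etale subgroup scheme. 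So assume $\psi$ principal. On the universal cover of $S^{\mathrm{an}}$, with $\Lambda$ trivialized, the period map takes values in $D(\Lambda_{\mathbb{R}},F_{0},\psi)$, which by \S 2 is the Siegel upper half space $\mathfrak{H}_{g}$. Replace $S$ by a finite \'etale covering $S'$ whose fundamental group maps into the principal congruence subgroup $\Gamma(N)\subset\mathrm{Sp}_{2g}(\mathbb{Z})$ for some $N\geq 3$ (so $\Gamma(N)$ is torsion free); then $\Lambda/N\Lambda$ is trivial on $S'$, giving $\mathsf{V}$ a level-$N$ structure, and the period map descends to a holomorphic map $S'^{\mathrm{an}}\to D_{N}^{\mathrm{an}}$, which is regular by Borel's theorem \ref{h43} applied to the Baily--Borel structure on $D_{N}$. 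By Theorem \ref{h46}, $D_{N}$ is Mumford's fine moduli variety $S_{N}$, which carries an algebraic universal family (Corollary \ref{h48}); pulling it back along $S'\to D_{N}=S_{N}$ yields an abelian scheme $A'\to S'$ whose analytification is $A^{\mathrm{an}}|_{S'^{\mathrm{an}}}$, because $D_{N}$ parametrizes principally polarized abelian varieties with level structure via $J\mapsto(V,J)/\Lambda$. Finally, the analytic family $A^{\mathrm{an}}$ already exists over $S^{\mathrm{an}}$, and the descent datum on $A'^{\mathrm{an}}$ for $S'^{\mathrm{an}}\to S^{\mathrm{an}}$, together with the relative polarization, is the one induced from $\Lambda$ and $\psi$ on $S$, hence is algebraic; since polarized abelian schemes satisfy effective \'etale descent, $A'$ descends to an abelian scheme $A\to S$ with $A^{\mathrm{an}}$ as its analytification.

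For (ii) I would argue as follows. The reverse construction is functorial: a morphism $\Lambda\to\Lambda'$ respecting $F^{0}$ induces $\mathcal{T}\to\mathcal{T}'$ and hence a holomorphic homomorphism $A^{\mathrm{an}}\to A'^{\mathrm{an}}$ over $S^{\mathrm{an}}$, which is algebraic since $A,A'$ are projective over $S$ (relative GAGA: its graph is a closed analytic, hence algebraic, subspace of $A\times_{S}A'$). By construction the relative exponential sequence of $A^{\mathrm{an}}/S^{\mathrm{an}}$ is $0\to\Lambda\to\mathcal{T}\to A^{\mathrm{an}}\to 0$ with associated Hodge filtration $F$, so $R_{1}f_{\ast}\mathbb{Z}$ of the constructed family returns $(\Lambda,F)$; conversely, the relative exponential sequence of a given $A/S$ identifies $A^{\mathrm{an}}$ with the analytic family built from $R_{1}f_{\ast}\mathbb{Z}$, and then uniqueness of the algebraic structure gives an algebraic isomorphism. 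Faithfulness is clear (a homomorphism of abelian schemes inducing $0$ on $R_{1}f_{\ast}\mathbb{Z}$ is fibrewise $0$, hence $0$), and fullness is exactly the functoriality just used; hence the functor is an equivalence.

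The crux is (i): the algebraicity of the analytically-constructed family, which I would obtain only by routing it through the moduli-theoretic inputs of Theorems \ref{h46}, \ref{h43} and Corollary \ref{h48}. The delicate points are the reduction to the principal case, the choice of the \'etale cover $S'$ carrying a level structure, and the verification that the descent datum produced analytically is algebraic, so that one may descend $A'$ back to $S$; once (i) is settled, the quasi-inverse checks in (ii) are routine.
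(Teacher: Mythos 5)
Your overall strategy---analytic construction, level structure, period map into $D_N$, Borel's theorem and Theorem \ref{h46} to algebraize, then descent---is essentially the route the paper takes (the paper glues local constructions by first proving full faithfulness via the unramified Hom scheme, where you invoke relative GAGA and effective \'etale descent; these are interchangeable in characteristic zero).

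However, your reduction to the principally polarized case has a genuine error. For the sublattice $\Lambda'=\{v\in\Lambda\mid\psi(v,\Lambda)\subset d\mathbb{Z}\}$ with $d=d_g$ the largest elementary divisor, choose a symplectic basis with $\psi(e_i,f_j)=d_i\delta_{ij}$, $d_1\mid\cdots\mid d_g$. Then $\Lambda'=\bigoplus_i\left(\mathbb{Z}\tfrac{d}{d_i}e_i\oplus\mathbb{Z}\tfrac{d}{d_i}f_i\right)$, and $d^{-1}\psi\left(\tfrac{d}{d_i}e_i,\tfrac{d}{d_j}f_j\right)=\tfrac{d}{d_j}\delta_{ij}$, so the elementary divisors of $d^{-1}\psi|_{\Lambda'}$ are $d/d_g,\ldots,d/d_1$. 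These are all $1$ only when all $d_i$ are equal, i.e.\ when $\psi$ was already an integer multiple of a principal form; for type $(1,2)$ you get type $(1,2)$ back. There is in fact no intrinsic, monodromy-stable finite-index sublattice of $\Lambda$ on which a rescaling of $\psi$ is principal in general (any lattice making this work involves an asymmetric choice between the ``$e$'' and ``$f$'' directions, which the monodromy need not preserve), so this reduction cannot be repaired by tweaking $\Lambda'$. The paper's fix is one of two: either observe that Mumford's construction, and hence Theorem \ref{h46}, applies to moduli of abelian varieties with a polarization of any fixed type (not just principal), so no reduction is needed; or invoke Zarhin's trick, which replaces the variation $\mathsf{V}$ by $(\mathsf{V}\oplus\mathsf{V}^{\vee}(1))^{\oplus 4}$, on which a principal polarization does exist. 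Either substitution closes the gap, after which the remainder of your argument goes through.
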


\begin{proof}
Let $f^{A}\colon A\rightarrow S$ be a family of abelian varieties over $S$.
The exponential defines an exact sequence of sheaves on $S^{\text{an}}$,%
\[
0\rightarrow R_{1}f_{\ast}^{A}\mathbb{Z}{}\rightarrow\mathcal{T}{}%
_{0}(A^{\text{an}})\rightarrow A^{\text{an}}\rightarrow0.
\]
From this one sees that the map $\Hom(A^{\text{an}},B^{\text{an}}%
)\rightarrow\Hom(R_{1}f_{\ast}^{A}\mathbb{Z}{},R_{1}f_{\ast}^{B}\mathbb{Z}{})$
is an isomorphism. The $S$-scheme $\mathcal{H}om_{S}(A,B)$ is unramified over
$S$, and so its algebraic sections coincide with its holomorphic sections (cf.
\cite{deligne1971h}, 4.4.3). Hence the functor is fully faithful. In
particular, a family of abelian varieties is uniquely determined by its
variation of Hodge structures up to a unique isomorphism. This allows us to
construct the family of abelian varieties attached to a variation of Hodge
structures locally. Thus, we may suppose that the underlying local system of
$\mathbb{Z}{}$-modules is trivial. Assume initially that the variation of
Hodge structures on $S$ has a principal polarization, and endow it with a
level-$N$ structure. According Proposition \ref{h45}, the variation of Hodge
structures on $S$ is the pull-back of the canonical variation of Hodge
structures on $D_{N}$ by a regular map $\alpha\colon S\rightarrow D_{N}$.
Since the latter variation arises from a family of abelian varieties (Theorem
\ref{h46}), so does the former.

In fact, the argument still applies when the variation of Hodge structures is
not \textit{principally} polarized, since \cite{mumford1965}, Chapter 7, hence
Theorem \ref{h46}, applies also to nonprincipally polarized abelian varieties.
Alternatively, Zarhin's trick (cf. \cite{milne1986}, 16.12) can be used to
show that (locally) the fourth multiple of the variation of Hodge structures
is principally polarized.
\end{proof}

\section{Variations of Hodge structures on locally symmetric varieties}

\begin{quote}
{\small In this section, we explain how to classify variations of Hodge
structures on arithmetic locally symmetric varieties in terms of certain
auxiliary reductive groups. Throughout, we write \textquotedblleft family of
integral Hodge structures\textquotedblright\ to mean \textquotedblleft family
of rational Hodge structures that admits an integral
structure\textquotedblright. }
\end{quote}

\subsection{Existence of Hodge structures of CM-type in a family}

\begin{proposition}
\label{h91d}Let $G$ be a reductive group over $\mathbb{Q}{}$, and let
$h\colon\mathbb{S}{}\rightarrow G_{\mathbb{R}{}}$ be a homomorphism. There
exists a $G(\mathbb{R}{})^{+}$-conjugate $h_{0}$ of $h$ such that
$h_{0}(\mathbb{S}{})\subset T_{0\mathbb{R}{}}$ for some maximal torus $T_{0}$
of $G$.
\end{proposition}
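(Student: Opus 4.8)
The plan is to reduce the assertion to the density of the $\mathbb{Q}$-rational points on the variety of maximal tori of $G$. First I would record two standard facts about the connected reductive group $G_{\mathbb{R}}$: the image $h(\mathbb{S})$ is a torus (being a homomorphic image of the torus $\mathbb{S}$), and its centraliser $Z_{G_{\mathbb{R}}}(h(\mathbb{S}))$ is a connected reductive $\mathbb{R}$-subgroup containing $h(\mathbb{S})$ in its centre. Hence any maximal torus $T$ of $Z_{G_{\mathbb{R}}}(h(\mathbb{S}))$ is a maximal torus of $G_{\mathbb{R}}$ with $h(\mathbb{S})\subset T$. It therefore suffices to exhibit a maximal torus $T_{0}$ of $G$ \emph{defined over $\mathbb{Q}$} together with an element $g\in G(\mathbb{R})^{+}$ such that $gTg^{-1}=T_{0\mathbb{R}}$: then $h_{0}:=\inn(g)\circ h$ is a $G(\mathbb{R})^{+}$-conjugate of $h$ with $h_{0}(\mathbb{S})=g\,h(\mathbb{S})\,g^{-1}\subset gTg^{-1}=T_{0\mathbb{R}}$, as required.

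Next I would bring in the $\mathbb{Q}$-scheme $\mathbf{T}(G)$ whose points are the maximal tori of $G$. Choosing any maximal torus $T_{1}$ of $G$ defined over $\mathbb{Q}$ (such exist because $G$ is reductive) identifies $\mathbf{T}(G)$ with $G/N_{G}(T_{1})$; this is a smooth variety, geometrically irreducible (all maximal tori are conjugate over $\mathbb{Q}^{\al}$), of dimension $\dim G-\rank G$. The Lie group $G(\mathbb{R})$ acts on the real manifold $\mathbf{T}(G)(\mathbb{R})$ with stabiliser of the point $[T]$ equal to $N_{G}(T)(\mathbb{R})$; since $N_{G}(T)/T$ is finite, the orbit $G(\mathbb{R})\cdot[T]$ is an (immersed) submanifold of the same dimension as $\mathbf{T}(G)(\mathbb{R})$, hence open by invariance of domain. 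As $G(\mathbb{R})^{+}$ is open in $G(\mathbb{R})$, the subset $\mathcal{O}:=\{\,gTg^{-1}\mid g\in G(\mathbb{R})^{+}\,\}$ is likewise a nonempty open subset of $\mathbf{T}(G)(\mathbb{R})$.

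The remaining, and essential, point is that $\mathbf{T}(G)(\mathbb{Q})$ is dense in $\mathbf{T}(G)(\mathbb{R})$ for the real topology. This rests on the fact that $\mathbf{T}(G)$ is $\mathbb{Q}$-unirational --- it is the image of the dominant map $G\to G/N_{G}(T_{1})$, and a reductive group over a perfect field is unirational over that field --- combined with the general principle that the $\mathbb{Q}$-points of a smooth $\mathbb{Q}$-unirational variety possessing a $\mathbb{Q}$-point are dense in its real points. Granting this, the nonempty open set $\mathcal{O}$ meets $\mathbf{T}(G)(\mathbb{Q})$, and a $\mathbb{Q}$-point of $\mathcal{O}$ is precisely a maximal $\mathbb{Q}$-torus $T_{0}$ of $G$ with $T_{0\mathbb{R}}=gTg^{-1}$ for some $g\in G(\mathbb{R})^{+}$ --- exactly the output needed in the first step. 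I expect the density input of this last paragraph to be the only substantive ingredient; the reductions above, together with the harmless check that $\mathbf{T}(G)(\mathbb{R})\neq\emptyset$ (it contains $[T]$), are formal.
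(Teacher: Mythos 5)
Your opening reductions are fine, and the strategy — conjugate the $\mathbb{R}$-torus $T\supset h(\mathbb{S})$ to the base change of a $\mathbb{Q}$-torus, using openness of the $G(\mathbb{R})^{+}$-orbit together with density of $\mathbb{Q}$-points — is also the one the paper uses. The gap is in the density step. The "general principle" you invoke, that $\mathbb{Q}$-points of a smooth $\mathbb{Q}$-unirational variety with a $\mathbb{Q}$-point are dense in the real points, is not a theorem. Unirationality only gives you a dominant rational map $\varphi\colon U\dashrightarrow X$ from an open subset of affine space; the image $\varphi(U(\mathbb{R}))$ is an open but possibly proper subset of $X(\mathbb{R})$, and $\mathbb{Q}$-points are dense only there (already $t\mapsto t^{2}\colon\mathbb{A}^{1}\to\mathbb{A}^{1}$ illustrates that dominance does not give surjectivity on real points). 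Concretely, for $\mathbf{T}(G)=G/N_{G}(T_{1})$ the real points decompose into several $G(\mathbb{R})$-orbits (corresponding to the conjugacy classes of maximal $\mathbb{R}$-tori of $G_{\mathbb{R}}$), and what you actually need is that the particular orbit containing $[T]$ has a $\mathbb{Q}$-point — this is not supplied by unirationality of $G$ and is essentially the assertion you are trying to prove.

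The paper (following Mumford) gets the density for free by parametrizing maximal tori differently: take a regular element $\lambda\in\Lie(T)$, so $T=Z_{G_{\mathbb{R}}}(\lambda)$, and approximate $\lambda$ by a $\mathbb{Q}$-rational $\lambda_{0}\in\Lie(G)$. Regularity is an open condition, so $\lambda_{0}$ is again regular semisimple and $T_{0}:=Z_{G}(\lambda_{0})$ is a maximal $\mathbb{Q}$-torus; continuity of $\lambda\mapsto Z_{G_{\mathbb{R}}}(\lambda)$ plus the openness of the orbit you already established then gives $T_{0\mathbb{R}}=gTg^{-1}$ with $g\in G(\mathbb{R})^{+}$. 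Here the only density used is that of the $\mathbb{Q}$-vector space $\Lie(G)$ in $\Lie(G)_{\mathbb{R}}$, which is immediate — and, because the map from regular semisimple elements onto $\mathbf{T}(G)(\mathbb{R})$ is surjective, this argument in fact proves the density statement you wanted. So your reduction is right, but the final input should be the regular-element approximation in the Lie algebra rather than an appeal to unirationality.
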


\begin{proof}
[\cite{mumford1969}, \textnf{p.~348}]Let $K$ be the centralizer of $h$ in
$G_{\mathbb{R}{}}$, and let $T$ be the centralizer in $G_{\mathbb{R}{}}$ of
some regular element of $\Lie K$; it is a maximal torus in $K$. Because
$h(\mathbb{\mathbb{S}{}}{})$ centralizes $T$, $h(\mathbb{S}{})\cdot T$ is a
torus in $K$, and so $h(\mathbb{S}{})\subset T$. If $T^{\prime}$ is a torus in
$G_{\mathbb{R}{}}$ containing $T$, then $T^{\prime}$ centralizes $h$, and so
$T^{\prime}\subset K$; therefore $T=T^{\prime}$, and so $T$ is maximal in
$G_{\mathbb{R}{}}$. For a regular element $\lambda$ of $\Lie(T)$, $T$ is the
centralizer of $\lambda$. Choose a $\lambda_{0}\in\Lie(G)$ that is close to
$\lambda$ in $\Lie(G)_{\mathbb{R}{}}$, and let $T_{0}$ be its centralizer in
$G$. Then $T_{0}$ is a maximal torus of $G$ (over $\mathbb{Q}{})$. Because
$T_{0\mathbb{R}{}}$ and $T_{\mathbb{R}{}}$ are close, they are conjugate:
$T_{0\mathbb{R}{}}=gTg^{-1}$ for some $g\in G(\mathbb{R}{})^{+}$. Now
$h_{0}\overset{\textup{{\tiny def}}}{=}\inn(g)\circ h$ factors through
$T_{0\mathbb{R}{}}$.
\end{proof}

A rational Hodge structure is said to be of \emph{CM-type}%
\index{CM-type!Hodge structure of}
if it is polarizable and its Mumford-Tate group is commutative (hence a torus
by \ref{h53}).

\begin{proposition}
\label{h91f}Let $(V,F_{0})$ be a rational Hodge structure of some weight $m$,
and let $\mathfrak{t}{}=(t_{i})_{i\in I}$ be a family of tensors of
$(V,F_{0})$ including a polarization. Then the period subdomain defined by
$(V,F_{0},\mathfrak{t})_{\mathbb{R}{}}$ includes a Hodge structure of CM-type.
\end{proposition}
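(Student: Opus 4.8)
The plan is to apply Mumford's lemma (Proposition~\ref{h91d}) to a suitable reductive $\mathbb{Q}$-group sitting inside the group that acts transitively on the period subdomain, and then to read off that the resulting point has a torus as Mumford--Tate group. Recall from the discussion preceding Theorem~\ref{h66} that, with $G\subset\GL_V\times\GL_{\mathbb{Q}(1)}$ the $\mathbb{Q}$-group fixing the tensors $t_i$, the period subdomain $D=D(V,F_0,\mathfrak{t})$ is a connected component of the space of homomorphisms $\mathbb{S}\to G$, and, with $G_1$ the smallest algebraic subgroup of $G$ through which every $h\in D$ factors, Statement~\ref{h65a} shows that $D$ is a single $G_1(\mathbb{R})^{+}$-conjugacy class of homomorphisms $\mathbb{S}\to G_1$.

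First I would fix a point $h\in D$ and form the Mumford--Tate group $M$ of the rational Hodge structure $(V\oplus\mathbb{Q}(1),h)$, reading $h$ via $G_1\hookrightarrow\GL_V\times\GL_{\mathbb{Q}(1)}\hookrightarrow\GL_{V\oplus\mathbb{Q}(1)}$; the second component of $h$ realizes $\mathbb{Q}(1)$ as the Tate structure. Since $V$ is polarized by $t_0$ and $\mathbb{Q}(1)$ is polarizable, the sum $V\oplus\mathbb{Q}(1)$ is polarizable, so $M$ is reductive by Corollary~\ref{h53}; and since $G_1$ is a $\mathbb{Q}$-subgroup of $\GL_{V\oplus\mathbb{Q}(1)}$ whose real points contain $h(\mathbb{S})$, we have $M\subset G_1$. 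Applying Proposition~\ref{h91d} to $M$ and $h\colon\mathbb{S}\to M_{\mathbb{R}}$ produces an element $g\in M(\mathbb{R})^{+}$ such that $h_0\overset{\textup{{\tiny def}}}{=}\inn(g)\circ h$ factors through $T_{0\mathbb{R}}$ for some maximal torus $T_0$ of $M$ defined over $\mathbb{Q}$. Because $M(\mathbb{R})^{+}\subset G_1(\mathbb{R})^{+}$, the homomorphism $h_0$ is a $G_1(\mathbb{R})^{+}$-conjugate of $h$ and hence again lies in $D$; that is, $h_0$ corresponds to a point of the period subdomain.

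It remains to identify the Hodge structure attached to $h_0$. Composing $h_0$ with the projection $\pi_V\colon\GL_V\times\GL_{\mathbb{Q}(1)}\to\GL_V$ gives the homomorphism $\mathbb{S}\to\GL_{V,\mathbb{R}}$ defining the Hodge structure on $V$; as every point of $D$ is pure of weight $m$, the weight gradation is trivial and a fortiori defined over $\mathbb{Q}$, so this is a genuine rational Hodge structure. Its Mumford--Tate group is the smallest $\mathbb{Q}$-subgroup of $\GL_V$ whose real points contain the image of $\pi_V\circ h_0$; since that image lies in $\pi_V(T_0)_{\mathbb{R}}$ and $\pi_V(T_0)$ is a $\mathbb{Q}$-torus, the Mumford--Tate group is contained in $\pi_V(T_0)$ and is therefore commutative. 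The structure is polarizable because $t_0$ polarizes every point of $D$, so by the definition preceding the statement it is of CM-type, and it lies in $D(V,F_0,\mathfrak{t})_{\mathbb{R}}$, as required.

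I do not expect a genuine obstacle here: the points needing care are merely that conjugation by $G_1(\mathbb{R})^{+}$ keeps one inside the connected component $D$ (which is exactly \ref{h65a}), that the group $M$ to which Mumford's lemma is applied is reductive and sits inside $G_1$ (so the conjugating element lies in $G_1(\mathbb{R})^{+}$), and that polarizability of the output is automatic. One could equally apply Proposition~\ref{h91d} directly to $G_1$ after checking that $G_1$ is reductive, but routing through $M$ avoids that verification.
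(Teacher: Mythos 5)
Your argument is correct, and it reaches the same endpoint as the paper --- an application of Mumford's lemma (Proposition~\ref{h91d}) to a reductive $\mathbb{Q}$-group in which $h$ lands, followed by reading off commutativity of the Mumford--Tate group from the $\mathbb{Q}$-torus it factors through --- but by a slightly different route. The paper applies Proposition~\ref{h91d} directly to $G$, the subgroup of $\GL_V\times\GL_{\mathbb{Q}(1)}$ fixing the $t_i$, and verifies on the spot that $G$ is reductive by observing that $\inn(h_0(i))$ is a Cartan involution of $G$ (this is where the polarization $t_0$ enters). You instead form the Mumford--Tate group $M$ of the polarizable Hodge structure $V\oplus\mathbb{Q}(1)$ at a chosen $h\in D$, note it is reductive for free by Corollary~\ref{h53}, verify $M\subset G_1$, and apply Proposition~\ref{h91d} to $M$; conjugation by $M(\mathbb{R})^+\subset G_1(\mathbb{R})^+$ then keeps you inside $D$ by~\ref{h65a}. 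Both approaches are sound and of comparable length; yours trades the Cartan-involution reductivity check for a detour through the Mumford--Tate group, and is arguably a little more self-contained since it leans only on facts about Mumford--Tate groups already established in \S 6 rather than on the Cartan-involution machinery of \S 2. Your last step --- that the Mumford--Tate group of $(V,\pi_V\circ h_0)$ is contained in the $\mathbb{Q}$-torus $\pi_V(T_0)$, hence commutative, and that polarizability by $t_0$ then gives CM-type --- is exactly the content that the paper leaves implicit in the phrase ``the statement follows from Proposition~\ref{h91d},'' so it is good that you spelled it out.
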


\begin{proof}
We are given a $\mathbb{Q}{}$-vector space $V$, a homomorphism $h_{0}%
\colon\mathbb{S}{}\rightarrow\GL_{V_{\mathbb{R}{}}}$, and a family of Hodge
tensors $V^{\otimes2r}\rightarrow\mathbb{Q}{}(-mr)$ including a polarization.
Let $G$ be the algebraic subgroup of $\GL_{V}\times\GL_{\mathbb{Q}{}(1)}$
fixing the $t_{i}$. Then $G$ is a reductive group because $\inn(h_{0}(i))$ is
a Cartan involution. The period subdomain $D$ is the connected component
containing $h_{0}$ of the space of homomorphisms $h\colon\mathbb{S}%
{}\rightarrow G_{\mathbb{R}{}}$ (see \S 7). This contains the $G(\mathbb{R}%
{})^{+}$-conjugacy class of $h_{0}$, and so the statement follows from
Proposition \ref{h91d}.
\end{proof}

\subsection{Description of the variations of Hodge structures on $D(\Gamma)$}

Consider an arithmetic locally symmetric variety $D(\Gamma)$. Recall that this
means that $D(\Gamma)$ is an algebraic variety whose universal covering space
is a hermitian symmetric domain $D$ and that the group of covering
transformations $\Gamma$ is an arithmetic subgroup of the real Lie group
$\mathrm{Hol}(D)^{+}$; moreover, $D(\Gamma)^{\text{an}}=\Gamma\backslash D$.

According to Theorem \ref{h32}, $D$ decomposes into a product $D=D_{1}%
\times\cdots\times D_{r}$ of hermitian symmetric domains with the property
that each group $\Gamma_{i}\overset{\textup{{\tiny def}}}{=}\Gamma
\cap\mathrm{Hol}(D_{i})^{+}$ is an irreducible arithmetic subgroup of
$\mathrm{Hol}(D_{i})^{+}$ and the map%
\[
D_{1}(\Gamma_{1})\times\cdots\times D_{r}(\Gamma_{r})\rightarrow D(\Gamma)
\]
is finite covering. In order to be able to apply the theorems of Margulis we
assume that%
\begin{equation}
\rank(\mathrm{Hol}(D_{i}))\geq2\text{ for each }i \label{hq51}%
\end{equation}

\noindent in the remainder of this subsection. We also fix a point $o\in D$.

Recall (\ref{h20p}) that there exists a unique homomorphism $u\colon
U^{1}\rightarrow\mathrm{Hol}(D)$ such that $u(z)$ fixes $o$ and acts as
multiplication by $z$ on $T_{o}(D)$. That $\Gamma$ is arithmetic means that
there exists a simply connected algebraic group $H$ over $\mathbb{Q}{}$ and a
surjective homomorphism $\varphi\colon H(\mathbb{R}{})\rightarrow
\mathrm{Hol}(D)^{+}$ with compact kernel such that $\Gamma$ is commensurable
with $\varphi(H(\mathbb{Z}{}))$. The Margulis superrigidity theorem implies
that the pair $(H,\varphi)$ is unique up to a unique isomorphism (see
\ref{h37a}).

Let%
\[
H_{\mathbb{R}{}}^{\mathrm{ad}}=H_{\mathrm{c}}\times H_{\mathrm{nc}}%
\]
where $H_{\mathrm{c}}$ (resp. $H_{\mathrm{nc}}$) is the product of the compact
(resp. noncompact) simple factors of $H_{\mathbb{R}{}}^{\mathrm{ad}}$. The
homomorphism $\varphi(\mathbb{R})\colon{}H(\mathbb{R}{})\rightarrow
\mathrm{Hol}(D)^{+}$ factors through $H_{\mathrm{nc}}(\mathbb{R}{})^{+}$, and
defines an isomorphism of Lie groups $H_{\mathrm{nc}}(\mathbb{R}{}%
)^{+}\rightarrow\mathrm{Hol}(D)^{+}$. Let $\bar{h}$ denote the homomorphism
$\mathbb{S}{}/\mathbb{G}_{m}\rightarrow H_{\mathbb{R}{}}^{\mathrm{ad}}$ whose
projection into $H_{\mathrm{c}}$ is trivial and whose projection into
$H_{\mathrm{nc}}$ corresponds to $u$ as in (\ref{h90a}). In other words,%
\begin{equation}
\bar{h}(z)=(h_{\mathrm{c}}(z),h_{\mathrm{nc}}(z))\in H_{\mathrm{c}}%
(\mathbb{R}{})\times H_{\mathrm{nc}}(\mathbb{R}{}) \label{hq52}%
\end{equation}
where $h_{\mathrm{c}}(z)=1$ and $h_{\mathrm{nc}}(z)=u(z/\bar{z})$ in
$H_{\mathrm{nc}}(\mathbb{R}{})^{+}\simeq\mathrm{Hol}(D)^{+}$. The map
$^{g}h\mapsto go$ identifies $D$ with the set of $H^{\mathrm{ad}}(\mathbb{R}%
{})^{+}$-conjugates of $\bar{h}$ (Theorem \ref{h21}).

Let $(\mathsf{V},F)$ be a polarizable variation of integral Hodge structures
on $D(\Gamma)$, and let $V=\mathsf{V}_{\pi(o)}$. Then $\pi^{\ast}%
\mathsf{V}\simeq V_{D}$ where $\pi\colon D\rightarrow\Gamma\backslash D$ is
the quotient map. Let $G\subset\GL_{V}$ be the generic Mumford-Tate group of
$(\mathsf{V},F)$ (see p.~\pageref{generic}), and let $\mathfrak{t}{}$ be a
family of tensors of $V$ (in the sense of \S 7), including a polarization
$t_{0}$, such that $G$ is the subgroup of $\GL_{V}\times\GL_{\mathbb{Q}{}(1)}$
fixing the elements of $\mathfrak{t}{}$. As $G$ contains the Mumford-Tate
group at each point of $D$, $\mathfrak{t}{}$ is a family of Hodge tensors of
$(V_{D},F)$. The period map $\mathcal{P}{}\colon D\rightarrow D(V,h_{o}%
,\mathfrak{t}{})$ is holomorphic (Theorem \ref{h67}).

We now assume that the monodromy map $\varphi^{\prime}\colon\Gamma
\rightarrow\GL(V)$ has finite kernel, and we pass to a finite covering, so
that $\Gamma\subset G(\mathbb{Q}{})$. Now the elements of $\mathfrak{t}$ are
Hodge tensors of $(\mathsf{V},F)$.

There exists an arithmetic subgroup $\Gamma^{\prime}$ of $H(\mathbb{Q}{})$
such that $\varphi(\Gamma^{\prime})\subset\Gamma$. The Margulis superrigidity
theorem \ref{h35f}, shows that there is a (unique) homomorphism $\varphi
^{\prime\prime}\colon H\rightarrow G$ of algebraic groups that agrees with
$\varphi^{\prime}\circ\varphi$ on a subgroup of finite index in $\Gamma
^{\prime}$,%
\[
\begin{tikzpicture}
\node (P1) at (0,0) {$H(\mathbb{Q})^+$};
\node (P2) at (2,0) {$\mathrm{Hol}(D)^+$};
\node (P3) at (6,0) {$H$};
\node (P4) at (0,-0.7) {$\cup$};
\node (P5) at (2,-0.7) {$\cup$};
\node (P6) at (0,-1.4) {$\Gamma^{\prime}$};
\node (P7) at (2,-1.4) {$\Gamma$};
\node (P8) at (4,-1.4) {$G(\mathbb{Q})$};
\node (P9) at (10,-1.4) {$G$};
\draw[->,font=\scriptsize,>=angle 90]
(P1) edge node[above] {$\varphi$} (P2)
(P6) edge node[above] {$\varphi|\Gamma^{\prime}$} (P7)
(P7) edge node[above] {$\varphi^{\prime}$} (P8)
(P3) edge node[above] {$\varphi^{\prime\prime}$} (P9);
\end{tikzpicture}
\]

It follows from the Borel density theorem \ref{h35a} that $\varphi
^{\prime\prime}(H)$ is the connected monodromy group at each point of
$D(\Gamma)$. Hence $H\subset G^{\mathrm{der}}$, and the two groups are equal
if the Mumford-Tate group at some point of $D(\Gamma)$ is commutative (Theorem
\ref{h56}). When we assume that, the homomorphism $\varphi^{\prime\prime
}\colon H\rightarrow G$ induces an isogeny $H\rightarrow G^{\mathrm{der}}$,
and hence\footnote{\label{ad}Let $G$ be a reductive group. The algebraic
subgroup $Z(G)\cdot G^{\mathrm{der}}$ is normal, and the quotient $G/\left(
Z(G)^{\circ}\cdot G^{\mathrm{der}}\right)  $ is both semisimple and
commutative, and hence is trivial. Therefore $G=Z(G)^{\circ}\cdot
G^{\mathrm{der}}$, from which it follows that $Z(G^{\mathrm{der}})=Z(G)\cap
G^{\mathrm{der}}$. For any isogeny $H\rightarrow G^{\mathrm{der}}$, the map
$H^{\mathrm{ad}}\rightarrow(G^{\mathrm{der}})^{\mathrm{ad}}$ is certainly an
isomorphism, and we have just shown that $(G^{\mathrm{der}})^{\mathrm{ad}%
}\rightarrow G^{\mathrm{ad}}$ is an isomorphism. Therefore $H^{\mathrm{ad}%
}\rightarrow G^{\mathrm{ad}}$ is an isomorphism.} an isomorphism
$H^{\mathrm{ad}}\rightarrow G^{\mathrm{ad}}$. Let $(V,h_{o})=(\mathsf{V}%
,F)_{o}$. Then
\[
\ad\circ h_{o}\colon\mathbb{S}{}\rightarrow G_{\mathbb{R}{}}^{\mathrm{ad}%
}\simeq H^{\mathrm{ad}}%
\]
equals $\bar{h}$. Thus, we have a commutative diagram%
\begin{equation}
\begin{tikzpicture} \matrix(m)[matrix of math nodes, row sep=3em, column sep=2.5em, text height=1.5ex, text depth=0.25ex] {H\\ (H^{\text{ad}},\bar{h})&(G,h)&\GL_V\\}; \path[->,font=\scriptsize,>=angle 90] (m-1-1) edge (m-2-1) (m-1-1) edge node[auto] {} (m-2-2) (m-2-2) edge (m-2-1); \path[right hook->,font=\scriptsize,>=angle 90] (m-2-2) edge node[auto]{$\rho$} (m-2-3); \end{tikzpicture} \label{hq17}%
\end{equation}
in which $G$ is a reductive group, the homomorphism $H\rightarrow G$ has image
$G^{\mathrm{der}}$, $w_{h}$ is defined over $\mathbb{Q}$, ${}$ and $h$
satisfies (SV2*).

Conversely, suppose that we are given such a diagram (\ref{hq17}). Choose a
family $\mathfrak{t}{}$ of tensors for $V$, including a polarization, such
that $G$ is the subgroup of $\GL_{V}\times G_{\mathbb{Q}{}(1)}$ fixing the
tensors. Then we get a period subdomain $D(V,h,\mathfrak{t}{})$ and a
canonical variation of Hodge structures $(\mathsf{V},F)$ on it. Pull this back
to $D$ using the period isomorphism, and descend it to a variation of Hodge
structures on $D(\Gamma)$. The monodromy representation is injective, and some
fibre is of CM-type by Proposition \ref{h91f}.

\begin{summary}
\label{h92a}Let $D(\Gamma)$ be an arithmetic locally symmetric domain
satisfying the condition (\ref{hq51}) and fix a point $o\in D$. To give\bquote
a polarizable variation of integral Hodge structures on $D(\Gamma)$ such that
some fibre is of CM-type and the monodromy representation has finite
kernel\equote is the same as giving\bquote a diagram (\ref{hq17}) in which $G$
is a reductive group, the homomorphism $H\rightarrow G$ has image
$G^{\mathrm{der}}$, $w_{h}$ is defined over $\mathbb{Q}$, ${}$ and $h$
satisfies (SV2*).\equote\qquad
\end{summary}

\begin{fundamentalquestion}
\label{h92b}For which arithmetic locally symmetric varieties $D(\Gamma)$ is it
possible to find a diagram (\ref{hq17}) with the property that the
corresponding variation of Hodge structures underlies a family of algebraic
varieties? or, more generally, a family of motives?
\end{fundamentalquestion}

In \S \S 10,11, we shall answer Question \ref{h92b} completely when
\textquotedblleft algebraic variety\textquotedblright\ and \textquotedblleft
motive\textquotedblright\ are replaced with \textquotedblleft abelian
variety\textquotedblright\ and \textquotedblleft abelian
motive\textquotedblright.

\subsection{Existence of variations of Hodge structures}

In this subsection, we show that, for every arithmetic locally symmetric
variety, there exists a diagram (\ref{hq17}), and hence a variation of
polarizable integral Hodge structures on the variety.

\begin{proposition}
\label{h91c}Let $H$ be a semisimple algebraic group over $\mathbb{Q}{}$, and
let $\bar{h}\colon\mathbb{S}{}\rightarrow H^{\mathrm{ad}}$ be a homomorphism
satisfying (SV1,2,3). Then there exists a reductive algebraic group $G$ over
$\mathbb{Q}{}$ and a homomorphism $h\colon\mathbb{S}{}\rightarrow
G_{\mathbb{R}}$ such that

\begin{enumerate}
\item $G^{\mathrm{der}}=H$ and $\bar{h}=\ad\circ h$,

\item the weight $w_{h}$ is defined over $\mathbb{Q}{}$, and

\item the centre of $G$ is split by a CM field%
\index{CM field}
(i.e., a totally imaginary quadratic extension of a totally real number field).
\end{enumerate}
\end{proposition}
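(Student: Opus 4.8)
The plan is to build $G$ as an extension of $H^{\mathrm{ad}}$ by a suitable torus $Z$ serving as the centre, arranged so that the weight homomorphism and the homomorphism $h$ can be defined over $\mathbb{Q}$. Since $\bar h$ satisfies (SV1), the Hodge structure on $\Lie(H)$ defined by $\ad\circ\bar h$ has weight $0$, so $w_{\bar h}$ is trivial, and $\bar h$ factors through $H^{\mathrm{ad}}/w=H^{\mathrm{ad}}$; by Remark \ref{h90a} it corresponds to a homomorphism $u\colon\mathbb{S}^1\to H^{\mathrm{ad}}_{\mathbb{R}}$. The only obstruction to lifting $\bar h$ to a homomorphism into $G_{\mathbb{R}}$ for some central extension $G\to H^{\mathrm{ad}}$ lives in a cohomology group, and we have the freedom to choose the extension; the point is to choose the centre $Z$ so that both a lift of $\bar h$ exists \emph{and} the composite weight cocharacter $w_h\colon\mathbb{G}_m\to Z$ descends to $\mathbb{Q}$, while $Z$ is split by a CM field.

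Concretely, first I would take $G^{\mathrm{der}}=H$ (so $H\to G^{\mathrm{der}}$ is the identity) and seek $G$ of the form $(H\times T)/F$, where $T$ is a $\mathbb{Q}$-torus split by a CM field and $F\subset Z(H)\times T$ is a finite subgroup so that $Z(G)^{\circ}$ is a quotient of $T$ and $G^{\mathrm{der}}=H$. To produce the homomorphism $h$, note that $h_{\mathbb{C}}$ is determined by a cocharacter $\mu_h\colon\mathbb{G}_m\to G_{\mathbb{C}}$ via \eqref{hq40}; the image of $\mu_h$ in $H^{\mathrm{ad}}_{\mathbb{C}}$ is the known cocharacter $\mu_{\bar h}$. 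One chooses a cocharacter $\nu$ of a maximal torus $T_H$ of $H$ lifting $\mu_{\bar h}$ (possible over $\mathbb{C}$, hence after enlarging scalars over a number field), and then corrects it by a cocharacter of $T$ so that (i) the resulting $h$ is defined over $\mathbb{R}$ — i.e. $h(\bar z)=\overline{h(z)}$, which is where complex conjugation on the CM field $T$ enters — and (ii) $w_h=h\circ w$ takes values in $Z(G)$ and is defined over $\mathbb{Q}$. The weight $w_h$ is automatically central because $\ad\circ h=\bar h$ has trivial weight (the footnote-style argument: $w_h$ commutes with $h(\mathbb{S})$, which generates, up to the centre, all of $G$). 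Rationality of $w_h$ is arranged by taking $T$ large enough — e.g. containing a factor $\mathbb{G}_m$ into which $w_h$ maps — so this condition becomes a constraint we simply impose in the construction rather than prove.

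The key steps, in order: (1) translate $\bar h$ into the cocharacter data $\mu_{\bar h}$ via \eqref{hq40}, \eqref{hq21}, and record that $w_{\bar h}=1$; (2) fix a maximal torus of $H^{\mathrm{ad}}$ through which $\bar h$ factors and lift $\mu_{\bar h}$ to a cocharacter of a maximal torus $T_H\subset H$ — this lift is defined over a number field, which we may take to be contained in a CM field after enlarging; (3) choose the central torus $T$, split by a CM field $L$, together with a cocharacter so that the combined $\mu$ defines, via \eqref{hq40}, a homomorphism $h$ of \emph{real} algebraic groups — here one uses that a cocharacter $\mu$ of $G_{\mathbb{C}}$ defines a real homomorphism $h$ iff $\mu+\bar\mu$ (the $\mathbb{G}_m$-part giving $w_h$) is defined over $\mathbb{R}$ and, in fact, descends along $L/\mathbb{Q}$ appropriately; (4) check $G^{\mathrm{der}}=H$ and $\ad\circ h=\bar h$, which is immediate from the construction since we are only adjoining central data; (5) verify (SV2*) for $(G,h)$: this follows because $\bar h$ satisfies (SV2) — so $\inn(\bar h(i))$ is a Cartan involution of $H^{\mathrm{ad}}$ — and $G/w_h(\mathbb{G}_m)$ has adjoint group $H^{\mathrm{ad}}$ with the centre $Z(G)/w_h(\mathbb{G}_m)$ being a torus that is an almost-direct factor; a Cartan involution of the adjoint group plus compactness of the relevant quotient of the centre gives (SV2*). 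I expect the main obstacle to be step (3): making the lift of $\mu_{\bar h}$ and the central correction simultaneously compatible with a \emph{real} structure on $h$, a \emph{rational} structure on $w_h$, and the CM condition on $Z(G)$. This is essentially an exercise in choosing the CM field $L$ (containing the field of definition of the cocharacter $\mu_{\bar h}$ and closed under the relevant Galois action) and then using that $(\mathbb{G}_m)_{L/\mathbb{Q}}$ has a large enough cocharacter lattice, with complex conjugation on $L$ providing the real structure via $h(z)=\mu(z)\cdot\overline{\mu(z)}$; the details are bookkeeping with Galois actions on cocharacter lattices rather than anything deep, but they are where all the conditions must be reconciled at once.
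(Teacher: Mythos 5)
Your overall strategy --- build $G$ as a central extension of $H$ by a torus split over a CM field, lift $\mu_{\bar h}$ to a cocharacter of a maximal torus of $G$, then correct by a central cocharacter to make the weight rational --- is the paper's strategy too. But there is a real gap at the point you flag as step~(2): you assert that the lift of $\mu_{\bar h}$ ``is defined over a number field, which we may take to be contained in a CM field after enlarging.'' That is not automatic: a number field embeds in a CM field only if it is itself totally real or CM, and nothing in your sketch establishes this for the splitting field of the relevant torus. This is exactly where (SV2) must enter, and you never use it. The paper's argument is: replace $\bar h$ by a conjugate factoring through a $\mathbb{Q}$-maximal torus $\bar T$ of $H^{\mathrm{ad}}$ (Proposition~\ref{h91d}); then (SV2) forces $\bar T_{\mathbb{R}}$ to be anisotropic, so $\iota$ acts as $-1$ on $X^{\ast}(\bar T)$; since $\iota$ acts as a scalar, $\sigma\iota$ and $\iota\sigma$ agree on $X^{\ast}(\bar T)$ for every $\sigma\in\Aut(\mathbb{C})$, and this commutation is precisely the characterization of ``$\bar T$ splits over a CM field.'' Without this observation your construction has no reason to produce a centre split by a CM field, and (c) fails.

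Two smaller points. First, you propose to lift $\mu_{\bar h}$ to a cocharacter of a maximal torus $T_{H}\subset H$; this may be impossible, since $X_{\ast}(T_{H})\hookrightarrow X_{\ast}(\bar T)$ is in general a proper inclusion. The paper instead lifts into $T=$ preimage of $\bar T$ in $G$, where the lift exists because the kernel $N$ is a connected torus, so $X_{\ast}(T)\to X_{\ast}(\bar T)$ is surjective; your $(H\times T)/F$ construction would work the same way if you lift into the full preimage rather than into $H$ alone. Second, the mechanism that actually rationalizes the weight deserves to be named, since it is where the CM hypothesis on $N$ is used a second time: one computes $w=-\mu-\iota\mu\in X_{\ast}(Z)$ with $Z=N$ a product of copies of $(\mathbb{G}_m)_{L/\mathbb{Q}}$ for $L$ CM; then $X_{\ast}(Z)$ is an induced $\Gal(\mathbb{C}/\mathbb{R})$-module, so $H^{0}_{T}(\mathbb{R},X_{\ast}(Z))=0$, and $w$ (being $\iota$-fixed) can be written $(\iota+1)\mu_{0}$; replacing $\mu$ by $\mu-\mu_{0}$ kills the weight. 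Your phrase ``large enough cocharacter lattice'' gestures at this but does not identify the induced-module/Tate-cohomology argument, which is the load-bearing step. Finally, verifying (SV2*) for $(G,h)$ is not part of this proposition (it is Corollary~\ref{h92c}, obtained afterwards by shrinking the centre), so your step~(5) is unnecessary here.
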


\begin{proof}
We shall need the following statement:\bquote Let $G$ be a reductive group
over a field $k$ (of characteristic zero), and let $L$ be a finite Galois
extension of $k$ splitting $G$. Let $G^{\prime}\rightarrow G^{\mathrm{der}}$
be a covering of the derived group of $G$. Then there exists a central
extension%
\[
1\rightarrow N\rightarrow G_{1}\rightarrow G\rightarrow1
\]
such that $G_{1}$ is a reductive group, $N$ is a product of copies of
$(\mathbb{G}_{m})_{L/k}$, and
\[
(G_{1}^{\mathrm{der}}\rightarrow G^{\mathrm{der}})=(G^{\prime}\rightarrow
G^{\mathrm{der}}).
\]
See \cite{milneS1982}, 3.1.\equote A number field $L$ is CM if and only if it
admits a nontrivial involution $\iota_{L}$ such that $\sigma\circ\iota
_{L}=\iota\circ\sigma$ for every homomorphism $\sigma\colon L\rightarrow
\mathbb{C}{}$. We may replace $\bar{h}$ with an $H^{\mathrm{ad}}(\mathbb{R}%
{})^{+}$-conjugate, and so assume (by Proposition \ref{h91d}) that there
exists a maximal torus $\bar{T}$ of $H^{\mathrm{ad}}$ such that $\bar{h}$
factors through $\bar{T}_{\mathbb{R}{}}$. Then $\bar{T}_{\mathbb{R}{}}$ is
anisotropic (by (SV2)), and so $\iota$ acts as $-1$ on $X^{\ast}(\bar{T})$. It
follows that, for any $\sigma\in\Aut(\mathbb{C}{})$, $\sigma\iota$ and
$\iota\sigma$ have the same action on $X^{\ast}(\bar{T})$, and so $\bar{T}$
splits over a CM-field $L$, which can be chosen to be Galois over
$\mathbb{Q}{}$. From the statement, there exists a reductive group $G$ and a
central extension%
\[
1\rightarrow N\rightarrow G\rightarrow H^{\mathrm{ad}}\rightarrow1
\]
such that $G^{\mathrm{der}}=H$ and $N$ is a product of copies of
$(\mathbb{G}_{m})_{L/\mathbb{Q}{}}$. The inverse image $T$ of $\bar{T}$ in $G$
is a maximal torus, and the kernel of $T\twoheadrightarrow\bar{T}$ is $N$.
Because $N$ is connected, there exists a $\mu\in X_{\ast}(T)$ lifting
$\mu_{\bar{h}}\in X_{\ast}(\bar{T})$.\footnote{The functor $X^{\ast}$ is
exact, and so $0\rightarrow X^{\ast}(\bar{T})\rightarrow X^{\ast
}(T)\rightarrow X^{\ast}(N)\rightarrow0$ is exact. In fact, it is split-exact
(as a sequence of $\mathbb{Z}{}$-modules) because $X^{\ast}(N)$ is
torsion-free. On applying $\Hom(-,\mathbb{Z}{})$ to it, we get the exact
sequence $\cdots\rightarrow X_{\ast}(T)\rightarrow X_{\ast}(\bar
{T})\rightarrow0$.} The weight $w=-\mu-\iota\mu$ of $\mu$ lies in $X_{\ast
}(Z)$, where $Z=Z(G)=N$. Clearly $\iota w=w$ and so, as the Tate cohomology
group\footnote{Let $g=\Gal(\mathbb{C}{}/\mathbb{R}{})$. The $g$-module
$X_{\ast}(Z)$ is induced, and so the Tate cohomology group $H_{T}%
^{0}(g,X_{\ast}(Z))=0$. By definition, $H_{T}^{0}(g,X_{\ast}(Z))=X_{\ast
}(Z)^{g}/(\iota+1)X_{\ast}(Z)$.} $H_{T}^{0}(\mathbb{R}{},X_{\ast}(Z))=0$,
there exists a $\mu_{0}\in X_{\ast}(Z)$ such that $(\iota+1)\mu_{0}=w$. When
we replace $\mu$ with $\mu-\mu_{0}$, we find that $w=0$; in particular, $w$ is
defined over $\mathbb{Q}{}$. Let $h\colon\mathbb{S}{}\rightarrow
G_{\mathbb{R}{}}$ correspond to $\mu$ as in (\ref{hq40}), p.~\pageref{hq40}.
Then $(G,h)$ fulfils the requirements.
\end{proof}

\begin{corollary}
\label{h92c}For any semisimple algebraic group $H$ over $\mathbb{Q}{}$ and
homomorphism $\bar{h}\colon\mathbb{S}{}/\mathbb{G}_{m}\rightarrow
H_{\mathbb{R}{}}^{\mathrm{ad}}$ satisfying (SV1,2,3), there exists a reductive
group $G$ with $G^{\mathrm{der}}=H$ and a homomorphism $h\colon\mathbb{S}%
{}\rightarrow G_{\mathbb{R}{}}$ lifting $\bar{h}$ and satisfying (SV1,2*,3).
\end{corollary}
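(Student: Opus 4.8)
The plan is to take the reductive group furnished by Proposition \ref{h91c} and replace its centre by an $\mathbb{R}$-anisotropic torus; this is precisely the modification needed to upgrade (SV2) to the stronger (SV2*), while nothing else changes.

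First I would reduce the problem. Applying Proposition \ref{h91c} to $(H,\bar h)$ produces a reductive $\mathbb{Q}$-group $G_1$ with $G_1^{\mathrm{der}}=H$, $\ad\circ h_1=\bar h$, $w_{h_1}$ rational, and $Z(G_1)$ split by a CM field, so $Z(G_1)_{\mathbb{R}}$ is a product of copies of $\mathbb{S}$. By the observation in \S6 --- that a pair $(G,h)$ with $G$ reductive satisfies (SV1), (SV2), (SV3) exactly when $(G^{\mathrm{ad}},\ad\circ h)$ does --- the pair $(G_1,h_1)$ already satisfies (SV1), (SV2), (SV3). Moreover, since $\inn(h_1(i))$ acts trivially on the centre, it is a Cartan involution of $G_1/w_{h_1}(\mathbb{G}_m)$ precisely when it is a Cartan involution of $G_1^{\mathrm{ad}}=H^{\mathrm{ad}}$ --- which holds --- \emph{and} $\bigl(Z(G_1)/w_{h_1}(\mathbb{G}_m)\bigr)^{\circ}$ has compact real points, which is the only clause that can fail. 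Hence it is enough to exhibit a reductive $\mathbb{Q}$-group $G$ with $G^{\mathrm{der}}=H$, $\ad\circ h=\bar h$ for a suitable $h$, and with $Z(G)$ an $\mathbb{R}$-anisotropic $\mathbb{Q}$-torus: for then $w_h$ maps into the anisotropic $Z(G)$ and so is trivial, $G/w_h(\mathbb{G}_m)=G$, and (SV2*) is the conjunction of (SV2) on $H^{\mathrm{ad}}$ with the anisotropy of $Z(G)$, both in hand; (SV1) and (SV3) for $(G,h)$ follow again from those for $(H^{\mathrm{ad}},\bar h)$.

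To build such a $(G,h)$, choose a maximal torus $\bar T$ of $H^{\mathrm{ad}}$ with $\bar h(\mathbb{S})\subset\bar T$ and let $\hat T\subset H$ be its preimage. Since $\bar T$ centralizes $\bar h$, condition (SV2) forces $\bar T_{\mathbb{R}}$, hence $\hat T_{\mathbb{R}}$, to be anisotropic, so complex conjugation $\iota$ acts as $-1$ on $X^{*}(\hat T)$ and therefore on its quotient $X^{*}(Z(H))$ --- equivalently, $Z(H)$ is split by a CM field (one also sees this from the fact that (SV2) forbids $\Res_{\mathbb{C}/\mathbb{R}}$-factors of $H^{\mathrm{ad}}_{\mathbb{R}}$). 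Fix such a field $K$, Galois over $\mathbb{Q}$; then $\iota$ is central in $\Gal(K/\mathbb{Q})$, so the ring $R=\mathbb{Z}[\Gal(K/\mathbb{Q})]/(\iota+1)$ has $\iota$ acting as $-1$, and $X^{*}(Z(H))$ is an $R$-module. A surjection $R^{\oplus r}\twoheadrightarrow X^{*}(Z(H))$ of $\Gal(K/\mathbb{Q})$-modules then dualizes to an embedding of $Z(H)$ into the $\mathbb{R}$-anisotropic $\mathbb{Q}$-torus $Z$ with character lattice $R^{\oplus r}$. Put $G=(Z\times H)/Z(H)$, the quotient by the antidiagonal copy of $Z(H)$; a direct verification gives that $G$ is reductive over $\mathbb{Q}$ with $G^{\mathrm{der}}=H$, $G^{\mathrm{ad}}=H^{\mathrm{ad}}$, $Z(G)=Z$. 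Finally, to get $h$: lift $\mu_{\bar h}\in X_{*}(\bar T)$ through $X_{*}(T_G)\twoheadrightarrow X_{*}(\bar T)$, where $T_G\subset G$ is the maximal torus over $\bar T$, to a cocharacter $\mu$; since $(1+\iota)\mu_{\bar h}=0$ (as $\bar h$ kills $w(\mathbb{G}_m)$, or as $\bar T_{\mathbb{R}}$ is anisotropic), the element $(1+\iota)\mu$ lies in $X_{*}(Z)$ and is $\iota$-fixed, hence vanishes because $\iota$ acts as $-1$ on the torsion-free lattice $X_{*}(Z)$. Let $h\colon\mathbb{S}\to G_{\mathbb{R}}$ be attached to $\mu$ by (\ref{hq40}); then $\ad\circ h=\bar h$ and $w_h=-(1+\iota)\mu=0$, so by the reduction $(G,h)$ satisfies (SV1,2*,3).

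The one step that is not purely formal is the construction of the anisotropic torus $Z$ receiving $Z(H)$: it rests on the observation that (SV2) forces $\iota$ to act as $-1$ on $X^{*}(Z(H))$ --- equivalently that $Z(H)$ is split by a CM field --- which is exactly what makes the module-theoretic construction with $R=\mathbb{Z}[\Gal(K/\mathbb{Q})]/(\iota+1)$ (where $\iota$ then acts by $-1$) work. If one preferred instead to divide the $G_1$ of Proposition \ref{h91c} by the maximal $\mathbb{R}$-split $\mathbb{Q}$-subtorus of its centre, one would have to check that this subtorus meets $H=G_1^{\mathrm{der}}$ trivially --- its intersection with $Z(H)$ is at worst $2$-torsion but need not vanish --- which is precisely the difficulty the from-scratch construction avoids. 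Everything else is bookkeeping with cocharacter lattices and with the (SV)-conditions for $(G,h)$ versus $(G^{\mathrm{ad}},\ad\circ h)$.
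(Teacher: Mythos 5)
Your argument is correct, but it takes a genuinely different route from the paper's. The paper simply observes that for the $(G_1,h_1)$ of Proposition \ref{h91c}, the quotient $G_1/G_1^{\mathrm{der}}$ is a torus split by a CM field, so the smallest $\mathbb{Q}$-subtorus $T$ of $G_1/G_1^{\mathrm{der}}$ whose real points contain the image of $h_1$ is automatically $\mathbb{R}$-anisotropic (the $\iota=-1$ eigenspace of $X_*$ is $\Gal$-stable because $\iota$ is central, and the image of $h_1$ lands there because $w_{h_1}=0$); one then replaces $G_1$ by the \emph{preimage} of $T$, which still has derived group $H$ and now has anisotropic connected centre. You instead rebuild $G$ from scratch: you embed $Z(H)$ into a purpose-built $\mathbb{R}$-anisotropic $\mathbb{Q}$-torus $Z$ with $X^*(Z)=R^{\oplus r}$ where $R=\mathbb{Z}[\Gal(K/\mathbb{Q})]/(\iota+1)$, set $G=(Z\times H)/Z(H)$, and lift $\mu_{\bar h}$ through $X_*(T_G)\twoheadrightarrow X_*(\bar T)$; the anisotropy of $Z$ then kills the weight of the lift automatically, with no need to correct $\mu$ by a central $\mu_0$ as in the proof of \ref{h91c}. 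Your construction is self-contained in the sense that it does not really use Proposition \ref{h91c} beyond its motivation (the opening reduction paragraph is logically dispensable), whereas the paper's proof is a genuine corollary. One small misdirection: your closing remark about the difficulty of ``dividing $G_1$ by the maximal $\mathbb{R}$-split subtorus of its centre'' is aimed at an approach neither proof takes; the paper passes to a \emph{subgroup} of $G_1$ (the preimage of $T$), not to a quotient, so the potential nontrivial intersection of a split central subtorus with $Z(H)$ never arises there.
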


\begin{proof}
Let $(G,h)$ be as in the proposition. Then $G/G^{\mathrm{der}}$ is a torus,
and we let $T$ be the smallest subtorus of it such that $T_{\mathbb{R}{}}$
contains the image of $h$. Then $T_{\mathbb{R}{}}$ is anisotropic, and when we
replace $G$ with the inverse image of $T$, we obtain a pair $(G,h)$ satisfying (SV1,2*,3).
\end{proof}

Let $G$ be a reductive group over $\mathbb{Q}{}$, and let $h\colon\mathbb{S}%
{}\rightarrow G_{\mathbb{R}{}}$ be a homomorphism satisfying (SV1,2,3). The
homomorphism $h$ is said to be \emph{special}%
\index{special homomorphism@special homomorphism $h$}
if $h(\mathbb{S}{})\subset T_{\mathbb{R}{}}$ for some torus $T\subset
G$.\footnote{Of course, $h(\mathbb{S}{})$ is always contained in a subtorus of
$G_{\mathbb{R}{}}$, even a maximal subtorus; the point is that there should
exist such a torus defined over $\mathbb{Q}{}$.} In this case, there is a
smallest such $T$, and when $(T,h)$ is the Mumford-Tate group of a CM Hodge
structure we say that $h$ is \emph{CM}%
\index{CM}%
.

\begin{proposition}
\label{h91e}Let $h\colon\mathbb{S}{}\rightarrow G_{\mathbb{R}{}}$ be special.
Then $h$ is CM if

\begin{enumerate}
\item $w_{h}$ is defined over $\mathbb{Q}{}$, and

\item the connected centre of $G$ is split by a CM-field.
\end{enumerate}
\end{proposition}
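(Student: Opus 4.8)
The plan is to reduce the assertion to the statement that $(T,h)$ satisfies \textup{(SV2*)}, and then to prove that by a computation with cocharacters. Since $h$ is special, the smallest $\mathbb{Q}{}$-torus $T$ with $h(\mathbb{S}{})\subset T_{\mathbb{R}{}}$ exists and $h$ satisfies \textup{(SV1,2,3)}. Because $h(\mathbb{S}{})$ is connected, any algebraic subgroup $H\subseteq T$ with $h(\mathbb{S}{})\subset H_{\mathbb{R}{}}$ has $H^{\circ}=T$ by minimality, hence $H=T$; thus $T$ is generated by $h$. Together with hypothesis~(a), Proposition \ref{h53a} shows that for any faithful representation $\rho\colon T\hookrightarrow\GL_{V}$ the pair $(V,h\circ\rho)$ is a rational Hodge structure with Mumford--Tate group $(T,h)$, which, being $T$, is commutative. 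By definition $h$ is CM exactly when this Hodge structure is of CM-type, and since its Mumford--Tate group is already a torus, that is equivalent to polarizability, which by Proposition \ref{h52} holds if and only if $(T,h)$ satisfies \textup{(SV2*)}. So the whole content is to prove \textup{(SV2*)}.

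Since $T/w_{h}(\mathbb{G}_{m})$ is a torus, on which $\inn(h(i))$ acts trivially, \textup{(SV2*)} says exactly that $T/w_{h}(\mathbb{G}_{m})$ is anisotropic over $\mathbb{R}{}$, i.e. that complex conjugation $\iota$ acts as $-1$ on $X_{\ast}(T/w_{h}(\mathbb{G}_{m}))_{\mathbb{Q}{}}=X_{\ast}(T)_{\mathbb{Q}{}}/\mathbb{Q}w_{h}$; as $w_{h}$ is $\iota$-fixed this amounts to $X_{\ast}(T)_{\mathbb{Q}{}}^{\iota=+1}=\mathbb{Q}w_{h}$ (read as $0$ when $w_{h}=0$), the inclusion $\supseteq$ being automatic. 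To prove $\subseteq$, put $Z=Z(G)^{\circ}$, $C=G/G^{\mathrm{der}}$, with $\nu\colon G\to C$ and $\ad\colon G\to G^{\mathrm{ad}}$ the quotient maps. As $G$ is reductive, $(\ad,\nu)\colon G\to G^{\mathrm{ad}}\times C$ has finite kernel, so there is a Galois-equivariant injection $X_{\ast}(T)_{\mathbb{Q}{}}\hookrightarrow X_{\ast}(\ad(T))_{\mathbb{Q}{}}\oplus X_{\ast}(\nu(T))_{\mathbb{Q}{}}$; write $\mu=\mu_{h}$ and $\mu\leftrightarrow(\mu^{\mathrm{ad}},\mu^{C})$. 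Since $T$ is generated by $h$, $X_{\ast}(T)_{\mathbb{Q}{}}$ is the $\mathbb{Q}{}$-span of the Galois conjugates of $\mu$, so it suffices to show $(\iota+1)(\sigma\mu)\in\mathbb{Q}w_{h}$ for every $\sigma\in\Gal(\mathbb{Q}{}^{\mathrm{al}}/\mathbb{Q}{})$.

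I would assemble this from three facts. First, by \textup{(SV2)} the torus $\ad(T)$ is anisotropic over $\mathbb{R}{}$: the element $\ad(h(i))$ lies in $\ad(T)(\mathbb{R}{})$, so the Cartan involution $\theta=\inn(h(i))$ of $G^{\mathrm{ad}}$ restricts to the identity on $\ad(T)$, whence $\ad(T)(\mathbb{R}{})$ is a closed, hence compact, subgroup of the compact group $(G^{\mathrm{ad}})^{(\theta)}(\mathbb{R}{})$; consequently $\iota$ acts as $-1$ on $X_{\ast}(\ad(T))_{\mathbb{Q}{}}$, so $(\iota+1)(\sigma\mu^{\mathrm{ad}})=0$. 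Second, $G=Z\cdot G^{\mathrm{der}}$, so $Z$ maps isogenously onto $C$, and hypothesis~(b) gives that $C$ is split by a CM field $L$, which we may take Galois over $\mathbb{Q}{}$; then complex conjugation is central in $\Gal(L/\mathbb{Q}{})$, so $\iota$ commutes with the Galois action on $X_{\ast}(\nu(T))_{\mathbb{Q}{}}\subseteq X_{\ast}(C)_{\mathbb{Q}{}}$, giving $(\iota+1)(\sigma\mu^{C})=\sigma\bigl((\iota+1)\mu^{C}\bigr)$. Third, by \textup{(SV1)} $w_{h}(\mathbb{G}_{m})\subseteq Z$, so $\ad\circ w_{h}$ is trivial, while (\ref{hq40}) gives $w_{h}=-(\mu+\iota\mu)$; hence $(\iota+1)\mu$ has $\ad$-component $0$ and $C$-component $-w_{h}^{C}$, and $w_{h}$ is $\Gal$-fixed by~(a). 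Combining the three, $(\iota+1)(\sigma\mu)$ has $\ad$-component $0$ and $C$-component $\sigma(-w_{h}^{C})=-w_{h}^{C}$, i.e. $(\iota+1)(\sigma\mu)=-w_{h}$. Therefore $X_{\ast}(T)_{\mathbb{Q}{}}^{\iota=+1}$ is spanned by $w_{h}$, which proves \textup{(SV2*)} and hence the proposition.

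The delicate point is the coordination of these three ingredients: \textup{(SV2)} pins down $\iota$ on the adjoint part, (b) forces $\iota$ to be central on the central part, and \textup{(SV1)} with (a) guarantees that the weight cocharacter exactly accounts for the remaining discrepancy, so that $(\iota+1)$ carries every Galois conjugate of $\mu$ onto $-w_{h}$. Everything else is routine bookkeeping with Propositions \ref{h53a} and \ref{h52}.
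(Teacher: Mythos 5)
Your proof is correct, and it reaches the conclusion by a different route than the paper's, though the two share the same underlying computation. The paper quotes the Serre criterion --- a special $h$ is CM iff $(\tau-1)(\iota+1)\mu_h=0=(\iota+1)(\tau-1)\mu_h$ for all $\tau$ --- as a known fact and then verifies it from (a), (b), and (SV2). You bypass that external citation by reducing, via Propositions \ref{h53a} and \ref{h52}, to the equivalent claim that $(T,h)$ satisfies (SV2*), which you then verify directly on cocharacters. The ensuing computation is essentially shared: both proofs decompose $X_*(T)_{\mathbb{Q}}$ into an adjoint piece (on which $\iota$ acts as $-1$, by (SV2)) and a central piece (on which $\iota$ commutes with the Galois action because the connected centre splits over a CM field), and both use $w_h=-(\iota+1)\mu_h$; indeed your conclusion $(\iota+1)(\sigma\mu)=-w_h$ for every $\sigma$ is precisely the second Serre condition, and hypothesis (a) supplies the first. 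What your route buys is self-containedness --- everything rests on results proved in the paper rather than on an uncited criterion --- at the cost of a longer preliminary reduction. One small caution worth keeping: the argument genuinely uses the standing hypothesis of this subsection that $h$ satisfies (SV2), which is not repeated in the statement of the proposition; you flag this correctly, though only (SV2), not (SV1) or (SV3), actually enters.
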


\begin{proof}
It is known that a special $h$ is CM if and only if it satisfies the Serre
condition:%
\[
(\tau-1)(\iota+1)\mu_{h}=0=(\iota+1)(\tau-1)\mu_{h}\text{ for all }\tau
\in\Gal(\mathbb{Q}{}^{\mathrm{al}}/\mathbb{Q}{})\text{.}%
\]
As $w_{h}=(\iota+1)\mu_{h}$, the first condition says that%
\[
(\tau-1)(\iota+1)\mu_{h}=0\text{ for all }\tau\in\Aut(\mathbb{C}{}),
\]
and the second condition implies that%
\[
\tau\iota\mu_{h}=\iota\tau\mu_{h}\text{ for all }\tau\in\Aut(\mathbb{C}{}).
\]
Let $T\subset G$ be a maximal torus such that $h(\mathbb{S}{})\subset
T_{\mathbb{R}{}}$. The argument in the proof of (\ref{h91c}) shows that
$\tau\iota\mu=\iota\tau\mu$ for $\mu\in X_{\ast}(T)$, and since%
\[
X_{\ast}(T)_{\mathbb{Q}}=X_{\ast}(Z)_{\mathbb{Q}{}}{}\oplus X_{\ast
}(T/Z)_{\mathbb{Q}{}}%
\]
we see that the same equation holds for $\mu\in X_{\ast}(T)$. Therefore
$(\iota+1)(\tau-1)\mu=(\tau-1)(\iota+1)\mu$, and we have already observed that
this is zero.
\end{proof}

\section{Absolute Hodge classes and motives}

\begin{quote}
{\small In order to be able to realize all but a handful of Shimura varieties
as moduli varieties, we shall need to replace algebraic varieties and
algebraic classes by more general objects, namely, by motives and absolute
Hodge classes.}
\end{quote}

\subsection{The standard cohomology theories}

Let $X$ be a smooth complete\footnote{Many statements hold without this
hypothesis, but we shall need to consider only this case.} algebraic variety
over an algebraically closed field $k$ (of characteristic zero as always).

For each prime number $\ell$, the \'{e}tale cohomology groups\footnote{The
\textquotedblleft$(m)$\textquotedblright\ denotes a Tate twist. Specifically,
for Betti cohomology it denotes the tensor product with the Tate Hodge
structure $\mathbb{Q}{}(m)$, for de Rham cohomology it denotes a shift in the
numbering of the filtration, and for \'{e}tale cohomology it denotes a change
in Galois action by a multiple of the cyclotomic character.} $H_{\ell}%
^{r}(X)(m)\overset{\textup{{\tiny def}}}{=}H_{\ell}^{r}(X_{\mathrm{et}%
},\mathbb{Q}{}_{\ell}(m))$ are finite dimensional $\mathbb{Q}_{\ell}$-vector
spaces. For any homomorphism $\sigma\colon k\rightarrow k^{\prime}$ of
algebraically closed fields, there is a canonical base change isomorphism%
\begin{equation}
H_{\ell}^{r}(X)(m)\overset{\sigma}{\longrightarrow}H_{\ell}^{r}(\sigma
X)(m),\quad\sigma X\overset{\textup{{\tiny def}}}{=}X\otimes_{k,\sigma
}k^{\prime}. \label{hq5}%
\end{equation}
When $k=\mathbb{C}{}$, there is a canonical comparison isomorphism%
\begin{equation}
\mathbb{Q}{}_{\ell}\otimes_{\mathbb{Q}{}}{}H_{B}^{r}(X)(m)\rightarrow H_{\ell
}^{r}(X)(m). \label{hq7}%
\end{equation}
Here $H_{B}^{r}(X)$ denotes the Betti cohomology group $H^{r}(X^{\text{an}%
},\mathbb{Q}{})$.

The de Rham cohomology groups $H_{\mathrm{dR}}^{r}(X)(m)\overset
{\textup{{\tiny def}}}{=}\mathbb{H}{}^{r}(X_{\mathrm{Zar}},\Omega
_{X/k}^{\bullet})(m)$ are finite dimensional $k$-vector spaces. For any
homomorphism $\sigma\colon k\rightarrow k^{\prime}$ of fields, there is a
canonical base change isomorphism%
\begin{equation}
k^{\prime}\otimes_{k}H_{\mathrm{dR}}^{r}(X)(m)\overset{\sigma}{\longrightarrow
}H_{\mathrm{dR}}^{r}(\sigma X)(m). \label{hq6}%
\end{equation}
When $k=\mathbb{C}{}$, there is a canonical comparison isomorphism%
\begin{equation}
\mathbb{C}{}\otimes_{\mathbb{Q}{}}H_{B}^{r}(X)(m)\rightarrow H_{\mathrm{dR}%
}^{r}(X)(m). \label{hq8}%
\end{equation}

We let $H_{k\times\mathbb{A}{}_{f}}^{r}(X)(m)$ denote the product of
$H_{\mathrm{dR}}^{r}(X)(m)$ with the restricted product of the topological
spaces $H_{\ell}^{r}(X)(m)$ relative to their subspaces $H^{r}(X_{\mathrm{et}%
},\mathbb{Z}{}_{\ell})(m)$. This is a finitely generated free module over the
ring $k\times\mathbb{A}{}_{f}$. For any homomorphism $\sigma\colon
k\rightarrow k^{\prime}$ of algebraically closed fields, the maps (\ref{hq5})
and (\ref{hq6}) give a base change homomorphism%
\begin{equation}
H_{k\times\mathbb{A}{}_{f}}^{r}(X)(m)\overset{\sigma}{\longrightarrow
}H_{k^{\prime}\times\mathbb{A}{}_{f}}^{r}(\sigma X)(m)\text{.} \label{hq1}%
\end{equation}
When $k=\mathbb{C}{}$, the maps (\ref{hq7}) and (\ref{hq8}) give a comparison
isomorphism%
\begin{equation}
(\mathbb{C}{}\times\mathbb{A}{}_{f})\otimes_{\mathbb{Q}{}}H_{B}^{r}%
(X)(m)\rightarrow H_{\mathbb{C}{}\times\mathbb{A}{}_{f}}^{r}(X)(m).
\label{hq2}%
\end{equation}

\begin{nt}
For more details and references, see \cite{deligne1982}, \S 1.
\end{nt}

\subsection{Absolute Hodge classes}

Let $X$ be a smooth complete algebraic variety over $\mathbb{C}{}$. The
cohomology group $H_{B}^{2r}(X)(r)$ has a Hodge structure of weight $0$, and
an element of type $(0,0)$ in it is called a \emph{Hodge class of codimension}%
\index{Hodge class of codimension}
$r$ on $X$.\footnote{As $H_{B}^{2r}(X)(r)\simeq H_{B}^{2r}(X)\otimes
\mathbb{Q}{}(r)$, this is essentially the same as an element of $H_{B}%
^{2r}(X)$ of type $(r,r)$.} We wish to extend this notion to all base fields
of characteristic zero. Of course, given a variety $X$ over a field $k$, we
can choose a homomorphism $\sigma\colon k\rightarrow\mathbb{C}{}$ and define a
Hodge class on $X$ to be a Hodge class on $\sigma X$, but this notion depends
on the choice of the embedding. Deligne's idea for avoiding this problem is to
use all embeddings (\cite{deligne1979val}, 0.7).

\begin{wrapfigure}{r}{2.8in}
\begin{tikzpicture}
\matrix(m)[matrix of math nodes,
row sep=3.0em, column sep=2.5em,
text height=1.5ex, text depth=0.25ex]
{H_{B}^{2r}(\sigma X)(r) \cap H^{0,0}& H_{\mathbb{\mathbb{C}}\times\mathbb{A}_{f}}^{2r}(\sigma X)(r)\\
AH^{r}(X) & H_{k\times\mathbb{A}_{f}}^{2r}(X)(r)\\};
\path[right hook->,font=\scriptsize,>=angle 90]
(m-1-1) edge node[auto] {(\ref{hq2})} (m-1-2)
(m-2-1) edge node[auto] {} (m-2-2)
(m-2-2) edge node[right]{$\sigma$} node[left] {(\ref{hq1})} (m-1-2);
\path[dashed,->,font=\scriptsize,>=angle 90]
(m-2-1) edge node[auto] {} (m-1-1);
\end{tikzpicture}
\end{wrapfigure}Let $X$ be a smooth complete algebraic variety over an
algebraically closed field $k$ of characteristic zero${}$, and let $\sigma$ be
a homomorphism $k\rightarrow\mathbb{C}{}$. An element $\gamma$ of
$H_{k\times\mathbb{A}{}_{f}}^{2r}(X)(r)$ is a $\sigma$-\emph{Hodge class of
codimension} $r$ if $\sigma\gamma$ lies in the subspace $H_{B}^{2r}(\sigma
X)(r)\cap H^{0,0}$ of $H_{\mathbb{\mathbb{C}{}}\times\mathbb{A}{}_{f}{}}%
^{2r}(\sigma X)(r)$. When $k$ has finite transcendence degree over
$\mathbb{Q}{}$, an element $\gamma$ of $H_{k\times\mathbb{A}{}}^{2r}(X)(r)$ is
an \emph{absolute Hodge class}%
\index{absolute Hodge class}
if it is $\sigma$-Hodge for all homomorphisms $\sigma\colon k\rightarrow
\mathbb{C}{}$. The absolute Hodge classes of codimension $r$ on $X$ form a
$\mathbb{Q}{}$-subspace $AH^{r}(X)$ of $H_{k\times\mathbb{A}{}_{f}}%
^{2r}(X)(r)$.

We list the basic properties of absolute Hodge classes.

\begin{plain}
\label{h71a} The inclusion $AH^{r}(X)\subset H_{k\times\mathbb{A}{}_{f}{}%
}^{2r}(X)(r)$ induces an injective map
\[
\left(  k\times\mathbb{A}{}_{f}{}\right)  \otimes_{\mathbb{Q}{}}%
AH^{r}(X)\rightarrow H_{k\times\mathbb{A}{}_{f}{}{}}^{2r}(X)(r);
\]
in particular $AH^{r}(X)$ is a finite dimensional $\mathbb{Q}{}$-vector space.
\end{plain}

\noindent This follows from (\ref{hq2}) because $AH^{r}(X)$ is isomorphic to a
$\mathbb{Q}{}$-subspace of $H_{B}^{2r}(\sigma X)(r)$ (each $\sigma$).

\begin{plain}
\label{h71b} For any homomorphism $\sigma\colon k\rightarrow k^{\prime}$ of
algebraically closed fields of finite transcendence degree over $\mathbb{Q}{}%
$, the map (\ref{hq1}) induces an isomorphism $AH^{r}(X)\rightarrow
AH^{r}(\sigma X)$ (\cite{deligne1982}, 2.9a).
\end{plain}

This allows us to define $AH^{r}(X)$ for a smooth complete variety over an
arbitrary algebraically closed field $k$ of characteristic zero: choose a
model $X_{0}$ of $X$ over an algebraically closed subfield $k_{0}$ of $k$ of
finite transcendence degree over $\mathbb{Q}{}$, and define $AH^{r}(X)$ to be
the image of $AH^{r}(X_{0})$ under the map $H_{k_{0}\times\mathbb{A}{}_{f}%
}^{2r}(X_{0})(r)\rightarrow H_{k\times\mathbb{A}{}_{f}}^{2r}(X)(r)$. With this
definition, (\ref{h71b}) holds for all homomorphisms of algebraically closed
fields $k$ of characteristic zero. Moreover, if $k$ admits an embedding in
$\mathbb{C}{}$, then a cohomology class is absolutely Hodge if and only if it
is $\sigma$-Hodge for every such embedding.

\begin{plain}
\label{h71c} The cohomology class of an algebraic cycle on $X$ is absolutely
Hodge; thus, the algebraic cohomology classes of codimension $r$ on $X$ form a
$\mathbb{Q}{}$-subspace $A^{r}(X)$ of $AH^{r}(X)$ (\cite{deligne1982}, 2.1a).
\end{plain}

\begin{plain}
\label{h71d} The K\"{u}nneth components of the diagonal are absolute Hodge
classes (ibid., 2.1b).
\end{plain}

\begin{plain}
\label{h71e} Let $X_{0}$ be a model of $X$ over a subfield $k_{0}$ of $k$ such
that $k$ is algebraic over $k_{0}$; then $\Gal(k/k_{0})$ acts on $AH^{r}(X)$
through a finite discrete quotient (ibid. 2.9b).
\end{plain}

\begin{plain}
\label{h71f} Let
\[
AH^{\ast}(X)=\bigoplus\nolimits_{r\geq0}AH^{r}(X);
\]
then $AH^{\ast}(X)$ is a $\mathbb{Q}{}$-subalgebra of $\bigoplus
H_{k\times\mathbb{A}{}_{f}{}}^{2r}(X)(r)$. For any regular map $\alpha\colon
Y\rightarrow X$ of complete smooth varieties, the maps $\alpha_{\ast}$ and
$\alpha^{\ast}$ send absolute Hodge classes to absolute Hodge classes. (This
follows easily from the definitions.)
\end{plain}

\begin{theorem}
[\cite{deligne1982}, 2.12, 2.14]\label{h72}Let $S$ be a smooth connected
algebraic variety over $\mathbb{C}{}$, and let $\pi\colon X\rightarrow S$ be a
smooth proper morphism. Let $\gamma\in\Gamma(S,R^{2r}\pi_{\ast}\mathbb{Q}%
(r))$, and let $\gamma_{s}$ be the image of $\gamma$ in $H_{B}^{2r}(X_{s})(r)$
($s\in S(\mathbb{C}{})$).

\begin{enumerate}
\item \ If $\gamma_{s}$ is a Hodge class for one $s\in S(\mathbb{C})$, then it
is a Hodge class for every $s\in S(\mathbb{C})$.

\item \ If $\gamma_{s}$ is an absolute Hodge class for one $s\in
S(\mathbb{C})$, then it is an absolute Hodge class for every $s\in
S(\mathbb{C})$.
\end{enumerate}
\end{theorem}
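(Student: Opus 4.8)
The statement is Deligne's ``Principle B''; the plan is to prove (a) via the theorem of the fixed part and to reduce (b) to (a) by transporting the situation through automorphisms of $\mathbb{C}{}$.

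For (a): by Theorem \ref{h51} (applied to $X\to S$ after a Tate twist) the local system $\mathsf{V}=R^{2r}\pi_{\ast}\mathbb{Q}{}(r)$, with the de Rham filtration and the integral structure $R^{2r}\pi_{\ast}\mathbb{Z}{}(r)$ modulo torsion, is a polarizable variation of Hodge structures of weight $0$ on the smooth complex algebraic variety $S$; hence by Theorem \ref{h59a} it satisfies the theorem of the fixed part. Taking $a=\mathrm{id}$ in Definition \ref{h55b}(b), the finite-dimensional $\mathbb{Q}{}$-vector space $H^{f}:=\Gamma(S,\mathsf{V})$ carries a Hodge structure of weight $0$ for which each evaluation map $\mathrm{ev}_{s}\colon H^{f}\to H_{B}^{2r}(X_{s})(r)$ is a morphism of Hodge structures, and $\mathrm{ev}_{s}$ is injective because $S$ is connected (a flat section is determined by any one of its values). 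Now $\gamma\in H^{f}$; its Hodge decomposition $\gamma=\sum_{p}\gamma^{p,-p}$ in $H^{f}_{\mathbb{C}{}}$ maps under $\mathrm{ev}_{s}$ to the Hodge decomposition of $\gamma_{s}$. If $\gamma_{s}$ is a Hodge class, i.e. $\gamma_{s}=\gamma_{s}^{0,0}$, then $\mathrm{ev}_{s}(\gamma^{p,-p})=0$ for $p\neq0$, so $\gamma^{p,-p}=0$ for $p\neq0$ by injectivity; thus $\gamma$ is of type $(0,0)$ in $H^{f}$, and therefore $\gamma_{s'}=\mathrm{ev}_{s'}(\gamma)$ is of type $(0,0)$, i.e. a Hodge class, for every $s'\in S(\mathbb{C}{})$.

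For (b): part (a) already gives that $\gamma_{s'}$ is a Hodge class for every $s'$, so it remains to show it is $\sigma$-Hodge for every embedding $\sigma$ into $\mathbb{C}{}$. Working with a model of each $X_{s'}$ over a subfield of finite transcendence degree, as in the discussion following (\ref{h71b}), and using that every such embedding extends to an automorphism of $\mathbb{C}{}$, it is enough to prove: for every $\sigma\in\Aut(\mathbb{C}{})$ and every $s'\in S(\mathbb{C}{})$, the class $\sigma\gamma_{s'}\in H_{\mathbb{C}{}\times\mathbb{A}{}_{f}}^{2r}(\sigma X_{s'})(r)$ produced by the base change (\ref{hq1}) lies in $H_{B}^{2r}(\sigma X_{s'})(r)\cap H^{0,0}$. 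Fix $\sigma$. Applying it to the family gives a smooth proper morphism $\sigma\pi\colon\sigma X\to\sigma S$ over $\mathbb{C}{}$ with $\sigma S$ smooth, connected and algebraic. Artin's comparison isomorphism, in families, identifies $\mathbb{Q}{}_{\ell}\otimes R^{2r}\pi_{\ast}\mathbb{Q}{}(r)$ with the \'{e}tale sheaf $R^{2r}\pi_{\ast}\mathbb{Q}{}_{\ell}(r)$ compatibly with restriction to fibres (where it is (\ref{hq7})) and with smooth and proper base change along $\sigma$; so $\gamma$, viewed $\ell$-adically via (\ref{hq7}), transports to a global section $\sigma\gamma$ of $\mathbb{Q}{}_{\ell}\otimes R^{2r}(\sigma\pi)_{\ast}\mathbb{Q}{}(r)$ on $(\sigma S)^{\mathrm{an}}$ whose value at $\sigma(s')$ is $\sigma\gamma_{s'}$ for each $s'$. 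The key auxiliary step is then a rationality-transport lemma: if $\mathsf{W}$ is a $\mathbb{Q}{}$-local system on a connected space $T$, a global section of $\mathbb{Q}{}_{\ell}\otimes\mathsf{W}$ whose value at one point $t_{0}$ lies in the rational fibre $\mathsf{W}_{t_{0}}$ is already a section of $\mathsf{W}$ --- because $\Gamma(T,\mathbb{Q}{}_{\ell}\otimes\mathsf{W})=\Gamma(T,\mathsf{W})\otimes\mathbb{Q}{}_{\ell}$ (tensoring with $\mathbb{Q}{}_{\ell}$ commutes with taking monodromy invariants), evaluation at $t_{0}$ is injective on $\Gamma(T,\mathsf{W})$, and $\bigl(\Gamma(T,\mathsf{W})\otimes\mathbb{Q}{}_{\ell}\bigr)\cap\mathsf{W}_{t_{0}}=\Gamma(T,\mathsf{W})$ inside $\mathsf{W}_{t_{0}}\otimes\mathbb{Q}{}_{\ell}$. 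Since $\gamma_{s}$ is absolutely Hodge at the one point $s$, $\sigma\gamma_{s}$ lies in the rational group $H_{B}^{2r}(\sigma X_{s})(r)$; applying the lemma with $\mathsf{W}=R^{2r}(\sigma\pi)_{\ast}\mathbb{Q}{}(r)$ and $t_{0}=\sigma(s)$ shows that $\sigma\gamma$ is a global section of the rational Betti local system on $(\sigma S)^{\mathrm{an}}$. Now part (a), applied to $(\sigma\pi,\sigma\gamma)$, says that since its value $\sigma\gamma_{s}$ at $\sigma(s)$ is of type $(0,0)$ (because $\gamma_{s}$ is $\sigma$-Hodge), the value $(\sigma\gamma)_{t}$ is of type $(0,0)$ for every $t\in(\sigma S)(\mathbb{C}{})$; taking $t=\sigma(s')$ gives $\sigma\gamma_{s'}\in H_{B}^{2r}(\sigma X_{s'})(r)\cap H^{0,0}$, and as $\sigma$ was arbitrary, $\gamma_{s'}$ is absolutely Hodge.

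The substance of (b) is not a single hard estimate but the bookkeeping: one must check that the comparison isomorphisms (\ref{hq5}), (\ref{hq7}), (\ref{hq1}), (\ref{hq2}) are compatible with base change along $\sigma$ in a \emph{family} over $S$ (so that the flat Betti section $\gamma$ really transports as claimed and the $\ell$-adic proper/smooth base-change isomorphisms intervene correctly), and one must justify the reduction, implicit in the definition of ``absolutely Hodge'' over $\mathbb{C}{}$, to models over subfields of finite transcendence degree. Granting these, the rationality-transport lemma and the appeal to part (a) are formal.
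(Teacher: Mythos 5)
Your argument for (a) is correct and is the paper's: both rest on the theorem of the fixed part (Theorem \ref{h59a}) to endow $\Gamma(S,R^{2r}\pi_{*}\mathbb{Q}(r))$ with a Hodge structure for which the injective evaluation maps are morphisms of Hodge structures, and then read off that a flat section of type $(0,0)$ at one fibre is of type $(0,0)$ at all fibres.

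In (b) you take a genuinely different route. The paper uses a smooth compactification $\bar X$ of $X$ and the surjection $H_B^{2r}(\bar X)(r)\twoheadrightarrow\Gamma(S,R^{2r}\pi_*\mathbb{Q}(r))$ from Deligne's mixed Hodge theory; this keeps everything inside the cohomology of a fixed smooth complete variety, reduces the rationality question to an elementary linear-algebra lemma about images of linear maps after a flat base extension, and handles the de Rham and all $\ell$-adic components simultaneously because the surjection persists with $\mathbb{C}\times\mathbb{A}_f$-coefficients. You instead transport $\gamma$ directly as a flat section via the $\sigma$-functoriality of the \'{e}tale local system, replace the paper's lemma with a rationality-transport lemma for $\mathbb{Q}$-local systems, and then invoke (a) explicitly to get that the transported class is of type $(0,0)$ at every fibre --- a step the paper's proof in fact leaves implicit, so your version is clearer there. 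Your rationality-transport lemma is correct: $\mathbb{Q}_\ell$ is flat over $\mathbb{Q}$, so tensoring commutes with taking monodromy invariants, and $(\Gamma(T,\mathsf{W})\otimes\mathbb{Q}_\ell)\cap\mathsf{W}_{t_0}=\Gamma(T,\mathsf{W})$ is elementary.

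There is, however, a gap at the final step. To show $\gamma_{s'}$ is $\sigma$-Hodge you must exhibit a single class $\gamma_{s'}^\sigma\in H_B^{2r}(\sigma X_{s'})(r)\cap H^{0,0}$ whose image under (\ref{hq2}) equals $\sigma(\gamma_{s'})_{\mathbb{A}}$, that is, whose de Rham realization \emph{and every} $\ell$-adic realization agree with the corresponding components of $\sigma(\gamma_{s'})_{\mathbb{A}}$. Your argument transports only one $\ell$-adic component of $\gamma$ and produces a rational Hodge class whose $\ell$-adic realization matches; you have not addressed the de Rham component, nor checked that the rational classes obtained from different primes $\ell$ coincide. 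Both are routine to repair: the same transport-and-rationality argument works for the algebraic de Rham component (a flat section of the Gauss--Manin connection on $R^{2r}(\sigma\pi)_*\Omega^{\bullet}_{\sigma X/\sigma S}(r)$ lies in $\mathbb{C}\otimes\Gamma(\sigma S, R^{2r}(\sigma\pi)_*\mathbb{Q}(r))$, so your lemma applies with $\mathbb{C}$ in place of $\mathbb{Q}_\ell$), and the rational flat sections obtained from de Rham and from each $\ell$ all take the value $\gamma_s^\sigma$ at $\sigma(s)$, hence coincide by injectivity of evaluation. Equivalently, run your transport once with $\mathbb{C}\times\mathbb{A}_f$-coefficients, which is in effect what the paper's diagram does.
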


\begin{proof}
Let $\bar{X}$ be a smooth compactification of $X$ whose boundary $\bar
{X}\smallsetminus X$ is a union of smooth divisors with normal crossings, and
let $s\in S(\mathbb{C}{})$. According to \cite{deligne1971}, 4.1.1, 4.1.2,
there are maps
\[
\begin{CD}
H_B^{2r}(\bar{X})(r)@>\textrm{onto}>> \Gamma(S,R^{2r}\pi_{\ast}\mathbb{Q}(r))
@>\textrm{injective}>>H_B^{2r}(X_{s})(r)
\end{CD}
\]
whose composite $H_{B}^{2r}(\bar{X})(r)\rightarrow H_{B}^{2r}(X_{s})(r)$ is
defined by the inclusion $X_{s}\hookrightarrow\bar{X}$; moreover
$\Gamma(S,R^{2r}\pi_{\ast}\mathbb{Q}{}(r))$ has a Hodge structure (independent
of $s$) for which the injective maps are morphisms of Hodge structures
(theorem of the fixed part).

Let $\gamma\in\Gamma(S,R^{2r}\pi_{\ast}\mathbb{Q}(r))$. If $\gamma_{s}$ is of
type $(0,0)$ for one $s$, then so also is $\gamma$; then $\gamma_{s}$ is of
type $(0,0)$ for all $s$. This proves (a).

Let $\sigma$ be an automorphism of $\mathbb{C}{}$ (as an abstract field). It
suffices to prove (b) with \textquotedblleft absolute Hodge\textquotedblright%
\ replaced with \textquotedblleft$\sigma$-Hodge\textquotedblright. We shall
use the commutative diagram ($\mathbb{A}{}=\mathbb{C}{}\times\mathbb{A}{}_{f}%
$):%
\[
\begin{CD}
H_{B}^{2r}(\bar{X})(r)@>\mathrm{onto}>>\Gamma(S,R^{2r}\pi_{\ast
}\mathbb{Q}(r)) @>\mathrm{injective}>>H_{B}^{2r}(X_{s})(r)\\
@VV{?\mapsto ?_{\mathbb{A}}}V@VVV@VVV\\
H_{\mathbb{A}}^{2r}(\bar{X})(r)@>\mathrm{onto}>>\Gamma(S,R^{2r}\pi_{\ast
}\mathbb{A}(r)) @>\mathrm{injective}>>H_{\mathbb{A}}^{2r}(X_{s})(r)\\
@VV{\sigma}V@VV{\sigma}V@VV{\sigma}V\\
H_{\mathbb{A}}^{2r}(\sigma\bar{X})(r)@>\mathrm{onto}>>\Gamma(\sigma S,R^{2r}(\sigma\pi)_{\ast
}\mathbb{A}(r)) @>\mathrm{injective}>>H_{\mathbb{A}}^{2r}(\sigma X_{s})(r)\\
@AAA@AAA@AAA\\
H_{B}^{2r}(\sigma\bar{X})(r)@>\mathrm{onto}>>\Gamma(\sigma S,R^{2r}(\sigma\pi)_{\ast
}\mathbb{Q}(r)) @>\mathrm{injective}>>H_{B}^{2r}(\sigma X_{s})(r).\\
\end{CD}
\]
The middle map $\sigma$ uses a relative version of the base change map
(\ref{hq1}). The other maps $\sigma$ are the base change isomorphisms and the
remaining vertical maps are essential tensoring with $\mathbb{A}$ (and are
denoted $?\mapsto?_{\mathbb{A}{}}$).

Let $\gamma$ be an element of $\Gamma(S,R^{2r}\pi_{\ast}\mathbb{Q}(r))$ such
that $\gamma_{s}$ is $\sigma$-Hodge for one $s$. Recall that this means that
there is a $\gamma_{s}^{\sigma}\in H_{B}^{2r}(\sigma X_{s})(r)$ of type
$(0,0)$ such that $\left(  \gamma_{s}^{\sigma}\right)  _{\mathbb{A}{}}%
=\sigma(\gamma_{s})_{\mathbb{A}{}}$ in $H_{\mathbb{A}{}}^{2r}(\sigma
X_{s})(r)$. As $\gamma_{s}$ is in the image of%
\[
H_{B}^{2r}(\bar{X})(r)\rightarrow H_{B}^{2r}(X_{s})(r),
\]
$\sigma(\gamma_{s})_{\mathbb{A}{}}$ is in the image of
\[
H_{\mathbb{A}{}}^{2r}(\sigma\bar{X})(r)\rightarrow H_{\mathbb{A}{}}%
^{2r}(\sigma X_{s})(r)\text{.}%
\]
Therefore $\left(  \gamma_{s}^{\sigma}\right)  _{\mathbb{A}}$ is also, which
implies (by linear algebra\footnote{Apply the following elementary statement:
\bquote Let $E$, $W$, and $V$ be vector spaces, and let $\alpha\colon
W\rightarrow V$ be a linear map; let $v\in V$; if $e\otimes v$ is in the image
of $1\otimes\alpha\colon E\otimes W\rightarrow E\otimes V$ for some nonzero
$e\in E$, then $v$ is in the image of $\alpha$.\equote To prove the statement,
choose an $f\in E^{\vee}$ such that $f(e)=1$. If $\sum e_{i}\otimes
\alpha(w_{i})=e\otimes v$, then $\sum f(e_{i})w_{i}=v$.}) that $\gamma
_{s}^{\sigma}$ is in the image of%
\[
H_{B}^{2r}(\sigma\bar{X})(r)\rightarrow H_{B}^{2r}(\sigma X_{s})(r).
\]
Let $\tilde{\gamma}^{\sigma}$ be a pre-image of $\gamma_{s}^{\sigma}$ in
$H_{B}^{2r}(\sigma\bar{X})(r)$.

Let $s^{\prime}$ be a second point of $S$, and let $\tilde{\gamma}_{s^{\prime
}}^{\sigma}$ be the image of $\tilde{\gamma}^{\sigma}$ in $H_{B}^{2r}(\sigma
X_{s^{\prime}})(r)$. By construction, $(\tilde{\gamma}^{\sigma})_{\mathbb{A}%
{}}$ maps to $\sigma\gamma_{\mathbb{A}{}}$ in $\Gamma(\sigma S,R^{2r}%
(\sigma\pi)_{\ast}\mathbb{A}{}(r))$, and so $\left(  \tilde{\gamma}%
_{s^{\prime}}^{\sigma}\right)  _{\mathbb{A}{}}=\sigma(\gamma_{s^{\prime}%
})_{\mathbb{A}{}}$ in $H_{\mathbb{A}{}}^{2r}(\sigma X_{s^{\prime}})(r)$, which
demonstrates that $\gamma_{s^{\prime}}$ is $\sigma$-Hodge.
\end{proof}

\begin{conjecture}
[Deligne 1979\upshape{a}, 0.10]\nocite{deligne1979val}\label{h73}Every
$\sigma$-Hodge class on a smooth complete variety over an algebraically closed
field of characteristic zero is absolutely Hodge, i.e.,%
\[
\sigma\text{-Hodge (for one }\sigma\text{)}\implies\text{ absolutely Hodge.}%
\]

\end{conjecture}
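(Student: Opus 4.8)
The plan is to reduce Deligne's conjecture to the case of abelian varieties, where it is a theorem (proved in \S9 below); in full generality the statement is open, and I will indicate exactly where the argument stalls. First, a standard spreading-out argument reduces us to a base field $k$ of finite transcendence degree over $\mathbb{Q}$: write $X$ as the generic fibre of a smooth proper family over a variety defined over $\overline{\mathbb{Q}}$, and observe that being $\sigma$-Hodge only involves the countably many embeddings $k\hookrightarrow\mathbb{C}$. Using that the property of being $\sigma$-Hodge is preserved by the base-change isomorphisms (\ref{hq1}) and that $AH^{r}$ is compatible with such isomorphisms (\ref{h71b}), one reduces further to $k=\overline{\mathbb{Q}}$, where ``absolutely Hodge'' means simply ``$\sigma$-Hodge for every embedding $\sigma\colon\overline{\mathbb{Q}}\hookrightarrow\mathbb{C}$'', and the Galois group $\Aut(\mathbb{C}/\mathbb{Q})$ acts on $AH^{\ast}(X)$ through a finite quotient by (\ref{h71e}).

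Next I would deform the class in an algebraic family by Principle B, Theorem \ref{h72}(b): if $X$ sits in a smooth proper family $\pi\colon\mathcal{X}\to S$ over a connected smooth $S/\mathbb{C}$ and the given class extends to a global section of $R^{2r}\pi_{\ast}\mathbb{Q}(r)$, then it is absolutely Hodge on $X$ as soon as it is so on one fibre. Combined with the functoriality of $AH^{\ast}$ under algebraic correspondences and the absolute-Hodge-ness of algebraic cycles, Künneth projectors, and their products and intersections (\ref{h71c}--\ref{h71f}), this lets one replace $X$ by any smooth complete variety that cohomologically dominates it via absolute cycles. The decisive step would then be to exhibit every Hodge class on $X$ as the image, under such an absolute correspondence, of a Hodge class on a product of abelian varieties, and to invoke Deligne's theorem that every Hodge class on an abelian variety is absolutely Hodge. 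That theorem itself runs on the same machinery: the case of CM abelian varieties is checked directly by computing the Mumford--Tate group of a CM Hodge structure; Principle B applied to the universal abelian scheme over a Siegel modular variety, on which the CM points are dense, propagates this to every Hodge tensor that can be specialized into a CM fibre; and the Kuga--Satake correspondence disposes of the $K3$-type weight-two pieces.

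The hard part---indeed the reason the conjecture is still open---is the reduction in the decisive step: there is no known procedure for expressing the cohomology and Hodge classes of a general smooth complete variety in terms of those of abelian varieties. Producing such a reduction by algebraic, or even merely absolute-Hodge, correspondences is essentially a motivic refinement of the Hodge conjecture, and in any case the category of motives is \emph{not} generated by abelian motives: precisely the exceptional Shimura data of type $E_{6}$, $E_{7}$, and certain types $D$ (the cases excluded from the main theorem of this article) fall outside the abelian range. Thus what one can actually prove is the restricted statement: Deligne's conjecture holds for every $X$ whose cohomology lies in the Tannakian category generated, via absolute Hodge cycles, by abelian varieties; the general case requires an essentially new input and is left as the conjecture stands.
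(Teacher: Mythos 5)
You are right not to offer a proof: the statement is Deligne's \emph{conjecture} (\ref{h73}), and the paper states it without proof; the only thing proved in the text is Theorem \ref{h74}, that the conjecture holds for abelian varieties, together with the abstract form \ref{h76} from which it follows. Your assessment of where the difficulty lies is exactly the paper's own point of view: the reduction machinery you describe --- spreading out to finite transcendence degree, compatibility of $AH^{r}$ with base change (\ref{h71b}), finiteness of the Galois action (\ref{h71e}), Principle B (\ref{h72}b), and functoriality under absolute correspondences (\ref{h71c}--\ref{h71f}) --- is precisely what the paper assembles, and the missing step is, as you say, the absence of any procedure for dominating the cohomology of a general variety by that of abelian varieties; the paper's restriction of its main results to abelian motives, excluding types $E_{6}$, $E_{7}$ and mixed type $D$, reflects the same obstruction.

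One correction to your sketch of the abelian case: the paper's proof of Theorem \ref{h74} (via \ref{h76}) does not check CM abelian varieties ``directly by computing the Mumford--Tate group,'' nor does it use the Kuga--Satake correspondence anywhere. The actual chain is: Hodge classes on powers of an elliptic curve are algebraic (Lefschetz plus Tate's observation); split Weil classes are handled by deforming, inside a period subdomain giving a family of abelian varieties of split Weil type, from the given fibre to a fibre that is a power of an elliptic curve (Principle B); the CM case is then reduced to split Weil classes by Andr\'e's theorem; and the general case is reduced to the CM case by another Principle~B deformation inside the period subdomain $D(V,h_{A},\{t,t_{0}\})$, using Proposition \ref{h91d} to find a CM fibre. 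Kuga--Satake and weight-two $K3$ pieces belong to a different circle of applications and play no role here. With that adjustment, your account of what is known and what is open matches the paper.
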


\begin{theorem}
[\cite{deligne1982}, 2.11]\label{h74}Conjecture \ref{h73} is true for abelian varieties.
\end{theorem}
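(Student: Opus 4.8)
The plan is to run Deligne's argument: two successive reductions, each powered by Principle B (Theorem \ref{h72}), ending at a situation where the Hodge classes in question are manifestly algebraic. It suffices to treat \emph{complex} abelian varieties and to prove that every Hodge class on such an $A$ is absolutely Hodge. Indeed, for $X$ over an algebraically closed field $k$ of characteristic zero and $\sigma\colon k\to\mathbb{C}$, a $\sigma$-Hodge class on $X$ is carried by $\sigma$ to a genuine Hodge class on $\sigma X$, which is then absolutely Hodge over $\mathbb{C}$, and \ref{h71b} transports this back to $X$; moreover a Hodge class of any codimension on a power $A^{N}$ lies, by the Künneth formula and the absolute Hodgeness of the Künneth projectors (\ref{h71d}), in a tensor space $T^{m,n}H^1_B(A,\mathbb{Q})$.

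\textbf{Step 1: reduction to abelian varieties of CM-type.} Let $\gamma$ be a Hodge class on $A$, viewed as a Hodge tensor in $T^{m,n}V$ with $V=H^1_B(A,\mathbb{Q})$, and put $G=\MT_A\subset\GL_V$, so that $\gamma$ is $G$-invariant. Choose a finite family $\mathfrak{t}$ of tensors of $V$, including a polarization, such that $G$ is the subgroup of $\GL_V\times\GL_{\mathbb{Q}(1)}$ fixing $\mathfrak{t}$, and form the period subdomain $D=D(V,h_A,\mathfrak{t})$, where $h_A\colon\mathbb{S}\to\GL_{V_{\mathbb{R}}}$ is the Hodge structure of $A$. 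Since $V$ has type $\{(-1,0),(0,-1)\}$, Griffiths transversality holds automatically (Remark \ref{h65}), so $D$ is a hermitian symmetric domain (Theorem \ref{h68}); for a suitable neat arithmetic subgroup $\Gamma$ (Theorem \ref{h35}) the quotient $S=\Gamma\backslash D$ is a smooth algebraic variety (Theorem \ref{h42}) carrying the canonical polarizable integral variation of Hodge structures, hence a family $f\colon\mathcal{A}\to S$ of abelian varieties by Theorem \ref{h69b}, with $\mathcal{A}_{s_0}\simeq A$ for some $s_0$. At each point of $D$ the tensors $\mathfrak{t}$ are Hodge tensors, so the Mumford-Tate group there is contained in $G$; hence $\gamma$ is a Hodge class in every fibre of $T^{m,n}R^1f_{\ast}\mathbb{Q}$ and, being $\Gamma$-invariant, it descends to a global section over $S$. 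By Proposition \ref{h91f} some fibre $\mathcal{A}_{s_1}$ is of CM-type. Realizing $\gamma$ (via Künneth and \ref{h71d}) as a section of $R^{2r}\pi_{\ast}\mathbb{Q}(r)$ for a suitable smooth proper family $\pi$ built from $\mathcal{A}$, Theorem \ref{h72}(b) shows that $\gamma$ is absolutely Hodge at $s_0$ once it is so at $s_1$. Thus we may assume $A$ is of CM-type.

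\textbf{Step 2: the CM case, via split Weil classes.} When $A$ is of CM-type its Mumford-Tate group is a torus $T$, and the Hodge classes in $T^{m,n}H^1_B(A)$ are exactly the $T$-invariants. A representation-theoretic analysis of the character group of $T$ — a quotient of that of $\Res_{E/\mathbb{Q}}\mathbb{G}_m$, where $E$ is a product of CM fields acting on $A$ — shows that these invariants are spanned, under the operations of pullback, pushforward, cup product, and Künneth projection along isogenies and products (all of which preserve absolute Hodgeness, \ref{h71c}, \ref{h71d}, \ref{h71f}), by divisor classes, which are algebraic hence absolutely Hodge (\ref{h71c}), and by classes of \emph{Weil type}: the $\mathbb{Q}$-line $\bigwedge_{K}^{[K:\mathbb{Q}]}H^1_B$ attached to a CM subfield $K$ acting with totally nontrivial type on a factor. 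It remains to show Weil classes are absolutely Hodge. A class of $K$-Weil type deforms in a connected family of abelian varieties of $K$-Weil type with fixed invariants, so by Theorem \ref{h72}(b) it is enough to verify this at one point; one chooses a \emph{split} point, namely a model isogenous to a self-product in such a way that the Weil class becomes a polynomial in divisor classes, hence algebraic, hence absolutely Hodge by \ref{h71c}. Running the chain of reductions backwards proves the theorem.

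\textbf{The main obstacle.} The heart of the argument, and the technically heaviest step, is the assertion in Step 2 that on a CM abelian variety every Hodge class is built, by the operations listed, from divisor classes and split Weil classes. This requires the linear algebra of CM-types — the decomposition of $H^1$ under the CM algebra, the determination of the Mumford-Tate torus $T$ inside $\Res_{E/\mathbb{Q}}\mathbb{G}_m$ and of its characters, the fact (shown by the examples of Mumford and of Weil) that divisor classes alone do not generate, and the verification that the $T$-invariant tensors are exhausted by Weil-type classes — together with the construction of the split model in which a Weil class is exhibited as an algebraic cycle. Step 1 is, by comparison, essentially formal once Principle B (Theorem \ref{h72}) and the existence of CM points (Proposition \ref{h91f}) are available; the only care needed there is to set things up over an honest smooth algebraic variety carrying the universal family for $(G,h)$, so that $\gamma$ really does spread out as a fibrewise Hodge class.
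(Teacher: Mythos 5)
Your overall route is the paper's: use Principle B (Theorem \ref{h72}) together with existence of CM points (Proposition \ref{h91d}/\ref{h91f}) to contract first to CM abelian varieties and then to the Weil-class situation, and finish by exhibiting algebraicity at a distinguished fibre. Your Step 1 matches the paper's final reduction (Step 4 of Theorem \ref{h76}, using the period subdomain $D(V,h_A,\{t,t_0\})$, Theorem \ref{h69b}, and hypothesis (c)); and the paper likewise bottoms out at powers of an elliptic curve via Tate--Lefschetz.

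There is, however, a genuine gap in your Step 2. The deformation argument works only for \emph{split} Weil classes: within a connected family of abelian varieties of $K$-Weil type with a fixed polarization, the $K$-hermitian form $\phi$ (and in particular its discriminant in $F^{\times}/N_{E/F}E^{\times}$) is a deformation invariant, so the period domain parametrising $K$-Weil type Hodge structures with that polarization lies entirely in one discriminant class. You cannot ``choose a split point'' inside a non-split family; the fibre isogenous to a power of an elliptic curve exists only when $\phi$ is already split. Thus your reduction in Step 2 must be sharpened so as to land specifically on \emph{split} Weil classes before the deformation argument is invoked. This is precisely what the paper supplies through André's theorem (cited in its Step 3): every Hodge class on a CM abelian variety $A$ is a linear combination of pullbacks along homomorphisms $A\to B_J$ of split Weil classes on auxiliary CM abelian varieties $B_J$. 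Without the word ``split'' in your reduction, the subsequent deformation to a power of an elliptic curve is unavailable, and the proof does not close. (A small notational slip: the Weil line is $\bigwedge_{K}^{d}H^{1}_{B}$ with $d=\dim_K H^1_B$, not $\bigwedge_{K}^{[K:\mathbb{Q}]}H^{1}_{B}$.)
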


To prove the theorem, it suffices to show that every Hodge class on an abelian
variety over $\mathbb{C}{}$ is absolutely Hodge.\footnote{Let $A$ be an
abelian variety over $k$, and suppose that $\gamma$ is $\sigma_{0}$-Hodge for
some homomorphism $\sigma_{0}\colon k\rightarrow\mathbb{C}{}$. We have to show
that it is $\sigma$-Hodge for every $\sigma\colon k\rightarrow\mathbb{C}{}$.
But, using the Zorn's lemma, one can show that there exists a homomorphism
$\sigma^{\prime}\colon\mathbb{C}{}\rightarrow\mathbb{C}{}$ such that
$\sigma=\sigma^{\prime}\circ\sigma_{0}$. Now $\gamma$ is $\sigma$-Hodge if and
only if $\sigma_{0}\gamma$ is $\sigma^{\prime}$-Hodge.} We defer the proof of
the theorem to the next subsection.

\begin{aside}
\label{h75}Let $X_{\mathbb{C}{}}$ be a smooth complete algebraic variety over
$\mathbb{C}{}$. Then $X_{\mathbb{C}{}}$ has a model $X_{0}$ over a subfield
$k_{0}$ of $\mathbb{C}{}$ finitely generated over $\mathbb{Q}{}$. Let $k$ be
the algebraic closure of $k_{0}$ in $\mathbb{C}{}$, and let $X=X_{0k}$. For a
prime number $\ell$, let
\[
\mathcal{T}{}_{\ell}^{r}(X)=\bigcup\nolimits_{U}H_{\ell}^{2r}(X)(r)^{U}%
\quad\quad\text{(space of Tate classes)}%
\]
where $U$ runs over the open subgroups of $\Gal(k/k_{0})$ --- as the notation
suggests, $\mathcal{T}{}_{\ell}^{r}(X)$ depends only on $X/k$. The Tate
conjecture (\cite{tate1964}, Conjecture 1) says that the $\mathbb{Q}{}_{\ell}%
$-vector space $\mathcal{T}{}_{\ell}^{r}(X)$ is spanned by algebraic classes.
Statement \ref{h71e} implies that $AH^{r}(X)$ projects into $\mathcal{T}%
{}_{\ell}^{r}(X)$, and (\ref{h71a}) implies that the map $\mathbb{Q}_{\ell
}\otimes_{\mathbb{Q}{}}{}AH^{r}(X)\rightarrow\mathcal{T}{}_{\ell}^{r}(X)$ is
injective. Therefore the Tate conjecture implies that $A^{r}(X)=AH^{r}(X)$,
and so the Tate conjecture for $X$ and one $\ell$ implies that all absolute
Hodge classes on $X_{\mathbb{C}{}}$ are algebraic. Thus, in the presence of
Conjecture \ref{h73}, the Tate conjecture implies the Hodge conjecture. In
particular, Theorem \ref{h74} shows that, for an abelian variety, the Tate
conjecture implies the Hodge conjecture.
\end{aside}

\subsection{Proof of Deligne's theorem}

It is convenient to prove Theorem \ref{h74} in the following more abstract form.

\begin{theorem}
\label{h76}Suppose that for each abelian variety $A$ over $\mathbb{C}{}$ we
have a $\mathbb{Q}{}$-subspace $C^{r}(A)$ of the Hodge classes of codimension
$r$ on $A$. Assume:

\begin{enumerate}
\item $C^{r}(A)$ contains all algebraic classes of codimension $r$ on $A$;

\item pull-back by a homomorphism $\alpha\colon A\rightarrow B$ of abelian
varieties maps $C^{r}(B)$ into $C^{r}(A)$;

\item let $\pi\colon\mathcal{A}{}\rightarrow S$ be an abelian scheme over a
connected smooth complex algebraic variety $S$, and let $t\in\Gamma
(S,R^{2r}\pi_{\ast}\mathbb{Q}(r))$; if $t_{s}$ lies in $C^{r}(A_{s})$ for one
$s\in S(\mathbb{C}{})$, then it lies in $C^{r}(A_{s})$ for all $s$.
\end{enumerate}

\noindent Then $C^{r}(A)$ contains all the Hodge classes of codimension $r$ on
$A$.
\end{theorem}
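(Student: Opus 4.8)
The plan is to reduce the assertion first to abelian varieties of CM-type, using hypothesis (c) (Principle B) to deform an arbitrary $A$ to a CM abelian variety while keeping the given class Hodge, and then, within the CM case, to reduce to \emph{split Weil classes} — classes manufactured by linear algebra over a CM field $K$, which are algebraic up to isogeny and hence lie in $C^{\bullet}$ already by hypotheses (a) and (b).

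For the reduction to the CM case, fix $A$ and a Hodge class $\gamma$ of codimension $r$; put $V=H_{1}(A^{\mathrm{an}},\mathbb{Q})$, let $h\colon\mathbb{S}\to\GL_{V_{\mathbb{R}}}$ be the associated Hodge structure of type $\{(-1,0),(0,-1)\}$, and let $G=\MT_{V}$. Choose a polarization together with a family $\mathfrak{t}$ of tensors of $V$ that includes $\gamma$ and cuts out $G$ inside $\GL_{V}\times\GL_{\mathbb{Q}(1)}$. The period subdomain $D=D(V,F_{0},\mathfrak{t})$ then consists precisely of the Hodge structures on $V$ for which $\gamma$ remains a Hodge class, and since these are of type $\{(-1,0),(0,-1)\}$ Griffiths transversality holds automatically, so $D$ is a hermitian symmetric domain (Theorem \ref{h68}). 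By Theorem \ref{h69b} the constant local system $V_{D}$, with its variation of Hodge structures, underlies an algebraic family of abelian varieties over an arithmetic quotient $\pi\colon\mathcal{A}\to S$ with $S=\Gamma\backslash D$ (after passing to a torsion-free $\Gamma$ of finite index): $S$ is a connected Shimura variety, hence an algebraic variety by Baily--Borel (Theorem \ref{h42}), and the family is algebraic after a symplectic embedding and Borel's theorem (Theorem \ref{h43}). Since $\gamma$ is $G$-invariant and $\Gamma\subset G(\mathbb{Q})$, it descends to a global section of $R^{2r}\pi_{\ast}\mathbb{Q}(r)$ over $S$. By Proposition \ref{h91d} some $G(\mathbb{R})^{+}$-conjugate of $h$ factors through a maximal $\mathbb{Q}$-torus of $G$; the conjugating element lies in $G(\mathbb{R})$ and therefore fixes $\gamma$, so the corresponding point $s_{1}\in S$ carries an abelian variety $A_{1}$ of CM-type for which $\gamma$ is still a Hodge class. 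Hypothesis (c), applied to $\pi$ and the section $\gamma$, now shows that $\gamma\in C^{r}(A)$ if and only if the class at $s_{1}$ lies in $C^{r}(A_{1})$; it therefore suffices to treat the CM case.

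In the CM case I would appeal to Deligne's structure theorem for Hodge classes on abelian varieties of CM-type: every such class is a $\mathbb{Q}$-linear combination of classes of the form $f^{\ast}(w)$, where $f\colon A\to\tilde A$ is a homomorphism to an auxiliary CM abelian variety and $w$ is a split Weil class on $\tilde A$. Concretely, after replacing $A$ by an isogenous abelian variety one may assume its rational Hodge structure is a module over a product of CM fields; decomposing $V_{\mathbb{C}}$ under the characters of the Mumford--Tate torus and determining which combinations of CM-types yield tensors of type $(0,0)$ produces, after a combinatorial analysis of the relevant character lattices as modules over $\Gal(\mathbb{Q}^{\mathrm{al}}/\mathbb{Q})$, the asserted expression. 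Granting this, split Weil classes are algebraic up to isogeny — the "split" hypothesis is exactly what forces the underlying abelian variety to decompose and the class to become a product of divisor (and point) classes — so they lie in $C^{\bullet}$ by hypothesis (a), and their pull-backs along homomorphisms of abelian varieties lie in $C^{\bullet}$ by hypothesis (b). Combining this with the structure theorem gives $C^{r}(A)\supset\{\text{Hodge classes of codimension }r\text{ on }A\}$ for $A$ of CM-type, and then, by the first step, for every abelian variety $A$.

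The main obstacle is the structure theorem invoked in the CM case — the reduction of an arbitrary Hodge class on a CM abelian variety to split Weil classes. This is where the substance of Deligne's theorem lies: one must identify the spaces of torus-invariant tensors in all the $T^{m,n}$ attached to the Mumford--Tate torus and show that they are spanned by the explicit split Weil elements, which requires a delicate study of the character lattice as a Galois module. By contrast, the deformation argument of the first step and the verification for split Weil classes are routine once the algebraicity inputs (Baily--Borel, Borel's theorem, representability of the Siegel moduli problem) and the definition of split Weil classes are in place.
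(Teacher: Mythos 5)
Your reduction of the general case to the CM case via the period subdomain $D(V,F_{0},\mathfrak{t})$, the Baily--Borel algebraization, Theorem \ref{h69b}, and hypothesis (c) is exactly the paper's Step 4, and the appeal to the structure theorem (due to Andr\'e (1992), which the paper cites --- not Deligne) reducing Hodge classes on CM abelian varieties to pull-backs of split Weil classes is the paper's Step 3. Up to that point you are following the same route.

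The gap is in your treatment of split Weil classes. You assert that split Weil classes are ``algebraic up to isogeny --- the `split' hypothesis is exactly what forces the underlying abelian variety to decompose and the class to become a product of divisor (and point) classes --- so they lie in $C^{\bullet}$ by hypothesis (a).'' This is false. Split Weil classes are not known to be algebraic in general; indeed Weil's 1977 examples of split Weil classes on abelian fourfolds with quadratic imaginary multiplication are precisely the classical test cases for which the Hodge conjecture remains open. The ``split'' condition (that the associated $E$-hermitian form admits a maximal totally isotropic subspace) does \emph{not} make the abelian variety decompose, nor the Weil class a product of divisor classes. What ``split'' actually buys, and what the paper's Step 2 exploits, is the connectedness of the relevant moduli space: the period subdomain parametrizing polarized abelian varieties of split Weil type for a fixed $(E,\phi)$ is connected and contains as a fibre a \emph{power of an elliptic curve}. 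On that special fibre the Hodge conjecture is known (Step 1: Tate's observation that the $\mathbb{Q}$-algebra of Hodge classes on a power of an elliptic curve is generated by divisor classes, which are algebraic by Lefschetz), so the Weil classes there lie in $C$ by hypothesis (a), and then hypothesis (c) propagates this to every fibre, in particular to the original $(A,\nu)$. Your proposal omits Step 1 entirely and replaces the Step-2 deformation argument by an incorrect direct algebraicity claim; as written, it does not prove the theorem.
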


\begin{corollary}
\label{h77}If hypothesis (c) of the theorem holds for algebraic classes on
abelian varieties, then the Hodge conjecture holds for abelian varieties. (In
other words, for abelian varieties, the variational Hodge conjecture implies
the Hodge conjecture.)
\end{corollary}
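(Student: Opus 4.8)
The plan is to read Corollary \ref{h77} off from Theorem \ref{h76} by applying the latter to the single choice $C^{r}(A)=A^{r}(A)$, where $A^{r}(A)$ denotes the $\mathbb{Q}{}$-span in $H_{B}^{2r}(A)(r)$ of the cohomology classes of algebraic cycles of codimension $r$ on the complex abelian variety $A$. Since the cycle class of an algebraic cycle is of type $(0,0)$, this $A^{r}(A)$ is indeed a $\mathbb{Q}{}$-subspace of the space of Hodge classes of codimension $r$, so it is a legitimate input for Theorem \ref{h76}, and all that remains is to verify hypotheses (a), (b), (c) for it.

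First I would dispatch (a) and (b), which are automatic for algebraic classes. Hypothesis (a) holds by definition: $A^{r}(A)$ \emph{is} the space of algebraic classes of codimension $r$ on $A$. For (b), if $\alpha\colon A\rightarrow B$ is a homomorphism of abelian varieties, then $\alpha$ is a morphism of smooth projective complex varieties, and pull-back of cohomology classes is compatible with the cycle class map; hence $\alpha^{\ast}$ carries the class of an algebraic cycle on $B$ to the class of an algebraic cycle on $A$, so $\alpha^{\ast}A^{r}(B)\subset A^{r}(A)$. Then I would observe that hypothesis (c) for the choice $C=A^{\bullet}$ is \emph{exactly} the hypothesis granted in the statement of the corollary: for an abelian scheme $\pi\colon\mathcal{A}{}\rightarrow S$ over a connected smooth complex algebraic variety and a global section $t$ of $R^{2r}\pi_{\ast}\mathbb{Q}{}(r)$, if $t_{s}$ is algebraic for one $s\in S(\mathbb{C}{})$ then it is algebraic for all $s$. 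This is the ``variational Hodge conjecture'' for abelian varieties, which is precisely what we are assuming.

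With (a), (b), (c) verified, Theorem \ref{h76} gives that $A^{r}(A)$ contains every Hodge class of codimension $r$ on every complex abelian variety $A$. Together with the trivial inclusion $A^{r}(A)\subset\{\text{Hodge classes}\}$, this yields equality, which is the Hodge conjecture for complex abelian varieties. There is no real obstacle in this argument: its entire substance sits in Theorem \ref{h76}, whose proof is carried out separately; the only point requiring the slightest care is the bookkeeping in (b) (that pull-back along a homomorphism of abelian varieties preserves algebraicity of cohomology classes), and that is standard intersection theory. The parenthetical remark in the corollary is just the identification of hypothesis (c), specialized to algebraic classes, with the variational Hodge conjecture.
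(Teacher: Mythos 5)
Your proof is correct and matches the paper's: Corollary \ref{h77} is read off directly from Theorem \ref{h76} by taking $C^{r}(A)$ to be the algebraic classes, with (a) immediate, (b) standard functoriality of the cycle class map under pull-back, and (c) precisely the hypothesis of the corollary. The paper states this in one line; you have only spelled it out.
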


\begin{proof}
Immediate consequence of the theorem, because the algebraic classes satisfy
(a) and (b).
\end{proof}

The proof of Theorem \ref{h76} requires four steps.

\subsubsection*{Step 1: The Hodge conjecture holds for powers of an elliptic
curve}

As Tate observed (1964, p.~19),\nocite{tate1964} the $\mathbb{Q}{}$-algebra of
Hodge classes on a power of an elliptic curve is generated by those of type
$(1,1)$.\footnote{This is most conveniently proved by applying the criterion
\cite{milne1999}, 4.8.} These are algebraic by a theorem of Lefschetz.

\subsubsection*{Step 2: Split Weil classes lie in $C$}

Let $A$ be a complex abelian variety, and let $\nu$ be a homomorphism from a
CM-field $E$ into $\End(A)_{\mathbb{Q}{}}$. The pair $(A,\nu)$ is said to be
of \emph{Weil type}%
\index{Weil type}
if the tangent space $T_{0}(A)$ is a free $E\otimes_{\mathbb{Q}{}}\mathbb{C}%
{}$-module. In this case, $d\overset{\textup{{\tiny def}}}{=}\dim_{E}H_{B}%
^{1}(A)$ is even and the subspace $\bigwedge\nolimits_{E}^{d}H_{B}%
^{1}(A)(\frac{d}{2})$ of $H_{B}^{d}(A)(\frac{d}{2})$ consists of Hodge classes
(\cite{deligne1982}, 4.4). When $E$ is quadratic over $\mathbb{Q}{}$, these
Hodge classes were studied by Weil (1977)\nocite{weil1977}, and for this
reason are called \emph{Weil classes}%
\index{Weil classes}%
. A \emph{polarization}%
\index{polarization}
of $(A,\nu)$ is a polarization $\lambda$ of $A$ whose whose Rosati involution
acts on $\nu(E)$ as complex conjugation. The Riemann form of such a
polarization can be written%
\[
(x,y)\mapsto\Tr_{E/\mathbb{Q}{}}(f\phi(x,y))
\]
for some totally imaginary element $f$ of $E$ and $E$-hermitian form $\phi$ on
$H_{1}(A,\mathbb{Q}{})$. If $\lambda$ can be chosen so that $\phi$ is split%
\index{split}
(i.e., admits a totally isotropic subspace of dimension $d/2$), then the Weil
classes are said to be \emph{split}%
\index{split}%
.

\begin{lemma}
\label{h78}All split Weil classes of codimension $r$ on an abelian variety $A$
lie in $C^{r}(A)$.
\end{lemma}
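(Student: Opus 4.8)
The plan is to reduce to the case already handled by the theorem of the fixed part, hypothesis (c), by exhibiting any abelian variety of split Weil type inside an algebraic family whose generic member is a power of an elliptic curve (where the Hodge conjecture, hence $C = $ Hodge classes, is known by Step 1). First I would fix the data: $A$ is a complex abelian variety, $E$ a CM field with $E \hookrightarrow \End(A)_{\mathbb{Q}}$, the pair $(A, \nu)$ of Weil type, $d = \dim_E H_B^1(A)$ even, and a polarization $\lambda$ chosen so that the $E$-hermitian form $\phi$ on $H_1(A, \mathbb{Q})$ is split. The split Weil classes span $\bigwedge_E^d H_B^1(A)(d/2)$. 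The key structural observation is that the moduli space of abelian varieties of Weil type $(E, \phi)$ with the given numerical invariants is itself the quotient of a hermitian symmetric domain (a product of unit balls, for the signature data coming from $\phi$), connected, and carries an algebraic family $\pi \colon \mathcal{A} \to S$ over a connected smooth complex algebraic variety $S$; this uses Baily--Borel (Theorem \ref{h42}) to realize $S$ as an algebraic variety and the fact that these are PEL-type moduli problems. The Weil classes fit together into a global section $t \in \Gamma(S, R^{d}\pi_\ast \mathbb{Q}(d/2))$, because the subspace $\bigwedge_E^d$ is cut out by the $E$-action, which is locally constant.

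The heart of the argument is then: by hypothesis (c) of Theorem \ref{h76}, it suffices to show that $t_s$ lies in $C^r(A_s)$ for a \emph{single} well-chosen point $s \in S(\mathbb{C})$. Here I would choose $s$ so that $A_s$ is isogenous to a power $E_0^{d}$ of an elliptic curve $E_0$ with $E_0$ having CM by an imaginary quadratic subfield of $E$ (one arranges, using the splitness of $\phi$, that $A_s$ decomposes as a product compatibly with the $E$-action). For such an $A_s$, every Hodge class is algebraic by Step 1, so in particular the Weil class $t_s$ is algebraic, and hence lies in $C^r(A_s)$ by hypothesis (a). Transporting along the isogeny uses hypothesis (b). Then hypothesis (c) propagates membership in $C$ to all fibres of $\pi$, and in particular to the original $A = A_{s_0}$ for the point $s_0 \in S$ representing it. Finally, one checks that every split Weil class of codimension $r$ on $A$ arises this way; codimension $r = d/2$ is forced by the weight/type computation, so there is nothing lost.

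The main obstacle I expect is the \emph{connectedness} of the relevant moduli space $S$ together with the construction of the good base point $s$: one must verify that abelian varieties of a fixed split Weil type $(E, \phi)$ form a single connected family (so that hypothesis (c), which only propagates along connected $S$, actually reaches from the CM/elliptic-power point to an arbitrary $(A, \nu)$), and that within this family there genuinely is a point whose abelian variety is isogenous to a power of an elliptic curve respecting the $E$-structure. This is where the split hypothesis on $\phi$ is essential: a split hermitian form over $E$ admits a hyperbolic decomposition, which lets one degenerate (within the moduli space, not to the boundary) to a product situation; a non-split $\phi$ need not connect to such a point, which is precisely why the lemma is stated only for split Weil classes. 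Once connectedness and the special point are in hand, the rest is a formal application of the three hypotheses of Theorem \ref{h76} and of Step 1.
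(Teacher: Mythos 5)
Your proposal is correct and follows essentially the same route as the paper: both construct the family $\pi\colon\mathcal{A}\to S$ over the (connected) quotient of the period subdomain parametrizing Hodge structures on $H_1(A,\mathbb{Q})$ with $E$-action and polarization (the paper invokes Theorem \ref{h69b}, you invoke the PEL/Baily--Borel description, which amounts to the same thing), package the Weil classes into a flat section, and propagate from a fibre that is a power of an elliptic curve using Step 1 and hypotheses (a) and (c). The paper arranges the special fibre to \emph{be} a power of an elliptic curve rather than merely isogenous to one, so it does not need hypothesis (b) at this step; and the connectedness of $S$ that you flag as a worry is automatic, since $S$ is by construction an arithmetic quotient of a single period subdomain (a connected hermitian symmetric domain), not the full, possibly disconnected, moduli stack of Weil-type abelian varieties.
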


\begin{proof}
Let $(A,\nu,\lambda)$ be a polarized abelian variety of split Weil type. Let
$V=H_{1}(A,\mathbb{Q}{})$, and let $\psi$ be the Riemann form of $\lambda$.
The Hodge structures on $V$ for which the elements of $E$ act as morphisms and
$\psi$ is a polarization are parametrized by a period subdomain, which is
hermitian symmetric domain (cf. \ref{h68}). On dividing by a suitable
arithmetic subgroup, we get a smooth proper map $\pi\colon\mathcal{A}%
{}\rightarrow S$ of smooth algebraic varieties whose fibres are abelian
varieties with an action of $E$ (Theorem \ref{h69b}). There is a $\mathbb{Q}%
{}$-subspace $W$ of $\Gamma(S,R^{d}\pi_{\ast}\mathbb{Q}{}(\frac{d}{2}))$ whose
fibre at every point $s$ is the space of Weil classes on $A_{s}$. One fibre of
$\pi$ is $(A,\nu)$ and another is a power of an elliptic curve. Therefore the
lemma follows from Step 1 and hypotheses (a,c). (See \cite{deligne1982}, 4.8,
for more details.)
\end{proof}

\subsubsection*{Step 3: Theorem \ref{h76} for abelian varieties of CM-type}

A simple abelian variety $A$ is of \emph{CM-type}%
\index{CM-type}
if $\End(A)_{\mathbb{Q}{}}$ is a field of degree $2\dim A$ over $\mathbb{Q}{}%
$, and a general abelian variety is of \emph{CM-type}%
\index{CM-type}
if every simple isogeny factor of it is of CM-type. Equivalently, it is of
CM-type if the Hodge structure $H_{1}(A^{\text{an}},\mathbb{Q}{})$ is of
CM-type. According to \cite{andre1992}:

\begin{quote}
For any complex abelian variety $A$ of CM-type, there exist complex abelian
varieties $B_{J}$ of CM-type and homomorphisms $A\rightarrow B_{J}$ such that
every Hodge class on $A$ is a linear combination of the pull-backs of split
Weil classes on the $B_{J}$.
\end{quote}

\noindent Thus Theorem \ref{h76} for abelian varieties of CM-type follows from
Step 2 and hypothesis (b). (See \cite{deligne1982}, \S 5, for the original
proof of this step.)

\subsubsection*{Step 4: Completion of the proof of Theorem \ref{h76}}

Let $t$ be a Hodge class on a complex abelian variety $A$. Choose a
polarization $\lambda$ for $A$. Let $V=H_{1}(A,\mathbb{Q}{})$ and let $h_{A}$
be the homomorphism defining the Hodge structure on $H_{1}(A,\mathbb{Q}{})$.
Both $t$ and the Riemann form $t_{0}$ of $\lambda$ can be regarded as Hodge
tensors for $V$. The period subdomain $D=D(V,h_{A},\{t,t_{0}\})$ is a
hermitian symmetric domain (see \ref{h68}). On dividing by a suitable
arithmetic subgroup, we get a smooth proper map $\pi\colon\mathcal{A}%
{}\rightarrow S$ of smooth algebraic varieties whose fibres are abelian
varieties (Theorem \ref{h69b}) and a section $t$ of $R^{2r}\pi_{\ast
}\mathbb{Q}{}(r)$. For one $s\in S$, the fibre $(\mathcal{A}{},t)_{s}=(A,t)$,
and another fibre is an abelian variety of CM-type (apply \ref{h91d}), and so
the theorem follows from Step 3 and hypothesis (c). (See \cite{deligne1982},
\S 6, for more details.)

\subsection{Motives for absolute Hodge classes}

We fix a base field $k$ of characteristic zero; \textquotedblleft
variety\textquotedblright\ will mean \textquotedblleft smooth projective
variety over $k$\textquotedblright.

For varieties $X$ and $Y$ with $X$ connected, we let%
\[
C^{r}(X,Y)=AH^{\dim X+r}(X\times Y)
\]
(correspondences of degree $r$ from $X$ to $Y$). When $X$ has connected
components $X_{i}$, $i\in I$, we let%
\[
C^{r}(X,Y)=\bigoplus\nolimits_{i\in I}C^{r}(X_{i},Y).
\]
For varieties $X,Y,Z$, there is a bilinear pairing%
\[
f,g\mapsto g\circ f\colon C^{r}(X,Y)\times C^{s}(Y,Z)\rightarrow C^{r+s}(X,Z)
\]
with%
\[
g\circ f\overset{\textup{{\tiny def}}}{=}(p_{XZ})_{\ast}(p_{XY}^{\ast}f\cdot
p_{YZ}^{\ast}g).
\]
Here the $p$'s are projection maps from $X\times Y\times Z$. These pairings
are associative and so we get a \textquotedblleft category of
correspondences\textquotedblright,\ which has one object $hX$ for every
variety over $k$, and whose Homs are defined by
\[
\Hom(hX,hY)=C^{0}(X,Y).
\]
Let $f\colon Y\rightarrow X$ be a regular map of varieties. The transpose of
the graph of $f$ is an element of $C^{0}(X,Y)$, and so $X\rightsquigarrow hX$
is a contravariant functor.

The category of correspondences is additive, but not abelian, and so we
enlarge it by adding the images of idempotents. More precisely, we define a
\textquotedblleft category of effective motives\textquotedblright,\ which has
one object $h(X,e)$ for each variety $X$ and idempotent $e$ in the ring
$\End(hX)=AH^{\dim X}(X\times X)$, and whose Homs are defined by%
\[
\Hom(h(X,e),h(Y,f))=f\circ C^{0}(X,Y)\circ e.
\]
This contains the old category by $hX\leftrightarrow h(X,\id)$, and $h(X,e)$
is the image of $hX\overset{e}{\longrightarrow}hX$.

The category of effective motives is abelian, but objects need not have duals.
In the enlarged category, the motive $h\mathbb{P}{}^{1}$ decomposes into
$h\mathbb{P}{}^{1}=h^{0}\mathbb{P}{}^{1}\oplus h^{2}\mathbb{P}^{1}$, and it
turns out that, to obtain duals for all objects, we only have to
\textquotedblleft invert\textquotedblright\ the motive $h^{2}\mathbb{P}{}%
^{1}{}$. This is most conveniently done by defining a \textquotedblleft
category of motives\textquotedblright\ which has one object $h(X,e,m)$ for
each pair $(X,e)$ as before and integer $m$, and whose Homs are defined by%
\[
\Hom(h(X,e,m),h(Y,f,n))=f\circ C^{n-m}(X,Y)\circ e.
\]
This contains the old category by $h(X,e)\leftrightarrow h(X,e,0)$.

We now list some properties of the category $\Mot(k)$ of motives.%
\index{Mot(k)@$\Mot(k)$}%

\begin{plain}
\label{h79}The Hom's in $\Mot(k)$ are finite dimensional $\mathbb{Q}{}$-vector
spaces, and $\Mot(k)$ is a semisimple abelian category.
\end{plain}

\begin{plain}
\label{h79a}Define a tensor product on $\Mot(k)$ by%
\[
h(X,e,m)\otimes h(X,f,n)=h(X\times Y,e\times f,m+n).
\]
With the obvious associativity constraint and a suitable\footnote{Not the
obvious one! It is necessary to change some signs.} commutativity constraint,
$\Mot(k)$ becomes a tannakian category.
\end{plain}

\begin{plain}
\label{h79b}The standard cohomology functors factor through $\Mot(k)$. For
example, define%
\[
\omega_{\ell}(h(X,e,m))=e\left(  \bigoplus\nolimits_{i}H_{\ell}^{i}%
(X)(m)\right)
\]
(image of $e$ acting on $\bigoplus\nolimits_{i}H_{\ell}^{i}(X)(m)$). Then
$\omega_{\ell}$ is an exact faithful functor $\Mot(k)\rightarrow
\Vc_{\mathbb{Q}{}_{\ell}}$ commuting with tensor products. Similarly, de Rham
cohomology defines an exact tensor functor $\omega_{\mathrm{dR}}%
\colon\Mot(k)\rightarrow\Vc_{k}$, and, when $k=\mathbb{C}{}$, Betti cohomology
defines an exact tensor functor $\Mot(k)\rightarrow\Vc_{\mathbb{Q}{}}$. The
functors $\omega_{\ell}$, $\omega_{\mathrm{dR}}$, and $\omega_{B}$ are called
the $\ell$-adic, de Rham, and Betti fibre functors, and they send a motive to
its $\ell$-adic, de Rham, or Betti \emph{realization}%
\index{realization}%
.
\end{plain}

The Betti fibre functor on $\Mot(\mathbb{C}{})$ takes values in
$\Hdg_{\mathbb{Q}{}}$, and is faithful (almost by definition). Deligne's
conjecture \ref{h73} is equivalent to saying that it is full.

\subsubsection{Abelian motives}

\begin{definition}
\label{h79c}A motive is \emph{abelian}%
\index{abelian motive}
if it lies in the tannakian subcategory $\Mot^{\mathrm{ab}}(k)$ of $\Mot(k)$
generated by the motives of abelian varieties.
\end{definition}

The Tate motive, being isomorphic to $\bigwedge\nolimits^{2}h_{1}E$ for any
elliptic curve $E$, is an abelian motive. It is known that $h(X)$ is an
abelian motive if $X$ is a curve, a unirational variety of dimension $\leq3$,
a Fermat hypersurface, or a $K3$ surface.

Deligne's theorem \ref{h74} implies that $\omega_{B}\colon\Mot^{\mathrm{ab}%
}(\mathbb{C}{})\rightarrow\Hdg_{\mathbb{Q}{}}$ is fully faithful.

\subsubsection{CM motives}

\begin{definition}
\label{h79e}A motive over $\mathbb{C}{}$ is of \emph{CM-type}%
\index{CM-type!motive}
if its Hodge realization is of CM-type.
\end{definition}

\begin{lemma}
\label{h79f}Every Hodge structure of CM-type is the Betti realization of an
abelian motive.
\end{lemma}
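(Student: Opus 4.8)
\textbf{Proof proposal for Lemma \ref{h79f}.}

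The plan is to reduce the statement to the classical theory of abelian varieties with complex multiplication. First I would use the structure of Hodge structures of CM-type: if $(V,h)$ is a Hodge structure of CM-type, then its Mumford-Tate group $T$ is a torus, and by the previous discussion (Proposition \ref{h91f} and the surrounding material on special Hodge structures) the pair $(T,h)$ can be realized by choosing $T$ to be a maximal torus in a suitable reductive group; moreover, decomposing $(V,h)$ into its isotypic components under $T$, one reduces to the case where $(V,h)$ is simple, i.e., where $T$ acts through a CM-field $E$ and $V$ is a one-dimensional $E$-vector space with a CM-type $\Phi$. In this situation the associated $h$ of weight $-1$ and type $\{(-1,0),(0,-1)\}$ is exactly the Hodge structure $H_1(A^{\text{an}},\mathbb{Q}{})$ attached to the CM abelian variety $A=\mathbb{C}{}^{\Phi}/\Phi(\mathcal{O}{}_E)$ by the standard construction (this abelian variety exists and is algebraic by Riemann's theorem \ref{h44}, since one can equip it with a polarization using a suitable totally imaginary element of $E$).

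Next I would handle the Tate twists and general weights. A general simple Hodge structure of CM-type is a Tate twist $\mathbb{Q}{}(m)\otimes W$ of one of type $\{(-1,0),(0,-1)\}$, or more generally lies in the tannakian category generated by such Hodge structures together with the Tate structures $\mathbb{Q}{}(m)$. By Example \ref{h54f} (or directly), $H_1(A^{\text{an}},\mathbb{Q}{})$ of a CM abelian variety is the Betti realization of the abelian motive $h_1(A)$, and the Tate structure $\mathbb{Q}{}(1)$ is the Betti realization of $h^2(\mathbb{P}{}^1)$ (equivalently $\bigwedge^2 h_1 E$ for an elliptic curve $E$), which is an abelian motive by the remark following Definition \ref{h79c}. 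Since $\Mot^{\mathrm{ab}}(\mathbb{C}{})$ is a tannakian category closed under $\otimes$, duals, direct sums, and subquotients, and since $\omega_B\colon\Mot^{\mathrm{ab}}(\mathbb{C}{})\rightarrow\Hdg_{\mathbb{Q}{}}$ is fully faithful (Deligne's theorem \ref{h74}, as noted just above), any Hodge substructure or subquotient of a tensor construction on the $H_1$'s of CM abelian varieties and the Tate structures is again the Betti realization of an (abelian) motive.

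The one point that needs care — and which I expect to be the main obstacle — is showing that \emph{every} Hodge structure of CM-type, not merely the simple ones of weight $-1$, actually appears inside the tannakian subcategory of $\Hdg_{\mathbb{Q}{}}$ generated by the $H_1$ of CM abelian varieties and the Tate structures. Concretely: given a CM-field $E$, a CM-type $\Phi$, and an arbitrary weight, one must check that the corresponding rank-one $E$-Hodge structure is a subquotient of a tensor product of copies of the $H_1(A_i)$ (for abelian varieties $A_i$ with CM by subfields of $E$) and Tate twists. This is a statement purely about the torus $S^E = (\mathbb{G}_m)_{E/\mathbb{Q}{}}$ and its cocharacter lattice: the Serre group (the quotient of $(\mathbb{G}_m)_{\mathbb{Q}{}^{\mathrm{al}}/\mathbb{Q}{}}$ cut out by the Serre condition, cf. Proposition \ref{h91e}) is precisely the Mumford-Tate group of the category generated by CM abelian varieties, and every CM Hodge structure has Mumford-Tate group a quotient of it. I would invoke exactly this: a CM Hodge structure satisfies the Serre condition (since its $\mu_h$ does, being defined over a CM field), hence its Mumford-Tate torus is a quotient of the Serre group, hence the Hodge structure lies in the tannakian category generated by CM abelian varieties and Tate twists, and therefore by the fullness of $\omega_B$ on $\Mot^{\mathrm{ab}}$ it is the Betti realization of an abelian motive — which is then, by Definition \ref{h79e}, a CM motive.
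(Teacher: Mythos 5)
Your outline is reasonable and in fact more detailed than the paper's own justification (a bare citation of Milne 1994a, 4.6), but as written the argument is circular at the decisive step. You appeal to the claim that the Serre group ``is precisely the Mumford-Tate group of the category generated by CM abelian varieties.'' Given that the Serre group $S$ appearing in the paper (proof of Proposition \ref{h79g}) is by definition the affine group scheme attached to the tannakian category $\Hdg_{\mathbb{Q}}^{\mathrm{cm}}$ of CM Hodge structures, the assertion that $S$ is also the Tannaka group of the tannakian subcategory of $\Hdg_{\mathbb{Q}}$ generated by $H_{1}$ of CM abelian varieties and $\mathbb{Q}(1)$ says precisely that these two tannakian subcategories coincide --- which is Lemma \ref{h79f}. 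Your passage through the Serre condition (Proposition \ref{h91e}) correctly shows that the Mumford-Tate group of a CM Hodge structure is a quotient of the Serre group, but that step is essentially definitional and does not by itself produce a factorization through $\Mot^{\mathrm{ab}}(\mathbb{C})$; the content is entirely in the invoked identity.

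What actually has to be established is a concrete lattice-theoretic statement. After reducing, as you correctly do, to a simple CM Hodge structure --- a rank-one $E$-module for a CM or totally real field $E$, with Hodge type given by integers $(p_{\sigma})_{\sigma\colon E\to\mathbb{C}}$ satisfying $p_{\sigma}+p_{\bar{\sigma}}=m$ --- one must show that the resulting character of the Serre group is a $\mathbb{Z}$-linear combination of characters arising from CM types on (possibly larger) CM fields together with the cyclotomic character. Note also that your intermediate claim that a simple CM Hodge structure is a Tate twist of one of type $\{(-1,0),(0,-1)\}$ is not true: a weight-$(-1)$ simple CM Hodge structure can have type $\{(-2,1),(0,-1),(-1,0),(1,-2)\}$, and no Tate twist brings that to type $\{(-1,0),(0,-1)\}$. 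This is exactly why the passage to the full tannakian category --- tensor products and subquotients, not a direct identification of each simple object with some $H_{1}(A)(n)$ --- is unavoidable, and why the combinatorial computation with CM types is the real substance of the lemma.
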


\begin{proof}
Elementary (see, for example, \cite{milne1994a}, 4.6).
\end{proof}

Therefore $\omega_{B}$ defines an equivalence from the category of abelian
motives of CM-type to the category of Hodge structures of CM-type.

\begin{proposition}
\label{h79g}Let $G_{\mathrm{Hdg}}$ (resp. $G_{\mathrm{Mab}}$) be the affine
group scheme attached to $\Hdg_{\mathbb{Q}{}}$ and its forgetful fibre functor
(resp. $\Mot^{\text{ab}}(\mathbb{C}{})$ and its Betti fibre functor). The
kernel of the homomorphism $G_{\mathrm{Hdg}}\rightarrow G_{\mathrm{Mab}}$
defined by the tensor functor $\omega_{B}\colon\Mot^{\text{ab}}(\mathbb{C}%
{})\rightarrow\Hdg_{\mathbb{Q}{}}$ is contained in $(G_{\mathrm{Hdg}%
})^{\mathrm{der}}$.
\end{proposition}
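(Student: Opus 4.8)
The plan is to reduce the statement to the tannakian dictionary together with Lemma \ref{h79f}. First I would note that, since $\omega_{B}$ is compatible with the forgetful fibre functors (the forgetful functor on $\Hdg_{\mathbb{Q}{}}$ composed with $\omega_{B}$ is the Betti fibre functor on $\Mot^{\mathrm{ab}}(\mathbb{C}{})$), the exact tensor functor $\omega_{B}\colon\Mot^{\mathrm{ab}}(\mathbb{C}{})\to\Hdg_{\mathbb{Q}{}}$ induces the homomorphism $f\colon G_{\mathrm{Hdg}}\to G_{\mathrm{Mab}}$ of the statement, and that, on points, $\Ker(f)$ is the set of $g\in G_{\mathrm{Hdg}}$ acting as the identity on $\omega_{B}(M)$ for every abelian motive $M$.

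Next I would describe $(G_{\mathrm{Hdg}})^{\mathrm{der}}$ tannakianly. The claim is that a polarizable rational Hodge structure $V$ is killed by $(G_{\mathrm{Hdg}})^{\mathrm{der}}$ if and only if $V$ is of CM-type. Indeed, $G_{\mathrm{Hdg}}$ acts on $V$ through a surjection onto its Mumford-Tate group $\MT_{V}$, which is reductive by Corollary \ref{h53}; a surjection of (pro-)reductive groups carries the derived group onto the derived group, so $(G_{\mathrm{Hdg}})^{\mathrm{der}}$ acts on $V$ through $(\MT_{V})^{\mathrm{der}}$, and this is trivial precisely when $\MT_{V}$ is commutative, i.e. precisely when $V$ is of CM-type. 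Consequently the CM Hodge structures form exactly the tannakian subcategory of $\Hdg_{\mathbb{Q}{}}$ equivalent to $\Rep$ of the maximal commutative quotient $G_{\mathrm{Hdg}}/(G_{\mathrm{Hdg}})^{\mathrm{der}}$; hence an element $g\in G_{\mathrm{Hdg}}$ lies in $(G_{\mathrm{Hdg}})^{\mathrm{der}}$ if and only if it acts trivially on every Hodge structure of CM-type.

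Finally I would combine the two steps. By Lemma \ref{h79f}, every Hodge structure of CM-type is $\omega_{B}(M)$ for some abelian motive $M$. Therefore, if $g\in\Ker(f)$, then $g$ acts trivially on $\omega_{B}(M)$ for all abelian motives $M$, in particular on every CM Hodge structure, so $g\in(G_{\mathrm{Hdg}})^{\mathrm{der}}$ by the previous paragraph. This gives $\Ker(f)\subseteq(G_{\mathrm{Hdg}})^{\mathrm{der}}$, as required.

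I expect the only delicate point to be the middle step: making precise that the subcategory of CM Hodge structures is exactly the one cut out by $(G_{\mathrm{Hdg}})^{\mathrm{der}}$, hence corresponds to the abelianization, and that an element acting trivially on all of them must lie in $(G_{\mathrm{Hdg}})^{\mathrm{der}}$. This is routine tannakian formalism for affine (pro-)group schemes, resting on Corollary \ref{h53} (polarizable implies reductive Mumford-Tate group), but it is where the care is needed; Steps 1 and 3 are bookkeeping and an appeal to Lemma \ref{h79f}.
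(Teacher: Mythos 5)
Your proof is correct and follows essentially the same route as the paper: both rest on Lemma~\ref{h79f} together with the identification of the CM Hodge structures as the tannakian subcategory cut out by $(G_{\mathrm{Hdg}})^{\mathrm{der}}$. The paper phrases this by introducing the group scheme $S$ of $\Hdg_{\mathbb{Q}}^{\mathrm{cm}}$, observing that $G_{\mathrm{Hdg}}\to S$ factors through $G_{\mathrm{Mab}}$, and then quoting $\Ker(G_{\mathrm{Hdg}}\twoheadrightarrow S)=(G_{\mathrm{Hdg}})^{\mathrm{der}}$ without further comment; you supply the argument for that last equality (pro-reductivity of $G_{\mathrm{Hdg}}$ via Corollary~\ref{h53}, derived group surjecting onto derived group, commutative $\MT_V$ iff CM-type), which is a genuine small improvement in explicitness even though the overall strategy is identical.
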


\begin{proof}
Let $S$ be the affine group scheme attached to the category $\Hdg_{\mathbb{Q}%
{}}^{\mathrm{cm}}$ of Hodge structures of CM-type and its forgetful fibre
functor. The lemma shows that the functor $\Hdg_{\mathbb{Q}{}}^{\text{cm}%
}\hookrightarrow\Hdg_{\mathbb{Q}{}}$ factors through $\Mot^{\text{ab}%
}(\mathbb{C}{})\hookrightarrow\Hdg_{\mathbb{Q}{}}$, and so $G_{\mathrm{Hdg}%
}\rightarrow S$ factors through $G_{\mathrm{Hdg}}\rightarrow G_{\mathrm{Mab}}%
$:%
\[
G_{\mathrm{Hdg}}\rightarrow G_{\mathrm{Mab}}\twoheadrightarrow S.
\]
Hence%
\[
\Ker(G_{\mathrm{Hdg}}\rightarrow G_{\mathrm{Mab}})\subset\Ker(G_{\mathrm{Hdg}%
}\twoheadrightarrow S)=\left(  G_{\mathrm{Hdg}}\right)  ^{\mathrm{der}}.
\]

\end{proof}

\subsubsection{Special motives}

\begin{definition}
\label{h80}A motive over $\mathbb{C}{}$ is \emph{special}%
\index{special motive}
if its Hodge realization is special (see p.~\pageref{h54a}).
\end{definition}

It follows from (\ref{h54a}) that the special motives form a tannakian
subcategory of $\Mot(k)$, which includes the abelian motives (see \ref{h54f}).

\begin{question}
\label{h79d}Is every special Hodge structure the Betti realization of a
motive? (Cf. \cite{deligne1979}, p.~248; \cite{langlands1979}, p.~216;
\cite{serre1994}, 8.7.)
\end{question}

More explicitly: for each simple special Hodge structure $(V,h)$, does there
exist an algebraic variety $X$ over $\mathbb{C}$ and an integer $m$ such that
$(V,h)$ is a direct factor of $\bigoplus\nolimits_{r\geq0}H_{B}^{r}(X)(m)$ and
the projection $\bigoplus\nolimits_{r\geq0}H_{B}^{r}(X)(m)\rightarrow
V\subset\bigoplus\nolimits_{r\geq0}H_{B}^{r}(X)(m)$ is an absolute Hodge class
on $X$.

A positive answer to (\ref{h79d}) would imply that all connected Shimura
varieties are moduli varieties for motives (see \S 11). Apparently, no special
motive is known that is not abelian.

\subsubsection{Families of abelian motives}

For an abelian variety $A$ over $k$, let%
\[
\omega_{f}(A)=\varprojlim A_{N}(k^{\mathrm{al}}),\quad A_{N}(k^{\mathrm{al}%
})=\Ker(N\colon A(k^{\mathrm{al}})\rightarrow A(k^{\mathrm{al}}))\text{.}%
\]
This is a free $\mathbb{A}{}_{f}$-module of rank $2\dim A$ with a continuous
action of $\Gal(k^{\mathrm{al}}/k)$.

Let $S$ be a smooth connected variety over $k$, and let $k(S)$ be its function
field. Fix an algebraic closure $k(S)^{\mathrm{al}}$ of $k(S)$, and let
$k(S)^{\mathrm{un}}$ be the union of the subfields $L$ of $k(S)^{\mathrm{al}}$
such that the normalization of $S$ in $L$ is unramified over $S$. We say that
an action of $\Gal(k(S)^{\mathrm{al}}/k(S))$ on a module is \emph{unramified}%
\index{unramified}
if it factors throught $\Gal(k(S)^{\mathrm{un}}/k(S))$.

\begin{theorem}
\label{h80a}Let $S$ be a smooth connected variety over $k$. The functor
$A\rightsquigarrow A_{\eta}\overset{\textup{{\tiny def}}}{=}A_{k(S)}$ is a
fully faithful functor from the category of families of abelian varieties over
$S$ to the category of abelian varieties over $k(S)$, with essential image the
abelian varieties $B$ over $k(S)$ such that $\omega_{f}(B)$ is unramified.
\end{theorem}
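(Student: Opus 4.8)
The plan is to establish the two assertions separately: first full faithfulness, then the identification of the essential image. For full faithfulness, let $A, B$ be families of abelian varieties over $S$ and consider a homomorphism $\varphi_\eta\colon A_\eta \to B_\eta$ over $k(S)$. The $S$-scheme $\mathcal{H}om_S(A,B)$ is unramified and separated over $S$ (this is the same input already used in the proof of Theorem \ref{h69b}, via \cite{deligne1971h}, 4.4.3), so a section over the generic point extends uniquely over $S$ if it extends over each codimension-one point, and the latter follows from the N\'eron mapping property together with the fact that $A, B$ have good reduction everywhere on $S$. Thus $\Hom_S(A,B) \to \Hom_{k(S)}(A_\eta, B_\eta)$ is bijective, which is full faithfulness. (Injectivity alone is immediate since $A$ is flat over the connected $S$ and $A_\eta$ is dense.)

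For the essential image, one inclusion is routine: if $A$ is a family of abelian varieties over $S$, then the Tate module sheaves $R^1 f_\ast \mathbb{Z}_\ell$ are lisse on $S_{\mathrm{et}}$, so the $\Gal(k(S)^{\mathrm{al}}/k(S))$-action on each $A_\eta[\ell^n](k(S)^{\mathrm{al}})$ — and hence on $\omega_f(A_\eta)$ — factors through $\pi_1(S,\bar\eta)$, i.e. is unramified in the sense defined. The substance is the converse: given an abelian variety $B$ over $k(S)$ with $\omega_f(B)$ unramified, produce a family over $S$. First I would replace $B$ by a $k(S)$-isogenous abelian variety carrying a polarization whose degree is prime to the residue characteristics (here characteristic zero makes this harmless) — or invoke Zarhin's trick, exactly as in the last paragraph of the proof of Theorem \ref{h69b} — so that we may assume $B$ is principally polarized and, after a further \'etale base change on $S$ trivializing $B[N]$, equipped with a level-$N$ structure for some $N \geq 3$. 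Then $B$, with its polarization and level structure, is classified by a morphism $\Spec k(S) \to S_N$ to the fine moduli scheme of Mumford (Theorem \ref{h46} identifies $S_N$ with $D_N$, but all we need is representability). The claim is that the unramifiedness of $\omega_f(B)$ forces this rational map $S \dashrightarrow S_N$ to extend to a morphism on all of $S$; pulling back the universal family then gives the desired family of abelian varieties over $S$ whose generic fibre is $B$.

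The main obstacle — and the step I would spend the most care on — is precisely this extension of the classifying map across the boundary: one must show that the abelian scheme $B$ over $k(S)$ has good reduction at \emph{every} codimension-one point of $S$, and then apply a purity/Zariski–Nagata argument to conclude good reduction over all of $S$. The key tool is the N\'eron–Ogg–Shafarevich criterion: at a codimension-one point $s$ of $S$ with henselian local ring having fraction field $K_s$, the abelian variety $B_{K_s}$ has good reduction if and only if the inertia group at $s$ acts trivially on $\omega_f(B)$ (equivalently on one $\ell$-adic Tate module), and this triviality is exactly the unramifiedness hypothesis restricted to the decomposition group at $s$. Having good reduction in codimension one, the Zariski extension lemma for abelian schemes (the "N\'eron model extends" statement — e.g. Faltings–Chai, or the purity theorem of Raynaud–Gabber) promotes this to an abelian scheme over the smooth variety $S$ itself; its generic fibre is canonically $B$ by the full faithfulness already proved. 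Finally I would note that the polarization and level structure, being morphisms and sections generically, extend by the full faithfulness statement, so the construction is independent of the auxiliary choices and functorial, completing the identification of the essential image.
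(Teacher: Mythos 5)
Your plan is in substance the paper's own argument, just spelled out at greater length: the crux in both is that the unramifiedness hypothesis gives good reduction of $B$ at every codimension-one point of $S$ (via the N\'eron--Ogg--Shafarevich criterion), and one then extends the resulting N\'eron model over the codimension-one locus across the remaining codimension-$\geq 2$ subset using the Faltings--Chai extension theorem (the paper cites \cite{chaiF1990}, I 2.7 and V 6.8, the latter handling the morphism/full-faithfulness part). The moduli-space detour you sketch first --- pass to a principally polarized isogenous variety with level structure, classify by a map to $S_N$, and extend that map --- is a legitimate alternative framing, but it introduces extra descent steps (undoing the isogeny, undoing the \'etale base change trivializing $B[N]$) that the direct argument avoids; you implicitly acknowledge this by pivoting back to the direct good-reduction-plus-purity route when you identify ``the main obstacle.'' Your treatment of full faithfulness via the unramified scheme $\mathcal{H}\mathrm{om}_S(A,B)$ and the N\'eron mapping property is consistent with the paper, which subsumes it in the remark that ``a morphism of abelian varieties extends'' in codimension one.
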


\begin{proof}
When $S$ has dimension $1$, this follows from the theory of N\'{e}ron models.
In general, this theory shows that an abelian variety (or a morphism of
abelian varieties) extends to an open subvariety $U$ of $S$ such that
$S\smallsetminus U$ has codimension at least $2$. Now we can
apply\footnote{Recall that we are assuming that the base field has
characteristic zero --- the theorem is false without that condition.}
\cite{chaiF1990}, I 2.7, V 6.8.
\end{proof}

The functor $\omega_{f}$ extends to a functor on abelian motives such that
$\omega_{f}(h_{1}A)=\omega_{f}(A)$ if $A$ is an abelian variety.

\begin{definition}
\label{h80e}Let $S$ be a smooth connected variety over $k$. A \emph{family}%
\index{family of abelian motives}
$M$ \emph{of abelian motives} over $S$ is an abelian motive $M_{\eta}$ over
$k(S)$ such that $\omega_{f}(M_{\eta})$ is unramified.
\end{definition}

Let $M$ be a family of motives over a smooth connected variety $S$, and let
$\bar{\eta}=\Spec(k(S)^{\mathrm{al}})$. The fundamental group $\pi_{1}%
(S,\bar{\eta})=\Gal(k(S)^{\mathrm{un}}/k(S))$, and so the representation of
$\pi_{1}(S,\bar{\eta})$ on $\omega_{f}(M_{\eta})$ defines a local system of
$\mathbb{A}{}_{f}$-modules $\omega_{f}(M)$. Less obvious is that, when the
ground field is $\mathbb{C}{}$, $M$ defines a polarizable variation of Hodge
structures on $S$, $\mathcal{H}{}_{B}(M/S)$. When $M$ can be represented in
the form $(A,p,m)$ on $S$, this is obvious. However, $M$ can always be
represented in this fashion on an open subset of $S$, and the underlying local
system of $\mathbb{Q}{}$-vector spaces extends to the whole of $S$ because the
monodromy representation is unramified. Now it is possible to show that the
variation of Hodge structures itself extends (uniquely) to the whole of $S$,
by using results from \cite{schmid1973}, \cite{cattaniKS1986}, and
\cite{griffithsS1969}. See \cite{milne1994}, 2.40, for the details.

\begin{theorem}
\label{h80b}Let $S$ be a smooth connected variety over $\mathbb{C}{}$. The
functor sending a family $M$ of abelian motives over $S$ to its associated
polarizable Hodge structure is fully faithful, with essential image the
variations of Hodge structures $(\mathsf{V},F)$ such that there exists a dense
open subset $U$ of $S$, an integer $m$, and a family of abelian varieties
$f\colon A\rightarrow S$ such that $(\mathsf{V},F)$ is a direct summand of
$Rf_{\ast}\mathbb{Q}{}$.
\end{theorem}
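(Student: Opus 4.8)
The plan is to prove the three assertions -- faithfulness, fullness, and the description of the essential image -- by reducing everything, over a suitable dense open subset of $S$, to honest families of abelian varieties, and then feeding in Deligne's theorem \ref{h74} (on abelian varieties Hodge classes are absolutely Hodge), the theorem of the fixed part \ref{h72}, and the N\'{e}ron-model description \ref{h80a}.

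First I would note that nothing is lost by shrinking $S$. For any dense open $U\subseteq S$ one has $k(U)=k(S)$, so the source category is unchanged; since $\pi_{1}(U)\twoheadrightarrow\pi_{1}(S)$ for $S$ smooth and connected, a morphism between the restrictions to $U$ of local systems (and Hodge filtrations) defined on $S$ is automatically defined on $S$; and a polarizable variation on $S$ is determined by its restriction to a dense open (rigidity of such extensions, already invoked in the paragraph preceding the statement). After shrinking, a given family $M$ of abelian motives over $S$ acquires a presentation $(A,p,m)$ with $f\colon A\to U$ a family of abelian varieties and $p$ an idempotent in $AH^{\dim A}(A\times_{U}A)$ -- spread out a presentation $M_{\eta}=(A_{\eta},p,m)$, using that an abelian variety over $k(S)$ has good reduction over some dense open (\ref{h80a}) and that absolute Hodge classes spread out (\ref{h71b}, \ref{h72}) -- and then $\mathcal{H}{}_{B}(M/U)$ is the direct summand of $(Rf_{\ast}\mathbb{Q}{})(m)$ cut out by $p$.

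Faithfulness is immediate: a morphism $M_{\eta}\to N_{\eta}$ killing the associated variations kills the Betti component of the corresponding absolute Hodge class on $A_{\eta}\times B_{\eta}$, hence vanishes by \ref{h71a}. Fullness is the heart of the matter. Working over $U$ with $M=(A,p,m)$, $N=(B,q,n)$, I would identify both $\Hom$-spaces with $q\circ\{\text{global absolutely Hodge sections over }U\text{ of the appropriate twists of }R^{\bullet}(f\times g)_{\ast}\mathbb{Q}{}\}\circ p$. On the motivic side this is the definition of $\Hom$ in $\Mot^{\mathrm{ab}}(k(S))$ together with the relative formalism of \cite{deligne1982}, \S 2: an absolute Hodge class on $A_{\eta}\times_{k(S)}B_{\eta}$ spreads out to, and is recovered from, such a global section, using \ref{h71b}, \ref{h72}, and the surjection (from the theorem of the fixed part) of the cohomology of a smooth compactification of $A\times_{U}B$ onto $\Gamma(U,R^{\bullet}(f\times g)_{\ast}\mathbb{Q}{}(\,\cdot\,))$, which respects absoluteness. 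On the variation side, a morphism of polarizable variations over $U$ is exactly a global section of the internal $\Hom$ of type $(0,0)$ at every point, i.e.\ a global Hodge section; at each point it is a Hodge class on an abelian variety, hence there absolutely Hodge by \ref{h74}, hence globally absolutely Hodge by principle B (\ref{h72}). This yields the desired bijection between morphisms of families of abelian motives over $S$ and morphisms of the associated variations.

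For the essential image, the inclusion ``$\subseteq$'' is exactly the presentation constructed above (over a dense open, up to Tate twist, $\mathcal{H}{}_{B}(M/S)$ is a direct summand of $Rf_{\ast}\mathbb{Q}{}$ for a family of abelian varieties $f$). For ``$\supseteq$'', suppose $(\mathsf{V},F)$ is a polarizable variation on $S$ which, over a dense open $U$ and up to a Tate twist by $m$, is a direct summand of $Rf_{\ast}\mathbb{Q}{}$ for a family of abelian varieties $f\colon A\to U$. The defining projector $p$ is a morphism of variations over $U$, hence at each point an idempotent endomorphism of the Hodge structure of an abelian variety, hence there absolutely Hodge by \ref{h74}, and, being flat, an element of $AH^{\dim A}(A_{\eta}\times_{k(S)}A_{\eta})$; so $M_{\eta}\overset{\textup{{\tiny def}}}{=}(A_{\eta},p,m)$ is an abelian motive over $k(S)$. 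Moreover $\omega_{f}(M_{\eta})\simeq\mathbb{A}{}_{f}\otimes_{\mathbb{Q}{}}(\text{underlying }\mathbb{Q}{}\text{-local system of }\mathsf{V}|_{U})$, which is the restriction of a local system on $S$ and so has monodromy factoring through $\pi_{1}(S)=\Gal(k(S)^{\mathrm{un}}/k(S))$; hence $\omega_{f}(M_{\eta})$ is unramified and $M$ is a family of abelian motives over $S$, with $\mathcal{H}{}_{B}(M/S)$ a polarizable variation agreeing with $(\mathsf{V},F)$ over $U$, hence equal to it. The main obstacle is the bookkeeping in the fullness step -- matching a morphism of abelian motives over the function field $k(S)$ with a global absolutely Hodge section over $U$, which goes through spreading out an absolute Hodge class from $k(S)$, descending it from a smooth compactification via the theorem of the fixed part, and re-extending across $S\smallsetminus U$ using unramifiedness -- while the genuine mathematical input is concentrated entirely in Deligne's theorem \ref{h74}.
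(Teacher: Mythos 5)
Your argument is correct and follows essentially the same route as the proof the paper points to (the paper itself offers only a one-line reference to \cite{milne1994}, 2.42, which goes through exactly this reduction). The key structural moves you make -- shrink to a dense open $U$ where $M$ has a concrete presentation $(A,p,m)$ via \ref{h80a}; identify $\Hom$'s on the motivic side with spaces of global absolutely Hodge sections, and on the Hodge-theoretic side with global Hodge sections, and then bridge the two with \ref{h74} at one point and Principle B (\ref{h72}) to propagate; and for the essential image, promote the direct-summand projector to an absolute Hodge class by the same device -- are precisely the content of that argument, and all the citations you invoke (\ref{h71a}, \ref{h71b}, \ref{h72}, \ref{h74}, \ref{h80a}) are the right ones. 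Your observation that nothing is lost by shrinking is the point that keeps the argument honest: $k(U)=k(S)$ fixes the source $\Hom$'s, $\pi_1(U)\twoheadrightarrow\pi_1(S)$ extends morphisms of local systems across $S\smallsetminus U$, and unramifiedness of $\omega_f(M_\eta)$ over $S$ (not just $U$) is checked in the essential-image step exactly where it needs to be. One place worth spelling out with a bit more care if this were written in full: passing from ``absolutely Hodge at every $s\in U(\mathbb{C})$'' to ``an absolute Hodge class on $A_\eta\times_{k(S)}B_\eta$ over (the algebraic closure of) the function field'' uses the theorem of the fixed part to lift the section to a smooth compactification and the base-change property \ref{h71b} plus the Galois-finiteness \ref{h71e} to identify the lift with a class over $\overline{k(S)}$ stable under $\Gal(\overline{k(S)}/k(S))$. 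You flag this as bookkeeping, which it is, but it is the only step where the global-to-generic passage is not purely formal.
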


\begin{proof}
This follows from the similar statement (\ref{h69b}) for families of abelian
varieties (see \cite{milne1994}, 2.42).
\end{proof}

\section{Symplectic Representations}

\begin{quote}
{\small In this subsection, we classify the symplectic representations of
groups. These were studied by Satake in a series of papers (see especially
\cite{satake1965,satake1967,satake1980}). Our exposition follows that of
\cite{deligne1979}.}
\end{quote}

In \S 8 we proved that there exists a correspondence between variations of
Hodge structures on locally symmetric varieties and certain commutative
diagrams%
\begin{equation}
\begin{tikzpicture} \matrix(m)[matrix of math nodes, row sep=3em, column sep=2.5em, text height=1.5ex, text depth=0.25ex] {H\\ (H^{\text{ad}},\bar{h})&(G,h)&\GL_V\\}; \path[->,font=\scriptsize,>=angle 90] (m-1-1) edge (m-2-1) (m-1-1) edge node[auto] {} (m-2-2) (m-2-2) edge (m-2-1); \path[right hook->,font=\scriptsize,>=angle 90] (m-2-2) edge node[auto]{$\rho$} (m-2-3); \end{tikzpicture}
\end{equation}
In this section, we study whether there exists such a diagram and a
nondegenerate alternating form $\psi$ on $V$ such that $\rho(G)\subset
G(\psi)$ and $\rho_{\mathbb{R}{}}\circ h\in D(\psi)$. Here $G(\psi)$ is the
group of \emph{symplectic similitudes }%
\index{symplectic similitudes }%
(algebraic subgroup of $\GL_{V}$ whose elements fix $\psi$ up to a scalar) and
$D(\psi)$ is the Siegel upper half space (set of Hodge structures $h$ on $V$
of type $\{(-1,0),(0,-1)\}$ for which $2\pi i\psi$ is a
polarization\footnote{This description agrees with that in \S 2 because of the
correspondence in (\ref{h47}).}). Note that $G(\psi)$ is a reductive group
whose derived group is the symplectic group $S(\psi)$.

\subsection{Preliminaries}

\begin{plain}
The \emph{universal covering torus}%
\index{universal covering torus}
$\tilde{T}$ of a torus $T$ is the projective system $(T_{n},T_{nm}%
\xrightarrow{m}T_{n})$ in which $T_{n}=T$ for all $n$ and the indexing set is
$\mathbb{N}{}\smallsetminus\{0\}$ ordered by divisibility. For any algebraic
group $G$,%
\[
\Hom(\tilde{T},G)=\varinjlim_{n\geq1}\Hom(T_{n},G)\text{.}%
\]
Concretely, a homomorphism $\tilde{T}\rightarrow G$ is represented by a pair
$(f,n)$ in which $f$ is a homomorphism $T\rightarrow G$ and $n\in\mathbb{N}%
{}\smallsetminus\{0\}$; two pairs $(f,n)$ and $(g,m)$ represent the same
homomorphism $\tilde{T}\rightarrow G$ if and only if $f\circ m=g\circ n$. A
homomorphism $f\colon\tilde{T}\rightarrow G$ factors through $T$ if and only
if it is represented by a pair $(f,1)$. A homomorphism $\mathbb{\tilde{G}}%
_{m}\rightarrow\GL_{V}$ represented by $(\mu,n)$ defines a gradation
$V=\bigoplus V_{r}$, $r\in\frac{1}{n}\mathbb{Z}{}$; here $V_{\frac{a}{n}%
}=\{v\in V\mid\mu(t)v=t^{a}v\}$; the $r$ for which $V_{r}\neq0$ are called the
\emph{weights}%
\index{weights}
the representation of $\mathbb{\tilde{G}}_{m}$ on $V$. Similarly, a
homomorphism $\mathbb{\tilde{S}}{}\rightarrow\GL_{V}$ represented by $(h,n)$
defines a fractional Hodge decomposition $V_{\mathbb{C}{}}=\bigoplus V^{p,q}$
with $p,q\in\frac{1}{n}\mathbb{Z}{}$.
\end{plain}

\subsection{The real case}

Throughout this subsection, $H$ is a simply connected real algebraic group
without compact factors, and $\bar{h}$ is a homomorphism $\mathbb{S}%
/\mathbb{G}_{m}\rightarrow H^{\mathrm{ad}}$ satisfying the conditions (SV1,2),
p.~\pageref{SV}, and whose projection on each simple factor of $H^{\mathrm{ad}%
}$ is nontrivial.

\begin{definition}
\label{h94}A homomorphism $H\rightarrow\GL_{V}$ with finite kernel is a
\emph{symplectic\/ representation}%
\index{symplectic\/ representation}
of $(H,\bar{h})$ if there exists a commutative diagram%
\[
\begin{tikzpicture}
\matrix(m)[matrix of math nodes, row sep=3em, column sep=2.5em,
text height=1.5ex, text depth=0.25ex]
{H\\
(H^{\text{ad}},\bar{h})&(G,h)&(G(\psi ),D(\psi )),\\};
\path[->,font=\scriptsize,>=angle 60]
(m-1-1) edge  (m-2-1)
(m-1-1) edge  (m-2-2)
(m-2-2) edge  (m-2-1)
(m-2-2) edge  (m-2-3);
\end{tikzpicture}
\]
in which $\psi$ is a nondegenerate alternating form on $V$, $G$ is a reductive
group, and $h$ is a homomorphism $\mathbb{S}{}\rightarrow G$; the homomorphism
$H\rightarrow G$ is required to have image $G^{\mathrm{der}}$.
\end{definition}

In other words, there exists a real reductive group $G$, a nondegenerate
alternating form $\psi$ on $V$, and a factorization%
\[
H\overset{a}{\longrightarrow}G\overset{b}{\longrightarrow}\GL_{V}%
\]
of $H\rightarrow\GL_{V}$ such that $a(H)=G^{\mathrm{der}}$, $b(G)\subset
G(\psi)$, and $b\circ h\in D(\psi)$; the isogeny $H\rightarrow G^{\mathrm{der}%
}$ induces an isomorphism $H^{\mathrm{ad}}\overset{c}{\longrightarrow
}G^{\mathrm{ad}}$ (see footnote \ref{ad}, p.~\pageref{ad}), and it is required
that $\bar{h}=c^{-1}\circ\ad\circ h$.

We shall determine the complex representations of $H$ that occur in the
complexification of a symplectic representation (and we shall omit
\textquotedblleft the complexification of\textquotedblright).

\begin{proposition}
\label{h94a}A homomorphism $H\rightarrow\GL_{V}$ with finite kernel is a
symplectic\/ representation of $(H,\bar{h})$ if there exists a commutative
diagram%
\[
\begin{tikzpicture}
\matrix(m)[matrix of math nodes, row sep=3em, column sep=2.5em,
text height=1.5ex, text depth=0.25ex]
{H\\
(H^{\text{ad}},\bar{h})&(G,h)&\GL_V,\\};
\path[->,font=\scriptsize,>=angle 60]
(m-1-1) edge  (m-2-1)
(m-1-1) edge  (m-2-2)
(m-2-2) edge  (m-2-1)
(m-2-2) edge  node[auto]{$\rho$} (m-2-3);
\end{tikzpicture}
\]
in which $G$ is a reductive group, the homomorphism $H\rightarrow G$ has image
$G^{\mathrm{der}}$, and $(V,\rho\circ h)$ has type $\{(-1,0),(0,-1)\}$.
\end{proposition}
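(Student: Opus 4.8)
The plan is to extract from the given diagram the triple required by Definition \ref{h94} --- a reductive group, the homomorphism $h$, and a nondegenerate alternating form $\psi$ on $V$ --- the only real work being the construction of $\psi$: its positivity will come from the polarization theorem (Theorem \ref{h20b}) fed by (SV2), while its invariance under the group is essentially forced by the Hodge type. First I would unwind the hypothesis. By \ref{h47}, the assumption that $(V,\rho\circ h)$ has type $\{(-1,0),(0,-1)\}$ means precisely that $J=\rho(h(i))$ is a complex structure on $V$, with $h(z)$ acting on $V^{-1,0}$ (resp. on $V^{0,-1}$) as $z$ (resp. as $\bar z$); in particular the Hodge structure has weight $-1$, so $w_{h}(\mathbb{G}_{m})$ acts on $V$ through the scalar $t\mapsto t$. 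Since $(G^{\mathrm{ad}},\bar{h})=(H^{\mathrm{ad}},\bar{h})$ satisfies (SV1,2), so does $(G,h)$ (the comparison in \S 6 of $(G,h)$ with $(G^{\mathrm{ad}},\bar{h})$), whence $w_{h}(\mathbb{G}_{m})\subset Z(G)$, the element $h(-1)=w_{h}(-1)$ is central, and $\inn(h(i))$ induces a Cartan involution of $G^{\mathrm{ad}}$. Let $G^{h}$ be the smallest algebraic subgroup of $G$ through which $h$ factors (a subtorus, since $\mathbb{S}{}$ is a torus), and put $G'=G^{\mathrm{der}}\cdot G^{h}$. Then $G'$ is a reductive subgroup of $G$ with $(G')^{\mathrm{der}}=G^{\mathrm{der}}$ (as $G'/G^{\mathrm{der}}$ is a quotient of the torus $G^{h}$, hence commutative), it contains $h(\mathbb{S}{})$ and $w_{h}(\mathbb{G}_{m})$, and $H\to G'$ still has image $G^{\mathrm{der}}=(G')^{\mathrm{der}}$, so I may work with $G'$ in place of $G$.

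Next I would construct $\psi$ by rerunning the argument of Theorem \ref{h20b} for the semisimple group $G^{\mathrm{der}}$ with the element $C=J$. The involution $\inn(J)$ of $G^{\mathrm{der}}$ induces $\inn(\bar{h}(i))$ on $(G^{\mathrm{der}})^{\mathrm{ad}}=G^{\mathrm{ad}}$, which is a Cartan involution; since an involution of a connected semisimple group is Cartan exactly when its image on the adjoint group is (a closed subgroup of a compact group is compact, and a connected semisimple real Lie group is compact iff its adjoint group is), the real form $(G^{\mathrm{der}})^{(\inn J)}$ is compact. Note this uses only that $J\in\GL_{V}(\mathbb{R}{})$ normalizes $G^{\mathrm{der}}$, not that $J$ lies in $G^{\mathrm{der}}$. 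Averaging a positive definite symmetric form on $V$ over the compact group generated by $(G^{\mathrm{der}})^{(\inn J)}(\mathbb{R}{})$ and $J$ (which normalizes it) gives a positive definite symmetric $\varphi$ on $V$ that is both $(G^{\mathrm{der}})^{(\inn J)}(\mathbb{R}{})$-invariant and $J$-invariant. Then, exactly as in the proof of \ref{h20b}, the form $\psi=\varphi(\,\cdot\,,-J\,\cdot\,)$ is $G^{\mathrm{der}}$-invariant; the $J$-invariance of $\varphi$ makes $\psi$ alternating, and $\psi(\,\cdot\,,J\,\cdot\,)=\varphi$ is symmetric and positive definite. So $\psi$ is nondegenerate and, by the description of $D(\psi)$ in \S 2 (via \ref{h47}), $\rho_{\mathbb{R}{}}\circ h\in D(\psi)$.

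Finally I would verify $\rho(G')\subset G(\psi)$. Being alternating, $J$-invariant, and positive through $\psi(\,\cdot\,,J\,\cdot\,)$, the form $\psi$ is a polarization of the Hodge structure $(V,\rho\circ h)$, hence a morphism of Hodge structures $V\otimes V\to\mathbb{R}{}(1)$; therefore $\psi_{\mathbb{C}{}}$ pairs $V^{-1,0}$ with $V^{0,-1}$ and annihilates $V^{-1,0}\otimes V^{-1,0}$ and $V^{0,-1}\otimes V^{0,-1}$, whence $\psi(h(z)x,h(z)y)=(z\bar{z})\,\psi(x,y)$, i.e. $h(\mathbb{S}{})$ --- and so its Zariski closure $G^{h}$ --- preserves $\psi$ up to the scalar $t$. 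Combined with $G^{\mathrm{der}}$-invariance this gives $\rho(G')\subset G(\psi)$, with similitude character trivial on $G^{\mathrm{der}}$ and equal to $t$ on $G^{h}$, and the resulting commutative diagram $H\to(G',h)\to(G(\psi),D(\psi))$, with $H\to G'$ of image $(G')^{\mathrm{der}}$ and $\bar{h}=\ad\circ h$ on $(G')^{\mathrm{ad}}\simeq H^{\mathrm{ad}}$, is exactly the data demanded by Definition \ref{h94}.

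The step I expect to be the main obstacle is the handling of Theorem \ref{h20b}: the element $J=h(i)$ realizing the Cartan involution need not lie in $G^{\mathrm{der}}$, so one must observe that the proof of \ref{h20b} only requires $J$ to normalize $G^{\mathrm{der}}$, and one must ensure that the polarizing form comes out alternating rather than merely symmetric --- which is why the averaging is performed over a compact group that contains $J$. The reduction to $G'=G^{\mathrm{der}}\cdot G^{h}$ and the ``up to scalar'' invariance under $G^{h}$ are then routine once the Hodge type is used.
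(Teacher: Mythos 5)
Your proof is correct and follows the same approach as the paper's: replace $G$ by the subgroup $G'$ generated by $G^{\mathrm{der}}$ and $h(\mathbb{S})$, then run the Cartan-involution/polarization argument of Theorem \ref{h20b} to produce a $G^{\mathrm{der}}$-invariant alternating form $\psi$ with $\rho_{\mathbb{R}}\circ h\in D(\psi)$, while $h(\mathbb{S})$ acts by similitudes. Your version is a faithful unfolding of the paper's compressed ``cf.\ the proof of \ref{h52}'' --- in particular you correctly flag that $J=h(i)$ need not lie in $G^{\mathrm{der}}(\mathbb{R})$ and so one must average also over $J$ to make $\varphi$ $J$-invariant, which is what forces $\psi$ to be alternating rather than merely $G^{\mathrm{der}}$-invariant.
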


\begin{proof}
Let $G^{\prime}$ be the algebraic subgroup of $G$ generated by
$G^{\mathrm{der}}$ and $h(\mathbb{S)}$. After replacing $G$ with $G^{\prime}$,
we may suppose that $G$ itself is generated by $G^{\mathrm{der}}$ and
$h(\mathbb{S}{})$. Then $(G,h)$ satisfies (SV2*), and it follows from Theorem
\ref{h20b} that there exists a polarization $\psi$ of $(V,\rho\circ h)$ such
that $G$ maps into $G(\psi)$ (cf. the proof of \ref{h52}).
\end{proof}

Let $(H,\bar{h})$ be as before. The cocharacter $\mu_{\bar{h}}$ of
$H_{\mathbb{C}{}}^{\mathrm{ad}}$ lifts to a fractional cocharacter $\tilde
{\mu}$ of $H_{\mathbb{C}}$:%
\[
\begin{CD}
\mathbb{\tilde{G}}_{m} @>{\tilde{\mu}}>> H_{\mathbb{C}}\\
@VVV@VV{\ad}V\\
\mathbb{G}_{m} @>>{\mu_{\bar{h}}}> H_{\mathbb{C}{}}^{\mathrm{ad}}.
\end{CD}
\]

\begin{lemma}
\label{h94c}If an irreducible complex representation $W$ of $H$ occurs in a
symplectic representation, then $\tilde{\mu}$ has at most two weights $a$ and
$a+1$ on $W$.
\end{lemma}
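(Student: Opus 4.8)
Let $H \to G \xrightarrow{b} \GL_V$ be a factorization realizing $W$ as a subrepresentation of a symplectic representation, so that $(V, b\circ h)$ has type $\{(-1,0),(0,-1)\}$. Passing to $V_{\mathbb{C}}$, the Hodge decomposition $V_{\mathbb{C}} = V^{-1,0} \oplus V^{0,-1}$ is exactly the weight decomposition for $\mu_h$: namely $\mu_h(t)$ acts as $t^{-1}$ on $V^{-1,0}$ and as $t^0 = 1$ on $V^{0,-1}$ (using the convention $v \in V^{p,q} \iff h(z)v = z^{-p}\bar z^{-q}v$, so that $\mu_h(t)$ acts on $V^{p,q}$ as $t^{-p}$). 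Thus $\mu_h$ has exactly the two weights $0$ and $1$ on $V$, after a harmless twist by the central cocharacter $w_h$; more precisely, on $V$ the weights of $\mu_h$ lie in $\{0,1\}$ once we normalize by the weight filtration, and in any case they form a set of two consecutive integers $\{a, a+1\}$ (here $a = -1$, or $a=0$ depending on normalization, but the \emph{difference} is what matters).

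**Key steps, in order.** First I would record that $\tilde\mu$, the lift of $\mu_{\bar h}$ to $H_{\mathbb{C}}$, differs from $\mu_{h}|_{H_{\mathbb{C}}}$ (the restriction of $\mu_h$ to the derived group) only by a \emph{central} fractional cocharacter of $H_{\mathbb{C}}$ — indeed both project to $\mu_{\bar h}$ under $\ad$, and $\Ker(\ad) = Z(H_{\mathbb{C}})$ which, since $H$ is simply connected and semisimple, is finite; so the difference is a central torsion fractional cocharacter, hence acts on any irreducible $W$ by a single scalar-valued character. Second, since $W$ occurs inside $V_{\mathbb{C}}$ as an $H$-subrepresentation, and $V_{\mathbb{C}} = V^{-1,0}\oplus V^{0,-1}$ is the decomposition into $\mu_h$-eigenspaces with eigenvalues differing by $1$, the restriction of $\mu_h$ to $W$ has at most the two weights occurring on $V_{\mathbb{C}}$, i.e. two consecutive integers. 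Third, translating back via the central-twist observation, $\tilde\mu$ has at most two weights on $W$, and they are consecutive; write them $a$ and $a+1$ (allowing that only one may actually occur, in which case we may still name the pair $a, a+1$ with one weight absent). This gives the statement.

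**Where the work actually is.** The only genuinely substantive point is the first step: verifying that $\tilde\mu$ and the restriction of $\mu_h$ to $H_{\mathbb{C}}$ agree up to a central fractional cocharacter acting by a scalar on each irreducible. This requires being careful about the fractional (i.e. $\mathbb{\tilde G}_m$) bookkeeping — $\tilde\mu$ is a pair $(\mu, n)$ and the comparison must be made after clearing denominators — and about the fact that $Z(H_{\mathbb{C}})$ being finite forces the discrepancy to be torsion, hence to act through a character of finite order, hence by a scalar on any irreducible $W$ (which only shifts \emph{all} the weights of $\tilde\mu$ on $W$ by the same fractional amount, not the spread between them). Once that is pinned down, everything else is immediate from the fact that a type $\{(-1,0),(0,-1)\}$ Hodge structure has $\mu_h$-weights in a set of two consecutive integers, together with the observation that weights on a subrepresentation are a subset of weights on the ambient space. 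I do not anticipate needing any input beyond Theorem \ref{h20b}, Proposition \ref{h94a}, and the elementary structure theory of $\mathbb{\tilde S}$ and $\mathbb{\tilde G}_m$ recalled in the preliminaries.
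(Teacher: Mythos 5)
Your plan is the paper's: push $\tilde\mu$ into $\GL_V$, show it differs from $\mu_h$ by a factor that acts as a scalar on the irreducible $W$, and then use that $\mu_h$ has only the weights $0,1$ on a Hodge structure of type $\{(-1,0),(0,-1)\}$. But the step you flag as ``where the work actually is'' is misformulated in a way that does matter. You compare $\tilde\mu$ with ``$\mu_h|_{H_{\mathbb{C}}}$'' and argue that since both project to $\mu_{\bar h}$ and $\Ker(\ad)=Z(H_{\mathbb{C}})$ is finite, the discrepancy is a torsion fractional cocharacter, hence acts through a finite quotient, hence by a scalar. This has two problems. First, $\mu_h$ is a cocharacter of $G_{\mathbb{C}}$, not of $H_{\mathbb{C}}$; ``the restriction of $\mu_h$ to the derived group'' is not a fractional cocharacter of $H_{\mathbb{C}}$, so the comparison cannot take place inside $H_{\mathbb{C}}$ and the kernel that is relevant is not $Z(H_{\mathbb{C}})$. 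Second, even after you push $\tilde\mu$ forward along $\varphi_{\mathbb{C}}\colon H_{\mathbb{C}}\rightarrow G_{\mathbb{C}}$ and compare inside $G_{\mathbb{C}}$, the discrepancy $\nu$ with $\varphi_{\mathbb{C}}\circ\tilde\mu=\mu_h\cdot\nu$ lies in $Z(G_{\mathbb{C}})$, which is \emph{not} finite in general, and $\nu$ is genuinely not torsion. Take $H=\SL_2\hookrightarrow G=\GL_2$ with $\mu_h(t)=\diag(t,1)$: then $\tilde\mu$ is represented by $(t\mapsto\diag(t,t^{-1}),2)$, and $\nu$ is represented by $(t\mapsto\diag(t^{-1},t^{-1}),2)$, central of infinite order. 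So the ``torsion, hence finite order, hence scalar'' reasoning collapses.

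The reason $\nu$ acts by a single scalar on $W$ is Schur's lemma, not finiteness: $\nu$ takes values in $Z(G_{\mathbb{C}})$, which commutes with $\varphi(H_{\mathbb{C}})$, and $W$ is $H_{\mathbb{C}}$-irreducible, so $\nu$ acts on $W$ by a single (fractional) weight $a$. Once you replace your torsion argument by this, the rest of your proof is correct and agrees with the paper's: on $W$ the weights of $\varphi_{\mathbb{C}}\circ\tilde\mu$ are $a$ plus a subset of the weights $\{0,1\}$ of $\mu_h$ on $V$, hence at most the two values $a$, $a+1$. (There is also a small sign slip in your first paragraph --- with the convention $\mu_h(t)$ acting as $t^{-p}$ on $V^{p,q}$, it acts as $t^{1}$ on $V^{-1,0}$, not $t^{-1}$ --- but this does not affect the spread of weights and is harmless.)
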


\begin{proof}
Let $H\overset{\varphi}{\longrightarrow}(G,h)\overset{}{\longrightarrow
}\GL_{V}$ be a symplectic representation of $(H,\bar{h})$, and let $W$ be an
irreducible direct summand of $V_{\mathbb{C}{}}$. The homomorphisms
$\varphi_{\mathbb{C}{}}\circ\tilde{\mu}\colon\mathbb{\tilde{G}}_{m}\rightarrow
G_{\mathbb{C}{}}$ coincides with $\mu_{h}$ when composed with $G_{\mathbb{C}%
{}}\rightarrow G_{\mathbb{C}{}}^{\mathrm{ad}}$, and so $\varphi_{\mathbb{C}{}%
}\circ\tilde{\mu}=\mu_{h}\cdot\nu$ with $\nu$ central. On $V$, $\mu_{h}$ has
weights $0,1$. If $a$ is the unique weight of $\nu$ on $W$, then the only
weights of $\tilde{\mu}$ on $W$ are $a$ and $a+1$.
\end{proof}

\begin{lemma}
\label{h94g}Assume that $H$ is almost simple. A nontrivial irreducible complex
representation $W$ of $H$ occurs in a symplectic representation if and only if
$\tilde{\mu}$ has exactly two weights $a$ and $a+1$ on $W$.
\end{lemma}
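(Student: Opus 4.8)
The plan is to establish the two implications separately. The direct implication (``only if'') follows at once from Lemma~\ref{h94c} together with the remark that a nontrivial irreducible representation of an almost simple group cannot support a single $\tilde\mu$-weight; the reverse implication (``if'') is the substantial one, and I would prove it by constructing a symplectic representation and then invoking Proposition~\ref{h94a}.

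\textit{Necessity.} Suppose $W$ occurs in a symplectic representation of $(H,\bar h)$. By Lemma~\ref{h94c}, $\tilde\mu$ has at most two weights on $W$, and they are of the form $a,a+1$; I must exclude the possibility of a single weight. If $\tilde\mu$ had a single weight on $W$, then, writing $\tilde\mu=(\mu',n)$, the image of the honest cocharacter $\mu'$ in $\GL_W$ would be a subtorus of the scalars $\mathbb{G}_m\cdot\mathrm{id}$. Since $H$ is almost simple, its image in $\GL_W$ is semisimple and meets the scalars in a finite group; hence the image of $\mu'$ is a finite subtorus, i.e.\ trivial, so $\mu'$ maps into the finite central subgroup $\ker(H\to\GL_W)$ and is therefore trivial. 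Then the image $\mu_{\bar h}$ of $\tilde\mu$ in $H^{\mathrm{ad}}$ is torsion, hence zero; but $\mu_{\bar h}$ is a nonzero element of the torsion-free cocharacter group of a maximal torus of $H^{\mathrm{ad}}_{\mathbb{C}}$ (nonzero because $\bar h$ has nontrivial projection to the simple factor of $H^{\mathrm{ad}}$), a contradiction. Thus $\tilde\mu$ has exactly two weights $a,a+1$ on $W$.

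\textit{Sufficiency.} Assume $\tilde\mu$ has exactly the two weights $a,a+1$ on $W$. The idea is to enlarge the image of $H$ by a one-dimensional central torus so as to shift these weights to $\{0,1\}$. First I would produce a real $H$-module $V$ with $W$ a summand of $V_{\mathbb{C}}$: take $V$ to be a real form of $W$ if $W$ carries a real structure, and a real form of $W\oplus\bar W$ otherwise; in either case $V_{\mathbb{C}}$ is a sum of copies of $W$ and $\bar W$, and $H$ acts on $V$ with finite kernel. On $V_{\mathbb{C}}$, $\tilde\mu$ has weights in $\{a,a+1\}\cup\{-a-1,-a\}$. Next I would choose a fractional central cocharacter $\nu$ of $\GL_{V_{\mathbb{C}}}$ acting by $t^{-a}$ on $W$ and by $t^{a+1}$ on $\bar W$, so that $\mu\overset{\textup{{\tiny def}}}{=}\tilde\mu\cdot\nu$ has \emph{integer} weights $0$ and $1$ on all of $V_{\mathbb{C}}$ --- this is exactly the point at which the hypothesis that the two weights are \emph{consecutive integers} is used. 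Let $G$ be the subgroup of $\GL_V$ generated by the image $G^1$ of $H$ and a suitable real torus $C$ whose complexification contains the image of $\mu$; since $C$ is central, $G$ is reductive with $G^{\mathrm{der}}=G^1$ and $G^{\mathrm{ad}}=H^{\mathrm{ad}}$. I would then define $h\colon\mathbb{S}\to G_{\mathbb{R}}$ by the recipe $h(z)=\mu(z)\cdot\overline{\mu(z)}$ of (\ref{hq40}) (with $w_h$ acting as the scalar $t^{-1}$ on $V$), check that $w_h$ is defined over $\mathbb{Q}$, and check that $\ad\circ h=\bar h$ --- the latter because $\mu$ and $\mu_{\bar h}$ have the same image in $H^{\mathrm{ad}}$ while (SV1) forces the weight on $\Lie(H^{\mathrm{ad}})$ to vanish. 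By construction $(V,\rho\circ h)$ has type $\{(-1,0),(0,-1)\}$, so the commutative diagram $H\to(G,h)\to\GL_V$ satisfies the hypotheses of Proposition~\ref{h94a}, which then yields a nondegenerate alternating form $\psi$ making this a symplectic representation of $(H,\bar h)$; and $W$ occurs in $V_{\mathbb{C}}$.

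\textit{Main obstacle.} The delicate part is not the group-theoretic bookkeeping but the passage from the complex representation $W$ to a genuine \emph{real} reductive datum: one must descend to a real $H$-module $V$ (combining $W$ with $\bar W$ when necessary) and exhibit the central torus $C$ and the homomorphism $h$ over $\mathbb{R}$, with $w_h$ rational and $\ad\circ h=\bar h$. Everything hinges on $\tilde\mu$ having \emph{exactly two, consecutive} weights on $W$: without ``exactly two'' or without ``consecutive'', the shifted cocharacter cannot have all weights in $\{0,1\}$, so $(V,\rho\circ h)$ fails to be of type $\{(-1,0),(0,-1)\}$ and Proposition~\ref{h94a} does not apply --- which is precisely why the condition in the lemma is both necessary and sufficient.
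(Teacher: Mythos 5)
Your proof is correct and follows essentially the same route as the paper: the ``only if'' direction is Lemma~\ref{h94c} plus the observation that a single weight would force $\mu_{\bar h}$ to be trivial, and the ``if'' direction shifts $\tilde\mu$ by a central fractional cocharacter to produce a genuine $h$ with $(V,\rho\circ h)$ of type $\{(-1,0),(0,-1)\}$, then invokes Proposition~\ref{h94a}. The only real difference is cosmetic --- the paper uniformly takes $V$ to be the real vector space underlying $W$ (so $V_{\mathbb{C}}\simeq W\oplus\bar W$ always), avoiding your case split on whether $W$ has a real structure; also note two small slips in your write-up: the weights $a,a+1$ need not be integers (they lie in $\tfrac1n\mathbb{Z}$, only their difference is $1$), and in this real setting one needs $w_h$ defined over $\mathbb{R}$, not over $\mathbb{Q}$.
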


\begin{proof}
$\Rightarrow$: Let $(\mu,n)$ represent $\tilde{\mu}$. As $H_{\mathbb{C}{}}$ is
almost simple and $W$ nontrivial, the homomorphism $\mathbb{G}_{m}%
\rightarrow\GL_{W}$ defined by $\mu$ is nontrivial, therefore noncentral, and
the two weights $a$ and $a+1$ occur.

$\Leftarrow$: Let $(W,r)$ be an irreducible complex representation of $H$ with
weights $a,a+1$, and let $V$ be the real vector space underlying $W$. Define
$G$ to be the subgroup of $\GL_{V}$ generated by the image of $H$ and the
homotheties: $G=r(H)\cdot\mathbb{G}_{m}$. Let $\tilde{h}$ be a fractional
lifting of $\bar{h}$ to $\tilde{H}$:%
\[
\begin{CD}
\mathbb{\tilde{\mathbb{S}}} @>{\tilde{h}}>> H_{\mathbb{C}}\\
@VVV@VV{\ad}V\\
\mathbb{S}@>>{\bar h}> H_{\mathbb{C}}^{\mathrm{ad}}.
\end{CD}
\]
Let $W_{a}$ and $W_{a+1}$ be the subspaces of weight $a$ and $a+1$ of $W$.
Then $\tilde{h}(z)$ acts on $W_{a}$ as $(z/\bar{z})^{a}$ and on $W_{a+1}$ as
$(z/\bar{z})^{a+1}$, and so $h(z)\overset{\textup{{\tiny def}}}{=}\tilde
{h}(z)z^{-a}\bar{z}^{1+a}$ acts on these spaces as $\bar{z}$ and $z$
respectively. Therefore $h$ is a true homomorphism $\mathbb{S}{}\rightarrow
G$, projecting to $\bar{h}$ on $H^{\text{ad}}$, and $V$ is of type
$\{(-1,0),(0,-1)\}$ relative to $h$. We may now apply Lemma \ref{h94a}.
\end{proof}

We interprete the condition in Lemma \ref{h94g} in terms of roots and weights.
Let $\bar{\mu}=\mu_{\bar{h}}$. Fix a maximal torus $T$ in $H_{\mathbb{C}}$,
and let $R=R(H,T)\subset X^{\ast}(T)_{\mathbb{Q}{}}$ be the corresponding root
system. Choose a base $S$ for $R$ such that $\langle\alpha,\bar{\mu}%
\rangle\,\geq0$ for all $\alpha\in S$ (cf. \S 2).

Recall that, for each $\alpha\in R$, there exists a unique $\alpha^{\vee}\in
X_{\ast}(T)_{\mathbb{Q}{}}$ such that $\langle\alpha,\alpha^{\vee}\rangle=2$
and the symmetry $s_{\alpha}\colon x\mapsto x-\langle x,\alpha^{\vee}%
\rangle\alpha$ preserves $R$; moreover, for all $\alpha\in R$, $\langle
R,\alpha^{\vee}\rangle\subset\mathbb{Z}{}$. The lattice of weights is
\[
P(R)=\{\varpi\in X^{\ast}(T)_{\mathbb{Q}{}}\mid\langle\varpi,\alpha^{\vee
}\rangle\,\in\mathbb{Z}\text{ all }\alpha\in R\},
\]
the fundamental weights are the elements of the dual basis $\{\varpi
_{1},\ldots,\varpi_{n}\}$ to $\{\alpha_{1}^{\vee},\ldots,\alpha_{n}^{\vee}\}$,
and that the dominant weights are the elements $\sum n_{i}\varpi_{i}$,
$n_{i}\in\mathbb{N}$. The quotient $P(R)/Q(R)$ of $P(R)$ by the lattice $Q(R)$
generated by $R$ is the character group of $Z(H)$:%
\[
P(R)/Q(R)\simeq X^{\ast}(Z(H)).
\]

The irreducible complex representations of $H$ are classified by the dominant
weights. We shall determine the dominant weights of the irreducible complex
representations such that $\tilde{\mu}$ has exactly two weights $a$ and $a+1$.

There is a unique permutation $\tau$ of the simple roots, called the
\emph{opposition involution}%
\index{opposition involution}%
, such that the $\tau^{2}=1$ and the map $\alpha\mapsto-\tau(\alpha)$ extends
to an action of the Weyl group. Its action on the Dynkin diagram is determined
by the following rules: it preserves each connected component; on a connected
component of type $A_{n}$, $D_{n}$ ($n$ odd), or $E_{6}$, it acts as the
unique nontrivial involution, and on all other connected components, it acts
trivially (\cite{tits1966}, 1.5.1). Thus:\label{opp} [See Fig1 at end.]

\begin{proposition}
\label{h94b}Let $W$ be an irreducible complex representation of $H$, and let
$\varpi$ be its highest weight. The representation $W$ occurs in a symplectic
representation if and only if
\begin{equation}
\langle\varpi+\tau\varpi,\bar{\mu}\rangle\,=1.\quad\label{hq43}%
\end{equation}

\end{proposition}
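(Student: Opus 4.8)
The plan is to reduce the statement to Lemma \ref{h94g}, which says (for $H$ almost simple) that a nontrivial irreducible $W$ occurs in a symplectic representation if and only if $\tilde{\mu}$ has \emph{exactly} two weights on $W$, and those weights differ by $1$. So the whole task is to translate ``$\tilde\mu$ has exactly two weights $a,a+1$ on $W$'' into the root-theoretic condition $\langle\varpi+\tau\varpi,\bar\mu\rangle=1$, where $\varpi$ is the highest weight of $W$. First I would record that since $\bar\mu=\mu_{\bar h}$ satisfies condition (*) from \S 2 (the adjoint action of $\mathbb{G}_m$ through $\bar\mu$ has only the characters $z,1,z^{-1}$), we have $\langle\alpha,\bar\mu\rangle\in\{-1,0,1\}$ for every root $\alpha$, with the chosen base $S$ making all these $\geq 0$; and $\langle\tilde\alpha,\bar\mu\rangle=1$ for the highest root $\tilde\alpha$. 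The rational cocharacter $\tilde\mu$ of $H_{\mathbb{C}}$ lifts $\bar\mu$, so for a weight $\lambda$ of $T$ occurring in $W$, the $\tilde\mu$-weight of the corresponding subspace is $\langle\lambda,\tilde\mu\rangle$, and the \emph{difference} between two such weights is $\langle\lambda-\lambda',\tilde\mu\rangle=\langle\lambda-\lambda',\bar\mu\rangle$ (the lift ambiguity is central, hence constant on $W$).

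The key computation is then: for an irreducible $W$ with highest weight $\varpi$, the set of weights of $W$ is $\{\varpi-\sum_{\alpha\in S} n_\alpha\alpha : n_\alpha\in\mathbb{N}\}\cap(\text{weights of }W)$, and the weight $\lambda_0$ of $W$ on which $\langle\,\cdot\,,\bar\mu\rangle$ is \emph{minimal} is the lowest weight, namely $w_0\varpi$ where $w_0$ is the longest element of the Weyl group. By definition of the opposition involution $\tau$ (which is $-w_0$ on the weight lattice), the lowest weight is $-\tau\varpi$. Therefore the span of $\tilde\mu$-weights on $W$ is exactly
\[
\langle\varpi,\bar\mu\rangle - \langle w_0\varpi,\bar\mu\rangle \;=\; \langle\varpi,\bar\mu\rangle + \langle\tau\varpi,\bar\mu\rangle \;=\; \langle\varpi+\tau\varpi,\bar\mu\rangle.
\]
So $\tilde\mu$ has its weights spread over an interval of integer length $\langle\varpi+\tau\varpi,\bar\mu\rangle$. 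The condition ``exactly two weights $a,a+1$'' is equivalent to this length being exactly $1$ \emph{and} every integer in the interval actually being attained — but an interval of integers of length $1$ automatically has both endpoints attained (highest and lowest weight always occur), so the length-$1$ condition alone is equivalent. Hence $W$ occurs in a symplectic representation $\iff \langle\varpi+\tau\varpi,\bar\mu\rangle=1$, which is exactly \eqref{hq43}.

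I would then need to handle two small points carefully. The first: the case $W$ trivial is excluded by hypothesis in the proposition (``irreducible complex representation $W$'' with the understanding from Lemma \ref{h94g} that nontrivial is meant — or one notes that for trivial $W$ the left side of \eqref{hq43} is $0\neq 1$, consistent with the fact that the trivial representation of an almost simple $H$ carries no Hodge structure of the required type). The second, and the \textbf{main obstacle}: verifying that $\langle\varpi+\tau\varpi,\bar\mu\rangle$ is always $\geq 1$ for a nontrivial $W$ is needed to make the equivalence clean, and more importantly one must be sure that ``exactly two weights that differ by $1$'' in Lemma \ref{h94g} really corresponds to the \emph{total} weight-span being $1$ rather than just \emph{some} pair of adjacent weights differing by $1$ while others are farther apart. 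This is where the convexity of the weight polytope (all $\tilde\mu$-weights of $W$ fill the full integer interval between $\langle\varpi,\bar\mu\rangle$ and $\langle w_0\varpi,\bar\mu\rangle$, because weights of an irreducible representation saturate under root strings) is essential, and I would cite or reprove the standard fact that if $\lambda,\lambda'$ are weights of $W$ with $\langle\lambda-\lambda',\bar\mu\rangle=k>0$ then every intermediate $\tilde\mu$-weight value $\langle\lambda',\bar\mu\rangle, \langle\lambda',\bar\mu\rangle+1,\dots,\langle\lambda,\bar\mu\rangle$ is realized. Granting that, the chain of equivalences above is complete, and this matches Deligne's treatment (\cite{deligne1979}, 1.3) that the exposition is following.
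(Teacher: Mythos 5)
Your argument is correct and follows essentially the same route as the paper's proof: identify the lowest weight of $W$ as $-\tau(\varpi)$, observe that the span of the $\tilde{\mu}$-weights on $W$ equals $\langle\varpi+\tau\varpi,\bar{\mu}\rangle$ (the central ambiguity in the lift $\tilde\mu$ of $\bar\mu$ is constant on $W$, so only $\bar\mu$ enters into differences), and translate ``exactly two weights $a,a+1$'' into ``span $=1$.'' One small correction, though: the point you flag as the ``main obstacle'' --- needing convexity/saturation of the weight polytope to know that all intermediate $\tilde\mu$-values are attained --- is not actually needed for either direction. Since every weight of $W$ pairs with $\bar\mu$ to an integer and the maximal and minimal values $\langle\varpi,\bar\mu\rangle$ and $\langle-\tau\varpi,\bar\mu\rangle$ are \emph{always} attained, ``the set of values is $\{a,a+1\}$'' is equivalent to ``max $-$ min $=1$'' with no further input: if the span exceeds $1$ then max and min are two attained values more than $1$ apart, so the set cannot be $\{a,a+1\}$, regardless of what happens in between. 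Dropping that step brings your argument in line with the paper's terse proof, which records exactly the three facts you use (lowest weight is $-\tau\varpi$; weight differences lie in the root lattice so $\langle\beta,\bar\mu\rangle\in\mathbb{Z}$; span $=1$ iff \eqref{hq43}) and nothing more.
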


\begin{proof}
The lowest weight of $W$ is $-\tau(\varpi)$. The weights $\beta$ of $W$ are of
the form%
\[
\beta=\varpi+\sum_{\alpha\in R}m_{\alpha}\alpha\text{,}\quad m_{\alpha}%
\in\mathbb{Z}\text{,}%
\]
and%
\[
\langle\beta,\bar{\mu}\rangle\in\mathbb{Z}{}\text{.}%
\]
Thus, $\langle\beta,\bar{\mu}\rangle$ takes only two values $a,a+1$ if and
only if
\[
\langle-\tau(\varpi),\bar{\mu}\rangle=\langle\varpi,\bar{\mu}\rangle-1,
\]
i.e., if and only if (\ref{hq43}) holds.
\end{proof}

\begin{corollary}
\label{h94d}If $W$ is symplectic, then $\varpi$ is a fundamental weight.
Therefore the representation factors through an almost simple quotient of $H$.
\end{corollary}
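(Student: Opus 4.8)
The plan is to derive the corollary purely from the numerical criterion of Proposition \ref{h94b}: an irreducible $W$ with highest weight $\varpi$ is symplectic only if $\langle\varpi+\tau\varpi,\bar\mu\rangle=1$. (We may assume $W$ nontrivial, since the trivial representation, having highest weight $0$, fails this condition.) The engine is the following positivity estimate, which I would prove first: \emph{for every nonzero dominant weight $\varpi'$ of $H$ one has $\langle\varpi'+\tau\varpi',\bar\mu\rangle\ge 1$.} Granting it, the corollary follows at once: if $\varpi$ were not fundamental we could write $\varpi=\varpi'+\varpi''$ with $\varpi',\varpi''$ nonzero and dominant (subtract from $\varpi$ one of the fundamental weights occurring in it), whence $\langle\varpi+\tau\varpi,\bar\mu\rangle=\langle\varpi'+\tau\varpi',\bar\mu\rangle+\langle\varpi''+\tau\varpi'',\bar\mu\rangle\ge 2$, contradicting Proposition \ref{h94b}. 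So $\varpi=\varpi_{i_{0}}$ for a single node $i_{0}$; and since $\varpi_{i_{0}}$ restricts trivially to every simple factor of $H$ except the one containing $\alpha_{i_{0}}$, the map $H\to\GL_{W}$ factors through that almost simple quotient.

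To prove the estimate I would argue in three steps. First, integrality: since $\tau=-w_{0}$ on $X^{\ast}(T)$ with $w_{0}$ in the Weyl group, and the Weyl group acts trivially on $P(R)/Q(R)$, we get $\varpi'+\tau\varpi'\equiv\varpi'-\varpi'\equiv 0\pmod{Q(R)}$; thus $\varpi'+\tau\varpi'$ lies in $Q(R)=X^{\ast}(T^{\mathrm{ad}})$ and $\langle\varpi'+\tau\varpi',\bar\mu\rangle\in\mathbb{Z}$. Second, non-negativity: $\varpi'+\tau\varpi'=\varpi'-w_{0}\varpi'$ is a non-negative integral combination of simple roots and $\langle\alpha,\bar\mu\rangle\ge 0$ for all $\alpha\in S$ by our choice of base. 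Third, strict positivity: choose a simple factor $H_{F}$ of $H$ on which $\varpi'$ is nontrivial; the projection of $\bar h$ to $H_{F}^{\mathrm{ad}}$ is nontrivial, so $\bar\mu$ restricts on $H_{F}$ to a noncentral cocharacter, hence the fractional lift $\tilde\mu$ cannot act through scalars on the nontrivial representation $W(\varpi')|_{H_{F}}$ (whose kernel is central), so $\tilde\mu$ has at least two distinct weights on $W(\varpi')$. Because $\bar\mu$ is dominant, the highest and lowest weights $\varpi'$ and $w_{0}\varpi'$ realize the largest and smallest $\tilde\mu$-weights, so $\langle\varpi'+\tau\varpi',\bar\mu\rangle=\langle\varpi',\tilde\mu\rangle-\langle w_{0}\varpi',\tilde\mu\rangle$ is exactly this weight range; it is therefore $>0$, and being a non-negative integer it is $\ge 1$.

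The only point needing genuine care is the third step, and within it the bookkeeping forced by $\tilde\mu$ being a \emph{fractional} cocharacter: one must check that differences of $\tilde\mu$-weights of any representation of $H$ still lie in $\langle Q(R),\bar\mu\rangle\subseteq\mathbb{Z}$, so that a strictly positive weight range is automatically at least $1$, and that $\langle\varpi'+\tau\varpi',\tilde\mu\rangle$ (a priori only rational) equals $\langle\varpi'+\tau\varpi',\bar\mu\rangle$ because $\varpi'+\tau\varpi'\in Q(R)$ and $\tilde\mu$ lifts $\bar\mu$. One also uses the standard fact that a noncentral cocharacter of an almost simple simply connected group cannot act through scalars on a nontrivial irreducible representation, the kernel of such a representation being central. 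With these in place the argument is formal; in particular it invokes neither Lemma \ref{h94c}, nor \ref{h94g}, nor the classification tables.
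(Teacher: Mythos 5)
Your proof is correct and follows essentially the same route as the paper: both argue that $\varpi\mapsto\langle\varpi+\tau\varpi,\bar\mu\rangle$ is additive, integer-valued (because $\varpi+\tau\varpi\in Q(R)$), and strictly positive on every nonzero dominant weight, so the value $1$ forces $\varpi$ to be indecomposable, hence fundamental, hence supported on a single simple factor. The paper states the positivity step more tersely --- ``$>0$ unless $\bar\mu$ kills all the weights of $W(\varpi)$,'' leaving the reader to note that this last possibility is excluded by the standing hypothesis that $\bar h$ projects nontrivially to each simple factor of $H^{\mathrm{ad}}$ --- whereas you spell out that verification in full, including the bookkeeping for the fractional lift $\tilde\mu$.
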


\begin{proof}
For every dominant weight $\varpi$, $\langle\varpi+\tau\varpi,\bar{\mu}%
\rangle\,\in\mathbb{Z}$ because $\varpi+\tau\varpi\in Q(R)$. If $\varpi\neq0$,
then $\langle\varpi+\tau\varpi,\bar{\mu}\rangle\,>0$ unless $\bar{\mu}$ kills
all the weights of the representation corresponding to $\varpi$. Hence a
dominant weight satisfying (\ref{hq43}) can not be a sum of two dominant weights.
\end{proof}

The corollary allows us to assume that $H$ is almost simple. Recall from \S 2
that there is a unique special simple root $\alpha_{s}$ such that, for
$\alpha\in S$,%
\[
\langle\alpha,\bar{\mu}\rangle=\left\{
\begin{array}
[c]{ll}%
1 & \text{if }\alpha=\alpha_{s}\\
0 & \text{otherwise.}%
\end{array}
\right.
\]
When a weight $\varpi$ is expressed as a $\mathbb{Q}$-linear combination of
the simple roots, $\langle\varpi,\bar{\mu}\rangle$ is the coefficient of
$\alpha_{s}$. For the fundamental weights, these coefficients can be found in
the tables in \cite{bourbakiLie}, VI. A fundamental weight $\varpi$ satisfies
(\ref{hq43}) if and only if%
\begin{equation}
(\text{coefficient of }\alpha_{s}\text{ in }\varpi+\tau\varpi)=1. \label{hq46}%
\end{equation}

In the following, we write $\alpha_{1},\ldots,\alpha_{n}$ for the simple roots
and $\varpi_{1},\ldots,\varpi_{n}$ for the fundamental weights with the usual
numbering. 

\subsubsection{Type $A_{n}$.}

The opposition involution $\tau$ switches the nodes $i$ and $n+1-i$. According
to the tables in Bourbaki, for $1\leq i\leq(n+1)/2,$%
\[
\varpi_{i}=\tstyle\frac{n+1-i}{n+1}\alpha_{1}+\frac{2(n+1-i)}{n+1}\alpha
_{2}+\cdots+\frac{i(n+1-i)}{n+1}\alpha_{i}+\cdots+\frac{2i}{n+1}\alpha
_{n-1}+\frac{i}{n+1}\alpha_{n}.
\]
Replacing $i$ with $n+1-i$ reflects the coefficients, and so%
\[
\tau\varpi_{i}=\varpi_{n+1-i}=\tstyle\frac{i}{n+1}\alpha_{1}+\frac{2i}%
{n+1}\alpha_{2}+\cdots+\frac{2(n+1-i)}{n+1}\alpha_{n-1}+\frac{n+1-i}%
{n+1}\alpha_{n}.
\]
Therefore,%
\[
\varpi_{i}+\tau\varpi_{i}=\alpha_{1}+2\alpha_{2}+\cdots+i\alpha_{i}%
+i\alpha_{i+1}+\cdots+i\alpha_{n+1-(i+1)}+i\alpha_{n+1-i}+\cdots+2\alpha
_{n-1}+\alpha_{n},
\]
i.e., the sequence of coefficients is%
\[
(1,2,\ldots,i,i,\ldots,i,i,\ldots,2,1).
\]
Let $\alpha_{s}=\alpha_{1}$ or $\alpha_{n}$. Then every fundamental weight
satisfies (\ref{hq46}):\footnote{\cite{deligne1979}, Table 1.3.9, overlooks
this possibility.} [See Fig2 at end.]

\medskip\noindent Let $\alpha_{s}=\alpha_{j}$, with $1<j<n$. Then only the
fundamental weights $\varpi_{1}$ and $\varpi_{n}$ satisfy (\ref{hq46}): [See
Fig3 at end.]

\medskip\noindent As $P/Q$ is generated by $\varpi_{1}$, the symplectic
representations form a faithful family.

\subsubsection{Type $B_{n}$.}

In this case, $\alpha_{s}=\alpha_{1}$ and the opposition involution acts
trivially on the Dynkin diagram, and so we seek a fundamental weight
$\varpi_{i}$ such that $\varpi_{i}=\frac{1}{2}\alpha_{1}+\cdots$. According to
the tables in Bourbaki,%
\begin{align*}
\varpi_{i}  &  =\alpha_{1}+2\alpha_{2}+\cdots+(i-1)\alpha_{i-1}+i(\alpha
_{i}+\alpha_{i+1}+\cdots+\alpha_{n})\quad(1\leq i<n)\\
\varpi_{n}  &  =\tstyle\frac{1}{2}(\alpha_{1}+2\alpha_{2}+\cdots+n\alpha_{n}),
\end{align*}
and so only $\varpi_{n}$ satisfies (\ref{hq46}): [See Fig4 at end.]

\noindent As $P/Q$ is generated by $\varpi_{n}$, the symplectic
representations form a faithful family.

\subsubsection{Type $C_{n}$.}

In this case $\alpha_{s}=\alpha_{n}$ and the opposition involution acts
trivially on the Dynkin diagram, and so we seek a fundamental weight
$\varpi_{i}$ such that $\varpi_{i}=\cdots+\frac{1}{2}\alpha_{n}$. According to
the tables in Bourbaki,%
\[
\varpi_{i}=\alpha_{1}+2\alpha_{2}+\cdots+(i-1)\alpha_{i-1}+i(\alpha_{i}%
+\alpha_{i+1}+\cdots+\alpha_{n-1}+\tstyle\tstyle\frac{1}{2}\alpha_{n}),
\]
and so only $\varpi_{1}$ satisfies (\ref{hq46}): [See Fig5 at end.]

\noindent As $P/Q$ is generated by $\varpi_{1}$, the symplectic
representations form a faithful family.

\subsubsection{Type $D_{n}$.}

The opposition involution acts trivially if $n$ is even, and switches
$\alpha_{n-1}$ and $\alpha_{n}$ if $n$ is odd. According to the tables in
Bourbaki,%
\begin{align*}
\varpi_{i}  &  =\alpha_{1}+2\alpha_{2}+\cdots+(i-1)\alpha_{i-1}+i(\alpha
_{i}+\cdots+\alpha_{n-2})+\tstyle\frac{i}{2}(\alpha_{n-1}+\alpha_{n}%
),\quad1\leq i\leq n-2\\
\varpi_{n-1}  &  =\tstyle\frac{1}{2}\left(  \alpha_{1}+2\alpha_{2}%
+\cdots+(n-2)\alpha_{n-2}+\tstyle\frac{1}{2}n\alpha_{n-1}+\tstyle\frac{1}%
{2}(n-2)\alpha_{n}\right) \\
\varpi_{n}  &  =\tstyle\frac{1}{2}\left(  \alpha_{1}+2\alpha_{2}%
+\cdots+(n-2)\alpha_{n-2}+\tstyle\frac{1}{2}(n-2)\alpha_{n-1}+\tstyle\frac
{1}{2}n\alpha_{n}\right)
\end{align*}

Let $\alpha_{s}=\alpha_{1}$. As $\alpha_{1}$ is fixed by the opposition
involution, we seek a fundamental weight $\varpi_{i}$ such that $\varpi
_{i}=\tstyle\frac{1}{2}\alpha_{1}+\cdots$. Both $\varpi_{n-1}$ and $\varpi
_{n}$ give rise to symplectic representations: [See Fig6 at end.]

\noindent When $n$ is odd, $\varpi_{n-1}$ and $\varpi_{n}$ each generates
$P/Q$, and when $n$ is even $\varpi_{n-1}$ and $\varpi_{n}$ together generate
$P/Q$. Therefore, in both cases, the symplectic representations form a
faithful family.

Let $\alpha_{s}=\alpha_{n-1}$ or $\alpha_{n}$ and let $n=4$. The nodes
$\alpha_{1}$, $\alpha_{3}$, and $\alpha_{4}$ are permuted by automorphisms of
the Dynkin diagram (hence by outer automorphisms of the corresponding group),
and so this case is the same as the case $\alpha_{s}=\alpha_{1}$: [See Fig7 at end.]

\noindent The symplectic representations form a faithful family.

Let $\alpha_{s}=\alpha_{n-1}$ or $\alpha_{n}$ and let $n\geq5$. When $n$ is
odd, $\tau$ interchanges $\alpha_{n-1}$ and $\alpha_{n}$, and so we seek a
fundamental weight $\varpi_{i}$ such that $\varpi_{i}=\cdots+a\alpha
_{n-1}+b\alpha_{n}$ with $a+b=1$; when $n$ is even, $\tau$ is trivial, and we
seek a fundamental weight $\varpi_{i}$ such that $\varpi_{i}=\cdots
+\tstyle\frac{1}{2}\alpha_{n-1}+\cdots$ or $\cdots+\tstyle\frac{1}{2}%
\alpha_{n}$. In each case, only $\varpi_{1}$ gives rise to a symplectic
representation: [see Fig8 at end].

\noindent The weight $\varpi_{1}$ generates a subgroup of order $2$ (and index
2) in $P/Q$. Let $C\subset Z(H)$ be the kernel of $\varpi_{1}$ regarded as a
character of $Z(H)$. Then every symplectic representation factors through
$H/C$, and the symplectic representations form a faithful family of
representations of $H/C$.

\subsubsection{Type $E_{6}$.}

In this case, $\alpha_{s}=\alpha_{1}$ or $\alpha_{6}$, and the opposition
involution interchanges $\alpha_{1}$ and $\alpha_{6}$. Therefore, we seek a
fundamental weight $\varpi_{i}$ such that $\varpi_{i}=a\alpha_{1}%
+\cdots+b\alpha_{6}$ with $a+b=1$. In the following diagram, we list the value
$a+b$ for each fundamental weight $\varpi_{i}$: [See Fig9 at end.]

\noindent As no value equals $1$, there are no symplectic representations.

\subsubsection{Type $E_{7}$.}

In this case, $\alpha_{s}=\alpha_{7}$, and the opposition involution is
trivial. Therefore, we seek a fundamental weight $\varpi_{i}$ such that
$\varpi_{i}=\cdots+\tstyle\frac{1}{2}\alpha_{7}$. In the following diagram, we
list the coefficient of $\alpha_{7}$ for each fundamental weight $\varpi_{i}$:
[See Fig10 at end.]

\noindent As no value is $\frac{1}{2}$, there are no symplectic
representations.\bigskip

Following \cite{deligne1979}, 1.3.9, we write $D^{\mathbb{R}{}}$ for the case
$D_{n}(1)$ and $D^{\mathbb{H}{}}$ for the cases $D_{n}(n-1)$ and $D_{n}(n)$.

\begin{summary}
\label{h94e}Let $H$ be a simply connected almost simple group over
$\mathbb{R}$, and let $\bar{h}\colon\mathbb{S}/\mathbb{G}_{m}\rightarrow
H^{\text{ad}}$ be a nontrivial homomorphism satisfying (SV1,2). There exists a
symplectic representation of $(H,\bar{h})$ if and only if it is of type $A$,
$B$, $C$, or $D$. Except when $(H,\bar{h})$ is of type $D_{n}^{\mathbb{H}}$,
$n\geq5$, the symplectic representations form a faithful family of
representations of $H$; when $(H,\bar{h})$ is of type $D_{n}^{\mathbb{H}{}}$,
$n\geq5$, they form a faithful family of representations of the quotient of
the simply connected group by the kernel of $\varpi_{1}$.
\end{summary}

\subsection{The rational case}

Now let $H$ be a semisimple algebraic group over $\mathbb{Q}{}$, and let
$\bar{h}$ be a homomorphism $\mathbb{S}{}/\mathbb{G}_{m}\rightarrow
H_{\mathbb{R}{}}^{\mathrm{ad}}$ satisfying (SV1,2) and generating
$H^{\mathrm{ad}}$.

\begin{definition}
\label{h95}A homomorphism $H\rightarrow\GL_{V}$ with finite kernel is a
\emph{symplectic\/ representation}%
\index{symplectic\/ representation}
of $(H,\bar{h})$ if there exists a commutative diagram%
\begin{equation}
\begin{tikzpicture} \matrix(m)[matrix of math nodes, row sep=3em, column sep=2.5em, text height=1.5ex, text depth=0.25ex] {H\\ (H^{\text{ad}},\bar{h})&(G,h)&(G(\psi ),D(\psi )),\\}; \path[->,font=\scriptsize,>=angle 90] (m-1-1) edge (m-2-1) (m-1-1) edge (m-2-2) (m-2-2) edge (m-2-1); \path[->,font=\scriptsize,>=angle 90] (m-2-2) edge node[above]{$\rho$} (m-2-3); \end{tikzpicture} \label{hq95}%
\end{equation}
in which $\psi$ is a nondegenerate alternating form on $V$, $G$ is a reductive
group (over $\mathbb{Q}$), and $h$ is a homomorphism $\mathbb{S}{}\rightarrow
G_{\mathbb{R}{}}$; the homomorphism $H\rightarrow G$ is required to have image
$G^{\mathrm{der}}$,
\end{definition}

Given a diagram (\ref{hq95}), we may replace $G$ with its image in $\GL_{V}$
and so assume that the representation $\rho$ is faithful.

\textit{We now assume that} $H$ \textit{is simply connected and almost
simple}. Then $H=\left(  H^{s}\right)  _{F/\mathbb{Q}{}}$ for some
geometrically almost simple algebraic group $H^{s}$ over a number field $F$.
Because $H_{\mathbb{R}{}}$ is an inner form of its compact form, the field $F$
is totally real (see the proof of \ref{h37a}). Let $I=\Hom(F,\mathbb{R}{})$.
Then,%
\[
H_{\mathbb{R}}=\prod_{v\in I}H_{v},\quad H_{v}=H^{s}\otimes_{F,v}\mathbb{R}.
\]
The Dynkin diagram $D$ of $H_{\mathbb{C}{}}$ is a disjoint union of the Dynkin
diagrams $D_{v}$ of the group $H_{v\mathbb{C}{}}$. The Galois group
$\Gal(\mathbb{Q}^{\text{al}}/\mathbb{Q})$ acts on it in a manner consistent
with its projection to $I$. In particular, it acts transitively on $D$ and so
all the factors $H_{v}$ of $H_{\mathbb{R}{}}$ are of the same type. We let
$I_{\text{c}}$ (resp. $I_{\mathrm{nc}}$) denote the subset of $I$ of $v$ for
which $H_{v}$ is compact (resp. not compact), and we let $H_{\mathrm{c}}%
=\prod_{v\in I_{\mathrm{c}}}H_{v}$ and $H_{\mathrm{nc}}=\prod_{v\in
I_{\mathrm{nc}}}H_{v}$. Because $\bar{h}$ generates $H^{\text{ad}}$,
$I_{\text{nc}}$ is nonempty.

\begin{proposition}
\label{h95a}Let $F$ be a totally real number field. Suppose that for each real
prime $v$ of $F$, we are given a pair $(H_{v},\bar{h}_{v})$ in which $H_{v}$
is a simply connected algebraic group over $\mathbb{R}{}$ of a fixed type, and
$\bar{h}_{v}$ is a homomorphism $\mathbb{S}{}/\mathbb{G}_{m}\rightarrow
H_{v}^{\mathrm{ad}}$ satisfying (SV1,2) (possibly trivial). Then there exists
an algebraic group $H$ over $\mathbb{Q}{}$ such that $H\otimes_{F,v}%
\mathbb{R}\approx H_{v}$ for all $v$.
\end{proposition}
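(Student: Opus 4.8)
The plan is to produce first, over $F$ itself, a geometrically almost simple simply connected group $H^{s}$ having the prescribed real groups $H_{v}$ as its localizations at the real primes of $F$, and then to set $H=(H^{s})_{F/\mathbb{Q}}$; since $H_{\mathbb{R}}\simeq\prod_{v}H_{v}$, this $H$ satisfies the requirement. The first point is that condition (SV2) fixes the outer form of each $H_{v}$: because $\inn(\bar h_{v}(i))$ is a Cartan involution of $H_{v}^{\mathrm{ad}}$, that involution is inner, so $H_{v}^{\mathrm{ad}}$ --- hence $H_{v}$, being simply connected --- is an inner form of its compact form (this is the remark following Theorem \ref{h20b}; when $\bar h_{v}$ is trivial it says directly that $H_{v}^{\mathrm{ad}}$ is compact). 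As all the $H_{v}$ have one and the same absolute type and are inner forms of the (unique) compact real form of that type, they are all inner twists of a single quasi-split real group $H^{*}$, namely the quasi-split inner form of the compact form.

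Secondly I would choose a quasi-split, geometrically almost simple, simply connected group $G^{*}$ over $F$ of the given absolute type with $G^{*}\otimes_{F,v}\mathbb{R}\simeq H^{*}$ for every real prime $v$ of $F$. For the types at hand $\Out$ is either trivial or $\mathbb{Z}/2\mathbb{Z}$ --- the only exception being $D_{4}$, where however over $\mathbb{R}$ only the forms of type ${}^{1}D_{4}$ and ${}^{2}D_{4}$ occur, so a $\mathbb{Z}/2\mathbb{Z}$-twist already realizes every $H_{v}$. If $H^{*}$ is an inner form of the split real group (e.g.\ types $B_{n}$, $C_{n}$, $A_{1}$, and $D_{n}$ with $n$ even), take $G^{*}$ to be the split $F$-form; otherwise $H^{*}$ is the quasi-split \emph{outer} real form, and one takes $G^{*}$ to be the outer twist of the split $F$-form by a quadratic extension $K/F$ chosen totally imaginary --- i.e.\ $K$ a CM field, which exist in abundance over the totally real field $F$ --- so that $K\otimes_{F}F_{v}\simeq\mathbb{C}$ and hence $G^{*}_{v}$ is the required quasi-split outer real form, $\simeq H^{*}$.

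Finally, each $H_{v}$ is an inner twist of $G^{*}_{v}=H^{*}$, hence corresponds to a class $\xi_{v}\in H^{1}(F_{v},(G^{*})^{\mathrm{ad}})$. I would then invoke the surjectivity of the localization map $H^{1}(F,(G^{*})^{\mathrm{ad}})\to\prod_{v\mid\infty}H^{1}(F_{v},(G^{*})^{\mathrm{ad}})$ --- valid for semisimple, in particular adjoint, groups over a number field (Borel--Harder; one reduces to it from the vanishing of $H^{1}$ at the finite primes for simply connected groups together with the Hasse principle for the $H^{2}$ of the centre, the obstruction at the archimedean primes being automatically satisfied since the $\xi_{v}$ arise from actual real groups), and since $F$ is totally real its archimedean primes are exactly the real primes in the hypothesis. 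This yields a global class $\xi$ localizing to $\xi_{v}$ at every real $v$; the inner twist $H^{s}={}_{\xi}G^{*}$ is then a simply connected geometrically almost simple $F$-group with $H^{s}\otimes_{F,v}\mathbb{R}\simeq H_{v}$ for all $v$, and $H=(H^{s})_{F/\mathbb{Q}}$ finishes the proof. The main obstacle is precisely this global-to-local step in Galois cohomology; should one wish to avoid the general theorem, $H^{s}$ can instead be exhibited explicitly --- as a special unitary group of a Hermitian form over a CM extension of $F$ (type $A$), a spin group of a quadratic $F$-form or of a skew-Hermitian form over a quaternion $F$-algebra (types $B$, $D$), or a unitary group over a quaternion $F$-algebra (type $C$) --- with signatures at the real primes prescribed to match the $H_{v}$, at the price of a case-by-case verification (cf.\ \cite{deligne1979}).
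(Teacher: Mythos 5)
Your argument is correct and follows essentially the same route as the paper: observe that (SV2) forces each $H_{v}$ to be an inner form of the compact form, choose a base $F$-group $H^{s}$ whose real localizations all have this outer type, and then use the surjectivity of the localization map $H^{1}(F,(H^{s})^{\mathrm{ad}})\to\prod_{v\text{ real}}H^{1}(F_{v},(H^{s})^{\mathrm{ad}})$ to realize the prescribed inner twists simultaneously (the paper cites Prasad--Rapinchuk 2006, Prop.~1 for this surjectivity, which is the same result you attribute to Borel--Harder). Your additional details --- the explicit quasi-split base group with a CM twist for the outer types, the remark on $D_{4}$, the Weil restriction, and the alternative case-by-case construction --- are all sound elaborations of steps the paper leaves implicit.
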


\begin{proof}
There exists an algebraic group $H$ over $F$ such that $H\otimes
_{F,v}\mathbb{R}{}$ is an inner form of its compact form for all real primes
$v$ of $F$. For each such $v$, $H_{v}$ is an inner form of $H\otimes
_{F,v}\mathbb{R}$, and so defines a cohomology class in $H^{1}(F_{v}%
,H^{\mathrm{ad}})$. The proposition now follows from the surjectivity of the
map%
\[
H^{1}(F,H^{\mathrm{ad}})\rightarrow\prod\nolimits_{v\text{ real}}H^{1}%
(F_{v},H^{\mathrm{ad}})
\]
(\cite{prasadR2006}, Proposition 1).
\end{proof}

\subsubsection{Pairs $(H,\bar{h})$ for which there do
not exist symplectic representations}

\paragraph{$H$ is of exceptional type}

Assume that $H$ is of exceptional type. If there exists an $\bar{h}$
satisfying (SV1,2), then $H$ is of type $E_{6}$ or $E_{7}$ (see \S 2). A
symplectic representation of $(H,\bar{h})$ over $\mathbb{Q}{}$ gives rise to a
symplectic representation of $(H_{\mathbb{R}{}},\bar{h})$ over $\mathbb{R}{}$,
but we have seen (\ref{h94e}) that no such representations exist.\label{not}

\paragraph{$(H,\bar{h})$ is of mixed type $D$.}

By this we mean that $H$ is of type $D_{n}$ with $n\geq5$ and that at least
one factor $(H_{v},\bar{h}_{v})$ is of type $D_{n}^{\mathbb{R}{}}$ and one of
type $D_{n}^{\mathbb{H}{}}$. Such pairs $(H,\bar{h})$ exist by Proposition
\ref{h95a}. The Dynkin diagram of $H_{\mathbb{R}{}}$ contains connected
components [See Fig11 at end.]

\noindent or $D_{n}(n-1)$. To give a symplectic representation for
$H_{\mathbb{R}{}}$, we have to choose a symplectic node for each real prime
$v$ such that $H_{v}$ is noncompact. In order for the representation to be
rational, the collection of symplectic nodes must be stable under
$\Gal(\mathbb{Q}{}^{\mathrm{al}}/\mathbb{Q}{})$, but this is impossible,
because there is no automorphism of the Dynkin diagram of type $D_{n}$,
$n\geq5$, carrying the node $1$ into either the node $n-1$ or the node $n$.

\subsubsection{Pairs $(H,\bar{h})$ for which there
exist symplectic representations}

\begin{lemma}
\label{h95c}Let $G$ be a reductive group over $\mathbb{Q}{}$ and let $h$ be a
homomorphism $\mathbb{S}{}\rightarrow G_{\mathbb{R}{}}$ satisfying (SV1,2*)
and generating $G$. For any representation $(V,\rho)$ of $G$ such that
$(V,\rho\circ h)$ is of type $\{(-1,0),(0,-1)\}$, there exists an alternating
form $\psi$ on $V$ such that $\rho$ induces a homomorphism $(G,h)\rightarrow
(G(\psi),D(\psi))$.
\end{lemma}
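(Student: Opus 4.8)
The plan is to treat this as the $\mathbb{Q}$-rational analogue of Proposition~\ref{h94a}: I would produce $\psi$ by recognizing $(V,\rho\circ h)$ as a \emph{polarizable} rational Hodge structure with Mumford--Tate group $(G,h)$, and then read $\psi$ off from a polarization. First I would reduce to the case that $\rho$ is faithful, replacing $G$ by its image $\rho(G)\subseteq\GL_V$ and $h$ by $\rho\circ h$. Since $\rho$ is surjective onto $\rho(G)$, the homomorphism $\rho\circ h$ still generates $\rho(G)$, the weight $w_{\rho\circ h}=\rho\circ w_h$ is still defined over $\mathbb{Q}$, and $(V,\rho\circ h)$ still has type $\{(-1,0),(0,-1)\}$; the only point needing care is that (SV2*) survives. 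Since (SV2*) for $(G,h)$ says $\inn(h(i))$ is a Cartan involution of $G/w_h(\mathbb{G}_m)$, I must see that the same holds for the still-further quotient $\rho(G)/w_{\rho\circ h}(\mathbb{G}_m)$, and I would get this from Theorem~\ref{h20b}: a Cartan involution of $G/w_h(\mathbb{G}_m)$ forces every real representation of that group to carry an $h(i)$-polarization; feeding in a faithful representation of $\rho(G)/w_{\rho\circ h}(\mathbb{G}_m)$ and applying the converse half of Theorem~\ref{h20b} recovers (SV2*) for the quotient.

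Assume now $\rho$ faithful. Because $w_h$ is defined over $\mathbb{Q}$ and $h$ generates $G$, Proposition~\ref{h53a} identifies $(G,h)$ with the Mumford--Tate group of the rational Hodge structure $(V,\rho\circ h)$, which has the single weight $-1$. As $(G,h)$ satisfies (SV2*), Proposition~\ref{h52} shows $(V,\rho\circ h)$ is polarizable; I fix a polarization $\psi_0\colon V\otimes V\to\mathbb{Q}(-m)=\mathbb{Q}(1)$ (here $m=-1$). Since $\psi_0$ takes values in the one-dimensional $\mathbb{Q}$-space $\mathbb{Q}(1)=2\pi i\,\mathbb{Q}$, dividing by $2\pi i$ produces a $\mathbb{Q}$-valued form $\psi$ on $V$; it is alternating because $m$ is odd, and $2\pi i\,\psi=\psi_0$.

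It remains to check that $\rho$ induces $(G,h)\to(G(\psi),D(\psi))$. For $\rho(G)\subseteq G(\psi)$: being a morphism of rational Hodge structures, $\psi_0$ is respected by $h(\mathbb{S})$ up to the scalar by which $h$ acts on $\mathbb{Q}(1)$, and since $h$ generates $G$ this persists for every point of $G$; hence $G$ lies in the symplectic similitude group of $\psi_0$, which equals $G(\psi)$ (rescaling a form does not change its similitude group). For $\rho_{\mathbb{R}}\circ h\in D(\psi)$: $D(\psi)$ is by definition the set of type $\{(-1,0),(0,-1)\}$ Hodge structures on $V$ for which $2\pi i\,\psi$ is a polarization, and by~(\ref{hq16}) with $m=-1$ this condition is exactly that $(x,y)\mapsto\psi\bigl(x,(\rho\circ h)(i)y\bigr)$ be symmetric and positive definite --- which is the polarization property of $\psi_0$ carried across the factor $2\pi i$. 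That finishes the plan.

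The genuine content is all in Proposition~\ref{h52} (equivalently Theorem~\ref{h20b} together with the variant~\ref{h20c}); everything else is bookkeeping with the Tate twist. The one spot I expect to be mildly delicate is the persistence of (SV2*) when $G$ is replaced by $\rho(G)$, because (SV2*) is a statement about the quotient $G/w_h(\mathbb{G}_m)$ and one is then quotienting further --- which is why I would route it through the representation-theoretic description of Cartan involutions in Theorem~\ref{h20b} rather than trying to transport a Cartan involution along a surjection of groups directly.
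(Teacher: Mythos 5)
Your proof is correct and follows the same route the paper takes; the paper's own proof is the two-sentence remark that $(\rho G,\rho\circ h)$ is the Mumford--Tate group of $(V,\rho\circ h)$, satisfies (SV2*), and that the proof of Proposition~\ref{h52} then produces the required $G$-quasi-invariant polarization. Your version spells out the ingredients the paper leaves implicit, most usefully the verification that (SV2*) survives the passage from $G$ to $\rho(G)$ --- a point the paper asserts without comment; your route through the two halves of Theorem~\ref{h20b} is valid, though it can be shortened by noting directly that a Cartan involution descends along any surjection of connected reductive real groups whose kernel it preserves (the compact form of the quotient is the image of the compact form, and a continuous open image of a compact group is compact). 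The Tate-twist bookkeeping and the identification $\rho(G)\subseteq G(\psi)=G(\psi_0)$ are all sound.
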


\begin{proof}
The pair $(\rho G,\rho\circ h)$ is the Mumford-Tate group of $(V,\rho\circ h)$
and satisfies (SV2*). The proof of Proposition \ref{h52} constructs a
polarization $\psi$ for $(V,\rho\circ h)$ such that $\rho G\subset G(\psi)$.
\end{proof}

\begin{proposition}
\label{h95d}A homomorphism $H\rightarrow\GL_{V}$ is a symplectic\/
representation of $(H,\bar{h})$ if there exists a commutative diagram%
\[
\begin{tikzpicture}
\matrix(m)[matrix of math nodes, row sep=3em, column sep=2.5em,
text height=1.5ex, text depth=0.25ex]
{H\\
(H^{\text{ad}},\bar{h})&(G,h)&\GL_V,\\};
\path[->,font=\scriptsize,>=angle 90]
(m-1-1) edge  (m-2-1)
(m-1-1) edge  (m-2-2)
(m-2-2) edge  (m-2-1);
\path[right hook->,font=\scriptsize,>=angle 90]
(m-2-2) edge  node[above]{$\rho$} (m-2-3);
\end{tikzpicture}
\]
in which $G$ is a reductive group whose connected centre splits over a
CM-field, the homomorphism $H\rightarrow G$ has image $G^{\mathrm{der}}$, the
weight $w_{h}$ is defined over $\mathbb{Q}{}$, and the Hodge structure
$(V,\rho\circ h)$ is of type $\{(-1,0),(0,-1)\}$.
\end{proposition}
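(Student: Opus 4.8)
The statement is the rational analogue of Proposition \ref{h94a}, and I would prove it the same way: after replacing $G$ by the subgroup generated by $h$, the pair $(G,h)$ acquires condition (SV2*), and then Lemma \ref{h95c} produces the alternating form directly. Thus the whole proof is the shrinking step plus one appeal to \ref{h95c}.

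\textbf{Shrinking $G$.} First I would let $G_{1}$ be the smallest $\mathbb{Q}$-subgroup of $G$ with $(G_{1})_{\mathbb{R}}\supset h(\mathbb{S})$, i.e.\ the group generated by $h$ in the sense of Proposition \ref{h53a}; it is connected since $h(\mathbb{S})$ is. Because $\bar h$ generates $H^{\mathrm{ad}}\simeq G^{\mathrm{ad}}$, the image of $G_{1}$ in $G^{\mathrm{ad}}$ is all of $G^{\mathrm{ad}}$, so $G_{1}^{\mathrm{der}}$ maps onto $(G^{\mathrm{der}})^{\mathrm{ad}}$ and, being a connected normal subgroup of the connected group $G^{\mathrm{der}}$, equals $G^{\mathrm{der}}$. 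Hence $H\to G_{1}$ still has image $G^{\mathrm{der}}=G_{1}^{\mathrm{der}}$, and $G_{1}$ is reductive, being an extension of the subtorus $G_{1}/G^{\mathrm{der}}$ of the torus $G/G^{\mathrm{der}}$ by $G^{\mathrm{der}}$. Since $w_{h}=h\circ w$, the cocharacter $w_{h}(\mathbb{G}_{m})$ has real points inside $(G_{1})_{\mathbb{R}}$, and as both $w_{h}(\mathbb{G}_{m})$ and $G_{1}$ are defined over $\mathbb{Q}$ this inclusion holds over $\mathbb{Q}$; so $w_{h}$ remains defined over $\mathbb{Q}$ as a cocharacter of $G_{1}$. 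Replacing $G$ by $G_{1}$ and $\rho$ by its restriction therefore preserves every hypothesis and does not alter $(V,\rho\circ h)$; from now on I assume $h$ generates $G$.

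\textbf{Verifying (SV1,2*).} Condition (SV1) depends only on the adjoint quotient (see \S6.1), and $(G^{\mathrm{ad}},\bar h)=(H^{\mathrm{ad}},\bar h)$ satisfies it by hypothesis. For (SV2*) I must show $\inn(h(i))$ is a Cartan involution of the reductive group $G/w_{h}(\mathbb{G}_{m})$; this amounts to checking that it induces a Cartan involution on the adjoint group and that the connected centre is $\mathbb{R}$-anisotropic. The first holds because the adjoint group is $H^{\mathrm{ad}}$ and $(H^{\mathrm{ad}},\bar h)$ satisfies (SV2). For the second, the connected centre of $G/w_{h}(\mathbb{G}_{m})$ is isogenous to the torus $G/(G^{\mathrm{der}}\cdot w_{h}(\mathbb{G}_{m}))$, which is generated by the image of $\mathbb{S}/w(\mathbb{G}_{m})\simeq\mathbb{S}^{1}$ under $\bar h$; since $\mathbb{S}^{1}(\mathbb{R})=U^{1}$ is compact this image is compact, so the torus is $\mathbb{R}$-anisotropic (the same mechanism as in the proofs of \ref{h94a}, \ref{h91c}, and \ref{h92c}).

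\textbf{Conclusion.} Now $(G,h)$ satisfies (SV1,2*) and is generated by $h$, and $(V,\rho\circ h)$ is of type $\{(-1,0),(0,-1)\}$ by hypothesis, so Lemma \ref{h95c} yields a nondegenerate alternating form $\psi$ on $V$ with $\rho\colon (G,h)\to(G(\psi),D(\psi))$. Combined with the original triangle $H\to(H^{\mathrm{ad}},\bar h)$, $H\to(G,h)$ this is exactly the data of Definition \ref{h95}, so $\rho$ is a symplectic representation of $(H,\bar h)$. (Note the hypothesis that the connected centre of $G$ is split by a CM field is not used here; it is recorded because, via Proposition \ref{h91e}, it guarantees a CM point on the period subdomain built from the diagram, which is needed in \S11.) The only mildly delicate point is the (SV2*) verification after shrinking, specifically the $\mathbb{R}$-anisotropy of $G/(G^{\mathrm{der}}\cdot w_{h}(\mathbb{G}_{m}))$; everything else is bookkeeping transported from the real case and from Lemma \ref{h95c}.
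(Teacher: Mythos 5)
Your overall strategy is the paper's: shrink $G$ to a subgroup for which $(G,h)$ satisfies (SV2*) and then invoke Lemma~\ref{h95c}.  But there is a genuine gap in the (SV2*) step, and your parenthetical remark that the CM-field hypothesis is not used is incorrect — that hypothesis is exactly what makes the step work.

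The flaw is in the claim that the $\mathbb{Q}$-torus $T=G_{1}/(G_{1}^{\mathrm{der}}\cdot w_{h}(\mathbb{G}_{m}))$ is $\mathbb{R}$-anisotropic because it is ``generated'' (over $\mathbb{Q}$) by the compact image of $\mathbb{S}^{1}$.  Over $\mathbb{R}$ this kind of argument is fine (a subtorus of $T_{\mathbb{R}}$ containing a compact generator is compact, and that is what happens in~\ref{h94a}), but over $\mathbb{Q}$ it fails: the smallest $\mathbb{Q}$-subtorus whose $\mathbb{R}$-points contain a given compact subtorus of $T(\mathbb{R})$ need not be $\mathbb{R}$-anisotropic.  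In terms of character lattices, the compact image corresponds to a quotient of $X^{\ast}(T)_{\mathbb{Q}}$ on which $\iota$ acts by $-1$, so the $+1$-eigenspace $V_{+}$ of $\iota$ lies in the kernel $K$ of this quotient; but to shrink $T$ rationally one must intersect $K$ with all its Galois translates $\sigma(K)$, and in general $\sigma^{-1}\iota\sigma\neq\iota$, so $V_{+}\not\subset\sigma(K)$ and a split piece survives.  Concretely, for $T=(\mathbb{G}_{m})_{K/\mathbb{Q}}$ with $K$ a non-Galois cubic field and $u:\mathbb{S}^{1}\to T_{\mathbb{R}}$ the map onto the anisotropic part $U^{1}\subset(\mathbb{C}^{\times}\subset K_{\mathbb{R}}^{\times})$, the torus generated over $\mathbb{Q}$ by $u$ is the norm-one torus $T^{1}\subset T$, which has a nontrivial $\mathbb{R}$-split factor and hence is not anisotropic.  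So your argument for (SV2*) would break on such an example.

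The CM-field hypothesis is precisely what rules this out: when $Z^{\circ}(G)$ is split by a CM-field $L$, complex conjugation $\iota$ is central in $\Gal(L/\mathbb{Q})$, so the $-1$-eigenspace of $\iota$ on $X^{\ast}(Z^{\circ})_{\mathbb{Q}}$ is $\Gal$-stable, i.e.\ the largest compact subtorus $T_{c}$ of $Z^{\circ}_{\mathbb{R}}$ is defined over $\mathbb{Q}$.  The paper's proof takes $G'$ to be the subgroup generated by $T_{c}$, $G^{\mathrm{der}}$, and $w_{h}(\mathbb{G}_{m})$, so that $G'/(G^{\mathrm{der}}\cdot w_{h}(\mathbb{G}_{m}))$ is a quotient of $T_{c}$ and hence $\mathbb{R}$-anisotropic; then $(G',h)$ satisfies (SV2*) and \ref{h95c} applies.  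Your $G_{1}$ (generated by $h$) is contained in this $G'$ and would also satisfy (SV2*), but only because of the CM hypothesis, not for the reason you give.  If you want to keep the shrinking step as ``replace $G$ by the Mumford--Tate group generated by $h$,'' the right way to justify (SV2*) is to note that $G_{1}$'s connected centre is still split by the same CM-field (it is isogenous to a subquotient of $Z^{\circ}(G)$), and then run the paper's anisotropy argument.
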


\begin{proof}
The hypothesis on the connected centre $Z^{\circ}$ says that the largest
compact subtorus of $Z_{\mathbb{R}{}}^{\circ}$ is defined over $\mathbb{Q}{}$.
Take $G^{\prime}$ to be the subgroup of $G$ generated by this torus,
$G^{\text{der}}$, and the image of $w_{h}$. Now $(G^{\prime},h)$ satisfies
(SV2*), and we can apply \ref{h95c}.
\end{proof}

We classify the symplectic representations of $(H,\bar{h})$ with $\rho$
faithful. Note that the quotient of $H$ acting faithfully on $V$ is isomorphic
to $G^{\text{der}}$.

Let $(V,r)$ be a symplectic representation of $(H,\bar{h})$. The restriction
of the representation to $H_{\text{nc}}$ is a real symplectic representation
of $H_{\text{nc}}$, and so, according to Corollary \ref{h94d}, every
nontrivial irreducible direct summand of $r_{\mathbb{C}}|H_{\text{nc}}$
factors through $H_{v}$ for some $v\in I_{\text{nc}}$ and corresponds to a
symplectic node of the Dynkin diagram $D_{v}$ of $H_{v}$.

\begin{wrapfigure}[6]{r}{1.5in}
\vspace{-0.5cm}
\begin{tikzpicture}
\matrix(m)[matrix of math nodes,
row sep=3.0em, column sep=3.0em,
text height=1.5ex, text depth=0.25ex]
{D\\I&T\\};
\path[->,font=\scriptsize,>=angle 90]
(m-1-1) edge (m-2-1)
(m-2-2) edge node[right]{s} (m-1-1);
\path[left hook->]
(m-2-2) edge (m-2-1);
\end{tikzpicture}
\end{wrapfigure}Let $W$ be an irreducible direct summand of $V_{\mathbb{C}{}}%
$. Then
\[
W\approx\bigotimes\nolimits_{v\in T}W_{v}%
\]
for some irreducible symplectic representations $W_{v}$ of $H_{v\mathbb{C}}$
indexed by a subset $T$ of $I$. The irreducible representation $W_{v}$
corresponds to a symplectic node $s(v)$ of $D_{v}$. Because $r$ is defined
over $\mathbb{Q}{}$, the set $s(T)$ is stable under the action of
$\Gal(\mathbb{Q}^{\text{al}}/\mathbb{Q})$. For $v\in I_{\mathrm{nc}}$, the set
$s(T)\cap D_{v}$ consists of a single symplectic node.

Given a diagram (\ref{hq95}), we let $\mathcal{S}{}(V)$ denote the set of
subsets $s(T)$ of the nodes of $D$ as $W$ runs over the irreducible direct
summands of $V$. The set $\mathcal{S}{}(V)$ satisfies the following
conditions:\refstepcounter{X}\label{h95bb}

\begin{description}
\item[\textnf{(\theX a)}] for $S\in\mathcal{S}{}(V)$, $S\cap D_{\mathrm{nc}}$
is either empty or consists of a single symplectic node of $D_{v}$ for some
$v\in I_{\mathrm{nc}}$;

\item[\textnf{(\theX b)}] $\mathcal{S}{}$ is stable under $\Gal(\mathbb{Q}%
{}^{\text{al}}/\mathbb{Q}{})$ and contains a nonempty subset.
\end{description}

Given such a set $\mathcal{S}{}$, let $H(\mathcal{S}{})_{\mathbb{C}{}}$ be the
quotient of $H_{\mathbb{C}{}}$ that acts faithfully on the representation
defined by $\mathcal{S}{}$. The condition (\ref{h95bb}b) ensures that
$H(\mathcal{S}{})$ is defined over $\mathbb{Q}{}$. According to Galois theory
(in the sense of Grothendieck), there exists an \'{e}tale $\mathbb{Q}{}%
$-algebra $K_{\mathcal{S}{}}$ such that
\[
\Hom(K_{\mathcal{S}{}},\mathbb{Q}{}^{\mathrm{al}}{})\simeq\mathcal{S}%
{}\text{\hspace{1cm}(as sets with an action of }\Gal(\mathbb{Q}{}%
^{\mathrm{al}}/\mathbb{Q}{})\text{)}.
\]

\begin{theorem}
\label{h95e}For any set $\mathcal{S}{}$ satisfying the conditions
(\ref{h95bb}), there exists a diagram (\ref{hq95}) such that the quotient of
$H$ acting faithfully on $V$ is $H(\mathcal{S}{})$.
\end{theorem}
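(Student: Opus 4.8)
The plan is to produce, for a given set $\mathcal{S}$ satisfying (\ref{h95bb}), the required diagram (\ref{hq95}) in three stages: first build the target representation $V$ of $H$ (hence of the quotient $H(\mathcal{S})$) by piecing together the symplectic representations classified in \S\,10.2, using Galois descent; second, construct a reductive $\mathbb{Q}$-group $G$ with $G^{\mathrm{der}}=H(\mathcal{S})$ whose connected centre is split by a CM field, together with a homomorphism $h\colon\mathbb{S}\to G_{\mathbb{R}}$ lifting $\bar h$, so that the hypotheses of Proposition \ref{h95d} are met; third, invoke Proposition \ref{h95d} (via Lemma \ref{h95c}) to upgrade $\rho\colon G\to\GL_V$ to a map into $(G(\psi),D(\psi))$.

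For the first stage I would proceed as follows. Each element $S\in\mathcal{S}$ is a subset of the nodes of $D=\coprod_{v\in I}D_v$; by (\ref{h95bb}a) it meets each noncompact component $D_v$ in at most one symplectic node, and for each node $s\in S$ lying in $D_v$ we have, by the analysis of types $A,B,C,D$ in \S\,10.2 (summarized in \ref{h94e}), a well-defined fundamental weight $\varpi_{s}$ of $H_{v\mathbb{C}}$ satisfying the numerical condition (\ref{hq43}); for $v\notin S$ (in particular all compact $v$ with $D_v\cap S=\varnothing$) we take the trivial representation. Tensoring these over $v\in I$ gives an irreducible representation $W_S$ of $H_{\mathbb{C}}$. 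Since $\mathcal{S}$ is $\Gal(\mathbb{Q}^{\mathrm{al}}/\mathbb{Q})$-stable, the collection $\{W_S\}_{S\in\mathcal{S}}$ is permuted by the Galois action compatibly with the action on $D$, so $\bigoplus_{S\in\mathcal{S}}W_S$ descends to a $\mathbb{Q}$-representation $V$ of $H$; concretely, grouping the $S$ into Galois orbits and using the \'etale $\mathbb{Q}$-algebra $K_{\mathcal{S}}$ with $\Hom(K_{\mathcal{S}},\mathbb{Q}^{\mathrm{al}})\simeq\mathcal{S}$, one realizes $V$ as a restriction of scalars from the factors of $K_{\mathcal{S}}$ of the appropriate geometrically irreducible representations of $H^s$ (which exist because the relevant fundamental weights generate the requisite piece of $P/Q$, cf. the ``faithful family'' remarks for each type). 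By construction the quotient of $H$ acting faithfully on $V$ is exactly $H(\mathcal{S})$, and $(V,\rho\circ\bar h)$ — after the shift by a central cocharacter as in the proof of Lemma \ref{h94g} — will be arranged to have type $\{(-1,0),(0,-1)\}$ once $h$ is chosen.

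For the second stage, I would apply Corollary \ref{h92c} (equivalently Proposition \ref{h91c}) to the semisimple $\mathbb{Q}$-group $H(\mathcal{S})$ and the homomorphism $\bar h\colon\mathbb{S}/\mathbb{G}_m\to H(\mathcal{S})^{\mathrm{ad}}_{\mathbb{R}}$ (note $\bar h$ still satisfies (SV1,2,3): (SV3) because $\bar h$ generates $H^{\mathrm{ad}}$, which is unchanged by passing to $H(\mathcal{S})$): this yields a reductive $\mathbb{Q}$-group $G$ with $G^{\mathrm{der}}=H(\mathcal{S})$, a homomorphism $h\colon\mathbb{S}\to G_{\mathbb{R}}$ lifting $\bar h$, with $w_h$ defined over $\mathbb{Q}$ and the connected centre of $G$ split by a CM field. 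Composing the representation of $H(\mathcal{S})=G^{\mathrm{der}}$ on $V$ with a central twist, I extend it to a faithful representation $\rho\colon G\to\GL_V$; the freedom in choosing how $h$ acts on each central factor (a weight-$0$, resp. weight-$-1$ adjustment on each $W_S$-isotypic piece, exactly as in Lemma \ref{h94g}) lets me force $(V,\rho\circ h)$ to be of type $\{(-1,0),(0,-1)\}$. The condition (\ref{hq43}) at each symplectic node is precisely what makes this central adjustment consistent and keeps the weights within $\{0,1\}$ before the twist.

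The third stage is then immediate: the data $(H(\mathcal{S})\hookrightarrow G\xrightarrow{\rho}\GL_V)$ now satisfies all hypotheses of Proposition \ref{h95d} — $G$ reductive with connected centre split by a CM field, $H\to G$ with image $G^{\mathrm{der}}$, $w_h$ over $\mathbb{Q}$, and $(V,\rho\circ h)$ of type $\{(-1,0),(0,-1)\}$ — so Proposition \ref{h95d} produces a nondegenerate alternating $\psi$ on $V$ and a factorization $(G,h)\to(G(\psi),D(\psi))$, giving the diagram (\ref{hq95}); and by construction the quotient of $H$ acting faithfully on $V$ is $H(\mathcal{S})$. The main obstacle I anticipate is the bookkeeping in stage one: verifying that the Galois action on $\mathcal{S}$ really is induced by the Galois action on the Dynkin diagram in a way compatible with which irreducible representation sits at each node (so that the tensor-product-then-descend step is legitimate), and checking that the central twist needed to put $V$ in type $\{(-1,0),(0,-1)\}$ can be made $\mathbb{Q}$-rationally and simultaneously across all Galois-conjugate summands — this is where one genuinely uses that $w_h$ can be taken over $\mathbb{Q}$ and that the centre is CM, rather than just the real classification of \S\,10.2.
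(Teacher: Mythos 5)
Your overall architecture is right — build a $\mathbb Q$-representation $V$ by Galois descent from the fundamental-weight data of \S 10.2, produce a reductive $(G,h)$ over $\mathbb Q$, and conclude via Proposition \ref{h95d} — but Stage~2 has a genuine gap. You claim that a central twist suffices to put $(V,\rho\circ h)$ into type $\{(-1,0),(0,-1)\}$. This cannot work on the compact factors of $H_{\mathbb R}$, and those factors are unavoidable: since $\mathcal S$ is $\Gal(\mathbb Q^{\mathrm{al}}/\mathbb Q)$-stable and nonempty, and Galois acts transitively on $I=\Hom(F,\mathbb R)$, the set $\mathcal S$ meets $D_v$ for \emph{every} $v$, in particular for every $v\in I_{\mathrm c}$. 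On such a $v$, $\bar h_v$ is trivial, so any fractional lift of $\bar h$ acts on the summand $V_s$ by a single scalar; the resulting Hodge type is a singleton $(p,q)$, and composing with a further central cocharacter of $G$ only translates that singleton. A single-type Hodge structure can never be turned into a two-type structure $\{(-1,0),(0,-1)\}$ by scalar twists, so the $I_{\mathrm c}$-pieces of your $V$ cannot be given the required type no matter how you choose $h$ on $Z(G)$.

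The paper avoids this with a step your proposal omits entirely: after constructing $V$ with its $K_{\mathcal S}$-module structure and renumbering so that the $I_{\mathrm{nc}}$-pieces have type $\{(-1,0),(0,-1)\}$ and the $I_{\mathrm c}$-pieces have type $(0,0)$, it chooses a totally imaginary quadratic extension $E$ of the totally real field $F$ (with $H=(H^s)_{F/\mathbb Q}$), equips $E$ as a $\mathbb Q$-vector space with a Hodge structure $h_T$ that is of type $\{(-1,0),(0,-1)\}$ over each $v\in I_{\mathrm c}$ and type $(0,0)$ over each $v\in I_{\mathrm{nc}}$, and passes to $E\otimes_F V$. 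The tensor product is then of type $\{(-1,0),(0,-1)\}$ everywhere, and $G$ is taken to be the concrete subgroup of $\GL_{E\otimes_F V}$ generated by $E^\times$ and the image of $H$ — not the group abstractly supplied by Corollary \ref{h92c}, which has no prescribed action on $V$. This is Shimura's trick recorded in Aside \ref{h96}; the CM field enters the argument through $E$ and the induced Hodge structure, not merely as a splitting hypothesis on the centre. Without this tensoring your Stage~2 fails whenever $I_{\mathrm c}\neq\varnothing$, which (for $H_{\mathbb R}^{\mathrm{ad}}$ having any compact factor) is precisely the interesting case.
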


\begin{proof}
We prove this only in the case that $\mathcal{S}{}$ consists of one-point
sets. For an $\mathcal{S}{}$ as in the theorem, the set $\mathcal{S}{}%
^{\prime}$ of $\{s\}$ for $s\in S\in\mathcal{S}{}$ satisfies (\ref{h95bb}) and
$H(\mathcal{S}{})$ is a quotient of $H(\mathcal{S}{}^{\prime})$.

Recall that $H=\left(  H^{s}\right)  _{F/\mathbb{Q}{}}$ for some totally real
field $F$. We choose a totally imaginary quadratic extension $E$ of $F$ and,
for each real embedding $v$ of $F$ in $I_{\mathrm{c}}$, we choose an extension
$\sigma$ of $v$ to a complex embedding of $E$. Let $T$ denote the set of
$\sigma$'s. Thus%
\[%
\begin{array}
[c]{ccc}%
E & \overset{\sigma}{\longrightarrow} & \mathbb{C}{}\\
\cup &  & \cup\\
F & \overset{v}{\longrightarrow} & \mathbb{R}%
\end{array}
\quad\quad T=\{\sigma\mid v\in I_{c}\}.
\]

We regard $E$ as a $\mathbb{Q}{}$-vector space, and define a Hodge structure
$h_{T}$ on it as follows: $E\otimes_{\mathbb{Q}{}}\mathbb{C}{}\simeq
\mathbb{C}{}^{\Hom(E,\mathbb{C}{})}$ and the factor with index $\sigma$ is of
type $(-1,0)$ if $\sigma\in T$, type $(0,-1)$ if $\bar{\sigma}\in T$, and of
type $(0,0)$ if $\sigma$ lies above $I_{\mathrm{nc}}$. Thus ($\mathbb{C}%
{}_{\sigma}=\mathbb{C}{}$):
\[
\renewcommand{\arraystretch}{1.4}%
\begin{array}
[c]{ccccccc}%
E\otimes_{\mathbb{Q}{}}\mathbb{C} & = & \bigoplus\nolimits_{\sigma\in
T}\mathbb{C}{}_{\sigma} & \oplus & \bigoplus\nolimits_{\bar{\sigma}\in
T}\mathbb{C}{}_{\sigma} & \oplus & \bigoplus\nolimits_{\sigma\notin T\cup
\bar{T}}\mathbb{C}{}_{\sigma}.\\
h_{T}(z) &  & z &  & \bar{z} &  & 1
\end{array}
\]

Because the elements of $\mathcal{S}{}$ are one-point subsets of $D$, we can
identify them with elements of $D$, and so regard $\mathcal{S}{}$ as a subset
of $D$. It has the properties:

\begin{enumerate}
\item if $s\in\mathcal{S}{}\cap D_{\mathrm{nc}}$, then $s$ is a symplectic node;

\item $\mathcal{S}{}$ is stable under $\Gal(\mathbb{Q}{}^{\mathrm{al}%
}/\mathbb{Q}{})$ and is nonempty.
\end{enumerate}

Let $K_{D}$ be the smallest subfield of $\mathbb{Q}{}^{\mathrm{al}}$ such that
$\Gal(\mathbb{Q}{}^{\mathrm{al}}/K_{D})$ acts trivially on $D$. Then $K_{D}$
is a Galois extension of $\mathbb{Q}{}$ in $\mathbb{Q}{}^{\mathrm{al}}$ such
that $\Gal(K_{D}/K)$ acts faithfully on $D$. Complex conjugation acts as the
opposition involution on $D$, which lies in the centre of $\Aut(D)$; therefore
$K_{D}$ is either totally real or CM.

The $\mathbb{Q}{}$-algebra $K_{\mathcal{S}{}}$ can be taken to be a product of
subfields of $K_{D}$. In particular, $K_{\mathcal{S}{}}$ is a product of
totally real fields and CM fields. The projection $\mathcal{S}{}\rightarrow I$
corresponds to a homomorphism $F\rightarrow K_{\mathcal{S}{}}$.

For $s\in\mathcal{S}{}$, let $V(s)$ be a complex representation of
$H_{\mathbb{C}{}}$ with dominant weight the fundamental weight corresponding
to $s$. The isomorphism class of the representation $\bigoplus_{s\in
\mathcal{S}{}}V(s)$ is defined over $\mathbb{Q}{}$. The obstruction to the
representation itself being defined over $\mathbb{Q}{}$ lies in the Brauer
group of $\mathbb{Q}{}$, which is torsion, and so some multiple of the
representation is defined over $\mathbb{Q}{}$. Let $V$ be a representation of
$H$ over $\mathbb{Q}{}$ such that $V_{\mathbb{C}{}}\approx\bigoplus
_{s\in\mathcal{S}{}}nV(s)$ for some integer $n$, and let $V_{s}$ denote the
direct summand of $V_{\mathbb{C}{}}$ isomorphic to $nV(s)$. These summands are
permuted by $\Gal(\mathbb{Q}{}^{\mathrm{al}}/\mathbb{Q}{})$ in a fashion
compatible with the action of $\Gal(\mathbb{Q}^{\mathrm{al}}/\mathbb{Q}{})$ on
$\mathcal{S}{}$, and the decomposition $V_{\mathbb{C}{}}=\bigoplus
_{s\in\mathcal{S}}V_{s}$ corresponds therefore to a structure of a
$K_{\mathcal{S}{}}$-module on $V$: let $s^{\prime}\colon K_{\mathcal{S}{}%
}\rightarrow\mathbb{Q}^{\mathrm{al}}$ be the homomorphism corresponding to
$s\in\mathcal{S}{}$; then $a\in K_{\mathcal{S}{}}$ acts on $V_{s}$ as
multiplication by $s^{\prime}(a)$.

Let $H^{\prime}$ denote the quotient of $H$ that acts faithfully on $V$. Then
$H_{\mathbb{R}{}}^{\prime}$ is the quotient of $H_{\mathbb{R}{}}$ described in
(\ref{h94e}).

A lifting of $\bar{h}$ to a fractional morphism of $\mathbb{S}{}$ into
$H_{\mathbb{R}{}}^{\prime}$ defines a fractional Hodge structure on $V$ of
weight $0$, which can be described as follows. Let $s\in\mathcal{S}{}$, and
let $v$ be its image in $I$; if $v\in I_{\mathrm{c}}$, then $V_{s}$ is of type
$(0,0)$; if $v\in I_{\mathrm{nc}}$, then $V_{s}$ is of type
$\{(r,-r),(r-1,1-r)\}$ where $r=\langle\varpi_{s},\bar{\mu}\rangle$ (notations
as in \ref{h94b}). We renumber this Hodge structure to obtain a new Hodge
structure on $V$:%
\[
\renewcommand{\arraystretch}{1.2}%
\begin{array}
[c]{|c|c|c|}\hline
& \text{old} & \text{new}\\\hline
V_{s}\text{, }v\in I_{\mathrm{c}} & (0,0) & (0,0)\\\hline
V_{s}\text{, }v\in I_{\mathrm{nc}} & (r,-r) & (0,-1)\\\hline
V_{s}\text{, }v\in I_{\mathrm{nc}} & (r-1,1-r) & (-1,0).\\\hline
\end{array}
\]

We endow the $\mathbb{Q}{}$-vector space $E\otimes_{F}V$ with the tensor
product Hodge structure. The decomposition%
\[
(E\otimes_{F}V)\otimes_{\mathbb{Q}{}}\mathbb{R}{}=\bigoplus\nolimits_{v\in
I}(E\otimes_{F,v}\mathbb{R}{})\otimes_{\mathbb{R}{}}(V\otimes_{F,v}%
\mathbb{R}{}),
\]
is compatible with the Hodge structures. The type of the Hodge structure on
each direct summand is given by the following table:%
\[
\renewcommand{\arraystretch}{1.5}%
\begin{array}
[c]{|c|c|c|}\hline
& E\otimes_{F,v}\mathbb{R}{} & V\otimes_{F,v}\mathbb{R}\\\hline
v\in I_{\mathrm{c}} & \{(-1,0),(0,-1)\} & \{(0,0)\}\\\hline
v\in I_{\mathrm{nc}} & \{(0,0)\} & \{(-1,0),(0,-1)\}.\\\hline
\end{array}
\]
Therefore, $E\otimes_{F}V$ has type $\{(-1,0),(0,-1)\}$. Let $G$ be the
algebraic subgroup of $\GL_{E\otimes_{F}V}$ generated by $E^{\times}$ and
$H^{\prime}$. The homomorphism $h\colon\mathbb{S}{}\rightarrow(\GL_{E\otimes
_{F}V})_{\mathbb{R}{}}$ corresponding to the Hodge structure factors through
$G_{\mathbb{R}{}}$, and the derived group of $G$ is $H^{\prime}$. Now apply
(\ref{h95d}).
\end{proof}

\begin{aside}
\label{h96}The trick of using a quadratic imaginary extension $E$ of $F$ in
order to obtain a Hodge structure of type $\{(-1,0),(0,-1)\}$ from one of type
$\{(-1,0),(0,0),(0,-1)\}$ in essence goes back to Shimura (cf.
\cite{deligne1971}, \S 6).
\end{aside}

\subsubsection{Conclusion}

Now let $H$ be a semisimple algebraic group over $\mathbb{Q}{}$, and let
$\bar{h}$ be a homomorphism $\mathbb{S}{}\rightarrow H_{\mathbb{R}{}%
}^{\mathrm{ad}}$ satisfying (SV1,2) and generating $H$.

\begin{definition}
\label{h97}The pair $(H,\bar{h})$ is of \emph{Hodge type}%
\index{Hodge type}
if it admits a faithful family of symplectic representations.
\end{definition}

\begin{theorem}
\label{h98}A pair $(H,\bar{h})$ is of Hodge type if it is a product of pairs
$(H_{i},\bar{h}_{i})$ such that either

\begin{enumerate}
\item $(H_{i},\bar{h}_{i})$ is of type $A$, $B$, $C$, or $D^{\mathbb{R}{}}$,
and $H$ is simply connected, or \noindent

\item $(H_{i},\bar{h}_{i})$ is of type $D_{n}^{\mathbb{H}{}}$ $(n\geq5)$ and
equals $\left(  H^{s}\right)  _{F/\mathbb{Q}{}}$ for the quotient $H^{s}$ of
the simply connected group of type $D_{n}^{\mathbb{H}{}}$ by the kernel of
$\varpi_{1}$ (cf. \ref{h94e}).
\end{enumerate}

\noindent Conversely, if $(H,\bar{h})$ is a Hodge type, then it is a quotient
of a product of pairs satisfying (a) or (b).
\end{theorem}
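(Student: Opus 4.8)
The plan is to assemble Theorem~\ref{h98} from three things already established in this section: the real classification (Summary~\ref{h94e}), the two nonexistence results for exceptional groups and for mixed type $D$ (p.~\pageref{not}), and the construction of Theorem~\ref{h95e}.

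\emph{Sufficiency.} First I would show that each pair $(\widehat H_i,\bar h_i)$ satisfying (a) or (b) is of Hodge type in the strong sense that the symplectic representations of it form a faithful family of $\widehat H_i$. Writing $\widehat H_i=(H_i^s)_{F/\mathbb{Q}}$, the group $\Gal(\mathbb{Q}^{\mathrm{al}}/\mathbb{Q})$ acts transitively on the Dynkin diagram $D$ of $(\widehat H_i)_{\mathbb{C}}$ through diagram automorphisms; using the type-by-type analysis above one checks that in each type allowed by (a) or (b) one can pick a symplectic node $s_0$ of some $D_v$ with $v\in I_{\mathrm{nc}}$ whose Galois orbit $\mathcal{S}=\Gal\cdot\{s_0\}$ is a set of one-point subsets of $D$ satisfying the conditions (\ref{h95bb}) (every node in the orbit lying in a noncompact component is again a symplectic node there). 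Theorem~\ref{h95e} then produces a diagram (\ref{hq95}) whose symplectic representation has image the quotient $H(\mathcal{S})$ of $\widehat H_i$ acting faithfully on the representation with highest weights $\varpi_{s_0}$ and its Galois conjugates. Letting $s_0$ run over all symplectic nodes and invoking Summary~\ref{h94e}, these representations are jointly faithful on $\widehat H_i$ itself in the cases $A$, $B$, $C$, $D^{\mathbb{R}}$ (and, via triality, $D_4$), and on $(H^s)_{F/\mathbb{Q}}$ in the case $D_n^{\mathbb{H}}$, $n\ge5$ — exactly the group occurring in (a) or (b). Taking direct sums over the factors (and applying Lemma~\ref{h95c} to the subgroup of $\GL_V$ generated by the image of $H$ and $h(\mathbb{S})$) shows that a product $\prod_i(\widehat H_i,\bar h_i)$ is of Hodge type; for a quotient $H$ of such a product one keeps the symplectic representations trivial on the kernel of $\prod_i\widehat H_i\to H$, which again form a faithful family.

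\emph{Necessity.} Conversely, suppose $(H,\bar h)$ is of Hodge type. Pass to the simply connected covering $\widetilde H=\prod_i\widetilde H_i$ of $H$ (leaving $\bar h$ on the common adjoint group untouched) and write $\widetilde H_i=(H_i^s)_{F_i/\mathbb{Q}}$ with $H_i^s$ geometrically almost simple over the totally real field $F_i$; each $\bar h_i$ is nontrivial, since $\bar h$ generates $H^{\mathrm{ad}}$, and satisfies (SV1,2), so by \S 2 each $\widetilde H_i$ is of type $A$, $B$, $C$, $D$, $E_6$, or $E_7$. A faithful family of symplectic representations of $(H,\bar h)$ pulls back to $\widetilde H$, restricts to each factor, and, restricted over $\mathbb{R}$ to a noncompact simple component, becomes a real symplectic representation; hence by Summary~\ref{h94e} no factor is of type $E_6$ or $E_7$ (a factor killed by the whole family would contradict faithfulness), and by the argument on p.~\pageref{not} no factor is of mixed type $D$, since no automorphism of the diagram $D_n$ ($n\ge5$) carries node $1$ to node $n-1$ or $n$, so the requisite Galois-stable collection of symplectic nodes cannot exist. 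Put $\widehat H_i=\widetilde H_i$ when $\widetilde H_i$ is of type $A$, $B$, $C$, $D^{\mathbb{R}}$, or $D_4$, and $\widehat H_i=\widetilde H_i/\ker\varpi_1$ when it is of type $D_n^{\mathbb{H}}$, $n\ge5$; then each $(\widehat H_i,\bar h_i)$ satisfies (a) or (b). Finally, for a $D_n^{\mathbb{H}}$, $n\ge5$ factor, every symplectic representation of $(H,\bar h)$ restricts on that factor to a sum of $\mathbb{C}$-irreducibles of highest weight of $\varpi_1$-type (the type-$D$ analysis above), hence is trivial on $\ker\varpi_1\subset Z(\widetilde H_i)$; as the family is faithful on $H$, this forces $\ker\varpi_1\subset\ker(\widetilde H\to H)$, so $H$ is a quotient of $\widehat H=\prod_i\widehat H_i$. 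Therefore $(H,\bar h)$ is a quotient of a product of pairs satisfying (a) or (b).

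\emph{Main obstacle.} The only genuinely non-formal points are the two ``which quotient'' verifications — that in type $D_n^{\mathbb{H}}$, $n\ge5$, a faithful family of symplectic representations kills exactly $\ker\varpi_1$, and that $D_4$ is correctly absorbed into case (a) via triality — together with the bookkeeping on central characters needed to see that the sub- and direct-sum families constructed in the sufficiency argument are faithful. Everything else is assembly of Theorem~\ref{h95e}, Summary~\ref{h94e}, and the nonexistence results of this section.
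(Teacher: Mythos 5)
Your argument for the theorem as stated follows the paper's own proof: for sufficiency, each almost simple factor satisfying (a) or (b) is realized as $H(\mathcal{S})$ for a suitable set $\mathcal{S}$ (a Galois orbit of a symplectic node) and Theorem~\ref{h95e} then furnishes the symplectic representation, with faithfulness of the family extracted from Summary~\ref{h94e}; products are then handled by direct sums via Lemma~\ref{h95c}. For necessity, pass to the simply connected cover, apply Summary~\ref{h94e} and the nonexistence observations of p.~\pageref{not} to exclude $E_{6}$, $E_{7}$, and mixed type $D$, and use Summary~\ref{h94e} again to see that a $D_{n}^{\mathbb{H}}$ ($n\geq5$) factor must act through $\widetilde H_i/\Ker\varpi_1$, so that $H$ is a quotient of $\prod_i\widehat H_i$. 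That is precisely the paper's argument, with the bookkeeping spelled out more fully.

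One claim must be removed, though. Your sufficiency paragraph ends by asserting that for a \emph{quotient} $H$ of $\prod_i\widehat H_i$, the symplectic representations trivial on $\Ker\bigl(\prod_i\widehat H_i\to H\bigr)$ ``again form a faithful family.'' The theorem does not assert this (the forward implication requires $H$ to \emph{be} such a product, not a quotient of one), and the claim is unjustified and in general false: the symplectic representations that survive passage to a given central quotient need not separate the remaining centre, and the Galois-stability constraint in (\ref{h95bb}) further limits which sets $\mathcal{S}$ are available. The paper makes exactly this point in the remark immediately following the theorem, where it notes that the pairs of Hodge type have \emph{not} been completely classified because one has not determined which quotients of products satisfying (a) or (b) occur as $H(\mathcal{S})$ for an admissible $\mathcal{S}$. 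Deleting that sentence leaves a correct proof with essentially the same structure as the paper's.
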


\begin{proof}
Suppose that $(H,\bar{h})$ is a product of pairs satisfying (a) and (b), and
let $(H^{\prime},\bar{h}^{\prime})$ be one of these factors with $H^{\prime}$
almost simple. Let $\tilde{H}^{\prime}$ be the simply connected covering group
of $H$. Then (\ref{h94e}) allows us to choose a set $\mathcal{S}{}$ satisfying
(\ref{h95bb}) and such that $H^{\prime}=H(\mathcal{S}{})$. Now Theorem
\ref{h95e} shows that $(H^{\prime},\bar{h}^{\prime})$ admits a faithful
symplectic representation. A product of pairs of Hodge type is clearly of
Hodge type.

Conversely, suppose that $(H,\bar{h})$ is of Hodge type, let $\tilde{H}$ be
the simply connected covering group of $H$, and let $(H^{\prime},\bar
{h}^{\prime})$ be an almost simple factor of $(\tilde{H},\bar{h})$. Then
$(H^{\prime},\bar{h}^{\prime})$ admits a symplectic representation with finite
kernel, and so $(H^{\prime},\bar{h}^{\prime})$ is not of type $E_{6}$, $E_{7}%
$, or mixed type $D$ (see p.~\pageref{not}). Moreover, if $(H^{\prime},\bar
{h}^{\prime})$ is of type $D_{n}^{\mathbb{H}{}}$, $n\geq5$, then (\ref{h94e})
shows that it factors through the quotient described in (b).
\end{proof}

Notice that we haven't completely classified the pairs $(H,\bar{h})$ of Hodge
type because we haven't determined exactly which quotients of products of
pairs satisfying (a) or (b) occur as $H(\mathcal{S}{})$ for some set
$\mathcal{S}{}$ satisfying (\ref{h95bb}).

\section{Moduli}

\begin{quote}
{\small In this section, we determine (a) the pairs $(G,h)$ that arise as the
Mumford-Tate group of an abelian variety (or an abelian motive); (b) the
arithmetic locally symmetric varieties that carry a faithful family of abelian
varieties (or abelian motives); (c) the Shimura varieties that arise as moduli
varieties for polarized abelian varieties (or motives) with Hodge class and
level structure. }
\end{quote}

\subsection{Mumford-Tate groups}

\begin{theorem}
\label{h11a}Let $G$ be an algebraic group over $\mathbb{Q}{}$, and let
$h\colon\mathbb{S}{}\rightarrow G_{\mathbb{R}{}}$ be a homomorphism that
generates $G$ and whose weight is rational. The pair $(G,h)$ is the
Mumford-Tate group of an abelian variety if and only if $h$ satisfies (SV2*)
and there exists a faithful representation $\rho\colon G\rightarrow\GL_{V}$
such that $(V,\rho\circ h)$ is of type $\{(-1,0),(0,-1)\}$
\end{theorem}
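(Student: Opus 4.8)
The plan is to read off both implications from the foundational results on Mumford-Tate groups in \S 6 together with Riemann's theorem (Theorem \ref{h69a}); essentially no new idea is needed, only careful bookkeeping.

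For the ``only if'' direction: suppose $(G,h)$ is the Mumford-Tate group of an abelian variety $A$ over $\mathbb{C}{}$, i.e.\ $(G,h)=\MT_{V}$ for $V=H_{1}(A^{\mathrm{an}},\mathbb{Q}{})$. By definition of the Mumford-Tate group, the tautological inclusion $\rho\colon G\hookrightarrow\GL_{V}$ is a faithful representation, and $(V,\rho\circ h)$ is the Hodge structure of an abelian variety, hence of type $\{(-1,0),(0,-1)\}$. Since $A$ is projective it admits a polarization, so $(V,\rho\circ h)$ is a \emph{polarizable} rational Hodge structure; Proposition \ref{h52} then gives that $(G,h)$ satisfies (SV2*). (Here $(V,h)$ has a single weight $-1$, so Proposition \ref{h52} applies directly.) That $h$ generates $G$ and that $w_{h}$ is defined over $\mathbb{Q}{}$ are part of the standing hypotheses on a Mumford-Tate group.

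For the ``if'' direction: assume $h$ generates $G$, $w_{h}$ is rational, $h$ satisfies (SV2*), and $\rho\colon G\rightarrow\GL_{V}$ is faithful with $(V,\rho\circ h)$ of type $\{(-1,0),(0,-1)\}$. First I would invoke Proposition \ref{h53a} to conclude that $(G,h)$ is precisely the Mumford-Tate group of the rational Hodge structure $(V,\rho\circ h)$. Next, applying Remark \ref{h52r} to the algebraic subgroup $\rho(G)=G\subset\GL_{V}$ (which satisfies (SV2*)), the Hodge structure $(V,\rho\circ h)$ is polarizable; fix a polarization $\psi\colon V\times V\rightarrow\mathbb{Q}{}(1)$. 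Now choose any full $\mathbb{Z}{}$-lattice $\Lambda\subset V$, and after replacing $\psi$ by $N\psi$ for a suitable positive integer $N$ arrange that $\psi(\Lambda,\Lambda)\subset\mathbb{Z}{}$; then $\Lambda$, with the induced Hodge structure and $\psi$, is a polarizable integral Hodge structure of type $\{(-1,0),(0,-1)\}$. By Riemann's theorem (Theorem \ref{h69a}) there is an abelian variety $A$ over $\mathbb{C}{}$ with $H_{1}(A^{\mathrm{an}},\mathbb{Z}{})\simeq\Lambda$ as polarizable integral Hodge structures, whence the Mumford-Tate group of $A$ equals that of $(V,\rho\circ h)$, namely $(G,h)$.

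The only step calling for any care — and the closest thing to an obstacle — is the passage from a polarizable \emph{rational} Hodge structure to a polarizable \emph{integral} one, i.e.\ producing a lattice on which the chosen polarization takes integer values; this is dealt with simply by clearing denominators, and it causes no difficulty because the lattice is irrelevant to the Mumford-Tate group, so no compatibility with $G$ is required. Everything else is a direct citation of Propositions \ref{h52}, \ref{h53a}, Remark \ref{h52r}, and Theorem \ref{h69a}.
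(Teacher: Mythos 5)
Your proof is correct and follows essentially the same route as the paper: necessity via Proposition \ref{h52} applied to the tautological inclusion $G\hookrightarrow\GL_V$, and sufficiency via Proposition \ref{h53a} to identify $(G,h)$ as the Mumford--Tate group of $(V,\rho\circ h)$, polarizability from (SV2*), and then Riemann's theorem. You merely make explicit the lattice-choice step that the paper leaves implicit in its appeal to Theorem \ref{h44}.
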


\begin{proof}
The necessity is obvious (apply (\ref{h52}) to see that $(G,h)$ satisfies
(SV2*)). For the sufficiency, note that $(G,h)$ is the Mumford-Tate group of
$(V,\rho\circ h)$ because $h$ generates $G$. The Hodge structure is
polarizable because $(G,h)$ satisfies (SV2*) (apply \ref{h52}), and so it is
the Hodge structure $H_{1}(A^{\text{an}},\mathbb{Q}{})$ of an abelian variety
$A$ by Riemann's theorem \ref{h44}.
\end{proof}

The Mumford-Tate group%
\index{Mumford-Tate group!of an abelian motive}
of a motive is defined to be the Mumford-Tate group of its Betti realization.

\begin{theorem}
\label{h11b}Let $(G,h)$ be an algebraic group over $\mathbb{Q}{}$, and let
$h\colon\mathbb{S}{}\rightarrow G_{\mathbb{R}{}}$ be a homomorphism satisfying
(SV1,2*) and generating $G{}$. Assume that $w_{h}$ is defined over
$\mathbb{Q}{}$. The pair $(G,h)$ is the Mumford-Tate group of an abelian
motive if and only if $(G^{\mathrm{der}},\bar{h})$ is a quotient of a product
of pairs satisfying (a) and (b) of (\ref{h98}).
\end{theorem}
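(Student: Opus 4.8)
The plan is to prove both directions by combining the classification of symplectic representations (Theorem \ref{h98}) with the motivic machinery of \S 9, using the diagram (\ref{hq17}) as the bridge between Mumford-Tate groups and variations of Hodge structures.

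\textbf{Necessity.} Suppose $(G,h)$ is the Mumford-Tate group of an abelian motive $M$ over $\mathbb{C}{}$. Then $(V,h)\overset{\textup{def}}{=}\omega_B(M)$ is a polarizable Hodge structure with Mumford-Tate group $(G,h)$, so $(G,h)$ satisfies (SV1,2*) by Propositions \ref{h52} and the definition of special (note $M$ is special by Example \ref{h54f} since it lies in the tannakian category generated by abelian varieties). The key point is to show $(G^{\mathrm{der}},\bar h)$ is a quotient of a product of pairs of types (a),(b) of Theorem \ref{h98}. First I would realize $M$ as a family: by Theorem \ref{h69b} (applied to the period subdomain $D(V,h,\mathfrak{t}{})$ for a suitable family of Hodge tensors, as in Step 4 of the proof of Theorem \ref{h76}), the Hodge structure $(V,h)$ fits into a polarizable variation of Hodge structures over an arithmetic locally symmetric variety $D(\Gamma)$ whose generic Mumford-Tate group is $G$. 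By Theorem \ref{h59b}, the connected monodromy group $M_s$ at a generic point is normal in $G^{\mathrm{der}}$, and equals $G^{\mathrm{der}}$ once we pass to a point of CM-type in the family (which exists by Proposition \ref{h91f}) — so after shrinking we get a diagram (\ref{hq17}) with $H\to G$ having image $G^{\mathrm{der}}$ and $H$ simply connected. Now $\omega_f(M_\eta)$ gives a variation of Hodge structures underlying a \emph{family of abelian motives}, hence by Theorem \ref{h80b} it is (up to a dense open set and Tate twist) a direct summand of $Rf_*\mathbb{Q}{}$ for a family of abelian varieties $f\colon A\to S$. Restricting to $H_{\mathbb{R}{}}$, the representation $\rho|H$ embeds $H$ into $\GL_V$ where $(V,\rho\circ h)$ — or rather $H^1$ of the abelian family — has type $\{(-1,0),(0,-1)\}$, so $(H,\bar h)$ admits a faithful family of symplectic representations, i.e. is of Hodge type (Definition \ref{h97}). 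By the converse part of Theorem \ref{h98}, $(H,\bar h)=(G^{\mathrm{der}},\bar h)$ is a quotient of a product of pairs satisfying (a) or (b).

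\textbf{Sufficiency.} Conversely, assume $(G^{\mathrm{der}},\bar h)$ is a quotient of a product of pairs of types (a),(b). Then by the forward part of Theorem \ref{h98}, $(G^{\mathrm{der}},\bar h)$ is of Hodge type: there is a faithful family of symplectic representations, and in particular a diagram of the shape used in Theorem \ref{h95e} producing a reductive $G_1$ with $G_1^{\mathrm{der}}=G^{\mathrm{der}}$, a homomorphism $h_1$ lifting $\bar h$, and a representation $(V_1,\rho_1)$ of type $\{(-1,0),(0,-1)\}$ with $\rho_1(G_1)\subset G(\psi)$ for some alternating $\psi$. The issue is that we are handed a \emph{specific} $(G,h)$, not the $(G_1,h_1)$ manufactured by Theorem \ref{h95e}; I would reconcile these by noting that $G$ and $G_1$ have the same derived group and both $h,h_1$ lift $\bar h$, so $G$ and $G_1$ differ only in their central tori, and by Theorem \ref{h11a} the pair $(G_1,h_1)$ is the Mumford-Tate group of an abelian variety $A_1$. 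The Hodge structure $H_1(A_1^{\text{an}},\mathbb{Q}{})$ lies in $\Hdg_{\mathbb{Q}{}}$, hence (Deligne's theorem \ref{h74}, as packaged in \ref{h79b} and the discussion after \ref{h79c}) corresponds to an abelian motive $M_1$. To get the given $(G,h)$: form the product motive $M_1\otimes N$ where $N$ is a CM motive (which is abelian by Lemma \ref{h79f}) chosen so that the Mumford-Tate group of the Betti realization of $M_1\oplus N$, restricted appropriately, becomes $(G,h)$ — concretely, $G$ is recovered as a subgroup of $G_1\times T$ for a suitable CM torus $T$, using that $G=G^{\mathrm{der}}\cdot Z(G)^\circ$ and that $Z(G)^\circ$ is a torus split by a CM field (this is forced by (SV2*) together with rationality of $w_h$, cf. the argument in Proposition \ref{h91c}). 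The resulting object is an abelian motive with Mumford-Tate group $(G,h)$.

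\textbf{Main obstacle.} The delicate step is the sufficiency direction: matching the \emph{given} central torus $Z(G)^\circ$ of $G$ with what the tannakian/motivic formalism allows. One must check that every reductive $(G,h)$ satisfying (SV1,2*) with rational weight and with $(G^{\mathrm{der}},\bar h)$ of Hodge type actually has $Z(G)^\circ$ split by a CM field — this is not automatic from (SV1,2*) alone, but follows because $h(\mathbb{S}{})$ projects to an anisotropic-modulo-$w_h$ subtorus of $(Z(G)/w_h(\mathbb{G}_m))_{\mathbb{R}{}}$ (by (SV2*)), forcing complex conjugation to act as $-1$ there and hence the splitting field to be CM, exactly as in the proof of Proposition \ref{h91c} and Proposition \ref{h91e}. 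Once that is in hand, the CM torus $N$ is realized motivically by Lemma \ref{h79f}, and tensoring (closing the tannakian subcategory under the operations in \ref{h79a}) produces the desired abelian motive; the forward direction is comparatively formal given Theorems \ref{h80b} and \ref{h98}.
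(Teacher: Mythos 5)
Your proposal takes a genuinely different route from the paper, and both directions contain gaps that I do not think can be repaired without essentially abandoning the strategy.

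The paper's proof is purely tannakian: it introduces the canonical map $\rho(H,\bar h)\colon(G_{\mathrm{Hdg}})^{\mathrm{der}}\to H$ (Lemma \ref{h11c}), shows it is functorial (Lemma \ref{h11d}), shows it factors through $(G_{\mathrm{Mab}})^{\mathrm{der}}$ precisely when $(H,\bar h)$ has a finite covering by a pair of Hodge type (Lemmas \ref{h11e}, \ref{h11f}), and then finishes using Proposition \ref{h79g} (that $\Ker(G_{\mathrm{Hdg}}\to G_{\mathrm{Mab}})\subset(G_{\mathrm{Hdg}})^{\mathrm{der}}$) to transfer the condition on $G^{\mathrm{der}}$ to a condition on $G$. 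No period domains, no families, no monodromy. You instead work geometrically through variations of Hodge structures and moduli. This is a legitimately different strategy, but as written it does not close.

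In the necessity direction, the critical step is your assertion that ``$\omega_f(M_\eta)$ gives a variation of Hodge structures underlying a family of abelian motives, hence by Theorem \ref{h80b} it is\ldots a direct summand of $Rf_*\mathbb{Q}$.'' Theorem \ref{h80b} goes the other way: its hypothesis is that the variation is a direct summand of $Rf_*\mathbb{Q}$ for a family of abelian varieties, and its conclusion is that the variation comes from a family of abelian motives. You need the hypothesis, and establishing it is precisely the hard content. Starting from a single abelian motive $M$ over $\mathbb{C}$ and spreading out over the period subdomain, there is no a priori reason that every other fibre of the resulting variation is the Betti realization of an abelian motive — that is exactly what is to be proved. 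The paper sidesteps this entirely: since $M$ is abelian, $\omega_B(M)\in\langle H_1(A)\rangle^{\otimes}$ for a single abelian variety $A$ (finite generation of tannakian subcategories), so $\mathrm{MT}(A)\twoheadrightarrow G$ and the symplectic representation on $H_1(A)$ exhibits a Hodge-type covering of $(G^{\mathrm{der}},\bar h)$. No families needed.

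In the sufficiency direction, you write ``by the forward part of Theorem \ref{h98}, $(G^{\mathrm{der}},\bar h)$ is of Hodge type.'' This is not what Theorem \ref{h98} says: the forward implication requires $(H,\bar h)$ to \emph{be a product} of pairs satisfying (a) or (b), not merely a \emph{quotient} of such a product. The paper itself immediately remarks after Theorem \ref{h98} that it has \emph{not} determined which quotients of such products are of Hodge type. Consequently your subsequent claim that ``$G$ and $G_1$ have the same derived group'' fails: the $G_1$ produced by Theorem \ref{h95e} has $G_1^{\mathrm{der}}=H(\mathcal{S})$, which in general covers $G^{\mathrm{der}}$ properly rather than equaling it. The correct argument keeps the product $\prod(H_i,\bar h_i)$ as a \emph{finite covering} of $(G^{\mathrm{der}},\bar h)$, applies Lemma \ref{h11e} to the covering, pushes $\rho$ forward (Lemma \ref{h11d}), and then uses Proposition \ref{h79g} to descend from the statement about $(G_{\mathrm{Mab}})^{\mathrm{der}}$ to the existence of the abelian motive with Mumford-Tate group exactly $(G,h)$; nowhere is $(G^{\mathrm{der}},\bar h)$ itself required to admit a faithful symplectic representation. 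Your final paragraph on the CM-splitting of $Z(G)^\circ$ is a correct observation, but it does not repair either gap.
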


The proof will occupy the rest of this subsection. Recall that
$G_{\mathrm{Hdg}}$ is the affine group scheme attached to the tannakian
category $\Hdg_{\mathbb{Q}{}}$ of polarizable rational Hodge structures and
the forgetful fibre functor (see \ref{h79g}). It is equipped with a
homomorphism $h_{\mathrm{Hdg}}\colon\mathbb{S}{}\rightarrow(G_{\mathrm{Hdg}%
})_{\mathbb{R}{}}$. If $(G,h)$ is the Mumford-Tate group of a polarizable
Hodge structure, then there is a unique homomorphism $\rho(h)\colon
G_{\mathrm{Hdg}}\rightarrow G$ such that $h=\rho(h)_{\mathbb{R}{}}\circ
h_{\mathrm{Hdg}}$. Moreover, $(G_{\mathrm{Hdg}},h_{\mathrm{Hdg}}%
)=\varprojlim(G,h)$.

\begin{lemma}
\label{h11c}Let $H$ be a semisimple algebraic group over $\mathbb{Q}{}$, and
let $\bar{h}\colon\mathbb{S}{}/\mathbb{G}_{m}\rightarrow H_{\mathbb{R}{}%
}^{\mathrm{ad}}$ be a homomorphism satisfying (SV1,2,3). There exists a unique
homomorphism
\[
\rho(H,\bar{h})\colon\left(  G_{\mathrm{Hdg}}\right)  ^{\mathrm{der}%
}\rightarrow H
\]
such that the following diagram commutes:%
\[
\begin{CD}
(G_{\mathrm{Hdg}})^{\mathrm{der}} @>{\rho(H,\bar{h})}>>H\\
@VVV@VVV\\
G_{\mathrm{Hdg}}@>>{\rho(\bar{h})}>H^{\mathrm{ad}}.
\end{CD}
\]

\end{lemma}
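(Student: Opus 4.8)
The plan is to obtain both halves of the statement from the description of $G_{\mathrm{Hdg}}$ as the inverse limit of the Mumford-Tate groups $(G,h)$ of polarizable rational Hodge structures, together with Corollary \ref{h92c}, which produces a reductive lift of $\bar h$ having derived group exactly $H$.

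\textbf{Uniqueness.} First I would record that $(G_{\mathrm{Hdg}})^{\mathrm{der}}$ is perfect: it is the inverse limit of the groups $G^{\mathrm{der}}$, each of which is semisimple and hence equal to its own derived group, and this property passes to the limit. If $\rho_{1},\rho_{2}\colon (G_{\mathrm{Hdg}})^{\mathrm{der}}\to H$ both lift the restriction of $\rho(\bar h)$ to $(G_{\mathrm{Hdg}})^{\mathrm{der}}$, then $g\mapsto\rho_{1}(g)\rho_{2}(g)^{-1}$ lands in the central subgroup $Z(H)=\Ker(H\to H^{\mathrm{ad}})$, so its image is commutative and it is therefore a homomorphism; a homomorphism from a perfect group to a commutative group is trivial, whence $\rho_{1}=\rho_{2}$.

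\textbf{Existence.} By Corollary \ref{h92c} choose a reductive group $G$ over $\mathbb{Q}{}$ and a homomorphism $h\colon\mathbb{S}{}\to G_{\mathbb{R}{}}$ lifting $\bar h$, with $G^{\mathrm{der}}=H$, with $w_{h}$ defined over $\mathbb{Q}{}$, and satisfying (SV1,2*,3). Replacing $G$ by the algebraic subgroup generated by $G^{\mathrm{der}}$ and the image of $h$ --- which has the same derived group $H$ and still satisfies (SV1,2*,3) --- I may assume that $h$ generates $G$. (Here one uses that $\bar h$ generates $H^{\mathrm{ad}}$, which is the case in the application of this lemma inside the proof of Theorem \ref{h11b}, where $\bar h=\ad\circ h_{0}$ with $h_{0}$ generating; in general one first replaces $H$ by the $\mathbb{Q}{}$-subgroup through which $\bar h$ factors, cf.\ \ref{h65a}.) Now $(G,h)$ is the Mumford-Tate group of a rational Hodge structure by Proposition \ref{h53a}, and that Hodge structure is polarizable by Proposition \ref{h52} since $(G,h)$ satisfies (SV2*); hence there is a (surjective) homomorphism $\rho(h)\colon G_{\mathrm{Hdg}}\to G$ with $\rho(h)_{\mathbb{R}{}}\circ h_{\mathrm{Hdg}}=h$. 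I then set $\rho(H,\bar h)$ to be the induced homomorphism on derived groups $(G_{\mathrm{Hdg}})^{\mathrm{der}}\to G^{\mathrm{der}}=H$.

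\textbf{Commutativity and the main obstacle.} Composing $\rho(h)$ with $G\to G^{\mathrm{ad}}=H^{\mathrm{ad}}$ yields a homomorphism $G_{\mathrm{Hdg}}\to H^{\mathrm{ad}}$ whose composite with $h_{\mathrm{Hdg}}$ is $\ad\circ h=\bar h$; since $(H^{\mathrm{ad}},\bar h)$ is itself a Mumford-Tate pair (generated by $\bar h$, trivial weight, polarizable by (SV2)), the uniqueness of such a homomorphism identifies it with $\rho(\bar h)$. Restricting to $(G_{\mathrm{Hdg}})^{\mathrm{der}}$ gives exactly the asserted commutative square, and uniqueness was proved above. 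The only mildly delicate points are the pro-algebraic bookkeeping for $G_{\mathrm{Hdg}}$ --- that it has a well-behaved derived group and that a surjection onto a reductive group restricts to a surjection of derived groups --- and the reduction to the case where $\bar h$ generates $H^{\mathrm{ad}}$; neither is serious, and all the substantive content is already contained in Corollary \ref{h92c} and in the inverse-limit description of $G_{\mathrm{Hdg}}$.
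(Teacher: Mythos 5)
Your proof is correct and follows essentially the same route as the paper: use Corollary \ref{h92c} to produce a reductive lift $(G,h)$ of $(H^{\mathrm{ad}},\bar h)$ with $G^{\mathrm{der}}=H$, realize $(G,h)$ as a Mumford-Tate group to obtain $\rho(h)\colon G_{\mathrm{Hdg}}\to G$, and restrict to derived groups. Two small divergences are worth flagging. For uniqueness, the paper argues that $g\mapsto\rho_1(g)\rho_2(g)^{-1}$ maps the connected group $(G_{\mathrm{Hdg}})^{\mathrm{der}}$ into the finite group $Z(H)$ and is therefore constant; you instead note the map is a homomorphism (since $Z(H)$ is central) and that $(G_{\mathrm{Hdg}})^{\mathrm{der}}$ is perfect, so it is trivial. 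Both are valid. For existence, you correctly point out a subtlety the paper glosses over: the pair $(G,h)$ produced by Corollary \ref{h92c} need not be generated by $h$, so one cannot directly cite Proposition \ref{h53a}; this requires knowing that $\bar h$ generates $H^{\mathrm{ad}}$, which is in any case implicit in the lemma's statement since otherwise $\rho(\bar h)$ is not defined. Your parenthetical remark addressing this is the right resolution, and your reduction (replace $G$ by the subgroup generated by $G^{\mathrm{der}}$ and $h(\mathbb{S})$, then check $h$ generates it using that $\bar h$ generates $H^{\mathrm{ad}}$) is sound, though you could tighten the justification that (SV2*) persists under this replacement.
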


\begin{proof}
Two such homomorphisms $\rho(H,\bar{h})$ would differ by a map into $Z(H)$.
Because $(G_{\mathrm{Hdg}})^{\mathrm{der}}$ is connected, any such map is
constant, and so the homomorphisms are equal.

For the existence, choose a pair $(G,h)$ as in (\ref{h92c}). Then $(G,h)$ is
the Mumford-Tate group of a polarizable Hodge structure, and we can take
$\rho(H,\bar{h})=\rho(h)|(G_{\mathrm{Hdg}})^{\mathrm{der}}$.
\end{proof}

\begin{lemma}
\label{h11d}The assignment $(H,\bar{h})\mapsto\rho(H,\bar{h})$ is functorial:
if $\alpha\colon H\rightarrow H^{\prime}$ is a homomorphism mapping $Z(H)$
into $Z(H^{\prime})$ and carrying $\bar{h}$ to $\bar{h}^{\prime}$, then
$\rho(H^{\prime},\bar{h}^{\prime})=\alpha\circ\rho(H,\bar{h})$.
\end{lemma}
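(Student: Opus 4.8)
The plan is to deduce Lemma \ref{h11d} from the uniqueness part of Lemma \ref{h11c}. Both $\alpha\circ\rho(H,\bar h)$ and $\rho(H',\bar h')$ are homomorphisms $(G_{\mathrm{Hdg}})^{\mathrm{der}}\to H'$, so it is enough to show they coincide, and for this I would repeat the argument from the proof of Lemma \ref{h11c} with $H'$ in place of $H$: it suffices to check that the composites of these two homomorphisms with the quotient map $H'\to (H')^{\mathrm{ad}}$ agree, since two homomorphisms $(G_{\mathrm{Hdg}})^{\mathrm{der}}\to H'$ with equal composite to $(H')^{\mathrm{ad}}$ differ by a morphism of $(G_{\mathrm{Hdg}})^{\mathrm{der}}$ into the finite centre $Z(H')$, which is constant because $(G_{\mathrm{Hdg}})^{\mathrm{der}}$ is connected, hence trivial since it is a homomorphism.

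First I would record that $\alpha$, mapping $Z(H)$ into $Z(H')$, induces $\alpha^{\mathrm{ad}}\colon H^{\mathrm{ad}}\to (H')^{\mathrm{ad}}$, and that the hypothesis that $\alpha$ carries $\bar h$ to $\bar h'$ means $\alpha^{\mathrm{ad}}_{\mathbb{R}}\circ\bar h=\bar h'$. Then I would chase the square of Lemma \ref{h11c}: composing $\alpha\circ\rho(H,\bar h)$ with $H'\to (H')^{\mathrm{ad}}$ gives $\alpha^{\mathrm{ad}}$ applied to the composite of $\rho(H,\bar h)$ with $H\to H^{\mathrm{ad}}$, which by that square for $(H,\bar h)$ equals the restriction of $\rho(\bar h)$ to $(G_{\mathrm{Hdg}})^{\mathrm{der}}$; likewise the composite of $\rho(H',\bar h')$ with $H'\to (H')^{\mathrm{ad}}$ is the restriction of $\rho(\bar h')$ to $(G_{\mathrm{Hdg}})^{\mathrm{der}}$. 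Thus the lemma reduces to the equality $\alpha^{\mathrm{ad}}\circ\rho(\bar h)=\rho(\bar h')$ of homomorphisms $G_{\mathrm{Hdg}}\to (H')^{\mathrm{ad}}$, i.e. to functoriality of $\rho(-)$ for the adjoint data alone.

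To obtain that equality I would use that $\rho(\bar h)$ is, by construction, $\rho(h)$ for the homomorphism $h\colon\mathbb{S}\to H^{\mathrm{ad}}_{\mathbb{R}}$ obtained by composing $\bar h$ with $\mathbb{S}\to\mathbb{S}/\mathbb{G}_m$, the pair $(H^{\mathrm{ad}},h)$ being the Mumford-Tate group of a polarizable Hodge structure (it satisfies (SV1,2*) thanks to (SV2,3), has trivial — hence rational — weight, and is generated by $h$ by (SV3)), and that $\rho(h)$ is the unique homomorphism $G_{\mathrm{Hdg}}\to H^{\mathrm{ad}}$ with $\rho(h)_{\mathbb{R}}\circ h_{\mathrm{Hdg}}=h$. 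Now $\alpha^{\mathrm{ad}}\circ\rho(\bar h)$ is a homomorphism $G_{\mathrm{Hdg}}\to (H')^{\mathrm{ad}}$ whose composite with $h_{\mathrm{Hdg}}$ is $\alpha^{\mathrm{ad}}_{\mathbb{R}}\circ h=h'$, the homomorphism attached to $\bar h'$ in the same way; so by the same uniqueness it equals $\rho(h')=\rho(\bar h')$.

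The only point needing real care is this last uniqueness: one must know that a homomorphism $G_{\mathrm{Hdg}}\to (H')^{\mathrm{ad}}$ into the Mumford-Tate group $((H')^{\mathrm{ad}},h')$ is determined by its restriction along $h_{\mathrm{Hdg}}$ — this is precisely the uniqueness built into the notation $\rho(-)$ for Mumford-Tate groups, valid because $h'$ generates $(H')^{\mathrm{ad}}$. Everything else is formal diagram-chasing; I do not anticipate any obstacle beyond keeping track of which centres are finite ($Z(H')$) and which groups are connected ($(G_{\mathrm{Hdg}})^{\mathrm{der}}$), exactly the inputs already used in Lemma \ref{h11c}.
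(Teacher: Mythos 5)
Your proof is correct, but it takes a genuinely different route from the paper's. The paper chooses a concrete lift $(G,h)$ with $G^{\mathrm{der}}=H$ (via Corollary \ref{h92c}), forms the quotient $G'=G/\Ker(\alpha)$ and notes that $h'=\alpha_{\mathbb{R}}\circ h$ lifts $\bar h'$, then invokes the functoriality $\rho(h')=\alpha\circ\rho(h)$ at the level of actual Mumford-Tate groups and restricts to $(G_{\mathrm{Hdg}})^{\mathrm{der}}$. You avoid constructing any lift: you reduce the equality of the two maps into $H'$ to the equality of their compositions with $H'\to(H')^{\mathrm{ad}}$ (by the same ``difference is a morphism into the finite centre, hence trivial by connectedness'' argument that opens the proof of Lemma \ref{h11c}), and then establish $\alpha^{\mathrm{ad}}\circ\rho(\bar h)=\rho(\bar h')$ directly from the uniqueness characterization of $\rho(\bar h')$. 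Both arguments hinge on the uniqueness of the $\rho(\cdot)$'s, but yours stays entirely at the adjoint level and sidesteps the need to verify that $G/\Ker(\alpha)$ is a well-defined reductive group satisfying the hypotheses of Corollary \ref{h92c}; it is a clean formal deduction in the spirit of Lemma \ref{h11c} itself. One small imprecision in the last paragraph: the uniqueness of $\rho(\bar h')$ among homomorphisms $G_{\mathrm{Hdg}}\to (H')^{\mathrm{ad}}$ with prescribed composite along $h_{\mathrm{Hdg}}$ is ultimately a consequence of $h_{\mathrm{Hdg}}$ generating $G_{\mathrm{Hdg}}$ (so that any homomorphism out of $G_{\mathrm{Hdg}}$ is determined by its restriction along $h_{\mathrm{Hdg}}$); the role of $h'$ generating $(H')^{\mathrm{ad}}$ is rather to ensure that $((H')^{\mathrm{ad}},h')$ is the Mumford-Tate group of a polarizable Hodge structure, so that the paper's uniqueness assertion applies at all. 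This does not affect the validity of the argument.
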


\begin{proof}
The homomorphism $\bar{h}^{\prime}$ generates $H^{\prime\mathrm{ad}}$ (by
SV3), and so the homomorphism $\alpha$ is surjective. Choose a pair $(G,h)$
for $(H,\bar{h})$ as in (\ref{h92c}), and let $G^{\prime}=G/\Ker(\alpha)$.
Write $\alpha$ again for the projection $G\rightarrow G^{\prime}$ and let
$h^{\prime}=\alpha_{\mathbb{R}}\circ h$. This equality implies that%
\[
\rho(h^{\prime})=\alpha\circ\rho(h).
\]
On restricting this to $(G_{\text{Hdg}})^{\text{der}}$, we obtain the
equality
\[
\rho(H^{\prime},\bar{h}^{\prime})=\alpha\circ\rho(H,\bar{h}).\text{{}}%
\]

\end{proof}

Recall that $G_{\mathrm{Mab}}$ is the affine group scheme attached to the
category of abelian motives over $\mathbb{C}{}$ and the Betti fibre functor.
The functor $\Mot^{\mathrm{ab}}(\mathbb{C}{})\rightarrow\Hdg_{\mathbb{Q}{}}$
is fully faithful by Deligne's theorem (\ref{h74}), and so it induces a
surjective map $G_{\mathrm{Hdg}}\rightarrow G_{\mathrm{Mab}}$.

\begin{lemma}
\label{h11e}If $(H,h)$ is of Hodge type, then $\rho(H,\bar{h})$ factors
through $(G_{\mathrm{Mab}})^{\mathrm{der}}$.
\end{lemma}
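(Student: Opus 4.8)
The plan is to use the hypothesis to realize the semisimple part of $(H,\bar h)$, via a family of symplectic representations, inside the Mumford--Tate groups of honest abelian varieties, and then to invoke Deligne's theorem (\ref{h74}) that such Mumford--Tate groups are quotients of $G_{\mathrm{Mab}}$.

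\textbf{Step 1: produce abelian varieties.} By the definition of Hodge type and the constructions of \S 10 (Theorem \ref{h95e}, applied to the sets of symplectic nodes satisfying (\ref{h95bb})), $(H,\bar h)$ admits a faithful family of symplectic representations $\rho_i\colon H\to\GL_{V_i}$ $(i\in I)$, each fitting into a commutative diagram (\ref{hq95})
\[
(H^{\mathrm{ad}},\bar h)\longrightarrow (G_i,h_i)\longrightarrow (G(\psi_i),D(\psi_i)),
\]
where $G_i$ is reductive over $\mathbb{Q}{}$ (which we may take to act faithfully on $V_i$), $q_i\colon H\to G_i$ has image $G_i^{\mathrm{der}}$ and finite kernel, $w_{h_i}$ is rational, and $(V_i,\rho_i\circ h_i)$ is a polarizable rational Hodge structure of type $\{(-1,0),(0,-1)\}$. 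By Riemann's theorem in the form (\ref{h69a}), $(V_i,\rho_i\circ h_i)=H_1(A_i^{\text{an}},\mathbb{Q}{})$ for an abelian variety $A_i$ over $\mathbb{C}{}$; in particular it is the Betti realization of the abelian motive $h_{1}A_i$. "Faithful family" means $\bigcap_i\ker\rho_i=1$, and since $\rho_i$ factors as $H\overset{q_i}{\to}G_i^{\mathrm{der}}\hookrightarrow G_i\hookrightarrow\GL_{V_i}$ this says $\bigcap_i\ker q_i=1$.

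\textbf{Step 2: pass to Mumford--Tate groups.} Fix $i$ and replace $G_i$ by the Mumford--Tate group $G_i'=\MT_{V_i}\subset G_i$ of $(V_i,\rho_i\circ h_i)$, keeping $h_i$. I claim $(G_i')^{\mathrm{der}}=G_i^{\mathrm{der}}=q_i(H)$. Indeed, $q_i$ identifies $H^{\mathrm{ad}}$ with $G_i^{\mathrm{ad}}$ and carries $\bar h$ to $\ad\circ h_i$; since $\bar h$ generates $H^{\mathrm{ad}}$ (part of the hypothesis on $(H,\bar h)$), the image of $\MT_{V_i}$ in $G_i^{\mathrm{ad}}$ — being the smallest algebraic subgroup containing $(\ad\circ h_i)(\mathbb{S}{})$ — is all of $G_i^{\mathrm{ad}}$; hence $(G_i')^{\mathrm{der}}$ surjects onto $G_i^{\mathrm{ad}}$. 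On the other hand $(G_i')^{\mathrm{der}}$, being a connected semisimple subgroup of the reductive group $G_i$, lies in $G_i^{\mathrm{der}}=q_i(H)$, and a connected subgroup of $q_i(H)$ surjecting onto $(q_i H)^{\mathrm{ad}}$ has the same dimension as $q_iH$, hence equals it. Thus $(G_i',h_i)$ is the Mumford--Tate group of the abelian motive $h_{1}A_i$, with $(G_i')^{\mathrm{der}}=q_i(H)$ and $\ad\circ h_i$ corresponding to $\bar h$; so by the construction in the proof of Lemma \ref{h11c} together with the functoriality Lemma \ref{h11d} (applied to $\alpha=q_i\colon H\to q_i(H)$), we get
\[
q_i\circ\rho(H,\bar h)=\rho(h_i)\big|(G_{\mathrm{Hdg}})^{\mathrm{der}},\qquad \rho(h_i)\colon G_{\mathrm{Hdg}}\to G_i'=\MT_{V_i}.
\]

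\textbf{Step 3: conclude.} Since $(V_i,\rho_i\circ h_i)=\omega_B(h_{1}A_i)$, the tannakian subcategory $\langle V_i,h_i\rangle^{\otimes}$ of $\Hdg_{\mathbb{Q}{}}$ lies in the essential image of $\Mot^{\mathrm{ab}}(\mathbb{C}{})\hookrightarrow\Hdg_{\mathbb{Q}{}}$, so the restriction map $\rho(h_i)\colon G_{\mathrm{Hdg}}\to\MT_{V_i}$ factors through $G_{\mathrm{Mab}}$; in particular it kills $N:=\ker(G_{\mathrm{Hdg}}\to G_{\mathrm{Mab}})$. Hence $q_i\circ\rho(H,\bar h)$ kills $N\cap(G_{\mathrm{Hdg}})^{\mathrm{der}}=\ker\big((G_{\mathrm{Hdg}})^{\mathrm{der}}\to(G_{\mathrm{Mab}})^{\mathrm{der}}\big)$, i.e. $\rho(H,\bar h)$ maps this kernel into $\ker q_i$. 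As this holds for every $i$ and $\bigcap_i\ker q_i=1$, we conclude $\rho(H,\bar h)$ kills $\ker\big((G_{\mathrm{Hdg}})^{\mathrm{der}}\to(G_{\mathrm{Mab}})^{\mathrm{der}}\big)$. Since $G_{\mathrm{Hdg}}\to G_{\mathrm{Mab}}$ is surjective (Deligne's theorem making $\omega_B$ on $\Mot^{\mathrm{ab}}$ full), so is the induced map on derived groups, and therefore $\rho(H,\bar h)$ factors through $(G_{\mathrm{Mab}})^{\mathrm{der}}$, as asserted.

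The substantive input is entirely upstream — the classification of \S 10 (that Hodge type guarantees enough symplectic realizations) and Deligne's theorem — so the lemma itself is essentially a diagram chase. The one place where a small argument is needed is the identification $(\MT_{V_i})^{\mathrm{der}}=q_i(H)$ in Step 2: it is crucial there that $\bar h$ generates $H^{\mathrm{ad}}$, for otherwise passing to the Mumford--Tate group of $A_i$ could shrink the derived group and the functoriality with $\rho(H,\bar h)$ would break.
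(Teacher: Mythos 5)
Your argument follows the same route as the paper: use the faithful family of symplectic representations to produce, for each member, an honest Mumford--Tate group of an abelian variety (via Riemann's theorem), so that the corresponding $\rho(h_i)$ kills $\Ker(G_{\mathrm{Hdg}}\to G_{\mathrm{Mab}})$, and then intersect over the family to conclude. The one place you add genuine content is Step~2, where you check that passing from $G_i$ to $\MT_{V_i}$ does not shrink the derived group; the paper tacitly assumes this when it speaks of the kernel of $H\to G$ after the replacement, and your argument (surjectivity onto $G_i^{\mathrm{ad}}$ because $\bar h$ generates $H^{\mathrm{ad}}$, plus a dimension count among connected semisimple groups) supplies exactly the needed justification.
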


\begin{proof}
Let $(G,h)$ be as in the definition (\ref{h95}), and replace $G$ with the
algebraic subgroup generated by $h$. Then $(G,h)$ is the Mumford-Tate group of
an abelian variety (Riemann's theorem \ref{h44}), and so $\rho(h)\colon
G_{\mathrm{Hdg}}\rightarrow G$ factors through $G_{\mathrm{Hdg}}\rightarrow
G_{\mathrm{Mab}}$. Therefore $\rho(H,\bar{h})$ maps the kernel of $\left(
G_{\mathrm{Hdg}}\right)  ^{\mathrm{der}}\rightarrow\left(  G_{\mathrm{Mab}%
}\right)  ^{\mathrm{der}}$ into the kernel of $H\rightarrow G$. By assumption,
the intersection of these kernels is trivial.
\end{proof}

\begin{lemma}
\label{h11f}The homomorphism $\rho(H,\bar{h})$ factors through
$(G_{\mathrm{Mab}})^{\mathrm{der}}$ if and only if $(H,\bar{h})$ has a finite
covering by a pair of Hodge type.
\end{lemma}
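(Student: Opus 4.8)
The plan is to treat the two implications separately. The forward implication is a short deduction from the functoriality of $(H,\bar h)\mapsto\rho(H,\bar h)$: suppose $\alpha\colon(H_1,\bar h_1)\to(H,\bar h)$ is a finite covering with $(H_1,\bar h_1)$ of Hodge type. As an isogeny of semisimple groups $\alpha$ carries $Z(H_1)$ into $Z(H)$, and by hypothesis it carries $\bar h_1$ to $\bar h$, so Lemma \ref{h11d} gives $\rho(H,\bar h)=\alpha\circ\rho(H_1,\bar h_1)$. By Lemma \ref{h11e}, $\rho(H_1,\bar h_1)$ factors through $(G_{\mathrm{Mab}})^{\mathrm{der}}$, and composing with $\alpha$ shows the same for $\rho(H,\bar h)$.

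For the converse, assume $\rho(H,\bar h)$ factors as $(G_{\mathrm{Hdg}})^{\mathrm{der}}\twoheadrightarrow(G_{\mathrm{Mab}})^{\mathrm{der}}\xrightarrow{\bar\rho}H$. Since $\rho(H,\bar h)$ is surjective (it is the restriction to the derived group of the surjection $\rho(h)\colon G_{\mathrm{Hdg}}\to G$ used in Lemma \ref{h11c}), the map $\bar\rho$ is surjective. Abelian motives are generated by motives of abelian varieties, so the Mumford--Tate groups $\MT(A)$ of abelian varieties $A$ are cofinal in the pro-system defining $G_{\mathrm{Mab}}$; as $H$ is of finite type, $\bar\rho$ therefore factors through a finite stage, giving an abelian variety $B$ and a surjection $\beta\colon\MT(B)^{\mathrm{der}}\to H$ through which $\bar\rho$ factors, compatibly with the homomorphisms induced by $h$. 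The pair $(\MT(B)^{\mathrm{der}},\bar h_B)$ is of Hodge type, since $H_{1}(B^{\mathrm{an}},\mathbb{Q})$ is a faithful symplectic representation of it: $\MT(B)\hookrightarrow\GL_{H_1(B)}$, the Hodge structure is of type $\{(-1,0),(0,-1)\}$, and a Riemann form supplies the alternating form making $\inn(h_B(i))$ a polarization (Riemann's theorem \ref{h44}, Proposition \ref{h52}). Hence, by Theorem \ref{h98}, every almost-simple factor of $(\MT(B)^{\mathrm{der}},\bar h_B)$ is, up to the coverings described there, the derived group of a pair satisfying condition (a) or (b) of that theorem.

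The surjection of adjoint groups $\MT(B)^{\mathrm{ad}}\twoheadrightarrow H^{\mathrm{ad}}$ induced by $\beta$ identifies $H^{\mathrm{ad}}$ with a sub-product of the almost-simple factors of $\MT(B)^{\mathrm{ad}}$. Let $\tilde H\subset\MT(B)^{\mathrm{der}}$ be the corresponding almost-direct subproduct; the remaining almost-simple factors are sent by $\beta$ into the finite group $Z(H)$, and since $\tilde H^{\mathrm{ad}}\xrightarrow{\sim}H^{\mathrm{ad}}$ this forces $\beta|_{\tilde H}\colon\tilde H\to H$ to be an isogeny. Thus each almost-simple factor $\bar H_P$ of $H$ is an isogenous quotient of an almost-simple factor $S$ of $\MT(B)^{\mathrm{der}}$. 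When $S$ is not of type $D_n^{\mathbb{H}}$ $(n\ge5)$, the corresponding factor of $\tilde H$ is simply connected (condition (a)), hence covers $\bar H_P$; when $S$ is of type $D_n^{\mathbb{H}}$ $(n\ge5)$, one uses that every symplectic representation of the simply connected group of that type kills $\ker\varpi_1$, so that $S$ --- which acts faithfully on $H_1(B)$ through symplectic representations --- must itself be the group $H^{s}$ of condition (b), and $\bar H_P$ is a quotient of it. Taking for $(H_1,\bar h_1)$ the product, over the almost-simple factors of $H^{\mathrm{ad}}$, of the corresponding pairs of type (a) or (b), with $\bar h_1$ the induced map to $H_1^{\mathrm{ad}}=H^{\mathrm{ad}}$, Theorem \ref{h98} shows $(H_1,\bar h_1)$ is of Hodge type, and the discussion above exhibits $H$ as a quotient of $H_1$ by a finite central subgroup, so the resulting isogeny $(H_1,\bar h_1)\to(H,\bar h)$ is the desired finite covering.

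The step I expect to be most delicate is this last one: upgrading the mere surjection $\beta$ onto $H$ to an honest isogeny onto $(H,\bar h)$, and matching the simple factors of $H$ against the largest Hodge-type group allowed for each type --- simply connected in cases (a), the quotient $H^{s}$ in case (b). That is exactly where the detailed output of \S10 (which fundamental weights yield symplectic representations, and over which isogeny class they form a faithful family) must be invoked with care; everything else is formal manipulation of pro-reductive groups and their adjoint quotients.
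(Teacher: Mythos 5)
Your forward direction coincides with the paper's. Your converse also follows the paper's plan: use cofinality of the groups $\MT(A)$ to reduce $\bar\rho$ to a surjection $\beta\colon\MT(B)^{\mathrm{der}}\to H$ for a single abelian variety $B$, and observe that $(\MT(B)^{\mathrm{der}},\ad\circ h_B)$ is of Hodge type thanks to the faithful symplectic representation on $H_1(B,\mathbb{Q})$. The paper's own proof of the converse in fact ends here, with the conclusion that $(H,\bar h)$ is a \emph{quotient} of a Hodge-type pair, leaving tacit the passage to the \emph{finite covering} the lemma actually asserts. You are right to identify this passage as the delicate point and to try to make it explicit.

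The argument you supply for it, however, contains a false assertion. You claim that an almost-simple factor $S$ of $\MT(B)^{\mathrm{der}}$ of type $A,B,C,D^{\mathbb{R}}$ ``is simply connected'' and that one of type $D_n^{\mathbb{H}}$ ``must itself be the group $H^s$'' of (\ref{h98}b). Neither is justified: the almost-simple factors of $\MT(B)^{\mathrm{der}}$ can be proper quotients of these groups (nothing forces them to be the maximal ones; an adjoint factor, for instance, is perfectly possible). What Summary \ref{h94e} actually gives is only that every symplectic representation of the simply connected cover factors through the group $H_{1,P}$ appearing in case (a) or (b). Since $S$ acts faithfully on $H_1(B)$ through symplectic representations, $S$ is a \emph{quotient} of $H_{1,P}$, not necessarily equal to it. That weaker statement is all your argument needs: it still yields that $H_P=\beta(S)$ is a quotient of $H_{1,P}$, hence that $H$ is a quotient of $H_1=\prod_P H_{1,P}$ by a finite central subgroup, and $(H_1,\bar h)$ is of Hodge type by Theorem \ref{h98}. (One can also bypass the factor-by-factor analysis: apply the converse half of Theorem \ref{h98} directly to $(\MT(B)^{\mathrm{der}},\ad\circ h_B)$ to realize it as a quotient of a product $P$ of (a)/(b) pairs; the connected kernel of the composite $P\to H$ is a sub-product of $P$, and the complementary sub-product maps isogenously onto $H$ and is again of Hodge type by Theorem \ref{h98}.)
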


\begin{proof}
Suppose that there is a finite covering $\alpha\colon H^{\prime}\rightarrow H$
such that $(H^{\prime},\bar{h})$ is of Hodge type. By Lemma \ref{h11e},
$\rho(H^{\prime},\bar{h})$ factors through $(G_{\mathrm{Mab}})^{\mathrm{der}}%
$, and therefore so also does $\rho(H,\bar{h})=\alpha\circ\rho(H^{\prime}%
,\bar{h})$.

Conversely, suppose that $\rho(H,\bar{h})$ factors through $(G_{\text{Mab}%
})^{\text{der}}$. There will be an algebraic quotient $(G,h)$ of
$(G_{\text{Mab}},h_{\text{Mab}})$ such that $(H,\bar{h})$ is a quotient of
$(G^{\text{der}},\ad\circ h)$. Consider the category of abelian motives $M$
such that the action of $G_{\text{Mab}}$ on $\omega_{B}(M)$ factors through
$G$. By definition, this category is contained in the tensor category
generated by $h_{1}(A)$ for some abelian variety $A$. We can replace $G$ with
the Mumford-Tate group of $A$. Then $(G^{\text{der}},\ad\circ h)$ has a
faithful symplectic embedding, and so it is of Hodge type.
\end{proof}

We can now complete the proof of the Theorem \ref{h11b}. From (\ref{h79g}), we
know that $\rho(h)$ factors through $G_{\text{Mab}}$ if and only if
$\rho(G^{\text{der}},\ad\circ h)$ factors through $(G_{\text{Mab}%
})^{\text{der}}$, and from (\ref{h11f}) we know that this is true if and only
if $(G^{\text{der}},\ad\circ h)$ has a finite covering by a pair of Hodge type.

\begin{aside}
\label{h11g}Let $G$ be an algebraic group over $\mathbb{Q}{}$ and let $h$ be a
homomorphism $\mathbb{S}{}\rightarrow G_{\mathbb{R}{}}$. If $(G,h)$ is the
Mumford-Tate group of a motive, then $h$ generates $G$, $w_{h}$ is defined
over $\mathbb{Q}{}$, and $h$ satisfies (SV2*). Assume that $(G,h)$ satisfies
these conditions. A positive answer to Question \ref{h79d} would imply that
$(G,h)$ is the Mumford-Tate group of a motive if $h$ satisfies (SV1). If
$G^{\mathrm{der}}$ is of type $E_{8}$, $F_{4}$, or $G_{2}$, then there does
not exist an $h$ satisfying (SV1) (apply \S 2 to $h|\mathbb{S}{}^{1}$).
Nevertheless, it has recently been shown that there exist motives whose
Mumford-Tate group is of type $G_{2}$ (\cite{dettweiler2010}).
\end{aside}

\begin{nt}
This subsection follows \S 1 of \cite{milne1994}.
\end{nt}

\subsection{Families of abelian varieties and motives}

Let $S$ be a connected smooth algebraic variety over $\mathbb{C}{}$, and let
$o\in S(\mathbb{C}{})$. A family $f\colon A\rightarrow S$ of abelian varieties
over $S$ defines a local system $\mathsf{V}=R_{1}f_{\ast}\mathbb{Z}{}$ of
$\mathbb{Z}{}$-modules on $S^{\text{an}}$. We say that the family is
\emph{faithful}%
\index{faithful family of representations}
if the monodromy representation $\pi_{1}(S^{\text{an}},o)\rightarrow
\GL(\mathsf{V}_{o})$ is injective.

Let $D(\Gamma)=\Gamma\backslash D$ be an arithmetic locally symmetric variety,
and let $o\in D$. By definition, there exists a simply connected algebraic
group $H$ over $\mathbb{Q}{}$ and a surjective homomorphism $\varphi\colon
H(\mathbb{R})\rightarrow\mathrm{Hol}(D)^{+}$ with compact kernel such that
$\varphi(H(\mathbb{Z}))$ is commensurable with $\Gamma$. Moreover, with a mild
condition on the ranks, the pair $(H,\varphi)$ is uniquely determined up to a
unique isomorphism (see \ref{h37a}). Let $\bar{h}\colon\mathbb{S}{}\rightarrow
H^{\mathrm{ad}}$ be the homomorphism whose projection into a compact factor of
$H^{\mathrm{ad}}$ is trivial and is such that $\varphi(\bar{h}(z))$ fixes $o$
and acts on $T_{o}(D)$ as multiplication by $z/\bar{z}$ (cf. (\ref{hq52}),
p.~\pageref{hq52}).

\begin{theorem}
\label{h12}There exists a faithful family of abelian varieties on $D(\Gamma)$
having a fibre of CM-type if and only if $(H,\bar{h})$ admits a symplectic
representation (\ref{h95}).
\end{theorem}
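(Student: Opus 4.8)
The plan is to combine the classification of variations of Hodge structures on $D(\Gamma)$ from \S8 (specifically Summary \ref{h92a}) with the characterization of families of abelian varieties among variations of Hodge structures (Theorem \ref{h69b}) and the existence of CM points in period subdomains (Proposition \ref{h91f}). The two implications go as follows.

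For the ``only if'' direction, suppose $f\colon A\to S$ is a faithful family of abelian varieties on $D(\Gamma)$ with a fibre of CM-type. Then $\mathsf{V}=R_{1}f_{\ast}\mathbb{Q}$ is a polarizable variation of integral Hodge structures of type $\{(-1,0),(0,-1)\}$ on $D(\Gamma)$ whose monodromy has trivial (hence finite) kernel, and whose fibre at the CM point is of CM-type. Applying Summary \ref{h92a} (after passing to a finite covering, which does not affect any of the hypotheses), we obtain a diagram (\ref{hq17}) with $G$ reductive, $H\to G$ having image $G^{\mathrm{der}}$, $w_h$ defined over $\mathbb{Q}$, and $h$ satisfying (SV2*). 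Since the underlying Hodge structure $(V,\rho\circ h)$ is of type $\{(-1,0),(0,-1)\}$ and is polarizable, Lemma \ref{h95c} (applied to the subgroup of $G$ generated by $h$) produces an alternating form $\psi$ with $\rho(G)\subset G(\psi)$ and $\rho_{\mathbb{R}}\circ h\in D(\psi)$. This is exactly a symplectic representation in the sense of Definition \ref{h95}.

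For the ``if'' direction, suppose $(H,\bar h)$ admits a symplectic representation, so we have a diagram (\ref{hq95}) with $\rho\colon G\hookrightarrow G(\psi)$, $h\colon\mathbb{S}\to G_{\mathbb{R}}$, image of $H\to G$ equal to $G^{\mathrm{der}}$, $w_h$ defined over $\mathbb{Q}$, and $(V,\rho\circ h)$ of type $\{(-1,0),(0,-1)\}$. After replacing $G$ by the subgroup generated by $G^{\mathrm{der}}$ and $h(\mathbb{S})$ we may assume $h$ generates $G$, so that $(G,h)$ satisfies (SV1,2*). Choose a family $\mathfrak{t}$ of tensors on $V$ (including the polarization coming from $\psi$) cutting out $G$ inside $\GL_V\times\GL_{\mathbb{Q}(1)}$; then the period subdomain $D(V,h,\mathfrak{t})$ is a hermitian symmetric domain, and by Proposition \ref{h91f} it contains a Hodge structure of CM-type. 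The canonical variation of Hodge structures on $D(V,h,\mathfrak{t})$ has type $\{(-1,0),(0,-1)\}$, is polarizable, and admits an integral structure; pulling back along the period isomorphism $D\xrightarrow{\sim}D(V,h,\mathfrak{t})$ and descending to $D(\Gamma)=\Gamma\backslash D$ (possibly after a finite covering to ensure $\Gamma\subset G(\mathbb{Q})$ and the monodromy is injective) gives a polarizable integral variation of Hodge structures of type $\{(-1,0),(0,-1)\}$ on $D(\Gamma)$ with injective monodromy and a CM fibre. By Theorem \ref{h69b} this variation arises from a family of abelian varieties $A\to D(\Gamma)$, which is then faithful and has a fibre of CM-type.

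The main obstacle, or rather the point requiring the most care, is the passage between ``$D(\Gamma)$'' and ``a finite covering of $D(\Gamma)$'': the statement is about $D(\Gamma)$ itself, but Summary \ref{h92a} and the construction via period domains naturally produce objects only after replacing $\Gamma$ by a finite-index subgroup (to make the monodromy land in $G(\mathbb{Q})$, or to descend the variation). One must check that a faithful family with a CM fibre on a finite covering of $D(\Gamma)$ descends to (or equivalently, can be replaced by) one on $D(\Gamma)$ — this is where the difference between ``faithful family on $D(\Gamma)$'' and the group-theoretic data is genuinely being negotiated, and the theorem should be read with the understanding (consistent with the rest of \S8) that one works up to such finite coverings; the condition (\ref{hq51}) on ranks, assumed throughout this part of \S8, is what licenses the use of the Margulis-type results underlying Summary \ref{h92a}.
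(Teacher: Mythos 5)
Your argument is correct and takes essentially the same route as the paper's very compressed proof, which for the ``only if'' direction simply invokes the \S 8 dictionary (Summary \ref{h92a}) and observes that faithfulness makes $H\to G^{\mathrm{der}}$ an isogeny, and for the ``if'' direction runs the dictionary backwards and cites Theorem \ref{h69b}. You have usefully unpacked what Summary \ref{h92a} bundles together: Proposition \ref{h91f} for the CM fibre, Theorem \ref{h56}(b) for the image of $H$ being all of $G^{\mathrm{der}}$ rather than merely contained in it, and the role of the $\{(-1,0),(0,-1)\}$ type condition in supplying the symplectic form.

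Two small observations. In the ``only if'' direction you can bypass Lemma \ref{h95c}: a polarization of the family $A\to D(\Gamma)$ already furnishes an alternating form $\psi$ that the generic Mumford--Tate group fixes up to scalar, so the factorization through $(G(\psi),D(\psi))$ is immediate. If you prefer to apply Lemma \ref{h95c} to the subgroup $G'$ of $G$ generated by $h$, you should flag the checks that $G'\supset G^{\mathrm{der}}$ (true, since the connected monodromy group $G^{\mathrm{der}}$ lies in the Mumford--Tate group at every point) and that $(G',h)$ satisfies (SV1) (true, since $\Lie G'\subset\End V$ and $(V,\rho\circ h)$ is of type $\{(-1,0),(0,-1)\}$). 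Finally, your closing remark about finite coverings is well-placed but is not a defect of your proof: both the descent of the variation from $D$ to $D(\Gamma)$ and the compatible lifting of $\Gamma$ into $G(\mathbb{Q})$ are carried out only modulo finite-index subgroups in the paper's own treatment of Summary \ref{h92a}, so the subtlety is inherited from the framework rather than introduced by you.
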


\begin{proof}
Let $f\colon A\rightarrow D(\Gamma)$ be a faithful family of abelian varieties
on $D(\Gamma)$, and let $(\mathsf{V},F)$ be the variation of Hodge structures
$R_{1}f_{\ast}\mathbb{\mathbb{Q}{}}$. Choose a trivialization $\pi^{\ast
}\mathsf{V}\approx V_{D}$, and let $G\subset\GL_{V}$ be the generic
Mumford-Tate group (see \ref{generic}). As in (\S 8), we get a commutative
diagram%
\begin{equation}
\begin{tikzpicture} \matrix(m)[matrix of math nodes, row sep=3em, column sep=2.5em, text height=1.5ex, text depth=0.25ex] {H\\ (H^{\text{ad}},\bar{h})&(G,h)&\GL_V\\}; \path[->,font=\scriptsize,>=angle 90] (m-1-1) edge (m-2-1) (m-1-1) edge node[auto] {} (m-2-2) (m-2-2) edge (m-2-1); \path[right hook->,font=\scriptsize,>=angle 90] (m-2-2) edge node[auto]{$\rho$} (m-2-3); \end{tikzpicture} \label{hq18}%
\end{equation}
in which the image of $H\rightarrow G$ is $G^{\mathrm{der}}$. Because the
family is faithful, the map $H\rightarrow G^{\mathrm{der}}$ is an isogeny, and
so $(H,\bar{h})$ admits a symplectic representation.

Conversely, a symplectic representation of $(H,\bar{h})$ defines a variation
of Hodge structures (\ref{h92a}), which arises from a family of abelian
varieties by Theorem \ref{h69b} (Riemann's theorem in families).
\end{proof}

\begin{theorem}
\label{h12a}There exists a faithful family of abelian motives on $D(\Gamma)$
having a fibre of CM-type if and only if $(H,\bar{h})$ has finite covering by
a pair of Hodge type.
\end{theorem}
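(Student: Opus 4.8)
The plan is to follow the proof of Theorem~\ref{h12}, with Riemann's theorem in families (Theorem~\ref{h69b}) replaced by its counterpart for abelian motives (Theorem~\ref{h80b}), and with the condition ``$(H,\bar{h})$ admits a symplectic representation'' replaced by the condition ``$(H,\bar{h})$ has a finite covering by a pair of Hodge type'' --- the latter being, by Lemma~\ref{h11f}, exactly the condition that $\rho(H,\bar{h})\colon(G_{\mathrm{Hdg}})^{\mathrm{der}}\to H$ factor through $(G_{\mathrm{Mab}})^{\mathrm{der}}$. The bridge between the analytic side (families on $D(\Gamma)$) and the group side will be the correspondence of \S 8 (Summary~\ref{h92a}) together with the determination of Mumford--Tate groups of abelian motives in Theorem~\ref{h11b}.

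\emph{Necessity.} Suppose $M$ is a faithful family of abelian motives on $D(\Gamma)$ with a fibre of CM-type. By Theorem~\ref{h80b} it has an associated polarizable variation of Hodge structures $(\mathsf{V},F)=\mathcal{H}_{B}(M/D(\Gamma))$ admitting an integral structure. Running the construction of \S 8 --- after passing to a finite covering so that $\Gamma\subset G(\mathbb{Q})$ for $G\subset\GL_{V}$ the generic Mumford--Tate group (p.~\pageref{generic}) --- yields a commutative diagram of the shape (\ref{hq17}), with $H\to G^{\mathrm{der}}$ an isogeny by faithfulness, exactly as in the proof of Theorem~\ref{h12}. Since the fibre of $M$ over a point outside a meagre subset of $D(\Gamma)$ is an abelian motive whose Mumford--Tate group is $G$, the pair $(G,h)$ is the Mumford--Tate group of an abelian motive; Theorem~\ref{h11b} then shows that $(G^{\mathrm{der}},\ad\circ h)$ --- equivalently, via the isogeny, $(H,\bar{h})$ --- has a finite covering by a pair of Hodge type.

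\emph{Sufficiency.} Suppose $(H,\bar{h})$ has a finite covering by a pair of Hodge type, so by Lemma~\ref{h11f} the homomorphism $\rho(H,\bar{h})$ factors through $(G_{\mathrm{Mab}})^{\mathrm{der}}$. Using Corollary~\ref{h92c} and Theorem~\ref{h11b}, I would produce a reductive group $G$ over $\mathbb{Q}$, a lift $h\colon\mathbb{S}\to G_{\mathbb{R}}$ of $\bar{h}$ with $G^{\mathrm{der}}=H$, satisfying (SV1,2*,3), with $w_{h}$ rational and $Z(G)^{\circ}$ split by a CM-field, such that $(G,h)$ is the Mumford--Tate group of an abelian motive; moreover, by the proof of Lemma~\ref{h11f}, after enlarging $G$ one may take $G$ to be the Mumford--Tate group of an abelian variety $A_{0}$. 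Fixing a faithful representation and a family $\mathfrak{t}$ of defining tensors for $G$, Summary~\ref{h92a} converts the resulting diagram (\ref{hq17}) into a polarizable variation of integral Hodge structures $(\mathsf{V},F)$ on $D(\Gamma)$ with a fibre of CM-type and monodromy of finite kernel. It remains to recognise $(\mathsf{V},F)$ as the Betti realization of a family of abelian motives by checking the criterion of Theorem~\ref{h80b}: over a dense open subset it should be a direct summand of $Rf_{\ast}\mathbb{Q}$ for a family of abelian varieties. For this one spreads $A_{0}$ over a period subdomain $D(V,h,\mathfrak{t})$, with $V$ the first homology of $A_{0}$, to obtain a family of abelian varieties $\mathcal{A}$ (Theorem~\ref{h69b}), observes that $(\mathsf{V},F)$ is cut out of the associated variation by a flat tensor $p$ that is a Hodge class at the CM fibre and hence at every fibre (Theorem~\ref{h72}(a)), and is an absolute Hodge class there --- Hodge classes on CM abelian varieties being absolutely Hodge (Theorem~\ref{h74}) --- hence at every fibre (Theorem~\ref{h72}(b)); thus $(\mathcal{A},p)$ is a family of abelian motives. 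Since $p$ lies in the tensor algebra fixed by $G\supset\Gamma$, it is $\Gamma$-invariant, so the family descends through the period map to $D(\Gamma)$, and Theorem~\ref{h80b} then lets it be extended over the non-generic locus. The resulting family of abelian motives is faithful (finite-kernel monodromy) and has a fibre of CM-type.

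The step I expect to be the main obstacle is the last one of the sufficiency direction: passing from the abstract information ``the generic Mumford--Tate group is that of an abelian motive'' to an honest faithful family of abelian motives over all of $D(\Gamma)$. This is where one must combine the spreading-out of $A_{0}$, the persistence of absolute Hodge classes in families (Theorems~\ref{h72} and~\ref{h74}), the $\Gamma$-invariance of the cutting idempotent (for the descent), and Theorem~\ref{h80b} (for the extension across the complement of the generic locus); the remaining bookkeeping --- identifying the group-theoretic condition via Lemma~\ref{h11f} and Theorem~\ref{h11b}, and running the machinery of \S 8 --- is then routine.
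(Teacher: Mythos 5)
Your proposal is correct and follows essentially the plan the paper indicates: it \emph{is} the proof of Theorem~\ref{h12} with Theorem~\ref{h69b} swapped out for Theorem~\ref{h80b} and the symplectic-representation condition translated via Theorem~\ref{h11b} and Lemma~\ref{h11f}. The paper states this in two sentences and leaves the sufficiency direction entirely to the reader; you have filled in exactly the step it glosses over, namely how one actually exhibits the group-theoretically constructed variation of Hodge structures as the Betti realization of a family of abelian motives so that Theorem~\ref{h80b} applies. Two small remarks on that step: the detour through Theorem~\ref{h72}(a) is superfluous, since a $G$-invariant tensor $p$ is automatically of type $(0,0)$ at every fibre once $G$ contains each fibre's Mumford--Tate group; and you do not really need to route through the CM fibre and Theorem~\ref{h72}(b) either, because Theorem~\ref{h74} already makes every Hodge class on every abelian fibre absolutely Hodge, so $p$ is absolutely Hodge fibre-by-fibre. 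Neither use is wrong, just slightly longer than necessary. Otherwise the bookkeeping (passing to a finite cover so $\Gamma\subset G(\mathbb{Q})$, invoking the Borel density theorem to identify the connected monodromy group, and using the descent of the family through the period map via $\Gamma$-invariance of $p$) matches what the cited Theorems~\ref{h56}, \ref{h80b}, and Summary~\ref{h92a} provide.
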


\begin{proof}
The proof is essentially the same as that of Theorem \ref{h12}. The points are
the determination of the Mumford-Tate groups of abelian motives in
(\ref{h11b}) and Theorem \ref{h80b}, which replaces Riemann's theorem in families.
\end{proof}

\subsection{Shimura varieties}

In the above, we have always considered connected varieties. As Deligne
(1971)\nocite{deligne1971} observed, it is often more convenient to consider
nonconnected varieties.

\begin{definition}
\label{h13}A \emph{Shimura datum}%
\index{Shimura datum}
is a pair $(G,X)$ consisting of a reductive group $G$ over $\mathbb{Q}{}$ and
a $G(\mathbb{R}{})^{+}$-conjugacy class of homomorphisms $\mathbb{S}%
{}\rightarrow G_{\mathbb{R}{}}$ satisfying (SV1,2,3).\footnote{In the usual
definition, $X$ is taken to be a $G(\mathbb{R}{})$-conjugacy class. For our
purposes, it is convenient to choose a connected component of $X$.}
\end{definition}

\begin{example}
\label{h13a}Let $(V,\psi)$ be a symplectic space over $\mathbb{Q}{}$. The
group $G(\psi)$ of symplectic similitudes together with the space $X(\psi)$ of
all complex structures $J$ on $V_{\mathbb{R}{}}$ such that $(x,y)\mapsto
\psi(x,Jy)$ is positive definite is a Shimura datum.
\end{example}

Let $(G,X)$ be a Shimura datum. The map $h\mapsto\bar{h}\overset
{\textup{{\tiny def}}}{=}\ad\circ h$ identifies $X$ with a $G^{\mathrm{ad}%
}(\mathbb{R}{})^{+}$-conjugacy class of homomorphisms $\bar{h}\colon
\mathbb{S}{}/\mathbb{G}_{m}\rightarrow G_{\mathbb{R}{}}^{\mathrm{ad}}$
satisfying (SV1,2,3). Thus $X$ is a hermitian symmetric domain (\ref{h21},
\ref{h90a}). More canonically, the set $X$ has a unique structure of a complex
manifold such that, for every representation $\rho_{\mathbb{R}{}}\colon
G_{\mathbb{R}{}}\rightarrow\GL_{V}$, $(V_{X},\rho\circ h)_{h\in X}$ is a
holomorphic family of Hodge structures. For this complex structure,
$(V_{X},\rho\circ h)_{h\in X}$ is a variation of Hodge structures, and so $X$
is a hermitian symmetric domain.

The Shimura variety attached to $(G,X)$ and the choice of a compact open
subgroup $K$ of $G(\mathbb{A}{}_{f})$ is\footnote{This agrees with the usual
definition because of \cite{milneSVI}, 5.11.}%
\[
\Sh_{K}(G,X)=G(\mathbb{Q})_{+}\backslash X\times G(\mathbb{A}{}_{f})/K
\]
where $G(\mathbb{Q}{})_{+}=G(\mathbb{Q}{})\cap G(\mathbb{R}{})^{+}$. In this
quotient, $G(\mathbb{Q}{})_{+}$ acts on both $X$ (by conjugation) and
$G(\mathbb{A}{}_{f})$, and $K$ acts on $G(\mathbb{A}{}_{f})$. Let
$\mathcal{C}{}$ be a set of representatives for the (finite) double coset
space $G(\mathbb{Q}{})_{+}\backslash G(\mathbb{A}{}_{f})/K$; then%
\[
G(\mathbb{Q})_{+}\backslash X\times G(\mathbb{A}{}_{f})/K\simeq\bigsqcup
\nolimits_{g\in\mathcal{C}{}}\Gamma_{g}\backslash X,\quad\Gamma_{g}%
=gKg^{-1}\cap G(\mathbb{Q}{})_{+}.
\]
Because $\Gamma_{g}$ is a congruence subgroup of $G(\mathbb{Q}{})$, its image
in $G^{\mathrm{ad}}(\mathbb{Q}{})$ is arithmetic (\ref{h33}), and so
$\Sh_{K}(G,X)$ is a finite disjoint union of connected Shimura varieties. It
therefore has a unique structure of an algebraic variety. As $K$ varies, these
varieties form a projective system.

We make this more explicit in the case that $G^{\mathrm{der}}$ is simply
connected. Let $\nu\colon G\rightarrow T$ be the quotient of $G$ by
$G^{\mathrm{der}}$, and let $Z$ be the centre of $G$. Then $\nu$ defines an
isogeny $Z\rightarrow T$, and we let%
\begin{align*}
T(\mathbb{R}{})^{\dagger}  &  =\im(Z(\mathbb{R}{})\rightarrow T(\mathbb{R}%
{})),\\
T(\mathbb{Q}{})^{\dagger}  &  =T(\mathbb{Q}{})\cap T(\mathbb{R}{})^{\dagger}.
\end{align*}
The set $T(\mathbb{Q}{})^{\dagger}\backslash T(\mathbb{A}{}_{f})/\nu(K)$ is
finite and discrete. For $K$ sufficiently small, the map%
\begin{equation}
\lbrack x,a]\mapsto\lbrack\nu(a)]\colon G(\mathbb{Q}{})\backslash X\times
G(\mathbb{A}{}_{f})/K\rightarrow T(\mathbb{Q}{})^{\dagger}\backslash
T(\mathbb{A}{}_{f})/\nu(K) \label{hq19}%
\end{equation}
is surjective, and each fibre is isomorphic to $\Gamma\backslash X$ for some
congruence subgroup $\Gamma$ of $G^{\mathrm{der}}(\mathbb{Q}{})$. For the
fibre over $[1]$, the congruence subgroup $\Gamma$ is contained in $K\cap
G^{\mathrm{der}}(\mathbb{Q}{})$, and equals it if $Z(G^{\mathrm{der}})$
satisfies the Hasse principal for $H^{1}$, for example, if $G^{\mathrm{der}}$
has no factors of type $A$.

\begin{example}
\label{h13c}Let $G=\GL_{2}$. Then
\begin{align*}
(G\overset{\nu}{\longrightarrow}T)  &  =(\GL_{2}\overset{\det}{\longrightarrow
}\mathbb{G}_{m})\\
(Z\overset{\nu}{\longrightarrow}T)  &  =(\mathbb{G}_{m}\overset{2}%
{\longrightarrow}\mathbb{G}_{m}),
\end{align*}
and therefore
\[
T(\mathbb{Q}{})^{\dagger}\backslash T(\mathbb{A}{}_{f})/\nu(K)=\mathbb{Q}%
{}^{>0}\backslash\mathbb{A}{}_{f}^{\times}/\det(K).
\]
Note that $\mathbb{A}{}_{f}^{\times}=\mathbb{Q}{}^{>0}\cdot\mathbb{\hat{Z}%
}^{\times}{}$ (direct product) where $\mathbb{\hat{Z}}=\varprojlim
_{n}\mathbb{Z}{}/n\mathbb{Z}{}\simeq\prod\nolimits_{\ell}\mathbb{Z}{}_{\ell}%
{}$. For
\[
K=K(N)\overset{\textup{{\tiny def}}}{=}\{a\in\mathbb{\hat{Z}}^{\times}\mid
a\equiv1,\bmod N\},
\]
we find that
\[
T(\mathbb{Q}{})^{\dagger}\backslash T(\mathbb{A}{}_{f})/\nu(K)\simeq
(\mathbb{Z}{}/N\mathbb{Z}{})^{\times}.
\]

\end{example}

\begin{definition}
\label{h13d}A Shimura datum $(G,X)$ is of \emph{Hodge type}%
\index{Hodge type}
if there exists an injective homomorphism $G\rightarrow G(\psi)$ sending $X$
into $X(\psi)$ for some symplectic pair $(V,\psi)$ over $\mathbb{Q}{}$.
\end{definition}

\begin{definition}
\label{h13e}A Shimura datum $(G,X)$ is of \emph{abelian type}%
\index{abelian type}
if, for one (hence all) $h\in X$, the pair $(G^{\mathrm{der}},\ad\circ h)$ is
a quotient of a product of pairs satisfying (a) or (b) of (\ref{h98}).
\end{definition}

A Shimura variety $\Sh(G,X)$ is said to be of Hodge or abelian type if $(G,X)$ is.

\begin{nt}
See \cite{milneSVI}, \S 5, for proofs of the statements in this subsection.
For the structure of the Shimura variety when $G^{\mathrm{der}}$ is not simply
connected, see \cite{deligne1979}, 2.1.16.
\end{nt}

\subsection{Shimura varieties as moduli varieties}

Throughout this subsection, $(G,X)$ is a Shimura datum such that

\begin{enumerate}
\item $w_{X}$ is defined over $\mathbb{Q}{}$ and the connected centre of $G$
is split by a CM-field, and

\item there exists a homomorphism $\nu\colon G\rightarrow\mathbb{G}_{m}%
\simeq\GL_{\mathbb{Q}{}(1)}$ such that $\nu\circ w_{X}=-2$.
\end{enumerate}

Fix a faithful representation $\rho\colon G\rightarrow\GL_{V}$. Assume that
there exists a pairing $t_{0}\colon V\times V\rightarrow\mathbb{Q}{}(m)$ such
that (i) $gt_{0}=\nu(g)^{m}t_{0}$ for all $g\in G$ and (ii) $t_{0}$ is a
polarization of $(V,\rho_{\mathbb{R}{}}\circ h)$ for all $h\in X$. Then there
exist homomorphisms $t_{i}\colon V^{\otimes r_{i}}\rightarrow\mathbb{Q}%
{}(\frac{mr_{i}}{2})$, $1\leq i\leq n$, such that $G$ is the subgroup of
$\GL_{V}$ whose elements fix $t_{0},t_{1},\ldots,t_{n}$. When $(G,X)$ is of
Hodge type, we choose $\rho$ to be a symplectic representation.

Let $K$ be a compact open subgroup of $G(\mathbb{A}_{f})$. Define
${}\mathcal{\mathcal{H}}_{K}(\mathbb{C}{})$ to be the set of triples
\[
(W,(s_{i})_{0\leq i\leq n},\eta K)
\]
in which

\begin{itemize}
\item $W=(W,h_{W})$ is a rational Hodge structure,

\item each $s_{i}$ is a morphism of Hodge structures $W^{\otimes r_{i}%
}\rightarrow\mathbb{Q}{}(\frac{mr_{i}}{2})$ and $s_{0}$ is a polarization of
$W$,

\item $\eta K$ is a $K$-orbit of $\mathbb{A}{}_{f}$-linear isomorphisms
$V_{\mathbb{A}{}_{f}}\rightarrow W_{\mathbb{A}{}_{f}}$ sending each $t_{i}$ to
$s_{i}$,
\end{itemize}

\noindent satisfying the following condition:

\begin{quote}
(*) there exists an isomorphism $\gamma\colon W\rightarrow V$ sending each
$s_{i}$ to $t_{i}$ and $h_{W}$ onto an element of $X$.
\end{quote}

\begin{lemma}
\label{h15}For $(W,\ldots)$ in $\mathcal{H}{}_{K}(\mathbb{C}{})$, choose an
isomorphism $\gamma$ as in (*), let $h$ be the image of $h_{W}$ in $X$, and
let $a\in G(\mathbb{A}{}_{f})$ be the composite $V_{\mathbb{A}{}_{f}}%
\overset{\eta}{\longrightarrow}W_{\mathbb{A}{}_{f}}\overset{\gamma
}{\longrightarrow}V_{\mathbb{A}{}_{f}}$. The class $[h,a]$ of the pair $(h,a)$
in $G(\mathbb{Q}{})_{+}\backslash X\times G(\mathbb{A}{}_{f})/K$ is
independent of all choices, and the map%
\[
(W,\ldots)\mapsto\lbrack h,a]\colon\mathcal{H}{}_{K}(\mathbb{C}{}%
)\rightarrow\Sh_{K}(G,X)(\mathbb{C}{})
\]
is surjective with fibres equal to the isomorphism classes$.$
\end{lemma}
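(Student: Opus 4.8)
The plan is to check, in order, the three parts of the assertion: that $[h,a]$ does not depend on the auxiliary choices, that the resulting map is surjective, and that its fibres are the isomorphism classes of triples. The one non-formal ingredient is the standard fact that, for $q\in G(\mathbb{Q}{})$, one has $qhq^{-1}\in X$ for some $h\in X$ only when $q\in G(\mathbb{Q}{})_{+}$; this holds because the connected components of the $G(\mathbb{R}{})$-conjugacy class of a point of $X$ are exactly its $G(\mathbb{R}{})^{+}$-orbits (one of which is $X$), and conjugation by $q$ permutes these components the way left translation by $qG(\mathbb{R}{})^{+}$ permutes $G(\mathbb{R}{})/G(\mathbb{R}{})^{+}$ --- see \cite{milneSVI}, \S 5. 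This is precisely what makes the double quotient defining $\Sh_{K}(G,X)$ meaningful, and it is the crux of both the well-definedness and the fibre computation below.

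\emph{Well-definedness.} Replacing the chosen representative $\eta$ of the orbit $\eta K$ by $\eta k$, $k\in K$, replaces $a=\gamma\circ\eta$ by $ak$, which is the same element of $G(\mathbb{A}{}_{f})/K$. Now let $\gamma,\gamma^{\prime}$ be two isomorphisms $W\to V$ as in (*), and set $q=\gamma^{\prime}\circ\gamma^{-1}\in\GL_{V}(\mathbb{Q}{})$. Because $\gamma$ and $\gamma^{\prime}$ both carry each $s_{i}$ to $t_{i}$, the element $q$ fixes every $t_{i}$ and hence lies in $G(\mathbb{Q}{})$; and $q$ conjugates $h:=\gamma_{\ast}h_{W}$ to $h^{\prime}:=\gamma^{\prime}_{\ast}h_{W}$, both of which lie in $X$ by the condition (*). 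By the fact just recalled, $q\in G(\mathbb{Q}{})_{+}$. Since $(h^{\prime},a^{\prime})=(qhq^{-1},qa)=q\cdot(h,a)$, the class $[h,a]$ in $G(\mathbb{Q}{})_{+}\backslash X\times G(\mathbb{A}{}_{f})/K$ is unchanged, and the map is well defined.

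\emph{Surjectivity.} Every point of $\Sh_{K}(G,X)(\mathbb{C}{})$ is a class $[h,a]$ with $h\in X$ and $a\in G(\mathbb{A}{}_{f})$. Put $W=(V,\rho_{\mathbb{R}{}}\circ h)$: it is a rational Hodge structure because $w_{X}$ is defined over $\mathbb{Q}{}$, and $t_{0}$ is a polarization of it because $h\in X$. Take $s_{i}=t_{i}$ for $0\le i\le n$; since $h\in X$ (and $\nu\circ w_{X}=-2$, so the Tate twists are the right ones), each $s_{i}$ is a morphism of Hodge structures $W^{\otimes r_{i}}\to\mathbb{Q}{}(\tfrac{mr_{i}}{2})$ and $s_{0}$ is a polarization of $W$. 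Finally, regard $a$ as an $\mathbb{A}{}_{f}$-linear automorphism of $V_{\mathbb{A}{}_{f}}=W_{\mathbb{A}{}_{f}}$; since $a\in G(\mathbb{A}{}_{f})$ it fixes every $t_{i}$, hence sends $t_{i}$ to $s_{i}$, and we set $\eta=a$. The resulting triple lies in $\mathcal{H}_{K}(\mathbb{C}{})$, with (*) witnessed by $\gamma=\mathrm{id}_{V}$; running the construction with this $\gamma$ returns $h$ and $\gamma\circ\eta=a$, so its image is $[h,a]$.

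\emph{Fibres.} If $\delta\colon W\to W^{\prime}$ is an isomorphism of Hodge structures carrying each $s_{i}$ to $s_{i}^{\prime}$ and the orbit $\eta K$ to $\eta^{\prime}K$, then using $\gamma^{\prime}:=\gamma\circ\delta^{-1}$ in (*) for the second triple shows the two triples have the same image, so isomorphic triples lie in one fibre. Conversely, suppose the triples $(W,(s_{i}),\eta K)$ and $(W^{\prime},(s_{i}^{\prime}),\eta^{\prime}K)$ have the same image. Choose $\gamma,\gamma^{\prime}$ as in (*), set $h_{1}=\gamma_{\ast}h_{W}$, $a_{1}=\gamma\circ\eta$, and define $h_{2},a_{2}$ analogously. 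From $[h_{1},a_{1}]=[h_{2},a_{2}]$ we get $q\in G(\mathbb{Q}{})_{+}$ and $k\in K$ with $h_{2}=qh_{1}q^{-1}$ and $a_{2}=qa_{1}k$; after replacing $\eta$ by $\eta k$ we may assume $a_{2}=qa_{1}$. Then $\delta=(\gamma^{\prime})^{-1}\circ q\circ\gamma\colon W\to W^{\prime}$ is an isomorphism of Hodge structures ($\gamma$ carries $h_{W}$ to $h_{1}$, $q$ carries $h_{1}$ to $h_{2}$, $(\gamma^{\prime})^{-1}$ carries $h_{2}$ to $h_{W^{\prime}}$); it carries each $s_{i}$ to $s_{i}^{\prime}$ (as $\gamma\colon s_{i}\mapsto t_{i}$, $q\in G$ fixes $t_{i}$, $(\gamma^{\prime})^{-1}\colon t_{i}\mapsto s_{i}^{\prime}$); and from $\gamma^{\prime}\circ\eta^{\prime}=a_{2}=q\circ\gamma\circ\eta$ it follows that $\eta^{\prime}=\delta\circ\eta$, so $\delta(\eta K)=\eta^{\prime}K$. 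Hence $\delta$ is an isomorphism of triples, so the fibre is a single isomorphism class. I do not anticipate a genuine obstacle: the argument is a formal manipulation of the definitions, and the only external input --- the description of $G(\mathbb{Q}{})_{+}$ as the stabilizer of $X$ in $G(\mathbb{Q}{})$ --- is supplied by \cite{milneSVI}; the real care lies in the bookkeeping of the Tate twists and of the similitude character $\nu$, needed to see that the $t_{i}$ genuinely become morphisms of Hodge structures (resp. a polarization) for every $h\in X$ and are fixed by $G(\mathbb{A}{}_{f})$.
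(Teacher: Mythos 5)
Your three-part organization (well-definedness, surjectivity, fibres) and the formal bookkeeping with $\gamma$, $\eta$, $k$ and $\delta$ are exactly the ``routine checking'' the paper alludes to, and those parts are fine. The trouble is in the one non-formal ingredient you flag, and your justification of it.

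You assert that conjugation by $q\in G(\mathbb{R})$ permutes the components of the $G(\mathbb{R})$-conjugacy class of $h$ ``the way left translation by $qG(\mathbb{R})^{+}$ permutes $G(\mathbb{R})/G(\mathbb{R})^{+}$,'' and conclude that the stabilizer of the component $X$ is $G(\mathbb{R})^{+}$. That is not correct as stated: the conjugacy class of $h$ is $G(\mathbb{R})/K_{h}$, where $K_{h}$ is the centralizer of $h$, so its set of connected components is $G(\mathbb{R})/\bigl(G(\mathbb{R})^{+}K_{h}\bigr)$, and the stabilizer of $X$ is $G(\mathbb{R})^{+}K_{h}$. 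Since $K_{h}\supset Z(G)(\mathbb{R})$ and $K_{h}$ maps into the (compact, connected) stabilizer of a point of the symmetric domain, one has $G(\mathbb{R})^{+}K_{h}=Z(G)(\mathbb{R})\cdot G(\mathbb{R})^{+}=G(\mathbb{R})_{+}$, the preimage of $G^{\mathrm{ad}}(\mathbb{R})^{+}$. This can be strictly larger than $G(\mathbb{R})^{+}$, e.g.\ whenever $Z(G)(\mathbb{R})$ is disconnected and not contained in $G(\mathbb{R})^{+}$. So what your computation actually gives is $q\in G(\mathbb{Q})\cap G(\mathbb{R})_{+}$, not $q\in G(\mathbb{Q})\cap G(\mathbb{R})^{+}$. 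The former is the group that Deligne and \cite{milneSVI} call $G(\mathbb{Q})_{+}$ (it is, precisely, the largest subgroup of $G(\mathbb{Q})$ carrying $X$ into itself, which is exactly what well-definedness and the fibre computation require); the paper's line defining $G(\mathbb{Q})_{+}$ via $G(\mathbb{R})^{+}$ should be read as $G(\mathbb{R})_{+}$, consistent with the footnote's citation. With that understanding, the rest of your argument goes through unchanged and is complete.
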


\begin{proof}
The proof involves only routine checking.
\end{proof}

For a smooth algebraic variety $S$ over $\mathbb{C}{}$, let $\mathcal{F}{}%
_{K}(S)$ be the set of isomorphism classes of triples $(A,(s_{i})_{0\leq i\leq
n},\eta K)$ in which

\begin{itemize}
\item $A$ is a family of abelian motives over $S$,

\item each $s_{i}$ is a morphism of abelian motives $A^{\otimes r_{i}%
}\rightarrow\mathbb{Q}{}(\frac{mr_{i}}{2})$, and

\item $\eta K$ is a $K$-orbit of $\mathbb{A}{}_{f}$-linear isomorphisms
$V_{S}\rightarrow\omega_{f}(A/S)$ sending each $t_{i}$ to $s_{i}%
$,\footnote{The isomorphism $\eta$ is defined only on the universal covering
space of $S^{\text{an}}$, but the family $\eta K$ is stable under $\pi
_{1}(S,o)$, and so is \textquotedblleft defined\textquotedblright\ on $S$.}
\end{itemize}

\noindent satisfying the following condition:

\begin{quote}
(**) for each $s\in S(\mathbb{C}{})$, the Betti realization of $(A,(s_{i}%
),\eta K)_{s}$ lies in $\mathcal{H}{}_{K}(\mathbb{C}{})$.
\end{quote}

\noindent With the obvious notion of pullback, $\mathcal{F}{}_{K}$ becomes a
functor from smooth complex algebraic varieties to sets. There is a
well-defined injective map $\mathcal{F}{}_{K}(\mathbb{C})\rightarrow
\mathcal{H}{}_{K}(\mathbb{C}{})/\!\!\approx$, which is surjective when $(G,X)$
is of abelian type. Hence, in this case, we get an isomorphism $\alpha
\colon\mathcal{F}{}_{K}(\mathbb{C}{})\rightarrow\Sh_{K}(\mathbb{C}{})$.

\begin{theorem}
\label{h16}Assume that $(G,X)$ is of abelian type. The map $\alpha$ realizes
$\Sh_{K}$ as a coarse moduli variety for $\mathcal{F}{}_{K}$, and even a fine
moduli variety when $Z(\mathbb{Q)}$ is discrete in $Z(\mathbb{R}{})$ (here
$Z=Z(G)$).
\end{theorem}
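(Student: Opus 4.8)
The plan is to verify the three defining properties of a coarse moduli space: (i) there is a natural transformation $\tau\colon\mathcal{F}_K\rightarrow h_{\Sh_K}$ of functors on smooth complex algebraic varieties; (ii) $\tau(\mathbb{C})$ is a bijection; and (iii) $\tau$ is universal among natural transformations from $\mathcal{F}_K$ to a representable functor. Property (ii) is already in hand: by the discussion preceding the theorem, $\tau(\mathbb{C})$ will coincide with $\alpha\colon\mathcal{F}_K(\mathbb{C})\rightarrow\Sh_K(\mathbb{C})$, which is a bijection precisely because $(G,X)$ is of abelian type — this is where the abelian-type hypothesis first enters, via the determination of the Mumford--Tate groups of abelian motives in Theorem \ref{h11b} (every $(W,(s_i),\eta K)\in\mathcal{H}_K(\mathbb{C})$ is then the Betti realization of an abelian motive). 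For the fine statement one upgrades $\tau$ to an isomorphism of functors.

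First I would construct $\tau$. Given $(A,(s_i),\eta K)\in\mathcal{F}_K(S)$, Theorem \ref{h80b} attaches to $A$ a polarizable variation of Hodge structures $\mathsf{V}=\mathcal{H}_B(A/S)$ on $S$; the morphisms $s_i$ become Hodge tensors and $\eta K$ a locally constant family of level structures. Condition (**) says that after a trivialization $\pi^\ast\mathsf{V}\simeq V_{\tilde S}$ of $\mathsf{V}$ on the universal cover $\tilde S$ compatible with $\eta$, every fibre lies in $X$. By Theorem \ref{h67} the resulting period map $\tilde S\rightarrow X$ is holomorphic, and combining it with the trivialization and passing to the quotient gives a holomorphic map $S^{\text{an}}\rightarrow\Sh_K(\mathbb{C})$; since $\Sh_K$ is a finite disjoint union of quotients $D(\Gamma_g)$, each algebraic by Baily--Borel (\ref{h42}), Borel's theorem \ref{h43} shows this map is regular. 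Functoriality in $S$ is immediate, so we obtain $\tau$, with $\tau(\mathbb{C})=\alpha$.

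Next I would prove the universal property. Fix a neat normal subgroup $K'\triangleleft K$ of finite index; then $\Sh_{K'}$ is smooth, each $\Gamma_g$ is torsion free, the monodromy representations are faithful, and $\Sh_K=(K/K')\backslash\Sh_{K'}$ as algebraic varieties. By Theorem \ref{h12a} (again using that $(G,X)$ is of abelian type) the tautological variation of Hodge structures on $\Sh_{K'}$, with its universal Hodge tensors and level structure, underlies a family of abelian motives, i.e.\ a distinguished $u'\in\mathcal{F}_{K'}(\Sh_{K'})$ with $\tau_{\Sh_{K'}}(u')=\id$; moreover, by the full faithfulness in \ref{h80b}, every object of $\mathcal{F}_{K'}(S')$ is the pullback of $u'$ along the map it induces to $\Sh_{K'}$. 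Now let $T$ be a variety and $\psi\colon\mathcal{F}_K\rightarrow h_T$ a natural transformation, and let $\bar u'\in\mathcal{F}_K(\Sh_{K'})$ be the image of $u'$ under the level-lowering map $\mathcal{F}_{K'}\rightarrow\mathcal{F}_K$. As a $K$-orbit of level structures depends only on the data modulo $K$, $\bar u'$ is $(K/K')$-invariant up to isomorphism, so $\psi_{\Sh_{K'}}(\bar u')$ is $(K/K')$-invariant and descends to $\phi\colon\Sh_K\rightarrow T$. To check $h_\phi\circ\tau=\psi$: for $(A,\dots)\in\mathcal{F}_K(S)$ with $f=\tau_S(A,\dots)$, pull back $\Sh_{K'}\rightarrow\Sh_K$ along $f$ to get a finite \'etale cover $S''\rightarrow S$ on which the family acquires a $K'$-structure and hence becomes the pullback of $u'$; chasing $\psi$ through naturality and the identity $\psi_{\Sh_{K'}}(\bar u')=\phi\circ(\Sh_{K'}\rightarrow\Sh_K)$ gives $\psi_S(A,\dots)\circ(S''\rightarrow S)=\phi\circ f\circ(S''\rightarrow S)$, and surjectivity of $S''\rightarrow S$ yields $\psi_S(A,\dots)=\phi\circ f$. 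Uniqueness of $\phi$ is clear: $\tau(\mathbb{C})$ is surjective and a morphism of reduced $\mathbb{C}$-varieties is determined by its effect on $\mathbb{C}$-points.

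Finally, when $Z(\mathbb{Q})$ is discrete in $Z(\mathbb{R})$ I would show $\tau$ is an isomorphism of functors. For $K$ small enough this hypothesis forces the objects classified by $\mathcal{F}_K$ to have no nontrivial automorphisms, so there is a universal family $u_K\in\mathcal{F}_K(\Sh_K)$ with $\tau_{\Sh_K}(u_K)=\id$; then $\tau_S$ is surjective by pulling back $u_K$ along $S\rightarrow\Sh_K$, and injective by the full faithfulness in \ref{h80b}, once one observes that two triples inducing the same map $S\rightarrow\Sh_K$ have isomorphic associated variations of Hodge structures with tensors and level structure. The hard part of the whole argument is the bookkeeping that makes $\tau$ well defined over the disconnected variety $\Sh_K$ and compatible with the $G(\mathbb{A}_f)$-action — equivalently, that conditions (*) and (**) are exactly what is needed — together with the descent step, where one uses both that $\Sh_K=(K/K')\backslash\Sh_{K'}$ and that families of abelian motives descend along finite \'etale covers; the abelian-type hypothesis is what guarantees, via \ref{h11b}, \ref{h12a}, and \ref{h80b}, that the relevant Hodge-theoretic objects are motivic.
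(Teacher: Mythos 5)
Your construction of the natural transformation $\tau$ and the proof of regularity follow the paper's route exactly: the family of abelian motives gives a polarizable variation of Hodge structures by Theorem \ref{h80b}, the period map is holomorphic by Theorem \ref{h67}, and Borel's theorem \ref{h43} upgrades this to regularity. The fine-moduli statement is also argued in essentially the same way. For the universal property in the coarse case, however, your argument takes a genuinely different route from the paper's, and that route has a gap.

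You pass to a neat normal subgroup $K'\triangleleft K$ and assert that the tautological variation of Hodge structures on $\Sh_{K'}$ descends and underlies a family of abelian motives, yielding a distinguished $u'\in\mathcal{F}_{K'}(\Sh_{K'})$ with $\tau_{\Sh_{K'}}(u')=\id$. This is precisely the point at which the hypothesis ``$Z(\mathbb{Q})$ discrete in $Z(\mathbb{R})$'' enters, and it is not available to you in the coarse case. The local system $V_{X}$ on $X$ descends to $\Gamma'_{g}\backslash X$ as a local system only when $Z(G)(\mathbb{Q})\cap\Gamma'_{g}=1$: an element of that intersection acts trivially on $X$ (since the $G(\mathbb{Q})_{+}$-action on $X$ is by conjugation, hence factors through $G^{\mathrm{ad}}$) but nontrivially on $V$ (since $\rho$ is faithful), so the quotient $(X\times V)/\Gamma'_{g}$ fails to be a vector bundle. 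Neatness forces $Z(G)(\mathbb{Q})\cap\Gamma'_{g}$ to be torsion-free, but not trivial. When $Z$ contains a $\mathbb{Q}$-anisotropic but $\mathbb{R}$-split torus — the norm-one torus of a real quadratic field is the standard example — this intersection stays infinite for every compact open $K'$, so no shrinking of the level produces the universal family you invoke, and the descent argument that follows cannot get started.

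The paper avoids this by shrinking the group rather than the level: it replaces $G$ by $G'$, the smallest $\mathbb{Q}$-subgroup of $G$ through which every $h\in X$ factors (the generic Mumford--Tate group, cf.\ \ref{h65a}). For this $G'$, the centre automatically satisfies the discreteness condition, because $(G',h)$ for generic $h$ is the Mumford--Tate group of a polarizable Hodge structure and hence satisfies (SV2*). The fine-moduli argument then applies to $\Sh_{K\cap G'(\mathbb{A}_{f})}(G',X)$, which consists of certain connected components of $\Sh_{K}(G,X)$, and the full variety is covered by translates. If you wish to keep your \'etale-descent framework, you must also pass from $G$ to $G'$ before shrinking the level; shrinking $K$ alone never removes the obstruction coming from the centre.
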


\begin{proof}
To say that $(\Sh_{K},\alpha)$ is coarse moduli variety means the following:

\begin{enumerate}
\item for any smooth algebraic variety $S$ over $\mathbb{C}{}$, and $\xi
\in\mathcal{F}{}(S)$, the map $s\mapsto\alpha(\xi_{s})\colon S(\mathbb{C}%
{})\rightarrow\Sh_{K}(\mathbb{C}{})$ is regular;

\item $(\Sh_{K},\alpha)$ is universal among pairs satisfying (a) .
\end{enumerate}

To prove (a), we use that $\xi$ defines a variation of Hodge structures on $S$
(see p.~\pageref{h80b}). Now the universal property of hermitian symmetric
domains (\ref{h67}) shows that the map $s\mapsto\alpha(\xi_{s})$ is
holomorphic (on the universal covering space, and hence on the variety), and
Borel's theorem \ref{h43} shows that it is regular.

Next assume that $Z(\mathbb{Q}{})$ is discrete in $Z(\mathbb{R}{})$. Then the
representation $\rho$ defines a variation of Hodge structures on $\Sh_{K}$
itself (not just its universal covering space), which arises from a family of
abelian motives. This family is universal, and so $\Sh_{K}$ is a fine moduli variety.

We now prove (b). Let $S^{\prime}$ be a smooth algebraic variety over
$\mathbb{C}{}$ and let $\alpha^{\prime}\colon\mathcal{F}_{K}(\mathbb{C}%
{})\rightarrow S^{\prime}(\mathbb{C}{})$ be a map with the following property:
for any smooth algebraic variety $S$ over $\mathbb{C}{}$ and $\xi
\in\mathcal{F}{}(S)$, the map $s\mapsto\alpha^{\prime}(\xi_{s})\colon
S(\mathbb{C}{})\rightarrow S^{\prime}(\mathbb{C}{})$ is regular. We have to
show that the map $s\mapsto\alpha^{\prime}\alpha^{-1}(s)\colon\Sh_{K}%
(\mathbb{C}{})\rightarrow S^{\prime}(\mathbb{C}{})$ is regular. When
$Z(\mathbb{Q}{})$ is discrete in $Z(\mathbb{R}{})$, the map is that defined by
$\alpha^{\prime}$ and the universal family of abelian motives on $\Sh_{K}$,
and so it is regular by definition. In the general case, we let $G^{\prime}$
be the smallest algebraic subgroup of $G$ such that $h(\mathbb{S}{})\subset
G_{\mathbb{R}{}}^{\prime}$ for all $h\in X$. Then $(G^{\prime},X)$ is a
Shimura datum (cf. \ref{h65a}), which now is such that $Z(\mathbb{Q}{})$ is
discrete in $Z(\mathbb{R}{})$; moreover, $\Sh_{K\cap G^{\prime}(\mathbb{A}%
{}_{f})}(G^{\prime},X)$ consists of a certain number of connected components
of $\Sh_{K}(G,X)$. As the map is regular on $\Sh_{K\cap G^{\prime}%
(\mathbb{A}{}_{f})}(G^{\prime},X)$, and $\Sh_{K}(G,X)$ is a union of
translates of $\Sh_{K\cap G^{\prime}(\mathbb{A}{}_{f})}(G^{\prime},X)$, this
shows that the map is regular on $\Sh_{K}(G,X)$.
\end{proof}

\subsubsection{Remarks}

\begin{plain}
\label{h16a}When $(G,X)$ is of Hodge type in Theorem \ref{h16}, the Shimura
variety is a moduli variety for abelian \textit{varieties} with additional
structure. In this case, the moduli problem can be defined for all schemes
algebraic over $\mathbb{C}{}$ (not necessarily smooth), and Mumford's theorem
can be used to prove that the Shimura variety is moduli variety for the
expanded functor.
\end{plain}

\begin{plain}
\label{h16b}It is possible to describe the structure $\eta K$ by passing only
to a finite covering, rather than the full universal covering. This means that
it can be described purely algebraically.
\end{plain}

\begin{plain}
\label{h16c}For certain compact open groups $K$, the structure $\eta K$ can be
interpreted as a level-$N$ structure in the usual sense.
\end{plain}

\begin{plain}
\label{h16d}Consider a pair $(H,\bar{h})$ having a finite covering of Hodge
type. Then there exists a Shimura datum $(G,X)$ of abelian type such that
$(G^{\mathrm{der}},\ad\circ h)=(H,\bar{h})$ for some $h\in X$. The choice of a
faithful representation $\rho$ for $G$ gives a realization of the connected
Shimura variety defined by any (sufficiently small) congruence subgroup of
$H(\mathbb{Q}{})$ as a fine moduli variety for abelian motives with additional
structure. For example, when $H$ is simply connected, there is a map
$\mathcal{H}{}_{K}(\mathbb{C}{})\rightarrow T(\mathbb{Q}{})^{\dagger
}\backslash T(\mathbb{A}{}_{f})/\nu(K)$ (see (\ref{hq19}), p.~\pageref{hq19}),
and the moduli problem is obtained from $\mathcal{F}{}_{K}$ by replacing
$\mathcal{H}{}_{K}(\mathbb{C}{})$ with its fibre over $[1]$. Note that the
realization involves many choices.
\end{plain}

\begin{plain}
\label{h16e}For each Shimura variety, there is a well-defined number field
$E(G,X)$, called the reflex field. When the Shimura variety is a moduli
variety, it is possible choose the moduli problem so that it is defined over
$E(G,X)$. Then an elementary descent argument shows that the Shimura variety
itself has a model over $E(G,X)$. A priori, it may appear that this model
depends on the choice of the moduli problem. However, the theory of complex
multiplication shows that the model satisfies a certain reciprocity law at the
special points, which characterize it.
\end{plain}

\begin{plain}
\label{h16f}The (unique) model of a Shimura variety over the reflex field
$E(G,X)$ satisfying (Shimura's) reciprocity law at the special points is
called the \emph{canonical model}%
\index{canonical model}%
. As we have just noted, when a Shimura variety can be realized as a moduli
variety, it has a canonical model. More generally, when the associated
connected Shimura variety is a moduli variety, then $\Sh(G,X)$ has a canonical
model (\cite{shimura1970}, \cite{deligne1979}). Otherwise, the Shimura variety
can be embedded in a larger Shimura variety that contains many Shimura
subvarieties of type $A_{1}$, and this can be used to prove that the Shimura
variety has a canonical model (\cite{milne1983}).
\end{plain}

\begin{nt}
For more details on this subsection, see \cite{milne1994}.
\end{nt}

\bibliographystyle{cbe}
\bibliography{D:/Current/svh}

\pagestyle{empty} \clearpage

\includegraphics{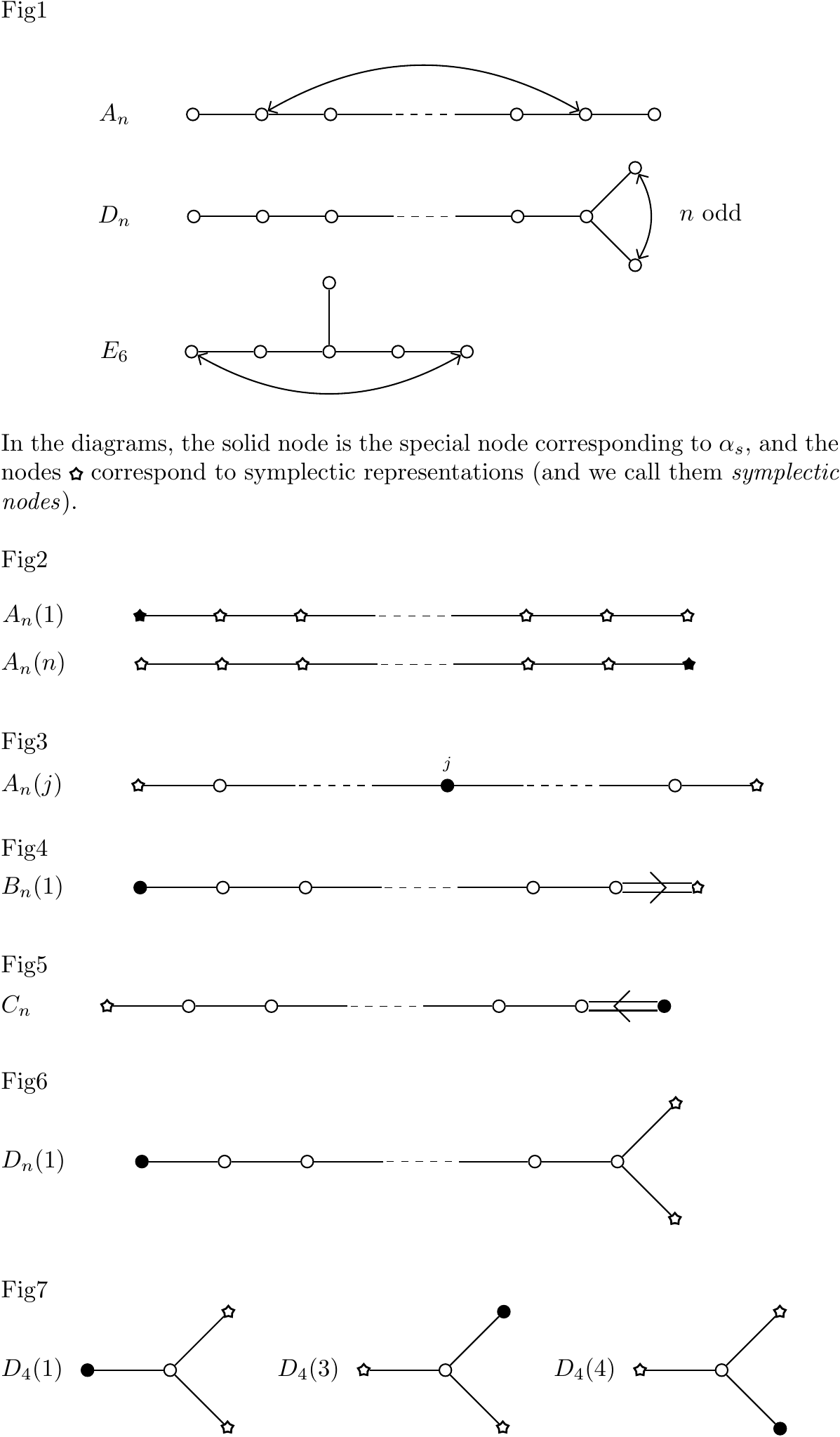}

\clearpage \includegraphics{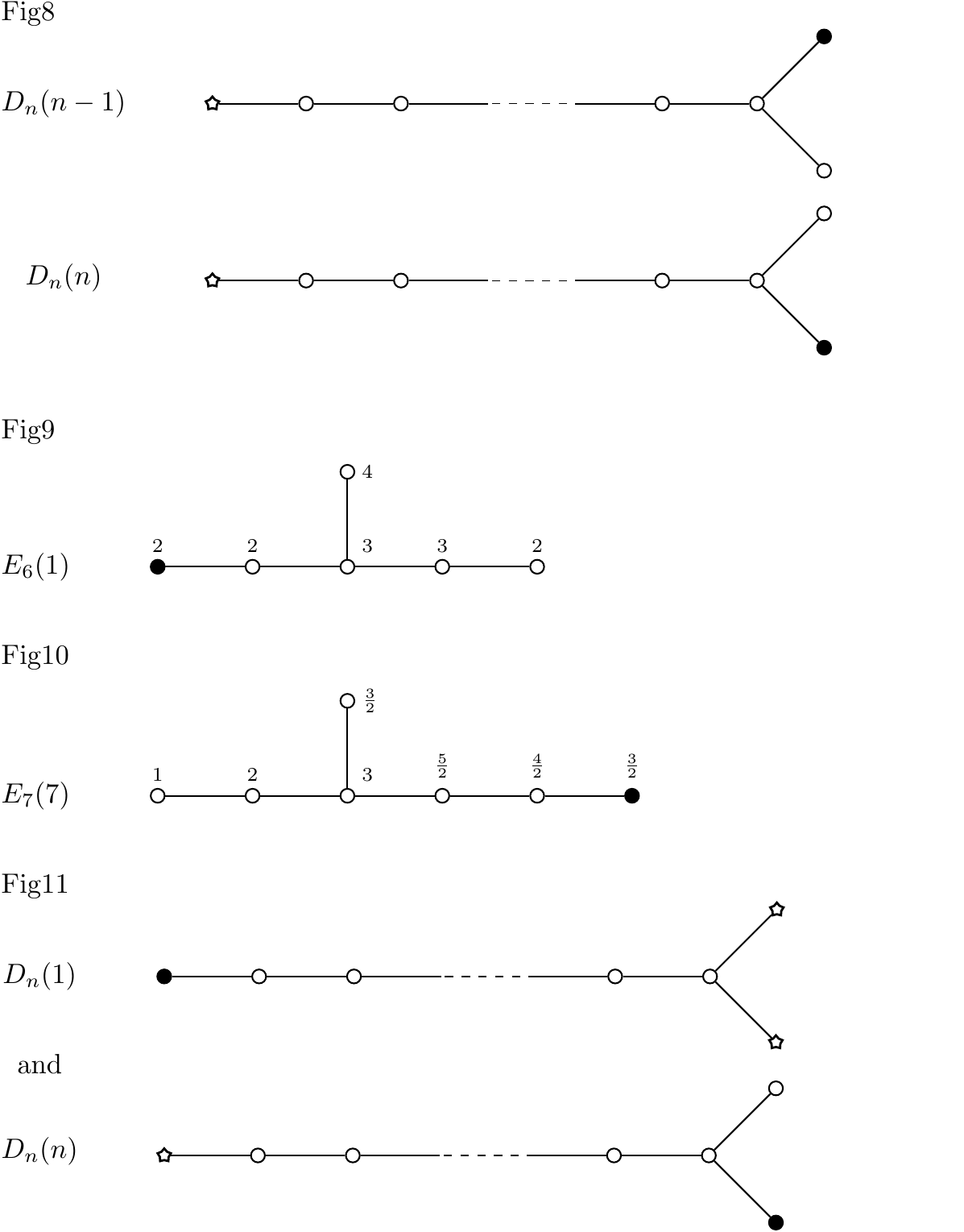}
\end{document}